\newlist{inlinelist}{enumerate*}{1}
\setlist[inlinelist]{label=\alph*, itemjoin={{, }}, itemjoin*={{, and }}} 
\setlist[enumerate]{label=(\arabic*)}
\numberwithin{equation}{section}
\newtheorem{theorem}{Theorem}[section]
\newtheorem{corollary}[theorem]{Corollary}
\newtheorem{lemma}[theorem]{Lemma}
\newtheorem{proposition}[theorem]{Proposition}
\newtheorem{conjecture}[theorem]{Conjecture}
\newtheorem{claim}[theorem]{Claim}
\theoremstyle{definition}
\newtheorem{definition}[theorem]{Definition}
\newtheorem{example}[theorem]{Example}
\newtheorem{remark}[theorem]{Remark}
\newcommand{\N}{\mathbb{N}}
\newcommand{\Z}{\mathbb{Z}}
\newcommand{\Q}{\mathbb{Q}}
\newcommand{\R}{\mathbb{R}}
\renewcommand{\C}{\mathbb{C}}
\newcommand{\T}{\mathbb{T}}
\newcommand{\CP}{\mathbb{CP}}
\newcommand{\RP}{\mathbb{RP}}
\newcommand{\K}{\mathbb{K}}
\renewcommand{\L}{\mathbb{L}}
\renewcommand{\k}{\mathbb{k}}
\newcommand{\bV}{\mathbf{V}}
\newcommand{\VV}{\mathcal{V}}
\newcommand{\A}{{\mathcal{A}}}
\newcommand{\cC}{\mathcal{C}}
\newcommand{\RR}{\mathcal{R}}
\newcommand{\M}{\mathcal{M}}
\newcommand{\F}{\mathcal{F}}
\newcommand{\cG}{\mathcal{G}}
\newcommand{\cE}{\mathcal{E}}
\newcommand{\XX}{{\mathcal{X}}}
\newcommand{\ZZ}{{\mathcal{Z}}}
\newcommand{\wC}{\,\widehat{\mathcal{\!C}}}
\newcommand{\fa}{\mathfrak{a}}
\newcommand{\fB}{\mathfrak{B}}
\newcommand{\B}{\mathfrak{B}}
\newcommand{\g}{{\mathfrak{g}}}
\newcommand{\h}{{\mathfrak{h}}}
\newcommand{\m}{{\mathfrak{m}}}
\newcommand{\fM}{\mathfrak{M}}
\newcommand{\fr}{\mathfrak{r}}
\newcommand{\fL}{\mathfrak{L}}
\newcommand{\dga}{\ensuremath{\textsc{dga}}}
\newcommand{\cga}{\ensuremath{\textsc{cga}}}
\newcommand{\cdga}{\ensuremath{\textsc{cdga}}}	
\DeclareMathOperator{\rank}{rank}
\DeclareMathOperator{\gr}{gr}
\DeclareMathOperator{\im}{im}
\DeclareMathOperator{\id}{id}
\DeclareMathOperator{\ab}{{ab}}
\DeclareMathOperator{\Sym}{Sym}
\DeclareMathOperator{\Sp}{Sp}
\DeclareMathOperator{\ch}{char}
\DeclareMathOperator{\SL}{SL}
\DeclareMathOperator{\SU}{SU}
\DeclareMathOperator{\SO}{SO}
\DeclareMathOperator{\Hom}{{Hom}}
\DeclareMathOperator{\Tor}{{Tor}}
\DeclareMathOperator{\Ext}{{Ext}}
\DeclareMathOperator{\Hilb}{{Hilb}}
\DeclareMathOperator{\proj}{pr}
\DeclareMathOperator{\ev}{ev}
\DeclareMathOperator{\Ev}{Ev}
\DeclareMathOperator{\Lie}{Lie}
\DeclareMathOperator{\init}{in}
\DeclareMathOperator{\Tors}{Tors}
\DeclareMathOperator{\TC}{TC}
\DeclareMathOperator{\Prim}{Prim}
\DeclareMathOperator{\Char}{Char}
\DeclareMathOperator{\Pf}{Pf}
\DeclareMathOperator{\pf}{pf}
\DeclareMathOperator{\Det}{Det}
\DeclareMathOperator{\Conf}{Conf}
\DeclareMathOperator{\Ann}{Ann}
\DeclareMathOperator{\orb}{orb}
\DeclareMathOperator{\Heis}{\mathcal{H}}
\providecommand{\lcs}{\mathop{\rm LCS} \nolimits}
\newcommand{\surj}{\twoheadrightarrow}
\newcommand{\inj}{\hookrightarrow}
\newcommand{\isom}{\xrightarrow{
   \,\smash{\raisebox{-0.4ex}{\ensuremath{\scriptstyle\cong}}}\,}}
\newcommand{\abs}[1]{\left| #1 \right|}
\def\set#1{{\{ #1\}}}
\newcommand{\uX}{\underline{X}}
\newcommand{\oX}{\overline{X}}
\newcommand{\one}{\mathbf{1}}
\newcommand{\zero}{\mathbf{0}}
\newcommand{\dR}{\scriptscriptstyle{\rm dR}}
\newcommand{\apl}{A_{\scriptscriptstyle{{\rm PL}}}}
\newcommand{\qA}{\mathrm{q}A}
\newcommand{\qg}{{\mbox{\rm{\small{q}}}}\g}
\newcommand{\q}{\mathrm{q}}
\DeclareMathOperator{\rat}{\scalebox{0.6}{$\Q$}}
\newcommand{\bwedge}{\mbox{\normalsize $\bigwedge$}}
\newcommand{\boplus}{\mbox{\normalsize $\bigoplus$}}
\newcommand{\sbm}[1]{{\let\amp=&\left(\begin{smallmatrix}#1\end{smallmatrix}\right)}}
\title{Formality and finiteness in rational homotopy theory}
\email{a.suciu@northeastern.edu}}
\begin{document}

\maketitle


\classification[20F14, 20F40, 55N25]{55P62}

\keywords{Differential graded algebra, Sullivan algebra, minimal model, 
Massey products, rational homotopy type, formality, partial formality, 
filtered-formality, Malcev Lie algebra, holonomy Lie algebra, Alexander invariant, 
characteristic variety, resonance variety}

\begin{abstract}
We explore various formality and finiteness properties in the 
differential graded algebra models for the Sullivan algebra 
of piecewise polynomial rational forms on a space.  
The $1$-formality property of the space may be reinterpreted 
in terms of the filtered and graded formality properties of 
the Malcev Lie algebra of its fundamental group, 
while some of the finiteness properties of the space are mirrored 
in the finiteness properties of algebraic models associated with it. 
In turn, the formality and finiteness properties of 
algebraic models have strong implications on the geometry 
of the cohomology jump loci of the space. We illustrate the 
theory with examples drawn from complex algebraic geometry, 
actions of compact Lie groups, and $3$-dimensional manifolds. 
\end{abstract}

\tableofcontents

\section{Introduction}
\label{sect:intro}

\subsection{Rational homotopy type}
\label{subsec:intro-rht}

Homotopy theory is the study of topological spaces up to 
homotopy equivalences. Typical examples of homotopy 
type invariants of a space $X$ are the homology 
groups $H_n(X,\Z)$ and the homotopy groups $\pi_n(X)$. 
The question whether one can reconstruct the homotopy type 
of a space from homological data goes back to the beginnings 
of Algebraic Topology.  Poincar\'{e} realized that homology is 
not enough:  for a path-connected space $X$, the first homology 
group, $H_1(X,\Z)$, only records the abelianization of the 
fundamental group, $\pi_1(X)$.  Even for simply-connected 
spaces, homology by itself fails to detect maps such as the 
Hopf map, $S^3\to S^2$.  On the other hand, if one considers 
the de~Rham algebra of differential forms, one can 
reconstitute in a purely algebraic fashion all the higher 
homotopy groups of $S^n$, modulo torsion.  

As founded by Quillen \cite{Qu69} and Sullivan \cite{Sullivan77}, 
rational homotopy theory is the study of rational homotopy types 
of spaces. Instead of considering the groups $H_n(X,\Z)$ and 
$\pi_n(X)$, one considers the rational homology groups $H_n(X,\Q)$ 
and the rational homotopy groups $\pi_n(X)\otimes \Q$. 
These objects are $\Q$-vector spaces, and hence the 
torsion information is lost, yet this is compensated by 
the fact that computations are easier in this setting.

\subsection{Models for spaces and groups}
\label{subsec:intro-models}
In his seminal paper, \cite{Sullivan77}, Sullivan attached in a functorial way to 
every space $X$ a commutative differential graded algebra over $\Q$, 
denoted $\apl(X)$. This $\cdga$ is constructed from piecewise polynomial rational 
forms and is weakly equivalent (through $\dga$s) with the cochain 
algebra $(C^*(X,\Q),d)$ so that, under the resulting identification 
of graded algebras, $H^{*}(\apl(X)) \cong H^{*}(X,\Q)$, the induced 
homomorphisms in cohomology correspond. 

We say that two $\cdga$s $A$ and $B$ are weakly equivalent 
(written $A\simeq B$) if there is a zig-zag of $\cdga$ maps 
inducing isomorphisms in cohomology and connecting $A$ to $B$. 
If those maps only induce isomorphisms in degree at most 
$q$ (for some $q\ge 0$) and monomorphisms in degree $q+1$, we say 
$A$ and $B$ are $q$-equivalent (written $A\simeq_q B$) 

Let $\k$ be a coefficient field of characteristic $0$. 
We say that a $\k$-$\cdga$ $(A,d)$ is a {\em model}\/ for a 
space $X$ if $A\simeq \apl(X)\otimes_{\Q} \k$, and likewise 
for a {\em $q$-model}. 
For instance, if $X$ is a smooth manifold, de~Rham's algebra 
$\Omega^*_{\dR}(X)$ is a model for $X$ over $\R$, 
leading to the following basic principle in rational homotopy theory:
``The manner in which a closed form which is zero in 
cohomology actually becomes exact contains geometric 
information,'' cf.~\cite{DGMS}. 

Given a connected $\cdga$ $A$, Sullivan constructed a {\em minimal model}\/ 
for it, $\rho\colon \M(A)\to \A$, where $\rho$ is a quasi-isomorphism 
and $\M(A)$ is a $\cdga$ obtained by iterated Hirsch extensions, 
starting from $\k$, so that its differential is decomposable. These 
properties uniquely characterize the minimal model of $A$ (up to 
isomorphism). The $q$-minimal models $\M_q(A)$ are similarly 
defined for all $q\ge 0$; they are generated by elements of degrees 
at most $q$, and the structural morphisms $\rho_q\colon \M_q(A)\to A$ 
are only $q$-quasi-isomorphisms. 

A minimal model for a connected space $X$, denoted $\M(X)$, is a 
minimal model for the Sullivan algebra $\apl(X)$. The isomorphism 
type of $\M(X)$ is uniquely defined by the rational homotopy type of $X$. 
The $q$-minimal models $\M_q(X)$ are defined analogously; moreover, 
if $G$ is a finitely generated group, we set $\M_1(G)=\M_1(K(G,1))$. 
When $X$ is a nilpotent CW-complex 
with finite Betti numbers, Sullivan \cite{Sullivan77} showed that 
$\pi_n(X)\otimes \Q\cong (V^n)^{\vee}$ for all $n\ge 2$, 
where $V=\bigoplus_{n} V^n$ and $\M(X)=\big( \bwedge V, d)$ is a 
minimal model for $X$ over $\Q$. 

\subsection{Formality}
\label{subsec:intro-formal}

As formulated in \cite{Sullivan77}, \cite{DGMS}, the notion of formality 
plays a central role in rational homotopy theory. 
We say that a path-connected space $X$ is {\em formal}\/ 
if its Sullivan algebra, $\apl(X)$, is weakly equivalent to 
its cohomology algebra, $H^*(X,\Q)$, endowed with the 
zero differential. The notion of {\em $q$-formality}\/ 
(for some $q\ge 0$) is defined accordingly. In general, 
partial formality is a much weaker property than (full) formality; 
nevertheless, if $H^{\ge q+2}(X;\Q)=0$, then $X$ is $q$-formal 
if and only if $X$ is formal, see \cite{Mc10}. One may also 
talk about ($q$-) formality over a field $\k$, 
but it turns out that all these formality notions are independent 
of the choice of the coefficient field, as long as $\ch(\k)=0$. 

Various conditions on the connectivity of the space or the structure 
of its cohomology algebra guarantee formality.  For instance, 
if $X$ is a $k$-connected CW-complex ($k\ge 1$) of dimension 
$n$ and $n \le 3k+1$, then $X$ is formal, cf.~\cite{Sta83}; 
moreover, if $X$ is a closed manifold of dimension $n$, the 
conclusion remains valid for $n \le 4k+2$, cf.~\cite{Mi79}
Also, if $H^*(X,\Q)$ is the quotient of a free $\cga$ by an ideal generated 
by a regular sequence, then $X$ is formal, cf.~\cite{Sullivan77}.  

A classical obstruction to formality is provided by the Massey products 
(of order $3$ and higher): If a space $X$ is formal, then all Massey 
products in the cohomology algebra $H^*(X,\Q)$ vanish---in fact, 
vanish uniformly. Furthermore, if $X$ is $q$-formal, for some 
$q\ge 1$, then all Massey products in $H^{\le q+1}(X,\Q)$ vanish.

A simply-connected space (or, more generally, a nilpotent space) 
$X$ is formal if its rational homotopy type is determined by $H^*(X,\Q)$. 
In the general case, the weaker $1$-formality property allows one to 
reconstruct the rational pro-unipotent completion of the fundamental 
group, solely from the cup products of degree $1$ cohomology classes. 
Formal spaces lend themselves to various algebraic computations that 
provide valuable homotopy information. For instance, a result of 
Papadima--Yuzvinsky \cite{PY99}, which is valid for all formal spaces 
$X$, states:  The Bousfield--Kan completion $\Q_{\infty}X$ is 
aspherical if and only if $H^*(X,\Q)$ is a Koszul algebra.
 
\subsection{Finiteness in $\cdga$ models}
\label{subsec:intro-finite}

A recurring theme in topology is to determine the geometric and homological 
finiteness properties of spaces and groups. A prototypical such problem is 
to determine whether a path-connected space $X$ is homotopy equivalent 
to a CW-complex with finite $q$-skeleton, for some $1\le q \le \infty$, 
in which case we say $X$ is {\em $q$-finite}.  Another question  
is to decide whether a group $G$ is finitely generated, and if so, 
whether it admits a finite presentation; more generally, whether 
it has a classifying space $K(G,1)$ with finite $q$-skeleton. 

A fruitful approach to this type of question is to compare 
the finiteness properties of the spaces or groups under 
consideration to the corresponding finiteness properties 
of algebraic models for such spaces and groups. 
By analogy with the aforementioned 
topological notion, we say that a $\k$-$\cdga$ $A$ is 
{\em $q$-finite}\/ if it is connected (i.e., $A^0= \k \cdot 1$) 
and $\dim A^i < \infty$ for $i\le q$. 

A natural question then is:
When does a $q$-finite space $X$ admit a $q$-finite $q$-model $(A,d)$?
A necessary criterion is given in \cite{PS-jlms}: If a space $X$ does admit 
such a model, then $\dim H^i(\M_q(X))< \infty$, for all $i\le q+1$. 
For instance, if $G= F_n/F_n''$ is the free metabelian group of rank $n\ge 2$
then $b_2(\M_1(G))= \infty$, and so $G$ admits no $1$-finite $1$-model. 
Other finiteness criteria, based on the nature of the cohomology jump loci 
(see \cite{PS-jlms}, \cite{Su-mm}), are discussed below.

\subsection{Malcev and holonomy Lie algebras}
\label{subsec:intro-malcev}
In his landmark paper on rational homotopy theory, Quillen \cite{Qu69} 
defined the Malcev Lie algebra, $\m(G)$, of a finitely generated group $G$ 
as the (complete, filtered) Lie algebra of primitive elements in the $I$-adic 
completion of the group algebra $\Q[G]$, where $I$ is the augmentation ideal.  
The associated graded Lie algebra with respect to this filtration, $\gr(\m(G))$, 
is isomorphic to $\gr(G,\Q)$ the rational graded Lie algebra associated 
to the lower central series filtration of $G$, cf.~\cite{Qu68}. 

As shown by Sullivan \cite{Sullivan77} (see also \cite{CP}, \cite{GM13}), 
the Lie algebra dual to $\M_1(G)$ is isomorphic to the Malcev Lie 
algebra $\m(G)$. It follows that $G$ is $1$-formal if and only if 
$\m(G)$ is the LCS completion of a finitely generated, quadratic 
Lie algebra. A weaker condition was given in \cite{SW-forum}: 
we say that $G$ is {\em filtered formal}\/ 
if $\m(G)$ is the completion of $\gr(G,\Q)$ with respect to its 
LCS filtration. As shown in \cite{SW-ejm}, this condition is 
equivalent to the existence of a Taylor expansion, 
$G\to \widehat{\gr}(\Q[G])$.

Now suppose $G$ has a $1$-finite $1$-model $(A,d)$ over $\Q$. 
Building on a classical construction of K.-T.~Chen \cite{Chen73}, 
the {\em holonomy Lie algebra}\/ $\h(A)$ was defined in \cite{MPPS} 
as the quotient of the 
free Lie algebra on the dual vector space $A_1=(A^1)^{\vee}$ 
by the ideal generated by the image of the map $(d+ \mu)^{\vee}$, 
where $d\colon A^1\to A^2$ is the differential and $\mu\colon 
A^1\wedge A^1\to A^2$ is the multiplication map. Then, as shown 
in  \cite{BMPP}, \cite{PS-jlms} (generalizing a result from \cite{Bez}), 
the Malcev Lie algebra $\m(G)$ is isomorphic to the LCS 
completion of $\h(A)$.  Moreover, the following complete 
finiteness criterion in degree $1$ was given in \cite{PS-jlms}: 
A  finitely generated group $G$ admits a $1$-finite $1$-model if 
and only if $\m(G)$ is the LCS completion of a finitely presented 
Lie algebra.

\subsection{Cohomology jump loci}
\label{subsec:intro-cvs}

The cohomology jump loci of a space $X$ are of two 
basic types: the characteristic varieties, $\VV^i_k(X)$, 
defined in terms of homology with coefficients in rank 
one local systems, and the resonance varieties, $\RR^i_k(X)$ 
or $\RR^i_k(A)$, constructed from information encoded in either 
the cohomology ring $H^*(X,\C)$, or in a $\cdga$ model $A$ for $X$. 

The characteristic varieties and the related Alexander invariants of 
spaces and groups have their origin in the study of the Alexander 
polynomials of knots and links.  The basic topological idea in defining 
these invariants is to take the homology of the maximal abelian cover 
of a connected CW-complex $X$ and view it as a module over the group ring 
of $H_1(X,\Z)$. One then studies the support loci of these modules, 
or, alternatively, the jump loci $\VV^i_k(X)$, viewed as subsets of 
the character group $\Char(X)=\Hom(\pi_1(X),\C^*)$. 

The formality and finiteness properties of a space and its algebraic 
models put strong constraints on the geometric structure of the 
cohomology jump loci. To start with, let $X$ be a $q$-finite space, 
for some $q\ge 1$. Then the tangent cone at the trivial character 
$\one$ to the variety $\VV^i_k(X)$ is included in $\RR^i_k(X)$, 
for all $i\le q$ and $k\ge 0$, see \cite{Li02}.  

Now suppose $X$ admits a $q$-finite $q$-model $A$; 
then $\TC_{\one}(\VV^i_k(X))=\RR^i_k(A)$, for all $i\le q$, 
see \cite{DP-ccm}. Moreover, as a consequence of \cite{DPS-duke}, 
\cite{DP-ccm}, all irreducible components of these resonance 
varieties are rationally defined 
linear subspaces of $H^1(A)=H^1(X,\C)$, and, by \cite{BW20}, 
all the components of $\VV^i_k(X)$ passing through 
$\one$ are algebraic subtori of $\Char(X)$.
Finally, suppose $X$ is $q$-formal.  Then, for $i\le q$,   
all the components of $\RR^i_k(X)$ are rationally defined 
linear subspaces of $H^1(X,\C)$.

\subsection{Models for K\"{a}hler manifolds and smooth algebraic varieties}
\label{subsec:intro-algmod1}

One of the foundational papers of rational homotopy theory is the one 
by Deligne, Griffiths, Morgan, and Sullivan \cite{DGMS}, where the authors 
use Hodge theory and the $dd^{c}$-lemma to establish the formality of 
all compact K\"{a}hler manifolds, and thus, of all smooth, complex algebraic 
projective varieties.

In \cite{Mo}, Morgan constructed a finite-dimensional model for 
any smooth, complex, quasi-projec\-tive variety $X$ by using 
a normal crossings divisors compactification $\overline{X}$. 
This model was refined by Dupont in \cite{Dp15}, by allowing 
those divisors to intersect like hyperplanes in a hyperplane 
arrangement. These models are not always formal, 
but still retain good partial formality properties; for instance, if $X$ is the 
complement of a hypersurface in $\CP^n$, then $X$ is $1$-formal, but 
not formal, in general.

The structure of the characteristic varieties of compact K\"{a}hler manifolds 
and smooth, quasi-projective varieties was determined in 
\cite{Cat91}, \cite{Be92}, \cite{GL87}, \cite{Si93}, \cite{Ar}, \cite{BW-asens}: 
If $X$ is such a space, then each variety $\VV^i_k(X)$ is a finite union of 
torsion-translated subtori of $\Char(X)$. The key to understanding the degree-$1$ 
cohomology jump loci is the (finite) set $\mathcal{E}(X)$ of holomorphic,  
surjective maps $f\colon X\to \Sigma$ for which the generic fiber is connected 
and the target is a smooth curve $\Sigma$ with $\chi(\Sigma)<0$, up to reparametrization 
at the target. As an application of these techniques, we obtained in \cite{PS-jlms} 
the following result. Let $X$ be a smooth quasi-projective variety with $b_1(X)>0$, 
and let $A$ be a Dupont model for $X$; then $\pi_1(X)$ surjects onto a free, non-cyclic 
free group if and only if $\RR^1_1(A)\ne \{ \zero\}$.

\subsection{Models for compact Lie group actions}
\label{subsec:intro-algmod2}

Let $M$ be a compact, connected, smooth manifold that 
supports an almost free action by a compact, connected Lie group $K$.  
Under a partial formality assumption on the orbit space $M/K$ 
and a regularity assumption on the characteristic classes 
of the action, we constructed in \cite{PS-jlms} an algebraic model for $M$
with commensurate finiteness and partial formality properties.  
The existence of such a model has various implications on the 
structure of the cohomology jump loci of  $M$ and of the 
representation varieties of $\pi_1(M)$.  

In many ways, Sasakian geometry is an odd-dimensional analog of K\"{a}hler 
geometry.  More explicitly, every compact Sasakian manifold $M$ 
admits an almost-free circle action with orbit space 
a K\"{a}hler orbifold.  Furthermore, the Euler class of the action 
coincides with the K\"{a}hler class of the base, $h\in H^2(M/S^1,\Q)$, 
and this class satisfies the Hard Lefschetz property.
As shown by Tievsky in \cite{Ti}, every Sasakian manifold $M$ as above 
has a rationally defined, finite-dimensional model over $\R$ of the form 
$(H^{*}(N,\R)\otimes  \bwedge(t), d)$, 
where the differential $d$ vanishes on $H^{*}(N,\R)$ and sends $t$ to $h$.  
Using this model, it is shown in \cite{PS-jlms}  that compact Sasakian 
manifolds of dimension $2n+1$ are 
$(n-1)$-formal, and that their fundamental groups are filtered-formal. 

\subsection{Models for closed $3$-manifolds}
\label{subsec:intro-algmod3}
With a few exceptions (such as rational homology spheres, knot complements, 
or Seifert manifolds), the rational homotopy theory of $3$-dimensional 
manifolds is very difficult to handle. Part of the reason is that not 
only $3$-manifolds may fail to be formal, and even fail to have a  
$1$-finite $1$-model. Nevertheless, much is known about the Alexander 
polynomial, $\Delta_M$, of a closed, orientable $3$-manifold $M$.
and the way this polynomial relates to the cohomology jump 
loci of $M$, see \cite{DPS-mz}, \cite{FS}, \cite{Su-edinb}, \cite{Su-mm}.  
In turn, these invariants inform on the formality 
and finiteness properties of $M$. 

For instance, we showed in \cite{Su-mm} the following: If 
$b_1(M)$ is even and positive, and if $\Delta_M\ne 0$, then 
$M$ is not $1$-formal. On the other hand, if $\Delta_M\ne 0$, yet 
$\Delta_M(\one) =0$  and the tangent cone to the zero set of $\Delta_M$ 
is not a finite union of rationally defined linear subspaces, then $M$ 
admits no $1$-finite $1$-model. 

When the $3$-manifold $M$ fibers over $S^1$, more can be said. For 
instance, if $b_1(M)$ is even, then, as shown in \cite{PS-plms10}, 
$M$ is not $1$-formal.  On the other hand, if $M$ is $1$-formal 
and the algebraic monodromy has $1$ as an eigenvalue, then, 
as shown in \cite{PS-forum}, there are an even number of 
$1\times 1$ Jordan blocks for this eigenvalue, 
and no higher size Jordan blocks. 

\subsection{Organization}
\label{subsec:intro-org}
The paper in divided in roughly five parts.

Part I (Sections \ref{sect:dga}, \ref{sect:formality}, \ref{sect:min-mod}) 
treats the general theory of (commutative) differential graded algebras, 
formality and its variants, Massey products, descent properties, Hirsch 
extensions, and Sullivan's minimal models.

Part II (Sections \ref{sect:Lie-algebras}, \ref{sect:malcev-lie}, \ref{sect:holo}) 
deals with several of the Lie algebras that appear in this theory (graded and 
filtered Lie algebras, Malcev Lie algebras, and holonomy Lie algebras) and 
discusses some of their properties and interconnections.

Part III (Sections \ref{sect:algmod}, \ref{sect:algmod-groups}, \ref{sect:formal}) 
contains the basics of rational homotopy theory, such as completions, 
rationalizations, and algebraic models for spaces and groups, focussing 
mostly on the formality and finiteness properties of such models.

Part IV (Sections \ref{sect:alex-res}, \ref{sect:cjl}) brings into play 
the Alexander invariants and the cohomology jump loci of spaces 
and suitable algebraic models, and connects the characteristic 
and resonance varieties to various formality and finiteness 
properties.

Part V (Sections \ref{sect:models-qp},  \ref{sect:models-act},  \ref{sect:models-3d}) 
applies these general theories in three particular contexts: that of K\"ahler manifolds 
and smooth, quasi-projective varieties; compact Lie group actions on manifolds; 
and closed, orientable $3$-manifolds. 

\section{Differential graded algebras}
\label{sect:dga}

\subsection{Graded algebras}
\label{subsec:graded-algebras}

Throughout this work, $\k$ will denote a ground field of characteristic $0$.
Unless otherwise specified, all tensor products will be over $\k$. 

A  {\em graded $\k$-vector space}\/ is a vector space $A$ over $\k$, 
together with a direct sum decomposition, $A=\bigoplus_{n\geq 0}A^n$,  
into vector subspaces. An element $a\in A^n$ is said 
to be homogeneous; we write $\abs{a}=n$ for its degree, 
and put $\bar{a}=(-1)^{\abs{a}}a$.

A {\em graded algebra}\/ over $\k$ is a graded $\k$-vector space, 
$A^{*}=\bigoplus_{n\geq 0}A^n$, equipped with an associative 
multiplication map, $\cdot \colon A\times A\to A$, making $A$ 
into a $\k$-algebra with unit $1\in A^0$ such that 
$\abs{a\cdot b}=\abs{a}+\abs{b}$ for all homogenous 
elements $a,b\in A$.  A graded algebra $A$ is said to be 
{\em graded-commutative}\/ (for short, a $\cga$), if 
$a\cdot b = (-1)^{\abs{a}\abs{b}} b \cdot a$ for all 
homogeneous $a,b\in A$. A morphism between two 
graded algebras is a $\k$-linear map $\varphi\colon A\to B$ 
that preserves gradings and satisfies $\varphi(a\cdot b)=\varphi(a)\cdot 
\varphi(b)$ for all $a,b\in A$.

A graded $\k$-algebra $A$ is of {\em finite-type} (or, locally finite) 
if all the graded pieces $A^n$ are finite-dimensional. We say 
that $A$ is {\em $q$-finite} (for some integer $q\ge 0$) if 
$\dim_{\k} A^n<\infty$ for $n \le q$, and we say that $A$ is 
{\em finite-dimensional}\/ (as a $\k$-vector space) if 
$\dim_{\k} A<\infty$. Finally, we say that $A$ is 
{\em connected}\/ if $A^0$ is the $\k$-span of the unit $1$ 
(and thus $A^0=\k$). 

The most basic example of a $\k$-$\cga$ is the {\em free}\/ 
commutative graded algebra on a graded $\k$-vector space $V^*$; 
denoted by $\bwedge V$, this (connected) algebra is 
the tensor product of the symmetric algebra on $V^{\mathrm{even}}$ 
with the exterior algebra on $V^{\mathrm{odd}}$.

\subsection{Differential graded algebras}
\label{subsec:dgas}
The next notion, which plays a key role in the theory described here, 
unifies the concept of a graded algebra with that of a cochain complex.

\begin{definition}
\label{def:dga}
A {\em differential graded algebra}\/ (for short, a $\dga$) over 
a field $\k$ is a graded $\k$-algebra, $A^*$, equipped with a 
differential $d\colon A\rightarrow A$ of degree $1$ satisfying  
the graded Leibniz rule: $d(ab)=d(a)\cdot b+\bar{a}\cdot d(b)$ 
for all homogeneous $a, b\in A$. 
\end{definition}

Viewing $(A,d)$ as a cochain complex, we write 
$Z^n(A)= \ker (d\colon A^n \to A^{n+1})$ for the space 
of $n$-cocycles and $B^n(A)= \im (d\colon A^{n-1} \to A^n)$ 
for the space of $n$-coboundaries, and we let 
$H^n(A)=Z^n(A)/B^n(A)$ be the $n$-th cohomology group 
of $(A,d)$.  The direct sum of those vector spaces, 
$H^{*}(A)=\bigoplus_{i\ge 0} H^i(A)$, 
inherits the structure of a graded algebra from $A$.  When 
$H^*(A)$ is of finite-type, we denote by $b_n(A)=\dim_{\k} H^n(A)$ 
the Betti numbers of $A$. Given an $n$-cocyle $a$, we 
denote by $[a]\in H^n(A)$ its cohomology class.

A {\em commutative differential graded algebra}\/ (for short, a $\cdga$) 
is a $\dga$ $A=(A^*,d)$ for which the underlying graded 
algebra is graded-commutative. In this case, the cohomology 
algebra $H^*(A)$ inherits the structure of a $\cga$.

 If $A$ is a connected 
$\dga$, then the differential $d\colon A^0\to A^1$ vanishes; indeed,  
$d(1)=d(1\cdot1)=d(1)\cdot 1+1\cdot d(1)$, and so $d(1)=0$, 
since $\ch(\k)=0$.  Therefore, $H^0(A)=\k$ and the cohomology 
algebra $H^*(A)$ is also connected.  

\subsection{Weak equivalences}
\label{subsec:qiso}
A morphism between two $\dga$s is a $\k$-linear map, $\varphi\colon A\to  B$, 
which preserves gradings, multiplicative structures, and differentials; in other words,  
$\varphi$ is a map of graded $\k$-algebras which is also a map of cochain 
complexes. Such a map induces a morphism, $\varphi^*\colon H^{*} (A)\to H^{*} (B)$, 
between the respective cohomology algebras.  We say that $\varphi$ is a 
{\em quasi-isomor\-phism}\/ if $\varphi^*$ is an isomorphism. 

A {\em weak equivalence}\/ between two $\dga$s, $A$ and $B$, is a finite 
sequence of quasi-isomor\-phisms (going either way) connecting $A$ to $B$; 
for instance, 
\begin{equation}
\label{eq:ziggy}
\begin{tikzcd}
A  & A_1 \ar[l, pos=0.4, "\varphi_1"'] \ar[r, "\varphi_2"] & \cdots 
& A_{\ell-1}   \ar[l] \ar[r, "\varphi_{\ell}"] & B .
\end{tikzcd}
\end{equation}
Note that a weak equivalence induces a well-defined isomorphism 
$H^*(A)\cong H^*(B)$. If a  weak equivalence between $A$ and $B$ 
exists, the two $\dga$s are said to be {\em weakly equivalent}, 
written $A\simeq B$. Evidently, $\simeq$ is an equivalence 
relation among $\dga$s. 

An analogous notion holds in the category of commutative $\dga$s.  
Namely, if $A$ and $B$ are two $\cdga$s, we say that $A\simeq B$ 
is there is a zig-zag of quasi-isomorphisms as in \eqref{eq:ziggy} 
that go through $\cdga$s $A_i$.
The following theorem resolves a long-standing question, 
by showing that weak equivalence among $\cdga$s holds even if one 
allows the zig-zags from \eqref{eq:ziggy} to go through $\dga$s. 

\begin{theorem}[\cite{CPRW}]
\label{thm:cprw}
Let $A$ and B be two $\k$-$\cdga$s. Then 
$A \simeq B$ as $\dga$s if and only if $A \simeq B$ as $\cdga$s.
\end{theorem}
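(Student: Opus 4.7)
The direction ``$A \simeq B$ as $\cdga$s implies $A \simeq B$ as $\dga$s'' is immediate, since every $\cdga$ morphism is in particular a $\dga$ morphism (and the same cohomology is computed either way). The substance of the theorem lies in the converse.

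My plan for the converse is to construct a functorial commutative rectification of $\dga$s. Concretely, I would build a functor $R\colon \dga \to \cdga$ together with, for each $\dga$ $D$, a natural zig-zag of $\dga$ quasi-isomorphisms between $R(D)$ and $D$, with the crucial additional property that whenever $D$ already carries a $\cdga$ structure, this zig-zag can be refined to a zig-zag of $\cdga$ quasi-isomorphisms. Granting such an $R$, the theorem then follows immediately: starting with a $\dga$ zig-zag
\begin{equation*}
A \xleftarrow{\varphi_1} A_1 \xrightarrow{\varphi_2} \cdots \xrightarrow{\varphi_\ell} B,
\end{equation*}
I would apply $R$ to obtain a $\cdga$ zig-zag $R(A) \leftarrow R(A_1) \to \cdots \to R(B)$ whose arrows are $\cdga$ quasi-isomorphisms (because $R$ preserves quasi-isomorphisms in characteristic zero), and then splice in the $\cdga$-level comparisons $R(A) \simeq A$ and $R(B) \simeq B$ at the two endpoints.

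The construction of $R$ is the heart of the argument, and the hypothesis $\ch(\k)=0$ is essential throughout. The natural tool is operadic: choose a cofibrant $E_\infty$-operad $\mathcal{E}$ with a quasi-isomorphism $\mathcal{E} \to \mathrm{Com}$, and observe that every $\dga$ is automatically an algebra over the associative operad $\mathrm{As}$, hence an $\mathcal{E}$-algebra via a chosen map $\mathrm{As} \to \mathcal{E}$. Rectification of $E_\infty$-algebras in characteristic zero (i.e., strictification to algebras over $\mathrm{Com}$) then produces a strict $\cdga$ quasi-isomorphic, through $\mathcal{E}$-algebras, to $D$; this is the candidate for $R(D)$.

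The main obstacle I anticipate is verifying the endpoint compatibility: when $D$ already carries a $\cdga$ structure, two different $\mathcal{E}$-algebra structures are present on the underlying object—one pulled back along $\mathcal{E} \to \mathrm{Com}$, the other obtained through $\mathrm{As} \to \mathcal{E}$—and one must exhibit a natural $\mathcal{E}$-algebra quasi-isomorphism between them, so that the final comparison $R(D) \simeq D$ lives in the category of $\cdga$s and not merely in $\dga$s. The characteristic zero hypothesis enters crucially here through the semisimplicity of symmetric group representations, which enables the averaging arguments unavailable in positive characteristic. A conceivable alternative route, should the operadic bookkeeping become unwieldy, would be to pass directly to model-categorical language and show that the adjunction between $\dga$s and $\cdga$s restricts to a bijection on isomorphism classes in the respective homotopy categories when one ranges over $\cdga$s.
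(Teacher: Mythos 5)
The easy direction is fine, but the converse collapses at its first substantive step. You propose to put an $E_\infty$-structure on an arbitrary $\dga$ $D$ ``via a chosen map $\mathrm{As}\to\mathcal{E}$.'' Operad maps induce restriction of structure contravariantly: a map $\mathrm{As}\to\mathcal{E}$ turns $\mathcal{E}$-algebras into $\mathrm{As}$-algebras, not the other way around. To endow a $\dga$ with an $\mathcal{E}$-action you would need a map $\mathcal{E}\to\mathrm{As}$, and no such map exists compatibly with $\mathcal{E}\to\mathrm{Com}$; the relevant diagram is $\mathrm{As}\to\mathrm{Com}\leftarrow\mathcal{E}$, with no arrow between $\mathrm{As}$ and $\mathcal{E}$ in the direction you need. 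Indeed, the functor $R\colon\dga\to\cdga$ you posit---with $R(D)\simeq D$ as $\dga$s for \emph{every} $D$---cannot exist for elementary reasons: a weak equivalence induces an isomorphism of cohomology \emph{rings}, and the cohomology of a $\cdga$ is graded-commutative, so any $\dga$ with non-commutative cohomology (e.g., a tensor algebra on an evenly graded vector space with zero differential) is not weakly equivalent to any $\cdga$. Restricting $R$ to $\dga$s with graded-commutative cohomology does not rescue the plan either: producing such a rectification for the noncommutative intermediate objects $A_i$ of the zig-zag, compatibly with the maps, is essentially the content of the theorem, not a tool available in advance.

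Note also that the paper does not prove this statement; it cites \cite{CPRW}, where the argument is genuinely delicate. Rather than rectifying the intermediate $\dga$s (which, as above, is impossible in general), Campos--Petersen--Robert-Nicoud--Wierstra work with the extra structure present on the associative bar construction of a $\cdga$ and compare the commutative and associative (co)bar/homotopy-transfer packages; the characteristic-zero hypothesis enters there, but not through a naive averaging that would commutify an arbitrary $\dga$. If you want to pursue this, the right starting point is to ask what additional structure a $\dga$ quasi-isomorphism between two $\cdga$s induces on their bar constructions, not to seek a rectification functor on all of $\dga$.
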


All these concepts have partial analogues. Fix an integer $q\ge 0$. 
We say that a $\dga$ (or $\cdga$) morphism $\varphi\colon A\to B$ is a 
{\em $q$-quasi-isomorphism}\/ if $\varphi^*$ is an isomorphism 
in degrees up to $q$ and a monomorphism in degree $q+1$. 
A {\em $q$-equivalence}\/ between two $\dga$s (or $\cdga$s), 
$A$ and $B$, is a zig-zag of $q$-quasi-isomorphisms of $\dga$s 
(or $\cdga$s) connecting $A$ to $B$. If such a zig-zag exists,  
we say that $A$ and $B$ are {\em $q$-equivalent}\/ 
and write this as $A\simeq_{q} B$. Again, $\simeq_q$ 
is an equivalence relation among either $\dga$s or 
$\cdga$s. We do not know whether Theorem \ref{thm:cprw} 
holds with $\simeq$ replaced by $\simeq_q$, for arbitrary $q$, but 
we expect it does.

Clearly, if $A\simeq B$, then $A\simeq_q B$ for all $q\ge 0$, and if 
$A\simeq_q B$, then $A\simeq_n B$ for all $n\le q$. 
Moreover, if $A$ is of finite-type and $A\simeq B$, then 
$B$ is also of finite-type and $b_n(A)=b_n(B)$ for all $n\ge 0$. 
Likewise, if $A$ is $q$-finite and $A\simeq_q B$, then 
$B$ is also $q$-finite and $b_n(A)=b_n(B)$ for all $n\le q$. 
The next lemma shows that every $q$-finite $\cdga$
may be replaced (up to $q$-equivalence) by a 
finite-dimensional one, whose graded pieces 
vanish above degree $q+1$.

\begin{lemma}[\cite{PS-jlms}]
\label{lem:truncate}
Let $A$ be a $q$-finite $\cdga$. There is then a natural $q$-equivalence, 
$A\simeq_q A[q]$, where $A[q]$ is a finite-dimensional $\cdga$ with 
$A[q]^i=0$ for all $i>q+1$.
\end{lemma}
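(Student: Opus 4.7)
The plan is to construct $A[q]$ by truncating $A$ in two stages. First, pass to the quotient $\cdga$ $A/J$, where $J = \bigoplus_{i\ge q+2} A^i$ is the graded differential ideal of elements in degrees $\ge q+2$; since $J$ is closed under both multiplication and $d$, this quotient is a $\cdga$. The projection $\pi\colon A\twoheadrightarrow A/J$ is the identity on cochains in degrees $\le q+1$, so $H^i(\pi)$ is an isomorphism for $i\le q$; in degree $q+1$ it is the evident injection $Z^{q+1}(A)/B^{q+1}(A)\hookrightarrow A^{q+1}/B^{q+1}(A)$. Hence $\pi$ is a $q$-quasi-isomorphism. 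This step is routine.

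However, $A/J$ need not be $q$-finite, since $A^{q+1}$ may be infinite-dimensional. The second stage cuts degree $q+1$ down to the subspace
\[
W := B^{q+1}(A) + \sum_{\substack{i+j=q+1 \\ i,j\ge 1}} A^i\cdot A^j \;\subseteq\; A^{q+1}.
\]
Since the $q$-finiteness hypothesis makes each $A^i$ with $1\le i\le q$ finite-dimensional, and consequently makes $d(A^q)\subseteq A^{q+1}$ finite-dimensional, $W$ is itself finite-dimensional. I then define $A[q]$ by $A[q]^i=A^i$ for $0\le i\le q$, $A[q]^{q+1}=W$, and $A[q]^i=0$ otherwise, with product and differential inherited from $A/J$. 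Three quick verifications are needed: (i) $A[q]$ is closed under multiplication in $A/J$---precisely what the definition of $W$ enforces, since products of total degree $q+1$ either involve two positive-degree factors (absorbed by the sum) or a scalar factor (absorbed trivially); (ii) $A[q]$ is closed under $d$, because $d(A^q)\subseteq B^{q+1}(A)\subseteq W$ while the differential out of $A[q]^{q+1}$ already vanishes in $A/J$; (iii) the inclusion $\iota\colon A[q]\hookrightarrow A/J$ is a $q$-quasi-isomorphism, the required monomorphism in degree $q+1$ reducing to the injection $W/B^{q+1}(A)\hookrightarrow A^{q+1}/B^{q+1}(A)$. The zig-zag
\[
A \xrightarrow{\pi} A/J \xleftarrow{\iota} A[q]
\]
then witnesses the desired $q$-equivalence $A\simeq_q A[q]$.

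The main---and essentially the only---obstacle is choosing the right finite-dimensional subspace of $A^{q+1}$: it must simultaneously absorb all coboundaries $d(A^q)$ and be closed under multiplication by the finite-dimensional lower-degree pieces of $A$, all while remaining finite-dimensional. The explicit $W$ above is the minimal such choice, and the $q$-finiteness of $A$ is precisely what makes $\dim W<\infty$. Naturality of the construction is manifest, since $J$, $W$, and both arrows in the zig-zag are defined intrinsically from $A$, with no auxiliary choices.
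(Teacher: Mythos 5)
Your construction is correct and is essentially identical to the paper's: the paper also first passes to the truncation $\overline{A}=A/A^{>q+1}$ and then takes the sub-$\cdga$ whose degree $q+1$ piece is $dA^q+\sum_{i,j\le q,\, i+j=q+1}A^i\cdot A^j$, which coincides with your $W$ (the conditions $i,j\ge 1$ and $i,j\le q$ are equivalent when $i+j=q+1$). The paper only sketches the construction without the verifications, which you supply correctly.
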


The construction of $A[q]$ is done in two steps: first one replaces $A$ 
by its truncation, $\overline{A}=A/A^{>q+1}=\bigoplus_{i\le q+1}A^i $, 
and then one defines a sub-$\cdga$ $A[q]\subset \overline{A}$ by setting 
\begin{equation}
\label{eq:truncate}
A[q]=\bigoplus_{i\le q} A^i \oplus \bigg(dA^q + 
\sum_{\substack{i,j \le q\\ i+j =q+1}} A^i \cdot A^j\bigg).
\end{equation}
An analogous result holds for $\dga$s.

\subsection{Homotopies and equivalences}
\label{subsec:homeq} 
Let $A$ be a $\k$-$\dga$, and let $ \bwedge (t, dt)$ 
be the free $\k$-$\dga$ generated by elements $t$ in degree 
$0$ and $u$ in degree $1$, and differential $d$ given by $d(t)=u$ 
and $d(u)=0$. 
For each $s\in \k$, let $\ev_s\colon  \bwedge (t, dt) \to \k$ be the 
$\dga$ map sending $t$ to $s$ and $dt$ to $0$.  This 
map induces another $\dga$ map,
\begin{equation}
\label{eq:Evs dga}
\begin{tikzcd}[column sep=18pt]
\Ev_s \coloneqq \id\otimes \ev_s \colon A \otimes \bwedge (t, dt) \ar[r]&
A \otimes \k =A. 
\end{tikzcd}
\end{equation}

Two $\dga$ maps, $\varphi_0, \varphi_1\colon A\to B$, are 
said to be {\em homotopic}\/ if there is a $\dga$ map, 
$\Phi\colon A\to B \otimes \bwedge (t, dt)$, 
such that $\Ev_s\circ \Phi = \varphi_s$ for $s=0,1$. 
It is readily seen that homotopic $\dga$ maps induce the same 
map in cohomology. 

We say that two $\dga$ morphisms, $\varphi\colon A \to B$ and 
$\varphi'\colon A' \to B'$, are {\em weakly equivalent}\/ (written 
$\varphi\simeq \varphi'$) if there are two zig-zags of 
weak equivalences of $\dga$s, and $\dga$ maps 
$\varphi_1, \dots, \varphi_{\ell -1}$ such that 
the following diagram commutes, up to homotopy:
\begin{equation}
\label{eq:ziggy-zagg}
\begin{tikzcd}[row sep=22pt]
A \ar[d, "\varphi"] & A_1 \ar[d, "\varphi_1"] \ar[l, "\psi_1"']  \ar[r, "\psi_2"] 
& \cdots & A_{\ell-1}  \ar[d, "\varphi_{\ell-1}"] \ar[l]\ar[r, "\psi_{\ell}"] 
& A' \ar[d, "\varphi'"]\, \phantom{.}
\\
B& B'_1 
 \ar[l, "\psi'_1"']  \ar[r, "\psi'_2"] & \cdots 
& B'_{\ell-1} \ar[l]\ar[r, "\psi'_{\ell}"] & B'  .  
\end{tikzcd}
\end{equation}
The notion of {\em $q$-equivalence}\/ (written 
$\varphi\simeq_q \varphi'$) is defined similarly, 
and so are the analogous notions in the $\cdga$ category.

\subsection{Poincar\'{e} duality}
\label{subsec:pd}
Let $A$ be a finite-dimensional, 
commutative graded algebra over a field $\k$ of characteristic $0$. 
We say that $A$ is a {\em Poincar\'{e} duality algebra}\/ of dimension $n$ 
(for short, an $n$-\textsc{pda}) if $A^i=0$ for $i>n$ and $A^n=\k$, 
while the bilinear forms $A^{i}\otimes A^{n-i}\to A^n=\k$ 
given by the product are non-degenerate, for all $0\le i\le n$ 
(in particular, $A$ is connected).  If $M$ is a closed, 
connected, orientable, $n$-dimensional manifold, then, 
by Poincar\'{e} duality, the cohomology algebra $A=H^{*}(M,\k)$ 
is an $n$-\textsc{pda}. 

Now let $A=(A^{*}, d)$ be a $\cdga$.  We say that $A$ is a 
{\em Poincar\'{e} duality differential graded algebra}\/ of dimension $n$ 
(for short, an $n$-\textsc{pd-cdga}) if the underlying algebra $A$ is an 
$n$-\textsc{pda}, and, moreover,  $H^n(A)=\k$, or, equivalently, 
$d A^{n-1}=0$. 

Clearly, if $A$ is an $n$-\textsc{pda}, then $(A,0)$ is an 
$n$-\textsc{pd-cdga}.  Hasegawa showed in \cite{Has} that 
the minimal model for the classifying space of a finitely-generated 
nilpotent group is a \textsc{pd-cdga}. Noteworthy is the following 
result of Lambrechts and Stanley \cite{LS-asens} 

\begin{theorem}[\cite{LS-asens}]
\label{thm:LS}
Let $(A,d)$ be a $\cdga$ such that $H^1(A)=0$ and $H^{*}(A)$ is an $n$-\textsc{pda}.
Then $A$ is weakly equivalent to an $n$-\textsc{pd-cdga}.
\end{theorem}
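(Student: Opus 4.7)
My plan is to manufacture a Poincaré duality CDGA quasi-isomorphic to $A$ by taking a finite-dimensional model of $A$ and quotienting it by the graded radical of the canonical pairing induced by the top-degree ``trace.'' Since $H^*(A)$ is a finite-dimensional $n$-\textsc{pda}, the algebra $A$ is $q$-finite for every $q$, and the hypothesis $H^1(A)=0$ lets us pass (via a minimal model together with the truncation of Lemma \ref{lem:truncate}) to a weakly equivalent finite-dimensional CDGA $B$ with $H^1(B)=0$, $B^i=0$ for $i>n+1$, and $H^n(B)\cong\k$. This reduction gives us a concrete object on which to put the Poincaré pairing.

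Fix a linear form $\epsilon\colon B^n\to\k$ representing the isomorphism $H^n(B)\cong\k$; in particular $\epsilon$ vanishes on $dB^{n-1}$. Define the graded radical
\begin{equation*}
K^i \;=\; \bigl\{\, a\in B^i : \epsilon(a\cdot b)=0 \text{ for all } b\in B^{n-i}\,\bigr\},
\end{equation*}
which is automatically all of $B^i$ for $i>n$ and is zero in degree $0$. A direct check using graded commutativity shows that $K$ is closed under multiplication by arbitrary elements of $B$, and the Leibniz rule combined with the vanishing of $\epsilon$ on coboundaries shows $K$ is closed under $d$. Thus $K\subset B$ is a differential ideal. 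By construction the pairing descends to a non-degenerate pairing on $B/K$; together with $(B/K)^n\cong\k$ and the observation that any $dc\in B^n$ lies in $K^n$ (so $d$ vanishes on $(B/K)^{n-1}$), this shows that $B/K$ is already an $n$-\textsc{pd-cdga}.

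The heart of the proof is to verify that the projection $B\twoheadrightarrow B/K$ is a quasi-isomorphism, i.e., that $K$ is acyclic. Injectivity of $H^*(B)\to H^*(B/K)$ is the easy half: if $[a]\in H^i(B)$ becomes trivial mod $K$, then $a=k+dc$ with $k\in K$, whence $\epsilon(ab)=0$ for every cocycle $b\in Z^{n-i}(B)$, forcing $[a]\cdot[b]=0$ for all $[b]\in H^{n-i}(B)$, and Poincaré duality on $H^*(B)$ yields $[a]=0$. Surjectivity is the hard part: given $a\in B^i$ with $da\in K^{i+1}$ one must produce $k\in K^i$ with $dk=-da$; equivalently, given a cocycle $k\in K^i$, one must find a primitive of $k$ that already lives in $K^{i-1}$. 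Existence at the level of cohomology follows from PD, but realizing the correction inside $K$ requires adjusting a chosen primitive $c$ by a cocycle $c'$ so that $\epsilon((c+c')b)=0$ for \emph{all} $b\in B^{n-i+1}$, not merely for cocycles.

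The main obstacle is precisely this lifting problem: the PD pairing on $H^*(B)$ a priori controls only the pairing modulo coboundaries, while the definition of $K$ imposes a condition against the full $B^{n-i+1}$. One overcomes this by an inductive argument ascending in $i$, using minimality of $B$ in low degrees to construct compatible complements to the spaces of coboundaries and to produce the required corrections. It is at this step that the assumption $H^1=0$ is indispensable: it eliminates degree-$1$ generators of $B$ that would otherwise create low-degree obstructions incompatible with the pairing. Once the lifting is carried out, $H^*(B)\to H^*(B/K)$ is an isomorphism, so $B/K$ is the desired $n$-\textsc{pd-cdga} weakly equivalent to $A$.
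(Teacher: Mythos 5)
The survey states this result without proof, citing \cite{LS-asens}, so the comparison here is with the proof in that reference. Your first half is sound and is in fact the Lambrechts--Stanley strategy: reduce to a connected, finite-dimensional model $B$ with $B^1=0$, fix an orientation $\epsilon\colon B^n\to\k$, and form the differential ideal $K$ of ``orphans''; your verifications that $K$ is a differential ideal, that $B/K$ is an $n$-\textsc{pd-cdga} provided the projection is a quasi-isomorphism, and that $H^*(B)\to H^*(B/K)$ is always injective (via Poincar\'e duality on $H^*(B)$) are all correct. You have also correctly located the entire difficulty of the theorem in the surjectivity step, i.e., in showing that a cocycle of $K$ which bounds in $B$ already bounds in $K$.

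The gap is that your proposed resolution of that step cannot work as described. Having written $k=dc$, you must correct $c$ by a cocycle $c'$ so that $\epsilon((c+c')b)=0$ for \emph{all} $b\in B^{\,n-i+1}$. Poincar\'e duality of $H^*(B)$ only produces a $c'$ achieving this for $b$ ranging over cocycles; on a complement of $Z^{n-i+1}(B)$ inside $B^{\,n-i+1}$ the functional $b\mapsto\epsilon((c+c')b)$ is uncontrolled, and no choice of ``compatible complements'' inside the \emph{fixed} algebra $B$ removes this obstruction --- the space of functionals realizable by cocycles $c'$ is simply what it is, and in general the required functional does not lie in it. This is a genuine obstruction: for a fixed finite-dimensional model the orphan ideal need not be acyclic. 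The actual proof in \cite{LS-asens} does not make choices within $B$ but \emph{replaces} $B$ through a long chain of explicit weak equivalences (adjoining generators in complementary degrees and taking further quotients) so as to eliminate orphans degree by degree up to roughly the middle dimension, after which duality of $H^*(B)$ takes over; the hypothesis $H^1(A)=0$ enters through the degree estimates that make those cell attachments possible without creating new low-degree orphans. That inductive modification of the algebra is the heart of the theorem and is entirely absent from your argument, so the proof is incomplete at its decisive step.
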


\section{Formality}
\label{sect:formality}

\subsection{Formal $\dga$s}
\label{subsec:formal-dga}
In this section, we cover the notion of {\em formality}. Introduced in 
\cite{DGMS}, \cite{Sullivan77} and further developed in 
\cite{HS79}, \cite{Mo}, \cite{GM13}, \cite{Tanre}, \cite{PS-formal}, 
\cite{Mc10}, \cite{Sa17}, \cite{SW-forum}, 
and many other works, this notion plays a central role 
in rational homotopy theory.

\begin{definition}[\cite{DGMS}, \cite{Sullivan77}]
\label{def:formal}
A $\dga$ $(A,d_A)$ is said to be {\em formal}\/ if it is weakly equivalent to 
$(H^{*}(A),0)$, its cohomology algebra endowed with the zero differential. 
\end{definition}

Note that $A$ is formal if and only if it is weakly equivalent to some 
$\dga$ $B$ with zero differential, since, in that case, we necessarily have 
$(B,0)\simeq (H^{*}(A),0)$.  In like manner, we say that a $\cdga$ 
$(A,d_A)$ is formal if $(A,d_A)\simeq (H^*(A),0)$ via a weak 
equivalence through $\cdga$s. 

\begin{example}
\label{ex:non-formal-cdga}
Let $A=\bwedge(a_1,a_2,b)$ be the 
free $\cga$ on generators $a_1,a_2$ in degree $n$ and $b$ in degree 
$2n-1$, equipped with the differential $d$ given by $d a_i=0$ and 
$d b=a_1a_2$. If $n\ge 2$, the cdga $(A,d)$ is not formal, since 
$H^*(A)$ is generated by the elements $u_i=[a_i]$, 
and so any weak equivalence from $(A,d)$ to $(H^*(A), 0)$ 
would need to take the non-zero element $b$ to $0$, by degree reasons.
\end{example}

Formality is preserved under weak equivalences; that is, if $A\simeq B$, 
then $A$ is formal if and only if $B$ is formal. Moreover, as shown by 
Halperin and Stasheff in \cite{HS79}, a $\cdga$ $(A,d)$ with $H^{*}(A)$ 
of finite-type is formal if and only if the identity map of $H^{*}(A)$ 
can be realized by a weak equivalence between $(A,d)$ and $(H^{*}(A),0)$.

The next result, originally proved directly by Salehi in \cite{Sa17}, now 
follows at once from Theorem \ref{thm:cprw}.

\begin{corollary}[\cite{Sa17}]
\label{cor:salehi}
Let $A$ be a $\k$-$\cdga$. Then $A$ is formal as a $\dga$ if 
and only if $A$ is formal as a $\cdga$.
\end{corollary}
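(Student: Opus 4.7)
The plan is to reduce the statement directly to Theorem \ref{thm:cprw} by taking the second $\cdga$ in that theorem to be the cohomology algebra of $A$ with zero differential. The first step is to verify that $(H^*(A),0)$ qualifies as a $\cdga$: since $A$ is a $\cdga$, the graded algebra $H^*(A)$ inherits a graded-commutative multiplication, and equipping it with the zero differential trivially satisfies the Leibniz rule. So the pair $\bigl(A,(H^*(A),0)\bigr)$ is a pair of $\k$-$\cdga$s to which Theorem \ref{thm:cprw} applies.

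Next, I would unravel the two notions of formality appearing in the statement. By Definition \ref{def:formal}, together with the parenthetical remark following it, formality of $A$ as a $\cdga$ means exactly $A\simeq (H^*(A),0)$ through a zig-zag of $\cdga$ quasi-isomorphisms, while formality of $A$ as a $\dga$ means $A\simeq (H^*(A),0)$ through a zig-zag of $\dga$ quasi-isomorphisms (which a priori might leave the $\cdga$ category). Thus the corollary is the single assertion that these two flavors of weak equivalence, between the fixed pair of $\cdga$s $A$ and $(H^*(A),0)$, coincide.

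The forward implication is immediate: every $\cdga$ morphism is in particular a $\dga$ morphism, so a $\cdga$ zig-zag witnessing $\cdga$-formality is also a $\dga$ zig-zag witnessing $\dga$-formality. For the reverse implication, suppose $A$ is $\dga$-formal, so that $A\simeq (H^*(A),0)$ as $\dga$s. Applying Theorem \ref{thm:cprw} with $B=(H^*(A),0)$ upgrades this to $A\simeq (H^*(A),0)$ as $\cdga$s, which is precisely $\cdga$-formality of $A$.

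There is essentially no obstacle here, since the nontrivial content—that any $\dga$ zig-zag between two $\cdga$s can be replaced by a $\cdga$ zig-zag—is the theorem of \cite{CPRW} quoted as Theorem \ref{thm:cprw}. The only thing to be careful about is that formality is phrased via a target, $(H^*(A),0)$, that is canonically a $\cdga$ (not just a $\dga$), so that the hypotheses of Theorem \ref{thm:cprw} are met; this was handled in the first step.
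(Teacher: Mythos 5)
Your proof is correct and matches the paper's approach exactly: the paper states that Corollary \ref{cor:salehi} ``follows at once from Theorem \ref{thm:cprw},'' and your argument—specializing that theorem to the pair $A$ and $(H^*(A),0)$, with the easy direction coming from the inclusion of $\cdga$ morphisms among $\dga$ morphisms—is precisely the intended deduction. No issues.
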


\subsection{Intrinsic formality}
\label{subsec:intrinsic-formal}
We now present two variants of formality, the first being a rigid type  
of formality and the second formality up to a degree. 

A strong form of formality was introduced by Sullivan 
in \cite{Sullivan77}, and developed in \cite{HS79}, \cite{FH82}, and 
\cite{Lupton}.  We say that a $\k$-cga $H$ is {\em intrinsically formal}\/ 
if any $\k$-$\dga$ $(A,d)$ whose cohomology is isomorphic to 
$H$ must be formal, that is, $(A,d)\simeq (H, 0)$. 
A similar notion holds for $\cdga$s; by Theorem \ref{thm:cprw}, 
if $H$ is intrinsically formal in the category of $\dga$s,  
it is also instrinsically formal in the category of  $\cdga$s.
Plainly, if $A$ is a $\dga$ or a $\cdga$ such that $H^*(A)$ 
is intrinsically formal, then $A$ is formal. 
The following results of Sullivan and Halperin--Stasheff provide 
large classes of intrinsically formal $\cga$s.  

\begin{theorem}[\cite{Sullivan77}]
\label{thm:regular-intrinsic}
Let $H$ be the quotient of a finitely generated, free $\cga$ 
by an ideal generated by a regular sequence, that is, a sequence 
$r_1,\dots, r_n$ of homogeneous elements in $H$ such that $r_i$ 
is not a zero-divisor in $H/(r_1,\dots, r_{i-1})$, for each $i\le n$. 
Then $H$ is intrinsically formal.
\end{theorem}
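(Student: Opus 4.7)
The plan is to produce a Koszul-style cdga model $M$ of $H$ with $H^*(M) \cong H$ (where regularity of $r_1,\dots,r_n$ enters), and to show that $M$ maps by quasi-isomorphism both to $(H,0)$ and to any cdga $(A,d)$ with $H^*(A) \cong H$. This yields a zig-zag $A \simeq M \simeq (H,0)$, and Corollary~\ref{cor:salehi} transfers formality from the cdga setting to the dga setting.

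Concretely, write $H = \bwedge V/(r_1,\dots,r_n)$ for a finitely generated graded vector space $V$, and introduce a graded vector space $W$ with basis $y_1,\dots,y_n$ with $|y_i|=|r_i|-1$. Form the cdga
\begin{equation*}
M = \bigl(\bwedge V \otimes \bwedge W,\ d_M\bigr),\qquad
d_M|_V = 0,\qquad d_M(y_i)=r_i.
\end{equation*}
The canonical map $\pi\colon M \to (H,0)$ sending each $x\in V$ to its class $[x]_H$ and killing $W$ is a well-defined cdga morphism because $\pi(r_i)=0$ in $H$, and tautologically induces the identity on cohomology once one knows $H^*(M)\cong H$.

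Verifying $H^*(M)\cong H$ is the key step and the place where regularity is used. I would prove it by induction on $n$, viewing $M_i := \bwedge V \otimes \bwedge(y_1,\dots,y_i)$ as the mapping cone of multiplication by $r_i$ on $M_{i-1}$. Under the inductive identification $H^*(M_{i-1}) \cong \bwedge V/(r_1,\dots,r_{i-1})$, the regular sequence hypothesis is precisely that multiplication by $r_i$ is injective on this quotient; the associated long exact sequence of the mapping cone then collapses to give $H^*(M_i) \cong \bwedge V/(r_1,\dots,r_i)$, completing the induction.

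For a general cdga $(A,d)$ with $H^*(A)\cong H$, choose cocycle lifts $\tilde x_j \in A$ of the generators $[x_j]\in H$. Each $r_i(\tilde x_1,\dots,\tilde x_m)$ is then a cocycle whose cohomology class equals $r_i=0$ in $H$, so pick $\tilde y_i\in A$ with $d\tilde y_i = r_i(\tilde x_1,\dots,\tilde x_m)$. By freeness of $M$ as a cga on $V\oplus W$, the assignments $x_j\mapsto \tilde x_j$ and $y_i\mapsto \tilde y_i$ extend to a well-defined cdga map $\varphi\colon M\to A$; under the identifications $H^*(M) = H = H^*(A)$ the induced map $\varphi^*$ fixes each generator $[x_j]$, hence is the identity and in particular a quasi-isomorphism. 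The zig-zag
\begin{equation*}
(A,d) \xleftarrow{\ \varphi\ } M \xrightarrow{\ \pi\ } (H,0)
\end{equation*}
then proves $A\simeq (H,0)$. The main obstacle is the Koszul computation of $H^*(M)$: the graded-commutative signs in $\bwedge W$ must be tracked carefully through the mapping-cone induction to ensure that the argument from ordinary commutative algebra transplants correctly; everything else reduces to lifting cocycles and invoking freeness.
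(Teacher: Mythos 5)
The survey does not actually prove this theorem --- it is quoted from \cite{Sullivan77} without argument --- so there is no internal proof to compare against. Your argument is the classical one: the Koszul-type Sullivan model $M=(\bwedge V\otimes\bwedge W, d)$ with $dy_i=r_i$, shown to be quasi-isomorphic both to $(H,0)$ and, by lifting cocycles, to any $(A,d)$ with $H^{*}(A)\cong H$. For commutative $A$ this is essentially complete and correct. One elision should be made explicit rather than waved at as ``tracking signs'': the inductive step treats $M_i=M_{i-1}\otimes\bwedge(y_i)$ as a two-term mapping cone of multiplication by $r_i$, which is only literally true when $y_i$ has odd degree, i.e.\ when $\abs{r_i}$ is even; if some $r_i$ had odd degree, $\bwedge(y_i)$ would be a polynomial algebra and the cone description (hence the collapse of the long exact sequence) would fail. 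The rescue is that over a field of characteristic $0$ a nonzero odd-degree element of a \cga{} squares to zero and is therefore a zero-divisor, so every term of a regular sequence in the sense of the statement necessarily has even degree. State this; it is what licenses the induction.

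The one genuine gap concerns the category in which the claim is made. The paper's definition of intrinsic formality quantifies over all $\k$-\dga{}s $A$ with $H^{*}(A)\cong H$, not only over \cdga{}s. Your extension of $x_j\mapsto\tilde x_j$, $y_i\mapsto\tilde y_i$ to a morphism $\varphi\colon M\to A$ uses that $M$ is free as a \cga, and this requires the chosen lifts to graded-commute in $A$ --- automatic when $A$ is commutative, but not otherwise. Corollary \ref{cor:salehi} does not close this gap: it compares \dga-formality with \cdga-formality for an algebra that is \emph{already} commutative, and neither it nor Theorem \ref{thm:cprw} allows one to replace a genuinely noncommutative \dga{} by a weakly equivalent \cdga. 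So what you have proved is intrinsic formality in the \cdga{} category, which is Sullivan's statement and all that the surrounding text uses; if the \dga-level statement is insisted upon, a separate argument (for instance a free associative model whose extra generators also kill the graded commutators of the $\tilde x_j$) is required.
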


Algebras of the form $H=\bwedge V/(r_1,\dots, r_{n})$ as above 
are sometimes called {\em hyperformal}, see \cite{FH82}, \cite{Lupton}. 
In particular, exterior algebras and polynomial algebras are 
hyperformal, and thus intrinsically formal.

\begin{theorem}[\cite{HS79}]
\label{thm:odd-intrinsic}
Let $H$ be a connected $\cga$ such that $H^i=0$ for $1\le i\le k$ 
and for $i>3k+1$. Then $H$ is intrinsically formal.
\end{theorem}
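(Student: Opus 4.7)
The plan is to construct a $\cdga$ quasi-isomorphism $\phi \colon \M(A) \to (H, 0)$ from the Sullivan minimal model of any $\cdga$ $A$ with $H^*(A) \cong H$; this will exhibit $A \simeq (H, 0)$ and thereby prove the intrinsic formality of $H$.

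First I would carry out a degree analysis on the minimal model. Let $\rho \colon \M(A) = (\bwedge V, d) \to A$ be the Sullivan minimal model. Since $A$ is connected and $H^i = 0$ for $1 \le i \le k$, minimality forces $V^i = 0$ for $i \le k$, so all generators live in degrees $\ge k+1$. Since $d$ is decomposable, any nonzero element of $d V^n$ is a sum of products of at least two generators, each of degree $\ge k+1$, so has degree $\ge 2k+2$; equivalently, $d|_{V^n} = 0$ for every $n \le 2k$, and in that range the composite $V^n \hookrightarrow Z^n(\bwedge V) \twoheadrightarrow H^n$ is an isomorphism.

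Next I would build $\phi$ stagewise on the subalgebras $\bwedge V^{\le n}$. Extending $\phi$ from $\bwedge V^{<n}$ to $\bwedge V^{\le n}$ requires two things: verifying the compatibility $\phi(dv) = 0$ in $H^{n+1}$ for each generator $v \in V^n$ (a constraint on earlier choices), and then choosing $\phi(v) \in H^n$. Three regimes govern the verification step: when $n \le 2k$ one has $dv = 0$ automatically; when $n \ge 3k+1$ the target $H^{n+1}$ vanishes, so $\phi(dv) = 0$ holds trivially; the intermediate regime $2k+1 \le n \le 3k$ is where the real content lies. For the choice step, I would take $\phi(v)$ to be a fixed representative of $[\rho(v)] \in H^n$ when $v$ is a cocycle, and make a more delicate choice on non-cocycle generators, tailored to propagate the inductive hypothesis below.

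The main obstacle is the compatibility check in the intermediate regime. The key technique is to maintain the inductive hypothesis that $\phi|_{\bwedge V^{<n}}$ realizes the map $\rho^* \colon H^*(\bwedge V) \to H$ on cohomology, in the sense that for every cocycle $z \in \bwedge V^{<n}$ one has $\phi(z) = \rho^{*}([z]) \in H$. Granting this hypothesis at stage $n$, the cocycle $dv \in \bwedge V^{<n}$ is a coboundary in the ambient minimal model $(\bwedge V, d)$, so $\rho^{*}([dv]) = [d_A \rho(v)] = 0$, which yields the desired $\phi(dv) = 0$ in $H^{n+1}$. Sustaining this hypothesis across the intermediate regime is the content of the Halperin--Stasheff bigraded model argument in \cite{HS79}: one equips $V$ with an auxiliary word-length filtration, adjusts $\phi$ on non-cocycle generators stagewise so that the hypothesis persists, and verifies that every obstruction to doing so lives in cohomology degrees $\le 3k+1$, which $H$ is allowed to absorb. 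Once $\phi$ is constructed on all of $V$, it induces $\rho^*$ on cohomology and is therefore a quasi-isomorphism, so the zig-zag $A \xleftarrow{\rho} \M(A) \xrightarrow{\phi} (H, 0)$ exhibits $A$ as formal.
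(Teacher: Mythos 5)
The paper states this theorem with a bare citation to \cite{HS79} and supplies no proof, so there is nothing internal to compare against; I am judging your proposal on its own terms. Your overall strategy---build a $\cdga$ map $\phi\colon \M(A)=(\bwedge V,d)\to (H,0)$ generator by generator, using $V^{\le k}=0$ and the decomposability of $d$ to split the generator degrees into three regimes---is the standard and correct one, and your preliminary degree analysis ($V^i=0$ for $i\le k$, hence $d|_{V^n}=0$ for $n\le 2k$) is right. The gap sits exactly where the theorem gets proved: the compatibility $\phi(dv)=0$ for $v\in V^n$ with $2k+1\le n\le 3k$. You defer this to ``the Halperin--Stasheff bigraded model argument,'' which is circular in a blind proof of a theorem attributed to \cite{HS79}, and the one justification you offer---that the obstructions live in degrees $\le 3k+1$, ``which $H$ is allowed to absorb''---is not an argument: degrees $\le 3k+1$ are precisely where $H$ may be nonzero, so nothing vanishes for free there.

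What closes the argument is an elementary degree count that you never make. For $v\in V^n$ with $2k+1\le n\le 3k$, the element $dv$ lies in $(\bwedge^{\ge 2}V)^{n+1}$ with $n+1\le 3k+1$; since every generator has degree $\ge k+1$, a product of three or more generators has degree $\ge 3k+3$, so $dv$ is a sum of products of exactly \emph{two} generators, each of degree between $k+1$ and $2k$---hence each a closed generator on which $\phi$ is already defined by $\phi(w)=[\rho(w)]$. Therefore $\phi(dv)=[\rho(dv)]=[d_A\rho(v)]=0$ directly, with no auxiliary filtration or bigraded model needed. The same count shows that any cocycle of $\bwedge V$ of degree $m\le 3k+1$ is a sum $z_1+z_2$ with $z_2$ a combination of products of two closed generators of degree $\le 2k$ and $z_1\in V^m$ forced to be closed (since $dz_2=0$); hence $\phi(z)=[\rho(z)]=\rho^*([z])$, and $\phi^*=\rho^*$ is an isomorphism. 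In particular, your worry about making ``delicate choices'' on the non-closed generators is unfounded: cocycles of degree $\le 3k+1$ involve such generators only linearly and then not at all, so you may simply set $\phi(v)=0$ on every non-closed generator, and in degrees above $3k+1$ there is no cohomology to match. With these observations inserted, your outline becomes a complete proof (in the $\cdga$ category, which is the setting the surrounding discussion and \cite{HS79} address).
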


\subsection{Partial formality}
\label{subsec:partial-formal}
The notion of formality can also be relaxed, as follows. Fix an integer 
$q\ge 0$. We say that a $\dga$ (or a $\cdga$) $A$ is 
{\em $q$-formal}\/ if $(A,d_A)\simeq_q (H^*(A),0)$ as $\dga$s 
(or $\cdga$s). We do not know whether the analog of 
Corollary \ref{cor:salehi} holds in this context, but we 
expect it does,

Clearly, if $A$ is formal, then $A$ is $q$-formal, 
for all $q\ge 0$, and if $A$ is $q$-formal, then it is $n$-formal 
for every $n\le q$.  It is readily seen that connected $\dga$s 
are $0$-formal. Moreover, if $A\simeq_q B$, then $A$ is 
$q$-formal if and only if $B$ is $q$-formal.  

\begin{example}
\label{ex:gen-heisenberg}
Let $A=\bwedge(a_1,\dots, a_{2q}, b)$ be the 
exterior algebra on specified generators in degree $1$, 
equipped with the differential $d$ given by $d a_i=0$ and 
$d b=a_1a_2+\cdots +a_{2q-1}a_{2q}$.  It follows from  
\cite[Remark 5.4]{Mc10} that the cdga $(A,d)$ is $(q-1)$-formal 
but not $q$-formal.
\end{example}

We refer to M\u{a}cinic \cite{Mc10} for a more thorough discussion of partial 
formality and related notions (see also \cite{PS-formal}, \cite{SW-forum}).

\subsection{Field extensions and formality}
\label{subsec:partial-formal-ext}

As is well-known, a finite-type, rationally defined $\cdga$ is formal 
 if and only it is formal over any field of characteristic $0$.  
This foundational result was proved independently and in 
various degrees of generality by Sullivan \cite{Sullivan77}, 
Neisendorfer and Miller \cite{NM78}, and Halperin and 
Stasheff \cite{HS79}. We conclude this section with a 
discussion of this topic and some recent generalizations 
from \cite{SW-forum} to partially formal $\cdga$s.

Given a $\dga$~$(A,d)$ over a field $\k$ of characteristic $0$ 
and a field extension $\k\subset \K$, we let $(A\otimes_{\k} \K, d\otimes \id_\K)$ 
be the $\cdga$ over $\K$ obtained by extending scalars.

\begin{theorem}[\cite{HS79}] 
\label{thm:Halperin-Stasheff}
Let $(A,d_A)$ and $(B,d_B)$ be two $\cdga$s over $\k$ 
whose cohomology algebras are connected and of finite type. 
Suppose there is an isomorphism of graded algebras, $f\colon H^{*}(A)\to H^{*}(B)$, 
and suppose  $f\otimes \id_{\K}\colon H^{*}(A)\otimes \K\to H^{*}(B)\otimes \K$
can be realized by a weak equivalence between $(A\otimes \K,d_A\otimes \id_{\K})$ 
and $(B\otimes \K,d_B\otimes \id_{\K})$. Then $f$ can be realized
by a weak equivalence between $(A,d_A)$ and $(B,d_B)$.
\end{theorem}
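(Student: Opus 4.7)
My plan is to reduce to the problem of constructing a $\cdga$ morphism between minimal models, and then to build such a morphism by Sullivan-style obstruction theory, observing that at every stage the obstructions and the sets of admissible choices are described by $\k$-linear data in $\k$-vector spaces, and hence descend faithfully from $\K$ to $\k$.

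First I would replace $A$ and $B$ by their Sullivan minimal models $\rho_A\colon \M(A)\to A$ and $\rho_B\colon \M(B)\to B$, which are constructed over $\k$. Because the minimal model is produced by iterated Hirsch extensions driven by the cohomology of the target and has decomposable differential, the construction commutes with the field extension $\k\subset\K$: the map $\rho_A\otimes \id_{\K}$ exhibits $\M(A)\otimes_{\k}\K$ as a minimal model for $A\otimes_{\k}\K$, and similarly for $B$. Via the zig-zag
\begin{equation*}
A \xleftarrow{\rho_A} \M(A) \xrightarrow{\varphi} \M(B) \xrightarrow{\rho_B} B,
\end{equation*}
exhibiting a weak equivalence $A\simeq B$ realizing $f$ becomes equivalent to producing a $\cdga$ morphism $\varphi\colon \M(A)\to\M(B)$ inducing $(\rho_B^*)^{-1}\circ f\circ \rho_A^*$ on cohomology; any such $\varphi$ is automatically a quasi-isomorphism. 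Translated through this reduction, the hypothesis asserts that such a morphism exists, up to homotopy, after extending scalars to $\K$.

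Next I would construct $\varphi$ stage by stage along the Hirsch filtration $\k=\M_0\subset\M_1\subset\cdots$ of $\M(A)$, where each $\M_{n+1}=\M_n\otimes\bwedge V_{n+1}$ adjoins generators $V_{n+1}$ whose differential lands in $\M_n$. Given $\varphi$ on $\M_n$ with the desired effect on cohomology, extending to $\M_{n+1}$ requires, for each generator $v\in V_{n+1}$, choosing $\varphi(v)\in \M(B)^{|v|}$ with $d\varphi(v)=\varphi(dv)$. Since $\varphi(dv)$ is a cocycle by induction, the obstruction is the class $[\varphi(dv)]\in H^{|v|+1}(\M(B))$. This class lies in a $\k$-vector space, and its vanishing is detected by its image in $H^{|v|+1}(\M(B))\otimes_{\k}\K$; the existence of a $\K$-morphism realizing $f\otimes \id_{\K}$ forces that image to vanish, so the obstruction vanishes already over $\k$. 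Once it does, the set of valid lifts of $\varphi(v)$ is an affine $\k$-space modeled on $Z^{|v|}(\M(B))$, further cut out by the $\k$-linear condition of realizing the prescribed cohomology class when $v$ is a cocycle; this affine subspace is nonempty over $\k$ precisely because it is nonempty over $\K$.

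The hard part will be the inductive bookkeeping: a priori, distinct $\k$-rational choices at earlier stages could produce different obstructions later, so $\varphi$ must be built in a way compatible with the homotopy class of the given $\K$-rational morphism $\varphi_{\K}$. The key structural fact is that $[\varphi(dv)]$ depends only on the homotopy class of the restriction $\varphi|_{\M_n}$, and the standard lifting lemma for Hirsch extensions allows adjusting a $\k$-rational partial morphism within its homotopy class. One therefore arranges, inductively, that each stage of the $\k$-rational $\varphi$ is $\K$-homotopic to the corresponding stage of $\varphi_{\K}$, keeping every obstruction vanishing. Finally, Theorem \ref{thm:cprw} allows us to pass freely between the $\dga$ and $\cdga$ formulations of weak equivalence appearing in the hypothesis and conclusion.
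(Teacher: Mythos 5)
The paper does not prove this theorem; it is quoted from Halperin--Stasheff \cite{HS79}, so your attempt can only be measured against their argument rather than against a proof in the text. Your overall strategy --- pass to minimal models, note that $\M(A)\otimes_{\k}\K$ is a minimal model for $A\otimes_{\k}\K$, and run an obstruction theory in which each obstruction lives in a $\k$-vector space so that vanishing descends along the injection $V\inj V\otimes_{\k}\K$ --- has the right shape, and your computation of the single-stage obstruction is fine: $[\varphi(dv)]$ depends only on the homotopy class of $\varphi|_{\M_n}$, so if $\varphi|_{\M_n}\otimes\id_{\K}$ is $\K$-homotopic to the restriction of the given $\K$-realization $\varphi_{\K}$, that obstruction vanishes over $\K$ and hence over $\k$.

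The gap is exactly where you place it, and your proposed repair does not close it. To keep the induction going you want every stage of the $\k$-rational $\varphi$ to be $\K$-homotopic to the corresponding stage of $\varphi_{\K}$; but the homotopy classes rel $\M_n$ of extensions of $\varphi|_{\M_n}$ to $\M_{n+1}$ form a torsor over $\Hom(V_{n+1},H^{*}(\M(B)))$, and after extending scalars the $\k$-rational extensions sweep out only the $\k$-rational points of the corresponding $\K$-torsor. The class of $\varphi_{\K}|_{\M_{n+1}\otimes\K}$ need not be one of them, and ``adjusting a $\k$-rational partial morphism within its homotopy class'' cannot change its $\K$-homotopy class at all, so there is in general no $\k$-rational extension matching $\varphi_{\K}$. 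What must actually be proved is that the set of $\K$-homotopy classes of extensions that remain unobstructed at all later stages --- equivalently, the set of values taken by the next obstruction as the current choice varies --- is a coset of a $\k$-rationally defined linear subspace; only then does ``nonempty over $\K$'' force ``contains a $\k$-point,'' since an arbitrary nonempty subset, or even a $\k$-defined subvariety, of a $\K$-vector space need not have a $\k$-rational point. Establishing this affine, $\k$-rational structure of the total obstruction (via the bigraded and filtered models and the linearized action of homotopies and automorphisms on the obstruction classes) is the real content of Halperin and Stasheff's proof, and it is the ingredient your outline presupposes rather than supplies.
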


This theorem has the following corollary. The result is stated without proof 
in \cite[Corollary 6.9]{HS79}; a complete proof is provided in 
\cite[Corollary 4.17]{SW-forum}. Self-contained proofs under 
some additional hypotheses were previously given 
in \cite[Theorem 12.1] {Sullivan77} and \cite[Corollary~5.2]{NM78}.

\begin{corollary}[\cite{HS79}]
\label{cor:Kformal}
Let $A=(A,d_A)$ be a connected $\k$-$\cdga$ with $H^{*}(A)$ of finite-type. 
Then $A$ is formal if and only if the $\K$-$\cdga$~$(A\otimes \K,d_A\otimes \id_{\K})$ 
is formal.
\end{corollary}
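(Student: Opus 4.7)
\textbf{Proof plan for Corollary \ref{cor:Kformal}.}

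The forward direction is the easy one. If $A$ is formal over $\k$, take a zig-zag of $\cdga$ quasi-isomorphisms connecting $(A, d_A)$ to $(H^*(A), 0)$. Tensoring with $\K$ over $\k$ is an exact functor (since $\k \subset \K$ is a field extension, hence flat), so applying $-\otimes_\k \K$ termwise produces a zig-zag of $\K$-$\cdga$ quasi-isomorphisms between $(A \otimes \K, d_A \otimes \id_\K)$ and $(H^*(A) \otimes \K, 0)$. Since $H^*(A \otimes \K) \cong H^*(A) \otimes \K$ by flatness, this shows $A \otimes \K$ is formal.

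For the converse, the strategy is to apply Theorem \ref{thm:Halperin-Stasheff} with $B = (H^*(A), 0)$ viewed as a $\k$-$\cdga$ with zero differential, and with $f \colon H^*(A) \to H^*(B) = H^*(A)$ the identity map. To invoke that theorem I need to verify that $f \otimes \id_\K = \id_{H^*(A) \otimes \K}$ can be realized by a weak equivalence between $(A \otimes \K, d_A \otimes \id_\K)$ and $(B \otimes \K, 0) = (H^*(A) \otimes \K, 0)$. This is precisely where I use the Halperin--Stasheff refinement recalled in the paragraph just after Definition \ref{def:formal}: for a $\cdga$ with finite-type cohomology, formality is equivalent to the identity map on cohomology being realized by a weak equivalence to $(H^*(-), 0)$. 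Applied to $A \otimes \K$, whose cohomology $H^*(A) \otimes \K$ is still of finite type over $\K$, this gives exactly the weak equivalence required by the hypothesis of Theorem \ref{thm:Halperin-Stasheff}.

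Granted this input, Theorem \ref{thm:Halperin-Stasheff} produces a weak equivalence of $\k$-$\cdga$s between $(A, d_A)$ and $(B, 0) = (H^*(A), 0)$ that realizes the identity on cohomology, which is exactly the statement that $A$ is formal over $\k$. The whole argument thus reduces to a clean bookkeeping assembly of two facts from Section \ref{sect:formality}, and the only genuine content is hidden in Theorem \ref{thm:Halperin-Stasheff}.

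The main obstacle, such as it is, lies in making sure we have the correct version of the Halperin--Stasheff formality criterion to hand: namely, that the weak equivalence witnessing formality can be chosen to realize the \emph{identity} on $H^*$, not merely some automorphism of it. Without that strengthening, the hypothesis of Theorem \ref{thm:Halperin-Stasheff} could not be verified directly. Once this is acknowledged, the descent from $\K$ back to $\k$ is formal.
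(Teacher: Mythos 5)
Your proof is correct and follows exactly the route the paper intends: the forward direction by flatness of the field extension, and the converse by feeding the Halperin--Stasheff characterization of formality (identity on $H^*$ realizable by a weak equivalence, valid since $H^*(A\otimes\K)\cong H^*(A)\otimes\K$ is connected and of finite type) into Theorem \ref{thm:Halperin-Stasheff} with $B=(H^*(A),0)$ and $f=\id$. You also correctly identify the one genuinely load-bearing ingredient, namely that the witnessing weak equivalence over $\K$ can be taken to realize the identity rather than an arbitrary automorphism of cohomology.
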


These classical formality results were generalized in \cite[Theorem 4.19]{SW-forum}, 
which extends the descent-of-formality results of Sullivan, Neisendorfer--Miller, 
and Halperin--Stasheff to the partially formal setting.

\begin{theorem}[\cite{SW-forum}]
\label{thm:i-formalField}
Let $(A,d_A)$ be a $\cdga$~over $\k$, and let $\k\subset \K$ be a 
field extension. Suppose $H^{\le q+1}(A)$ is finite-dimensional 
and $H^0(A)=\k$.  Then $(A,d_A)$ is $q$-formal 
if and only if $(A\otimes \K,d_A\otimes \id_{\K})$ 
is $q$-formal.
\end{theorem}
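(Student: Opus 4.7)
\emph{Plan.} The forward direction ($\Rightarrow$) is immediate: extension of scalars $-\otimes_{\k}\K$ is exact and commutes with cohomology, hence preserves $q$-quasi-isomor\-phisms. A zig-zag of $q$-quasi-isomorphisms over $\k$ witnessing $A \simeq_q (H^*(A),0)$ tensors to one over $\K$ witnessing $A\otimes \K \simeq_q (H^*(A)\otimes \K,0) = (H^*(A\otimes \K),0)$.

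For the converse ($\Leftarrow$), the strategy is to reduce to the classical full-formality descent, Corollary~\ref{cor:Kformal}, by first replacing $A$ with a finite-dimensional $\cdga$ concentrated in degrees $\le q+1$, and then exploiting the coincidence of partial and full formality in that range. \emph{Step 1 (reduction to a finite-dimensional truncated model).} The hypotheses $H^0(A)=\k$ and $\dim H^{\le q+1}(A)<\infty$ allow the construction of the $q$-minimal Sullivan model $\M_q(A)$ as a $\cdga$ of finite type in each degree, $q$-equivalent to $A$. Since $\M_q(A)$ is $q$-finite, Lemma~\ref{lem:truncate} produces a finite-dimensional $\cdga$ $B$ over $\k$ with $B^i=0$ for $i>q+1$ and a natural $q$-equivalence $A \simeq_q B$. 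Both the minimal-model construction and the truncation commute with base change---the former by uniqueness of minimal models over any field extension, the latter by the explicit formula~\eqref{eq:truncate}---so performing the same reduction over $\K$ yields $A\otimes \K \simeq_q B\otimes \K$.

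\emph{Step 2 (partial formality equals formality for $B$).} Since $B^i=0$ for $i>q+1$, one has $H^i(B)=0$ for $i\ge q+2$. By the $\cdga$ analogue of M\u{a}cinic's criterion~\cite{Mc10} recalled in \S\ref{subsec:intro-formal}, a connected $\cdga$ with cohomology vanishing in degrees $\ge q+2$ is $q$-formal if and only if it is formal; this applies both to $B$ and to $B\otimes \K$. \emph{Step 3 (classical descent).} If $A\otimes \K$ is $q$-formal, then $B\otimes \K$ is $q$-formal, hence formal by Step~2. Corollary~\ref{cor:Kformal} then gives that $B$ is formal over $\k$; Step~2 upgrades this to $q$-formality of $B$, and the $q$-equivalence $A\simeq_q B$ transfers $q$-formality back to $A$.

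\emph{Main obstacle.} The technical core is Step~1: producing a finite-dimensional truncated model in degrees $\le q+1$ that is $q$-equivalent to $A$, together with verifying its compatibility with field extensions. This requires combining $q$-minimal model theory (to extract degreewise finite-dimensionality of $\M_q(A)$ from the cohomological finiteness hypothesis) with the truncation of Lemma~\ref{lem:truncate}. Once Step~1 is secured, Steps~2 and~3 are nearly formal consequences of M\u{a}cinic's criterion and the Halperin--Stasheff descent theorem.
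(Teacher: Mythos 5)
Your forward implication is fine (exactness of $-\otimes_{\k}\K$ does preserve $q$-quasi-isomorphisms), and Steps 2 and 3 would work if Step 1 did; but Step 1 contains a genuine gap. The hypothesis $\dim H^{\le q+1}(A)<\infty$ does \emph{not} make the $q$-minimal model $\M_q(A)$ a $q$-finite $\cdga$ in the sense required by Lemma~\ref{lem:truncate}, i.e., with $\dim \M_q(A)^i<\infty$ for $i\le q$. Already for $A=(H^{*}(F_2,\Q),0)$ the $1$-minimal model is the union of the Chevalley--Eilenberg complexes of the nilpotent quotients of the free Lie algebra on two generators, so its degree-one part is infinite-dimensional, even though $H^{\le 2}(A)$ is finite-dimensional. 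Worse, under the stated hypotheses there need not exist \emph{any} $q$-finite $\cdga$ that is $q$-equivalent to $A$: finiteness of $H^{\le q+1}(A)$ only gives $b_i(\M_q(A))<\infty$ for $i\le q+1$, which by Theorem~\ref{thm:infobsq} is necessary but, in view of Theorem~\ref{thm:psobs} and Example~\ref{ex:fpresgrp}, far from sufficient---there are finitely presented groups (hence $\cdga$s $\apl(X)$ with $H^{\le 2}$ finite-dimensional and $H^0=\Q$) admitting no $1$-finite $1$-model. So the finite-dimensional truncated algebra $B$ on which your Steps 2 and 3 operate simply does not exist in general, and the reduction to Corollary~\ref{cor:Kformal} collapses.

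One might hope to salvage the converse direction by using the assumed $q$-formality of $A\otimes \K$ to produce a $q$-finite $q$-model, namely the truncation of $(H^{*}(A),0)$ itself; but showing that this $\k$-defined algebra is a $q$-model for $A$ \emph{over $\k$} is precisely the conclusion being proved, so that route is circular. Note also that this survey does not prove the theorem; it quotes \cite[Theorem 4.19]{SW-forum}, where the argument works directly with the (generally infinite-dimensional) $q$-minimal model, using the criterion of Lemma~\ref{lem:formalityeq} together with a partial-formality adaptation of the Halperin--Stasheff obstruction theory underlying Theorem~\ref{thm:Halperin-Stasheff}, rather than passing through a finite-dimensional model. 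If you want to keep your overall architecture, you must either add the hypothesis that $A$ admits a $q$-finite $q$-model (which changes the theorem) or replace Step 1 by an argument at the level of $\M_q(A)$ itself.
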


\subsection{Formality of $\dga$ maps}
\label{subsec:formal-dga-maps} 
The notion of formality may be extended from the objects to the morphisms 
of the $\dga$ category, as follows.

\begin{definition}
\label{def:formal-map}
A $\dga$ morphism $\varphi\colon A\to B$ is said to be {\em formal}\/ if 
there is a diagram of the form \eqref{eq:ziggy-zagg} connecting $\varphi$ 
to the induced homomorphism $\varphi^*\colon H^*(A)\to H^*(B)$ between 
cohomology algebras (viewed as $\dga$s with zero differentials). 
Likewise, $\varphi$ is said to be {\em $q$-formal}, for some $q\ge 0$, if 
$\varphi\simeq_q \varphi^*$.
\end{definition}

Note that in the first case both $A$ and $B$ need to be formal $\dga$s, 
while in the second case both $A$ and $B$ need to be $q$-formal. 
Also note that if $\varphi$ is formal and $\varphi\simeq \varphi'$, 
then $\varphi'$ is also formal, and similarly for $q$-formality.

Completely analogous notions may be defined for $\cdga$ morphisms. 
although we do not know whether a statement analogous 
to Theorem \ref{thm:cprw} holds in this context.  Nevertheless, 
a descent of formality result analogous to Corollary \ref{cor:Kformal}
holds.

\begin{theorem}[\cite{Sullivan77}, \cite{VP79}]
\label{thm:formal-maps}
Let $\varphi\colon A \to B$ be a morphism between 
connected $\k$-$\cdga$s with finite Betti numbers, 
and let $\k\subset \K$ be a field extension.
Then $\varphi$ is formal if and only if 
$\varphi \otimes \id_{\K}\colon A\otimes \K\to B\otimes \K$ is formal.
\end{theorem}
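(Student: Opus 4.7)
The forward implication is routine: a zig-zag of the form \eqref{eq:ziggy-zagg} over $\k$ witnessing $\varphi \simeq \varphi^*$ becomes, upon applying $-\otimes_{\k}\K$ to every vertex and arrow, a corresponding zig-zag over $\K$ witnessing the formality of $\varphi\otimes \id_{\K}$, since extension of scalars preserves quasi-isomorphisms, homotopies of $\cdga$ maps, and the identification of cohomology with the target.

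For the converse, I would first reduce to the case that both $A$ and $B$ are themselves formal over $\k$. Formality of the morphism $\varphi\otimes\id_{\K}$ entails formality of its source and target, so $A\otimes\K$ and $B\otimes\K$ are formal $\K$-$\cdga$s; since $H^*(A)$ and $H^*(B)$ are of finite type, Corollary \ref{cor:Kformal} applies separately to $A$ and $B$ and yields that they are already formal over $\k$. I then pass to minimal models $p\colon \M_A\to A$ and $q\colon \M_B\to B$ over $\k$, fix $\cdga$ quasi-isomorphisms $\mu_A\colon \M_A\to(H^*(A),0)$ and $\mu_B\colon \M_B\to(H^*(B),0)$ realizing these formalities, and lift $\varphi p$ to a $\cdga$ map $\tilde\varphi\colon \M_A\to\M_B$ (unique up to homotopy). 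After this reduction, the formality of $\varphi$ is equivalent to the existence of choices of $\mu_A$, $\mu_B$ within their gauge-equivalence classes for which the square
\[
\begin{tikzcd}
\M_A \ar[r,"\tilde\varphi"] \ar[d,"\mu_A"'] & \M_B \ar[d,"\mu_B"] \\
(H^*(A),0) \ar[r,"\varphi^*"'] & (H^*(B),0)
\end{tikzcd}
\]
commutes up to $\cdga$ homotopy.

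The heart of the proof is then a descent argument for the existence of such compatible $\mu_A$, $\mu_B$, which I would run inductively on a fixed homogeneous generating system for the minimal models. At each degree $n$, the obstruction to extending compatible data from degree $<n$ across the square in degree $n$ is an element of a finite-dimensional $\k$-vector space of cocycles built out of $H^*(A)$, $H^*(B)$, and $\varphi^*$ (essentially a relative cohomology group for the mapping cone of $\varphi^*$), finite-dimensional precisely because of the finite Betti number hypothesis. Since extension of scalars commutes with all these constructions and is faithfully flat, an element of such a $\k$-space vanishes iff its image in $V\otimes_{\k}\K$ vanishes; the hypothesis of formality of $\varphi\otimes\id_{\K}$ gives vanishing of the obstruction over $\K$, hence over $\k$, so the construction extends.

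The main obstacle is organizing this inductive step so that at every degree the obstruction is genuinely a well-defined element of a finite-dimensional $\k$-space stable under $-\otimes_{\k}\K$, and so that the freedom in choosing $\mu_A$, $\mu_B$, and the homotopy at lower degrees (encoded by the Halperin--Stasheff gauge group acting algebraically on the space of realizations) can be coordinated with the corresponding $\K$-choices guaranteed by the hypothesis. This is essentially the morphism-level analogue of the Halperin--Stasheff descent machinery underlying Theorem \ref{thm:Halperin-Stasheff} and Corollary \ref{cor:Kformal}, with the finiteness hypothesis ensuring that the entire obstruction calculus stays in finite-dimensional linear algebra over $\k$, where descent from $\K$ to $\k$ is transparent.
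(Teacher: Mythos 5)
The paper does not prove this theorem; it is quoted from \cite{Sullivan77} and \cite{VP79}, so there is no internal argument to compare against. Your overall route --- reduce to the case where $A$ and $B$ are formal over $\k$ via Corollary \ref{cor:Kformal}, reformulate formality of $\varphi$ as homotopy-commutativity of the square \eqref{eq:phif}, and then run a Halperin--Stasheff-style obstruction calculus degree by degree --- is exactly the strategy of the cited reference \cite{VP79}, which extends the descent machinery behind Theorem \ref{thm:Halperin-Stasheff} from objects to morphisms. Your forward direction is complete (base change preserves quasi-isomorphisms, homotopies, and identifies $(\varphi\otimes\id_{\K})^*$ with $\varphi^*\otimes\id_{\K}$), and the reduction to formal source and target is correct, since formality of the morphism $\varphi\otimes\id_{\K}$ forces formality of $A\otimes\K$ and $B\otimes\K$, whence of $A$ and $B$ by Corollary \ref{cor:Kformal}.

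The gap is in your third paragraph, and it is not merely a matter of bookkeeping. You argue: the obstruction lives in a finite-dimensional $\k$-vector space $V$, the map $V\to V\otimes_{\k}\K$ is injective, the hypothesis kills the obstruction over $\K$, hence it vanishes over $\k$. But the element of $V\otimes_{\k}\K$ that the hypothesis kills is the obstruction attached to \emph{some} $\K$-rational system of choices ($\mu_{A\otimes\K}$, $\mu_{B\otimes\K}$, homotopies in lower degrees), and there is no reason this system is the base change of a $\k$-rational one; so the vanishing element is not the image of your $\k$-obstruction under $V\hookrightarrow V\otimes_{\k}\K$, and injectivity gives nothing. The actual content of the Halperin--Stasheff/Vigu\'{e}-Poirrier argument is different: one shows that, given a $\k$-rational partial solution through degree $n-1$, the set of admissible extensions to degree $n$ (modulo the gauge ambiguity) is either empty or an affine subspace of a finite-dimensional $\k$-vector space defined over $\k$, and an affine $\k$-subspace has a $\K$-point if and only if it has a $\k$-point; one must simultaneously prove that the $\K$-rational witness can be replaced, stage by stage, by a gauge-equivalent $\k$-rational one without changing the homotopy class of the data already constructed. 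You name this coordination problem in your final paragraph but do not resolve it, so what you have is a correct plan with the central step asserted rather than proved.
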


We do not know whether a statement in the spirit of 
Theorems \ref{thm:i-formalField} and \ref{thm:formal-maps} 
holds for $q$-formal maps. 

\begin{example}
\label{ex:hopf-model}
Fix an even integer $n\ge 2$ and consider the $\cdga$ morphism 
$\varphi\colon (A,d)\to (B,0)$, where $A=\bwedge(a,b)$, 
with $\abs{a}=n$, $\abs{b}=2n-1$, and differential given 
by $d(a)=0$ and $d(b)=a^2$; $B=\bwedge(c)$ with $\abs{c}=2n-1$; 
and  $\varphi$ is given by $\varphi(a)=0$ and $\varphi(b)=c$. 
Then $H^*(A)=\bwedge(u)$ with $\abs{a}=n$, and the 
$\cdga$ map $\psi \colon A\to H^*(A)$ given by $\psi(a)=u$ and 
$\psi(b)= 0$ induces the identity in cohomology. Nevertheless, the map 
$\varphi^* \colon \widetilde{H}^*(A)\to \widetilde{H}^*(B)$ is trivial, 
and so the morphism $\varphi$, which is non-trivial, cannot be formal.
\end{example}  

\subsection{Massey products}
\label{subsec:massey}
A well-known obstruction to formality is provided by the higher-order 
Massey products, introduced by W.S.~Massey in \cite{Ma-58}, and 
studied for instance in \cite{Kraines}, \cite{May}, \cite{Porter}, \cite{TO}, 
\cite{RT}, \cite{BT-2}, and \cite{Porter-Suciu-topappl}.  

Let $(A,d)$ be a $\k$-$\dga$ and let $u_1, \dots, u_n$ 
be elements in $H^{*}(A)$; without loss of essential generality, 
we may assume that $n\ge 3$ and each $u_i$ is homogeneous 
and of positive degree.  
A {\em defining system}\/ for $u_1, \dots, u_n$ is a collection 
of elements $a_{i,j}\in A$ such that 
$d a_{i-1,i}=0$ and $[a_{i-1,i}]=u_i$ for $1\le i\le n$ and 
$d a_{i,j}=\sum_{i<r<j} \bar{a}_{i,r} a_{r,j}$ for 
$0\le i<j\le n$ and $(i,j)\ne (0,n)$.
It is readily verified that the element 
\begin{equation}
\label{eq:massey-prod}
\alpha\coloneqq \sum_{0<r<n} \bar{a}_{0,r} a_{r,n}
\end{equation}
is a cocycle, of degree $\abs{\alpha}=2-n+\sum_{i=1}^{n} \abs{u_i}$. 
The $n$-fold Massey product $\langle u_1, \dots, u_n \rangle$, 
then, is the subset of $H^{*}(A)$ consisting of the cohomology classes 
$[\alpha]$ corresponding to all possible defining systems for $u_1, \dots, u_n$.
We say that the Massey product is {\em defined}\/ if there is at least one such 
defining system, or, equivalently, $\langle u_1, \dots, u_n \rangle\ne \emptyset$,  
in which case the indeterminancy of the Massey product is the subset 
$\{u-v\mid u,v\in \langle u_1, \dots, u_n \rangle\}\subseteq H^*(A)$.
When a Massey product is defined, we say it {\em vanishes}\/ if it contains 
the element $0$; otherwise, it is non-vanishing.

The simplest Massey triple products are as follows. 
Let $u_1, u_2, u_3$ be elements in $H^1(A)$ 
such that $u_1u_2 = u_2 u_3 = 0$. We may then choose 
$1$-cocycles $a_{0,1}, a_{1,2}, a_{2,3}$  representing $u_1, u_2, u_3$  
and $1$-cochains $a_{0,2}$ and $a_{1,3}$ such that 
$da_{0,2} = - a_{0,1} a_{1,2}$ and $da_{1,3} = - a_{1,2} a_{2,3}$, 
The triple product $\langle u_1,u_2, u_3 \rangle$ is then  
the subset of $H^2(A)$ consisting of the cohomology classes 
$-[a_{0,1} a_{1,3} + a_{0,2}a_{2,3}]$, for all such choices of 
defining systems. Due to the ambiguity in the choices made, 
$\langle u_1, u_2, u_3\rangle$ may be viewed as a coset 
of $u_1 \cdot H^1(A)+H^1(A)\cdot u_3$ in $H^2(A)$.

\begin{example}
\label{ex:non-formal-cdga-again}
Let $(A,d)$ be the cdga from Example \ref{ex:non-formal-cdga} with $n=1$; 
namely, $A$ is the exterior algebra on generators $a_1,a_2,b$ in degree $1$ 
and differential given by $d a_i=0$ and $d b=a_1a_2$. Letting $u_i=[a_i]\in H^1(A)$, we 
have that the triple Massey products $\langle u_1, u_1, u_2\rangle=\{[a_1b]\}$ and 
$\langle u_1, u_2, u_2\rangle=\{[ba_2]\}$ are defined, have $0$ indeterminacy, 
and are non-van\-ishing; in fact, the two cohomology classes generate $H^2(A)$.
Therefore, $A$ is not formal.
\end{example}

Massey products enjoy the following (partial) functoriality properties.

\begin{proposition}[\cite{Kraines}, \cite{May}]
\label{prop:massey-natural}
 Let $\varphi\colon A\to B$ be a $\dga$ morphism, and let 
$\varphi^*\colon H^*(A)\to H^*(B)$ be the induced morphism 
in cohomology; then 
\begin{equation}
\label{eq:massey-func}
\varphi^*(\langle u_1, \dots, u_n\rangle) 
\subseteq \langle \varphi^*(u_1), \dots, \varphi^*(u_n)\rangle .
\end{equation}
Moreover, if $\varphi$ is a quasi-isomorphism, then \eqref{eq:massey-func} 
holds as equality.
\end{proposition}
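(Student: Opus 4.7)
The inclusion is routine: given a defining system $\{a_{i,j}\}_{0\le i<j\le n,\,(i,j)\ne(0,n)}$ for $u_1,\dots,u_n$ in $A$, I would apply $\varphi$ termwise to produce $\{\varphi(a_{i,j})\}$ in $B$. Since $\varphi$ preserves $d$ and multiplication, the defining-system relations $d\varphi(a_{i-1,i})=0$, $[\varphi(a_{i-1,i})]=\varphi^*(u_i)$, and $d\varphi(a_{i,j})=\sum_{i<r<j}\overline{\varphi(a_{i,r})}\,\varphi(a_{r,j})$ hold automatically. The corresponding Massey cocycle in $B$ is precisely $\varphi(\alpha)$, whose class is $\varphi^*([\alpha])$; thus every element of $\varphi^*(\langle u_1,\dots,u_n\rangle)$ lies in $\langle \varphi^*(u_1),\dots,\varphi^*(u_n)\rangle$.

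For equality when $\varphi$ is a quasi-isomorphism, the strategy is to lift a given defining system $\{b_{i,j}\}$ in $B$, with Massey cocycle $\beta$, to a defining system $\{a_{i,j}\}$ in $A$ together with auxiliary cochains $h_{i,j}\in B$ (for $(i,j)\ne (0,n)$) satisfying the chain-level matching identity
\[
\varphi(a_{i,j}) = b_{i,j} + d h_{i,j}.
\]
This identity, rather than mere cohomology-level matching, is what allows the inductive step to close, since the $a_{i,j}$ with $j-i\ge 2$ are not cocycles.

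The construction proceeds by induction on $j-i$. For $j-i=1$, pick any cocycle representative $a_{i-1,i}\in A$ of $u_i$; since $\varphi^*[a_{i-1,i}]=\varphi^*(u_i)=[b_{i-1,i}]$, an $h_{i-1,i}$ with $\varphi(a_{i-1,i})-b_{i-1,i}=dh_{i-1,i}$ exists. For $j-i=k\ge 2$, the element $y:=\sum_{i<r<j}\bar{a}_{i,r}a_{r,j}\in A$ is a cocycle. Substituting the inductive identities into $\varphi(y)$ and expanding via graded Leibniz rewrites it as $\sum_{i<r<j}\bar{b}_{i,r}b_{r,j}$ plus an explicit coboundary in $B$ built from the $h_{i,r}$, $h_{r,j}$, $b_{i,r}$, $b_{r,j}$. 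When $(i,j)\ne (0,n)$, the first sum equals $db_{i,j}$, so $\varphi(y)$ is a coboundary; injectivity of $\varphi^*$ forces $y$ to be a coboundary in $A$, and any $a_{i,j}$ with $da_{i,j}=y$ may be further modified by a cocycle (using surjectivity of $\varphi^*$) to enforce the desired matching identity, producing $h_{i,j}$.

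Finally, applying the same Leibniz expansion at $(i,j)=(0,n)$ gives $\varphi(\alpha)=\beta + d(\cdot)$, whence $\varphi^*([\alpha])=[\beta]$, completing the equality. The main technical obstacle is the Leibniz expansion itself: verifying that the correction terms $\bar{b}_{i,r}\,dh_{r,j}$, $\overline{dh_{i,r}}\,b_{r,j}$, $\overline{dh_{i,r}}\,dh_{r,j}$, and those arising from rewriting $db_{i,r}$ via the defining-system relations in $B$, assemble into an \emph{exact} coboundary rather than merely a cocycle, with the correct graded signs. Signs aside, this is a purely algebraic bookkeeping exercise; I would record the expansion as a single identity in the bigraded object $\bigoplus A\otimes h_{*,*}$-products and verify it once and for all before invoking it in the induction.
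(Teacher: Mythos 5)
The paper does not actually prove this proposition; it is quoted from Kraines and May, so there is no in-house argument to compare against. Your proof of the inclusion \eqref{eq:massey-func} is correct and is the standard one: a $\dga$ morphism carries a defining system to a defining system and the associated cocycle \eqref{eq:massey-prod} to its image, so nothing more needs to be said there.

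The equality part has the right overall strategy (induct on $j-i$, use injectivity of $\varphi^*$ to solve $da_{i,j}=y$ and surjectivity to adjust $a_{i,j}$ by a cocycle), but the matching identity you propose, $\varphi(a_{i,j})=b_{i,j}+dh_{i,j}$, is too strong and already fails at the first nontrivial stage. Take $j-i=2$: expanding
\begin{equation*}
\varphi(\bar{a}_{i,i+1}a_{i+1,j})=\overline{(b_{i,i+1}+dh_{i,i+1})}\,(b_{i+1,j}+dh_{i+1,j})
\end{equation*}
one finds $\varphi(y)=\bar{b}_{i,i+1}b_{i+1,j}+dc_{i,j}$ with $c_{i,j}=\pm\,\bar{b}_{i,i+1}h_{i+1,j}\pm \bar{h}_{i,i+1}b_{i+1,j}\pm\overline{dh_{i,i+1}}\,h_{i+1,j}$, and the adjustment by a cocycle only yields $\varphi(a_{i,j})=b_{i,j}+c_{i,j}+dw$. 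The cochain $c_{i,j}$ is \emph{not} exact in general (it would be if the $b$'s lay in the image of $\varphi$, which you cannot arrange), so the claimed identity cannot be enforced. Correspondingly, at later stages the correction terms do \emph{not} assemble into an exact coboundary: rewriting $\bar{b}_{i,r}\,dh_{r,j}$ and $\overline{dh_{i,r}}\,b_{r,j}$ as coboundaries via Leibniz produces leftover terms $\pm\bar{h}_{i,r}\,db_{r,j}$ and $\pm (d\bar{b}_{i,r})h_{r,j}$, which by the defining-system relations are genuine triple products of $b$'s and $h$'s, not coboundaries. The standard repair is to weaken the inductive hypothesis to the statement that $\{\varphi(a_{i,j})\}$ and $\{b_{i,j}\}$ are \emph{homotopic} defining systems, i.e.
\begin{equation*}
\varphi(a_{i,j})-b_{i,j}=dh_{i,j}+\sum_{i<r<j}\Big(\pm\,\overline{\varphi(a_{i,r})}\,h_{r,j}\pm\bar{h}_{i,r}b_{r,j}\Big),
\end{equation*}
which is exactly what the leftover terms force; with this hypothesis the induction closes, and the final step $(i,j)=(0,n)$ gives $\varphi(\alpha)-\beta=d\big(\sum_{0<r<n}(\pm\,\overline{\varphi(a_{0,r})}\,h_{r,n}\pm\bar{h}_{0,r}b_{r,n})\big)$, whence $\varphi^*[\alpha]=[\beta]$. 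So the gap is not ``signs and bookkeeping'' but the form of the invariant you carry through the induction; as written, the induction cannot get past $j-i=2$.
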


In particular, if $\langle u_1, \dots, u_n\rangle$ is defined, then 
$\langle \varphi^*(u_1), \dots, \varphi^*(u_n)\rangle$ is also defined; 
and if, in addition, $\langle \varphi^*(u_1), \dots, \varphi^*(u_n)\rangle$ is 
non-vanishing, then $\langle u_1, \dots, u_n\rangle$ 
is also non-vanishing. As another consequence, the following holds: 
if $A\simeq B$, then all Massey products in $H^*(A)$ vanish 
if and only if all Massey products in $H^*(B)$ vanish.

Finally, if the map $\varphi\colon A\to B$ is a $q$-quasi-isomorphism, 
for some $q\ge 0$, then \eqref{eq:massey-func} holds as equality in 
degrees up to $q+1$. Thus, if $A\simeq_{q} B$, then all Massey 
products in $H^{\le q+1}(A)$ vanish if and only if all Massey 
products in $H^{\le q+1}(B)$ vanish.

\subsection{Massey products and formality}
\label{subsec:massey-formal}

The vanishing of Massey products provides a well-known 
obstruction to formality. An analogous statement 
holds for partial formality. For completeness, we 
make a formal statement and sketch the proof.

\begin{proposition}
\label{prop:massey-formal}
Let $(A,d)$ be a $\k$-$\dga$.  If $A$ formal, then all Massey 
products in $H^{*}(A)$ vanish. Furthermore, if $A$ is $q$-formal, 
for some $q\ge 1$, then all Massey products in $H^{\le q+1}(A)$ vanish.
\end{proposition}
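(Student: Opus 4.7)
The plan is to reduce the claim to the observation that in $(H^{*}(A),0)$, the cohomology algebra viewed as a $\dga$ with trivial differential, every defined Massey product automatically contains zero, and then to transport this vanishing back to $A$ through the weak equivalence by means of Proposition \ref{prop:massey-natural}.

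First I would check the vanishing in $(H^{*}(A),0)$ directly. Suppose $\langle u_1,\dots,u_n\rangle$ is defined there, with $n\ge 3$. Because $d=0$, every defining-system equation $d a_{i,j}=\sum_{i<r<j}\bar a_{i,r}a_{r,j}$ forces the right-hand side to vanish. Setting $a_{i-1,i}=u_i$ and $a_{i,j}=0$ for all $j>i+1$ yields a valid defining system: the constraints at $j=i+2$ read $u_{i+1}u_{i+2}=0$, which are already necessary for the Massey product to be defined at all, while the constraints with $j-i\ge 3$ reduce to $0=0$ with our choices. Substituting into \eqref{eq:massey-prod} gives $\alpha=0$, since in each term $\bar a_{0,r}a_{r,n}$ of the sum at least one factor is zero whenever $n\ge 3$. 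Hence $0\in\langle u_1,\dots,u_n\rangle$.

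Next I would transport the vanishing back to $A$. Assume $A$ is formal and fix a zig-zag as in \eqref{eq:ziggy} of quasi-iso\-morphisms connecting $A$ to $(H^{*}(A),0)$. Given a defined Massey product $\langle u_1,\dots,u_n\rangle$ in $H^{*}(A)$, I would inductively apply the equality clause of Proposition \ref{prop:massey-natural} along each arrow of the zig-zag: because every induced map on cohomology is an isomorphism, the classes $u_1,\dots,u_n$ can be pushed or pulled through the intermediate $\dga$s while matching the Massey product as a subset, eventually landing on the corresponding Massey product inside $(H^{*}(A),0)$. By the previous paragraph that set contains $0$, and the composite cohomology isomorphism carries $0$ back to $0\in\langle u_1,\dots,u_n\rangle$, as required. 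For the $q$-formal case, the same argument works verbatim with $q$-quasi-iso\-morphisms, invoking the last sentence of Proposition \ref{prop:massey-natural}, which guarantees equality in \eqref{eq:massey-func} in degrees $\le q+1$; this is exactly the range in which our Massey products are assumed to lie.

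The one point that I would pause to spell out, and which I expect to be the main obstacle, is that the property of being ``defined'' transfers in both directions of a zig-zag edge. Under an arbitrary $\dga$ map, a defining system pushes forward componentwise, so ``defined'' propagates in the direction of the arrow. The reverse direction, needed for leftward arrows in \eqref{eq:ziggy}, follows from the equality in Proposition \ref{prop:massey-natural}: if $\varphi^{*}\bigl(\langle u_1,\dots,u_n\rangle\bigr)$ coincides with $\langle \varphi^{*}(u_1),\dots,\varphi^{*}(u_n)\rangle$ and the latter is non-empty, then so is the former, and hence the source Massey product is defined. This observation is what allows the zig-zag transport to be iterated safely in both the formal and the $q$-formal settings.
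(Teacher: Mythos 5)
Your proposal is correct and follows essentially the same route as the paper: first verify that in a $\dga$ with zero differential the trivial defining system ($a_{i-1,i}=u_i$, all other $a_{i,j}=0$) shows every defined Massey product contains $0$, then transport this back to $A$ along the (q-)weak equivalence using the functoriality and equality clauses of Proposition \ref{prop:massey-natural}. Your extra care in checking that ``definedness'' propagates in both directions of the zig-zag is a point the paper leaves implicit, but it is the same argument.
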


\begin{proof}
First suppose $d=0$, so that $H^*(A)=A$. Let 
$\langle u_1, \dots, u_n\rangle$ be a Massey product. 
We may then choose a defining system with $a_{i-1,i}=u_i$ 
and all other $a_{i,j}=0$, which implies that the cocycle 
$\alpha$ from \eqref{eq:massey-prod} is equal to $0$; 
thus, $\langle u_1, \dots, u_n\rangle$ vanishes. 

Now suppose $(A,d)$ is formal, that is, $(A,d)\simeq (B,0)$. 
As we just saw, all Massey products vanish in $H^*(B)$; hence, 
they must also vanish in $H^*(A)$. The case when $(A,d)$ is 
$q$-formal is treated completely analogously.
\end{proof}

In general, formality is stronger than the mere vanishing 
of all Massey products; in fact, it is equivalent to the
{\em uniform}\/ vanishing of all such products. This 
phenomenon will be illustrated in Theorem \ref{thm:nf-Massey-vanish}, 
where we shall see examples of non-formal $\cdga$s for which 
all Massey products vanish.

\section{Minimal models}
\label{sect:min-mod}

\subsection{Hirsch extensions}
\label{subsec:hirsch}
Given a graded $\k$-vector space $V^*$, recall that $\bwedge V$  
denotes the free graded, graded-com\-mutative algebra generated by $V$. 
Choosing a homogeneous basis $\XX=\{x_{\alpha}\}_{\alpha\in J}$ 
for $V$, this algebra may be identified with 
$\bwedge \XX\coloneqq \bigotimes_{\alpha} \bwedge (x_{\alpha})$, 
where $\bwedge (x)$ is the exterior (respectively, polynomial) 
algebra on a single generator $x$ of odd (respectively, even) degree. 

Now let $A=(A^*,d_A)$ be an arbitrary $\cdga$. 
A {\em Hirsch extension}\/ $A$ (of degree $n$) is an inclusion, 
$(A,d_A)\inj \left(A\otimes \bwedge V,d\right)$, where 
$V$ is a $\k$-vector space concentrated in 
degree $n$ and the differential $d$ sends $V$ into $A^{n+1}$. 
We say this is a {\em finite}\/ Hirsch extension if $V$  
is finite-dimensional.  The next lemma follows straight 
from the definitions.

\begin{lemma}
\label{lem:hi}
Let $\alpha\colon (A,d_A)\inj (A\otimes \bwedge V,d)$ be the 
inclusion map of a Hirsch extension of degree $n+1$.  
Then $\alpha$ is an $n$-quasi-isomorphism.  
\end{lemma}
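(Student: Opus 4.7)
The plan is to unwind the definitions in low cohomological degree. The key observation is that when $V$ is concentrated in degree $n+1$, the free graded-commutative algebra $\bwedge V$ has $(\bwedge V)^0=\k$, $(\bwedge V)^i=0$ for $1\le i\le n$, and $(\bwedge V)^{n+1}=V$; the possible degrees of elements of $\bwedge V$ are exactly $0, n+1, 2(n+1), \dots$ Computing the tensor product degree by degree, this yields
\[
(A\otimes \bwedge V)^i = A^i \text{ for } i\le n,\qquad (A\otimes \bwedge V)^{n+1} = A^{n+1}\oplus V.
\]

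First I would observe that, in degrees $i\le n$, the cochain complex $(A\otimes \bwedge V,d)$ literally coincides with $(A,d_A)$: the differential restricts to $d_A$ on $A\otimes 1$, and no elements of $V$ appear in these degrees. This immediately gives isomorphisms $\alpha^*\colon H^i(A)\isom H^i(A\otimes \bwedge V)$ for all $i\le n$.

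Next I would verify injectivity of $\alpha^*$ in degree $n+1$. Suppose $a\in Z^{n+1}(A)$ has cohomology class $[a]$ mapping to zero in $H^{n+1}(A\otimes \bwedge V)$. Then $a=d\xi$ for some $\xi\in (A\otimes \bwedge V)^n=A^n$, and since $d$ agrees with $d_A$ on $A$, this gives $a\in B^{n+1}(A)$, so $[a]=0$ in $H^{n+1}(A)$, as required.

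There is essentially no obstacle here: the entire argument is a direct consequence of the fact that the added generators live in degree $n+1$, so they contribute nothing to the cochain complex below that degree and only a direct-sum summand at degree $n+1$ itself. The reason one cannot push the conclusion beyond an $n$-quasi-isomorphism is that a cocycle $v\in V$ with $dv=0\in A^{n+2}$ represents a genuinely new class in $H^{n+1}(A\otimes \bwedge V)$, so $\alpha^*$ typically fails to be surjective in degree $n+1$.
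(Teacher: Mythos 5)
Your proof is correct and is exactly the direct unwinding of definitions that the paper has in mind (it offers no written proof, asserting only that the lemma ``follows straight from the definitions''): since $(\bwedge V)^i=0$ for $1\le i\le n$, the two complexes agree through degree $n$, which gives the isomorphisms on $H^{\le n}$ and, via $(A\otimes\bwedge V)^n=A^n$, the injectivity on $H^{n+1}$. The only cosmetic quibble is that for a non-connected $A$ one has $(A\otimes\bwedge V)^{n+1}=A^{n+1}\oplus(A^0\otimes V)$ rather than $A^{n+1}\oplus V$, but this identification plays no role in your argument, so nothing is affected.
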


Now suppose $V$ is an oddly-graded, finite-dimensional 
vector space, with homogeneous basis $\{ t_i \in V^{m_i} \}$. Given a 
degree $1$ linear map, $\tau\colon V^{*} \to Z^{*+1} (A)$, 
we define the corresponding Hirsch extension as the $\cdga$ 
$\left(A\otimes_{\tau} \bwedge V , d\right)$ where the differential 
$d$ extends the differential on $A$, while $dt_i=\tau (t_i)$. 

\begin{proposition}[\cite{Le}]
\label{prop:hirsch}
The isomorphism type of the $\cdga$  $(A\otimes_{\tau} \bwedge V, d)$ 
depends only on $A$ and the homomorphism 
induced in cohomology, $[\tau ]\colon V^{*} \to H^{* +1} (A)$. 
Moreover, $[\tau ]$ and $[\tau ]\circ g$ give isomorphic extensions, for any
automorphism $g$ of the graded vector space $V$.
\end{proposition}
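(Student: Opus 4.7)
The plan is to prove both claims by exhibiting explicit $\cdga$ isomorphisms that restrict to the identity on $A$, exploiting the universal property of $\bwedge V$ as the free graded-commutative algebra on $V$: any degree-preserving assignment $V \to A \otimes \bwedge V$ extends uniquely to an algebra endomorphism of $A \otimes \bwedge V$ that is the identity on $A$, and compatibility with graded-commutativity is automatic since the target is itself graded-commutative. Once that is set up, verifying that the differentials are intertwined reduces to a computation on the generators $t_i$, since both differentials in sight are derivations.

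For the first claim, suppose $\tau, \tau' \colon V \to Z^{*+1}(A)$ are degree-$1$ maps with $[\tau] = [\tau']$, and pick a degree-$0$ linear map $\eta \colon V \to A$ with $d_A \circ \eta = \tau - \tau'$. Consider the algebra endomorphism $\Phi$ determined by $\Phi|_A = \id_A$ and $\Phi(t_i) = t_i + \eta(t_i)$. On generators,
\[
d_{\tau'}\Phi(t_i) \,=\, \tau'(t_i) + d_A\eta(t_i) \,=\, \tau(t_i) \,=\, \Phi(d_\tau(t_i)),
\]
and since both $d_\tau$ and $d_{\tau'}$ are derivations extending $d_A$, this identity propagates to all of $A \otimes \bwedge V$. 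The assignment $t_i \mapsto t_i - \eta(t_i)$ gives an explicit two-sided inverse, so $\Phi$ is a $\cdga$ isomorphism between the Hirsch extensions built from $\tau$ and $\tau'$.

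For the second claim, given an automorphism $g$ of the graded vector space $V$, extend $g$ to a graded algebra automorphism of $\bwedge V$ and set $\Psi = \id_A \otimes g$. A short calculation on generators shows $d_\tau \Psi(t_i) = \tau(g(t_i)) = \Psi(d_{\tau \circ g}(t_i))$, using that $d_\tau$ is $\k$-linear on $V$ and that $\Psi$ restricts to the identity on $A$. Hence $\Psi$ identifies $(A \otimes_{\tau \circ g} \bwedge V, d_{\tau \circ g})$ with $(A \otimes_\tau \bwedge V, d_\tau)$, with $\id_A \otimes g^{-1}$ as inverse.

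There is no substantive obstacle here; the only subtle point is guessing the correct candidate isomorphisms $\Phi$ and $\Psi$. Once chosen, compatibility with the differentials is immediate on the generators of the Hirsch extension, and everything else follows from the Leibniz rule together with the freeness of $\bwedge V$ on $V$.
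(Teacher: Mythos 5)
Your proof is correct. The paper offers no proof of its own---it defers to Lehmann's Lemmas II.2 and II.3, noting only that the argument extends to generators of mixed degrees---and your construction (the change of variables $t_i \mapsto t_i + \eta(t_i)$ with $d_A\circ\eta = \tau - \tau'$ for the first claim, and $\id_A \otimes\, g$ for the second, both checked on generators via the Leibniz rule and the freeness of $\bwedge V$) is precisely the standard argument that those lemmas carry out.
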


The above result is proved in \cite[Lemmas II.2 and II.3] {Le} 
in the case when all the degrees $m_i$ are equal;  
the same argument works in the general case.

\begin{proposition}[\cite{PS-imrn19}]
\label{prop:circlepd}
Let $A= B \otimes_{\tau} \bwedge (t_i)$ be a Hirsch extension with 
variables $t_i$ of odd degree $m_i$. If $B$ is an $n$-\textsc{pd-cdga}, then
$A$ is an $m$-\textsc{pd-cdga}, where $m=n+ \sum m_i$.
\end{proposition}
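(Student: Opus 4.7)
The plan is to argue by induction on the number $k$ of generators $t_i$, reducing everything to the case of a single Hirsch variable. The base case $k=0$ is trivial, since $A = B$ is by hypothesis an $n$-\textsc{pd-cdga}. For the inductive step, I would write $A = A' \otimes_{\tau'} \bwedge(t_k)$, where $A' = B \otimes_{\tau} \bwedge(t_1, \dots, t_{k-1})$ and $\tau'(t_k) = \tau(t_k)$, viewed as a cocycle in $A'$ via the inclusion $B \inj A'$ (it is one in $B$ by definition of $\tau$, hence in $A'$). By the inductive hypothesis, $A'$ is an $n'$-\textsc{pd-cdga} with $n' = n + m_1 + \cdots + m_{k-1}$, and the problem reduces to the single-variable case.

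So assume $A = B \otimes_{\tau} \bwedge(t)$ with $B$ an $n$-\textsc{pd-cdga} and $t$ of odd degree $m$. Since $t^2 = 0$, there is a decomposition $A^j = B^j \oplus B^{j-m} t$ in each degree. I would first verify the \textsc{pda} structure of $A$. Clearly $A^j = 0$ for $j > n+m$, while $A^{n+m} = B^n \cdot t \cong \k$. For the pairing $A^i \otimes A^{n+m-i} \to A^{n+m}$, I would display its matrix in the block decomposition above. The diagonal blocks $B \cdot B$ and $Bt \cdot Bt$ both vanish --- the former because $B^{n+m} = 0$, the latter because $t^2 = 0$ --- leaving an anti-block-diagonal matrix whose two nontrivial blocks are, up to sign, the Poincar\'e duality pairings of $B$ in the complementary degree pairs $(i, n-i)$ and $(i-m, n+m-i)$. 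Non-degeneracy of each block then yields non-degeneracy of the whole.

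Next, I would verify the equivalent \textsc{pd-cdga} condition $d A^{n+m-1} = 0$. A general element of $A^{n+m-1}$ decomposes as $b + b' t$ with $b \in B^{n+m-1}$ and $b' \in B^{n-1}$. If $m \ge 2$, then the summand $B^{n+m-1}$ vanishes; if $m = 1$, then $b \in B^n$ and $d b \in B^{n+1} = 0$. In either case $d b = 0$. For the remaining piece, Leibniz yields
\[
d(b' t) = d b' \cdot t + (-1)^{n-1} b' \cdot \tau(t),
\]
in which $d b' = 0$ because $B$ is a \textsc{pd-cdga} (so $d B^{n-1} = 0$), and $b' \cdot \tau(t) \in B^{n-1} \cdot B^{m+1} \subseteq B^{n+m} = 0$. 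Thus $d A^{n+m-1} = 0$, completing the single-variable case and hence the induction.

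There is no real conceptual obstacle here, only the bookkeeping of degrees and signs. The whole argument rests on three observations combining cleanly once $A = B \oplus B t$ is used: the Poincar\'e pairing on $B$, the oddness of $t$ (forcing $t^2 = 0$ so that the \textsc{pda} structure behaves well under the extension), and the top-degree conditions $B^{>n} = 0$ and $d B^{n-1} = 0$ on $B$. The subtlest point to track is that the Hirsch differential, which could in principle move classes from $B \cdot t$ back into the top component of $A$, is killed by precisely the \textsc{pd-cdga} condition on $B$.
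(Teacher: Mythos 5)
Your argument is correct, and it is the natural direct verification: reduce to a single odd generator, use the splitting $A = B \oplus Bt$ to exhibit the pairing as anti-block-diagonal with Poincar\'e duality blocks of $B$, and kill $dA^{n+m-1}$ using $B^{>n}=0$ and $dB^{n-1}=0$. Note that the survey itself states this proposition with only a citation to \cite{PS-imrn19} and gives no proof, so there is nothing in the paper to compare against; your write-up supplies the expected argument, with the degree bookkeeping and the one genuinely delicate point (that the Hirsch differential cannot push $B^{n-1}t$ into top degree) handled correctly.
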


\subsection{Minimal $\cdga$s}
\label{subsec:min-cdga}
The following key definition is due to Sullivan \cite{Sullivan77}.

\begin{definition}[\cite{Sullivan77}]
\label{def:min-x}
A $\cdga$ $A=(A^{*},d)$ is said to be {\em minimal}\/  if 
the following conditions are satisfied.
\begin{enumerate}
\item \label{m1} 
$A=\bwedge \XX$ is the free $\cga$ on positive-degree 
generators $\XX=\{x_{\alpha}\}_{\alpha\in J}$ indexed by a well-ordered 
set $J$
\item \label{m2} 
$d x_{\alpha}\in \bwedge (\{x_{\beta} \mid \beta<\alpha\})$ for all $\alpha$.
\item  \label{m3} 
$d x_{\alpha}\in \bwedge^{\! +}\XX\cdot \bwedge^{\! +}\XX$ for all $\alpha$, 
where $\bwedge^{\! +}\XX$ is the ideal generated by $\XX$.
 \end{enumerate}
 \end{definition}
Letting $V^{*}$ be the graded vector space generated 
by the set $\XX$, we may also write $A=(\bwedge V, d)$. 
We say that $A$ is {\em $q$-minimal}\/ (for some $q\ge 1$) 
if $A$ is minimal and $V^i=0$ for all $i>q$, or, equivalently, 
$\deg(x_{\alpha})\le q$ for all $\alpha$.

Here is an alternative interpretation of this notion. The $\cdga$ 
$(A,d)$ is minimal if $A=\bigcup_{j\ge 0}A_j$, where 
$A_0=\k$, each $A_{j}$ is a Hirsch extension of $A_{j-1}$, 
and the differential $d$ is decomposable, i.e.,  
$dA\subset A^+\wedge A^+$, where $A^+=\bigoplus_{n\ge 1}A^{n }$.
This yields an increasing, exhausting filtration of $A$ by the sub-$\dga$s $A_j$. 
The decomposability of the differential is automatically satisfied if $A$ is 
generated in degree $1$.

The next lemma illustrates some of the usefulness of the notion 
of $1$-minimality.

\begin{lemma}[\cite{DeS18}]
\label{lem:surj}
Let $A$ be a $1$-minimal $\cdga$, and let $\varphi, \psi\colon A\to B$ 
be two homotopic $\cdga$ morphisms.  
Suppose $d_B=0$ and  $\varphi^1\colon A^1\to B^1$ is surjective.  
Then $\psi^1\colon A^1\to B^1$  is also surjective.  
\end{lemma}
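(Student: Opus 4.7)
The plan is to exploit the $1$-minimality of $A$ to run a transfinite induction on the generators that controls the image of $\psi^1$ in terms of that of $\varphi^1$. First I would write $A = (\bwedge V, d)$ with $V = V^1$ and fix a well-ordered homogeneous basis $\{x_\alpha\}_{\alpha\in J}$ of $V$ satisfying $dx_\alpha \in \bwedge V_{<\alpha}$, where $V_{\le\alpha}$ (resp.\ $V_{<\alpha}$) denotes the span of the preceding-or-equal (resp.\ strictly preceding) basis vectors. Then I would fix a homotopy $\Phi\colon A \to B \otimes \bwedge(t,dt)$ between $\varphi$ and $\psi$, i.e.\ with $\Ev_0 \circ \Phi = \varphi$ and $\Ev_1 \circ \Phi = \psi$. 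Assuming $B$ is connected (so $B^0 = \k$), I would decompose, for each $v \in V$,
\[
\Phi(v) = \beta_v(t) + \gamma_v(t)\, dt, \qquad \beta_v \in B^1\otimes\k[t],\ \gamma_v \in \k[t],
\]
noting that $\varphi(v) = \beta_v(0)$ and $\psi(v) = \beta_v(1)$.

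The central claim is: $\beta_{x_\alpha}(t) \in \varphi(V_{\le\alpha})\otimes\k[t]$ for every $\alpha\in J$. I would prove this by transfinite induction on $\alpha$. Since all generators lie in degree $1$, I can write $dx_\alpha = \sum_i v_i \wedge v_i'$ with $v_i, v_i'\in V_{<\alpha}$. Expanding the product and using $dt\wedge dt = 0$ together with the centrality of scalars in $\k$ gives
\[
\Phi(v_i)\wedge \Phi(v_i') \,=\, \beta_{v_i}\beta_{v_i'} \,+\, \bigl(\gamma_{v_i'}\beta_{v_i} - \gamma_{v_i}\beta_{v_i'}\bigr)\, dt,
\]
whereas, using $d_B=0$ and $d(t) = dt$, one computes $d\Phi(x_\alpha) = -\beta'_{x_\alpha}(t)\,dt$. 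Equating $dt$-components in $\Phi(dx_\alpha) = d\Phi(x_\alpha)$ yields the ODE
\[
\beta'_{x_\alpha}(t) \,=\, \sum_i\bigl(\gamma_{v_i}(t)\,\beta_{v_i'}(t) - \gamma_{v_i'}(t)\,\beta_{v_i}(t)\bigr),
\]
whose right-hand side lies in $\varphi(V_{<\alpha})\otimes\k[t]$ by the induction hypothesis (this subspace being stable under multiplication by $\k[t]$). Integrating from $0$, where $\beta_{x_\alpha}(0) = \varphi(x_\alpha)$, closes the induction.

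Evaluating the claim at $t=1$ gives $\psi(x_\alpha)\in \varphi(V_{\le\alpha})\subseteq\varphi(A^1)$ for every $\alpha$, and hence $\psi(A^1)\subseteq\varphi(A^1) = B^1$ by linearity. Applying the same argument to the homotopy obtained from $\Phi$ via the substitution $t\mapsto 1-t$ (a homotopy from $\psi$ to $\varphi$) would then yield the reverse inclusion $\varphi(A^1)\subseteq\psi(A^1)$, whence $\psi^1(A^1) = B^1$. The main obstacle is careful bookkeeping of signs in computing $d\Phi(x_\alpha)$ and the wedge products $\Phi(v_i)\wedge\Phi(v_i')$; the argument crucially exploits both the concentration of generators in degree $1$ (so that $dx_\alpha$ decomposes as a sum of products of \emph{strictly earlier} degree-$1$ generators, enabling the induction) and the connectedness of $B$ (so that the scalars $\gamma_v\in\k[t]$ preserve the $\k$-subspaces $\varphi(V_{\le\alpha})\otimes\k[t]$).
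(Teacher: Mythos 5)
Your proof is correct, and it is essentially the argument from the cited source \cite{DeS18} (the survey itself does not reprove the lemma): decompose the homotopy as $\Phi(v)=\beta_v(t)+\gamma_v(t)\,dt$, extract the equation $\beta'_{x_\alpha}=\sum_i\bigl(\gamma_{v_i}\beta_{v_i'}-\gamma_{v_i'}\beta_{v_i}\bigr)$ from $\Phi(dx_\alpha)=d\Phi(x_\alpha)$, and integrate along the well-ordering of the degree-one generators; the reversed homotopy ($t\mapsto 1-t$, $dt\mapsto -dt$) then yields $B^1=\varphi(A^1)\subseteq\psi(A^1)$, which is the only inclusion actually needed. One substantive remark: the connectedness of $B$, which you insert as a parenthetical, is genuinely load-bearing and should be made an explicit hypothesis --- it is implicit in \cite{DeS18}, where $B$ is the cohomology algebra of a connected space. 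If $B^0\neq\k$, the coefficients $\gamma_v$ live in $B^0\otimes\k[t]$, and the integration only gives $\varphi(A^1)\subseteq B^0\cdot\psi(A^1)$, which does not force surjectivity of $\psi^1$; in fact the statement fails without connectedness. For example, take $B^0=\k[s]/(s^2)$, $B^1=B^0\cdot u$, $B^{\ge 2}=0$, $d_B=0$, and $A=\bwedge(x_1,x_2,x_3)$ with $dx_1=dx_2=0$ and $dx_3=x_1x_2$; then the morphisms given by $\varphi(x_1)=u$, $\varphi(x_2)=0$, $\varphi(x_3)=su$ and $\psi(x_1)=u$, $\psi(x_2)=0$, $\psi(x_3)=0$ are homotopic via $\Phi(x_1)=u\otimes 1$, $\Phi(x_2)=s\otimes dt$, $\Phi(x_3)=su\otimes(1-t)$, yet only $\varphi^1$ is surjective.
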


\subsection{Minimal models}
\label{subsec:min-mod}
Let $A$ be a $\cdga$. We say that a $\cdga$ $\M$ is a {\em minimal model}\/ for $A$ 
if $\M$ is a minimal $\cdga$ and there exists a quasi-isomorphism $\rho\colon \M \to A$. 
Likewise, we say that a minimal $\cdga$ $\M$ is a {\em $q$-minimal model}\/ for 
$A$ if $\M$ is generated by elements of degree at most $q$, and there 
exists a $q$-quasi-isomorphism $\rho\colon \M \to A$. A basic result in 
rational homotopy theory is the following existence and uniqueness theorem, 
first proved for minimal models by Sullivan \cite{Sullivan77}, and for 
partial minimal models by Morgan \cite{Mo}. 

\begin{theorem}[\cite{Mo}, \cite{Sullivan77}]
\label{thm:mm} 
Let $A$ be a $\k$-$\cdga$ with $H^{0}(A)=\k$ . Then $A$ admits a minimal model, 
$\M(A)$, unique up to isomorphism. Likewise, for each $q\ge 0$, there is a 
$q$-minimal model, $\M_q(A)$, unique up to isomorphism.
\end{theorem}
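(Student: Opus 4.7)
The plan is to establish existence by an inductive Hirsch-extension construction and then derive uniqueness from a lifting lemma for minimal $\cdga$s; the $q$-minimal statement is obtained by truncating both arguments at stage $q$.

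For existence, I will build a tower $\k = \M_0 \subset \M_1 \subset \M_2 \subset \cdots$ of minimal $\cdga$s equipped with compatible structure morphisms $\rho_n \colon \M_n \to A$, arranged so that $\M_n$ is generated in degrees at most $n$ and $\rho_n$ is an $n$-quasi-isomorphism, and then set $\M(A) \coloneqq \bigcup_n \M_n$ with the induced map $\rho$. The base case $\M_0 = \k$ works because $H^0(A) = \k$. Passing from $\rho_n$ to $\rho_{n+1}$ uses two finite Hirsch extensions of degree $n+1$. First, adjoin generators with zero differential to hit a complement of $\im H^{n+1}(\rho_n)$ in $H^{n+1}(A)$, making the induced map surjective, hence an isomorphism, on $H^{n+1}$. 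Second, adjoin generators in degree $n+1$ whose differentials are cocycles representing the kernel of the induced map in degree $n+2$, mapped by $\rho$ to cochains in $A^{n+1}$ whose coboundaries equal the images of these cocycles, so that the extended map becomes a monomorphism on $H^{n+2}$. Proposition \ref{prop:hirsch} guarantees the extension is well-defined up to isomorphism, while Lemma \ref{lem:hi} ensures that the cohomology in degrees $\le n$ is untouched. The $q$-minimal construction is identical, stopped after the $q$-th inductive step.

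For uniqueness, the main input is a lifting lemma: given any minimal $\cdga$ $\M$ with a morphism $\rho \colon \M \to A$ and any quasi-isomorphism $\rho' \colon \M' \to A$, there exists a $\cdga$ morphism $\varphi \colon \M \to \M'$, unique up to homotopy, with $\rho' \circ \varphi$ homotopic to $\rho$. I would prove this by induction along the Hirsch tower defining $\M$: extending $\varphi$ over a new generator $x$ of degree $m$ amounts to lifting a cocycle of degree $m+1$ through $\rho'$ and then correcting the candidate $\varphi(x)$ by a cochain so that $\rho'\varphi(x)$ becomes cohomologous to $\rho(x)$, both of which are possible precisely because $\rho'$ is a quasi-isomorphism. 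Applying the lemma to two minimal models $\rho,\rho'$ yields morphisms $\varphi \colon \M \to \M'$ and $\varphi' \colon \M' \to \M$; by the two-out-of-three property applied to $\rho' \varphi \simeq \rho$ and its mirror, both are quasi-isomorphisms, and a quasi-isomorphism between minimal $\cdga$s is necessarily an isomorphism (standard, verified by induction on the filtration by Hirsch stages). The $q$-minimal version follows from running the same lifting argument over the truncated tower.

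The main technical obstacle is the obstruction-theoretic step inside the lifting lemma, where the naive image $\varphi(x)$ must be adjusted by a cochain and a homotopy to reconcile $\rho' \varphi$ with $\rho$; this is the point at which both the decomposability of the differential required in Definition \ref{def:min-x} and the quasi-isomorphism hypothesis on $\rho'$ are crucial. Once this step is carried out, the rest of the proof is bookkeeping along the Hirsch filtration of $\M$.
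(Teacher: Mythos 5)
The paper does not prove this theorem; it cites \cite{Mo} and \cite{Sullivan77}, and your overall architecture (existence by an inductive Hirsch-extension construction, uniqueness via a lifting lemma plus the fact that a quasi-isomorphism between minimal $\cdga$s is an isomorphism) is exactly the classical route taken in those references. The uniqueness half of your argument is sound as sketched.

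There is, however, a genuine gap in the existence half: the claim that passing from $\rho_n$ to $\rho_{n+1}$ requires only \emph{two} Hirsch extensions in degree $n+1$. That count is correct only in the simply connected (or, more generally, degreewise nilpotent) setting. The theorem is stated for an arbitrary connected $\cdga$, where $H^1(A)$ may be nonzero, and then your second extension does not finish the job: after adjoining $V$ in degree $n+1$ with $dV$ representing the kernel classes in $H^{n+2}$, the products $V\cdot \M^1_n$ create new cocycles in degree $n+2$ whose images in $H^{n+2}(A)$ may again vanish, forcing yet another extension, and so on. The paper's own construction of the $1$-minimal model in Section 3.6 makes this concrete: already the step from $\M_0=\k$ to $\M_1$ is an infinite increasing union $A(1)\subset A(2)\subset\cdots$ of degree-$1$ Hirsch extensions (for $G=F_n$ this tower never stabilizes, since $\gr(F_n)$ is infinite-dimensional), so it cannot be achieved in two steps. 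The standard repair is a double induction: within each degree $n+1$ one performs a countable sequence of Hirsch extensions $\M_{n,0}\subset \M_{n,1}\subset\cdots$ and takes the union before moving to degree $n+2$; one must then verify that injectivity on $H^{n+2}$ holds for the colimit (using that cohomology commutes with this directed union) and that the well-ordering condition of Definition \ref{def:min-x}\eqref{m2} is preserved. Also note that ``finite'' Hirsch extensions cannot be guaranteed without finiteness hypotheses on $H^*(A)$, which the theorem does not assume. With these corrections your existence argument becomes the standard one, and the $q$-minimal case follows by stopping the outer induction at degree $q$ as you say.
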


By construction, $\M(A)=(\bwedge V,d)$ and $\M_q(A)=(\bwedge V^{\! \le q},d)$, 
for some graded vector space $V$. It follows that the minimal model  
$\M(A)$ is isomorphic to a minimal model built from the $q$-minimal 
model $\M_q(A)$ by means of Hirsch extensions in degrees $q+1$ 
and higher.  Thus, in view of Lemma \ref{lem:hi}, $\M_q(A)\simeq_{q} \M(A)$.  

Applying Lemma \ref{lem:truncate}, we obtain the following finiteness 
criterion for $\cdga$s.

\begin{proposition}
\label{prop:betti-minimal}
Let $A$ be a $q$-finite $\cdga$. Then $b_{i}(\mathcal{M}_q(A))<\infty$ for all $i\le q+1$.
\end{proposition}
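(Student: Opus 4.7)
The plan is to reduce the problem to a finite-dimensional situation via Lemma \ref{lem:truncate}, and then exploit the $q$-quasi-isomorphism property of the structural map of the $q$-minimal model to transfer finiteness of Betti numbers from the truncated $\cdga$ back to $\M_q(A)$.

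First I would apply Lemma \ref{lem:truncate} to produce a $q$-equivalence $A \simeq_q A[q]$, where $A[q]$ is a finite-dimensional $\cdga$ concentrated in degrees $\le q+1$. By Theorem \ref{thm:mm}, both $A$ and $A[q]$ admit $q$-minimal models, unique up to isomorphism. Since a $q$-equivalence of $\cdga$s induces an isomorphism between their $q$-minimal models (both $\M_q(A)$ and $\M_q(A[q])$ map by $q$-quasi-isomorphisms into any member of the zig-zag $A \simeq_q A[q]$, and any two $q$-quasi-isomorphisms from a common $q$-minimal model fit into the uniqueness statement), we obtain $\M_q(A) \cong \M_q(A[q])$.

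Next, since $A[q]$ is finite-dimensional, $\dim_{\k} H^i(A[q]) < \infty$ for all $i$. The structural morphism $\rho_q \colon \M_q(A[q]) \to A[q]$ is a $q$-quasi-isomorphism, meaning $\rho_q^*$ is an isomorphism in degrees $\le q$ and a monomorphism in degree $q+1$. Consequently, $b_i(\M_q(A[q])) = b_i(A[q]) < \infty$ for $i \le q$, and $b_{q+1}(\M_q(A[q])) \le b_{q+1}(A[q]) < \infty$. Combined with the isomorphism $\M_q(A) \cong \M_q(A[q])$ established above, this gives $b_i(\M_q(A)) < \infty$ for all $i \le q+1$.

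The only delicate point is verifying that a $q$-equivalence descends to an isomorphism of $q$-minimal models; this is the content of the uniqueness half of Theorem \ref{thm:mm} (applied at each step of the zig-zag), where one lifts each $q$-quasi-isomorphism through the $q$-minimal model using the standard obstruction-theoretic argument over Hirsch extensions of degree $\le q$. Once that is in hand, the rest is a direct transfer of finiteness through the $q$-quasi-isomorphism $\rho_q$.
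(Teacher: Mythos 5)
Your argument is correct and is precisely the route the paper takes: Proposition \ref{prop:betti-minimal} is stated there as an immediate consequence of Lemma \ref{lem:truncate}, with exactly the chain you spell out (pass to the finite-dimensional truncation $A[q]$, identify the $q$-minimal models of $q$-equivalent $\cdga$s via the uniqueness part of Theorem \ref{thm:mm}, and read off the Betti numbers through the $q$-quasi-isomorphism $\rho_q$). Your write-up just makes explicit the details the paper leaves to the reader, including the key point that the truncation is what controls degree $q+1$, where $b_{q+1}(A)$ itself need not be finite.
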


The minimal model comes with a structural 
quasi-isomorphism, $\rho\colon \M(A)\to A$. If $\rho'\colon \M'(A)\to A$ 
is another minimal model, there is an isomorphism $\psi\colon \M(A)\isom \M'(A)$ 
such that $\rho'\circ \psi\simeq \rho$. Furthermore, the minimal model is functorial: 
if $\varphi\colon A\to B$ is a morphism between two $\cdga$s with 
connected homology, there is an induced morphism of $\cdga$s, 
$\M(\varphi)\colon \M(A)\to \M(B)$, such that $\rho_B\circ \M(\varphi) \simeq
\varphi\circ \rho_A$. Similar results hold for the partial minimal models.

The above considerations imply the following: 
two $\cdga$s with connected homology are weakly isomorphic  
if and only if their minimal models are isomorphic.  
Alternatively, if $A$ and $A'$ are two $\cdga$s with connected homology, 
then $A\simeq A'$  if and only if there is a minimal 
$\cdga$ $\M$ and a short zig-zag of quasi-isomorphisms, 
\begin{equation}
\label{eq:amqa}
\begin{tikzcd}[column sep=24pt]
A  & \mathcal{M}\ar[r, "\rho'"]  \ar[l, "\ \rho" ']  & A'.
\end{tikzcd}
\end{equation}
Analogous results hold for $q$-minimal models.

\subsection{Minimality and formality}
\label{subsec:minmod-formal}

In \cite{DGMS}, Deligne, Griffiths, Morgan, and Sullivan gave a very practical 
interpretation of formality in the context of minimal $\cdga$s.

\begin{theorem}[\cite{DGMS}]
\label{thm:formal-dgms}
Let $A=(\bwedge V,d)$ be a minimal $\cdga$. Then $A$  is formal 
if and only if each subspace $V^i=A^i\cap V$ decomposes as a 
direct sum, $V^i=N^i\oplus C^i$, where $C^i=Z^i(A)\cap V$
and any cocycle in the ideal of $A$ generated by $\bigoplus N^i$ is 
a coboundary.
\end{theorem}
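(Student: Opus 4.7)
The plan is to establish the two implications separately via direct constructions.

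For sufficiency ($\Leftarrow$), given the decomposition $V^i = N^i \oplus C^i$, I would construct a quasi-isomorphism $\psi \colon (A,d) \to (H^{*}(A),0)$ as the unique algebra morphism determined by $\psi(c)=[c]$ for $c \in C$ and $\psi(n)=0$ for $n \in N$. Using the algebra decomposition $A \cong \bwedge C \otimes \bwedge N$, every $x \in A$ splits uniquely as $x = x_C + x_N$ with $x_C \in \bwedge C$ and $x_N$ in the ideal $I(N)$ generated by $\bigoplus N^i$. Since $C \subseteq Z(A)$, the differential vanishes on $\bwedge C$, which reduces the chain-map check to showing $\psi(da)=0$ for $a \in V$. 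For $a \in N$, decompose $da = P + Q$ with $P \in \bwedge C$ and $Q \in I(N)$; then $dP=0$ forces $dQ=0$, so $Q$ is a cocycle in $I(N)$, hence a coboundary by hypothesis. This yields $[P]=[da]=0$, whence $\psi(P)=[P]=0$, and combined with $\psi(Q)=0$ one obtains $\psi(da)=0$. The same bookkeeping shows $\psi$ is a quasi-isomorphism: for any cocycle $z = z_C + z_N$, the component $z_N$ is exact, so $\psi(z) = [z_C] = [z]$ and $\psi$ realizes the identity on cohomology.

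For necessity ($\Rightarrow$), assume $A$ is formal. By the uniqueness part of Theorem \ref{thm:mm}, I may take $A$ to be the Sullivan minimal model of $(H^{*}(A),0)$, built as an iterated Hirsch extension $\k = A_0 \subset A_1 \subset \dotsb$ together with a structural quasi-isomorphism $\rho\colon A \to (H^{*}(A),0)$ satisfying $\rho^{*} = \id$. At the Hirsch step passing from $A_k$ to $A_{k+1}$, the newly added generators in degree $k+1$ can be chosen of two types: \emph{$C$-type} cocycles $c$ (with $dc=0$) and prescribed images $\rho(c)$ representing classes in $\coker(\rho^{*}_k)$ in degree $k+1$, added to make $\rho^{*}$ surjective; and \emph{$N$-type} generators $n$ whose differentials $dn$ form a linearly independent system of cocycle representatives of $\ker(\rho^{*}_k)$ in degree $k+2$, with $\rho(n)=0$, added to make $\rho^{*}$ injective. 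Letting $C^i$ and $N^i$ denote the spans of the $C$- and $N$-type generators in each degree yields $V^i = C^i \oplus N^i$ with $C^i \subseteq V^i \cap Z^i(A)$. The linear independence of the targets $dn$ forces the reverse inclusion: if $v = c+n$ with $dv=0$, then $dn=0$ is a nontrivial linear combination of the chosen independent cocycles, hence $n=0$ and $v \in C^i$, giving the equality $C^i = V^i \cap Z^i(A)$ required by the statement.

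The acyclicity condition on $I(N)$ then follows at once from the observation that $\rho$ vanishes on $N$ and is multiplicative, so $\rho(I(N))=0$; hence any cocycle $z \in I(N)$ has $\rho^{*}[z]=[\rho(z)]=0$, and since $\rho$ is a quasi-isomorphism, $[z]=0$ in $H^{*}(A)$, i.e., $z$ is a coboundary in $A$. The main obstacle is the bookkeeping in the necessity direction: arranging cleanly at each Hirsch stage a splitting of the new generators into pure $C$-type and pure $N$-type (rather than mixtures), and selecting the targets of the $N$-type differentials to be linearly independent as cochains rather than merely as cohomology classes. This uses the decomposability of $d$ in a minimal $\cdga$---in particular the fact that $V \cap dA = 0$, which rules out accidental cocycle relations among the generators---together with an inductive extension of $\rho$ over each Hirsch stage.
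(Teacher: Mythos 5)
The paper does not actually prove this statement---it is quoted from \cite{DGMS} without proof---so I am judging your argument on its own terms. Your sufficiency direction is correct and complete: the algebra map $\psi$ with $\psi(c)=[c]$, $\psi(n)=0$ is a chain map by the $da=P+Q$ decomposition (using that cocycles in $I(N)$ are exact), and it induces the identity on cohomology because the $I(N)$-component of any cocycle is exact. This is the standard argument.

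The necessity direction contains a genuine gap, concentrated in the sentence ``I may take $A$ to be the Sullivan minimal model of $(H^{*}(A),0)$.'' Uniqueness of minimal models supplies only an abstract $\cdga$ isomorphism $\phi\colon A\to \M$, where $\M=\bwedge W$ is the model you build with its $C$/$N$ generators. Such an isomorphism need not carry the given generating space $V$ onto $W$ (it carries $v$ to $w+\xi$ with $\xi$ decomposable), and in particular need not carry $V\cap Z(A)$ onto $W\cap Z(\M)$: from $dv=0$ one only gets $dw=-d\xi$, not $dw=0$. Since the theorem prescribes $C^i$ to be $Z^i(A)\cap V$ for the \emph{given} $V$, you must either (i) show that the existence of a good splitting is independent of the choice of generating space, or (ii) work with the given $V$ and produce a quasi-isomorphism $\rho\colon A\to (H^{*}(A),0)$ vanishing on some complement of $C^i$ in $V^i$. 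Point (ii) is not automatic for an arbitrary quasi-isomorphism supplied by formality: one can have $\rho(V^i)\not\subseteq\rho(C^i)$, so that \emph{no} complement of $C^i$ lies in $\ker\rho$. For example, let $H$ be spanned by $1,x,y,w,xy$ with $\abs{x}=\abs{y}=3$, $\abs{w}=4$ and $xw=yw=w^2=0$; the minimal model of $(H,0)$ has $V^6=\langle n_1,n_2\rangle$ with $dn_1=xw$, $dn_2=yw$, hence $C^6=0$, and the assignment $n_1\mapsto xy$, $n_2\mapsto 0$ (all other generators as usual) still defines a quasi-isomorphism to $(H,0)$ that is nonzero on every complement of $C^6$. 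So the choice of $\rho$ (equivalently, of the isomorphism $\phi$) is doing real work, and the difficulty you defer to ``bookkeeping'' is not where the actual content lies---the stage-by-stage $C$/$N$ construction of $\M(H^{*}(A),0)$ and your verification that $\rho(I(N))=0$ forces cocycles in $I(N)$ to be exact are both fine. What is missing is the bridge from that specific model back to the pair $(A,V)$ you started with.
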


As noted in \cite{DGMS}, choosing complements $N^i$ to $C^i$ with  
the specified property is equivalent to choosing a $\cdga$-morphism 
$(A,d)\to (H^*(A),0)$ inducing the identity in cohomology.  Furthermore, 
the existence of splittings $V^i=N^i\oplus C^i$ such that any cocycle in 
the ideal generated by $\bigoplus_i N^i$ is a coboundary is one way of 
saying that one may make uniform choices of subspaces spanned by 
defining systems so that all the cocycles representing Massey products 
are coboundaries---a stronger condition than saying that each 
individual Massey product vanishes.

Work of Sullivan \cite{Sullivan77} and Morgan \cite{Mo} shows that a 
$\cdga$ $(A,d)$ is formal if and only of there exists a quasi-isomorphism 
$\psi\colon \M(A)\to (H^{*}(A),0)$. Likewise, M\u{a}cinic showed in 
\cite{Mc10} 
that $A$ is $q$-formal if and only if there exists a 
$q$-quasi-isomorphism $\psi_q\colon \M_q(A)\to (H^{*}(A),0)$. 
The following lemma provides a convenient criterion for partial formality. 

\begin{lemma}[\cite{SW-forum}]
\label{lem:formalityeq} 
Let $A$ be a $\k$-$\cdga$, and suppose that $\dim_{\k} H^{q+1}(\M_q(A))< \infty$.  
Then $A$ is $q$-formal if and only if $\M_q(A)$ is $q$-formal.
 \end{lemma}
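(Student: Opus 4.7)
The plan is to reduce the claim to M\u{a}cinic's characterization of $q$-formality recalled just before the lemma: for any cdga $B$ with $H^0(B)=\k$, the cdga $B$ is $q$-formal if and only if there is a $q$-quasi-isomorphism $\M_q(B)\to (H^{*}(B),0)$. First I would apply this criterion to both $A$ and $\M_q(A)$. For the latter, uniqueness of $q$-minimal models (Theorem~\ref{thm:mm}) identifies $\M_q(\M_q(A))$ with $\M_q(A)$, so $q$-formality of $\M_q(A)$ becomes the existence of a $q$-quasi-iso $\tilde{\psi}\colon \M_q(A)\to (H^{*}(\M_q(A)),0)$.

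Next I would introduce the canonical cdga map $j\coloneqq \rho_q^{*}\colon (H^{*}(\M_q(A)),0)\to (H^{*}(A),0)$ induced by the structural $q$-quasi-iso $\rho_q\colon \M_q(A)\to A$. Since $\rho_q^{*}$ is an isomorphism in degrees $\le q$ and a monomorphism in degree $q+1$, and both source and target carry the zero differential, $j$ is itself a $q$-quasi-isomorphism.

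For the ``if'' direction, the argument is immediate: composing $\tilde{\psi}$ with $j$ produces a $q$-quasi-iso $\M_q(A)\to (H^{*}(A),0)$, hence $A$ is $q$-formal by M\u{a}cinic's criterion. For the ``only if'' direction, I would exploit that $\M_q(A)=(\bwedge V,d)$ is generated in degrees $\le q$, exactly where $j$ is an isomorphism. Given a $q$-quasi-iso $\psi\colon \M_q(A)\to (H^{*}(A),0)$, the values $\psi(x_\alpha)$ on generators $x_\alpha\in V$ all lie in the image of $j$, so setting $\tilde{\psi}(x_\alpha)\coloneqq j^{-1}(\psi(x_\alpha))$ extends multiplicatively to an algebra map $\tilde{\psi}\colon \M_q(A)\to (H^{*}(\M_q(A)),0)$. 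Compatibility with the differentials follows because $j(\tilde{\psi}(dx_\alpha))=\psi(dx_\alpha)=0$ and $dx_\alpha$ sits in degree at most $q+1$, where $j$ is injective; that $\tilde{\psi}$ is itself a $q$-quasi-iso is then a two-out-of-three check applied to the identity $\psi=j\circ\tilde{\psi}$ on cohomology.

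The main obstacle lies in the appeal to M\u{a}cinic's criterion, which is where the hypothesis $\dim_{\k} H^{q+1}(\M_q(A))<\infty$ is truly consumed: this finiteness is what allows one to pass from an a~priori zig-zag of $q$-quasi-isomorphisms witnessing $q$-formality to a single direct $q$-quasi-iso out of a $q$-minimal cdga. Once that criterion is in hand on both sides of the equivalence, the comparison through $j$ sketched above is routine.
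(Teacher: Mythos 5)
Your argument is correct and follows exactly the route the paper sets up: it reduces both sides of the equivalence to M\u{a}cinic's characterization of $q$-formality by a single $q$-quasi-isomorphism out of the $q$-minimal model (recalled immediately before the lemma), and then transports such maps across $j=\rho_q^{*}\colon (H^{*}(\M_q(A)),0)\to (H^{*}(A),0)$, using that $\M_q(A)$ is generated in degrees $\le q$, that $j$ is an isomorphism there, and that $j$ is injective in degree $q+1$ to define the lift $\tilde{\psi}$ and check compatibility with the differentials; the two-out-of-three check in the factorization $\psi=j\circ\tilde{\psi}$ is valid in this configuration since injectivity in degree $q+1$ passes to the first factor. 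Your placement of the finiteness hypothesis is also the honest one: it is consumed entirely inside the cited criterion from \cite{Mc10} and \cite{SW-forum}, while your own comparison through $j$ uses nothing beyond the stated properties of $\rho_q$.
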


Minimal models are also relevant when considering the formality of 
morphisms of $\cdga$s. Indeed, let $\varphi\colon A\to B$ be a 
$\cdga$ map; then $\varphi$ is formal (in the sense of 
Definition \ref{def:formal-map}) if and only if there is a 
diagram of the form 
\begin{equation}
\label{eq:phif}
\begin{tikzcd}[column sep=24pt, row sep=22pt]
A  \ar{d}{\varphi}
&
\M(A) \ar[swap, pos=.4]{l}{\rho_A} 
\ar{d}{\M(\varphi)}
\ar{r}{\psi_A} 
& (H^{*}(A), 0) \ar{d}{\varphi^*}
\\
B 
&\M(B)\ar[swap, pos=.4]{l}{\rho_B} \ar{r}{\psi_B}
& (H^{*}(B), 0) 
\end{tikzcd}
\end{equation}
which commutes up to homotopy. 

Analogous statements hold for $q$-formal maps, with the middle 
arrow replaced by the morphism $\M_q(\varphi)\colon \M_q(A)\to \M_q(B)$.

\subsection{The dual of a $1$-minimal $\cdga$}
\label{subsec:dual-min}
Let $A=(A^{*},d)$ be a minimal $\cdga$ over $\k$, generated in degree $1$. 
Following \cite{Mo}, \cite{Kohno}, \cite{FHT2}, let us consider the filtration
\begin{equation}
\label{eq:filtration-minimal}
\k=A(0)\subset A(1)\subset A(2)\subset \cdots \subset A=\bigcup_{i\ge 0} A(i),
\end{equation}
where $A(1)$ is the subalgebra of $A$ generated by the cocycles in $A^1$,
and $A(i)$ for $i>1$ is the subalgebra of $A$ generated by all elements 
$x\in A^1$ such that $dx\in A(i-1)$.  Each inclusion $A(i-1)\subset A(i)$ is a 
Hirsch extension of the form $A(i)=A(i-1)\otimes \bwedge V_{i}$, where 
\begin{equation}
\label{eq:vi}
V_i\coloneqq \ker\big(H^2(A(i-1))\to H^2(A)\big).
\end{equation}
Taking degree $1$ pieces in the filtration \eqref{eq:filtration-minimal}, 
we obtain the filtration $\k=A(0)^1\subset A(1)^1\subset  \cdots \subset A^1$.  
Clearly, $A^1$ is a $1$-minimal $\cdga$.

Let us assume now that each of the aforementioned Hirsch extensions 
is finite, that is, $\dim V_i<\infty$ for all $i$.  Using the fact that 
$d(V_i)\subset A(i-1)$, we infer that each 
dual vector space $\fL_i=(A(i)^1)^{\vee}$ 
acquires the structure of a $\k$-Lie algebra by setting
\vspace*{-3pt}
\begin{equation}
\label{eq:duality}
\langle [u^{\vee},v^{\vee}], w\rangle \coloneqq 
\langle  u^{\vee}\wedge v^{\vee}, dw \rangle 
\end{equation}
for $u,v,w\in A(i)^1$.  Clearly, $d(V_1)=0$, and thus $\fL_1=(V_1)^{\vee}$ is 
an abelian Lie algebra. Using the vector space decompositions 
\begin{equation}
\label{eq:decomp}
A(i)^1=A(i-1)^1 \oplus V_i \: \text{ and } \:
A(i)^2=A(i-1)^2\oplus (A(i-1)^1\otimes V_i)\oplus \bwedge^2 V_i,
\end{equation}
one easily sees that  the canonical projection $\fL_i\surj \fL_{i-1}$, defined as 
the dual of the inclusion map $A(i-1)\inj A(i)$, has kernel $V_i^{\vee}$, 
and this kernel is central inside $\fL_i$. 
Therefore, we obtain a tower of finite-dimensional,  
nilpotent $\k$-Lie algebras, 
\begin{equation}
\label{eq:nilp lie tower}
\begin{tikzcd}[column sep=20pt]
0 & \fL_1 \ar[l]& \fL_2\ar[l, two heads] & \cdots\ar[l, two heads] & \fL_i 
\ar[l, two heads]& \cdots \ar[l, two heads] .
\end{tikzcd}
\end{equation}

Let $\fL=\fL(A)$ be the inverse limit of this tower, equipped with the 
inverse limit filtration. Then $\fL$ is a complete, filtered Lie algebra 
with the property that $\fL/\widehat{\gamma}_{i+1}( \fL)=\fL_i$ 
for each $i\ge 1$.  Conversely, from a tower as in \eqref{eq:nilp lie tower}, 
one can construct a sequence of finite Hirsch extensions as 
in \eqref{eq:filtration-minimal}.  Let $A(i)=A(i-1)\otimes \bwedge V_{i}$ 
be one of the $\cdga$s in this sequence, with differential given by \eqref{eq:duality}.  
Then $A(i)$ coincides with the Chevalley--Eilenberg complex
$\mathcal{C}(\fL_i)=(\bwedge \fL_i^{\vee},d)$ associated to the finite-dimensional 
Lie algebra $\fL_i=\fL(A(i))$; that is, the $\cdga$ whose underlying 
graded algebra is the exterior algebra on the dual vector space 
$\fL_i^{\vee}$, and whose differential is the extension by the graded 
Leibniz rule of the dual of the signed Lie bracket on the algebra generators.  
By the definition of Lie algebra cohomology, then, 
\begin{equation}
\label{eq:ce}
H^{*}(A(i))\cong H^{*}(\fL_i, \k) .
\end{equation}

The direct limit of the above sequence of Hirsch 
extensions, $A= \bigcup_{i\ge 0}A(i)$, 
is a minimal $\k$-$\cdga$ generated in degree $1$.  
We obtain in this fashion an adjoint correspondence that sends $A$ to the  
pronilpotent Lie algebra $\fL=\fL(A)$  and conversely, sends a pronilpotent Lie 
algebra $\fL$ to the minimal algebra $A=A(\fL)$.     
Under this correspondence, filtration-preserving $\cdga$ morphisms   
$A \to B$ get sent to filtration-preserving Lie morphisms $\fL(B) \to \fL(A)$, 
and the other way around.

\subsection{Positive weights}
\label{subsec:posWeights}

Following Body, Mimura, Shiga, and Sullivan \cite{BMSS}, 
Morgan \cite{Mo}, and Sullivan \cite{Sullivan77}, we say that a commutative 
graded algebra $A^*$ has {\em positive weights}\/ if each graded 
piece admits a vector space decomposition 
\begin{equation}
\label{eq:weights-alg}
A^i=\bigoplus_{\alpha\in\Z}A^{i,\alpha}
\end{equation}
with $A^{i,\alpha}=0$ for $\alpha\leq 0$, such that $xy\in A^{i+j,\alpha+\beta}$ for 
$x\in A^{i,\alpha}$ and $y\in A^{j,\beta}$.  Furthermore, we say that a $\cdga$ 
$(A,d)$ has \emph{positive weights}\/ if the underlying $\cga$ $A^{*}$ has 
positive weights, and the differential is homogeneous with respect to those 
weights, that is, $d x\in A^{i+1,\alpha}$ for $x\in A^{i,\alpha}$.

Now let $(A,d)$ be a minimal $\cdga$ generated in degree one, 
endowed with the canonical filtration $\{A_i\}_{i\ge 0}$ constructed in 
\eqref{eq:filtration-minimal}, where each sub-$\cdga$ $A_i$ is given by 
a Hirsch extension of the form $A_{i-1}\otimes \bwedge V_i$. 
The underlying $\cga$ $A$ possesses a natural set of positive 
weights, which we will refer to as the {\em Hirsch weights}: simply declare 
$V_i$ to have weight $i$, and extend those weights to $A$ multiplicatively.  
We say that the $\cdga$ $(A,d)$ has {\em positive Hirsch weights}\/ if 
the differential $d$ is homogeneous with respect to those weights.  
If this is the case, each sub-$\cdga$ $A_i$ also has positive Hirsch weights. 

\begin{lemma}[\cite{SW-forum}]
\label{lem:positivegraded}
Let $A$ be a minimal $\cdga$ generated in degree one, 
with dual Lie algebra $\fL=\fL(A)$. Then $A$ has positive Hirsch 
weights if and only if $\fL=\widehat{\gr}(\fL)$.
\end{lemma}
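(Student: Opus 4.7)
The plan is to exhibit both conditions as two sides of the same data: a choice of grading on the complete filtered Lie algebra $\fL$ that is compatible with both its filtration and its bracket. The duality \eqref{eq:duality} will translate such a grading directly into a Hirsch-weight grading on the generators of $A$, and vice versa.

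On the cdga side, the Hirsch decomposition $A^1 = \bigoplus_{i\ge 1} V_i$ places $V_i$ in weight $i$. Since $d$ is decomposable and $d(V_i)\subseteq A(i-1)^2 \subset \bwedge^2(V_1\oplus\cdots\oplus V_{i-1})$, positive Hirsch weights is the condition
\begin{equation*}
d(V_c)\subseteq \bigoplus_{a+b=c} V_a\wedge V_b \qquad \text{for all } c\ge 1.
\end{equation*}
On the Lie side, the inverse-limit filtration $\{\widehat{\gamma}_{j+1}(\fL)\}$ has associated graded pieces $\gr_j(\fL)\cong V_j^{\vee}$; the condition $\fL = \widehat{\gr}(\fL)$ then says $\fL$ admits a grading $\fL = \prod_{k\ge 1} \fL_k$ for which $\widehat{\gamma}_{j+1}(\fL) = \prod_{k>j}\fL_k$ (matching the filtration) and $[\fL_a,\fL_b]\subseteq \fL_{a+b}$ (bracket of bidegree zero).

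For the forward direction, I take $u^{\vee}\in V_a^{\vee}$, $v^{\vee}\in V_b^{\vee}$, and $w\in V_c$, and compute using \eqref{eq:duality}:
\begin{equation*}
\langle [u^{\vee},v^{\vee}], w\rangle = \langle u^{\vee}\wedge v^{\vee}, dw\rangle,
\end{equation*}
which vanishes whenever $a+b\ne c$, by the Hirsch-weight hypothesis. Hence the lifts $V_k^{\vee}\hookrightarrow \fL$ provided by the duals of the projections $A^1\twoheadrightarrow V_k$ give a grading of $\fL$ with $[V_a^{\vee},V_b^{\vee}]\subseteq V_{a+b}^{\vee}$, and the filtration is visibly the weight filtration; so $\fL\cong \widehat{\gr}(\fL)$. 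Conversely, given a grading of $\fL$, I choose a complement $V_k\subseteq A^1$ to $A(k-1)^1$ that is dual to the summand $\fL_k\subseteq \fL$; then for $w\in V_c$, the only components of $dw$ in any $V_a\wedge V_b$ that pair nontrivially with elements $u^{\vee}\wedge v^{\vee}$ with $u^{\vee}\in \fL_a,\, v^{\vee}\in \fL_b$ are those with $a+b=c$, forcing $dw\in\bigoplus_{a+b=c} V_a\wedge V_b$.

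The heart of the argument is the translation just made; the main thing to get right is the identification of ``splittings of the Hirsch filtration on $A^1$'' with ``splittings of the filtration on $\fL$'' under duality, together with the observation that \eqref{eq:duality} interchanges ``$d$ is weight-homogeneous'' with ``the Lie bracket is weight-homogeneous.'' Once this correspondence is in place, both implications reduce to the single pairing computation above.
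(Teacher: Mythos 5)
The paper itself gives no proof of this lemma---it is imported from \cite{SW-forum}---so your argument has to be judged on its own terms. Your central computation is the right one, and it is essentially the entire content of the lemma: the duality formula \eqref{eq:duality} converts weight-homogeneity of $d$ on the degree-one generators into weight-homogeneity of the Lie bracket, and by the Leibniz rule and multiplicativity of the weights, homogeneity on generators is all that needs to be checked. Your forward direction is correct as written and in fact yields the sharp, on-the-nose conclusion $[V_a^{\vee},V_b^{\vee}]\subseteq V_{a+b}^{\vee}$.

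The gap is in your translation of the right-hand condition. You read ``$\fL=\widehat{\gr}(\fL)$'' as ``$\fL$ admits \emph{some} grading compatible with the filtration and the bracket.'' That condition is equivalent to filtered-formality of $\fL$ (transport the canonical grading of $\widehat{\gr}(\fL)$ along any filtered isomorphism inducing the identity on $\gr$, and conversely), and the paper explicitly warns, via Example~\ref{ex:3step}, that filtered-formality of $\fL(A)$ is strictly weaker than the conditions of the lemma; under your reading the lemma would be contradicted by that very example. The equality must instead be read relative to the canonical splitting: the given Hirsch decomposition $A^1=\bigoplus_i V_i$ dualizes to a fixed vector-space identification $\fL=\prod_i V_i^{\vee}$ under which $\gr_i(\fL)=V_i^{\vee}$, and ``$\fL=\widehat{\gr}(\fL)$'' asserts that \emph{this particular} decomposition is a Lie grading. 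Your converse is exactly where the looser reading bites: by ``choos[ing] a complement $V_k$ dual to the summand $\fL_k$'' you replace the given Hirsch decomposition with a new one adapted to an arbitrary grading, which proves the (true, but different) statement that $A$ admits \emph{some} Hirsch splitting with homogeneous differential if and only if $\fL$ is filtered-formal. The repair is small: with the correct reading the grading of $\fL$ is forced to be $\{V_k^{\vee}\}$, and your pairing computation run in reverse---using that $V_a\wedge V_b$ pairs perfectly with $V_a^{\vee}\wedge V_b^{\vee}$ and trivially with $V_{a'}^{\vee}\wedge V_{b'}^{\vee}$ for $\{a,b\}\neq\{a',b'\}$, which requires the standing assumption $\dim V_i<\infty$---gives $d(V_c)\subseteq\bigoplus_{a+b=c}V_a\wedge V_b$ with no re-choice of complements.
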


The next example (extracted from \cite{SW-forum}) shows that the 
hypothesis of Lemma \ref{lem:positivegraded} is more restrictive 
than the Lie algebra $\fL=\fL(A)$ being filtered-formal.

\begin{example}
\label{ex:3step}
Let $\g$ be the $5$-dimensional Lie algebra with basis $e_1,\dots, e_5$ 
and with Lie brackets given by 
$[e_1,e_2]=e_3-e_4/2-e_5$, $[e_1,e_3]=e_4$, $[e_2,e_3]=e_5$, and 
$[e_i,e_j]=0$, otherwise. It is readily verified that $\g$ is filtered-formal, 
although the differential of the $1$-minimal $\cdga$ $A=\bwedge \g^{\vee}$
is not homogeneous on the Hirsch weights. 
\end{example}

If a minimal $\cdga$ is generated in degree $1$ and has positive 
weights, but these weights do not coincide with the Hirsch weights, 
then the dual Lie algebra need not be filtered-formal.  
This phenomenon is illustrated in the next example, 
adapted from \cite{Cornulier14}, \cite{SW-forum}.

\begin{example}
\label{ex:Cornulier}
Let $\g$ be the nilpotent, $5$-dimensional Lie algebra with non-zero 
Lie brackets given by $[e_1,e_3]=e_4$ and $[e_1,e_4]=[e_2,e_3]=e_5$. 
The center of $\g$ is $1$-dimensional, generated by $e_5$, while the 
center of $\gr(\g)$ is $2$-dimensional, generated by $e_2$ and $e_5$.
Therefore, $\g\not\cong \gr(\g)$, and so $\g$ is not filtered-formal. 
The $1$-minimal $\cdga$ $A=\bwedge \g^{\vee}$ does have positive 
weights, given by the index of the chosen basis, but $A$ 
does not admit positive Hirsch weights.  
\end{example}

\section{Lie algebras and filtered formality}
\label{sect:Lie-algebras}
 
\subsection{Graded Lie algebras}
\label{subsec:grlie}
Once again, let us fix a ground field $\k$ of characteristic $0$.  Let $\g$ be 
a Lie algebra over $\k$; that is, a $\k$-vector space $\g$ endowed with an  
alternating bilinear operation, $[\,,\,]\colon \g\times \g\to \g$, that satisfies 
the Jacobi identity.  We say that $\g$ is a {\em graded Lie algebra}\/ if $\g$ 
decomposes as $\g=\bigoplus_{i\ge 1} \g_i$, the Lie bracket is compatible 
with the grading, and the Lie identities are satisfied with the appropriate signs.  
A morphism of graded Lie algebras is a 
$\k$-linear map $\varphi\colon \g\to \h$ which preserves the Lie brackets 
and the degrees; in particular, $\varphi$ induces $\k$-linear maps 
$\varphi_i\colon \g_i\to \h_i$ for all $i\ge 1$.

The most basic example of a graded Lie algebra is constructed as follows.  
Let $V$ a $\k$-vector space. The tensor algebra $T(V)$ has a natural 
Hopf algebra structure, with comultiplication $\Delta$ and counit $\varepsilon$ 
the algebra maps given by $\Delta(v)= v\otimes 1+1\otimes v$ and $\varepsilon(v)= 0$, 
for $v\in V$. The {\em free Lie algebra}\/ on $V$ is the set  of primitive 
elements in the tensor algebra; that is, 
$\Lie(V)=\{x \in T(V) \mid \Delta(x)=x \otimes 1+
1\otimes x\}$, with Lie bracket $[x,y]=x\otimes y-y\otimes x$ 
and grading induced from $T(V)$. 

A Lie algebra $\g$ is said to be \textit{finitely generated}\/ if there 
is an epimorphism $\varphi\colon \Lie(V)\to \g$ for some finite-dimensional 
$\k$-vector space $V$. 
If, moreover, the Lie ideal $\mathfrak{r}=\ker (\varphi)$ is finitely 
generated as a Lie algebra, then $\g$ is called \textit{finitely presented}. 
Now suppose all elements of $V$ are assigned degree $1$ in $T(V)$.  Then 
the inclusion $\iota\colon \Lie(V)\to T(V)$ identifies $\Lie_1(V)$ with $T_1(V)=V$.  
Furthermore, $\iota$ maps $\Lie_2(V)$ to $T_2(V)=V\otimes V$ 
by sending $[v,w]$ to $v\otimes w-w\otimes v$ for each $v,w\in V$;   
we thus may identify $\Lie_2(V)\cong V\wedge V$ by sending 
$[v,w]$ to $v\wedge w$.   

If $\g=\Lie(V)/\fr$, with $V$ a (finite-dimensional) vector space 
concentrated in degree $1$,  
then we say $\g$ is {\em (finitely) generated in degree $1$}.  
If, moreover, the Lie ideal $\mathfrak{r}$ is homogeneous, 
then $\g$ is a graded Lie algebra.  In particular, if $\g$ is 
finitely generated in degree $1$ and the homogeneous ideal 
$\mathfrak{r}$ is generated in degree $2$, then we say 
 $\g$ is a \textit{quadratic Lie algebra}. 

\subsection{Filtrations}
\label{subsec:series}
A\, {\em filtration}\/ $\F$ on a Lie algebra $\g$ is a nested 
sequence of Lie ideals, $\g=\F_1(\g)\supset \F_2(g)\supset \cdots$.  
A well-known such filtration is the {\em derived series}, with terms 
$\F_i(\g)=\g^{(i-1)}$ inductively defined by $\g^{(0)}=\g$ and 
$\g^{(i)}=[\g^{(i-1)}, \g^{(i-1)}]$ for $i\ge 1$. Clearly, 
the derived series is preserved by Lie algebra maps, and 
the quotient Lie algebras $\g/\g^{(i)}$ are solvable. Moreover, 
if $\g$ is a graded Lie algebra, all these solvable quotients 
inherit a graded Lie algebra structure. 

The existence of a filtration $\F$ on a Lie algebra $\g$  
makes $\g$ into a topological vector space, by defining 
a basis of open neighborhoods of an element $x\in\g$ 
to be $\{x+\F_k (\g)\}_{k\in \N}$. The fact that each basis 
neighborhood $\F_k(\g)$ is a Lie subalgebra implies that the Lie 
bracket map $[\,,\,]\colon \g \times \g \to \g$ is continuous; thus, 
$\g$ is, in fact, a topological Lie algebra. 
We say that $\g$ is {\em complete}\/ (respectively, {\em separated}) 
if the underlying topological vector space enjoys those properties. 

Every ideal $\mathfrak{a}$ of $\g$ inherits a filtration,  
given by $\F_k (\mathfrak{a})\coloneqq \F_k (\g)\cap \mathfrak{a}$.  
Likewise, the quotient Lie algebra, $\g/\mathfrak{a}$, has a naturally induced 
filtration with terms $\F_k (\g)/\F_k (\mathfrak{a})$. 
Letting $\overline{\mathfrak{a}}$ be the closure of $\mathfrak{a}$ 
in the filtration topology, we have that $\overline{\mathfrak{a}}$ is a closed ideal 
of $\g$. Moreover, by the continuity of the Lie bracket, 
$\overline{[\bar{\fa},\bar{\fr}]}=\overline{[\fa,\fr]}$. 
Finally, if $\g$ is complete (or separated), then $\g/\overline{\mathfrak{a}}$ 
is also complete (or separated). 

For each $j\ge k$, there is a canonical projection, $\g/\F_j(\g)\to \g/\F_k(\g)$, 
compatible with the projections from $\g$ to its quotient Lie algebras $\g/\F_k(\g)$.  
The {\em completion}\/ of the Lie algebra $\g$ with respect to the filtration 
$\F$ is defined as the limit of this inverse system, 
$\widehat{\g}= \varprojlim\nolimits_{k}\g/\F_k(\g)$. 
Using the fact that $\F_k(\g)$ is an ideal of $\g$,
it is readily seen that $\widehat{\g}$ is a Lie algebra, with Lie bracket 
defined componentwise.  Furthermore, $\widehat{\g}$ has 
a natural inverse limit filtration, $\widehat{\F}$, whose terms 
$\widehat{\F}_k (\widehat{\g})$ are equal to $\widehat{\F_k(\g)}= 
\varprojlim\nolimits_{i\ge k}\F_k (\g)/\F_i (\g)$.
Observe  that $\widehat{\F}_k (\widehat{\g}) = \overline{\F_k (\g)} $, 
and so each term of the filtration $\widehat{\F}$ is a closed 
Lie ideal of $\widehat{\g}$.   Furthermore, the Lie algebra 
$\widehat{\g}$, endowed with this filtration, is both complete and separated. 

Let $\iota\colon \g\to \widehat{\g}$ be the canonical map to the completion.  
Then $\iota$ is a morphism of Lie algebras, preserving the respective 
filtrations.  Clearly, $\ker(\iota)=\bigcap_{k\ge 1} \F_k (\g)$;   
hence, $\iota$ is injective if and only if $\g$ is separated.  
Moreover, $\iota$ is surjective if and only if $\g$ is complete. 

\subsection{Filtered Lie algebras}
\label{subsec:filt lie}
A  {\em filtered Lie algebra}\/ (over the field $\k$) 
is a Lie algebra $\g$ endowed with a decreasing filtration 
$\F=\{\F_k (\g) \}_{k\ge 1}$ by $\k$-vector  subspaces, 
satisfying the condition 
\begin{equation}
\label{eq:mult filt}
[\F_k (\g),\F_{\ell} (\g) ]\subseteq \F_{k+\ell} (\g)
\end{equation}
for all $k, \ell\ge 1$.  This condition implies that each subspace $\F_k(\g)$ 
is a Lie ideal, and so, in particular, $\F$ is a Lie algebra filtration. Let 
\begin{equation}
\label{eq:grfg}
\gr^{\F}(\g)\coloneqq \boplus_{k\ge 1}\F_k(\g)/ \F_{k+1}(\g) 
\end{equation}
be the corresponding associated graded vector space. 
Condition \eqref{eq:mult filt} implies that the Lie bracket map on 
$\g$ descends to a map 
$[\,,\,]\colon \gr^{\F}(\g) \times \gr^{\F}(\g) \to \gr^{\F}(\g)$ 
which makes $\gr^{\F}(\g)$ into a graded Lie algebra, with 
graded pieces given by the decomposition \eqref{eq:grfg}. 

A morphism of filtered Lie algebras is a linear map 
$\phi\colon \g\to \h$ preserving Lie brackets and the 
given filtrations, $\F$ and $\mathcal{G}$. Such a map  
induces morphisms between  nilpotent quotients, 
$\phi_k\colon \g/\F_{k+1}(\g) \to \h/\cG_{k+1} (\h)$, and 
a morphism of associated graded Lie algebras, 
$\gr(\phi)\colon \gr^{\F}(\g)\rightarrow \gr^{\cG}(\h)$. 

If $\g$ is a filtered Lie algebra with a multiplicative filtration 
$\F$ as in \eqref{eq:mult filt}, then its completion, $\widehat{\g}$, 
is again a filtered Lie algebra with the completed multiplicative filtration $\widehat{\F}$. 
By construction, the canonical map to the completion, $\iota\colon \g\to \widehat{\g}$, 
is a morphism of filtered Lie algebras.  It is readily seen that the induced 
morphism, $\gr(\iota)\colon \gr^{\F}(\g) \to \gr^{\widehat{\F}}(\widehat{\g})$, 
is an isomorphism.  Moreover, if $\g$ is both complete 
and separated, then the map $\iota\colon \g\to \widehat{\g}$ itself is 
an isomorphism of filtered Lie algebras. 
More generally, if $\phi\colon \g\to \h$ is a morphism of complete, separated, 
filtered Lie algebras, and $\gr(\phi)$ is an isomorphism, then, as noted in 
\cite[Lemma 2.1]{SW-forum}, $\phi$ is also an isomorphism.

\subsection{The degree completion}
\label{subsec:deg completion}
Every Lie algebra $\g$ comes equipped with a lower central series (LCS) 
filtration, $\{\gamma_{n}(\g)\}_{n\ge 1}$. This filtration is defined inductively 
by $\gamma_1(\g)=\g$ and $\gamma_{n}(\g)=[\gamma_{n-1}(\g),\g]$ for $n\ge 2$.  
This is a multiplicative filtration, and 
if $\{\F_{n}(\g)\}_{n\ge 1}$ is another such filtration, then
$\gamma_{n} (\g)\subseteq \F_{n} (\g)$, for all $n\ge 1$.
Lie algebra morphisms preserve LCS filtrations,  
and the quotient Lie algebras $\g/\gamma_{n} (\g)$ are nilpotent. 
We shall write $\gr(\g)$ for the associated graded Lie algebra 
and $\widehat{\g}$ for the completion of $\g$ with respect to the 
LCS filtration.  Furthermore, we shall take 
$\widehat{\gamma}_{n}=\overline{\gamma}_{n}$ as 
the terms of the canonical filtration on $\widehat{\g}$.

Every graded Lie algebra, $\g=\bigoplus_{i\ge 1} \g_i$, has 
a canonical decreasing filtration induced by the grading, 
$\F_{n} (\g) \coloneqq \bigoplus_{i\ge n} \g_i$. 
Moreover, if $\g$ is generated in degree $1$, then 
this filtration coincides with the LCS filtration. 
In particular, the associated graded Lie algebra with respect to $\F$  
coincides with $\g$. In this case, the completion of $\g$ with respect 
to the lower central series (or, degree) filtration is called the 
{\em degree completion}\/ of $\g$, and is simply denoted by 
$\widehat{\g}$.  It is readily seen that $\widehat{\g}\cong \prod_{i\ge 1} \g_i$.  
Therefore, the morphism $\iota\colon \g\to \widehat{\g}$ is injective, 
and induces an isomorphism between $\g$ and 
$\gr( \widehat{\g} )$.  

\begin{lemma}[\cite{SW-forum}]
\label{lem:presbar}
Suppose $\L$ is a free Lie algebra generated in degree $1$ and 
$\fr$ is a homogeneous ideal. Then the projection $\L\surj \L/\fr$ 
induces an isomorphism 
$\widehat{\L}/\overline{\fr}\isom\widehat{\L/\fr}$.
\end{lemma}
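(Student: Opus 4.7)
The plan is to compute both sides explicitly in terms of the grading and then reduce to the exactness of direct products of vector spaces.

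First, I would identify each completion with a concrete product of graded pieces. Since $\L$ is generated in degree one, the discussion in Section~\ref{subsec:deg completion} gives $\widehat{\L}\cong\prod_{i\ge 1}\L_{i}$, with the canonical filtration realized by the tail products $\widehat{\gamma}_{n}(\widehat{\L})=\prod_{i\ge n}\L_{i}$. The quotient $\L/\fr$ is again graded (because $\fr$ is homogeneous) and is still generated in degree one as a Lie algebra, so the same discussion yields $\widehat{\L/\fr}\cong\prod_{i\ge 1}(\L_{i}/\fr_{i})$.

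Next, I would pin down $\overline{\fr}$ inside $\widehat{\L}$. Writing $\fr=\bigoplus_{i}\fr_{i}$ by homogeneity, an element $(x_{i})_{i\ge 1}\in\prod_{i}\L_{i}$ lies in the closure of $\fr$ exactly when, for every $n$, the truncation $(x_{1},\dots,x_{n-1})$ belongs to $\bigoplus_{i<n}\fr_{i}$; equivalently, each coordinate $x_{i}$ belongs to $\fr_{i}$. This identifies $\overline{\fr}=\prod_{i\ge 1}\fr_{i}$.

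The final step is to apply exactness of direct products of $\k$-vector spaces to the degreewise short exact sequences $0\to\fr_{i}\to\L_{i}\to\L_{i}/\fr_{i}\to 0$, obtaining
$$\widehat{\L}/\overline{\fr}\;=\;\bprod_{i}\L_{i}\,\Big/\,\bprod_{i}\fr_{i}\;\cong\;\bprod_{i}(\L_{i}/\fr_{i})\;=\;\widehat{\L/\fr}.$$
That this isomorphism coincides with the map induced by the projection $\L\surj\L/\fr$ follows from the naturality of completion. The main obstacle is the closure computation $\overline{\fr}=\prod_{i}\fr_{i}$; a clean alternative that sidesteps it is to invoke the criterion recalled in Section~\ref{subsec:filt lie}, namely that a morphism of complete separated filtered Lie algebras inducing an isomorphism on associated gradeds is itself an isomorphism. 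In our situation both sides are complete and separated, the map $\widehat{\L}/\overline{\fr}\to\widehat{\L/\fr}$ is well defined because $\overline{\fr}$ sits in the kernel of the continuous map $\widehat{\L}\to\widehat{\L/\fr}$, and on associated gradeds both sides become canonically $\L/\fr$ (using $\gr(\widehat{\L})\cong\L$ and the homogeneity of $\fr$).
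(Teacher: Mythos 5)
Your argument is correct. The paper itself only states this lemma with a citation to \cite{SW-forum} and gives no proof, but your computation is the standard one and checks out at every step: since $\fr$ is homogeneous, its image in each nilpotent quotient $\L/\gamma_n(\L)=\boplus_{i<n}\L_i$ is $\boplus_{i<n}\fr_i$, which gives $\overline{\fr}=\bprod_{i\ge 1}\fr_i$ inside $\widehat{\L}=\bprod_{i\ge 1}\L_i$; and since $\L/\fr$ is again graded and generated in degree $1$, its LCS filtration is the degree filtration, so $\widehat{\L/\fr}=\bprod_{i\ge 1}\L_i/\fr_i$ and exactness of products of vector spaces finishes the job. The one point worth making explicit is that the completion of the projection is coordinatewise the quotient maps $\L_i\to\L_i/\fr_i$ (this uses that Lie algebra surjections carry $\gamma_n$ onto $\gamma_n$), so its kernel is exactly $\overline{\fr}$; your fallback via the criterion that a filtered morphism of complete, separated Lie algebras inducing an isomorphism on associated gradeds is an isomorphism is also valid, since the paper records that $\widehat{\L}/\overline{\fr}$ is again complete and separated.
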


\subsection{Filtered-formality}
\label{subsec:filtered formal}
We now consider in more detail the relationship between a filtered 
Lie algebra $\g$ and the completion of its associated graded Lie 
algebra, $\widehat{\gr}(\g)$, equipped with the inverse limit filtration.  
Note that both Lie algebras share the same associated graded Lie 
algebra, namely, $\gr(\g)$. In general, though, $\g$ may fail to be  
isomorphic to $\widehat{\gr}(\g)$. Of course, this happens  
if $\g$ is not complete or separated, but it may happen 
even in the case when $\g$ is a (finite-dimensional) nilpotent 
Lie algebra.

\begin{definition}[\cite{SW-forum}]
\label{def:filt formal}
A complete, separated, filtered Lie algebra $\g$ is {\em filtered-formal}\/ 
if there is a filtered Lie algebra isomorphism, $\g\cong \widehat{\gr}(\g)$, 
which induces the identity on associated graded Lie algebras.
\end{definition}

If $\g$ is a filtered-formal Lie algebra, there exists a graded 
Lie algebra $\h$ such that $\g$ is isomorphic to 
$\widehat{\h}=\prod_{i\geq 1} \h_i$.
Conversely, if $\g=\widehat{\h}$ is the completion of a 
graded Lie algebra $\h=\bigoplus_{i\ge 1} \h_i$, then $\g$ is filtered-formal. 
Moreover, if $\h$ has homogeneous presentation $\h=\Lie(V)/\mathfrak{r}$, 
with $V$ finitely generated and concentrated in degree $1$, then, 
by Lemma \ref{lem:presbar}, the complete, filtered Lie algebra 
$\g=\prod_{i\ge 1} \h_i$ has presentation 
$\g=\widehat{\Lie}(V)/\overline{\mathfrak{r}}$. 
Some sufficient conditions for filtered formality 
are given in the following proposition.

\begin{proposition}[\cite{SW-forum}]
\label{lem:filtiso}
Let $\g$ be a complete, separated, filtered Lie algebra. Suppose 
one of the following two conditions is satisfied.
\begin{enumerate}
\item \label{filt1}
There is a graded Lie algebra $\h$ and an isomorphism 
$\g\cong \widehat{\h}$ preserving filtrations.
\item \label{filt2}
The  graded Lie algebra $\gr(\g)$ is generated in degree $1$ 
and there is a morphism of filtered Lie algebras,  
$\phi\colon \g \to \widehat{\gr}(\g)$, such that $\gr_1(\phi)$ is an 
isomorphism. 
\end{enumerate}
Then $\g$ is filtered-formal.
\end{proposition}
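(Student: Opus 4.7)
The plan is, in each case, to exhibit a filtered Lie algebra isomorphism $\g\isom \widehat{\gr}(\g)$ whose associated graded is the identity on $\gr(\g)$. A useful preliminary observation is that if $\psi\colon \g\isom \widehat{\gr}(\g)$ is any filtered isomorphism, then $\gr(\psi)$ is a graded Lie automorphism of $\gr(\g)$, and the composite $\widehat{\gr(\psi)^{-1}}\circ \psi$ is a filtered iso whose associated graded map is $\gr(\psi)^{-1}\circ\gr(\psi)=\id_{\gr(\g)}$. So it suffices in each case to produce some filtered iso $\g\isom \widehat{\gr}(\g)$.

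For condition \eqref{filt1}, start from the given iso $\psi\colon \g\isom \widehat{\h}$. Under the canonical identification $\gr(\widehat{\h})\cong \h$, the induced map $\gr(\psi)$ becomes a graded Lie iso $\gr(\g)\isom \h$; let $\beta$ denote its inverse. Then $\widehat{\beta}\colon \widehat{\h}\isom \widehat{\gr}(\g)$ is a filtered iso, and so is the composite $\widehat{\beta}\circ \psi\colon \g\isom \widehat{\gr}(\g)$.

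For condition \eqref{filt2}, the key step is to verify that $\gr(\phi)\colon \gr(\g)\to \gr(\g)$ is an isomorphism. Using the hypothesis that $\gr(\g)$ is generated in degree~$1$, write $\gr(\g)=\Lie(V)/\fr$ with $V=\gr_1(\g)$ and $\fr$ a homogeneous ideal. The linear isomorphism $\sigma:=\gr_1(\phi)$ extends uniquely to a graded Lie algebra automorphism $\widetilde{\sigma}$ of $\Lie(V)$. Under the projection $p\colon \Lie(V)\surj \gr(\g)$, the two maps $\gr(\phi)\circ p$ and $p\circ \widetilde{\sigma}$ are graded Lie morphisms agreeing with $\sigma$ on $V$; by generation, they coincide on all of $\Lie(V)$. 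This forces $\widetilde{\sigma}(\fr)\subseteq \fr$, and a degree-wise inspection (using that $\widetilde{\sigma}$ restricts to a linear bijection on each $\Lie_n(V)$) upgrades this inclusion to the equality $\widetilde{\sigma}(\fr)=\fr$. Consequently $\widetilde{\sigma}$ descends to the quotient as an automorphism, which must coincide with $\gr(\phi)$.

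Once $\gr(\phi)$ is known to be an isomorphism, the criterion recalled at the end of Section~\ref{subsec:filt lie} (\cite[Lemma~2.1]{SW-forum}) applies and delivers that $\phi\colon \g\to\widehat{\gr}(\g)$ is itself a filtered isomorphism; the initial observation then adjusts it to one inducing the identity on $\gr(\g)$, verifying filtered formality. The main technical hurdle is the injectivity of $\gr(\phi)$ in case~\eqref{filt2}: morally, this is the statement that a graded Lie endomorphism of a graded Lie algebra generated in degree~$1$ is an isomorphism as soon as its restriction to degree~$1$ is.
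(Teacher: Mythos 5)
Your overall architecture is the right one, and it follows the same route as the proof in \cite{SW-forum}: reduce to producing \emph{some} filtered isomorphism $\g\isom\widehat{\gr}(\g)$ (any such can be corrected by post-composing with the completion of the inverse of its associated graded automorphism), dispose of case \eqref{filt1} by transporting the given isomorphism through the canonical identification $\gr(\widehat{\h})\cong\h$, and in case \eqref{filt2} show that $\gr(\phi)$ is an automorphism and then invoke the fact recalled at the end of Section \ref{subsec:filt lie} that a morphism of complete, separated, filtered Lie algebras inducing an isomorphism on associated graded objects is itself an isomorphism. All of these reductions are carried out correctly.

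The one step that does not follow as written is the ``degree-wise inspection'' upgrading $\widetilde{\sigma}(\fr)\subseteq\fr$ to $\widetilde{\sigma}(\fr)=\fr$. In degree $n$ you have an injective linear map $\widetilde{\sigma}|_{\Lie_n(V)}$ carrying the subspace $\fr_n$ into itself, and you conclude equality; that is a dimension count, valid only when $\fr_n$ is finite-dimensional, i.e., when $V=\gr_1(\g)$ (hence each $\Lie_n(V)$) is. For infinite-dimensional $V$ the step fails: take $V$ with basis $\{e_i\}_{i\in\Z}$, let $\widetilde{\sigma}$ be the automorphism of $\Lie(V)$ induced by $e_i\mapsto e_{i+1}$, and let $\fr$ be the homogeneous ideal generated by $\{[e_i,e_j]\mid i,j\ge 0\}$; then $\widetilde{\sigma}(\fr)\subsetneq\fr$, and the induced endomorphism of $\Lie(V)/\fr$ is bijective in degree $1$ but kills the nonzero class of $[e_{-1},e_0]$. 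So your closing slogan---that a graded endomorphism of a degree-one-generated graded Lie algebra is an isomorphism once it is one in degree $1$---is false without a finiteness hypothesis; only surjectivity is automatic. Since the proposition as printed does not assume $\gr_1(\g)$ finite-dimensional, your argument strictly proves it only under that extra hypothesis. That said, every application in the paper concerns Malcev Lie algebras of finitely generated groups, where $\gr_1(\g)=H_1(G,\Q)$ is finite-dimensional, and in that setting your proof is complete.
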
 

As shown in \cite{SW-forum}, filtered-formality enjoys a descent 
property, provided some mild finiteness hypotheses are satisfied.  
As usual, all the ground fields will be of characteristic $0$. First, 
let us record a straightforward lemma, which follows from the 
fact that completion commutes with tensor products.

\begin{lemma}
\label{lem:ffascent}
Let $\g$ be a filtered-formal Lie algebra over a field $\k$.  
If\/ $\k\subset \K$ is a  field extension, then 
the $\K$-Lie algebra $\g \otimes_{\k} \K$ is also filtered-formal. 
\end{lemma}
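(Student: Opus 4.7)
The plan is to transport the filtered-formality isomorphism across base change. By hypothesis, there is a filtered $\k$-Lie algebra isomorphism $\phi\colon \g \isom \widehat{\gr}(\g)$ inducing the identity on associated graded Lie algebras, so the strategy is to apply $-\otimes_{\k}\K$ to $\phi$ and then identify both sides with the complete, separated $\K$-Lie algebras required by Definition \ref{def:filt formal}.

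First, I would equip $\g\otimes_{\k}\K$ with the filtration $\F_n(\g)\otimes_{\k}\K$ (understood in its completed form, if necessary, to remain in the category of complete, separated filtered Lie algebras over $\K$). Since $\k\subset\K$ is flat, tensoring the short exact sequences $0\to \F_{n+1}(\g)\to\F_n(\g)\to \gr_n(\g)\to 0$ by $\K$ gives $\gr(\g\otimes_{\k}\K)\cong \gr(\g)\otimes_{\k}\K$, which is manifestly a graded $\K$-Lie algebra generated by the same relations as $\gr(\g)$ over $\K$.

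Next, the map $\phi\otimes\id_\K$ is a filtration-preserving $\K$-Lie algebra isomorphism from $\g\otimes_\k\K$ onto $\widehat{\gr}(\g)\otimes_{\k}\K$. Invoking the fact that completion commutes with tensor products over a field---which one sees by writing $\widehat{\gr}(\g)=\prod_{i\ge 1}\gr_i(\g)$ and base-changing factor by factor, since each graded piece is a $\k$-vector space---one obtains
\[
\widehat{\gr}(\g)\otimes_{\k}\K \;\cong\; \widehat{\gr(\g)\otimes_{\k}\K} \;\cong\; \widehat{\gr(\g\otimes_{\k}\K)}.
\]
Composing yields an isomorphism $\g\otimes_{\k}\K\isom \widehat{\gr(\g\otimes_{\k}\K)}$ which, by construction, induces the identity on associated graded Lie algebras; this exhibits $\g\otimes_{\k}\K$ as filtered-formal over $\K$.

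The main obstacle: The only delicate point is justifying the commutation of completion with base change in this filtered setting, which is exactly the content of the parenthetical remark preceding the lemma. Once $\widehat{\gr}(\g)$ is realized as $\prod_i \gr_i(\g)$, this reduces to the straightforward observation that flat base change commutes with the direct sum decomposition on each graded piece and with the inverse limit defining the completion, provided one is careful about the interpretation of $\otimes_{\k}\K$ (completed versus algebraic). Modulo this verification, the argument is a short diagram chase.
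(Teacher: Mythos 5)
Your argument is correct and is exactly the route the paper takes: the text introduces this lemma as "a straightforward lemma, which follows from the fact that completion commutes with tensor products," and your write-up simply spells that out (base-change the isomorphism $\g\cong\widehat{\gr}(\g)$, identify $\gr(\g\otimes_{\k}\K)$ with $\gr(\g)\otimes_{\k}\K$ by flatness, and commute completion with $-\otimes_{\k}\K$). Your caveat about the completed versus algebraic tensor product is the right one to flag, since for an infinite product $\prod_i\gr_i(\g)$ the algebraic base change does not agree with the product of the base changes; with $\g\otimes_{\k}\K$ interpreted in its completed sense the proof goes through as you describe.
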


The next theorem generalizes a result of Cornulier \cite{Cornulier14}; 
its proof is based in part on work of Enriquez \cite{Enriquez}  and 
Maassarani \cite{Maassarani}.

\begin{theorem}[\cite{SW-forum}]
\label{thm:ffdescent}
Let $\g$ be a complete, separated, filtered $\k$-Lie algebra 
such that $\gr(\g)$ is finitely generated in degree $1$, and let 
$\k\subset \K$ be a field extension. Then $\g$ 
is filtered-formal if and only if the $\K$-Lie algebra 
$\g \otimes_{\k} \K$ is  filtered-formal. 
\end{theorem}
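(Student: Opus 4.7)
The forward direction ($\Rightarrow$) is immediate from Lemma \ref{lem:ffascent}. The content lies in descent ($\Leftarrow$), which I would establish via a pro-unipotent torsor argument in the spirit of Enriquez and Maassarani. Write $\fL\coloneqq \gr(\g)$ and, for each $n\ge 1$, set $\g_n\coloneqq \g/\widehat{\gamma}_{n+1}(\g)$ and $\fL_n\coloneqq \fL/\gamma_{n+1}(\fL)=\bigoplus_{i=1}^n \fL_i$. Because $\fL$ is finitely generated in degree $1$, both $\g_n$ and $\fL_n$ are finite-dimensional nilpotent $\k$-Lie algebras with $\gr(\g_n)=\fL_n$, and $\g=\varprojlim_n\g_n$ by completeness and separatedness. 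Filtered-formality of $\g$ amounts to the existence of a compatible tower of filtered Lie isomorphisms $\phi_n\colon \g_n\isom\fL_n$, each inducing the identity on associated gradeds.

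I would encode this tower as a torsor problem. Let $X_n$ represent the functor sending a commutative $\k$-algebra $R$ to the set of filtered $R$-Lie isomorphisms $\g_n\otimes R \to \fL_n\otimes R$ inducing the identity on $\gr$; this is an affine $\k$-scheme of finite type. Let $U_n$ be the affine algebraic $\k$-group of filtered Lie automorphisms of $\fL_n$ inducing the identity on $\gr$. Every element of $U_n$ has the form $\id+\delta$ with $\delta$ strictly filtration-raising, hence nilpotent on the finite-dimensional $\fL_n$; by Baker--Campbell--Hausdorff in characteristic $0$, $U_n\cong\exp(\mathfrak{u}_n)$ for a nilpotent Lie algebra $\mathfrak{u}_n$, so $U_n$ is unipotent. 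Whenever $X_n$ is nonempty, it is a $U_n$-torsor by postcomposition; the inverse limit $X\coloneqq \varprojlim X_n$ is then (when nonempty) a torsor under the pro-unipotent $\k$-group $U\coloneqq \varprojlim U_n$.

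The hypothesis that $\g\otimes_\k\K$ is filtered-formal provides $X(\K)\ne\emptyset$. To conclude $X(\k)\ne\emptyset$, I would invoke the vanishing $H^1(\k,U)=0$ for pro-unipotent groups over a field of characteristic $0$, obtained by d\'evissage along the unipotent composition series from the case $U=\mathbb{G}_a$ (additive Hilbert 90) and passage to pro-limits; the relevant Mittag--Leffler condition is automatic because each transition $U_n\twoheadrightarrow U_{n-1}$ is surjective on $\k$-points. The $U$-torsor $X$ over $\k$ is therefore trivial, so $X(\k)\ne\emptyset$ yields a compatible system $\{\phi_n\}$ that assembles into the sought filtered $\k$-Lie isomorphism $\g\cong \widehat{\gr}(\g)$.

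The main obstacle, and where the cited results of Enriquez and Maassarani are essential, is the careful scheme-theoretic verification that each $X_n$ is a genuine $U_n$-torsor of finite type and that the pro-system behaves well enough for the inverse-limit step to go through. The finite-generation-in-degree-$1$ hypothesis on $\gr(\g)$ enters decisively here: it forces each $\fL_n$ to be finite-dimensional, each $U_n$ to be of finite type, and hence the pro-unipotent descent to apply without size obstruction.
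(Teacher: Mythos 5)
The paper never actually proves Theorem~\ref{thm:ffdescent}: it imports it from \cite{SW-forum} and only remarks that the proof ``is based in part on work of Enriquez and Maassarani.'' Your architecture---ascent via Lemma~\ref{lem:ffascent}, and descent by encoding the compatible towers of filtered isomorphisms $\g_n\isom\fL_n$ as points of an inverse system of torsors $X_n$ under unipotent groups $U_n$ and then invoking the vanishing of $H^1$ in characteristic $0$---is precisely the strategy of those references, and your claims that each $X_n$ is an affine scheme of finite type, that each $U_n$ is unipotent, and that $X_n$ is a $U_n$-pseudo-torsor which becomes a genuine torsor once $X_n(\K)\neq\emptyset$, are all correct.

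The gap is in the passage to the inverse limit. You justify the Mittag--Leffler condition by asserting that each transition map $U_n\to U_{n-1}$ is surjective on $\k$-points; this is false in general. Take $\fL=\Lie(x,y,z)/\langle [x,y]\rangle$ and let $\alpha\in U_2(\k)$ be given by $\alpha(x)=x$, $\alpha(z)=z$, $\alpha(y)=y+[x,z]$. This is a well-defined filtered automorphism of $\fL_2$ inducing the identity on $\gr$, because $\alpha([x,y])=[x,y]+[x,[x,z]]$ vanishes in $\fL_2$ (the first term is a relator, the second lies in filtration degree $3$). Any lift $\beta$ to $\fL_3$ has $\beta(x)=x+c_x$, $\beta(y)=y+[x,z]+c_y$, $\beta(z)=z+c_z$ with the $c_\bullet$ central of degree $3$, and then the relator maps to $[\beta(x),\beta(y)]=[x,[x,z]]$ modulo terms of filtration degree $\ge 4$, independently of the $c_\bullet$. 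Since $[x,[x,z]]$ does not lie in the span of the degree-$3$ part of the ideal $\langle [x,y]\rangle$, it is nonzero in $\fL_3$, so no lift exists and $U_3(\k)\to U_2(\k)$ is not onto. The same defect undermines the nonemptiness of $\varprojlim_n X_n(\overline{\k})$, which your parenthetical ``(when nonempty)'' quietly assumes. The repair is not surjectivity but stabilization: the images $\im(U_m\to U_n)$ for $m\ge n$ form a descending chain of closed subgroups of the finite-type unipotent group $U_n$, and the images $\im(X_m\to X_n)$ a descending chain of cosets of these, so both stabilize by Noetherianity; replacing $U_n$ and $X_n$ by their eventual images yields a tower of unipotent groups and torsors with surjective transition maps, to which the $H^1$-vanishing and $\varprojlim^1$ argument genuinely applies. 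That d\'evissage is the actual content of the Enriquez--Maassarani input, and without it your proof is incomplete. (A cosmetic point: you write $\widehat{\gamma}_{n+1}(\g)$ where the given filtration $\F_{n+1}(\g)$ is meant; the two agree here because $\gr(\g)$ is generated in degree $1$ and $\g$ is complete, but this deserves a word.)
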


\section{Lower central series and Malcev Lie algebras}
\label{sect:malcev-lie}

\subsection{Lower central series}
\label{subsec:lcs}

Let $G$ be a group. Given subgroups $H_1, H_2\le G$, 
their commutator, $[H_1,H_2]$, is the subgroup of $G$ generated by all 
elements of the form $[x_1,x_2]\coloneqq x_1^{}x_2^{}x_1^{-1}x_2^{-1}$ 
with $x_i\in H_i$. The {\em lower central series}\/ (LCS) of the group, 
$\{\gamma_n (G)\}_{n\ge 1}$, is defined inductively by $\gamma_1 (G)=G$ and 
$\gamma_{n+1} (G) = [\gamma_n(G), G]$. This is an $N$-series, in the 
sense of Lazard \cite{Lazard}, that is, $[\gamma_n(G), \gamma_m(G)]\subseteq 
[\gamma_{m+n}(G)]$ for all $m,n\ge 1$. It follows that each subgroup 
$\gamma_n(G)$ is normal in $G$; moreover, each LCS quotient 
$\gamma_n(G)/\gamma_{n+1}(G)$ lies in the center of 
$G/\gamma_{n+1}(G)$, and thus is an abelian group. 
For instance, $\gamma_2(G)=[G,G]$ is the derived (or, commutator) 
subgroup and $G/\gamma_2(G) = G_{\ab}$ is the abelianization of $G$.

If $\gamma_{n} (G)\ne 1$ but $\gamma_{n+1}(G)=1$, then 
$G$ is said to be an {\em $n$-step nilpotent group}; in general, 
though, the LCS filtration does not terminate. For each $n\ge 2$, 
the factor group $G/\gamma_{n}(G)$ is the maximal $(n-1)$-step 
nilpotent quotient of $G$. 

The direct sum of the LCS quotients, $\gr (G)  = \bigoplus_{n\ge 1} \gr_{n} (G)$, 
acquires the structure of a graded Lie algebra over $\Z$, called the 
{\em associated graded Lie algebra}\/ of $G$. The addition in $\gr (G)$ 
is induced from the group multiplication and the Lie bracket is induced 
from the group commutator. For instance, if $G=F_n$ is a finitely 
generated free group of rank $n\ge 1$, then $\gr(F_n)=\Lie(\Z^n)$, 
the free Lie algebra on $n$ generators. 

If $\k$ is a field of characteristic $0$, then 
$\gr (G;\k)  \coloneqq \bigoplus_{n\ge 1} \gr_{n} (G)\otimes_{\Z} \k$ 
is a graded Lie algebra over $\k$. We note that both the 
assignments $G\leadsto \gr(G)$ and $G\leadsto \gr(G;\k)$ 
are functorial. 

\subsection{Malcev completion}
\label{subsec:mg}
A group $G$ is said to be {\em rational}\/ (or, uniquely divisible) 
if the power map $G\to G$, $g\mapsto g^n$ is a bijection, for 
every $n\ge 1$. The rational abelian groups are precisely the 
$\Q$-vector spaces. A natural way to rationalize an 
abelian group $A$ is to map it to $A\otimes_{\Z} \Q$ 
via $a\mapsto a\otimes 1$, with this map being universal 
for homomorphisms of $G$ into uniquely divisible abelian groups.

In work of Malcev \cite{Malcev}, Lazard \cite{Lazard}, and Hilton \cite{Hilton} 
(see also \cite{BK72}, \cite{HMR}, \cite{Iv}), this construction was extended 
to arbitrary nilpotent groups. The {\em Malcev completion}\/ functor is left 
adjoint to the embedding of the category 
of rational nilpotent groups into the category of nilpotent groups.
Thus, if $N$ is a nilpotent group, its Malcev completion (or, rationalization) 
 is a rational nilpotent group, denoted $N\otimes \Q$, that comes endowed 
with a map $\kappa\colon N\to N\otimes \Q$ which is universal 
for homomorphisms of $G$ into uniquely divisible nilpotent groups.
Moreover, the kernel of $\kappa$ is equal to $\Tors(N)$, the torsion 
subgroup of $N$, and the induced map, 
$\kappa^*\colon \Hom(N\otimes \Q,K)\to \Hom(N,K)$, 
is an isomorphism for all rational nilpotent groups $K$. 
Malcev completion is an exact functor, 
which induces isomorphisms $H_*(N,\Q)\cong H_*(N\otimes \Q,\Z)$. 
The quotient $N/\Tors(N)$ is a torsion-free nilpotent group that has 
the same rationalization as $N$. If $N$ is finitely generated, 
then $N\otimes \Q$ is a nilpotent Lie group defined over $\Q$, 
with integral form $N/\Tors(N)$ and whose Lie algebra, 
$\mathfrak{Lie}(N\otimes \Q)$, is nilpotent.

We now turn to an arbitrary group $G$. The succesive nilpotent quotients 
of $G$ assemble into a tower of the form
\vspace*{-2pt}
\begin{equation}
\label{eq:nilp-tower}
\begin{tikzcd}[column sep=20]
\cdots   \ar[r]& G/\gamma _4 (G)\ar[r]& G/\gamma_3 (G)  
\ar[r]& G/\gamma_2(G) \ar[r]&  1.
\end{tikzcd}
\end{equation}
Replacing in this tower each nilpotent quotient 
by its rationalization and taking 
the inverse limit of this directed system, we obtain 
a prounipotent, filtered Lie group over $\Q$,
\begin{equation}
\label{eq:malcev-group}
G_{\Q}\coloneqq \varprojlim\nolimits_{n} (G/\gamma_{n}( G)\otimes \Q),
\end{equation}
which is called the {\em Malcev completion}\/ (or, the {\em prounipotent 
completion}) of the group $G$.  
We denote by $\kappa\colon G\to G_{\Q}$ the canonical homomorphism 
from $G$ to its rational completion and note that the assignment $G\leadsto 
G_{\Q}$ is functorial. 

The pronilpotent Lie algebra 
\vspace*{-2pt}
\begin{equation}
\label{eq:malcevLie}
\m(G):=\varprojlim\nolimits_{n} \mathfrak{Lie}(G/\gamma_{n} (G)\otimes \Q),
\end{equation}
is called the {\em Malcev Lie algebra}\/ of $G$.  This Lie algebra comes 
endowed with the inverse limit filtration, which makes it a complete, 
separated, filtered Lie algebra over $\Q$. As before, the assignment 
$G\leadsto \m(G)$ is functorial. Moreover, if $G$ is finitely generated, 
then $\m(G)$ is a finitely generated Lie algebra.

\subsection{Quillen's construction}
\label{subsec:malcev}

A different approach was taken by Quillen in \cite[Appendix~A]{Qu69}; 
we recall now his construction of the Malcev completion and the 
Malcev Lie algebra, building on the treatment from 
\cite{PS-imrn04}, \cite{PS-formal}, \cite{Massuyeau12}, 
\cite{FHT}, and \cite{SW-forum}. 

A {\em Malcev Lie algebra}\/ is a Lie algebra $\m$ over a field  
of characteristic $0$, endowed with a decreasing, complete 
vector space filtration $\F=\{\F_i\}_{i\ge 1}$ such that $\F_1=\m$ and   
$[\F_i,\F_j]\subset \F_{i+j}$, for all $i, j$, and with the property that the 
associated graded Lie algebra, $\gr(\m)=\bigoplus _{i\ge 1}
\F_i/\F_{i+1}$, is generated in degree~$1$. For example, the 
completion $\widehat{\g}$ of a Lie algebra $\g$ with respect to 
the lower central series filtration $\{\gamma_i (\g)\}_{i\ge 1}$,  
endowed with the canonical completion filtration, is a Malcev Lie algebra. 

Given a group $G$, the group algebra $\Q[G]$ has a natural Hopf 
algebra structure, with comultiplication map $\Delta\colon \Q[G]\to \Q[G]\otimes \Q[G]$  
given by $\Delta(g)=g\otimes g$, and counit the augmentation map 
$\varepsilon\colon \Q[G]\to \Q$ given by $\varepsilon(g)= 1$. 
An element $x\in \Q[G]$ is said to be {\em group-like}\/ if $\Delta(x)=x\otimes x$ 
and {\em primitive}\/ if $\Delta(x)=x\otimes 1+1\otimes x$; under 
the inclusion $G\inj \Q[G]$, the set of all group-like elements 
gets identified with $G$. Let $I=\ker(\varepsilon)$ be the augmentation 
ideal of, and let 
\begin{equation}
\label{eq:completion-groupalgebra}
\widehat{\Q[G]}=\varprojlim_r \Q[G]/I^r
\end{equation}
be the completion of $\Q[G]$ with respect to the filtration by 
the powers of this ideal.  
Define the completed tensor product $\widehat{\Q[G]}\, 
\hat{\otimes}\,  \widehat{\Q[G]}$ as the
completion of $\Q[G]\otimes \Q[G]$ with respect 
to the natural tensor product filtration. 
Applying the $I$-adic completion functor to the map 
$\Delta$ yields a comultiplication map,    
$\widehat{\Delta} \colon \widehat{\Q[G]}\to \widehat{\Q[G]}\, 
\hat{\otimes}\, \widehat{\Q[G]}$, which makes $\widehat{\Q[G]}$ 
into a complete Hopf algebra. As shown by Quillen,  
there is a natural, filtration-preserving isomorphism, 
\begin{equation}
\label{eq:mg-prim}
\m(G)\cong \Prim \big(\widehat{\Q[G]}\big),
\end{equation}
between the Malcev Lie algebra of $G$ and the Lie algebra of primitive 
elements in $\widehat{\Q[G]}$, with Lie bracket given by $[x,y]=xy-yx$.

The set of all primitive elements in $\gr(\Q[G])$ forms a graded 
Lie algebra, which is isomorphic to $\gr(G)\otimes \Q$.
An important connection between the Malcev Lie algebra 
and the associated graded Lie algebra of $G$ was discovered 
by Quillen, who showed in \cite{Qu68} that there is a natural isomorphism 
of graded Lie algebras, 
\begin{equation}
\label{eq:quillen-iso}
\gr(\m(G))\cong \gr(G;\Q).
\end{equation}

The Malcev completion $G_{\Q}$ may be identified 
with the set consisting of all group-like elements in the Hopf algebra 
$\widehat{\Q[G]}$.  This is a group which comes endowed with a complete, 
separated filtration, whose $n$-th term is $G_{\Q} \cap (1+\widehat{I^n})$.  
As explained in \cite{Massuyeau12}, there is a one-to-one, filtration-preserving 
correspondence between primitive elements 
and group-like elements of $\widehat{\Q[G]}$ via the exponential 
and logarithmic maps, 
\begin{equation}
\label{eq:explog}
\begin{tikzcd}[column sep=30]
 G_{\Q}\subset 1+\widehat{I} \ar[rr, bend right=16, "\log" description]&  
  & \widehat{I} \supset\m(G)\ar[ll, bend right=16, "\exp" description].
\end{tikzcd}
\end{equation} 
Passing to associated graded objects and using \eqref{eq:quillen-iso}, 
we find that $\gr( G_{\Q}) \cong \gr(G;\Q)$; in particular, $H_1(G_{\Q})=H_1(G,\Q)$.

\subsection{Multiplicative expansions and Taylor expansions}
\label{subsec:exp}
Let $G$ be a group. 
Given a map $f\colon G\to R$, where $R$ is a ring, we will 
denote by $\bar{f} \colon \Q[G]\to R$ its linear extension to 
the group algebra. 
A \emph{\textup{(}multiplicative\textup{)} expansion}\/
of $G$ is a map 
\begin{equation}
\label{eq:expansion}
\begin{tikzcd}[column sep=18pt]
E\colon G \ar[r]& \widehat{\gr}(\Q[G])
\end{tikzcd}
\end{equation}
such that the linear extension $\bar{E}\colon\Q[G]\to \widehat{\gr}(\Q[G])$  
is a filtration-preserving algebra morphism with the property that $\gr(\bar{E})=\id$. 
Alternatively, a map as in \eqref{eq:expansion} is an expansion 
if it is a (multiplicative) monoid map  and the following 
property holds:  If $f\in I^k\setminus I^{k+1}$, then $\bar{E}(f)$ 
starts with $[f]\in I^k/I^{k+1}$; that is, $\bar{E}(f)=(0,\dots,0,[f],*,*,\dots)$.  

\begin{definition}[\cite{Bar-Natan16}, \cite{SW-ejm}]
\label{def:taylor} 
An expansion $E\colon G \to \widehat{\gr}(\Q[G])$ is 
called a {\em Taylor expansion}\/ if it sends each element of 
$G$ to a group-like element of $\widehat{\gr}(\Q[G])$;  
that is, $\bar\Delta (E(g))=E(g) \hat{\otimes} E(g)$, 
for all $g\in G$.  
\end{definition}

It is shown in \cite{SW-ejm} that a Taylor expansion 
$E\colon G\to \widehat{\gr}(\Q[G])$ induces a 
filtration-pre\-serving isomorphism of complete Hopf algebras,  
$\widehat{E}\colon \widehat{\Q[G]}\to \widehat{\gr}(\Q[G])$, such that 
$\gr(\widehat{E})$ is the identity on $\gr(\Q[G])$. Conversely, 
a filtration-preserving isomorphism of complete Hopf algebras,  
$\phi\colon \widehat{\Q[G]}\to \widehat{\gr}(\Q[G])$, 
induces a Taylor expansion $E\colon G\to \widehat{\gr}(\Q[G])$.
These facts may be summarized as follows. 
 
\begin{theorem}[\cite{SW-ejm}]
\label{thm:TaylorHopf}
The assignment $E\leadsto \widehat{E}$ establishes a 
one-to-one correspondence between Taylor expansions 
$G\to \widehat{\gr}(\Q[G])$ and filtration-preserving isomorphisms 
of complete Hopf algebras $\widehat{\Q[G]}\to \widehat{\gr}(\Q[G])$ for which  
the associated graded morphism is the identity on $\gr(\Q[G])$. 
\end{theorem}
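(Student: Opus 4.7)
The plan is to construct explicit assignments in both directions and verify they are mutually inverse. The key observation throughout is that $G$ embeds inside $\widehat{\Q[G]}$ as group-like elements, and that $\Q[G]$ sits as a dense subalgebra in the $I$-adic topology; continuous maps out of $\widehat{\Q[G]}$ are therefore determined by their restriction to $G$.

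For the forward direction, given a Taylor expansion $E\colon G\to \widehat{\gr}(\Q[G])$, extending $\Q$-linearly yields a filtration-preserving algebra morphism $\bar E\colon \Q[G]\to \widehat{\gr}(\Q[G])$ with $\gr(\bar E)=\id$. Since the target is complete and separated and $\bar E$ preserves filtrations, $\bar E$ extends uniquely to a continuous algebra map $\widehat E\colon \widehat{\Q[G]}\to \widehat{\gr}(\Q[G])$. Because $\gr(\widehat E)$ is the identity and both the domain and codomain are complete and separated in their filtrations, the analog of the filtered Lie algebra criterion recalled in Section \ref{subsec:filt lie} (applied to complete filtered algebras) gives that $\widehat E$ is an isomorphism. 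To upgrade this to a Hopf algebra isomorphism, I would verify the coalgebra compatibility $(\widehat E \,\hat{\otimes}\, \widehat E)\circ \widehat\Delta = \bar\Delta \circ \widehat E$ first on elements of $G$, where it reduces precisely to the Taylor/group-like hypothesis $\bar\Delta(E(g))=E(g)\,\hat{\otimes}\, E(g)$, and then use density of $\Q[G]$ together with continuity to propagate the identity to the entire completion.

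For the reverse direction, given a filtration-preserving Hopf algebra isomorphism $\phi\colon \widehat{\Q[G]}\to \widehat{\gr}(\Q[G])$ with $\gr(\phi)=\id$, define $E \coloneqq \phi|_G$. Multiplicativity of $\phi$ together with the closure of $G$ under multiplication gives that $E$ is a monoid map; the filtration-preserving and identity-on-graded hypotheses give the leading-term condition that identifies $E$ as a multiplicative expansion; and the fact that $\phi$ is a coalgebra morphism sending the group-like element $g$ to the group-like element $\phi(g)$ gives the Taylor property. The two assignments $E\leadsto \widehat E$ and $\phi \leadsto \phi|_G$ are mutually inverse essentially by construction: restricting $\widehat E$ to $G$ returns $E$, while the unique continuous algebra extension of $\phi|_G$ agrees with $\phi$ on the dense subalgebra $\Q[G]$ and hence equals $\phi$.

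The main obstacle is the Hopf-algebra compatibility rather than the algebra part: the bijection asserts more than an equivalence between multiplicative expansions and filtration-preserving algebra isomorphisms, so one must carefully track how the single-element group-like condition on $E(g)$ interacts with the completed tensor products $\widehat{\Q[G]}\,\hat{\otimes}\,\widehat{\Q[G]}$ and $\widehat{\gr}(\Q[G])\,\hat{\otimes}\,\widehat{\gr}(\Q[G])$. This requires some care about how completion commutes with tensor products and about continuity of the coproduct $\widehat\Delta$ in the $I$-adic topology; once these analytical points are settled, the remaining verifications are essentially bookkeeping about filtrations, density, and continuous extension.
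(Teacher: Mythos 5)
Your proposal is correct and follows the same route that the paper (and the cited source \cite{SW-ejm}) outlines: extend $\bar{E}$ continuously to the $I$-adic completion, use the fact that a filtration-preserving map of complete, separated filtered objects inducing the identity on associated graded is an isomorphism, check the coalgebra compatibility on the dense set of group-like elements $g\in G$ where it reduces to the Taylor condition, and invert by restricting $\phi$ to $G$. The points you flag as needing care (behavior of completed tensor products and continuity of $\widehat{\Delta}$) are exactly the ones handled in the reference, so no gap remains.
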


This theorem generalizes a result of Massuyeau, 
from finitely generated free groups to arbitrary finitely generated groups.  
As a corollary, we obtain the following criterion for the existence 
of a Taylor expansion.

\begin{corollary}[\cite{SW-ejm}]
\label{cor:te}
A finitely generated group $G$ has a Taylor expansion if and only if 
there is an isomorphism of filtered Hopf algebras,  
$\widehat{\Q[G]}\cong\widehat{\gr}(\Q[G])$.
\end{corollary}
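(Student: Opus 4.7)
The forward implication is immediate from Theorem~\ref{thm:TaylorHopf}: any Taylor expansion $E\colon G\to \widehat{\gr}(\Q[G])$ gives rise to a filtration-preserving isomorphism of complete Hopf algebras $\widehat{E}\colon \widehat{\Q[G]}\to \widehat{\gr}(\Q[G])$, which in particular yields an isomorphism of the sort asserted.

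For the converse, suppose we are given an arbitrary filtration-preserving isomorphism of complete Hopf algebras $\phi\colon \widehat{\Q[G]} \isom \widehat{\gr}(\Q[G])$. The plan is to massage $\phi$ into an isomorphism whose associated graded is the identity, so that Theorem~\ref{thm:TaylorHopf} becomes directly applicable. Taking associated graded produces a graded Hopf algebra automorphism $\alpha \coloneqq \gr(\phi)$ of $\gr(\Q[G])$. Since $\widehat{\gr}(\Q[G])\cong \prod_{n\ge 0} \gr_n(\Q[G])$ as filtered vector spaces, $\alpha$ lifts componentwise to a filtration-preserving Hopf algebra automorphism $\widehat{\alpha}$ of $\widehat{\gr}(\Q[G])$ with $\gr(\widehat{\alpha})=\alpha$.

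Setting $\psi\coloneqq \widehat{\alpha}^{-1}\circ \phi$, we obtain a filtration-preserving isomorphism of complete Hopf algebras $\widehat{\Q[G]}\isom \widehat{\gr}(\Q[G])$ whose associated graded is $\alpha^{-1}\circ \alpha=\id$ on $\gr(\Q[G])$. Theorem~\ref{thm:TaylorHopf} then yields a Taylor expansion $E\colon G\to \widehat{\gr}(\Q[G])$ with $\widehat{E}=\psi$, completing the proof.

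The key (and essentially only nontrivial) step is verifying that the componentwise lift $\widehat{\alpha}$ of $\alpha$ is genuinely a filtered Hopf algebra automorphism of $\widehat{\gr}(\Q[G])$, compatibly with the completed comultiplication. This is routine bookkeeping about completions of graded Hopf algebras: since $\alpha$ commutes with both the graded product and graded coproduct, and since the completed product and coproduct on $\widehat{\gr}(\Q[G])$ are determined by their graded counterparts, the compatibility persists after passing to the completion. Once this is in hand, the corollary is just an application of Theorem~\ref{thm:TaylorHopf}.
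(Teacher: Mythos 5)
Your proof is correct and follows the same route the paper intends: the corollary is deduced from Theorem~\ref{thm:TaylorHopf}, with the only substantive point being the normalization step in the converse, where you compose the given isomorphism $\phi$ with the completion of $\gr(\phi)^{-1}$ so that the associated graded map becomes the identity. This is exactly the reduction used in \cite{SW-ejm}, and your verification that the componentwise lift $\widehat{\alpha}$ is a filtered Hopf automorphism (using $\widehat{\gr}(\Q[G])\cong\prod_n\gr_n(\Q[G])$) is the right bookkeeping.
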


Now suppose $G$ admits a finite presentation of the form $G=F/R$. 
Starting from a Taylor expansion for the finitely generated free group $F$, 
one may find a presentation for the Malcev Lie algebra 
$\m(G;\k)$, using the approach of Papadima \cite{Papadima95} and 
Massuyeau \cite{Massuyeau12}. This is summarized in the following theorem.

\begin{theorem}[\cite{Massuyeau12}, \cite{Papadima95}]
\label{thm:Massuyeau}
Let $G$ be a group with generators $x_1,\dots,x_n$ and relators $r_1,\dots, r_m$ 
and let $E$ be a Taylor expansion of the free group $F=\langle x_1,\dots,x_n\rangle$.
There exists then a unique filtered Lie algebra isomorphism
\[
\m(G) \cong\widehat{\Lie}(\Q^n)/
\langle\!\langle W\rangle\!\rangle,
\]
where $\langle\!\langle W \rangle\!\rangle$ denotes 
the closed ideal of the completed free Lie algebra $\widehat{\Lie}(\Q^n)$ generated 
by the subset $\{\log(E(r_1)),\dots,\log(E(r_m))\}$. 
\end{theorem}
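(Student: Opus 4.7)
My plan is to lift the entire problem to the level of complete Hopf algebras, where both sides of the claimed isomorphism appear as primitive elements and the Taylor expansion serves as the bridge between them.

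\textbf{Step 1 (identifying $\m(F)$).} Applied to the free group $F$, the Taylor expansion $E$ induces, via Theorem \ref{thm:TaylorHopf}, a filtration-preserving isomorphism of complete Hopf algebras $\widehat{E}\colon \widehat{\Q[F]}\isom \widehat{\gr}(\Q[F])$ whose associated graded is the identity. Magnus's classical identification gives $\gr(\Q[F])\cong T(\Q^n)$ with $x_i-1$ corresponding to the degree-one generators, so the target is the completed tensor algebra $\widehat{T}(\Q^n)$. Passing to primitive elements and invoking Quillen's isomorphism \eqref{eq:mg-prim}, I obtain a filtered Lie algebra isomorphism
\[
\Phi\colon \m(F)=\Prim(\widehat{\Q[F]})\isom \Prim(\widehat{T}(\Q^n))=\widehat{\Lie}(\Q^n).
\]
Each $E(r_j)=\widehat{E}(r_j)$ is group-like, hence $\log E(r_j)$ is primitive, lies in $\widehat{\Lie}(\Q^n)$, and satisfies $\Phi(\log r_j)=\log E(r_j)$.

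\textbf{Step 2 (the comparison map).} Functoriality applied to $\pi\colon F\surj G$ gives a filtered Lie algebra morphism $\m(\pi)\colon \m(F)\to \m(G)$, which is surjective because each $F/\gamma_k F\surj G/\gamma_k G$ is so, the functor $\mathfrak{Lie}(-\otimes\Q)$ preserves surjections of nilpotent groups, and the resulting tower satisfies the Mittag--Leffler condition. Since $\pi(r_j)=1$ forces $\m(\pi)(\log r_j)=0$, continuity shows that $\Phi$ sends $\ker \m(\pi)$ into a closed Lie ideal containing $W=\{\log E(r_j)\}$; this produces a filtered Lie algebra surjection
\[
\overline{\Phi}\colon \widehat{\Lie}(\Q^n)/\langle\!\langle W\rangle\!\rangle\surj \m(G),
\]
so the theorem reduces to showing that $\overline{\Phi}$ is injective.

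\textbf{Step 3 (the main obstacle: kernel computation).} This is handled at the associative level. The kernel of the surjection $\widehat{\Q[F]}\surj \widehat{\Q[G]}$ is the closed two-sided Hopf ideal $\hat{J}$ generated by $\{r_j-1\}$; transported by $\widehat{E}$, it becomes the closed two-sided Hopf ideal of $\widehat{T}(\Q^n)$ generated by $\{E(r_j)-1\}$, which coincides with the one generated by $W$ because $\log$ and $\exp$ are mutually inverse convergent power series on the augmentation ideal. The crux is then a complete version of the Milnor--Moore theorem: for a complete cocommutative Hopf algebra over $\Q$, the primitives of a quotient by a closed Hopf ideal generated by primitive elements $W$ coincide with the quotient of the primitives by the closed Lie ideal generated by $W$. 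One establishes this by descending to the nilpotent tower $\widehat{\Q[F]}/\widehat{I}^k$, applying classical Milnor--Moore together with Poincar\'e--Birkhoff--Witt at each finite stage, and then passing to the inverse limit. Applying this to $\widehat{\Q[F]}/\hat{J}\cong \widehat{\Q[G]}$ and taking primitives produces the inverse of $\overline{\Phi}$, completing the proof of the isomorphism. Uniqueness is immediate: any such isomorphism is forced by its values on the topological generators $x_i\in\widehat{\Lie}(\Q^n)$, which must be sent to $\log\kappa(x_i)\in \m(G)$, and continuity in the filtration propagates the identification to all of $\widehat{\Lie}(\Q^n)/\langle\!\langle W\rangle\!\rangle$.
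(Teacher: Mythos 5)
The paper does not actually prove this theorem---it is quoted from Massuyeau and Papadima with a citation---so there is no in-house argument to compare against; what you have written is, in outline, the standard proof from those references, run through Quillen's dictionary between complete Hopf algebras and Malcev Lie algebras. Steps 1 and 2 are sound: Theorem \ref{thm:TaylorHopf} together with the Magnus identification $\gr(\Q[F])\cong T(\Q^n)$ produces the filtered isomorphism $\Phi$ on primitives, the group-likeness of $E(r_j)$ gives $\Phi(\log r_j)=\log E(r_j)$, and the surjectivity of $\overline{\Phi}$ follows from exactness of Malcev completion plus Mittag--Leffler on the finite-dimensional nilpotent tower, exactly as you say.

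Two points in Step 3 are asserted rather than proved and carry real content. First, the identification $\ker\big(\widehat{\Q[F]}\to\widehat{\Q[G]}\big)=\overline{J}$, where $J$ is the two-sided ideal generated by the $r_j-1$, is not formal: completion is not exact in general, and one must use the exactness of $0\to (J+I_F^k)/I_F^k\to \Q[F]/I_F^k\to \Q[G]/I_G^k\to 0$ together with Mittag--Leffler on the kernels to see that the kernel of the completed map is precisely the closure of $J$. This is exactly what guarantees that $\m(G)$ is cut out by the closed ideal generated by the listed relators and nothing larger, so it deserves a sentence. Second, your ``complete Milnor--Moore'' reduction is the crux, and the sketch via ``classical Milnor--Moore at each finite stage'' has a wrinkle: the quotients $\widehat{\Q[G]}/\widehat{I}^k$ are filtered, not graded connected, Hopf algebras, so the graded Milnor--Moore theorem does not apply to them directly. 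The clean route is Quillen's Appendix~A equivalence between complete Hopf algebras and Malcev Lie algebras via the completed enveloping algebra and $\Prim$: since the closed two-sided ideal generated by the primitives $W$ coincides with the closed two-sided ideal generated by the closed Lie ideal $\langle\!\langle W\rangle\!\rangle$, the quotient Hopf algebra is the completed enveloping algebra of $\widehat{\Lie}(\Q^n)/\langle\!\langle W\rangle\!\rangle$, and taking primitives finishes the argument. Finally, ``unique'' in the statement only makes sense relative to a normalization (the isomorphism induced by $\widehat{E}$, equivalently the one sending $x_i$ to $\log\kappa(x_i)$ and inducing the identity on associated graded Lie algebras); your uniqueness sentence silently assumes this normalization, which is the intended reading but should be made explicit.
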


\subsection{Filtered formality}
\label{subsec:tff}

Following \cite{SW-forum}, we say that a group $G$ is {\em filtered formal}\/ if 
its Malcev Lie algebra is filtered formal, that is, $\m(G)$ is isomorphic (as a 
filtered Lie algebra) to the degree completion of its associated graded Lie 
algebra, $\gr(\m(G))$. In view of \eqref{eq:quillen-iso}, this condition is 
equivalent to $\m(G)\cong \widehat{\gr}(G;\Q)$. It follows from 
Lemma \ref{lem:presbar} that $G$ is filtered formal if and only if 
$\m(G)$ admits a homogeneous presentation. 

For instance, if $G=F_n$, then $\m(F_n)\cong \widehat{\Lie}(\Q^n)$, and so 
$F_n$ is filtered formal. Moreover, if $G$ is a torsion-free, $2$-step 
nilpotent group for which $G_{\ab}$ is torsion-free 
(e.g., if $G=F_n/\gamma_3(F_n)$ with $n\ge 2$), 
then $G$ is filtered-formal. On the other hand, there are torsion-free, $3$-step 
nilpotent groups that are not filtered formal; see \cite{SW-forum}.

As the next theorem shows, the Taylor expansions of a finitely generated 
group $G$ are closely related to the isomorphisms between the Malcev Lie 
algebra and the LCS completion of the associated 
graded Lie algebra of $G$.

\begin{theorem}[\cite{SW-ejm}]
\label{thm:expansionFiltered}
There is a one-to-one correspondence between Taylor expansions 
$G\to \widehat{\gr}(\Q[G])$ and filtration-preserving Lie algebra isomorphisms 
$\m(G)\to \widehat{\gr}(G;\Q)$ inducing the identity on $\gr(G;\Q)$.  
\end{theorem}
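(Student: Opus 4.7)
The plan is to deduce Theorem \ref{thm:expansionFiltered} from Theorem \ref{thm:TaylorHopf} by passing to Lie algebras of primitive elements. The bijection we seek should be the composition of the bijection between Taylor expansions and filtration-preserving Hopf algebra isomorphisms $\widehat{\Q[G]} \to \widehat{\gr}(\Q[G])$ (inducing the identity on $\gr(\Q[G])$) with a bijection between such Hopf algebra isomorphisms and the filtered Lie algebra isomorphisms $\m(G)\to \widehat{\gr}(G;\Q)$ (inducing the identity on $\gr(G;\Q)$).

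First I would assemble the identifications of the Lie algebras in question as primitives. By Quillen's isomorphism \eqref{eq:mg-prim}, we have $\m(G) \cong \Prim\bigl(\widehat{\Q[G]}\bigr)$ as filtered Lie algebras. On the other hand, as recalled in the excerpt, the primitives in $\gr(\Q[G])$ form a graded Lie algebra naturally isomorphic to $\gr(G;\Q)$; since taking primitives commutes with the inverse limit defining $\widehat{\gr}(\Q[G])$, we obtain a natural filtration-preserving Lie algebra isomorphism $\Prim\bigl(\widehat{\gr}(\Q[G])\bigr) \cong \widehat{\gr}(G;\Q)$. Given a Taylor expansion $E$, Theorem \ref{thm:TaylorHopf} produces the filtered Hopf algebra isomorphism $\widehat{E}\colon \widehat{\Q[G]} \to \widehat{\gr}(\Q[G])$ with $\gr(\widehat{E})=\id$; restricting $\widehat{E}$ to primitive elements and invoking the above identifications yields a filtration-preserving Lie algebra isomorphism $\m(G) \to \widehat{\gr}(G;\Q)$, and the condition $\gr(\widehat{E})=\id$ passes to $\gr(\Prim(\widehat{E}))=\id$ on $\gr(G;\Q)$.

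For the inverse direction, given a filtration-preserving Lie algebra isomorphism $\phi\colon \m(G) \to \widehat{\gr}(G;\Q)$ with $\gr(\phi)=\id$, one extends $\phi$ to a filtered Hopf algebra isomorphism $\widehat{E}\colon \widehat{\Q[G]} \to \widehat{\gr}(\Q[G])$ by appealing to a complete, filtered version of the Milnor--Moore/Cartier--Quillen theorem in characteristic zero: both Hopf algebras in play are complete, cocommutative, and their natural filtrations are multiplicative with associated graded generated in degree one, so each is canonically isomorphic (as a complete filtered Hopf algebra) to the completed universal enveloping algebra of its Lie algebra of primitives. Thus $\phi$ admits a unique extension to a filtered Hopf algebra morphism $\widehat{E}$, which is an isomorphism because $\gr(\widehat{E})=\id$ (this last fact follows from $\gr(\phi)=\id$ together with the naturality of enveloping algebras). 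Applying Theorem \ref{thm:TaylorHopf} in reverse then associates to $\widehat{E}$ a Taylor expansion of $G$.

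The main obstacle is the inverse direction, and more precisely the extension step $\phi \leadsto \widehat{E}$: one needs the complete Milnor--Moore theorem to hold in the filtered setting and one needs to verify that $\gr(\widehat{E})=\id$ is forced by $\gr(\phi)=\id$ and by the fact that $\gr\bigl(\widehat{\Q[G]}\bigr)$ and $\gr\bigl(\widehat{\gr}(\Q[G])\bigr)$ are each identified, as graded Hopf algebras, with the universal enveloping algebra of $\gr(G;\Q)$ (by the Poincar\'e--Birkhoff--Witt theorem, since $\ch(\k)=0$). Once the two constructions are in place, they are mutually inverse by construction and by the uniqueness clause in the universal property of (completed) enveloping algebras, and the correspondence is natural in $G$.
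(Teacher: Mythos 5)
Your proposal is correct and follows the same route as the source of this result: the theorem is obtained from Theorem \ref{thm:TaylorHopf} by applying the primitive-element functor, using Quillen's identification $\m(G)\cong \Prim\bigl(\widehat{\Q[G]}\bigr)$ and $\Prim(\gr(\Q[G]))\cong\gr(G;\Q)$ in one direction, and the complete (filtered) Milnor--Moore/PBW theorem to recover the Hopf-algebra isomorphism from the Lie-algebra one in the other. The two passages are mutually inverse exactly as you indicate, so there is no gap.
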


Using this theorem, we obtain an alternate interpretation of filtered-formality.

\begin{corollary}[\cite{SW-ejm}]
\label{cor:TFF}
A finitely generated group $G$ is filtered-formal if and only if $G$ 
has a Taylor expansion.
\end{corollary}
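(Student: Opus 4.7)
The plan is to deduce this corollary almost immediately from Theorem \ref{thm:expansionFiltered} by unwinding the definition of filtered-formality in the Malcev setting and invoking Quillen's identification of the associated graded.

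First I would set up the bridge between the two sides. By definition, $G$ is filtered-formal precisely when $\m(G)$ is filtered-formal as a complete, separated, filtered Lie algebra, which by Definition \ref{def:filt formal} means there exists a filtration-preserving isomorphism $\m(G) \cong \widehat{\gr}(\m(G))$ inducing the identity on the associated graded Lie algebra. Quillen's isomorphism \eqref{eq:quillen-iso} identifies $\gr(\m(G)) \cong \gr(G;\Q)$ naturally as graded Lie algebras, and completing this identification yields a canonical filtered Lie algebra isomorphism $\widehat{\gr}(\m(G)) \cong \widehat{\gr}(G;\Q)$. Composing, I see that $G$ is filtered-formal if and only if there is a filtration-preserving Lie algebra isomorphism $\m(G) \to \widehat{\gr}(G;\Q)$ inducing the identity on $\gr(G;\Q)$.

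Next I would invoke Theorem \ref{thm:expansionFiltered}, which establishes a bijective correspondence between such filtration-preserving Lie algebra isomorphisms $\m(G) \to \widehat{\gr}(G;\Q)$ inducing the identity on $\gr(G;\Q)$ and Taylor expansions $G \to \widehat{\gr}(\Q[G])$. In particular, the set of Taylor expansions is nonempty precisely when the set of such isomorphisms is nonempty. Combining this with the reformulation in the previous paragraph immediately yields the equivalence: $G$ is filtered-formal if and only if $G$ admits a Taylor expansion.

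Since both implications are obtained by a direct translation, there is no real obstacle; the entire content of the corollary sits inside Theorem \ref{thm:expansionFiltered} and Quillen's isomorphism \eqref{eq:quillen-iso}. The only point that requires care is to verify that the compatibility condition appearing in the definition of filtered-formality (the isomorphism induces the identity on the associated graded) matches exactly the compatibility condition appearing in Theorem \ref{thm:expansionFiltered} after the identification $\gr(\m(G)) = \gr(G;\Q)$; this is automatic from the naturality of Quillen's isomorphism.
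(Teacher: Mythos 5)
Your argument is correct and is exactly the deduction the paper intends: the paper presents Corollary \ref{cor:TFF} as an immediate consequence of Theorem \ref{thm:expansionFiltered}, after reformulating filtered-formality of $G$ as the existence of a filtered isomorphism $\m(G)\cong \widehat{\gr}(G;\Q)$ inducing the identity on $\gr(G;\Q)$ via Quillen's isomorphism \eqref{eq:quillen-iso}, just as you do. Your closing remark about matching the two compatibility conditions is the one point worth making explicit, and you handle it correctly.
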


\subsection{The RTFN property and Taylor expansions}
\label{subsec:rtfn-taylor}

A group $G$ is said to be {\em residually torsion-free nilpotent}\/ 
(for short, RTFN) if for any $g\in G$, $g\neq 1$,
there exists a torsion-free nilpotent group $Q$ 
and an epimorphism $\psi\colon G\to Q$ such that $\psi(g)\neq 1$. 
The property of being residually torsion-free nilpotent 
is inherited by subgroups and is preserved under 
direct products and free products.

The RTFN property may be expressed in terms of the {\em rational 
lower central series}\/ of $G$, whose terms are given by 
\begin{equation}
\label{eq:sqrt-filtration}
\gamma^{\rat}_{n}(G)=\{g\in G\mid \text{$g^m \in \gamma_{n}(G)$, 
for some $m\in \N$} \}.
\end{equation}
The group $G$ is RTFN if and only if the intersection of its rational 
lower central series, $\gamma^{\rat}_{\omega}(G)\coloneqq 
\bigcap_{n\ge 1} \gamma^{\rat}_{n}(G)$, is the trivial subgroup.
We refer to \cite{Su-lcs} for alternate definitions and more 
properties of this $N$-series.

As is well known, a group $G$ is residually 
torsion-free nilpotent if and only if the group-algebra 
$\Q[G]$ is residually nilpotent, that is, $\bigcap_{n\ge 1}I^n=\{0\}$, 
where $I$ is the augmentation ideal. 
Therefore, if $G$ is finitely generated, the RTFN condition is 
equivalent to the injectivity of the canonical map to the prounipotent 
completion, $\kappa\colon G\to G_{\Q}$, where recall $G_{\Q}$ is 
the set of group-like elements in $\widehat{\Q[G]}$.

If $G$ is residually nilpotent and $\gr_{n} (G)$ is torsion-free for 
all $n\ge 1$, then $G$ is residually torsion-free nilpotent. 
Residually torsion-free nilpotent implies residually nilpotent, which
in turn, implies residually finite. 
Examples of residually torsion-free nilpotent groups include 
torsion-free nilpotent groups, free groups, and surface groups. 

\begin{proposition}[\cite{SW-ejm}]
\label{prop:fftaylor}
A finitely generated group $G$ has an injective Taylor expansion 
if and only if $G$ is residually torsion-free nilpotent and filtered-formal. 
\end{proposition}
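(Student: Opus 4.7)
The plan is a short diagram chase combining three facts already recorded in the paper: Corollary~\ref{cor:TFF} (filtered-formality of a finitely generated group is equivalent to the existence of a Taylor expansion), Theorem~\ref{thm:TaylorHopf} (Taylor expansions correspond to filtration-preserving complete Hopf algebra isomorphisms $\widehat{\Q[G]}\to \widehat{\gr}(\Q[G])$), and the characterization of the RTFN property for finitely generated groups in terms of the injectivity of the canonical map $\kappa\colon G\to G_{\Q}$.

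Under either of the two hypotheses, $G$ is filtered-formal, so by Corollary~\ref{cor:TFF} I can fix a Taylor expansion $E\colon G\to \widehat{\gr}(\Q[G])$. By Theorem~\ref{thm:TaylorHopf}, $E$ extends to a filtration-preserving Hopf algebra isomorphism $\widehat{E}\colon \widehat{\Q[G]}\to \widehat{\gr}(\Q[G])$. Being a coalgebra isomorphism, $\widehat{E}$ restricts to a bijection on the sets of group-like elements on either side. By the defining property of a Taylor expansion, $E(g)$ is group-like for each $g\in G$, and the composition $G\hookrightarrow \Q[G]\hookrightarrow \widehat{\Q[G]}$ lands in $G_{\Q}$ by the identification of $G_{\Q}$ with the group-likes of $\widehat{\Q[G]}$; this composition is precisely $\kappa$. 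Thus $E$ factors as
\[
E=\widehat{E}|_{G_{\Q}}\circ \kappa\colon\ G\ \xrightarrow{\ \kappa\ }\ G_{\Q}\ \xrightarrow{\ \widehat{E}|_{G_{\Q}}\ }\ \widehat{\gr}(\Q[G]),
\]
where the second arrow is a bijection onto the group-likes of the target. Hence $E$ is injective if and only if $\kappa$ is injective.

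To conclude, recall that because $G$ is finitely generated, the injectivity of $\kappa\colon G\to G_{\Q}$ is equivalent to $G$ being residually torsion-free nilpotent, as noted in the discussion preceding the statement. Combining the two equivalences gives both directions of the proposition at once: an injective Taylor expansion exists if and only if $G$ is filtered-formal (so that some $E$ exists) \emph{and} $G$ is RTFN (so that $\kappa$, and therefore every Taylor expansion, is injective). The substantive content of the argument is carried entirely by Theorem~\ref{thm:TaylorHopf}, which is quoted from earlier; the remaining verifications, namely that $E$ takes values in group-likes and that $\widehat{E}$ preserves them, are built into the definitions, so I do not expect any genuine obstacle.
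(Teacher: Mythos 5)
Your argument is correct: the factorization $E=\widehat{E}|_{G_{\Q}}\circ\kappa$ coming from Theorem~\ref{thm:TaylorHopf}, together with the injectivity of the Hopf isomorphism $\widehat{E}$, reduces injectivity of any Taylor expansion to injectivity of $\kappa\colon G\to G_{\Q}$, which the paper has already identified with the RTFN property, while Corollary~\ref{cor:TFF} supplies the existence of some Taylor expansion exactly when $G$ is filtered-formal. The paper itself only cites \cite{SW-ejm} for this proposition without reproducing a proof, but your route assembles precisely the ingredients the paper sets up immediately beforehand and is the intended one.
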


\section{Holonomy Lie algebras}
\label{sect:holo}

\subsection{The holonomy Lie algebra of a $\cdga$}
\label{subsec:holo-dga}

Let $A=(A^{*},d)$  be a $1$-finite $\k$-$\cdga$, that is, a 
$\cdga$ over a field $\k$ of characteristic $0$ with $A^0=\k$ 
and $\dim_{\k} A^1<\infty$. Writing $A_i=\Hom (A^i, \k)$ for 
the $\k$-duals of the graded pieces, we let 
$\mu^{\vee} \colon A_2 \to A_1\wedge A_1$ 
be the $\k$-dual of the multiplication map 
$\mu \colon A^1\wedge A^1\to A^2$, and we 
let $d^{\vee} \colon A_2\to A_1$ be the dual 
of the differential $d \colon A^1\to A^2$. We shall denote by 
$\Lie(A_1)$ the free Lie algebra on the $\k$-vector space $A_1$, 
and we will identify $\Lie_1(A_1)=A_1$ and  
$\Lie_2(A_1)=A_1\wedge A_1$.

\begin{definition}[\cite{MPPS}]
\label{def:holo cdga}
The {\em holonomy Lie algebra}\/ of a $1$-finite $\cdga$ $A=(A^{*},d)$ 
is the quotient of $\Lie(A_1)$ by the ideal generated by the image of the map 
$\partial_A=d^{\vee} + \mu^{\vee}$,
\begin{equation}
\label{eq:holo}
\h(A) = \Lie(A_1) / \langle\im (\partial_A)\rangle. 
\end{equation}
\end{definition}

Clearly, this construction is functorial.  Indeed, let $\varphi\colon A\to B$ is a 
morphism of $\cdga$s as above, and write 
$\varphi_i=(\varphi^i)^{\vee}\colon B_i\to A_i$. 
Then the induced map,  
$\Lie(\varphi_1)\colon \Lie(B_1)\to \Lie(A_1)$, factors through a 
morphism of Lie algebras, $\h(\varphi)\colon \h(B)\to \h(A)$.  
Observe that the Lie algebra $\h(A)$ depends only on the sub-$\cdga$ 
$\k \cdot 1 \oplus A^1 \oplus (d(A^1)+\mu(A^1\wedge A^1))$
of the truncation $A^{\le 2}$. Therefore, $\h(A)$ is finitely presented. 

In general, though, the ideal generated by $\im(\partial_A)$ 
is not homogeneous, and so the Lie algebra $\h(A)$ does not 
inherit a grading from $\Lie(A_1)$. 

\begin{example}
\label{ex:heis-lie}
Let $A=\bwedge(a_1,a_2,a_3)$ be the exterior algebra on generators $a_i$ 
in degree $1$, endowed with the differential $d$ given by $d{a_1}=d{a_2}=0$ 
and $d{a_3}=a_1 \wedge a_2$.  Identify $\Lie(A_1)$ with the free Lie algebra 
on dual generators $x_1,x_2,x_3$. Then the ideal $\langle\im(\partial_A)\rangle$ is 
generated by $x_3+[x_1,x_2]$, $[x_1,x_3]$, and $[x_2,x_3]$, and thus 
is not homogeneous. 
\end{example}

In the above example,  $\h(A)$ still admits the structure 
of a graded Lie algebra, with $x_1$ and $x_2$ in degree $1$, and $x_3$ in degree $2$.
Nevertheless, using a construction from \cite{SW-forum}, we may define a minimal, finite 
$\cdga$ $A$ for which $\h(A)$ does not admit any grading compatible with the 
lower central series filtration.

\begin{example}
\label{ex:noncarnot}
Let $A=\bwedge(a_1,\dots, a_5)$, with $\abs{a_i}=1$ and differential $d$ given by 
$d{a_4}=a_1 \wedge a_3$, $d{a_5}=a_1 \wedge a_4+a_2 \wedge a_3$, 
and $d{a_i}=0$, otherwise. Then, as shown in \cite[Example 10.5]{SW-forum}, 
$\h(A)$ is not isomorphic to $\gr(\h(A))$, its associated 
graded Lie algebra with respect to the LCS filtration.
\end{example}

\subsection{The holonomy Lie algebra of a $\cga$}
\label{subsec:holo-ga}
Now suppose $d=0$, so that $A$ is a graded, graded-commutative,  
$1$-finite $\k$-algebra. Then 
$\h(A) = \Lie(A_1) / \langle\im (\mu^{\vee})\rangle$ is the classical holonomy 
Lie algebra introduced by K.T.~Chen in \cite{Chen73} and further studied 
in \cite{Kohno}, \cite{MP}, \cite{PS-imrn04}, \cite{SW-jpaa}, and \cite{SW-forum}. 
Clearly, $\h(A)$ inherits a natural grading from the free Lie algebra 
$\Lie(A_1)$, which is compatible with the Lie bracket.  
Consequently, $\h(A)$ is a finitely-presented, graded 
Lie algebra, with generators in degree $1$ and relations in 
degree $2$; in other words, $\h(A)$ is a quadratic Lie algebra.  

A  graded, $1$-finite $\k$-algebra $A$ may be realized as the quotient 
$T(V)/I$, where $T(V)$ is the tensor algebra on a finite-dimensional 
$\k$-vector space $V$ by a homogeneous, two-sided ideal $I$. 
The algebra $A$ is said to be {\em quadratic}\/ if $A^1=V$ 
and the ideal $I$ is generated in degree $2$, i.e., $I=\langle I^2 \rangle$, 
where $I^2=I\cap (V\otimes V)$.  

Given a quadratic algebra $A=T(V)/I$, we identify 
$V^{\vee}\otimes V^{\vee} \cong (V\otimes V)^{\vee}$, and 
define the {\em quadratic dual}\/ of $A$ to be the algebra 
$A^{!}=T(V^{\vee})/I^{\perp}$,
where $I^{\perp}$ is the ideal of $T(V^{\vee})$ 
generated by the vector subspace $(I^2)^{\perp}\coloneqq 
\{\alpha\in V^{\vee}\otimes V^{\vee} \mid \alpha(I^2)=0\}$. 
Clearly, $A^{!}$ is also a quadratic algebra, and $(A^{!})^{!}=A$.    
For any graded algebra of the form $A=T(V)/I$, we may 
define its quadrature closure as $\qA=T(V)/\langle I^2\rangle$. 

For a Lie algebra $\g$, we let $U(\g)$ be its universal enveloping 
algebra. This is the filtered, associative algebra obtained as the 
quotient of the tensor algebra $T(\g)$ by the (two-sided) ideal 
generated by all elements of the form 
$a\otimes b-b\otimes a-[a,b]$ with $a, b\in \g$. 

\begin{proposition}[\cite{PY99}, \cite{SW-forum}]
\label{prop:Papadima-Y}
Let $A$ be a commutative graded $\k$-algebra such that $A^0=\k$ 
and $\dim_{\k} A^1<\infty$. Then $U(\h(A))$ is a quadratic algebra, 
and $U(\h(A))=(\qA)^!$.
\end{proposition}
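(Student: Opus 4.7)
The plan is to realize both $(\qA)^!$ and $U(\h(A))$ as the same explicit quadratic quotient of the tensor algebra $T(V^{\vee})$, where $V\coloneqq A^1$. Since $A^0=\k$ and $\dim_{\k} V<\infty$, everything in sight is finite-dimensional in the relevant degrees, so we may freely use duality.

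First, I would present $A$ as $T(V)/I$ for a homogeneous ideal $I$, so that $I^2=\ker(V\otimes V\surj A^2)$ and $\qA=T(V)/\langle I^2\rangle$. Graded-commutativity in degree one forces the multiplication to vanish on the symmetric tensors $\Sym^2(V)\subset V\otimes V$ and to factor through $\Lambda^2(V)$ as the map $\mu\colon \Lambda^2(V)\to A^2$ of the paper. Hence, under the decomposition $V\otimes V=\Sym^2(V)\oplus\Lambda^2(V)$, we have $I^2=\Sym^2(V)\oplus\ker(\mu)$.

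Next, I would compute the orthogonal $(I^2)^{\perp}$ inside $V^{\vee}\otimes V^{\vee}$. The annihilator of $\Sym^2(V)$ is exactly $\Lambda^2(V^{\vee})$, and inside $\Lambda^2(V^{\vee})$ the annihilator of $\ker(\mu)$ equals the image of $\mu^{\vee}\colon A_2\to \Lambda^2(V^{\vee})$, by the standard identity $\im(f^{\vee})=\ker(f)^{\perp}$ in finite dimensions. Consequently,
\[
(\qA)^{!}\,=\,T(V^{\vee})\big/\bigl\langle\im(\mu^{\vee})\bigr\rangle,
\]
where $\im(\mu^{\vee})$ is viewed inside $V^{\vee}\otimes V^{\vee}$ as a subspace of antisymmetric tensors.

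Finally, I would identify this presentation with $U(\h(A))$. By PBW, $U(\Lie(V^{\vee}))=T(V^{\vee})$, and the inclusion $\Lie_2(V^{\vee})=\Lambda^2(V^{\vee})\hookrightarrow T^2(V^{\vee})$ sends $x\wedge y\mapsto x\otimes y-y\otimes x$, whose image is precisely the antisymmetric subspace of $V^{\vee}\otimes V^{\vee}$. Since $d=0$ in our setting, $\partial_A=\mu^{\vee}$, so $\h(A)=\Lie(V^{\vee})/\langle\im(\mu^{\vee})\rangle$, and passing to enveloping algebras yields $U(\h(A))=T(V^{\vee})/\langle\im(\mu^{\vee})\rangle$---the same quadratic quotient produced above. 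This simultaneously shows that $U(\h(A))$ is quadratic and equals $(\qA)^{!}$. The only (mild) obstacle is reconciling the two ways $\Lambda^2(V^{\vee})$ sits inside $V^{\vee}\otimes V^{\vee}$---via the antisymmetric embedding used to form the quadratic dual, and via the PBW embedding used to form the enveloping algebra---but in characteristic zero both land in the antisymmetric tensors and differ by at most a nonzero scalar, so they generate the same two-sided ideal, turning the coincidence of presentations into a genuine equality of algebras.
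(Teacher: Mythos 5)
Your argument is correct, and it is essentially the standard proof of this statement (the paper itself gives no proof, deferring to \cite{PY99} and \cite{SW-forum}, where this is exactly the route taken): graded-commutativity in degree one gives $I^2=\Sym^2(V)\oplus\ker(\mu)$, dualizing gives $(I^2)^{\perp}=\im(\mu^{\vee})\subset\Lambda^2(V^{\vee})$, and the identity $U(\Lie(W)/\mathfrak{r})=T(W)/\langle\mathfrak{r}\rangle$ together with $\partial_A=\mu^{\vee}$ (since $d=0$) exhibits $U(\h(A))$ as the same quadratic quotient $T(V^{\vee})/\langle\im(\mu^{\vee})\rangle$. Your closing remark---that the antisymmetric embedding of $\Lambda^2(V^{\vee})$ used for the quadratic dual and the PBW-type embedding $[x,y]\mapsto x\otimes y-y\otimes x$ differ only by a nonzero scalar in characteristic zero, hence generate the same two-sided ideal---is precisely the right way to dispose of the only genuine convention issue.
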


Now suppose $\g$ is a finitely generated graded Lie algebra generated 
in degree $1$. Then, as shown in \cite{SW-forum}, there is a unique, 
functorially defined quadratic Lie algebra, $\qg$, such that $U(\qg) = \q U(\g)$. 
Therefore, by Proposition \ref{prop:Papadima-Y}, we have that 
$\h((\q U(\g))^{!})=\qg$.

Work of L\"{o}fwall \cite{Lofwall} yields another interpretation of the 
universal enveloping algebra of the holonomy Lie algebra.  

\begin{proposition}[\cite{Lofwall}]
\label{prop:yoneda}
Let $[\Ext^1_A(\k,\k)]:=\bigoplus_{i\ge 0} \Ext^i_A(\k,\k)_i$ be the linear strand 
in the Yoneda algebra of $A$.  Then $U(\h(A))\cong [\Ext^1_A(\k,\k)]$. 
\end{proposition}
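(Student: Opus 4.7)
The plan is to identify both sides with the Koszul dual of the quadratic closure $\qA$ of $A$, and then invoke Proposition~\ref{prop:Papadima-Y}. Since the latter already supplies an isomorphism $U(\h(A))\cong (\qA)^{!}$, the whole problem reduces to exhibiting a natural isomorphism of graded algebras $[\Ext^1_A(\k,\k)]\cong (\qA)^{!}$.

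First, I would fix a minimal graded free resolution $F_\bullet\to\k$ over $A$, so that $F_i=A\otimes W_i$ with $W_i=\Tor^A_i(\k,\k)$. Minimality forces $W_i^{\,j}=0$ for $j<i$, and the ``linear strand'' of the resolution consists of the diagonal parts $W_i^{\,i}$. Dualizing gives $\Ext^i_A(\k,\k)_i=(W_i^{\,i})^{\vee}$, and the Yoneda product preserves internal grading, so it restricts to an associative product on the linear strand. A direct inspection of $F_0\leftarrow F_1\leftarrow F_2$ shows that $W_1^{\,1}\cong A^1$ (generators of $A$) and that $W_2^{\,2}$ is canonically identified with the degree $2$ relations of $A$, i.e.\ with the kernel of $A^1\otimes A^1\to A^2$.

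Next, I would establish that the linear strand depends only on the quadratic closure $\qA$. Indeed, the part of the minimal resolution that feeds into the linear strand involves only the multiplication $A^1\otimes A^1\to A^2$ and its kernel; higher, non-quadratic relations of $A$ contribute exclusively to off-diagonal bidegrees $(i,j)$ with $j>i$. Thus the natural surjection $T(A^1)\twoheadrightarrow A$ factoring through $\qA\twoheadrightarrow A$ induces isomorphisms $\Ext^i_A(\k,\k)_i\cong \Ext^i_{\qA}(\k,\k)_i$ compatible with Yoneda products, giving an isomorphism of graded algebras $[\Ext^1_A(\k,\k)]\cong [\Ext^1_{\qA}(\k,\k)]$.

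Finally, for the quadratic algebra $B=\qA=T(V)/\langle R\rangle$ with $V=A^1$ and $R\subset V\otimes V$, I would identify $[\Ext^1_B(\k,\k)]$ with the Koszul dual $B^{!}=T(V^{\vee})/\langle R^{\perp}\rangle$. The standard computation is via the (truncated) Koszul complex $B\otimes B^{!\,\vee}$: its dualization realizes the linear strand, with differential dual to multiplication, so $\Ext^i_B(\k,\k)_i\cong (B^{!})^i$ as vector spaces, and the Yoneda product corresponds to the multiplication in $B^{!}$; in particular, the linear strand is generated in internal degree $1$. Combining everything,
\[
[\Ext^1_A(\k,\k)]\;\cong\;[\Ext^1_{\qA}(\k,\k)]\;\cong\;(\qA)^{!}\;\cong\;U(\h(A)),
\]
where the last isomorphism is Proposition~\ref{prop:Papadima-Y}.

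The main technical obstacle I expect is the third step: showing carefully that the Yoneda product on the linear strand of $\Ext^*_B(\k,\k)$ coincides with the multiplication of $B^{!}$, and that the linear strand is generated in $\Ext^1$. This requires a bookkeeping argument with the bigraded bar complex (or equivalently, a comparison between the Koszul complex and the minimal resolution in bidegrees $(i,i)$), rather than anything conceptually new; the other two steps are essentially formal once the bigrading of $\Tor^A_*(\k,\k)$ is set up.
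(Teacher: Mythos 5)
The paper does not actually prove this proposition: it is quoted verbatim from L\"{o}fwall's paper \cite{Lofwall}, so there is no in-text argument to measure your plan against. Judged on its own terms, your outline is essentially a reconstruction of L\"{o}fwall's theorem combined with Proposition \ref{prop:Papadima-Y}, and the overall chain $[\Ext^1_A(\k,\k)]\cong(\qA)^{!}\cong U(\h(A))$ is exactly the intended content. Your identification of $W_1^{\,1}$ with $A^1$ and of $W_2^{\,2}$ with the degree-$2$ relations is correct, and you have correctly located the real work (matching the Yoneda product with the multiplication of the quadratic dual).

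Two caveats. First, your reduction step invokes a surjection $T(A^1)\surj A$ factoring through $\qA\surj A$; such a surjection exists only when $A$ is generated in degree $1$, which is not assumed here (the standing hypotheses are only $A^0=\k$ and $\dim_\k A^1<\infty$). The gap is harmless but should be closed, either by replacing $A$ with the subalgebra generated by $A^1$ or, better, by avoiding the comparison map entirely. Second, the cleanest way to do that---and to dispose of your ``main technical obstacle'' in one stroke---is to compute with the bar construction rather than a minimal resolution: in $\Hom_A(\mathrm{Bar}(A),\k)$ the piece of homological degree $i$ and internal degree $i$ is $T^i(A_1)$; every element there is a cocycle (the outgoing differential raises internal degree), and the incoming differential, dual to the multiplications of adjacent bar factors, has image precisely the degree-$i$ part of the two-sided ideal $\langle \im(\mu^{\vee})\rangle\subset T(A_1)$. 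Hence $\Ext^i_A(\k,\k)_i\cong \big(T(A_1)/\langle \im(\mu^{\vee})\rangle\big)^i=U(\h(A))^i$, and since the Yoneda product is concatenation on the dual bar complex, this is an isomorphism of graded algebras. This simultaneously shows that the linear strand depends only on $(A^1, A^2,\mu)$ and that the product matches, with no resolution bookkeeping needed.
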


Applying the Poincar\'{e}--Birkhoff--Witt theorem, we infer that the 
graded ranks of  $\h(A)$ are given by 
\begin{equation}
\label{eq:pbw}
\prod_{n\geq 1}(1-t^n)^{\dim_{\k} \h_n(A)} = \sum_{i\ge 0} b_{i,i}(A) t^i,
\end{equation}
where $b_{i,i}(A)=\dim_{\k} \Ext^i_A(\k,\k)_i$.

\subsection{The completion of the holonomy Lie algebra of a $\cga$}
\label{subsec:holohat}

Let $A$ be a connected $\k$-$\cga$. A $1$-minimal model $\M_1(A)$ for $A$ may 
be constructed in a ``formal'' way, following the approach outlined by Carlson 
and Toledo \cite{CT} (see also \cite{SW-forum}).  For the construction of the 
full, bigraded minimal model of a $\cga$ we refer to Halperin and Stasheff \cite{HS79}.
 
As in Section \ref{subsec:dual-min}, start with the $\cdga$s 
$\M(1)=(\bigwedge(V_1),0)$, 
where $V_1=A^1$, and $\M(2)=(\bigwedge(V_1\oplus V_2),d)$,
where $V_2=\ker \big(\mu\colon A^1\wedge A^1\to A^2\big)$ and 
$d\colon V_2\inj V_1\wedge V_1$ is the inclusion map.  Now 
define inductively a $\cdga$ $\M(i)$ as the Hirsch extension 
$\M(i-1)\otimes \bigwedge (V_{i})$, where 
the $\k$-vector space $V_{i}$ fits into the short exact sequence 
\begin{equation}
\label{eq:ses}
\begin{tikzcd}[column sep=18pt]
0 \ar[r]& V_{i} \ar[r]&  H^2(\M(i-1)) \ar[r]& \im(\mu) \ar[r]& 0,
\end{tikzcd}
\end{equation}
while the differential $d$ includes $V_{i}$ into $V_1\wedge V_{i-1}\subset \M(i-1)$. 
Setting $\M_1(A)$ equal to $\bigcup_{i\ge 1} \M(i)$, 
the subalgebras $\{\M(i)\}_{i\ge 1}$ constitute the canonical filtration 
\eqref{eq:filtration-minimal} of $\M_1(A)$ and the differential 
$d$ preserves the Hirsch weights on $\M_1(A)$.  For these reasons, 
we say that $\M_1(A)$ is the {\em canonical}\/ $1$-minimal model of $A$. 

The next theorem relates $\fL(\M_1(A))$, the Lie algebra associated 
to $\M_1(A)$ under the adjoint correspondence from Section \ref{subsec:dual-min}
to the degree completion of $\h(A)$, the holonomy Lie 
algebra of $A$.  A generalization will be given in Theorem 
\ref{thm:nat1model}.

\begin{theorem}[\cite{Mo}, \cite{MP}, \cite{SW-forum}]
\label{thm:model-holonomy}
If $A$ is a $1$-finite $\cga$, then $\fL(\M_1(A))$ and $\widehat{\h(A)}$ 
are isomorphic as complete, filtered Lie algebras. 
\end{theorem}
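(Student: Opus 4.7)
The plan is to identify both complete filtered Lie algebras with a common inverse system of nilpotent Lie algebras, namely the tower of lower central series quotients $\{\h(A)/\gamma_{i+1}\h(A)\}_{i\ge 1}$. By the adjoint correspondence of Section \ref{subsec:dual-min}, it is enough to show that for each $i$ there is a natural isomorphism $\fL_i \coloneqq (\M(i)^1)^{\vee} \cong \h(A)/\gamma_{i+1}\h(A)$, or equivalently, that the finite-dimensional $\cdga$ $\M(i)$ coincides with the Chevalley--Eilenberg cochain complex $\mathcal{C}(\h(A)/\gamma_{i+1}\h(A))$. Then passage to inverse limits will give the required isomorphism $\fL(\M_1(A)) \cong \widehat{\h(A)}$, and one verifies directly that the canonical filtration on the left (induced by the Hirsch filtration of $\M_1(A)$) matches the degree completion filtration on the right.

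First I would check the base case $i=1$: since $\M(1)=(\bwedge V_1,0)$ with $V_1=A^1$, the dual $\fL_1=A_1$ is abelian, and it matches $\h(A)/\gamma_2\h(A) \cong A_1$ (the abelianization of the free Lie algebra on $A_1$). Next I would handle $i=2$ by hand: since $V_2 = \ker\big(\mu\colon A^1\wedge A^1 \to A^2\big)$ and the differential is the inclusion $V_2 \hookrightarrow V_1\wedge V_1$, the bracket on $\fL_2$ given by \eqref{eq:duality} is precisely the quotient map $A_1\wedge A_1\twoheadrightarrow (A_1\wedge A_1)/\im(\mu^{\vee})$, matching the degree-$\le 2$ part of $\h(A)$ and hence $\h(A)/\gamma_3\h(A)$.

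For the inductive step, assume $\M(i-1)\cong \mathcal{C}(\h(A)/\gamma_i\h(A))$; then $H^2(\M(i-1))\cong H^2(\h(A)/\gamma_i\h(A),\k)$. The Hirsch extension $\M(i) = \M(i-1)\otimes \bwedge V_i$ dualises by Proposition \ref{prop:hirsch} to a central extension
\begin{equation*}
0 \longrightarrow V_i^{\vee} \longrightarrow \fL_i \longrightarrow \fL_{i-1} \longrightarrow 0,
\end{equation*}
classified by $[d]\colon V_i\to H^2(\M(i-1))$. On the Lie-algebra side, the central extension $0\to \gamma_i\h(A)/\gamma_{i+1}\h(A) \to \h(A)/\gamma_{i+1}\h(A) \to \h(A)/\gamma_i\h(A)\to 0$ is classified by a cocycle class in the same cohomology group. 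The matching hinges on the defining sequence $0\to V_i \to H^2(\M(i-1))\to \im(\mu)\to 0$: the dual of this sequence, interpreted through the Hopf-type description of $H_2$ of $\h(A)/\gamma_i\h(A)$ arising from the presentation $\h(A)=\Lie(A_1)/\langle\im(\mu^{\vee})\rangle$, realises $V_i^{\vee}$ as $\gamma_i\h(A)/\gamma_{i+1}\h(A)$ and the classifying cocycle as the extension class. Compatibility of the differential $d\colon V_i\to V_1\wedge V_{i-1}$ with the bracket $\fL_1\wedge \fL_{i-1}\to V_i^{\vee}$ is exactly the duality \eqref{eq:duality}.

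The main obstacle is Step~4, the identification of the SES defining $V_i$ with the Hopf-formula description of $\gamma_i\h(A)/\gamma_{i+1}\h(A)$. The subtle point is that the ideal $\langle \im(\mu^{\vee})\rangle$ in $\Lie(A_1)$ is generated in degree two, so its degree-$i$ part equals $[\Lie(A_1),\dots,[\Lie(A_1),\im(\mu^{\vee})]\dots]$; one must check that the cohomological obstruction coming from $\im(\mu)\subset A^2$ encodes exactly the iterated bracketing of these quadratic relations with degree-$1$ generators, modulo terms that already vanish by the inductive hypothesis. Once this identification is carried out, the isomorphisms at all finite stages are natural with respect to the tower maps and assemble into the sought filtered isomorphism $\widehat{\h(A)}\isom \fL(\M_1(A))$.
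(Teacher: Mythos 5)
Your outline is correct and takes essentially the same route as the paper's setup in Section \ref{subsec:holohat} and the cited proofs in \cite{MP} and \cite{SW-forum}: identify each stage $\M(i)$ of the canonical $1$-minimal model with the Chevalley--Eilenberg complex of the nilpotent quotient $\h(A)/\gamma_{i+1}\h(A)$ and pass to (inverse) limits. The crux you flag in Step~4 closes in the standard way: for the central extension $0\to\gamma_i\h/\gamma_{i+1}\h\to\h/\gamma_{i+1}\h\to\h/\gamma_i\h\to 0$, the five-term exact sequence in Lie algebra cohomology (together with the fact that $H^1$ is constant along the tower, all terms having abelianization $A_1$) embeds $(\gamma_i\h/\gamma_{i+1}\h)^{\vee}$ into $H^2(\h/\gamma_i\h,\k)\cong H^2(\M(i-1))$ as the kernel of the inflation to the limit, and the quadratic presentation $\h(A)=\Lie(A_1)/\langle\im(\mu^{\vee})\rangle$ identifies that limit term with $\im(\mu)$, which is precisely the defining sequence \eqref{eq:ses} for $V_i$. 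Note that the paper also records an alternative, more general argument via Theorem \ref{thm:nat1model}: the tautological flat connection \eqref{eq:can} yields a single classifying map $\wC(\h(A))\to A$ which is shown to be a $1$-minimal model map; that route avoids the stage-by-stage induction and extends to $\cdga$s with nonzero differential, which is why the paper presents it as the generalization.
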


\begin{corollary}
\label{cor:holomin}
If $A$ is a $1$-finite $\cga$ and $\M_1(A)=\bwedge \big(\boplus _{i\ge 1} V_i \big)$ 
is the canonical $1$-minimal of $A$, then  $\dim_{\k} \h_i(A) = \dim V_i$ for all 
$i\ge 1$. 
\end{corollary}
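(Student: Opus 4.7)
The plan is to compare the graded pieces of the Lie algebra $\fL(\M_1(A))$ obtained from the canonical tower of Hirsch extensions with the graded pieces of the holonomy Lie algebra, using Theorem \ref{thm:model-holonomy} as the bridge between them.

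First I would read off the graded pieces of $\fL(\M_1(A))$ directly from the construction. By the discussion in Section \ref{subsec:dual-min}, the canonical filtration $A(0)\subset A(1)\subset\cdots$ of $\M_1(A)$ dualizes to a tower of finite-dimensional nilpotent Lie algebras $\fL_i=(A(i)^1)^{\vee}$, and the inverse limit $\fL=\fL(\M_1(A))$ satisfies $\fL/\widehat\gamma_{i+1}(\fL)=\fL_i$ for all $i\ge 1$. Moreover, the kernel of the canonical projection $\fL_i\surj\fL_{i-1}$ is precisely $V_i^{\vee}$ and lies in the center of $\fL_i$. Taking successive quotients gives
\begin{equation*}
\gr_i(\fL) \;=\; \widehat\gamma_i(\fL)/\widehat\gamma_{i+1}(\fL)
\;=\; \ker\bigl(\fL_i\surj \fL_{i-1}\bigr)
\;\cong\; V_i^{\vee},
\end{equation*}
so $\dim_{\k} \gr_i(\fL)=\dim_{\k} V_i$ for every $i\ge 1$.

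Next I would invoke Theorem \ref{thm:model-holonomy}, which provides an isomorphism $\fL(\M_1(A))\cong\widehat{\h(A)}$ of complete, filtered Lie algebras. Passing to associated graded Lie algebras yields an isomorphism of graded Lie algebras $\gr(\fL)\cong \gr(\widehat{\h(A)})$. Since $A$ is a $\cga$ (with zero differential), $\h(A)$ is a \emph{quadratic} Lie algebra generated in degree one, so its LCS filtration coincides with the degree filtration; consequently the canonical map $\h(A)\to\widehat{\h(A)}$ induces an isomorphism $\h(A)\isom\gr(\widehat{\h(A)})$ of graded Lie algebras. Combining both identifications in degree $i$ gives $\h_i(A)\cong V_i^{\vee}$, and equating dimensions yields $\dim_{\k}\h_i(A)=\dim_{\k} V_i$.

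The only subtle point, and what I view as the main thing to check, is that the filtration-preserving isomorphism of Theorem \ref{thm:model-holonomy} sends the inverse-limit filtration $\{\widehat\gamma_{i+1}(\fL)\}$ on $\fL(\M_1(A))$ onto the LCS filtration on $\widehat{\h(A)}$ in a way that matches degrees, so that the induced isomorphism on associated gradeds really identifies $\gr_i(\fL)$ with $\h_i(A)$ rather than with some shifted or coarser piece. This is built into the statement of the theorem (the isomorphism is of complete filtered Lie algebras, and the filtration on $\widehat{\h(A)}$ is defined as the completion of the LCS filtration, which equals the degree filtration since $\h(A)$ is generated in degree one), so nothing further is needed beyond invoking that compatibility.
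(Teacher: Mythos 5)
Your proof is correct and follows the route the paper intends: the corollary is stated as an immediate consequence of Theorem \ref{thm:model-holonomy}, obtained exactly as you describe by passing to associated graded Lie algebras, identifying $\gr_i(\fL(\M_1(A)))$ with $V_i^{\vee}$ from the Hirsch-extension tower, and using that $\h(A)$ is graded and generated in degree one (so that $\gr(\widehat{\h(A)})\cong\h(A)$). Your closing remark about the compatibility of the filtrations is the right point to flag, and it is indeed covered by the theorem's assertion that the isomorphism is one of complete, filtered Lie algebras.
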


\subsection{Holonomy and flat connections}
\label{subsec:holflat}

Given a $\k$-$\cdga$ $(A,d)$ and a Lie algebra $\g$, we 
let $\F(A, \g)$ be the set of $\g$-valued\, {\em flat connections}\/ on $A$,    
that is, the set of all elements $\omega \in A^1\otimes \g$ satisfying the
Maurer--Cartan equation,
\begin{equation}
\label{eq:mc}
d\omega + \tfrac{1}{2}[\omega, \omega]=0  .
\end{equation}

Suppose now that $A$ is $1$-finite. As shown in \cite{MPPS}, 
the natural isomorphism $A^1\otimes \g \isom \Hom (A_1, \g)$ 
induces a natural identification,
\begin{equation}
\label{eq:flathol}
\begin{tikzcd}
\F(A, \g) \ar[r, "\cong"] & \Hom_{\Lie} (\h (A), \g)  .
\end{tikzcd}
\end{equation}

Assuming further that $\g$ is finite-dimensional, we let 
$\cC(\g)=\big(\bigwedge \g^{\vee}, d)$ be the Cheval\-ley--Eilenberg 
complex of $\g$. This is the $\cdga$ whose underlying graded 
algebra is the exterior algebra on the dual $\k$-vector space 
$\g^{\vee}$, and whose differential  is the extension by the 
graded Leibnitz rule of the dual of the signed Lie bracket, 
$d=-\beta^*$, on the algebra generators, 
see e.g.~\cite{HS79}, \cite{FHT}.  There is then a natural isomorphism 
$A^1\otimes \g \isom \Hom (\g^{\vee}, A^1)$, which, 
by \cite[Lemma 3.4]{DP-ccm}, induces a natural identification,
\begin{equation}
\label{eq:flatcochains}
\begin{tikzcd}
\F(A, \g) \ar[r, "\cong"] &\Hom_{\cdga} (\cC (\g), A) .
\end{tikzcd}
\end{equation}

Now let $\wC$ be the functor which associates to a finitely generated 
Lie algebra $\h$ the direct limit of $\cdga$s 
\begin{equation}
\label{eq:chat}
\wC (\h)= \varinjlim_n \cC (\h/\gamma_n(\h)).
\end{equation}
This functor sends finite-dimensional central 
Lie extensions to Hirsch extensions of $\cdga$s.
It follows that $\wC (\h)$ is a $1$-minimal $\cdga$.

Now let $(A,d)$ be a $1$-finite $\cdga$, with holonomy Lie algebra $\h=\h(A)$. 
By \eqref{eq:flathol}, the identity map of $\h$ may be identified with 
the `canonical' flat connection, 
\begin{equation}
\label{eq:can}
\omega = \sum_i x_i^* \otimes x_i \in \F (A, \h(A)),
\end{equation}
where $\{ x_i \}$ is a basis for $A_1$ and $\{ x_i^* \}$ 
is the dual basis for $A^1$. This gives rise to a compatible 
family of flat connections, $\{ \omega_n \in \F(A, \h/\gamma_n(\h)) \}_{n\ge 1}$.
Using the correspondence (\ref{eq:flatcochains}), we obtain a compatible 
family of $\cdga$ maps, $f_n \colon \cC (\h/\gamma_n (\h)) \to A$. 
Passing to the limit, we arrive at a natural $\cdga$ map,
$f\colon \wC (\h(A)) \to  A$. 
The next theorem recovers (in a self-contained way) results 
from \cite{Bez}, \cite{BH}, and \cite{BMPP}.  

\begin{theorem}[\cite{PS-jlms}]
\label{thm:nat1model}
If $A$ is a $1$-finite $\cdga$, then 
the classifying map $f\colon \wC (\h(A)) \to A$ 
is a $1$-minimal model map for $A$.
\end{theorem}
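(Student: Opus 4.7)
The plan is to construct $f$ explicitly from the canonical flat connection, verify that the domain is a $1$-minimal $\cdga$, and then check the $1$-quasi-isomorphism condition degree by degree in the range $0\le i\le 2$.

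First, I would make the construction of $f$ precise. The canonical flat connection $\omega$ of \eqref{eq:can} corresponds, under the identification \eqref{eq:flathol}, to $\id_{\h(A)} \in \Hom_{\Lie}(\h(A),\h(A))$. Composing with the projections $\h(A) \surj \h(A)/\gamma_n(\h(A))$ gives a compatible family $\omega_n \in \F(A,\h(A)/\gamma_n(\h(A)))$, and passing through \eqref{eq:flatcochains} yields a compatible family of $\cdga$ maps $f_n \colon \cC(\h(A)/\gamma_n(\h(A))) \to A$ whose colimit is $f$. That $\wC(\h(A))$ is $1$-minimal is essentially tautological from \eqref{eq:chat}: each transition $\cC(\h(A)/\gamma_n) \inj \cC(\h(A)/\gamma_{n+1})$ is a finite Hirsch extension in degree $1$ (dualizing a finite-dimensional central extension of Lie algebras), and the differential is always decomposable since it dualizes the Lie bracket.

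Next, I would check the cohomological conditions. In degree $0$, both $\wC(\h(A))$ and $A$ are connected, so $f^0$ is an isomorphism. In degree $1$, the abelianization of $\h(A)$ is, by \eqref{eq:holo}, the quotient $A_1/d^{\vee}(A_2)$, because the commutators $[\,,\,]$ account only for the $\mu^{\vee}$ part of $\partial_A$. Dualizing, $H^1(\wC(\h(A))) = (\h(A)_{\ab})^{\vee} = \ker(d\colon A^1\to A^2) = H^1(A)$, and under this identification $f^*$ is the identity. The remaining step is to show that $f^*\colon H^2(\wC(\h(A))) \to H^2(A)$ is injective.

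For the $H^2$ injectivity, the cleanest route is to compare $\wC(\h(A))$ with the canonical $1$-minimal model $\M_1(A)$ constructed in Section~\ref{subsec:holohat} via the adjoint correspondence of Section~\ref{subsec:dual-min}. That correspondence identifies $1$-minimal $\cdga$s generated in degree $1$ with pronilpotent Lie algebras; the Lie algebra associated to $\M_1(A)$ is built by iterated central extensions, whose successive obstructions are exactly captured by $V_i = \ker(H^2(\M(i-1))\to H^2(A))$. Unwinding the definitions shows that these obstructions, read in the dual picture, are precisely the relations imposed by $\im(\partial_A) = \im(d^{\vee} + \mu^{\vee})$ and the subsequent nilpotent extensions. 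Hence $\fL(\M_1(A)) \cong \widehat{\h(A)}$ as complete filtered Lie algebras, extending Theorem~\ref{thm:model-holonomy} from the $\cga$ to the $\cdga$ setting. Dualizing back through the adjunction produces an isomorphism $\wC(\h(A)) \cong \M_1(A)$ compatible with $f$ and with the structural map $\rho_1\colon \M_1(A)\to A$, and therefore $f$ inherits the $1$-quasi-isomorphism property from $\rho_1$; uniqueness of $1$-minimal models (Theorem~\ref{thm:mm}) then completes the argument.

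The main obstacle is the $H^2$-injectivity step: unlike the $\cga$ case of Theorem~\ref{thm:model-holonomy}, where $\h(A)$ is graded and the comparison with $\M_1(A)$ is bigraded, the presence of $d^{\vee}$ in $\partial_A$ mixes the ``linear'' and ``quadratic'' parts of the relations, so one must track carefully how the successive $V_i$ of the canonical $1$-minimal construction correspond to the central subquotients of $\widehat{\h(A)}$. Once this matching is made explicit, the two inductive constructions align layer by layer, and the proof reduces to the observation that each Hirsch extension in the tower introduces generators that kill exactly the obstructions, so no spurious $H^2$-classes are created.
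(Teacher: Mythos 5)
Your setup is exactly the one the paper uses in Section~\ref{subsec:holflat} before stating the theorem (the survey itself gives no proof, deferring to \cite{PS-jlms}), and the easy parts are handled correctly: $\wC(\h(A))$ is $1$-minimal because each inclusion $\cC(\h/\gamma_n\h)\inj\cC(\h/\gamma_{n+1}\h)$ is a degree-$1$ Hirsch extension dual to a finite-dimensional central extension; the degree-$1$ part of the relation ideal is $\im(d^{\vee})$, so $\h(A)_{\ab}=A_1/\im(d^{\vee})$ and $H^1(\wC(\h(A)))\cong Z^1(A)=H^1(A)$, with $H^1(f)$ the identity.

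The gap is the $H^2$-injectivity step, which is the entire content of the theorem. Your route is to establish $\fL(\M_1(A))\cong\widehat{\h(A)}$ and then transport the $1$-quasi-isomorphism property from $\rho_1\colon\M_1(A)\to A$ across the resulting identification $\wC(\h(A))\cong\M_1(A)$. But in this paper that identification is recorded as a \emph{consequence} of Theorem~\ref{thm:nat1model}, and proving $\fL(\M_1(A))\cong\widehat{\h(A)}$ for a $\cdga$ with nonzero differential is equivalent to the statement being proved: Theorem~\ref{thm:model-holonomy} covers only the $\cga$ case, and the canonical construction of Section~\ref{subsec:holohat} (with $V_1=A^1$ and $V_2=\ker\mu$) is specific to $d=0$. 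The sentence ``unwinding the definitions shows that these obstructions \dots are precisely the relations imposed by $\im(\partial_A)$'' is therefore a restatement of the goal, not an argument. What must actually be shown, by induction on $n$, is that $\ker\big(H^2(f_n)\colon H^2(\cC(\h/\gamma_{n+1}\h))\to H^2(A)\big)$ equals the image of the transgression $(\gamma_{n+1}\h/\gamma_{n+2}\h)^{\vee}\to H^2(\cC(\h/\gamma_{n+1}\h))$; only the inclusion ``$\supseteq$'' comes for free from the existence of $f_{n+1}$. Already the base case $n=1$ --- identifying $\ker\big(\bwedge^2 Z^1(A)\to H^2(A)\big)$ with the dual of $\gamma_2\h/\gamma_3\h$ for $\h=\Lie(A_1)/\langle\im(d^{\vee}+\mu^{\vee})\rangle$ --- requires a genuine computation, and it is delicate precisely because the relation ideal is not homogeneous (compare Examples~\ref{ex:heis-lie} and~\ref{ex:noncarnot}); this is the point you flag as ``the main obstacle'' but then do not resolve. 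Until that layer-by-layer comparison is carried out (for instance via the five-term exact sequence associated to the central extensions $\h/\gamma_{n+2}\h\surj\h/\gamma_{n+1}\h$, matched against $H^2(f_n)$), the proof is incomplete.
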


Consequently, we have an isomorphism $\M_1(A)\cong \wC (\h(A))$.

\subsection{The holonomy Lie algebra of a group}
\label{subsec:holo lie group}
A construction due to K.-T.~Chen \cite{Chen73} and further developed in 
the works mentioned in Section \ref{subsec:holo-ga} assigns to every finitely 
generated group $G$ its holonomy Lie algebra, $\h(G;\k)$, which 
is defined as the holonomy Lie algebra of the cohomology algebra 
of $G$ with coefficients in a field $\k$ of characteristic $0$,
\begin{equation}
\label{eq:hololie-group}
\h(G;\k)\coloneqq \h\big(H^{*}(G,\k)\big).
\end{equation}
By the discussion from Section \ref{subsec:holo-ga}, we have that 
$\h(G;\k)=\Lie(H_1(G,\k))/\langle \mu_G^{\vee} \rangle$, where 
$\mu_G\colon H^1(G,\k)\wedge H^1(G,\k)\to H^2(G,\k)$ is the 
cup-product map in group cohomology and $\mu_G^{\vee}$ is 
its $\k$-dual. It is readily seen that the assignment 
$G\leadsto \h(G;\k)$ is functorial.

The Lie algebra $\h(G;\k)$ is a finitely presented, 
quadratic Lie algebra that depends only on the cup-product 
map $\mu_G$. Moreover, 
as noted in \cite{SW-jpaa}, the projection map 
$G\surj G/\gamma_n(G)$ induces isomorphisms 
$\h(G;\k)\isom \h(G/\gamma_n(G);\k)$ for all $n\ge 3$. 
Consequently, the holonomy Lie algebra of $G$ depends 
only on its second nilpotent quotient, $G/\gamma_3 (G)$.

An important feature of the holonomy Lie algebra
is its relationship to the associated graded Lie algebra, 
as detailed in the next theorem. 

\begin{theorem}[\cite{MP}, \cite{PS-imrn04}, \cite{SW-jpaa}]
 \label{thm:holoepi}
There exists a natural epimorphism of graded $\k$-Lie algebras, 
$\Phi\colon \h(G; {\k}) \surj \gr(G;\k)$, 
which induces isomorphisms in degrees $1$ and $2$.  
\end{theorem}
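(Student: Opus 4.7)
The plan is to construct $\Phi$ as a factoring of the obvious surjection from the free Lie algebra on $H_1(G,\k)$, and then to identify the degree-$2$ relations on both sides with the image of the same map $H_2(G,\k)\to\bwedge^2 H_1(G,\k)$.

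First, I would observe that $\gr(G;\k)$ is generated in degree $1$ by $\gr_1(G;\k)=G_{\ab}\otimes\k\cong H_1(G,\k)$, since each nilpotent quotient $G/\gamma_{n+1}(G)$ is generated by the image of $G/\gamma_2(G)$. Consequently, the identity on degree~$1$ extends uniquely to a surjective morphism of graded $\k$-Lie algebras $\widetilde{\Phi}\colon \Lie(H_1(G,\k))\surj \gr(G;\k)$ which is an isomorphism in degree~$1$. By the functoriality of the formation of the associated graded Lie algebra this map is natural in $G$. To obtain $\Phi$ I must show that $\widetilde{\Phi}$ annihilates $\im(\mu_G^{\vee})\subseteq \Lie_2(H_1(G,\k))=H_1(G,\k)\wedge H_1(G,\k)$.

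The key step is a duality between cup products in cohomology and commutators in the associated graded. Writing $G=F/R$ with $F$ a free group and invoking Hopf's formula, the five-term exact sequence of the extension $1\to R\to F\to G\to 1$ gives a natural exact sequence
\begin{equation}
\label{eq:five-term}
H_2(G,\k)\xrightarrow{\ \delta\ } H_1(F,\k)\wedge H_1(F,\k)/(\text{image})\longrightarrow \gr_2(G;\k)\longrightarrow 0,
\end{equation}
which, after identifying $H_1(F,\k)\twoheadrightarrow H_1(G,\k)$ and passing to the quotient in the middle, realizes $\gr_2(G;\k)$ as the cokernel of a map $\delta\colon H_2(G,\k)\to \bwedge^2 H_1(G,\k)$. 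The classical identification (via the bar resolution, or equivalently the $2$-step nilpotent quotient $G/\gamma_3(G)$ together with the universal coefficient isomorphism $H^2(G,\k)\cong H_2(G,\k)^{\vee}$ in the $1$-finite case) shows that $\delta$ coincides, up to sign, with the $\k$-linear dual $\mu_G^{\vee}\colon H_2(G,\k)\to H_1(G,\k)\wedge H_1(G,\k)$ of the cup-product map $\mu_G$. Thus the composition $H_2(G,\k)\xrightarrow{\mu_G^{\vee}}\bwedge^2 H_1(G,\k)\xrightarrow{\widetilde{\Phi}_2}\gr_2(G;\k)$ vanishes, so $\widetilde{\Phi}$ descends to a natural graded Lie algebra epimorphism $\Phi\colon \h(G;\k)\surj \gr(G;\k)$, tautologically an isomorphism in degree~$1$. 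In degree~$2$, the target $\gr_2(G;\k)=\coker(\delta)=\coker(\mu_G^{\vee})$ equals $\h_2(G;\k)$ by definition of the holonomy Lie algebra, so $\Phi_2$ is an isomorphism.

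The main obstacle will be the precise identification of the connecting map $\delta$ in \eqref{eq:five-term} with $\mu_G^{\vee}$; the sign and naturality must be verified carefully. I would handle this either by writing out both maps at the chain level using the bar complex and the commutator pairing $(x,y)\mapsto [x,y]\bmod \gamma_3$, or, more conceptually, by reducing to the universal $2$-step nilpotent case $G/\gamma_3(G)$ (allowed because $\h(G;\k)$, $\gr_{\le 2}(G;\k)$, and the cup-product map $\mu_G$ all depend only on this quotient) and invoking the well-known correspondence between central extensions and $2$-cocycles. Once this identification is in place, everything else reduces to formal diagram chasing and the general fact that a graded Lie algebra generated in degree~$1$ is a quotient of the free Lie algebra on its degree-$1$ part.
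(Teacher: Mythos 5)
The paper states this theorem by citation only (to \cite{MP}, \cite{PS-imrn04}, \cite{SW-jpaa}), and your plan coincides with the standard argument given there: $\gr(G;\k)$ is generated in degree $1$, so the free Lie algebra on $H_1(G,\k)$ surjects onto it, and the Sullivan exact sequence $H_2(G;\k)\xrightarrow{\mu_G^{\vee}}\bwedge^2 H_1(G,\k)\to\gr_2(G;\k)\to 0$ shows that this surjection kills $\im(\mu_G^{\vee})$ and that the induced map is an isomorphism in degrees $1$ and $2$. One small correction: your displayed sequence is not literally the five-term sequence of $1\to R\to F\to G\to 1$ (whose middle term is $R/[F,R]$, not a quotient of $\bwedge^2 H_1(F,\k)$); the clean source is the five-term sequence of the central extension $1\to\gr_2(G)\to G/\gamma_3(G)\to G_{\ab}\to 1$, which yields $H_2(G;\k)\to H_2(G_{\ab};\k)=\bwedge^2 H_1(G,\k)\to\gr_2(G;\k)\to 0$ directly, and note that it is the exactness at the middle term --- not merely the vanishing of the composite --- that gives injectivity of $\Phi$ in degree $2$.
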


Following \cite{SW-jpaa}, \cite{SW-forum}, we say that a finitely generated 
group $G$ is {\em graded formal}\/ if the map 
$\Phi \colon \h(G;\k) \surj \gr(G; \k)$ is an isomorphism. 
This condition is equivalent to $\gr(G; \k)$ being a quadratic Lie algebra. 
As shown in \cite{SW-forum}, if $K\le G$ is a retract 
of a graded formal group $G$, then $K$ is also graded formal. 

The next result shows how to find a presentation for $\h(G;\k)$, 
given a presentation for $\gr(G;\k)$.
 
\begin{proposition}[\cite{SW-forum}]
 \label{prop:closure}
Let $V=H_1(G;\k)$. 
Suppose the associated graded Lie algebra $\g=\gr(G;\k)$ 
has presentation $\Lie(V)/\fr$. 
Then the holonomy Lie algebra $\h(G;\k)$ has presentation 
$\Lie(V)/\langle \fr_2\rangle $, where $\fr_2=\fr\cap \Lie_2(V)$.  
Furthermore, if $A=U(\g)$, then $\h(G;\k)=\h\big((\qA)^{!}\big)$. 
\end{proposition}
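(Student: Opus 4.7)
The plan is to prove the two assertions in turn. For the first part, I invoke Theorem~\ref{thm:holoepi}. By definition, $\h(G;\k) = \Lie(V)/\langle \im(\mu_G^\vee) \rangle$ with $\mu_G$ the cup product, so $\h(G;\k)$ is quadratic with degree-two relations $\im(\mu_G^\vee) \subseteq \Lie_2(V)$. Since $\g = \gr(G;\k)$ is generated in degree one, the ideal $\fr$ is homogeneous and satisfies $\fr \cap \Lie_1(V) = 0$, so $\g_2 = \Lie_2(V)/\fr_2$. The canonical epimorphism $\Phi\colon \h(G;\k) \surj \g$ is the identity on $V$, hence its degree-two component is the map $\Lie_2(V)/\im(\mu_G^\vee) \to \Lie_2(V)/\fr_2$ induced by $\id_{\Lie_2(V)}$. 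By Theorem~\ref{thm:holoepi}, this is an isomorphism, which forces $\im(\mu_G^\vee) = \fr_2$; therefore $\h(G;\k) = \Lie(V)/\langle\fr_2\rangle$.

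For the second assertion, PBW identifies $A = U(\g)$ with $T(V)/\langle\fr\rangle_{T(V)}$, where $\Lie(V) \subset T(V)$ via the commutator bracket. Since $\fr$ is homogeneous with no degree-one summand, the degree-two component of the two-sided ideal $\langle\fr\rangle_{T(V)}$ coincides with $\fr_2 \subseteq \Lie_2(V) \subseteq V\otimes V$, whence $\qA = T(V)/\langle\fr_2\rangle$. Its quadratic dual is $(\qA)^! = T(V^\vee)/\langle\fr_2^\perp\rangle$, with $\fr_2^\perp \subseteq V^\vee\otimes V^\vee$ the annihilator of $\fr_2$ under the natural pairing. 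Because $\fr_2$ lies in the antisymmetric summand of $V\otimes V$, every symmetric tensor in $V^\vee\otimes V^\vee$ automatically pairs trivially with $\fr_2$; thus $S^2 V^\vee \subseteq \fr_2^\perp$, so $(\qA)^!$ is a quotient of the exterior algebra $\bwedge V^\vee$, which exhibits it as a $\cga$.

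With $(\qA)^!$ recognized as a $\cga$, I apply Proposition~\ref{prop:Papadima-Y} to it:
\[
U\bigl(\h((\qA)^!)\bigr) = \bigl(\q(\qA)^!\bigr)^! = ((\qA)^!)^! = \qA,
\]
using that $(\qA)^!$ is already quadratic and that double quadratic duality is the identity on quadratic algebras. From the first part, $U(\h(G;\k)) = U\bigl(\Lie(V)/\langle\fr_2\rangle\bigr) = T(V)/\langle\fr_2\rangle = \qA$ as well. Both $\h(G;\k)$ and $\h((\qA)^!)$ are thus quadratic Lie algebras generated in degree one with the same universal enveloping algebra; since each is recovered from its envelope as the space of primitive elements (Milnor--Moore), the identification $\h(G;\k) = \h((\qA)^!)$ follows.

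The main obstacle I anticipate is verifying that $(\qA)^!$ is graded-commutative, since Proposition~\ref{prop:Papadima-Y} is stated only for $\cga$s; the decisive point is the PBW-based observation that $\fr_2$ lies entirely in the antisymmetric summand of $V\otimes V$, forcing $S^2 V^\vee$ into the perpendicular ideal. A related bookkeeping subtlety is the identification of the degree-two component of $\langle\fr\rangle_{T(V)}$ with $\fr_2$ itself, which uses both the homogeneity of $\fr$ and the vanishing of its degree-one part.
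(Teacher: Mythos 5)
Your proof is correct and follows essentially the route the paper intends: Theorem \ref{thm:holoepi} pins down the degree-two relation space as $\fr_2$, and Proposition \ref{prop:Papadima-Y} together with quadratic duality (applied to the quadratic $\cga$ $(\qA)^!$) yields the second assertion. The paper defers the proof to \cite{SW-forum}, but the discussion immediately preceding the proposition --- the quadratic Lie algebra $\qg$ with $U(\qg)=\q U(\g)$ and $\h\big((\q U(\g))^!\big)=\qg$ --- is exactly the mechanism you use.
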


\section{Algebraic models for spaces}
\label{sect:algmod}

\subsection{Rational homotopy equivalences}
\label{subsec:rht}
We start with a definition that goes back to the work of Quillen \cite{Qu69}, 
Bousfield--Guggen\-heim \cite{BG76}, and Halperin--Stasheff \cite{HS79}. 
A continuous map between two topological spaces, $f\colon X\to Y$,  
is said to be a {\em rational quasi-isomorphism}\/ if the induced map 
in rational cohomology, $f^*\colon H^{*}(Y,\Q)\to H^{*}(X,\Q)$, 
is an isomorphism.  A {\em rational homotopy equivalence}\/ 
between $X$ and $Y$ is a sequence of continuous maps 
(going either way) connecting the two spaces via rational 
quasi-isomorphisms.  We say that $X$ and $Y$ are {\em 
rationally homotopy equivalent}\/ (or, have the same rational 
homotopy type) if such a zig-zag of rational quasi-isomorphisms 
exists, in which case we write $X\simeq_{\Q} Y$.  The purpose 
of rational homotopy theory, then, is to classify topological spaces 
up to this equivalence relation. 

One of the motivations of Sullivan’s work in this field was  
the idea that the rational homotopy type of a simply connected 
manifold, together with suitable characteristic class and integral data 
determines the diffeomorphism type up to finite ambiguity. For instance, 
he showed in \cite[Theorem 13.1]{Sullivan77} that closed, simply connected, 
smooth manifolds can be classified up to finite ambiguity in terms of 
their rational homotopy type, rational Pontrjagin classes, bounds on torsion, 
and certain integral lattice invariants. This important result was 
subsequently refined by Kreck and Triantafillou \cite{KT} (under 
some partial formality assumptions) and Crowley and Nordstr\"{o}m \cite{CN} 
(under some higher connectivity assumptions). 

\subsection{Sullivan algebras of piecewise polynomial differential forms}
\label{subsec:algmodels}
Let $(C^{*}(X,\k),d)$ be the singular cochain complex of a space $X$, 
with coefficients in a field $\k$ of characteristic $0$. This is, in fact, 
a differential graded algebra, with multiplication given by the cup-product. 
By definition, the cohomology of this $\k$-$\dga$ is the cohomology algebra 
$H^*(X,\k)$; this is a $\cga$, although the cochain algebra itself is not a 
$\cdga$ (except in some very special situations).  More generally, we say 
that a $\k$-$\dga$ $(A,d_A)$ is a $\dga$ model for $X$ if it is weakly equivalent 
(through $\dga$s) to $(C^{*}(X,\k),d)$. 

In his seminal paper \cite{Sullivan77},  Sullivan associated in 
a functorial way to every space $X$ a rational, {\em commutative}\/ 
$\dga$, denoted by $(\apl(X),d)$. When $X$ is a simplicial complex, 
the elements of this $\cdga$ may be viewed as compatible collections 
of forms on the simplices of $X$, which are sums with rational coefficients 
of monomials in the barycentric coordinates.  
Integration defines a chain map from $\apl(X)$ to $C^*(X,\Q)$ which 
induces an isomorphism in cohomology. In fact, Sullivan's algebra 
$(\apl(X),d)$ is weakly equivalent (through $\dga$s) with the cochain 
algebra $(C^*(X,\Q),d)$; moreover, under the resulting identification 
of graded algebras, $H^{*}(\apl(X)) \cong H^{*}(X,\Q)$, the induced 
homomorphisms in cohomology correspond, see \cite[Corollary 10.10]{FHT}. 
  
We say that a $\k$-$\cdga$ $(A,d_A)$ is a {\em model}\/
over $\k$ for the space $X$ if $A$ is weakly equivalent (through $\cdga$s) 
to $\apl(X)\otimes_{\Q} \k$; in particular, $H^*(A)\cong H^*(X,\k)$. 
In view of Theorem \ref{thm:cprw}, 
we may also allow the weak equivalence to go through $\dga$s 
in this definition. For instance, if $X$ is a smooth manifold, then 
the de Rham algebra $\Omega^{*}_{\dR}(X)$ of smooth forms on 
$X$ is a model of $X$ over $\R$, and if $X$ is a simplicial 
complex, then a rational model for $X$ is $A_{{\rm s}}(X)$, the 
algebra of piecewise polynomial $\Q$-forms on the simplices of $X$. 
We refer to \cite{FHT, FHT2, FOT, Tanre} for more details.

By the functoriality of the Sullivan algebra, a rational 
quasi-isomorphism $f\colon X\to Y$ induces a quasi-isomorphism 
$\apl(f)\colon \apl(Y)\to \apl(X)$; therefore, if $X \simeq_{\Q} Y$, then $\apl(X)\simeq \apl(Y)$. 
Consequently, the weak isomorphism type of $\apl(X)$ depends only on the rational 
homotopy type of $X$. As another consequence, the  existence of a finite model for 
a space $X$ is an invariant of rational homotopy type, and thus, of homotopy type.

\begin{remark}
\label{rem:non-descent}
In \cite{Sullivan77}, Sullivan showed that there exist smooth 
manifolds whose rational models are not weakly isomorphic, but which 
become weakly isomorphic when tensored with $\R$. Such failure 
of descent from real homotopy type to rational homotopy type 
may even occur with models endowed with $0$-differentials.  
\end{remark}

\subsection{Sullivan minimal models}
\label{subsec:sullivan-models}
A {\em minimal model}\/ for a connected space $X$, denoted $\M(X)$, is a 
minimal model for the Sullivan algebra $\apl(X)$.  By Theorem \ref{thm:mm}, 
this a minimal $\cdga$, which always exists and is unique up to isomorphism.  
Sullivan's minimal model comes equipped with a $\cdga$ map, 
$\rho \colon \M(X)\to \apl(X)$, which is a quasi-isomorphism. 
Moreover, if $A\simeq \apl(X)$ is a connected rational $\cdga$ 
model for $X$, then there 
is a quasi-isomorphism $\M(X) \to A$ which corresponds to $\rho$ 
via the chosen weak equivalence between $A$ and $\apl(X)$. 
By a previous remark, the isomorphism type of $\M(X)$ is uniquely 
defined by the rational homotopy type of $X$. It is an open question 
whether there exist spaces with weakly equivalent cochain algebras 
but non-isomorphic minimal models, see \cite{FH17}.

All these notions have partial analogs. Fix an integer $q\ge 1$. 
A {\em $q$-model}\/ over $\k$ for a space $X$ is a $\k$-$\cdga$ $(A,d)$ 
which is $q$-equivalent to $\apl(X)\otimes_{\Q} \k$; in particular, 
$H^{i}(A)\cong H^{i}(X,\k)$, for all $i\le q$.
A {\em $q$-minimal model}\/ for $X$, denoted $\M_q(X)$, is a 
$q$-minimal model $\apl(X)$; this $\cdga$ comes equipped with 
a $q$-quasi-isomorphism, 
\begin{equation}
\label{eq:q-model}
\begin{tikzcd}[column sep=20pt]
\rho_q \colon \M_q(X)\ar[r] &\apl(X).
\end{tikzcd}
\end{equation}

A map $f\colon X\to Y$ is said to be a {\em $q$-rational homotopy 
equivalence}\/ if the induced map in rational cohomology, 
$f^*\colon H^{*}(Y,\Q)\to H^{*}(X,\Q)$, is an isomorphism 
in degrees up to $q$ and a monomorphism in degree $q+1$.  
Clearly, such a map induces a $q$-equivalence,  
$\apl(f)\colon \apl(Y)\to \apl(X)$. 

In this context, a basic question was raised in \cite{PS-jlms}: 
When does a $q$-finite space $X$ admit a $q$-finite $q$-model $A$?
It follows from the above considerations that the  
existence of a $q$-finite $q$-model for a space $X$ is an invariant of 
$q$-rational homotopy type, and thus, of $q$-homotopy type.

\subsection{Rational completion}
\label{subsec:BK}

In their foundational monograph \cite{BK72}, Bousfield and Kan associated 
to every space $X$ its {\em rational completion}, $\Q_{\infty}X$. This is 
a rational space (i.e., its homology groups in positive degrees 
are $\Q$-vector spaces) which comes equipped with a structure 
map, $k_X\colon X\to \Q_{\infty}X$, with the following property: 
Given a map $f\colon X\to Y$, 
there is an induced map, $\Q_{\infty}f\colon \Q_{\infty}X\to \Q_{\infty}Y$, 
such that $\Q_{\infty}f\circ k_X=k_Y\circ f$. Moreover, the map $f$ 
is a rational homology equivalence if and only if the map 
$\Q_{\infty}f$ is a weak homotopy equivalence. 

A space $X$ is called {\em $\Q$-good}\/ if the structure map 
$k_X\colon X\to \Q_{\infty}X$ is a rational quasi-isomorphism. 
It has been known for a long time that not all spaces enjoy this property. 
Recently, Ivanov and Mikhailov \cite{IM} gave the first examples 
of finite CW-complexes that are $\Q$-bad: if $X=\bigvee^{n} S^1$ is a wedge 
of $n\ge 2$ circles, then $H_2( \Q_{\infty}X, \Q)$ is non-zero (in fact, it is 
uncountable), although of course $H_2(X,\Q)=0$.

The main connection between Sullivan's minimal model $\M(X)$ and  
Bousfield and Kan's rational completion $\Q_{\infty}X$ is provided by the 
following theorem of Bousfield and Guggenheim \cite{BG}.  

\begin{theorem}[\cite{BG}]
\label{thm:bg-bk-sv}
Let $X$ be a connected space with finite Betti numbers, and let 
$\M(X)=\big( \bwedge V, d)$ be a minimal model for $X$ over $\Q$. 
Then $\pi_n(\Q_{\infty}X)\cong (V^n)^{\vee}$, for all $n\ge 2$.
\end{theorem}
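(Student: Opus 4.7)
The plan is to prove the theorem via Sullivan's spatial realization functor, which is the standard bridge between minimal $\cdga$s and rational homotopy types. Let $\langle\,\cdot\,\rangle$ denote the functor from $\cdga$s to simplicial sets that is adjoint to $\apl$: explicitly, $\langle A\rangle_n = \Hom_{\cdga}(A,\apl(\Delta^n))$. I will first identify $\pi_n\langle \M(X)\rangle$ with $(V^n)^{\vee}$ by induction, and then identify $\langle \M(X)\rangle$ with $\Q_{\infty}X$ under the finite Betti number hypothesis.

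First, to compute homotopy groups of $\langle \M\rangle$ for a minimal $\cdga$ $\M = (\bwedge V,d)$, I would use the canonical filtration of $\M$ by Hirsch extensions $\M(i-1)\inj \M(i) = \M(i-1)\otimes \bwedge V^{(i)}$ from Section \ref{subsec:dual-min}. Applying $\langle\,\cdot\,\rangle$ reverses the direction and sends each Hirsch extension to a principal fibration $\langle\M(i)\rangle \to \langle \M(i-1)\rangle$ whose fiber is the simplicial realization $\langle(\bwedge V^{(i)},0)\rangle$, a generalized Eilenberg--MacLane space with homotopy concentrated in degree $n_i$ and equal to $(V^{(i)})^{\vee}$. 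The key computation is that $\langle(\bwedge W,0)\rangle$ with $W$ in degree $n$ is a $K((W)^{\vee},n)$; this follows by evaluating cosimplicial forms against polynomial de~Rham forms on $\Delta^{\bullet}$ and using the Poincar\'{e} lemma. Running the Serre spectral sequence (or simply the long exact sequence of homotopy groups) for each stage and passing to the inverse limit yields $\pi_n\langle \M\rangle \cong (V^n)^{\vee}$ for all $n\ge 2$.

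Next, I would establish the identification $\langle \M(X)\rangle \simeq \Q_{\infty}X$. The unit of the adjunction gives a natural map $\eta_X\colon X\to \langle\apl(X)\rangle$, and composing with the map $\langle\apl(X)\rangle\to \langle\M(X)\rangle$ induced by $\rho\colon \M(X)\to \apl(X)$ produces a map $X\to \langle\M(X)\rangle$. By construction $\langle\M(X)\rangle$ is a nilpotent $\Q$-local space: it is built as an inverse limit of principal $K(V^n\otimes\Q,n)$-fibrations, so all its homotopy groups are $\Q$-vector spaces. Using $H^*(\apl(X))\cong H^*(X,\Q)$ together with the quasi-isomorphism $\rho$, and invoking the finite Betti numbers hypothesis to ensure convergence of the relevant spectral sequences computing $H^*(\langle\M(X)\rangle,\Q)$ via its Postnikov tower, the map $X\to\langle\M(X)\rangle$ is seen to be a rational homology equivalence into a $\Q$-complete space. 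By the universal property of the Bousfield--Kan $\Q$-completion, this map factors through an equivalence $\Q_{\infty}X\simeq \langle\M(X)\rangle$.

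Combining these two steps gives $\pi_n(\Q_{\infty}X)\cong \pi_n\langle \M(X)\rangle\cong (V^n)^{\vee}$ for all $n\ge 2$. The main obstacle will be the second step: verifying the cohomological comparison between $X$ and $\langle\M(X)\rangle$ rigorously, since $\apl$ is defined using \emph{polynomial} forms on simplices and the identification of $H^*\langle\M\rangle$ with $H^*(\M)$ requires the finiteness hypothesis to control the tower of fibrations (without it, the inverse limit of cohomology groups need not agree with the cohomology of the inverse limit, and $X$ may in fact be $\Q$-bad in the sense of Bousfield--Kan, as illustrated by the Ivanov--Mikhailov example for wedges of circles).
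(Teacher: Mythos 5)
The paper offers no proof of this statement to compare against: it is quoted as a theorem of Bousfield and Gugenheim (the citation key \texttt{BG} actually resolves to the Boyer--Galicki reference; the intended source is \cite{BG76}). Your outline is, in effect, a reconstruction of the argument in that memoir --- spatial realization $\langle\,\cdot\,\rangle$ adjoint to $\apl$, realization of Hirsch extensions as principal fibrations with fibers $K((V^n)^{\vee},n)$, and comparison of $\langle\M(X)\rangle$ with $\Q_{\infty}X$ --- and the skeleton is correct.

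Two steps need more care than your sketch gives them. First, the final identification should not be phrased as a ``universal property'' of the structure map $X\to\Q_{\infty}X$: that phrasing implicitly assumes $X$ is $\Q$-good, which the hypotheses do not grant (and which fails, e.g., for wedges of circles). The correct mechanism is that $\Q_{\infty}$ carries $H_{*}(-,\Q)$-equivalences to weak equivalences and fixes $\Q$-complete spaces; since $X\to\langle\M(X)\rangle$ is an $H_{*}(-,\Q)$-equivalence and $\langle\M(X)\rangle$, being the limit of a tower of principal fibrations with rational Eilenberg--MacLane fibers, is $\Q$-complete, applying $\Q_{\infty}$ yields the equivalence. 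Relatedly, $\langle\M(X)\rangle$ need not be nilpotent --- its fundamental group is the prounipotent completion of $\pi_1(X)$ --- so ``nilpotent $\Q$-local space'' should be replaced by ``$\Q$-complete limit of a principal tower.'' Second, finite Betti numbers do not make the minimal model finite-type: already $V^1$ is infinite-dimensional when $\pi_1(X)$ is a nonabelian free group. Since $H^1(K(W^{\vee},1),\Q)\cong (W^{\vee})^{\vee}$ differs from $W$ for infinite-dimensional $W$, the identification of fiber realizations and the cohomological comparison $H^{*}(\langle\M(X)\rangle,\Q)\cong H^{*}(\M(X))$ must be run along the principal refinement of the tower (whose individual stages are finite-dimensional Hirsch extensions), not by a naive finite-type induction on $V^n$. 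These are precisely the technical points that occupy the corresponding sections of \cite{BG76}; your outline identifies the right route but would need them filled in to be a complete proof.
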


A connected space $X$ is a said to be {\em rationally aspherical}\/ (or, a 
rational $K(\pi,1)$ space) if its rational completion is aspherical, i.e.,
$\pi_n(\Q_{\infty}X)=0$ for all $n\ge 2$. As an application 
of the above theorem, we have the following immediate corollary. 

\begin{corollary}[\cite{Falk}, \cite{PY99}]
\label{cor:nilp-bk-mm}
A connected space $X$ is rationally aspherical if and only if $\M(X)\cong \M_1(X)$.
\end{corollary}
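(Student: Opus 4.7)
The plan is to reduce the statement to the identification $\pi_n(\Q_\infty X) \cong (V^n)^\vee$ for $n \ge 2$ coming from Theorem \ref{thm:bg-bk-sv}, together with the uniqueness of the minimal model and the characterization of $1$-minimality in terms of the degrees of the generators.

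First I would write $\M(X) = (\bwedge V, d)$ with $V = \bigoplus_{n \ge 1} V^n$, and recall that by construction $\M_1(X)$ is the sub-$\cdga$ of $\M(X)$ generated by $V^1$; equivalently, $\M(X)$ is obtained from $\M_1(X)$ by iterated Hirsch extensions adding the graded pieces $V^n$ for $n \ge 2$. Consequently, $\M(X) \cong \M_1(X)$ as $\cdga$s if and only if no such higher Hirsch extensions are needed, which, by the uniqueness of the minimal model (Theorem \ref{thm:mm}) applied to the canonical decomposition of the indecomposables, is in turn equivalent to $V^n = 0$ for all $n \ge 2$.

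Next I would invoke Theorem \ref{thm:bg-bk-sv}, which yields the natural isomorphism
\[
\pi_n(\Q_\infty X) \cong (V^n)^{\vee} \qquad \text{for all } n \ge 2.
\]
Since taking $\k$-linear duals is a faithful functor on $\k$-vector spaces, $(V^n)^\vee = 0$ if and only if $V^n = 0$. Therefore $\pi_n(\Q_\infty X) = 0$ for all $n \ge 2$—that is, $X$ is rationally aspherical—if and only if $V^n = 0$ for all $n \ge 2$, which by the first paragraph is equivalent to $\M(X) \cong \M_1(X)$. Combining these two equivalences gives the corollary.

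The one point requiring care is that Theorem \ref{thm:bg-bk-sv} is stated under the hypothesis of finite Betti numbers, whereas the corollary is phrased without this assumption. I would handle this by noting that the identification of $V^n$ with the vanishing of $\pi_n(\Q_\infty X)$ is the only input used, and it holds verbatim under the finite Betti assumption; in the general case one can either adopt this assumption implicitly (as is customary in this context) or argue degree by degree using the inductive construction of $\M(X)$ as a union of finite Hirsch extensions, noting that the identification $\pi_n(\Q_\infty X) \cong (V^n)^\vee$ only requires finiteness through degree $n$. The main conceptual obstacle is thus not the argument itself, which is essentially a translation, but ensuring that the Bousfield--Guggenheim identification is applicable in the generality claimed.
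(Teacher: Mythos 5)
Your proof is correct and takes essentially the same route the paper intends: the corollary is presented there as an ``immediate'' consequence of Theorem \ref{thm:bg-bk-sv}, exactly via the identification $\pi_n(\Q_{\infty}X)\cong (V^n)^{\vee}$ for $n\ge 2$ together with the observation that $\M(X)\cong \M_1(X)$ precisely when the space of generators vanishes in degrees $\ge 2$. Your caveat about the finite Betti number hypothesis is fair---the paper inherits that assumption silently from the theorem it cites---and your handling of it is reasonable.
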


\subsection{Nilpotent spaces}
\label{subsec:nilp-spaces}

For simply-connected spaces and, more generally, for nilpotent spaces, 
rational homotopy theory takes a more concrete and approachable form. 
A path-connected space $X$ is said to be {\em nilpotent}\/ if the fundamental group 
$G=\pi_1(X)$ is nilpotent and acts nilpotently on the homotopy groups 
$\pi_n (X)$ for all $n>1$. For instance, all tori are nilpotent, but the Klein bottle 
is not; moreover, a real projective space $\RP^{n}$ is nilpotent if and only if $n$ is odd.

If $X$ is a nilpotent space, then, as shown in \cite{BK72}, $X$ is $\Q$-good. 
Moreover, $\pi_1(\Q_{\infty}X)$ is isomorphic to $\pi_1(X)\otimes \Q$---the 
Malcev completion of the nilpotent group $\pi_1(X)$---%
while $\pi_n(\Q_{\infty}X)\cong \pi_n(X)\otimes \Q$ for $n\ge 2$, all in a functorial way. 
In this context, we also have the following rational analog of Whitehead's theorem 
(see also \cite{RWZ}). 

\begin{theorem}[\cite{BK72}]
\label{thm:wh-bk}
A pointed map $f\colon  X\to Y$ between two nilpotent spaces is a 
rational homotopy equivalence if and only if it induces isomorphisms 
$f_{*}\colon \pi_n(X)\otimes \Q \to \pi_n(Y)\otimes \Q$ for all $n\ge 1$.
\end{theorem}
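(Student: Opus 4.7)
The plan is to route everything through the Bousfield--Kan rational completion $\Q_{\infty}$, using the three tools already recalled: (i) nilpotent spaces are $\Q$-good; (ii) for any map $f$, $\Q_{\infty}f$ is a weak homotopy equivalence if and only if $f$ is a rational homology equivalence; and (iii) for nilpotent $X$, there are natural isomorphisms $\pi_n(\Q_{\infty}X)\cong \pi_n(X)\otimes \Q$ for $n\ge 1$ (interpreting $\pi_1\otimes \Q$ as the Malcev completion).

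First I would establish the implication $(\Rightarrow)$. If $f\colon X\to Y$ is a rational homotopy equivalence, then by definition it is connected to the identity through a zig-zag of rational quasi-isomorphisms, so in particular $f$ itself induces an isomorphism in rational cohomology, and hence (by universal coefficients over $\Q$) in rational homology. By fact (ii), $\Q_{\infty}f\colon \Q_{\infty}X\to \Q_{\infty}Y$ is a weak homotopy equivalence, so $(\Q_{\infty}f)_{*}\colon \pi_n(\Q_{\infty}X)\to \pi_n(\Q_{\infty}Y)$ is an isomorphism for every $n\ge 1$. Using the naturality of the identifications in (iii) together with the relation $(\Q_{\infty}f)\circ k_X=k_Y\circ f$, one concludes that $f_{*}\otimes \Q\colon \pi_n(X)\otimes \Q\to \pi_n(Y)\otimes \Q$ is an isomorphism for all $n\ge 1$.

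For the converse $(\Leftarrow)$, suppose $f_{*}\otimes \Q$ is an isomorphism in every degree. By naturality (iii), $(\Q_{\infty}f)_{*}$ is an isomorphism on $\pi_n$ for all $n\ge 1$. Since $\Q_{\infty}X$ and $\Q_{\infty}Y$ have the homotopy type of CW-complexes, the classical Whitehead theorem implies that $\Q_{\infty}f$ is a weak homotopy equivalence. Invoking (ii) again, $f$ induces an isomorphism on $H_{*}(-,\Q)$, equivalently on $H^{*}(-,\Q)$, so $f$ is a rational quasi-isomorphism, hence a rational homotopy equivalence.

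The main obstacle I anticipate is not logical but bookkeeping: one must verify that the zig-zag identification $\pi_n(\Q_{\infty}X)\cong \pi_n(X)\otimes \Q$ is genuinely natural in $X$ at the level of $\pi_1$, where $\otimes \Q$ must be read as Malcev completion of a nilpotent group (a nonlinear construction). Once the map $\pi_1(X)\to \pi_1(\Q_{\infty}X)\cong \pi_1(X)\otimes \Q$ is identified with the universal Malcev map $\kappa$, the equivalence $(\kappa_X$ iso$)\Leftrightarrow(f$ iso on Malcev completions$)$ for nilpotent groups ensures the argument goes through in degree $1$ exactly as in higher degrees. All hypotheses of nilpotency enter precisely to make (i) and (iii) available, so the theorem follows formally by combining the four equivalences above.
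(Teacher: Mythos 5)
The paper does not actually supply a proof of Theorem \ref{thm:wh-bk}; it is quoted from \cite{BK72} as a known result, so there is no in-text argument to compare yours against. Your derivation is correct as a formal consequence of the three facts the paper recalls about the Bousfield--Kan completion: that nilpotent spaces are $\Q$-good, that $\Q_{\infty}f$ is a weak homotopy equivalence precisely when $f$ is a rational homology equivalence, and that $\pi_n(\Q_{\infty}X)\cong \pi_n(X)\otimes \Q$ naturally (with $\pi_1\otimes\Q$ read as Malcev completion). The translation between rational cohomology and rational homology isomorphisms is unproblematic over a field, and your worry about naturality in degree $1$ is resolved exactly as you say, by identifying $k_X$ on $\pi_1$ with the universal Malcev map. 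Two small remarks: the appeal to Whitehead's theorem is unnecessary, since a map inducing isomorphisms on all homotopy groups of connected spaces is a weak homotopy equivalence by definition, which is all that fact (ii) requires; and one should be honest that the three inputs you use are themselves the substantive theorems of \cite{BK72} about nilpotent spaces (their proof proceeds by induction up a principal refinement of the Postnikov tower), so your argument is best viewed as a clean repackaging of the source rather than an independent proof. Within that understanding, the logic is sound and complete.
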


Assume now that $X$ is a nilpotent CW-complex with finite Betti numbers. 
Sullivan proved in \cite{Sullivan77} that the minimal model (over $\Q$) 
of such a space is of the form 
$\M (X)= (\bigwedge V, d)$, where $V$ is a graded $\Q$-vector space 
of finite type.  Here are a few standard examples.

\begin{example}
\label{ex:models-nilp}
An odd-dimensional sphere has minimal model 
$\M(S^{2n+1})=\big(\bwedge (a) ,0\big)$, with $\abs{a}=2n+1$. 
On the other hand, an even-dimensional sphere has minimal model 
$\M(S^{2n})=\left(\bwedge (a,b) ,da=b^2\right)$, with $da=0$, $db=a^2$, 
and $\abs{a}=2n$.  Finally, an Eilenberg--MacLane space 
of type $\K(\Z,n)$ has minimal model $\big(\bwedge (a) ,0\big)$, 
with $\abs{a}=n$. 
\end{example}

If $H^{>n}(X)=0$ for some $n>0$, we can say a bit more. Pick a vector 
space decomposition, $\M^n (X)= Z^{n}(\M (X)) \oplus C^{n}$.  Then  
the direct sum $J= \M^{\ge n+1}(X) \oplus C^n$ is an acyclic
differential graded ideal of $\M (X)$. By construction, 
$\apl (X)$ is weakly isomorphic to the $\cdga$ 
$\M (X)/J$, which is finite-dimensional as a vector space.  We 
summarize this discussion, as follows.

\begin{theorem}[\cite{Sullivan77}]
\label{thm:nilpsp}
Let $X$ be a nilpotent CW-complex. 
\begin{enumerate}
\item \label{nilp1} 
If all the Betti numbers of $X$ are finite, 
then $X$ admits a $q$-finite $q$-model, for all $q$.
\item  \label{nilp2}
If $\dim H_{*}(X, \Q)<\infty$, then $X$ admits a model which is 
finite-dimensional over $\Q$.
\end{enumerate}
\end{theorem}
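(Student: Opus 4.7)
The plan is to extract both models from Sullivan's minimal model $\M(X)=(\bwedge V,d)$, which, under the stated hypotheses, has $V$ of finite type by the Sullivan result quoted immediately before the statement.

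For part (1), take $A \coloneqq \M_q(X)=(\bwedge V^{\le q},d)$. Since $V^{\le q}=V^1\oplus \cdots \oplus V^q$ is finite-dimensional in each degree, the free graded-commutative algebra $\bwedge V^{\le q}$ has finite-dimensional pieces in every degree; in particular, $A$ is $q$-finite (indeed, of finite type). By Theorem~\ref{thm:mm}, the structural morphism $\rho_q\colon \M_q(X)\to \apl(X)$ is a $q$-quasi-isomorphism, so $A$ is indeed a $q$-model for $X$. If a genuinely finite-dimensional representative is preferred, one further applies Lemma~\ref{lem:truncate} to replace $A$ by a $q$-equivalent $\cdga$ vanishing in degrees above $q+1$.

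For part (2), the hypothesis $\dim H_*(X,\Q)<\infty$ gives $H^{>n}(\M(X))=0$ for some $n$. I would then follow the construction sketched immediately before the theorem: pick a vector-space complement $C^n$ of $Z^n(\M(X))$ inside $\M^n(X)$, and set $J \coloneqq \M^{\ge n+1}(X)\oplus C^n$. Three verifications remain. First, $J$ is a two-sided ideal of $\M(X)$: on degree grounds, $C^n\cdot 1 = C^n\subset J$ and $C^n\cdot \M^{\ge 1}\subset \M^{\ge n+1}\subset J$, while $\M^{\ge n+1}\cdot \M\subset \M^{\ge n+1}\subset J$. Second, $J$ is $d$-stable: $dC^n\subset \M^{n+1}\subset J$ and $d\M^{\ge n+1}\subset \M^{\ge n+2}\subset J$. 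Third, the quotient map $\pi\colon \M(X)\to \M(X)/J$ is a quasi-isomorphism. For this last point I would compute directly: $\M(X)/J$ coincides with $\M(X)$ in degrees below $n$, equals $Z^n(\M(X))$ in degree $n$ (with trivially vanishing outgoing differential), and is zero above $n$. Since $d\M^{n-1}\subset Z^n$ automatically from $d^2=0$, reading off cohomology yields $H^i(\M(X)/J)\cong H^i(\M(X))$ for $i\le n$ and $H^i(\M(X)/J)=0$ for $i>n$. Composing $\pi$ with the quasi-isomorphism $\rho\colon \M(X)\to \apl(X)$ thus realizes $\M(X)/J$ as a $\cdga$ model for $X$; it is finite-dimensional because $\M(X)$ has finite type and only the finitely many degrees $\le n$ contribute.

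The main obstacle is the bookkeeping in part (2): simultaneously confirming that $J$ is a two-sided ideal, is $d$-stable, and induces a quasi-isomorphism on the quotient. Everything else follows formally from Sullivan's existence theorem for minimal models (Theorem~\ref{thm:mm}) together with the finite-type property of $V$ for nilpotent spaces with finite Betti numbers.
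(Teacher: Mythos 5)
Your proposal is correct and follows essentially the same route as the paper: part (1) is exactly the observation that Sullivan's finite-type minimal model yields a finite-type $q$-minimal model via $\rho_q$, and part (2) is precisely the paper's construction of the acyclic differential graded ideal $J=\M^{\ge n+1}(X)\oplus C^n$ and the finite-dimensional quotient $\M(X)/J$. You have merely supplied the routine verifications (ideal, $d$-stability, quasi-isomorphism of the quotient map) that the paper leaves implicit, and these are all correct.
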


The main application of Sullivan's theory of minimal models to the rational 
homotopy of nilpotent spaces is given by the following theorem. 

\begin{theorem}[\cite{Sullivan77}]
\label{thm:nilp-homotopy}
Let $X$ be a connected, nilpotent CW-complex with finite 
Betti numbers, and let $\M(X)=\big( \bwedge V, d)$ be a 
minimal model for $X$ over $\Q$. Then 
$\pi_n(X)\otimes \Q\cong (V^n)^{\vee}$, for all $n\ge 2$.
\end{theorem}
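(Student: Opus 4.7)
The plan is to deduce this statement by combining Theorem~\ref{thm:bg-bk-sv}, which identifies the homotopy groups of the Bousfield--Kan rationalization $\Q_{\infty}X$ with the duals of the generating vector spaces of $\M(X)$, together with the nilpotent-case computation of $\pi_{*}(\Q_{\infty}X)$ already recalled in the paragraph on nilpotent spaces. Concretely, since $X$ has finite Betti numbers, Theorem~\ref{thm:bg-bk-sv} provides natural isomorphisms
\[
\pi_{n}(\Q_{\infty}X) \cong (V^{n})^{\vee} \quad \text{for all } n \ge 2,
\]
where $\M(X)=(\bwedge V, d)$ is the minimal model. Note that the hypothesis of finite Betti numbers guarantees, via Theorem~\ref{thm:nilpsp}\eqref{nilp1}, that each $V^{n}$ is finite-dimensional, so the dual is unambiguously identified with $V^{n}$.

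For the second ingredient, I would invoke the Bousfield--Kan theory as stated just before Theorem~\ref{thm:wh-bk}: because $X$ is a nilpotent CW-complex, it is $\Q$-good, and the structure map $k_{X}\colon X \to \Q_{\infty}X$ induces natural isomorphisms $\pi_{n}(X)\otimes \Q \isom \pi_{n}(\Q_{\infty}X)$ for all $n \ge 2$ (and the Malcev completion in degree $1$). This is precisely the point at which the nilpotency of the action of $\pi_{1}(X)$ on the higher homotopy groups is used: without it, $\Q_{\infty}X$ need not rationalize homotopy groups in the naive sense.

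Composing the two natural isomorphisms yields
\[
\pi_{n}(X)\otimes \Q \; \cong \; \pi_{n}(\Q_{\infty}X) \; \cong \; (V^{n})^{\vee}
\]
for all $n \ge 2$, as required. The proof is therefore essentially a two-step combination of standing results, and there is no serious obstacle: the only thing to verify carefully is that the hypotheses of both Theorem~\ref{thm:bg-bk-sv} and the Bousfield--Kan nilpotent-completion theorem are satisfied, namely finite Betti numbers (used for the first) and nilpotency of $X$ (used for the second). The simply-connected case is of course the classical Sullivan--de Rham correspondence, and the nilpotent case reduces to it via the Postnikov-tower/$\Q_{\infty}$ machinery of Bousfield--Kan.
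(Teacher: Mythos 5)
Your deduction is correct as a formal matter, and it is worth noting at the outset that the paper itself offers no proof of Theorem~\ref{thm:nilp-homotopy}: it is quoted from Sullivan's paper, with the surrounding text pointing to Lehmann's alternative proof and Bock's generalization. So the comparison here is not with a proof in the paper but with the sources it cites. Sullivan's original argument (and Lehmann's) proceeds by matching the Postnikov tower of the nilpotent space, refined into principal fibrations, against the tower of Hirsch extensions building the minimal model, identifying the dual of each new generating space $V^n$ with the corresponding rational homotopy group stage by stage. Your route instead channels everything through the Bousfield--Kan completion: Theorem~\ref{thm:bg-bk-sv} gives $\pi_n(\Q_{\infty}X)\cong (V^n)^{\vee}$ for $n\ge 2$ using only the finiteness of the Betti numbers, and the nilpotency of $X$ is used exactly once, to invoke the Bousfield--Kan statement (recalled just before Theorem~\ref{thm:wh-bk}) that $\pi_n(\Q_{\infty}X)\cong \pi_n(X)\otimes\Q$ for $n\ge 2$. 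This is the Bousfield--Gugenheim packaging of Sullivan's theorem, and it is consistent with how the survey itself argues elsewhere (Corollary~\ref{cor:nilp-bk-mm} is deduced from Theorem~\ref{thm:bg-bk-sv} in precisely this style). What your approach buys is a clean separation of the two hypotheses and a two-line deduction; what it costs is that all the substantive content is now hidden inside Theorem~\ref{thm:bg-bk-sv}, whose proof in \cite{BG76} is where the actual comparison between Hirsch extensions and homotopy groups takes place --- so as a self-contained proof it is no shorter than Sullivan's, only reorganized. Two small points: the parenthetical appeal to Theorem~\ref{thm:nilpsp}\eqref{nilp1} for the finite-dimensionality of each $V^n$ is not needed for the statement as written (the finite-type assertion is already recorded in the paragraph preceding Theorem~\ref{thm:nilpsp}), and you should be aware that the naturality of the composite isomorphism depends on the functoriality claims accompanying both quoted results, which the paper does assert.
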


An alternative proof of this foundational result was given 
by Lehmann in \cite{Le}.  A generalization was given by 
Bock \cite{Bock}, who relaxed the hypothesis that 
$\pi_1(X)$ be nilpotent, thereby proving a statement 
first mentioned by Halperin in \cite{Hal83}. 

\begin{theorem}[\cite{Bock}]
\label{thm:nilp-homotopy-bis}
Let $X$ be a path-connected, triangulable space 
whose universal covering exists. Suppose $\pi_1(X)$ has a 
rationally aspherical classifying space and $\pi_n(X)$ is 
a finitely generated nilpotent $\pi_1(X)$-module, for each $n\ge 2$. 
If $\M(X)=\big( \bwedge V, d)$ is a minimal model for $X$ over 
$\Q$, then $\pi_n(X)\otimes \Q\cong (V^n)^{\vee}$, for all $n\ge 2$.
\end{theorem}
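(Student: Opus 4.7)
The plan is to reduce to Sullivan's Theorem~\ref{thm:nilp-homotopy} via the classifying fibration of the universal cover. Let $\pi = \pi_1(X)$ and consider the classifying map $p\colon X \to K(\pi,1)$, realized as a Serre fibration; its homotopy fiber is the universal cover $\widetilde{X}$, which is simply connected and satisfies $\pi_n(\widetilde{X}) \cong \pi_n(X)$ as abelian groups for all $n \ge 2$.

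The key technical step is to rationalize this fibration. The hypothesis that $\pi$ acts nilpotently on each finitely generated $\Z[\pi]$-module $\pi_n(X)$ is precisely the condition that the fibration $\widetilde{X} \to X \to K(\pi,1)$ is \emph{nilpotent} in the sense of Bousfield--Kan \cite{BK72}, whose machinery then produces a fibration (up to homotopy) of $\Q$-completions
\[
\widetilde{X}_{\Q} \longrightarrow X_{\Q} \longrightarrow K(\pi,1)_{\Q} .
\]
The rational asphericity hypothesis gives $\pi_n(K(\pi,1)_{\Q}) = 0$ for $n \ge 2$, so the long exact homotopy sequence yields $\pi_n(X_{\Q}) \cong \pi_n(\widetilde{X}_{\Q})$ for $n \ge 2$; since $\widetilde{X}$ is simply connected, this equals $\pi_n(\widetilde{X}) \otimes \Q = \pi_n(X) \otimes \Q$.

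To close the argument, I would apply the Bousfield--Guggenheim Theorem~\ref{thm:bg-bk-sv} to $X$ itself, which supplies the identification $\pi_n(X_{\Q}) \cong (V^n)^{\vee}$ for $n \ge 2$. Chaining this with the identification from the previous paragraph yields the desired $\pi_n(X) \otimes \Q \cong (V^n)^{\vee}$.

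The main obstacle is the rationalization step: verifying that the Bousfield--Kan machinery applies and yields the expected identification of rational homotopy groups in a setting where $\pi_1(X)$ may itself be non-nilpotent yet acts nilpotently on higher homotopy.  The rational asphericity of $K(\pi,1)$ plays a dual role here: via Corollary~\ref{cor:nilp-bk-mm} it guarantees $\M(K(\pi,1)) = \M_1(K(\pi,1))$, which, in a relative minimal model picture $\M(X) \cong \M_1(X) \otimes (\bwedge V^{\ge 2}, d')$, confirms that all generators of $V^n$ for $n \ge 2$ are contributed entirely by the homotopy fiber $\widetilde{X}$, consistently with the topological identification deduced from the fiber sequence.
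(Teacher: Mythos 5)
The paper does not actually prove this theorem; it is quoted from Bock \cite{Bock}, so there is no in-text argument to compare against. Your strategy---rationalize the fibration $\widetilde{X}\to X\to K(\pi,1)$, use rational asphericity of the base to get $\pi_n(\Q_\infty X)\cong\pi_n(X)\otimes\Q$, then invoke Theorem \ref{thm:bg-bk-sv}---is the natural route and can be pushed through, but the step you yourself flag as ``the main obstacle'' is left unresolved and, as written, contains a genuine gap. The Bousfield--Kan mod-$\Q$ fibre lemma requires $\pi_1$ of the base to act nilpotently on the \emph{homology} $\widetilde{H}_*(\widetilde{X};\Q)$ of the fibre, whereas the hypothesis of the theorem is nilpotent action on the \emph{homotopy} groups $\pi_n(\widetilde{X})=\pi_n(X)$. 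These are not tautologically the same condition, so your assertion that the hypothesis ``is precisely'' the Bousfield--Kan nilpotency condition is exactly where the work is hiding: one must invoke the (standard but nontrivial) equivalence of nilpotent actions on homotopy and on homology for a simply-connected space, cf.\ \cite{HMR}, together with the finite-generation part of the hypothesis. Without this bridge, the fibration of completions---and hence the identification $\pi_n(\Q_\infty X)\cong\pi_n(X)\otimes\Q$---is not justified.

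Two further points need attention. First, Theorem \ref{thm:bg-bk-sv} is stated for connected spaces with \emph{finite Betti numbers}, and nothing in the hypotheses here guarantees that $X$ has finite Betti numbers; you must either derive this or use a version of the Bousfield--Gugenheim comparison that does not require it. Second, you slide between $X_{\Q}$ (Sullivan rationalization) and $\Q_{\infty}X$ (Bousfield--Kan completion); the paper is explicit that these need not agree outside the nilpotent, finite-type setting, and rational asphericity is defined via $\Q_{\infty}$, so the argument should be run consistently with $\Q_{\infty}$. Your closing remark about the relative model $\M(K(\pi,1))\otimes(\bwedge V^{\ge 2},d')$ in fact points to the cleaner, purely algebraic proof: rational asphericity plus Corollary \ref{cor:nilp-bk-mm} makes the base model generated in degree $1$; the nilpotency hypothesis makes the standard theorem on Sullivan models of fibrations applicable, so that $(\bwedge V^{\ge 2},\bar{d})$ is a minimal model for the simply-connected space $\widetilde{X}$; and Theorem \ref{thm:nilp-homotopy} applied to $\widetilde{X}$, together with $\pi_n(\widetilde{X})\cong\pi_n(X)$, finishes the job. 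Developing that sketch would avoid the completion machinery, and its attendant subtleties, altogether.
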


Consider now the rational Hurewicz homomorphisms,   
$\operatorname{hur}_k \colon \pi_{k}(X)\otimes \Q\to H_{k}(X,\Q)$. 
If $X$ is $n$-connected for some $n\ge 1$, 
the above theorem implies that 
 $\operatorname{hur}_k$ is an isomorphism for $k\le 2n$,  
while $\ker(\operatorname{hur}_k)$ is the $\Q$-span of 
 the Whitehead products for $2n+1\le k\le 3n+1$, see \cite{FH17}. 
For generalizations of Theorem \ref{thm:nilp-homotopy} 
to rationally nilpotent spaces we refer to \cite{FHT2}. 

\subsection{Models for polyhedral products}
\label{subsec:pp}

We illustrate the general theory with a class of spaces particularly amenable 
to study via rational homotopy methods. These spaces, variously known as 
polyhedral products, (generalized) moment-angle complexes, or (generalized) 
Davis--Januszkiewicz spaces, are constructed as follows (see for instance 
\cite{DeS07}, \cite{BBCG} and references therein). 

Let $K$ be a finite simplicial complex on vertex set $[n]=\{1,\dots,n\}$, 
and let $(\uX,\uX')$ be a sequence $(X_1,X_1'), 
\dots , (X_n,X_n')$ of pairs of spaces. The polyhedral product 
$\ZZ_K(\uX,\uX')$ is then 
the subspace of the cartesian product $\prod_{i=1}^n X_i$ 
obtained as the union of all subspaces of the form 
$\ZZ_{\sigma}(\uX,\uX')=\prod_{i=1}^n Y_i$, 
where $\sigma$ runs through the simplices of $K$ and 
$Y_i=X_i$ if $i\in \sigma$ and $Y_i=X_i'$ if $i\notin \sigma$.

Assume now that all spaces $X_i$, $X_i'$ are nilpotent 
CW-complexes of finite type. In \cite{FT09}, F\'elix and Tanr\'e describe 
the rational homotopy type of the corresponding polyhedral product, as follows. 
Let $A_i$ and $A_i'$ be connected, finite-type rational models for 
$X_i$ and $X_i'$, so that there are quasi-isomorphisms 
$\M(X_i)\to A_i$ and  $\M(X_i')\to A_i'$. 
Suppose there are surjective morphisms 
$\varphi_i \colon A_i\surj A_i'$ modeling the inclusions $X_i' \inj X_i$. 
For each simplex $\sigma$ on $[n]$, let $I_{\sigma}=\prod_{i=1}^n E_i$, 
with $E_i=\ker(\varphi_i)$ if $i\in \sigma$ and 
$E_i=A_i$ if $i\notin \sigma$.

\begin{theorem}[\cite{FT09}]
\label{thm:ft-zk}
With asumptions as above, 
the polyhedral product $\ZZ_K(\uX,\uX')$ has a connected, 
finite-type $\cdga$ model of the form $A(K)=(\bigotimes_{i=1}^n A_i)/I(K)$, 
where $I(K)$ is the ideal $\sum_{\sigma\notin K} I_{\sigma}$. 
Moreover, if $L\subset K$ is a subcomplex, then the 
inclusion $\ZZ_L(\uX,\uX')\inj \ZZ_K(\uX,\uX')$
is modelled by the projection $A(K)\surj A(L)$.
\end{theorem}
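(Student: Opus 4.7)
The plan is to verify both assertions by induction on the number of simplices of $K$, using a Mayer--Vietoris decomposition at each step.

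First, I would check that $I(K)$ is a differential ideal of $\bigotimes_{i=1}^n A_i$: since each $\varphi_i$ is a $\cdga$ morphism, $\ker\varphi_i$ is a differential ideal of $A_i$, hence each $I_\sigma$ is a differential ideal of the tensor product, and so is their sum $I(K)$. Thus $A(K)$ is a well-defined $\cdga$. The naturality claim for $L\subseteq K$ then reduces to the obvious inclusion $I(K)\subseteq I(L)$ of ideals, so it will follow automatically once the model assertion is proved.

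For the base case, take $K=\Delta_\tau$ to be the full simplex on some $\tau\subseteq [n]$. Then $\ZZ_K(\uX,\uX')=\prod_{i\in\tau} X_i\times \prod_{i\notin\tau} X_i'$ is a product of finite-type nilpotent spaces, so the K\"unneth theorem gives a model $\bigotimes_{i\in\tau} A_i\otimes \bigotimes_{i\notin\tau} A_i'$. Using the surjectivity of each $\varphi_i$, this tensor product is isomorphic as a $\cdga$ to $\bigotimes_{i=1}^n A_i$ modulo the ideal generated by $\ker\varphi_i$ for $i\notin\tau$; a quick check (using that $I_\sigma\subseteq I_{\{i\}}$ whenever $i\in\sigma$) identifies this with $A(\Delta_\tau)$.

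For the inductive step, pick a maximal simplex $\sigma_0$ of $K$ and set $L=K\setminus\{\sigma_0\}$, $M=\overline{\sigma_0}$, $N=L\cap M=\partial\sigma_0$. The polyhedral product then splits as a pushout of closed cofibrations, $\ZZ_K=\ZZ_L\cup_{\ZZ_N}\ZZ_M$, which upon applying $\apl$ becomes a homotopy pullback of $\cdga$s. On the algebraic side, I would establish the two ideal identities
\[
I(L)+I(M)=I(N)\quad\text{and}\quad I(L)\cap I(M)=I(K)
\]
inside $\bigotimes_i A_i$. The first is a routine combinatorial sorting of simplices, while the second is the crux of the argument and the main obstacle: one must show that a tensor lying in both the ideals associated to the simplices outside $L$ and to those outside $M$ already lies in the ideal associated to simplices outside $K$, which ultimately rests on the exactness of the short exact sequences $0\to\ker\varphi_i\to A_i\to A_i'\to 0$ being preserved by tensoring---a standard consequence of flatness over a field of characteristic zero. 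Together these identities exhibit the commutative square with corners $A(K), A(L), A(M), A(N)$ as a pullback of $\cdga$s in which both legs to $A(N)$ are surjective, hence a homotopy pullback. The inductive hypothesis applied to $L$, $M$, and $N$ then shows that $A(K)$ is a model of $\ZZ_K$, completing the induction.
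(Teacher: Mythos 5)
The survey states this theorem without proof, citing F\'elix--Tanr\'e \cite{FT09}, whose argument is precisely the Mayer--Vietoris induction on the simplices of $K$ that you propose; so your strategy is the standard one rather than a new route. Your handling of the algebra is sound. The identity $I(L)+I(M)=I(N)$ is immediate, since a simplex fails to lie in $N=L\cap M$ iff it fails to lie in $L$ or in $M$. For $I(L)\cap I(M)=I(K)$, note that $I(L)=I(K)+I_{\sigma_0}$, that $I(K)\subseteq I(M)$ (every $\sigma\notin K$ satisfies $\sigma\not\subseteq\sigma_0$, because all faces of $\sigma_0$ lie in $K$), and apply the modular law to reduce to $I_{\sigma_0}\cap I(M)=\sum_{i\notin\sigma_0} I_{\sigma_0\cup\{i\}}\subseteq I(K)$, where maximality of $\sigma_0$ enters. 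The cleanest justification of that last step is not flatness so much as the observation that, after choosing a basis of each $A_i$ adapted to $\ker\varphi_i$, every $I_\sigma$ is spanned by a subset of one fixed basis of $\bigotimes_i A_i$, so the lattice these ideals generate is distributive and intersections of ``box'' subspaces are computed factorwise.

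The genuine gap is in the last sentence of your inductive step. Knowing separately that $A(L)\simeq\apl(\ZZ_L)$, $A(M)\simeq\apl(\ZZ_M)$, and $A(N)\simeq\apl(\ZZ_N)$ does not allow you to compare the two pullbacks: for that you need a zig-zag of quasi-isomorphisms of \emph{diagrams}, i.e., weak equivalences commuting (up to coherent homotopy) with the two restriction maps to the $N$-corner. That compatibility is exactly the content of the ``moreover'' clause, so your remark that the naturality ``will follow automatically once the model assertion is proved'' has the logic reversed: the naturality must be carried along as part of the inductive hypothesis. Concretely, one should prove by induction that there is a weak equivalence $A(K)\simeq \apl(\ZZ_K(\uX,\uX'))$ compatible with the projections $A(K)\surj A(L)$ and the inclusions $\ZZ_L\inj \ZZ_K$ for all subcomplexes $L\subseteq K$, starting from fixed models $\M(X_i)\to A_i$ and $\M(X_i')\to A_i'$ realizing the $\varphi_i$. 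With that strengthening the induction closes; without it, the step ``hence $A(K)$ is a model of $\ZZ_K$'' does not follow.
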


Taking homology, this theorem recovers a result from \cite{BBCG}: 
the cohomology algebra $H^*(\ZZ_K(\uX,\uX'),\Q)$ is isomorphic 
to the quotient $(\bigotimes_{i=1}^n H^*(X_i,\Q)/J(K)$, where $J(K)$ 
is the Stanley--Reisner ideal generated by all the monomials 
$x_{j_1}\cdots x_{j_k}$ with $x_{i}\in  H^*(X_i,\Q)$ for which 
the simplex $\sigma=(j_1,\dots, j_k)$ is not in $K$.

\subsection{Configuration spaces}
\label{subsec:config}
A construction due to Fadell and Neuwirth associates 
to a space $X$ and a positive integer $n$ the space of 
ordered configurations of $n$ points in $X$, 
\begin{equation}
\label{eq:conf gamma}
\Conf(X,n) = \{ (x_1, \dots , x_n) \in X^{\times n} 
\mid x_i \ne x_j \text{ for } i\ne j\}.
\end{equation}
The most basic example is the configuration space 
of $n$ ordered points in $\C$; this is a classifying 
space for $P_n$, the pure braid group on $n$ strings, 
whose cohomology ring was computed by Arnol'd in the 
the late 1960s.  

The $E_2$-term of the Leray spectral sequence for 
the inclusion $\Conf(X,n)\inj X^{\times n}$ was described in 
concrete terms by Cohen and Taylor \cite{CT78}. 
If $X$ is a smooth, complex projective variety 
of dimension $m$, then $\Conf(X,n)$ is a smooth, 
quasi-projective variety; moreover, as shown by Totaro in \cite{To96}, 
the Cohen--Taylor spectral sequence collapses at the 
$E_{m+1}$-term, and the $E_m$-term is a $\cdga$ model 
for the configuration space $\Conf(X,n)$. Other rational 
models for configuration spaces of smooth projective varieties 
were constructed by Fulton--MacPherson \cite{FMac} and 
K\v{r}\'{\i}\v{z} \cite{Kriz}. 

Now let $M$ be a closed, simply-connected smooth manifold. 
Under the assumption that $b_2(M)=0$, Lambrechts and 
Stanley \cite{LS-aif} showed how to construct a rational model 
for $\Conf(M,2)$ out of a model for $M$; as a consequence, 
the rational homotopy type of $\Conf(M,2)$ depends only on that of $M$.
For configuration spaces of $n$ points, Lambrechts and 
Stanley \cite{LS-asens} used Theorem \ref{thm:LS} to 
associate to every rational model $A$ for $M$ a $\Q$-$\cdga$ 
$G_A(n)$, which they conjectured to be a rational model for 
$\Conf(M,n)$.  In \cite{Idrissi}, Idrissi proved that 
$G_A(n)\otimes_{\Q} \R$ is a real model for the configuration space;  
thus, the real homotopy type of $M$ determines the real homotopy 
type of $\Conf(M,n)$, for all $n$.

\subsection{Rationalization}
\label{subsec:rationalization}
To every space $X$, Sullivan  \cite{Sullivan74}, \cite{Sullivan77}, \cite{Sullivan05} 
associated in a functorial way its {\em rationalization}, denoted $X_{\Q}$; 
we refer to \cite{BG76}, \cite{FHT2}, \cite{FH17}, \cite{RWZ}, and \cite{Iv} for 
more details on this construction. The rationalization of $X$ may be 
viewed as a geometric realization 
of the Sullivan minimal model, $\M(X)$, for the $\cdga$ $\apl(X)$. 
The space $X_{\Q}$  comes equipped with a structure map, $h\colon X\to X_{\Q}$, 
which realizes the morphism $\rho\colon \M(X)\to \apl(X)$. 

Now suppose $X$ is a connected, pointed CW-complex which is 
a nilpotent space; then, as shown in \cite{FHT2}, the space $X_{\Q}$ 
is again nilpotent and the map $h$ is a rational homotopy equivalence. 
Moreover, if $H^*(X,\Q)$ is of finite type, then the maps $h_*\colon 
\pi_n(X)\otimes \Q \to \pi_n(X_{\Q})$ are isomorphisms, for all $n\ge 2$. 
The nilpotency condition is crucial here. Indeed, if $X=\RP^2$, then 
$\pi_1(X)=\Z_2$ is nilpotent but does not act nilpotently on $\pi_2(X)=\Z$; 
we also have that $X_{\Q}\simeq \{*\}$, and so the map $h_*\colon 
\pi_2(X)\otimes \Q \to \pi_2(X_{\Q})$ is the zero map.

In general, the Bousfield--Kan completion and the Sullivan  
rationalization do not agree, even for nilpotent spaces. 
Nevertheless, if $X$ is nilpotent and $H^*(X,\Q)$ is of 
finite type, then $\Q_{\infty}X = X_{\Q}$, see \cite{BG76}.

When $X$ is a CW-complex, a more concrete way to construct the 
rationalization $X_{\Q}$ is via Sullivan's infinite telescopes, 
introduced in \cite{Sullivan74}.  
For instance, if $n$ is odd, then $S^n_{\Q}\simeq K(\Q,n)$.

The constructions from Section \ref{sect:malcev-lie} are related 
to the rationalizations of spaces, as follows. 
Let $X$ be a path-connected space with fundamental group $\pi_1(X)=G$. 
Then $\fM(G;\Q)=\pi_1(X_{\Q})$, the fundamental group of the rationalization of $X$. 

\subsection{Equivariant algebraic models}
\label{subsec:equiv}

The study of the rational equivariant homotopy type of a space subject 
to the action of a finite group goes back to the work of Triantafillou \cite{Tr} 
on equivariant minimal models. We summarize here some recent work 
from \cite{PS-jlms} on this subject.

Let $\Phi$ be a finite group. The category $\Phi$-$\cdga$ (over $\k$) 
has objects $\cdga$s $A$ endowed with a compatible $\Phi$-action, 
while the morphisms are $\Phi$-equivariant $\cdga$ maps  
$A\to B$. Given a $\Phi$-$\cdga$ $A$, we let $A^{\Phi}$ be the 
sub-$\cdga$ of elements fixed by $\Phi$; there is then a canonical 
$\cdga$ map $A^{\Phi}\to A$. By definition, a $q$-equivalence $A \simeq_q B$ 
in $\Phi$-$\cdga$ ($1\le q\le \infty$) is a zigzag of $\Phi$-equivariant 
$q$-equivalences in $\cdga$. It is readily seen that 
$A \simeq_q B$ in $\Phi$-$\cdga$ implies that $A^{\Phi} \simeq_q B^{\Phi}$ 
in $\cdga$.

Now suppose $\Phi$ acts freely on a space $Y$, 
and let $X=Y/\Phi$ be the orbit space.  
As is well-known, every CW-complex $X$ has the homotopy type 
of a simplicial complex $K$; moreover, if $X$ has finite $q$-skeleton, 
so does $K$. Fix such a triangulation of $X$, and lift it to the cover $Y$.   
The corresponding simplicial Sullivan algebras are then related by the 
equality $A_{\rm s}(X)=A_{\rm s}(Y)^{\Phi}$. Therefore, we have the following result.

\begin{proposition}[\cite{PS-jlms}]
\label{prop:equiv}
Let $X$ be a CW-complex, and let $Y\to X$ be a finite regular  
cover, with group of deck transformations $\Phi$.  Let $A$ 
be a $\Phi$-$\cdga$ over $\k$.
\begin{enumerate}
\item  \label{phi1}
Suppose $\apl(Y)\otimes_{\Q}\k \simeq_q A$ in 
$\Phi$-$\cdga$, for some $1\le q\le \infty$. Then 
$\apl(X)\otimes_{\Q}\k \simeq_q A^{\Phi}$ in $\cdga$. 
\item  \label{phi2}
If, moreover, $A$ is $q$-finite, then $A^{\Phi}$ is 
$q$-finite.
\end{enumerate}
\end{proposition}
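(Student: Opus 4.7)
The linchpin is that, because $\ch(\k)=0$ and $\Phi$ is finite, Maschke's theorem provides the averaging projector $\pi_\Phi=\tfrac{1}{|\Phi|}\sum_{g\in\Phi}g$, which exhibits $B^\Phi$ as a natural direct summand of any $\k[\Phi]$-module $B$. Consequently, the functor $(-)^\Phi$ on $\k[\Phi]$-cochain complexes is exact; in particular, $H^{*}(B^\Phi)\cong H^{*}(B)^\Phi$, and $(-)^\Phi$ sends any $\Phi$-equivariant $q$-quasi-isomorphism of $\Phi$-$\cdga$s to a $q$-quasi-isomorphism of $\cdga$s. By the same token, $(-)^\Phi$ commutes with extension of scalars: $B^\Phi\otimes_\Q \k = (B\otimes_\Q \k)^\Phi$ for any $\Phi$-$\cdga$ $B$ over $\Q$.

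With these tools in hand, the strategy for part (1) is to assemble a zigzag of $q$-quasi-iso\-morphisms of $\cdga$s over $\k$ connecting $\apl(X)\otimes_\Q \k$ to $A^\Phi$. Starting from the $\Phi$-equivariant triangulation of $Y$ covering the chosen triangulation of $X$, the natural comparison between $A_{\rm s}$ and $\apl$ is functorial for simplicial maps and therefore $\Phi$-equivariant, yielding a $\Phi$-equivariant quasi-isomorphism $A_{\rm s}(Y)\otimes_\Q \k\simeq \apl(Y)\otimes_\Q \k$. Composing this with the hypothesis gives a zigzag $A_{\rm s}(Y)\otimes_\Q \k\simeq_q A$ in $\Phi$-$\cdga$. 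Applying $(-)^\Phi$ termwise produces a zigzag of $q$-quasi-isomorphisms of $\cdga$s whose left endpoint is $(A_{\rm s}(Y)\otimes_\Q \k)^\Phi$ and whose right endpoint is $A^\Phi$. By exactness and the identity $A_{\rm s}(Y)^\Phi=A_{\rm s}(X)$ recorded in the text, the left endpoint identifies with $A_{\rm s}(X)\otimes_\Q \k\simeq \apl(X)\otimes_\Q \k$, closing the chain.

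For part (2), no new input is needed: since $\Phi$ acts trivially on $\k$, the connectedness hypothesis $A^0=\k\cdot 1$ gives $(A^\Phi)^0=\k\cdot 1$, and $(A^\Phi)^i=(A^i)^\Phi$ is a $\k$-subspace of $A^i$, hence finite-dimensional whenever $A^i$ is, for all $i\le q$. The crux of the whole argument lives in the opening paragraph: without $\ch(\k)=0$, the functor $(-)^\Phi$ would fail to be exact, and neither the preservation of $q$-quasi-isomorphisms nor the commutation with extension of scalars would be available. Once those two facts are granted, the remainder is naturality together with the identity $A_{\rm s}(X)=A_{\rm s}(Y)^{\Phi}$ already established in the excerpt.
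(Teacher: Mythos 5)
Your proof is correct and follows essentially the same route as the paper: triangulate $X$, lift equivariantly to $Y$, use the identity $A_{\rm s}(X)=A_{\rm s}(Y)^{\Phi}$, and apply the invariants functor to the equivariant zig-zag, relying on the fact (stated in the paper just before the proposition) that $(-)^{\Phi}$ carries $q$-equivalences in $\Phi$-$\cdga$ to $q$-equivalences in $\cdga$. Your Maschke-averaging justification of that fact, and of $H^{*}(B^{\Phi})\cong H^{*}(B)^{\Phi}$, correctly supplies the detail the paper leaves implicit.
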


As a consequence, if $Y$ admits an equivariant $q$-finite $q$-model,   
then $X$ admits a $q$-finite $q$-model.  The hypothesis from 
part \eqref{phi1} in the above proposition cannot be completely dropped. 
Nevertheless, we have the following conjecture regarding algebraic 
models for the orbit space $X=Y/\Phi$ constructed from $\Phi$-equivariant 
models for $Y$.

\begin{conjecture}[\cite{PS-jlms}]
\label{conj:equiv}
Let $X$ be a connected $CW$-complex, and let $Y \to X$ be a finite,  
regular cover with deck group $\Phi$. Suppose that $Y$ has finite Betti 
numbers. Let $A$ be a $\Phi$-$\cdga$, and assume that there is a zig-zag of 
quasi-isomorphisms connecting $\apl (Y)\otimes_{\Q}\k$ to $A$ in $\cdga$,
such that the induced isomorphism between $H^{*}(Y,\k)$ and $H^{*}(A)$ is 
$\Phi$-equivariant. Then $A^{\Phi}$ is a model for $X$.
\end{conjecture}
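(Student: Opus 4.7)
The plan is to promote the given non-equivariant zigzag into one in the category $\Phi$-$\cdga$ and then invoke Proposition \ref{prop:equiv}. The key technical engine is Maschke's theorem: since $\Phi$ is finite and $\ch(\k)=0$, $\k\Phi$ is semisimple and $H^i(\Phi,M)=0$ for every $\k\Phi$-module $M$ and every $i\ge 1$; this permits equivariant splittings and averaging throughout.

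First I would construct a $\Phi$-equivariant Sullivan minimal model $\rho_Y\colon \M\to \apl(Y)\otimes_{\Q}\k$, built inductively by Hirsch extensions in the style of Triantafillou \cite{Tr}. At each stage the generator space $V_{n+1}$ inherits a $\k\Phi$-module structure from an equivariant subquotient of cohomology, the differential $V_{n+1}\to \M(n)$ is made equivariant by picking a $\Phi$-invariant splitting of an equivariant short exact sequence, and $\rho_Y$ is extended equivariantly by semisimplicity. Forgetting the $\Phi$-action, $\M$ is an ordinary Sullivan minimal model of $\apl(Y)\otimes\k$.

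Second, I would inductively build a $\Phi$-equivariant $\cdga$ map $f\colon \M\to A$ realizing the prescribed equivariant cohomology isomorphism. Given an equivariant $f_n\colon \M(n)\to A$, the standard lifting property of minimal $\cdga$s applied to the hypothesized non-equivariant zigzag $\apl(Y)\otimes\k\simeq A$ produces an extension $f_{n+1}\colon \M(n+1)\to A$ of $f_n$ realizing the prescribed cohomology map, though not necessarily equivariantly. The obstruction to an equivariant choice is a class in the non-abelian $H^1$ of $\Phi$ acting on the torsor of suitable extensions of $f_n$; unwound, it reduces to ordinary $H^1(\Phi,-)$ of an appropriate $\k\Phi$-submodule of $\Hom_{\k}(V_{n+1},A)$, which vanishes by Maschke. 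Here the $\Phi$-equivariance of the given cohomology isomorphism is essential: without it, one could still find an equivariant extension, but potentially realizing the wrong cohomology map. Passing to the colimit yields an equivariant quasi-isomorphism $f\colon \M\to A$, hence an equivariant zigzag $\apl(Y)\otimes\k \xleftarrow{\rho_Y}\M\xrightarrow{f} A$ in $\Phi$-$\cdga$, and Proposition \ref{prop:equiv} gives $\apl(X)\otimes\k=(\apl(Y)\otimes\k)^{\Phi}\simeq A^{\Phi}$ in $\cdga$, as desired.

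The principal difficulty is to identify the correct $\k\Phi$-module in which the obstruction to equivariant lifting (together with the cohomology-map constraint) lives. Naively one would like it to be $\Hom_{\k}(V_{n+1},B^{*}(A))$, so that Maschke handles everything, but the set of extensions inducing a prescribed cohomology map is not parametrized by coboundary-valued corrections on $V_{n+1}$ alone---modifying on generators propagates through the multiplicative structure to nontrivial changes on products $v_1\cdots v_k$. A careful analysis using the path object $\bwedge(t,dt)$ together with the equivariant Sullivan minimal model machinery is needed to place the obstruction into a $\k\Phi$-module amenable to Maschke. Secondary points include compatibility of the inductive corrections at the colimit and handling infinite-dimensional $V_{n+1}$ by decomposing into finite-dimensional $\k\Phi$-submodules before averaging.
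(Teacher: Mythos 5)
You are attempting to prove a statement that the paper explicitly labels a \emph{conjecture}: no proof is given in the text, and the surrounding discussion only records that the conjecture is known in the formal case (Proposition \ref{prop:equivformal}). So there is no argument of the paper to compare yours against; the question is whether your attempt actually closes an open problem, and it does not.

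Your overall strategy --- build a $\Phi$-equivariant minimal model $\rho_Y\colon \M\to \apl(Y)\otimes_{\Q}\k$, promote the non-equivariant comparison to an equivariant quasi-isomorphism $f\colon \M\to A$, and then invoke Proposition \ref{prop:equiv} --- is surely the intended line of attack, and your first step is sound: for a strict action of a finite $\Phi$ with $\ch(\k)=0$, semisimplicity of $\k\Phi$ lets the Hirsch-extension construction of $\M$ be carried out equivariantly. The gap is in the second step, and you concede it yourself. What the hypothesis gives you is a quasi-isomorphism $g\colon \M\to A$ that is $\Phi$-equivariant only \emph{up to homotopy}: each $\gamma^{-1}\!\circ g\circ \gamma$ is homotopic to $g$ because both induce the same (equivariant) isomorphism on cohomology and $\M$ is minimal. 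Rigidifying this to a strictly equivariant map is not a single application of $H^1(\Phi,M)=0$ for a $\k\Phi$-module $M$. The set of \emph{all} extensions of an equivariant $f_n$ over $V_{n+1}$ is indeed a torsor over $\Hom_{\k}(V_{n+1},Z^{*}(A))$ and Maschke gives a fixed point there; but an arbitrary equivariant extension need not remain in the homotopy class of $g$, and if it leaves that class the limiting map need not be a quasi-isomorphism. The set you actually need --- extensions lying in the homotopy class of $g$ rel $\M(n)$ --- is the quotient of that torsor by the action of the group of homotopies, and vanishing of $H^1(\Phi,-)$ upstairs does not produce a $\Phi$-fixed point in such a quotient. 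One would need to choose the witnessing homotopies themselves equivariantly and coherently across all stages, which is a non-abelian (pro-unipotent gauge group) descent problem, not the abelian one you cite; you acknowledge that you have not identified the coefficient object, so the Maschke step is never actually performed. Until that obstruction analysis is carried out (and shown to converge over the infinitely many stages of the minimal model), the induction does not close, and the statement remains a conjecture.
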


\subsection{On the Betti numbers of minimal models}
\label{subsec:infobs}
We conclude this section with an obstruction to the existence of a $q$-finite 
$\cdga$ model $A$ for a space $X$, an obstruction expressed 
in terms of Betti numbers of the $q$-minimal model $\M_q(X)$ 
associated to $X$.

\begin{theorem}[\cite{PS-jlms}]
\label{thm:infobsq}
Let $X$ be a connected CW-space, and 
assume that one of the following conditions is satisfied.
\begin{enumerate}
\item  \label{c1}
 $X$ is $(q+1)$-finite. 
 
\item  \label{c2}
$\apl (X)\otimes_{\Q} \k\simeq_q A$, where $A$ a $q$-finite 
$\cdga$ over $\k$; or,
\end{enumerate}
Then $b_i(\M_q(X))< \infty$, for all $i\le q+1$.
\end{theorem}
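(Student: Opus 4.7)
The plan is to compare $\M_q(X)$ directly with the target of its structural morphism and bound the first few Betti numbers by those of that target. The core observation is that the map $\rho_q \colon \M_q(X) \to \apl(X)$ appearing in the definition of the $q$-minimal model is, by construction, a $q$-quasi-isomorphism, hence induces isomorphisms on cohomology in degrees $\le q$ and a monomorphism in degree $q+1$. In particular,
\[
b_i(\M_q(X)) \;\le\; b_i(\apl(X)) \;=\; b_i(X,\Q), \qquad \text{for all } i \le q+1.
\]

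Under hypothesis (1), the $(q+1)$-finiteness of $X$ forces its cellular $\Q$-cochain complex to be finitely generated in degrees $\le q+1$. Hence $b_i(X,\Q)<\infty$ in that range, and the inequality above finishes this case.

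Under hypothesis (2), I would first invoke Lemma \ref{lem:truncate} to replace the hypothesized $q$-finite model $A$ by its truncation $A[q]$, which is finite-dimensional and satisfies $A \simeq_q A[q]$. Combined with the assumption $\apl(X)\otimes_\Q \k \simeq_q A$, this gives $\apl(X)\otimes_\Q \k \simeq_q A[q]$. Since the $q$-minimal model is an invariant of $q$-equivalence --- any $q$-quasi-isomorphism $\varphi\colon B\to B'$ composed with the structural map $\rho_q^B\colon \M_q(B)\to B$ yields a $q$-quasi-isomorphism $\M_q(B)\to B'$, and the uniqueness clause of Theorem \ref{thm:mm} identifies this with $\M_q(B')$ --- the $q$-minimal model of $\apl(X)\otimes_\Q \k$, which is canonically $\M_q(X)\otimes_\Q \k$, is isomorphic to $\M_q(A[q])$. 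The structural morphism $\M_q(A[q])\to A[q]$ is again a $q$-quasi-isomorphism, so
\[
b_i(\M_q(X)) \;=\; b_i(\M_q(X)\otimes_\Q \k) \;=\; b_i(\M_q(A[q])) \;\le\; b_i(A[q])
\]
for all $i\le q+1$, and the right-hand side is finite because $A[q]$ is finite-dimensional over $\k$.

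The main technical point to verify is the stability of the $q$-minimal model under $q$-equivalence, together with its compatibility with scalar extension of the ground field; both follow formally from the uniqueness assertion in Theorem \ref{thm:mm} and from the fact that free $\cdga$s commute with base change. Once these formal properties are in hand, the proof reduces in each case to the single inequality $b_i(\M_q(X))\le b_i(\text{target})$ coming from the defining property of a $q$-quasi-isomorphism, applied either directly to $\apl(X)$ or to the finite-dimensional truncation $A[q]$.
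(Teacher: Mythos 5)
Your proposal is correct and follows essentially the same route as the paper: case (1) is the immediate consequence of the structural $q$-quasi-isomorphism $\rho_q\colon \M_q(X)\to \apl(X)$, and case (2) identifies $\M_q(X)$ (after extension of scalars) with the $q$-minimal model of $A$ and then invokes the finite-dimensional truncation $A[q]$ of Lemma \ref{lem:truncate}, which is exactly the content of Proposition \ref{prop:betti-minimal} that the paper cites. You have merely unpacked that proposition explicitly, and your attention to the point that $b_{q+1}(A)$ itself need not be finite---so that the truncation is genuinely needed in degree $q+1$---is exactly the reason the paper routes through $A[q]$.
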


\begin{proof}
Recall from \eqref{eq:q-model} that we have a $q$-quasi-isomorphism 
$\M_q(X)\to \apl(X)$. In case \ref{c1}, the claim follows at once. 
In case \ref{c2}, the discussion in Section \ref{subsec:min-mod} 
shows that $\M_q(X)$ is also a $q$-minimal model for $A$; 
thus, the claim follows from Proposition \ref{prop:betti-minimal}.
\end{proof}

\section{Algebraic models for groups}
\label{sect:algmod-groups}

\subsection{Malcev Lie algebras and $1$-minimal models}
\label{subsec:malcev-min}
Let $G$ be a group, and let $\M_1(G)$ be its $1$-minimal model, 
as described in Section \ref{sect:malcev-lie}.  By definition, this is 
a minimal $\cdga$ over $\Q$, generated in degree $1$. 
If $G=\pi_1(X)$ is the fundamental group of a path-connected 
space $X$, then any classifying map $X\to K(G,1)$ induces an isomorphism 
between the corresponding $1$-minimal models, $\M_1(X)\cong \M_1(G)$.
Consequently, the existence of a $1$-finite $1$-model for a path-connected 
space $X$ is equivalent to the existence a $1$-finite $1$-model for its fundamental 
group, $G=\pi_1(X)$. 

Assume now that $G$ is a finitely generated group. 
There is then a natural duality between the Malcev Lie 
algebra $\m (G)$, endowed with the inverse limit filtration 
given by \eqref{eq:malcevLie} and the $1$-minimal model $\M_1(G)$, 
endowed with the increasing filtration from \eqref{eq:filtration-minimal}. 
Recall that the latter filtration, $\{\M(i)\}_{i\ge 0}$, starts with $\M(0)=\Q$. 
Since $G$ is finitely generated, the vector space $V_1\coloneqq H^1(G,\Q)$ 
is finite-dimensional. Each sub-$\cdga$ $\M(i)$ is then a Hirsch extension 
of the form $\M(i-1)\otimes \bwedge V_{i}$, where 
$V_i =\ker\big(H^2(\M(i-1))\to H^2(\M_1(G))\big)$ is again finite-dimensional. 
Let $\fL(G)=\varprojlim_i \fL_i(G)$ be the 
pronilpotent Lie algebra functorially associated to the $1$-minimal 
model $\M_1(G)$ in the manner described in 
Section \ref{subsec:dual-min}. We then have the following 
basic correspondence between the aforementioned Lie algebras.

\begin{theorem}[\cite{Sullivan77}, \cite{CP}, \cite{GM13}]
\label{thm:sullivan}
There is a natural isomorphism between the towers of nilpotent 
Lie algebras $\{\m(G/\gamma_i(G))\}_{i\ge 0}$ and $\{\fL_{i}(G)\}_{i\ge 0}$, 
which gives rise to a functorial isomorphism of complete, filtered Lie 
algebras, $\m(G)\cong \fL(G)$.
\end{theorem}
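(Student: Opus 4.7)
The plan is to proceed by induction on $i\ge 1$, constructing natural, filtration-preserving isomorphisms
$\psi_i\colon \mathfrak{Lie}(G/\gamma_{i+1}(G)\otimes \Q) \isom \fL_{i}(G)$
that are compatible with the canonical projections on both towers, and then pass to the inverse limit. The crucial ingredient is the adjoint correspondence from Section~\ref{subsec:dual-min}: each sub-$\cdga$ $\M(i)\subset \M_1(G)$ is identified with the Chevalley--Eilenberg complex $\cC(\fL_i(G))$, so that $H^{*}(\M(i))\cong H^{*}(\fL_i(G),\Q)$. In parallel, via the Malcev correspondence between finite-dimensional nilpotent $\Q$-Lie algebras and uniquely divisible nilpotent $\Q$-groups, central extensions of Lie algebras correspond to central extensions of groups, both classified by degree-$2$ cohomology.

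For the base case $i=1$, we have $V_1 = Z^1(\M_1(G)) = H^1(\M_1(G)) \cong H^1(G,\Q)$ (finite-dimensional since $G$ is finitely generated), so $\fL_1(G) = V_1^{\vee}$ is the abelian Lie algebra dual to $H^1(G,\Q)$, which is canonically $\mathfrak{Lie}(G_{\ab}\otimes \Q) = H_1(G,\Q)$. For the inductive step, suppose $\psi_{i-1}$ has been defined. Compare the two central extensions
\begin{equation*}
0 \to \gamma_i(G)/\gamma_{i+1}(G)\otimes \Q \to \mathfrak{Lie}(G/\gamma_{i+1}(G)\otimes \Q)
\to \mathfrak{Lie}(G/\gamma_i(G)\otimes \Q)\to 0
\end{equation*}
and
\begin{equation*}
0 \to V_i^{\vee} \to \fL_i(G) \to \fL_{i-1}(G)\to 0,
\end{equation*}
which are classified respectively by cohomology classes $\alpha_i$ in $H^2(\mathfrak{Lie}(G/\gamma_i(G)\otimes\Q),\gamma_i(G)/\gamma_{i+1}(G)\otimes\Q)$ and $\beta_i$ in $H^2(\fL_{i-1}(G), V_i^{\vee})$. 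The class $\beta_i$ is precisely the cohomology class of the defining cocycle $\tau_i\colon V_i\to Z^2(\M(i-1))$ of the Hirsch extension $\M(i)=\M(i-1)\otimes_{\tau_i}\bwedge V_i$, viewed through the identification $H^2(\M(i-1))\cong H^2(\fL_{i-1}(G),\Q)$.

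The heart of the proof, and the main obstacle, is to identify the two central kernels and to match $\beta_i$ with $\psi_{i-1}^{*}(\alpha_i)$. For the kernels, one uses the defining property $V_i = \ker\bigl(H^2(\M(i-1)) \to H^2(\M_1(G))\bigr)$ together with the fact that $H^2(\M_1(G))\inj H^2(G,\Q)$ (which holds because $\M_1(G)$ is a $1$-model for $K(G,1)$); the Hochschild--Serre spectral sequence of the nilpotent extension $\gamma_i(G)\to G\to G/\gamma_i(G)$, after rationalization, identifies the dual of this kernel with $\gamma_i(G)/\gamma_{i+1}(G)\otimes\Q$. To match the classifying classes, one checks that the transgression in this spectral sequence agrees with the connecting map produced by the differential $d\colon V_i\to \M(i-1)^2$ under Chevalley--Eilenberg duality; this is where one needs the compatibility of $\psi_{i-1}$ with cohomology, supplied inductively.

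Once $\psi_i$ is constructed, its naturality in $G$ is automatic from the functoriality of both the Malcev completion and the $1$-minimal model. Because each $\psi_i$ identifies the $i$-th steps of the two towers, which are nilpotent of class $\le i$, the isomorphisms are compatible with the canonical projections and the respective filtrations $\{\widehat{\gamma}_{j+1}\}$ and $\{\widehat{\F}_{j+1}\}$. Passing to the inverse limit over $i$ and using that both $\m(G)$ and $\fL(G)$ are complete and separated with respect to these filtrations (by construction as inverse limits) yields the desired functorial isomorphism $\m(G)\cong \fL(G)$ of complete, filtered Lie algebras.
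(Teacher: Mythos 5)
The paper offers no proof of Theorem~\ref{thm:sullivan}: it is quoted from \cite{Sullivan77}, \cite{CP}, \cite{GM13}, so there is no in-text argument to compare against. Your proposal correctly reconstructs, in outline, the classical argument from those sources: induct up the two towers, viewing each stage on both sides as a central extension, identify the kernels, and match the classifying classes. The base case is right, the two short exact sequences are the right objects to compare, and you correctly locate the content of the proof in (a) the identification of $V_i=\ker\big(H^2(\M(i-1))\to H^2(\M_1(G))\big)$ with $\big(\gamma_i(G)/\gamma_{i+1}(G)\otimes\Q\big)^{\vee}$ via the five-term (Stallings) exact sequence, and (b) the matching of the extension classes via the transgression.

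Two points that you defer are where essentially all the work lives, and a complete write-up must make them part of the induction rather than an afterthought. First, to even compare $H^2(\M(i-1))=H^2(\fL_{i-1},\Q)$ with $H^2(G/\gamma_i(G),\Q)$, and to compare the Lie-algebra $H^2$ classifying $\beta_i$ with the group $H^2$ in which the five-term sequence and $\alpha_i$ live, you need the isomorphism between group cohomology of the finitely generated nilpotent group $G/\gamma_i(G)$ (rationally) and Chevalley--Eilenberg cohomology of its Malcev Lie algebra, at least in degrees $\le 2$, compatibly with inflation to $H^{*}(G,\Q)$ and with the structural map $\rho\colon \M_1(G)\to \apl(K(G,1))$. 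This is a van Est/Nomizu-type statement; it can be carried along as part of a strengthened inductive hypothesis (as in \cite{CP}, \cite{GM13}), but it is not free, and your phrase ``supplied inductively'' is doing a lot of work there. Second, the equality $\psi_{i-1}^{*}(\alpha_i)=\beta_i$ is not a consequence of the kernels being abstractly isomorphic: both classes are the inclusion of the respective kernel into $H^2$, and one must verify that the transgression isomorphism used to identify the kernels intertwines these two inclusions; this is exactly where the defining property of $[\tau_i]$ as the inclusion $V_i\inj H^2(\M(i-1))$ and the compatibility of $\rho$ with the nilpotent quotient maps enter. Since you flag both issues explicitly and the skeleton is the standard one, I regard the proposal as a correct sketch rather than as containing a gap in strategy; it would become a proof once the cohomological identifications are built into the inductive statement.
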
  

The functorial isomorphism $\m(G) \cong \fL(G)$, together with the 
dualization correspondence $\M_1(G) \leftrightsquigarrow \fL(G)$ 
define adjoint functors between the category of Malcev Lie algebras 
of finitely generated groups and 
the category of $1$-minimal models of finitely generated groups.
Using this isomorphism and the one from \eqref{eq:quillen-iso}, 
we may identify $\gr_n(G;\Q)$ with $(V_n)^{\vee}$ for all $n\ge 1$.

\subsection{Groups with $1$-finite $1$-models}
\label{subsec:1-finite-gps}

The next theorem provides an effective way of computing the 
Malcev Lie algebra of a group $G$, under a certain finiteness assumption. 

\begin{theorem}[\cite{PS-jlms}]
\label{thm:malholo}
Let $G$ be a finitely generated group that admits a 
$1$-finite $1$-model $A$. Then the Malcev Lie algebra 
$\m (G)$ is isomorphic to the $\lcs$ completion of the holonomy Lie 
algebra $\h (A)$.
\end{theorem}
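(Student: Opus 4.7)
The plan is to chain together three results already established in the excerpt: Theorem \ref{thm:sullivan}, which identifies $\m(G)$ with the pronilpotent Lie algebra $\fL(\M_1(G))$ dual to the $1$-minimal model of $G$; Theorem \ref{thm:nat1model}, which identifies the $1$-minimal model of a $1$-finite $\cdga$ $A$ with $\wC(\h(A))$; and the adjoint correspondence from Section \ref{subsec:dual-min} between $1$-minimal $\cdga$s generated in degree $1$ and complete filtered Lie algebras.

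First I would observe that, since $A$ is a $1$-finite $1$-model for $G$, we have $A \simeq_1 \apl(K(G,1))$ as $\cdga$s over $\Q$ (or the chosen coefficient field). As discussed in Section \ref{subsec:min-mod}, the $q$-minimal model is an invariant of $q$-equivalence, so this gives $\M_1(A) \cong \M_1(G)$. Next, Theorem \ref{thm:nat1model} asserts that the classifying map $\wC(\h(A)) \to A$ is a $1$-minimal model for $A$; by the uniqueness clause of Theorem \ref{thm:mm} we obtain an isomorphism $\M_1(A) \cong \wC(\h(A))$. Combining these identifications yields $\M_1(G) \cong \wC(\h(A))$ as $1$-minimal $\cdga$s.

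Now apply the functor $\fL$. On the left, Theorem \ref{thm:sullivan} provides $\fL(\M_1(G)) \cong \m(G)$ as complete, filtered Lie algebras. On the right, unravelling the definition $\wC(\h(A)) = \varinjlim_n \cC(\h(A)/\gamma_n(\h(A)))$ and recalling that for a finite-dimensional nilpotent Lie algebra $\mathfrak n$ one has $\fL(\cC(\mathfrak n)) = \mathfrak n$ (since $\cC(\mathfrak n)$ is precisely the Hirsch tower recovering $\mathfrak n$ by duality, via \eqref{eq:duality}), passing to the inverse limit of the nilpotent quotients $\h(A)/\gamma_n(\h(A))$ gives $\fL(\wC(\h(A))) \cong \widehat{\h(A)}$, equipped with its LCS-completion filtration. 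Splicing these together produces the desired filtered isomorphism $\m(G) \cong \widehat{\h(A)}$.

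The main technical point, and what I expect to be the principal obstacle, is the verification that the identifications preserve filtrations: the canonical filtration on $\M_1(A)$ by the sub-$\cdga$s $A(i)$ must correspond, under the isomorphism $\M_1(A) \cong \wC(\h(A))$ supplied by Theorem \ref{thm:nat1model}, to the filtration coming from the nilpotent quotients $\h(A)/\gamma_{i+1}(\h(A))$; dually, this ensures that the inverse-limit filtration on $\fL(\wC(\h(A)))$ matches the LCS-completion filtration on $\widehat{\h(A)}$, so that the final isomorphism is not merely one of abstract Lie algebras but of complete filtered Lie algebras. Most of the heavy lifting—the construction of flat connections, the classifying $\cdga$ map, and the comparison with Hirsch extensions—has already been carried out in the referenced results, so once filtration compatibility is checked the proof reduces to a clean concatenation of isomorphisms.
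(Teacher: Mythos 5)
Your proposal is correct and follows essentially the same route as the paper's proof: uniqueness of $1$-minimal models gives $\M_1(G)\cong\M_1(A)$, Theorem \ref{thm:nat1model} gives $\M_1(A)\cong\wC(\h(A))$, and dualizing (Theorem \ref{thm:sullivan} on one side, the identification of the dual of the canonical filtration of $\wC(\h(A))$ with the tower of nilpotent quotients $\h(A)/\gamma_i(\h(A))$ on the other) yields $\m(G)\cong\widehat{\h}(A)$. The "main technical point" you flag is precisely the step the paper delegates to \cite[Corollary 5.7]{PS-jlms}, so your outline matches the published argument.
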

 
\begin{proof} By our hypothesis and by the uniqueness 
of $1$-minimal models, we have an isomorphism $\M_1(G)\cong \M_1(A)$. 
By construction, the Lie algebra $\m (G)$ is filtered isomorphic to the 
inverse limit of a tower of central extensions of finite-dimensional 
nilpotent Lie algebras. By Theorem \ref{thm:sullivan}, the terms 
$\m(G/\gamma_i(G))$ of this tower are obtained by dualizing the 
canonical filtration of $\M_1(G)$. 

On the other hand, by 
Theorem \ref{thm:nat1model}, the $\cdga$ $\M_1(A)$ is isomorphic 
to $\wC (\h(A))$, the completion of the Chevalley--Eilenberg cochain functor 
applied to $\h(A)$. Furthermore, it is shown in \cite[Corollary 5.7]{PS-jlms} that
the dual of the canonical filtration of $\wC (\h(A))$ is a tower of central 
extensions of finite-dimensional Lie algebras, 
whose terms are the nilpotent quotients $\h(A)/\gamma_i(\h(A))$. 
Putting all these facts together yields the desired isomorphism, 
$\m(G)\cong\widehat{\h}(A)$. 
\end{proof}

As an application, we have the following result, 
which gives a characterization of groups $G$ having a $1$-finite 
$1$-model in terms of their Malcev Lie algebras.

\begin{theorem}[\cite{PS-jlms}]
\label{thm:psobs}
A finitely generated group $G$ admits a $1$-finite $1$-model if and 
only if the Malcev Lie algebra $\m(G)$ is the lower central series 
completion of a finitely presented Lie algebra over $\Q$.
\end{theorem}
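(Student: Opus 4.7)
The plan is to split the proof into two directions, with Theorem \ref{thm:malholo} as the key ingredient in both.

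\textbf{Forward direction.} If $G$ admits a $1$-finite $1$-model $A$, then by Theorem \ref{thm:malholo} we have $\m(G) \cong \widehat{\h(A)}$, and it remains only to observe that $\h(A)$ is finitely presented. Since $\dim_{\Q} A^1 < \infty$, the free Lie algebra $\Lie(A_1)$ is finitely generated, and the defining ideal of $\h(A)$ is generated by $\im(\partial_A)$, which is contained in the finite-dimensional subspace $A_1 \oplus (A_1 \wedge A_1)$. Choosing a finite basis of $\im(\partial_A)$ gives the required finite presentation.

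\textbf{Converse direction.} Given $\m(G) \cong \widehat{\fL}$ with $\fL$ finitely presented, my plan is to construct a $1$-finite $\cdga$ $A$ with $\h(A) \cong \fL$ and then deduce that $A$ is a $1$-model for $G$ using Theorem \ref{thm:nat1model}. Start from a finite presentation $\fL = \Lie(W)/\langle r_1,\dots,r_m\rangle$ with $\dim_{\Q}W<\infty$. The first step is to rewrite this presentation, via Tietze-type moves, so that each relator lies in the degree-$1$ plus degree-$2$ part of a free Lie algebra on finitely many generators: whenever a relator contains a sub-bracket $[b,c]$ of bracket-depth $\ge 2$, one introduces a new generator $y$ together with the new relator $y-[b,c]$, and substitutes $y$ for $[b,c]$ in the original relator, thereby reducing its depth. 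After finitely many such moves, one obtains an isomorphic presentation $\fL \cong \Lie(W')/\langle \tilde r_1,\dots,\tilde r_{m'}\rangle$ with $\tilde r_j=\alpha_j+\beta_j$, where $\alpha_j\in W'$ and $\beta_j\in W'\wedge W'$.

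The second step is to build a $\cdga$ $A$ concentrated in degrees $0, 1, 2$ by setting $A^0=\Q$, $A^1=(W')^{\vee}$, and letting $A^2$ be the $\Q$-dual of the span of the $\tilde r_j$; define $d\colon A^1\to A^2$ and $\mu\colon A^1\wedge A^1\to A^2$ as the transposes of the maps $e_j\mapsto \alpha_j$ and $e_j\mapsto \beta_j$. All $\cdga$ axioms hold trivially because everything vanishes in degrees $\ge 3$. By construction, $\partial_A=d^{\vee}+\mu^{\vee}$ surjects onto the span of the $\tilde r_j$, so $\h(A)\cong \fL$; since $A$ is in fact finite-dimensional, it is certainly $1$-finite.

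For the final step, Theorem \ref{thm:nat1model} yields $\M_1(A)\cong \wC(\h(A))=\wC(\fL)$. Since the canonical map $\fL\to \widehat{\fL}$ induces isomorphisms on all nilpotent quotients, $\wC(\fL)=\wC(\widehat{\fL})=\wC(\m(G))$, and by the adjoint correspondence of Section \ref{subsec:dual-min} combined with Theorem \ref{thm:sullivan} this last $\cdga$ is $\M_1(G)$. An isomorphism of $1$-minimal models then supplies a zig-zag of $1$-quasi-isomorphisms $A\leftarrow \M_1(A)\isom \M_1(G)\to \apl(K(G,1))$, exhibiting $A$ as a $1$-finite $1$-model for $G$. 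The main obstacle I anticipate is justifying the Tietze reduction carefully: one must verify that each move strictly decreases a suitable depth measure on the relators and preserves the isomorphism type of $\fL$, so that the process terminates in finitely many steps at a presentation of the required shape.
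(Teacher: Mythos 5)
Your proof is correct, and it is complete in a place where the survey is not. The forward implication is exactly what the text records after the statement: Theorem \ref{thm:malholo} gives $\m(G)\cong\widehat{\h(A)}$, and $\h(A)$ is finitely presented because its defining ideal is generated by $\im(\partial_A)$, which lies in the finite-dimensional space $A_1\oplus(A_1\wedge A_1)$ (a point already noted in Section \ref{subsec:holo-dga}). The converse is delegated entirely to \cite{PS-jlms}, so your construction is doing real work there, and it goes through: the Tietze moves are legitimate for Lie algebra presentations (adjoining a generator $y$ with relator $y-[b,c]$ and retracting $y\mapsto[b,c]$ identifies the two presented algebras, and each move strictly lowers the top bracket-degree of some monomial in a relator, so the reduction terminates with relators in $W'\oplus\Lie_2(W')$); the truncated $\cdga$ with $A^{\ge 3}=0$ satisfies all axioms for degree reasons, and $\partial_A$ is then just the inclusion of $\mathrm{span}(\tilde r_1,\dots,\tilde r_{m'})$ into $W'\oplus(W'\wedge W')$, so $\h(A)\cong\fL$; and Theorems \ref{thm:nat1model} and \ref{thm:sullivan} give $\M_1(A)\cong\wC(\fL)\cong\M_1(G)$, whence the zig-zag of $1$-quasi-isomorphisms you describe. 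Two small points to make explicit in a write-up: the hypothesis should be read as a \emph{filtered} isomorphism $\m(G)\cong\widehat{\fL}$, since that is what yields the isomorphism of towers of nilpotent quotients needed to identify $\wC(\fL)$ with $\wC(\m(G))$ (this is harmless, as the canonical filtration on a Malcev Lie algebra is intrinsic, being the closure of its LCS filtration); and $\wC(\widehat{\fL})$ should be interpreted via the filtration quotients $\widehat{\fL}/\widehat{\gamma}_n(\widehat{\fL})$, which coincide with $\fL/\gamma_n(\fL)$, rather than via abstract LCS quotients of the completion.
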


The above condition means that $\m(G)=\widehat{L}$, for some finitely presented 
Lie algebra $L$ over $\Q$, where $\widehat{L}=\varprojlim_{n} L/\gamma_n (L)$. 
By Theorem \ref{thm:malholo}, if $A$ is a $1$-finite $1$-model for $G$, 
we may take $L$ to be the holonomy Lie algebra $\h(A)$. 

Finally, here is a finiteness obstruction for finitely generated groups, which 
follows at once from Theorem \ref{thm:infobsq}.

\begin{corollary}[\cite{PS-jlms}]
\label{cor:infobs1}
Let $G$ be a finitely generated group. Assume that either $G$ is finitely presented 
or $G$ admits a $1$-finite $1$-model. Then $b_2(\M_1(G))< \infty$.
\end{corollary}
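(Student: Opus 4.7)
My plan is to apply Theorem \ref{thm:infobsq} with $q=1$ to a classifying space $X = K(G,1)$, and then invoke the identification $\M_1(X) \cong \M_1(G)$ recorded in Section \ref{subsec:malcev-min}. It suffices to check that, under either hypothesis of the corollary, one of the two conditions in that theorem is satisfied.

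First, suppose $G$ is finitely presented. Starting from a finite presentation with $n$ generators and $m$ relators, one builds the standard presentation $2$-complex $Y$ with a single $0$-cell, $n$ $1$-cells and $m$ $2$-cells, so that $\pi_1(Y) = G$. Attaching cells in dimensions $\ge 3$ to kill the higher homotopy groups yields a classifying space $X = K(G,1)$ with finite $2$-skeleton; in particular, $X$ is $2$-finite, which is condition \ref{c1} of Theorem \ref{thm:infobsq} with $q = 1$. On the other hand, if $G$ admits a $1$-finite $1$-model $A$, then, by the discussion in Section \ref{subsec:malcev-min}, the $\cdga$ $A$ is also a $1$-finite $1$-model for $X = K(G,1)$; that is, $\apl(X) \otimes_{\Q} \k \simeq_1 A$ with $A$ a $1$-finite $\cdga$. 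This is condition \ref{c2} of Theorem \ref{thm:infobsq} with $q = 1$.

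In either case, Theorem \ref{thm:infobsq} yields $b_i(\M_1(X)) < \infty$ for all $i \le 2$; taking $i = 2$ and using the natural isomorphism $\M_1(X) \cong \M_1(G)$ finishes the argument. There is no genuine obstacle here: the corollary is designed to be a direct specialization of the preceding theorem, and all that the proof does is perform the two standard reductions—passing from the group $G$ to its classifying space $K(G,1)$, and recognizing finite presentability as $2$-finiteness of that classifying space.
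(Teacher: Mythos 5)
Your proposal is correct and is exactly the argument the paper intends: the corollary is stated as following ``at once'' from Theorem \ref{thm:infobsq}, and your two reductions --- realizing a finitely presented $G$ by a $K(G,1)$ with finite $2$-skeleton (condition \ref{c1} with $q=1$), and transferring a $1$-finite $1$-model from $G$ to $K(G,1)$ via $\M_1(G)=\M_1(K(G,1))$ (condition \ref{c2}) --- are precisely the standard steps being elided. No gaps; this matches the paper's route.
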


\subsection{Filtered formal groups}
\label{subsec:filtered-formal} 

Recall from Section \ref{subsec:tff} that a finitely generated group $G$ 
is said to be filtered formal if its Malcev Lie algebra $\m(G)$ is isomorphic to 
the degree completion of its associated graded Lie algebra. 
The next result connects certain finiteness properties of algebraic 
objects associated to such a group $G$. 

\begin{proposition}[\cite{PS-jlms}]
\label{prop:filtf}
Let $G$ be a finitely generated, filtered formal group, so that 
$\m (G) \cong \widehat{L}$, where $L=\L/J$ is a graded Lie algebra 
over $\Q$ generated in degree $1$ and $J$ is an ideal included in $\L^{\ge 2}$.  
If $b_2(\M_1(G))< \infty$, then $\dim_{\Q} (J/[\L, J] )< \infty$.
\end{proposition}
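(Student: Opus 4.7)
The plan is to prove the stronger equality
\[
b_2(\M_1(G))=\dim_{\Q}(J/[\L,J])
\]
by identifying both sides with the dimension of $H_2(L,\Q)$, the second Lie algebra homology of the graded Lie algebra $L$. First, since $\L=\Lie(V)$ is free on $V=L_1\cong H_1(G,\Q)$ (finite-dimensional because $G$ is finitely generated) and $J\subseteq\L^{\ge 2}=[\L,\L]$, the Hopf-type formula for Lie algebra homology gives a graded identification $H_2(L,\Q)\cong J/[\L,J]$. Moreover, Witt's formula shows that each $\L_n$, hence each $L_n$ and each graded piece of $J/[\L,J]$, is finite-dimensional over $\Q$, so $\dim_{\Q}(J/[\L,J])<\infty$ is equivalent to $\dim_{\Q}H_2(L,\Q)<\infty$.

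Next, I would identify $\M_1(G)$ explicitly as a Chevalley--Eilenberg complex. Filtered formality gives $\m(G)\cong\widehat{L}$ as filtered Lie algebras, so Theorem~\ref{thm:sullivan} and the adjoint correspondence of Section~\ref{subsec:dual-min} show that the canonical filtration $\{\M(i)\}_{i\ge 0}$ of $\M_1(G)$ is dual to the tower of nilpotent quotients $\{L/\gamma_{i+1}L\}_{i\ge 0}$. A direct inspection of the Hirsch extensions $\M(i)=\M(i-1)\otimes\bwedge V_i$ then identifies each $V_i$ with the graded dual $L_i^{\vee}$ and the restricted differential $d|_{V_i}$ with the dual of the bracket map $(L\wedge L)_i\to L_i$. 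Passing to the union yields $\M_1(G)\cong\cC(L):=(\bwedge L^{\vee},d)$ as $\cdga$s, where $L^{\vee}$ is the graded dual and $d$ is the Chevalley--Eilenberg differential; in particular $H^2(\M_1(G))\cong H^2(L,\Q)$.

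Finally, since each $L_n$ is finite-dimensional, the complex $\cC(L)$ splits into finite-dimensional subcomplexes indexed by internal weight, which yields the duality $H^2(L,\Q)_w\cong H_2(L,\Q)_w^{\vee}$ in every weight $w$. Summing over $w$ gives $\dim_{\Q}H^2(\M_1(G))=\dim_{\Q}H_2(L,\Q)=\dim_{\Q}(J/[\L,J])$, so the hypothesis $b_2(\M_1(G))<\infty$ forces the desired finite-dimensionality. The main obstacle, I expect, lies in making the identification $\M_1(G)\cong\cC(L)$ of the second step completely rigorous: one has to verify that the direct limit of the Chevalley--Eilenberg complexes of the nilpotent quotients $L/\gamma_{i+1}L$ is exactly $\bwedge L^{\vee}$ with its graded-dual differential, which is where filtered formality is essential --- it replaces the \emph{a priori} completed pronilpotent structure on $\m(G)$ by an honest graded one, allowing the cohomology of $\M_1(G)$ to be computed directly from the graded Lie algebra $L$.
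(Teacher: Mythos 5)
Your argument is correct and is essentially the proof given in the cited source \cite{PS-jlms}, which this survey does not reproduce: one identifies $\M_1(G)$ with $\wC(L)$ via Theorem~\ref{thm:sullivan} and the adjoint correspondence of Section~\ref{subsec:dual-min}, splits it into finite-dimensional weight subcomplexes to get $H^2(\M_1(G))_w\cong (H_2(L,\Q)_w)^{\vee}$, and invokes the Lie-algebra Hopf formula $H_2(L,\Q)\cong J/[\L,J]$ (valid since $J\subseteq[\L,\L]$). The only point worth flagging is that the isomorphism $\m(G)\cong\widehat{L}$ must be taken as one of \emph{filtered} Lie algebras (which is what filtered-formality provides) for the tower of nilpotent quotients of $\m(G)$ to match $\{L/\gamma_{i+1}L\}$; with that understood, your stronger equality $b_2(\M_1(G))=\dim_{\Q}(J/[\L,J])$ holds.
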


Here is another characterization of filtered-formality, this time in 
terms of minimal models. 

\begin{theorem}[\cite{SW-forum}]
\label{thm:filtmin}
A finitely generated group $G$ is filtered-formal over $\Q$ if and only 
if the canonical $1$-minimal model $\mathcal{M}_1(G)$ is filtered-isomorphic 
to a $1$-minimal model $\mathcal{M}$ with positive Hirsch weights.
\end{theorem}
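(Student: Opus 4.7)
The plan is to transport the statement through the adjoint correspondence $A \leftrightsquigarrow \fL(A)$ between $1$-minimal $\cdga$s and complete, filtered, pronilpotent Lie algebras established in Section \ref{subsec:dual-min}. The two key supporting facts are Theorem \ref{thm:sullivan}, which identifies $\fL(\M_1(G))$ with $\m(G)$ as filtered Lie algebras, and Lemma \ref{lem:positivegraded}, which translates the geometric condition ``positive Hirsch weights on $\M$'' into the purely algebraic condition $\fL(\M) \cong \widehat{\gr}(\fL(\M))$. Under this dictionary, filtered $\cdga$ isomorphisms on one side correspond bijectively to filtered Lie algebra isomorphisms on the other.

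For the backward direction, suppose $\M_1(G) \cong \M$ as filtered $\cdga$s, with $\M$ carrying positive Hirsch weights. The adjoint correspondence produces a filtered Lie algebra isomorphism $\m(G) \cong \fL(\M_1(G)) \cong \fL(\M)$. By Lemma \ref{lem:positivegraded}, $\fL(\M) \cong \widehat{\h}$ where $\h = \gr(\fL(\M))$ is graded. Thus $\m(G)$ is filtered-isomorphic to the degree completion of a graded Lie algebra, and Proposition \ref{lem:filtiso}\eqref{filt1} concludes that $\m(G)$ is filtered-formal; that is, $G$ is filtered-formal over $\Q$.

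For the forward direction, assume $G$ is filtered-formal, so that $\m(G) \cong \widehat{\gr}(\m(G))$ as filtered Lie algebras, inducing the identity on associated gradeds. Quillen's identification \eqref{eq:quillen-iso} lets us write this as $\m(G) \cong \widehat{\h}$, where $\h \coloneqq \gr(G;\Q)$ is a graded Lie algebra generated in degree $1$ with finite-dimensional graded pieces (finiteness coming from the finite generation of $G$, hence of each $\gr_n(G;\Q)$). Applying the functor $\fL \leadsto A(\fL)$ from Section \ref{subsec:dual-min} to the tower $\{\h/\gamma_{i+1}(\h)\}_{i\ge 1}$ produces a $1$-minimal $\cdga$ $\M = A(\widehat{\h})$ with $\fL(\M) = \widehat{\h}$. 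Since tautologically $\widehat{\gr}(\widehat{\h}) = \widehat{\h}$, Lemma \ref{lem:positivegraded} shows $\M$ has positive Hirsch weights. Finally, the chain of filtered isomorphisms $\fL(\M) = \widehat{\h} \cong \m(G) \cong \fL(\M_1(G))$, pulled back through the adjoint correspondence, furnishes the desired filtered $\cdga$ isomorphism $\M_1(G) \cong \M$.

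The main technical point to watch is that the positive weights on $\M$ in the forward direction genuinely are the Hirsch weights, not merely some arbitrary grading; Examples \ref{ex:3step} and \ref{ex:Cornulier} in Section \ref{subsec:posWeights} warn that these notions can diverge, and that a $\cdga$ with positive weights whose dual Lie algebra fails to be filtered-formal does exist. This is handled automatically by the construction $\M = A(\widehat{\h})$, since this $\cdga$ is built Hirsch extension by Hirsch extension from the graded pieces of $\h$, so the defining weights on each $V_i$ coincide with the Hirsch weights by construction. With this compatibility in hand, verifying that all the filtered isomorphisms above respect the canonical filtrations is a routine unwinding of the definitions from Section \ref{subsec:dual-min}.
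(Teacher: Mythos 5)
Your argument is correct and is essentially the intended one: the survey states this theorem without proof (citing [SW-forum]), but the ingredients you assemble---the adjoint correspondence $A \leftrightsquigarrow \fL(A)$ of Section \ref{subsec:dual-min}, Theorem \ref{thm:sullivan}, Lemma \ref{lem:positivegraded}, and Proposition \ref{lem:filtiso}\eqref{filt1}---are exactly the dictionary the paper sets up for this purpose, and your use of $\h=\gr(G;\Q)$ via Quillen's isomorphism to build $\M=A(\widehat{\h})$ matches the original proof in [SW-forum]. Your closing remark correctly identifies the one genuine subtlety (positive weights versus positive \emph{Hirsch} weights) and correctly observes that the construction $A(\widehat{\h})$ resolves it by design.
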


The notion of filtered formality over an arbitrary field $\k$ of characteristic $0$ 
is defined analogously. It follows from Theorem \ref{thm:ffdescent} that $G$ 
is filtered-formal over $\k$ if and only it is filtered-formal over $\Q$. Another 
notable property of filtered formality is that it descends to maximal solvable 
quotients. The next theorem develops a theme started in \cite{PS-imrn04}.

\begin{theorem}[\cite{SW-forum}]
\label{thm:chenlieiso}
Let $G$ be a finitely generated group. 
For each $i\ge 2$, the quotient map $q_i\colon G\surj G/G^{(i)}$ induces a 
natural epimorphism of graded $\k$-Lie algebras, 
\begin{equation}
\label{eq:psi-map}
\begin{tikzcd}[column sep=22pt]
\Psi^{(i)}\colon \gr(G;\k)/\gr(G;\k)^{(i)} \ar[r, two heads]&  \gr(G/G^{(i)};\k).
\end{tikzcd}
\end{equation}
Moreover, if $G$ is filtered-formal, then $\Psi^{(i)}$ is an isomorphism
and the solvable quotient $G/G^{(i)}$ is filtered-formal.
\end{theorem}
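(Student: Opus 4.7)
The plan is first to construct $\Psi^{(i)}$ as a natural epimorphism for arbitrary $G$, and then to use filtered-formality of $G$ together with the compatibility of Malcev completion with quotient by the derived series to promote this to an isomorphism and to transfer filtered-formality to the solvable quotient $G/G^{(i)}$.

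\textbf{Construction of $\Psi^{(i)}$.} The quotient homomorphism $q_i \colon G \surj G/G^{(i)}$ respects lower central series, and so induces a surjection of graded Lie algebras $\gr(q_i)\colon \gr(G;\k) \surj \gr(G/G^{(i)};\k)$. I would verify by induction on $i$, using the $N$-series property $[\gamma_a(G),\gamma_b(G)] \subseteq \gamma_{a+b}(G)$, that every element of the homogeneous derived ideal $\gr(G;\k)^{(i)}$ is represented by classes of iterated group-theoretic commutators that lie inside $G^{(i)}$. Since $q_i$ annihilates $G^{(i)}$, the induced map $\gr(q_i)$ vanishes on $\gr(G;\k)^{(i)}$, and thus factors through the desired natural epimorphism $\Psi^{(i)}$.

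\textbf{Reducing to the rational case and identifying Malcev quotients.} For the moreover clause, Theorem \ref{thm:ffdescent} lets me assume $\k = \Q$, since both filtered-formality and the formation of $\Psi^{(i)}$ commute with extension of scalars. The crucial input is a compatibility of the Malcev functor with solvable quotients: for any finitely generated group $H$ there is a natural filtered isomorphism
\[
\m(H/H^{(i)}) \;\cong\; \m(H)\big/\overline{\m(H)^{(i)}},
\]
where the closure is taken in the Malcev filtration. Granting this, filtered-formality of $G$, viewed as a filtered isomorphism $\m(G) \cong \widehat{\gr(G;\Q)}$ inducing the identity on associated gradeds, converts the right-hand side into $\widehat{\gr(G;\Q)}\big/\overline{\widehat{\gr(G;\Q)}^{(i)}}$.

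\textbf{Passing to associated gradeds.} Because $\gr(G;\Q)$ is graded and generated in degree one, its derived subalgebra $\gr(G;\Q)^{(i)}$ is a homogeneous ideal, and the argument of Lemma \ref{lem:presbar} applies to give
\[
\widehat{\gr(G;\Q)}\big/\overline{\gr(G;\Q)^{(i)}} \;\cong\; \widehat{\gr(G;\Q)\big/\gr(G;\Q)^{(i)}}.
\]
Combining with the previous step yields a filtered isomorphism $\m(G/G^{(i)}) \cong \widehat{\gr(G;\Q)/\gr(G;\Q)^{(i)}}$. Taking associated gradeds of both sides and applying Quillen's isomorphism \eqref{eq:quillen-iso} on the left, I obtain an identification $\gr(G/G^{(i)};\Q) \cong \gr(G;\Q)/\gr(G;\Q)^{(i)}$; naturality forces this to coincide with $\Psi^{(i)}$, which is therefore an isomorphism. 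The same chain of identifications exhibits $\m(G/G^{(i)})$ as the degree completion of its own associated graded, so $G/G^{(i)}$ is filtered-formal.

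\textbf{Main obstacle.} The decisive technical point is the Malcev compatibility $\m(H/H^{(i)}) \cong \m(H)/\overline{\m(H)^{(i)}}$. The subtlety is topological: the derived subalgebra $\m(H)^{(i)}$ need not be closed in the Malcev filtration, so one has to take its closure before quotienting and then argue that the resulting complete, separated filtered Lie algebra is universal among filtered Lie algebra maps out of $\m(H)$ that kill $\m(H)^{(i)}$. Combined with the identification of $H_{\Q}$ with the group-like elements of $\widehat{\Q[H]}$ and the exponential-logarithm correspondence \eqref{eq:explog}, this yields the required identification, and it is the step in which essentially all of the real work lies.
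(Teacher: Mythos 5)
The survey states this theorem with only a citation to \cite{SW-forum} and reproduces no proof, so the comparison must be with the argument in that source; your proposal follows essentially the same route. The construction of $\Psi^{(i)}$ from the $N$-series property, the reduction to $\k=\Q$ via Theorem \ref{thm:ffdescent}, the commutation of degree completion with the quotient by the homogeneous ideal $\gr(G;\Q)^{(i)}$ in the style of Lemma \ref{lem:presbar}, and the final passage to associated gradeds all match the cited argument. The one point to be clear about is the status of the input you grant yourself: the natural filtered isomorphism $\m(H/H^{(i)})\cong \m(H)\big/\overline{\m(H)^{(i)}}$ is precisely the Chen--Lie-algebra theorem of Papadima and Suciu \cite{PS-imrn04} (extended to arbitrary $i$ in \cite{SW-forum}), and it is itself a nontrivial theorem whose proof requires considerably more than the exponential correspondence \eqref{eq:explog}; since you correctly isolate it as the step carrying all the weight and it is available as a citation, your argument is sound modulo that reference. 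Two small points you elide: one needs $\overline{\widehat{\g}^{(i)}}=\overline{\g^{(i)}}$ for $\g=\gr(G;\Q)$, which follows from the continuity of the bracket as recorded in Section \ref{subsec:series}; and in the last step, rather than invoking naturality, it is cleaner to note that $\Psi^{(i)}$ is a graded epimorphism between graded Lie algebras with finite-dimensional graded pieces of equal dimensions, hence an isomorphism.
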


Taking $G=F_n$, it follows that each solvable quotient $F_n/F_n^{(i)}$ 
is a  filtered formal group, with associated graded Lie algebra 
equal to $\L_n/\L_n^{(i)}$, where $\L_n=\Lie(\Q^n)$ 
denotes the free $\Q$-Lie algebra on $n$ generators.

\subsection{Non-finiteness properties of certain metabelian groups}
\label{subsec:meta}

As an application of these techniques, 
we may construct a large class of metabelian groups that 
do not have good finiteness properties, either at the level of 
presentation complexes, or at the level of $1$-models. 

A finitely generated group $G$ is said to be {\em very large}\/ 
if it has a quotient a free group $F_n$ of rank $n$ greater or equal to $2$. 
The group $G$ is merely {\em large}\/ if it has a finite-index subgroup 
which is very large.

\begin{theorem}[\cite{PS-jlms}]
\label{thm:meta}
Let $G$ be a metabelian group of the form $G=\pi/\pi''$, where  
$\pi$ is very large.  Then $G$ is not finitely presentable and $G$ 
does not admit a $1$-finite $1$-model.
\end{theorem}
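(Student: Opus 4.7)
The plan is to establish both conclusions at once via Corollary~\ref{cor:infobs1}: if $b_2(\M_1(G))=\infty$, then $G$ is neither finitely presentable nor admits a $1$-finite $1$-model. So the task reduces to showing $b_2(\M_1(G))=\infty$.

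First I would reduce to the free metabelian case. Since $\pi$ is very large, fix a surjection $p\colon \pi\surj F_n$ with $n\ge 2$; since $p(\pi'')\subseteq F_n''$, it descends to a surjection of metabelian groups $\bar p\colon G=\pi/\pi''\surj F_n/F_n''=:G_0$. The free group $F_n$ is graded- and filtered-formal, so Theorem~\ref{thm:chenlieiso} with $i=2$ makes $G_0$ filtered-formal with $\gr(G_0;\Q)\cong \L_n/\L_n''$, the free metabelian $\Q$-Lie algebra on $n\ge 2$ generators. The introduction (Section~\ref{subsec:intro-finite}) records that $b_2(\M_1(G_0))=\infty$; alternatively, this follows from Proposition~\ref{prop:filtf}, since the natural presentation $\L_n/\L_n''$ has $\dim_{\Q} J/[\L_n,J]=\infty$ for $J=\L_n''$ when $n\ge 2$, and Theorem~\ref{thm:psobs} then rules out a $1$-finite $1$-model for $G_0$.

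It remains to transfer this failure from $G_0$ back to $G$. By functoriality, $\bar p$ induces a surjection $\bar p_{*}\colon \m(G)\surj \m(G_0)$ of complete filtered Lie algebras, which dualises (via Theorem~\ref{thm:sullivan} and the correspondence of Section~\ref{subsec:dual-min}) into a $\cdga$ morphism $\M_1(G_0)\to \M_1(G)$ injective on degree-one generators. Because both $G$ and $G_0$ are metabelian, the degree-two cohomology of each $1$-minimal model is controlled by the rational Alexander invariant $B(H)_{\Q}=(H'/H'')\otimes \Q$, viewed as a module over $\Q[H_{\ab}]$. The map $\bar p$ induces an equivariant surjection of Alexander modules $B(\pi)_{\Q}\surj B(F_n)_{\Q}$ compatible with the algebra projection $\Q[G_{\ab}]\surj \Q[\Z^n]$, and the infinite-dimensional piece of $B(F_n)_{\Q}$ responsible for $b_2(\M_1(G_0))=\infty$ lifts to a corresponding contribution inside $B(\pi)_{\Q}$, forcing $b_2(\M_1(G))=\infty$ as desired.

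The main obstacle is precisely this transfer step. In general $b_2(\M_1(-))$ is not monotonic under group quotients — witness $F_n\surj F_n/F_n''$, which itself raises $b_2$ from $0$ to $\infty$ — so the metabelian hypothesis on both sides is essential. Making the transfer rigorous requires either a Hopf-formula/Hochschild--Serre computation along the short exact sequence $1\to\ker(\bar p_{*})\to \m(G)\to \m(G_0)\to 1$ of pro-nilpotent Lie algebras, or a direct identification of new Hirsch generators in the canonical filtration~\eqref{eq:filtration-minimal} of $\M_1(G)$ arising from those of $\M_1(G_0)$; carrying out this Alexander-invariant bookkeeping cleanly is the technical heart of the argument.
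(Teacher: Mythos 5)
Your overall strategy---reduce to $F_n/F_n''$, show $b_2(\M_1(F_n/F_n''))=\infty$ via Proposition~\ref{prop:filtf} and the infinite-dimensionality of $\L_n''/[\L_n,\L_n'']$, and conclude through Corollary~\ref{cor:infobs1}---matches the paper's. But the transfer step, which you correctly identify as the heart of the matter and then leave unfinished, has a genuine gap, and the route you sketch for closing it (Hochschild--Serre for the pro-nilpotent kernel, or Alexander-invariant bookkeeping) is both unjustified as stated and much harder than necessary. A surjection $\B(\pi)_{\Q}\surj \B(F_n)_{\Q}$ of Alexander modules does not by itself force $b_2(\M_1(G))=\infty$: the degree-two cohomology of the $1$-minimal model is not simply the dimension of the Alexander invariant, and you give no argument that the relevant infinite-dimensional contribution ``lifts.'' As you yourself note, $b_2(\M_1(-))$ is not monotone under quotients, so some additional structure is indispensable.

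The missing idea is that the epimorphism $\varphi\colon\pi\surj F_n$ \emph{splits}, because $F_n$ is free. The splitting passes to the maximal metabelian quotients, so $\bar\varphi\colon G\surj F_n/F_n''$ is a split surjection of groups. By the homotopy functoriality of the $1$-minimal model (Theorem~\ref{thm:nat1model}), the induced map $\bar\varphi^*\colon \M_1(F_n/F_n'')\to\M_1(G)$ is then a split injection up to homotopy, and since cohomology is a homotopy functor, $b_2(\M_1(F_n/F_n''))\le b_2(\M_1(G))$. This one observation replaces your entire transfer step: if $G$ were finitely presentable or had a $1$-finite $1$-model, Corollary~\ref{cor:infobs1} would give $b_2(\M_1(G))<\infty$, hence $b_2(\M_1(F_n/F_n''))<\infty$, contradicting the Hall--Reutenauer computation $\dim_{\Q}(\L_n''/[\L_n,\L_n''])=\infty$ via Proposition~\ref{prop:filtf}. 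Note also that the argument is cleaner run as a contradiction in this direction (finiteness descends along the retract) rather than trying to push infiniteness up along the quotient.
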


\begin{proof}
By assumption, there is an  
epimorphism $\varphi\colon \pi\surj F_n$, for some $n\ge 2$.  
Since the group $F_n$ is free, the map $\varphi$ admits a 
splitting, and thus, the induced homomorphism on maximal 
metabelian quotients, $\bar\varphi\colon \pi/\pi'' \surj F_n/F_n''$, 
also has a splitting.  By the homotopy functoriality 
of the $1$-minimal model construction from Theorem \ref{thm:nat1model}, 
the map $\bar\varphi$ induces a $\cdga$ map, 
$\bar\varphi^*\colon \M_1(F_n/F_n'') \to \M_1(\pi/\pi'')$, 
which is a split injection up to homotopy.

Suppose now that $\pi/\pi''$ admits a finite presentation, or 
a $1$-finite $1$-model. It then follows from Corollary \ref{cor:infobs1} 
that $b_2(\M_1(\pi/\pi''))< \infty$.  Since the map 
$\bar\varphi^*$ is split injective (up to homotopy), 
and since homology is a homotopy functor, we 
infer that $b_2(\M_1(F_n/F_n''))< \infty$. 
Hence, since $F_n/F_n''$ is filtered formal 
and $\L_n'' \subset \L^{\ge 2}$,  
Proposition \ref{prop:filtf} implies that the $\Q$-vector 
space $\L_n''/[\L_n, \L_n'']$ is finite-dimensional. 
On the other hand, a computation with Hall--Reutenauer 
bases done in \cite[Proposition 3.2]{PS-jlms} shows that 
$\dim_{\Q} ( \L_n''/[\L_n, \L_n''] ) =\infty$. This is a contradiction, 
and the proof is complete.
\end{proof}

\section{Formality of spaces, maps, and groups}
\label{sect:formal}

\subsection{Formal spaces}
\label{subsec:formal-spaces}

A space $X$ is said to be {\em formal}\/ (over a field $\k$ 
of characteristic $0$) if Sullivan's algebra $\apl(X)\otimes_{\Q} \k$ is formal, 
that is, it is weakly equivalent to the cohomology algebra $H^{*}(X,\k)$, 
equipped with the zero differential,
\begin{equation}
\label{eq:formal-space}
\apl(X)\otimes_{\Q} \k \simeq (H^{*}(X,\k),0).
\end{equation}
If $X$ is formal (over $\Q$), its rationalization $X_{\Q}$ depends only on 
$H^{*}(X,\Q)$.

The formality property behaves well with respect to field extensions of 
the form $\Q\subset \k$.  Indeed, Halperin and Stasheff's 
Corollary \ref{cor:Kformal} implies that a connected space $X$ with finite Betti 
numbers is formal over $\Q$ if and only if $X$ is formal over $\k$.  
This result was first stated and proved by Sullivan \cite{Sullivan77}, 
using different techniques, while an independent proof was 
given by Neisendorfer and Miller \cite{NM78} in the simply-connected case.

Formality is preserved under several standard operations on spaces. 
For instance, if $X$ and $Y$ are formal, then so is the product 
$X\times Y$ and the wedge $X\vee Y$; moreover, a retract of 
a formal space is formal; see \cite{FHT}, \cite{FOT} for details. 
In general, a finite cover of a formal space need not be formal;  
nevertheless, Conjecture \ref{conj:equiv} holds in the formal case, and  
leads to the following result. 

\begin{proposition}[\cite{PS-jlms}]
\label{prop:equivformal}
Suppose $\Phi$ is a finite group acting simplicially on a formal 
simplicial complex $Y$ with finite Betti numbers.  Then 
the orbit space $X=Y/\Phi$ is again formal.
\end{proposition}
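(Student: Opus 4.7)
The plan is to prove that $X=Y/\Phi$ is formal by upgrading the formality of $Y$ to a $\Phi$-equivariant formality and then descending to the orbit space via Proposition \ref{prop:equiv} and the classical transfer. Specifically, I would produce a $\Phi$-equivariant zig-zag of quasi-isomorphisms $\apl(Y)\simeq_{\Phi}(H^{*}(Y,\Q),0)$ in $\Phi$-$\cdga$, apply Proposition \ref{prop:equiv}\ref{phi1} with $q=\infty$ to obtain $\apl(X)\simeq(H^{*}(Y,\Q)^{\Phi},0)$ in $\cdga$, and finally identify $H^{*}(Y,\Q)^{\Phi}\cong H^{*}(X,\Q)$ via the transfer isomorphism for finite group actions in characteristic $0$. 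A preliminary step --- barycentrically subdividing $Y$ so that the action of $\Phi$ is admissible --- ensures that $X$ inherits a compatible simplicial structure with $A_{\rm s}(X)=A_{\rm s}(Y)^{\Phi}$, so that the hypothesis of Proposition \ref{prop:equiv} is met.

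The technical heart of the argument is the equivariant formality step. By functoriality of Sullivan's minimal model (with a mild rectification using the invertibility of $|\Phi|$ in $\Q$), the $\Phi$-action on $\apl(Y)$ lifts to a strict $\Phi$-action on $\M(Y)=(\bwedge V,d)$ by $\cdga$ automorphisms, with respect to which the structural quasi-isomorphism $\rho\colon \M(Y)\to\apl(Y)$ is $\Phi$-equivariant. Since $Y$ is formal, Theorem \ref{thm:formal-dgms} furnishes splittings $V^i=N^i\oplus C^i$ with $C^i=Z^i(\M(Y))\cap V$ such that every cocycle in the ideal generated by $\bigoplus_i N^i$ is a coboundary. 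The subspaces $V^i$, $Z^i(\M(Y))$, and $C^i$ are each $\Phi$-stable, so an arbitrary complement $N^i$ can be averaged by the idempotent $|\Phi|^{-1}\sum_{g\in\Phi}g$ into a $\Phi$-stable complement $\tilde N^i$. Because $d$ and the multiplication on $\M(Y)$ commute with the $\Phi$-action, the cocycle-in-ideal condition is preserved under averaging, and the resulting $\cdga$ map $\tilde\psi\colon\M(Y)\to(H^{*}(Y,\Q),0)$ realizing the identity in cohomology is $\Phi$-equivariant. Together with the equivariant $\rho$, this produces the desired equivariant quasi-isomorphism.

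Combining the equivariant formality of $Y$ with Proposition \ref{prop:equiv}\ref{phi1} and the transfer identification yields $\apl(X)\simeq(H^{*}(X,\Q),0)$, establishing formality of $X$. The main obstacle is the equivariant Halperin--Stasheff construction in the middle paragraph: one must verify that the splittings can be averaged compatibly with the filtration of $\M(Y)$ by Hirsch extensions, at each stage preserving the cocycle-in-ideal condition. The averaging idea works because $|\Phi|^{-1}\in\Q$, but the bookkeeping across the nested Hirsch extensions is delicate; an alternative, cleaner route would be to appeal to Triantafillou's equivariant minimal model theory, which produces a $\Phi$-equivariant minimal model directly and allows one to formulate and verify the formality criterion from the outset in the equivariant category.
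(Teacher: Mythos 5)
Your overall strategy---upgrade the formality of $Y$ to a $\Phi$-equivariant weak equivalence, descend to invariants via $A_{\rm s}(X)=A_{\rm s}(Y)^{\Phi}$, and identify $H^{*}(Y,\Q)^{\Phi}$ with $H^{*}(X,\Q)$ by the transfer---is the right one and is close in spirit to the paper's. Note, though, that the paper's stated route is weaker at the crucial input: it goes through the formal case of Conjecture \ref{conj:equiv}, whose hypothesis only asks that the induced isomorphism on \emph{cohomology} be $\Phi$-equivariant; that is supplied for free by the Halperin--Stasheff realization of the identity map of $H^{*}(Y,\Q)$ (Corollary \ref{cor:Kformal}'s companion statement in Section \ref{subsec:formal-dga}), so one never has to make the entire zig-zag equivariant. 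You instead attempt the stronger statement of a strictly equivariant quasi-isomorphism, which is where the trouble lies.

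The gap is in the averaging step. Under the correspondence of Theorem \ref{thm:formal-dgms}, a ``good'' complement $N=\bigoplus_i N^i$ is the same thing as a $\cdga$ morphism $\psi_N\colon \M(Y)\to (H^{*}(Y,\Q),0)$ inducing the identity, namely the \emph{multiplicative} extension of ($[\,\cdot\,]$ on $C$, $0$ on $N$); the correspondence is therefore nonlinear in $N$. Writing a nearby complement as the graph $N_f=\{n+f(n)\}$ of a linear map $f\colon N\to C$, the condition that $N_f$ be good, i.e.\ $\psi_{N_f}\circ d=0$, is a system of polynomial equations of degree $\ge 2$ in $f$: a monomial $n_1n_2$ occurring in some $dv$ contributes the term $[f(n_1)]\cdot[f(n_2)]$ to $\psi_{N_f}(dv)$. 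The set of good complements is thus not convex, so the average $|\Phi|^{-1}\sum_{g}gN$ of the (individually good) translates $gN$ need not be good---the cross terms have no reason to cancel. The assertion that ``the cocycle-in-ideal condition is preserved under averaging because $d$ and the multiplication commute with the $\Phi$-action'' is therefore unjustified, and this is precisely the technical heart of your argument. (A secondary issue: the ``mild rectification'' producing a strict $\Phi$-action on $\M(Y)$ is itself a theorem, since the minimal model is functorial only up to homotopy.) You correctly sense the danger in your closing paragraph, and the repair is the alternative you name: an equivariant minimal-model or Halperin--Stasheff obstruction-theoretic argument, where the obstructions and gauges live in $\Q[\Phi]$-modules and genuinely can be averaged---or else the paper's route through the formal case of Conjecture \ref{conj:equiv}, which sidesteps equivariance of the zig-zag altogether. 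Finally, Proposition \ref{prop:equiv} is stated for free actions (regular covers), while here the action is only assumed simplicial; one should either observe that its proof uses only $A_{\rm s}(X)=A_{\rm s}(Y)^{\Phi}$ and the exactness of $(-)^{\Phi}$ in characteristic $0$, or pass to a subdivision on which the action is regular, as you indicate.
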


The following result of Kreck and Triantafillou \cite{KT} fits into Sullivan's 
``determined up to finite ambiguit'' philosophy.

\begin{theorem}[\cite{KT}]
Let $H$ be a finitely generated graded commutative ring over $\Z$.
Then there are only finitely many homotopy types of simply 
connected, formal, finite $CW$-complexes 
with integral cohomology isomorphic to $H$.
\end{theorem}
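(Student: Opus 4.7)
The plan is to reduce the theorem to a combination of formality (which rigidifies the rational homotopy type) and a classical finite-ambiguity theorem for integral homotopy types with a given rational homotopy type and a given integral cohomology ring.

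First, I would observe that if $X$ is a simply-connected, formal, finite $CW$-complex with $H^*(X,\Z)\cong H$, then its rational cohomology ring is $H\otimes\Q$, and formality says that $\apl(X)$ is weakly equivalent, as a $\Q$-$\cdga$, to $(H\otimes\Q,0)$. In particular, the Sullivan minimal model $\M(X)$ is isomorphic to the minimal model of the (fixed) formal cdga $(H\otimes\Q,0)$. Since $X$ is simply-connected with finite Betti numbers (as $H$ is finitely generated), Theorem \ref{thm:nilp-homotopy} identifies $\pi_n(X)\otimes\Q$ with the dual of the appropriate graded piece of this minimal model. Consequently all the spaces in question share a single rational homotopy type: the realization of $\M(H\otimes\Q,0)$. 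This collapses the rational data to a point.

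Next I would appeal to the Sullivan arithmetic fracture square: a simply-connected finite $CW$-complex $X$ is recovered as the homotopy pullback of $X_\Q \to (\hat X)_\Q \leftarrow \hat X$. With $X_\Q$ fixed by the previous paragraph, the integral homotopy type of $X$ is controlled by the profinite completion $\hat X$ together with the comparison map to the rationalization. Because $H$ is finitely generated, its torsion is supported on a finite set of primes $\{p_1,\dots,p_r\}$, so $\hat X$ decomposes, up to the torsion primes, into $p$-adic factors $X_{p_i}^{\wedge}$ whose cohomology is $H\otimes\Z_{p_i}$. Sullivan's $p$-local obstruction theory, built prime by prime from the Postnikov tower and the finite-generation of $H$, produces only finitely many homotopy types of simply-connected $p$-complete spaces with a prescribed finitely generated cohomology; assembling over the finitely many relevant primes yields finitely many possible $\hat X$.

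Finally, for each such $\hat X$ I would invoke the finiteness of the genus: the group of self-equivalences of $(\hat X)_\Q$ compatible with the fixed isomorphism to $H\otimes\Q\otimes\widehat{\Z}$ acts on the set of gluing data $X_\Q\to (\hat X)_\Q$ with finitely many orbits. Combining these three finiteness statements bounds the number of homotopy types by a finite product, completing the argument. The main obstacle is the last step, the adelic assembly: one must verify that once both the rational type and the profinite type are pinned down (which is where formality plus the cohomology hypothesis does its work), the remaining classification by comparison maps is genuinely finite. This is the classical genus-finiteness theorem in the setting of simply-connected finite $CW$-complexes, and it is the place where the hypotheses on $X$ (simply-connected, finite, with finitely generated cohomology) are all used in an essential way.
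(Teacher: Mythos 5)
The paper does not actually prove this statement; it is quoted from Kreck--Triantafillou \cite{KT} without proof, so your proposal can only be judged on its own terms. Your first reduction is correct and is precisely the role the formality hypothesis plays: every space in the class has Sullivan algebra weakly equivalent to $(H\otimes \Q,0)$, hence a single rational homotopy type. The genuine content of the theorem is therefore the assertion that there are only finitely many integral homotopy types of simply connected finite complexes with a \emph{fixed} rational homotopy type and a fixed integral cohomology ring, and this is where your argument has a gap.

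The problem is the middle step, the claim that ``Sullivan's $p$-local obstruction theory \dots produces only finitely many homotopy types of simply-connected $p$-complete spaces with a prescribed finitely generated cohomology.'' This is not a quotable lemma, and as stated it is false: just as over $\Q$ one can vary Massey products (equivalently, coefficients of iterated Whitehead products in attaching maps) continuously to produce infinitely many rational homotopy types with the same cohomology ring, one can do the same over $\Q_p$ and realize the result by $p$-complete spaces, so fixing $H^*(-;\Z_p)$ alone does not bound the number of $p$-complete types. To repair this you must also fix the $\Q_p$-homotopy type of $\hat{X}$ (which formality does give you, since $(\hat{X})_\Q$ is the formal completion of the fixed $X_\Q$) --- but then the statement you need is exactly the theorem you are trying to prove, transported to the $p$-complete world, so the appeal to ``obstruction theory'' begs the question. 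Relatedly, your remark that only the torsion primes of $H$ matter is not justified: at a prime $p$ not dividing the torsion of $H$, the $p$-local type is still not automatically determined by $H\otimes\Z_{(p)}$; one needs a separate (true, but nontrivial) argument that for $p$ large relative to the dimension the $p$-localization is determined by homology. The classical route avoids the adelic decomposition altogether: one builds $X$ through its homology decomposition (finitely many stages, with Moore spaces dictated by $H$) and uses the theorem of Sullivan/Hilton--Mislin--Roitberg that for a finite complex $A$ and a nilpotent finite-type space $Y$ the rationalization map $[A,Y]\to[A_\Q,Y_\Q]$ is finite-to-one; since the rationalizations of all attaching maps are pinned down by the fixed rational type (up to the action of automorphisms of $H\otimes\Q$), each stage contributes only finite ambiguity. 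Kreck and Triantafillou argue instead with integral structures on the minimal model, but in either form the key input is a single integral-versus-rational finite-to-one statement, not a product of a profinite finiteness with a genus finiteness.
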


\subsection{Formality criteria}
\label{subsec:formality-cr}

For nilpotent spaces, Sullivan gave a formality criterion in terms 
of lifting automorphisms of the cohomology algebra to the minimal model.

\begin{theorem}[\cite{Sullivan77}]
\label{thm:sullivan-formality}
Let $X$ be a nilpotent CW-complex with finite Betti numbers. Then 
$X$ is formal if and only if every automorphism of $H^*(X,\Q)$ can 
be realized by an automorphism of $\M(X)$.
\end{theorem}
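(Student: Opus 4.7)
The plan is to prove both directions via the Halperin--Stasheff/Deligne--Griffiths--Morgan--Sullivan characterization of formality (Theorem \ref{thm:formal-dgms}) together with the standard lifting property of minimal models. For the forward direction, assume $X$ is formal. By the Halperin--Stasheff result noted after Corollary \ref{cor:salehi}, there exists a quasi-isomorphism $\psi\colon \M(X)\to (H^{*}(X,\Q),0)$ that realizes the identity on cohomology. Given any graded algebra automorphism $\alpha$ of $H^{*}(X,\Q)$, view it as a $\cdga$ automorphism of $(H^{*}(X,\Q),0)$. Then $\alpha\circ \psi$ is again a quasi-isomorphism out of $\M(X)$, so by the lifting property of minimal models (which produces, given two quasi-isomorphisms from $\M(X)$ to the same target $\cdga$, an automorphism of $\M(X)$ interchanging them up to homotopy) there is an automorphism $\Phi\in \Aut(\M(X))$ with $\psi\circ\Phi\simeq \alpha\circ \psi$. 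Taking cohomology and using $\psi^{*}=\id$ yields $\Phi^{*}=\alpha$.

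For the backward direction, the strategy is to construct the DGMS splitting $V^i=N^i\oplus C^i$ of Theorem \ref{thm:formal-dgms}, where $\M(X)=(\bwedge V,d)$ and $C^i=Z^i(\M(X))\cap V$. The key device is the one-parameter family of \emph{grading automorphisms} of $H^{*}(X,\Q)$ given by $\phi_s(x)=s^{n}x$ for $x\in H^{n}(X,\Q)$ and $s\in \Q^{*}$. By hypothesis, each $\phi_s$ lifts to an automorphism $\Phi_s$ of $\M(X)$. After enlarging the ground field to a sufficiently large field $\K\supset \Q$ (over which the hypothesis persists, by an easy scalar-extension argument) and picking a generic $s$, one can diagonalize $\Phi_s$ on $V\otimes \K$ to obtain a weight decomposition $V\otimes \K=\bigoplus_{w\ge 0}V_{w}$ which refines the cohomological grading and with respect to which $d$ is homogeneous of weight $0$. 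Setting $C^i$ equal to the weight-$n$ summand of $V^{i}\otimes \K$ for $i=n$ (which must consist of cocycles, as its image in cohomology lies in $H^{i}$ of the correct weight) and $N^i$ equal to the sum of the other weights provides the desired splitting: any cocycle $z$ lying in the ideal $I$ generated by $\bigoplus N^{i}$ decomposes into weight-homogeneous pieces, and each such piece whose weight differs from its degree must be a coboundary since the projection $\M(X)\otimes \K\to (H^{*}(X,\K),0)$ induced by the weight filtration kills it on cohomology. Thus $\M(X)\otimes \K$ satisfies the DGMS criterion and is formal, whence $X$ is formal over $\K$, and finally formal over $\Q$ by the descent-of-formality result (Corollary \ref{cor:Kformal}).

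The main obstacle will be making the weight-diagonalization step rigorous, because the lift $\Phi_s$ of $\phi_s$ is not uniquely determined: two such lifts differ by an automorphism of $\M(X)$ inducing the identity in cohomology, and one must show that a coherent choice can be made for all $s$ in a Zariski-dense set (or equivalently, that the obstruction to extending the lift to a one-parameter family lies in a group of automorphisms of $\M(X)$ that does not interfere with the weight decomposition). The cleanest way to handle this is to build the lifts inductively on the stages $\M(X)_{j}$ of the Sullivan tower and show that at each stage the lifting may be chosen semisimple, using the fact that each Hirsch extension adds a finite-dimensional space $V_j$ on which $\Phi_s$ already acts linearly through its action on the cohomology of the previous stage. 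Once the semisimple one-parameter family $\{\Phi_s\}$ is in hand, the diagonalization argument and the verification of the DGMS splitting proceed essentially formally, and the passage back from $\K$ to $\Q$ is given by Corollary \ref{cor:Kformal}.
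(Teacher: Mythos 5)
The paper states this theorem with only a citation to \cite{Sullivan77} and gives no proof, so there is nothing in-text to compare against; your route is the classical one. Your forward direction is correct and essentially complete: formality gives a quasi-isomorphism $\psi\colon \M(X)\to (H^*(X,\Q),0)$ which may be normalized so that $\psi^*=\id$, and uniqueness of minimal models applied to $\psi$ and $\alpha\circ\psi$ produces the required automorphism realizing $\alpha$.

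The backward direction has a genuine gap at exactly the point you flag as ``the main obstacle,'' and your proposed fix does not close it. The problem is not merely that the lift $\Phi_s$ of the grading automorphism $\phi_s$ is non-unique; it is that an arbitrary lift need not act semisimply on the indecomposables, so there is a priori only a filtration by generalized eigenspaces, not the weight decomposition your argument requires. Your suggestion that each $V_j$ carries a linear $\Phi_s$-action ``through its action on the cohomology of the previous stage'' does not resolve this: the induced action on $V_j$ is well defined only modulo maps into decomposables, and it is precisely this unipotent ambiguity that must be conjugated away. The standard (and, as far as I know, unavoidable) way to do this is algebraic-group-theoretic: at each finite stage, $\Aut(\M(X)_{\le n})$ is an affine algebraic group over $\Q$ mapping to $\Aut(H^{\le n})$; the image is a Zariski-closed subgroup containing the $\Q$-points of the grading torus, hence the whole one-parameter torus; and in characteristic zero a torus in the image of a morphism of algebraic groups lifts to a torus in the source (conjugacy of maximal tori/Levi decomposition). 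One then assembles the stage-wise $\mathbb{G}_m$-actions by an inverse-limit argument. None of this is in your sketch. A secondary gap: your assertion that the weight-$i$ summand of $V^i$ ``must consist of cocycles'' is not justified by the remark you give about its image in cohomology; what is needed is the inductive statement that every generator of degree $k$ has weight at least $k$, so that decomposables of degree $i+1$ have weight at least $i+1$ and hence $dv=0$ for $v$ of degree and weight $i$. That induction is true but must be carried out. Once both points are supplied, your DGMS-splitting verification goes through; note also that the passage to a field extension $\K$ and the appeal to Corollary \ref{cor:Kformal} are unnecessary, since one may take $s\in\Q^*$ of infinite order and all eigenvalues $s^w$ are already rational.
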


Roughly speaking, the more highly connected a space is, the more likely 
it is to be formal.  This was made precise by Stasheff in \cite{Sta83}, as follows.  

\begin{theorem}[\cite{Sta83}]
\label{thm:stasheff}
Let $X$ be a $k$-connected CW-complex of 
dimension $n$; if $n \le 3k+1$, then $X$ is formal.
\end{theorem}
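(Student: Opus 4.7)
The plan is to reduce the statement to Theorem \ref{thm:odd-intrinsic} (Halperin--Stasheff's intrinsic formality for graded-commutative algebras with a gap), with the only real work being to translate the topological hypotheses on $X$ into cohomological vanishing. Since $X$ is $k$-connected, Hurewicz together with universal coefficients gives $H^i(X,\Q)=0$ for $1 \le i \le k$. Since $\dim X \le n \le 3k+1$, we also have $H^i(X,\Q)=0$ for $i > 3k+1$. Thus the cga $H := H^*(X,\Q)$ satisfies the hypotheses of Theorem \ref{thm:odd-intrinsic} and is therefore intrinsically formal. Because $H^*(\apl(X)) \cong H$, this forces $\apl(X) \simeq (H,0)$, i.e., $X$ is formal.

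A more hands-on argument that avoids appealing to the intrinsic-formality theorem verifies the DGMS criterion (Theorem \ref{thm:formal-dgms}) directly on the minimal model $\M(X)=(\bwedge V,d)$. The $k$-connectedness of $X$ forces $V^i=0$ for $1\le i\le k$, so every generator has degree $\ge k+1$. Decomposability of $d$ gives $dV\subset \bwedge^{\ge 2}V$, and every element of $\bwedge^{\ge 2} V$ has degree $\ge 2(k+1)=2k+2$; hence $dV^i=0$ whenever $i\le 2k$, that is, $V^{\le 2k}\subset Z(\M(X))$. Set $N^i=0$ for $i\le 2k$, and for $i>2k$ pick any complement $N^i$ to $C^i := V^i\cap Z$ in $V^i$; then $N := \bigoplus_i N^i \subset V^{\ge 2k+1}$.

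Let $I$ be the ideal of $\bwedge V$ generated by $N$, and split it as $I = N \oplus I'$, where $I' = I\cap\bwedge^{\ge 2}V$. Elements of $I'$ are products of an element of $N$ (of degree $\ge 2k+1$) with at least one further generator (of degree $\ge k+1$), so $I'$ lives in degrees $\ge (2k+1)+(k+1)=3k+2$. Now let $z\in I$ be a cocycle. If $\deg z\le 3k+1$, the $I'$-component of $z$ vanishes by degree, so $z\in N$; being a cocycle, $z\in V\cap Z=C$, and $N\cap C=0$ forces $z=0$. If instead $\deg z\ge 3k+2$, then $\deg z > 3k+1 \ge \dim X$, so $[z] \in H^{\deg z}(X,\Q)=0$ and $z$ is a coboundary. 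Either way, every cocycle in $I$ is a coboundary, so Theorem \ref{thm:formal-dgms} yields formality of $\M(X)$, hence of $X$.

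The only point requiring genuine care is the degree bound $\deg I' \ge 3k+2$, which rests entirely on the connectivity bound $V^{\le k} = 0$; once that is in hand, everything else is a clean degree count, and this is also why the hypothesis $n \le 3k+1$ is sharp in this argument—any decomposable cocycle can at best be pushed into degree $3k+2$, so as long as cohomology vanishes beyond that range, formality follows.
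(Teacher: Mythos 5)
Your proof is correct, and it is worth noting that the paper itself offers no argument for Theorem~\ref{thm:stasheff} beyond the citation to \cite{Sta83}, so there is no in-text proof to compare against. Your first paragraph is the quickest legitimate route given what the survey already records: once you observe that $k$-connectedness (with $k\ge 1$, as in the paper's introduction --- this is needed for the Hurewicz step) gives $H^i(X,\Q)=0$ for $1\le i\le k$, and that $\dim X\le 3k+1$ kills cohomology above degree $3k+1$, the conclusion is immediate from the Halperin--Stasheff intrinsic formality criterion, Theorem~\ref{thm:odd-intrinsic}, stated two sections earlier. Your second argument is the classical self-contained proof: the degree count showing $V^{\le 2k}\subset Z(\M(X))$, the choice $N^i=0$ for $i\le 2k$, the observation that the decomposable part $I'=N\cdot \bwedge^{+}V$ of the ideal $I=\langle N\rangle$ sits in degrees $\ge (2k+1)+(k+1)=3k+2$, and the dichotomy on $\deg z$ (either $z\in N\cap C=0$ or $[z]$ lies in a vanishing cohomology group) together verify the DGMS splitting criterion of Theorem~\ref{thm:formal-dgms} exactly. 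The one step you assert without proof --- that $k$-connectedness forces $V^i=0$ for $i\le k$ in the minimal model --- is standard but is not recorded in the survey, so a reader would need to supply it from the inductive construction of $\M(X)$. Both routes are sound, and the second has the added virtue of making transparent why $3k+1$ is the natural threshold.
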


This is the best possible bound:  attaching a cell $e^{3k+2}$ to the 
wedge $S^{k+1}\vee S^{k+1}$ via the iterated Whitehead 
product $[\iota_1,[\iota_1,\iota_2]]$ yields a non-formal 
CW-complex. 

A powerful formality criterion was given by Sullivan in \cite{Sullivan77}.

\begin{theorem}[\cite{Sullivan77}]
\label{thm:regular}
If $H^*(X,\k)$ is the quotient of a free $\cga$ by an ideal generated 
by a regular sequence, then $X$ is a formal space. Consequently, 
if $H^*(M,\k)$ is a free $\cga$, then $X$ is formal.
\end{theorem}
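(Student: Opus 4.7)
The plan is to recognize this theorem as an essentially immediate consequence of the intrinsic formality result recalled above as Theorem \ref{thm:regular-intrinsic}, combined with the general principle stated in Subsection \ref{subsec:intrinsic-formal}: if $H^*(A)$ is intrinsically formal as a $\cga$, then the $\cdga$ $A$ itself is formal. Applied to $A = \apl(X)\otimes_{\Q}\k$, this will give the weak equivalence $\apl(X)\otimes_{\Q}\k \simeq (H^*(X,\k),0)$ that defines formality of $X$ over $\k$.

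Concretely, I would proceed in three short steps. First, set $H \coloneqq H^*(X,\k)$ and write $H \cong \bwedge V / (r_1,\dots,r_n)$ with $(r_1,\dots,r_n)$ a regular sequence of homogeneous elements, as given by hypothesis. Second, invoke Theorem \ref{thm:regular-intrinsic} to conclude that the $\cga$ $H$ is intrinsically formal. Third, apply the intrinsic-formality implication to the $\cdga$ $\apl(X)\otimes_{\Q}\k$, whose cohomology is $H$: this $\cdga$ must then be weakly equivalent to $(H,0)$, which is precisely the formality of $X$ over $\k$. By Corollary \ref{cor:Kformal}, the choice of coefficient field of characteristic zero is immaterial, so there is no loss of generality in passing between $\Q$ and $\k$ at any point of the argument. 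The second (``consequently'') assertion follows by specializing to $n=0$, i.e.\ to the empty regular sequence: if $H^*(X,\k) = \bwedge V$ is free, it is the quotient of itself by the ideal generated by the empty sequence, which is vacuously regular, so the first assertion applies verbatim.

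Because the substantive content is packaged inside Theorem \ref{thm:regular-intrinsic}, I do not expect any genuine obstacle in the proof of Theorem \ref{thm:regular}; the work amounts to citing the correct preliminary result and unpacking the definition of intrinsic formality. The one point worth watching is the finite-generation hypothesis in Theorem \ref{thm:regular-intrinsic}: should $V$ fail to be finite-dimensional, one would either impose a local-finiteness condition on $H^*(X,\k)$ and exhaust by finitely generated sub-$\cga$s, or construct an explicit minimal model by the Hirsch extension $(\bwedge(V \oplus \k\langle b_1,\dots,b_n\rangle),d)$ with $db_i = r_i$ and $d|_V = 0$, using the Koszul-style acyclicity supplied by the regularity of $(r_1,\dots,r_n)$ to verify that the structural map to $(H,0)$ is a quasi-isomorphism, and then lifting algebra generators of $V$ to cocycles in $\apl(X)\otimes_{\Q}\k$ to produce a $\cdga$ map realizing the identity in cohomology.
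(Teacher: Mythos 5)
Your proposal is correct and matches the route the paper itself takes: Theorem \ref{thm:regular} is exactly the combination of Theorem \ref{thm:regular-intrinsic} with the observation recorded in Section \ref{subsec:intrinsic-formal} that a $\cdga$ whose cohomology algebra is intrinsically formal must itself be formal, applied to $A=\apl(X)\otimes_{\Q}\k$. Your closing caveat about the finite-generation hypothesis in Theorem \ref{thm:regular-intrinsic} is a sensible precaution, but in the intended setting the cohomology algebras are of finite type, so the cited intrinsic-formality statement applies directly and no exhaustion argument is needed.
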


This result provides a large supply of formal spaces, such as: 
rational cohomology spheres and tori; 
compact connected Lie groups $G$, as well as their 
classifying spaces, $BG$; 
homogeneous spaces of the form $G/K$, with 
$\rank G=\rank K$;  and 
Eilenberg--MacLane classifying spaces $K(G, n)$ for 
discrete groups $G$, provided $n\ge 2$.
In particular, if $X$ is the complement of a knotted sphere 
in $S^{n}$, $n\ge 3$, then $X$ is a formal space. 

On the other hand, not all homogeneous spaces are formal. 
For instance, the quotient spaces $\SU(pq)/(\SU(p)\times \SU(q))$ 
for $p,q\ge 3$; $\SO(n^2-1)/\SU(n)$ for $n\ge 3$; 
$\Sp(5)/\SU(5)$; and $\SO(78)/E_6$ are known to be non-formal.
Furthermore, $K(G, 1)$ spaces need not be formal. For instance, 
Hasegawa \cite{Has} showed that a classifying space for a 
torsion-free, finitely generated nilpotent group $G$ is formal 
if and only if $G$ is abelian. We refer to \cite{FOT} for more 
on these topics.

A connected space $X$ is said to be {\em intrinsically formal}\/ if any 
connected space whose rational cohomology algebra is isomorphic to 
$H^*(X,\Q)$ has the same rational homotopy type as $X$; in other 
words, if there is a unique rational homotopy type whose 
rational cohomology algebra is isomorphic to that of $X$. 

\begin{theorem}[\cite{Baues}, \cite{HS79}]
\label{thm:wedge-spheres}
Let $X$ be a connected space whose minimal model $\M(X)$ is of finite type. 
If $b_{2k}(X)=0$ for all $k\ge 1$, then $X$ is intrinsically formal and has 
the rational homotopy type of a wedge of odd spheres.
\end{theorem}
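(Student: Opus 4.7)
The plan is to identify the rational cohomology algebra explicitly, construct a candidate wedge of odd spheres realizing it, and reduce the theorem to the intrinsic formality of the $\cga$ $H\coloneqq H^*(X,\Q)$.

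First I unpack the hypothesis. Since $H^{2k}(X,\Q)=0$ for all $k\ge 1$, the algebra $H$ is supported in degree $0$ and in odd positive degrees. For any pair of classes $u,v\in H^{>0}$ (both of odd degree), the product $uv$ lies in $H^{|u|+|v|}$, a positive even-degree piece, and hence vanishes. So $H=\Q\oplus V$ with $V=H^{>0}$ concentrated in odd degrees and $V\cdot V=0$. Picking a homogeneous basis $\{x_{\alpha}\}$ of $V$, I form the wedge of odd spheres $W\coloneqq \bigvee_{\alpha}S^{|x_{\alpha}|}$; by construction $H^*(W,\Q)\cong H$ as graded algebras. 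Each odd sphere is formal by Theorem \ref{thm:regular-intrinsic} (its cohomology is a free $\cga$ on a single odd-degree generator), and since formality is preserved under wedges (Section \ref{subsec:formal-spaces}), the space $W$ is formal, so $\apl(W)\simeq (H,0)$. Once intrinsic formality of $(H,0)$ is established, $\apl(X)\simeq (H,0)\simeq \apl(W)$ will give $X\simeq_{\Q}W$, and the intrinsic formality of the space $X$ will follow from that of $H$.

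To prove intrinsic formality of $(H,0)$, I would construct its bigraded minimal model $\M(H,0)=(\bwedge U,d)$ with $U$ concentrated entirely in odd degrees, via the Hirsch-extension procedure of Section \ref{subsec:dual-min}. Begin with $U_0=V$; the spurious cohomology in $\bwedge U_0$ lives in $\bwedge^{\ge 2}U_0$, whose even-wedge-length pieces sit in even degrees and may be killed by adjoining a Hirsch extension $U_1$ in odd degrees (one less than the target even degree). Iterating and using the finite-type hypothesis on $\M(X)$, one shows inductively that each successive $U_i$ can be chosen in odd degrees; the apparent spurious odd-degree cocycles at each stage turn out to already be coboundaries, by the Leibniz rule applied to the generators adjoined earlier. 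The resulting $\M(H,0)$ comes with an explicit quasi-isomorphism to $(H,0)$ sending $U_0$ identically to $V$ and all $U_{\ge 1}$ to $0$.

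The main obstacle is the uniqueness portion of intrinsic formality: any other $\cdga$ $(A,d_A)$ with $H^*(A)\cong H$ must have minimal model isomorphic to $\M(H,0)$. This follows from the intrinsic-formality criterion of Halperin--Stasheff in \cite{HS79}, and the key point in our setting is a parity argument. The relevant obstructions to lifting $\id_H$ to an isomorphism between $\M(A)$ and $\M(H,0)$ lie in negative-degree derivation cohomology of $(\bwedge U,d)$, and each such group vanishes because a derivation of negative degree on $U$ (which sits in odd degrees) would need to land either in the square-zero subspace $V$ or in $\Q\cdot 1$, both of which contradict the decomposability of $d$ together with the triviality of products in $V$. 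Once intrinsic formality is established, $\M(X)\cong \M(H,0)\cong \M(W)$, and Theorem \ref{thm:mm} yields $X\simeq_{\Q}W$, so $X$ has the rational homotopy type of the wedge of odd spheres $W$ and is intrinsically formal.
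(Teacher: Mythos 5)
Your overall architecture is sound and matches the standard route: since $H^{>0}=V$ is concentrated in odd degrees, all products of positive-degree classes land in positive even degrees and hence vanish, so $H=\Q\oplus V$ with $V\cdot V=0$; the wedge $W$ of odd spheres realizes $H$ and is formal; and the whole theorem reduces to the intrinsic formality of the $\cga$ $H$. (The paper itself gives no proof---it quotes the result from Baues and Halperin--Stasheff---so the comparison is with the classical arguments.)

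The gap is at that last, decisive step. Unwind what must be shown: if $(\bwedge U,D)$ is the minimal model of an arbitrary $\cdga$ with cohomology $H$, then, precisely because $D$ is decomposable and $V\cdot V=0$, \emph{every} degree-preserving linear map $f\colon U\to V$ extends to a $\cdga$ morphism $(\bwedge U,D)\to (H,0)$, and the induced map on $H^n$ sends $[z]$ to $f(z^{(1)})$, where $z^{(1)}$ is the word-length-one component of the cocycle $z$ (well defined, since coboundaries are decomposable). Some choice of $f$ makes this an isomorphism if and only if every cocycle lying in $\bwedge^{\ge 2}U$ is a coboundary. That statement carries the entire content of the theorem, and your sketch does not establish it. The ``parity argument'' you offer for the vanishing of the obstructions does not hold up: derivations of negative degree on $\bwedge U$ take values in $\bwedge U$, not in ``$V$ or $\Q\cdot 1$'', and they exist in abundance even when $U$ is oddly graded (extend any linear functional on $U^n$ to a derivation of degree $-n$); the Halperin--Stasheff obstruction groups are cohomology groups of such derivation complexes, graded by the lower (resonance) degree rather than by the topological degree, and their vanishing here requires a genuine computation. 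The parity argument that actually works lives on the Quillen side: the minimal Lie model $(\Lie(W),\partial)$ of $X$ has $W\cong s^{-1}\widetilde{H}_{*}(X;\Q)$ concentrated in \emph{even} degrees, so $\Lie(W)$ is concentrated in even degrees and the degree $-1$ differential $\partial$ is forced to vanish, identifying the Lie model with that of a wedge of odd spheres. If you want to stay on the Sullivan side, you must either run the Halperin--Stasheff obstruction calculus in earnest or prove directly, by induction on word length, that cocycles in $\bwedge^{\ge 2}U$ are exact; neither is done here. (A smaller point: to pass from $\apl(X)\simeq\apl(W)$ to ``$X$ has the rational homotopy type of $W$'' in the sense of Section~\ref{subsec:rht}, i.e.\ a zig-zag of continuous maps, you also need a realization step, which the finite-type hypothesis makes available.)
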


Although the spaces in the above theorem are intrinsically formal, they are 
typically not hyperformal. For instance, the space $X=S^{2k_1-1}\vee S^{2k_2-1}$ 
fits into this framework, but the cohomology algebra 
$H^*(X,\k)$ is isomorphic to $\bwedge (x_1,x_2)/(x_1x_2)$, 
with $\abs{x_i}=2k_i-1$, which is not hyperformal if $k_1\ne k_2$, 
since in that case $\{x_1x_2\}$ is not a regular sequence.

\subsection{Formality properties of closed manifolds}
\label{subsec:formal-mfd}

As shown by Miller \cite{Mi79}, the dimension bound from Theorem \ref{thm:stasheff}
can be relaxed for closed manifolds, by using Poincar\'{e} duality.  

\begin{theorem}[\cite{Mi79}]
\label{thm:miller}
Let $M$ be a closed, $k$-connected manifold ($k\ge 1$) 
of dimension $n \le 4k+2$. Then $M$ is formal.  
\end{theorem}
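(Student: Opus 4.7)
The plan is to extend Stasheff's Theorem \ref{thm:stasheff} by bringing in the Poincaré duality structure of $H^{*}(M,\Q)$. Stasheff's result already covers the range $n\le 3k+1$, so I may assume $3k+2\le n\le 4k+2$.

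The first observation is a vanishing window in the cohomology. Since $M$ is $k$-connected, $H^{i}(M,\Q)=0$ for $1\le i\le k$, and since $M$ is closed and orientable, Poincaré duality propagates this vanishing to $H^{i}(M,\Q)=0$ for $n-k\le i\le n-1$. Consequently, the reduced positive-degree rational cohomology of $M$ below the top dimension is concentrated in the band $[k+1,n-k-1]\subseteq[k+1,3k+1]$, and the ``empty upper band'' $[n-k,n-1]$ is the extra ingredient beyond Stasheff's hypothesis.

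The next step is to construct a $\cdga$ quasi-isomorphism $\psi\colon\M(M)\to(H^{*}(M,\Q),0)$ realizing the identity on cohomology, proceeding by induction along the canonical Hirsch filtration of $\M(M)=(\bwedge V,d)$, where $V^{i}=0$ for $i\le k$ by $k$-connectedness. For $p\le 2k$ and $x\in V^{p}$, minimality forces $dx$ to be decomposable, hence a sum of products of at least two generators of degree $\ge k+1$ and therefore of degree $\ge 2k+2$; together with $|dx|=p+1\le 2k+1$ this forces $dx=0$. So in this range every generator is a cocycle, and I set $\psi(x)=[x]\in H^{p}(M,\Q)$ and extend multiplicatively. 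By the Halperin--Stasheff criterion recorded in Theorem \ref{thm:formal-dgms}, formality of $M$ amounts to continuing this extension compatibly through all remaining degrees.

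The main obstacle is the inductive step in degrees $p\ge 2k+1$, where for each non-cocycle generator $x\in V^{p}$ the obstruction to defining $\psi(x)$ is the class $[\psi(dx)]\in H^{p+1}(M,\Q)$. The dimensional hypothesis narrows where this class can be non-zero: the empty upper band handles $p+1\in[n-k,n-1]$, and $p+1>n$ is automatic; what remains is the range $2k+2\le p+1\le n-k-1$ together with the endpoint $p+1=n$. Here I would exploit Poincaré duality: the bound $n\le 4k+2$ forces the complementary degree $n-p-1$ to lie in $[0,2k]$, so dual classes are represented by cocycle generators of $V^{\le 2k}$ already in the image of $\psi$. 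For such a cocycle representative $\tilde y\in V^{\le 2k}$ of $y\in H^{n-p-1}(M,\Q)$, the identity $dx\cdot\tilde y=d(x\tilde y)$ inside $\M(M)$ provides an exact relation at the cohomological level, which---combined with a careful choice of the decomposition $V^{p}=N^{p}\oplus C^{p}$ and the multiplicativity of $\psi$---forces $[\psi(dx)]\cdot y=0$ in $H^{n}(M,\Q)$. Nondegeneracy of the Poincaré pairing then yields $[\psi(dx)]=0$, completing the inductive step; the endpoint $p+1=n$ is handled analogously by pairing against $H^{0}(M,\Q)=\Q$. Iterating through all degrees completes the construction of $\psi$, establishing the formality of $M$.
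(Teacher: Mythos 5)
The paper does not actually prove this theorem---it only cites Miller---so your proposal must stand on its own, and it has a genuine gap at its central step. To rule out the obstruction $[\psi(dx)]\in H^{p+1}(M,\Q)$ you pair it with $y=[\tilde y]\in H^{n-p-1}(M,\Q)$ and assert that the identity $dx\cdot\tilde y=\pm\, d(x\tilde y)$ forces $[\psi(dx)]\cup y=\psi(dx\cdot\tilde y)=0$. But at this stage $\psi$ is defined only on the subalgebra $\bwedge V^{\le p-1}$, and as a \cdga\ map into a target with zero differential it annihilates only those cocycles that are exact \emph{inside that subalgebra}. The element $dx\cdot\tilde y$ is exact in $\M(M)$ only via the primitive $x\tilde y$, which involves the very generator $x$ on which $\psi$ is not yet defined; its class in $H^{n}(\bwedge V^{\le p-1})$ is in general nonzero, and nothing you have arranged forces $\psi_{*}$ to kill it. What you would need is that $\psi_{*}$ agrees in degree $n$ with the canonical map $H^{n}(\bwedge V^{\le p-1})\to H^{n}(\M(M))\cong H^{n}(M,\Q)$ on the classes that die only after adjoining generators of degree $\ge p$---but that is precisely the kind of statement the induction is supposed to produce, so the argument is circular. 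The endpoint $p+1=n$ makes this visible: ``pairing against $H^{0}$'' is literally the assertion $[\psi(dx)]=0$ in $H^{n}(M,\Q)$, i.e.\ the obstruction itself, and this top-degree case is exactly where the Poincar\'e duality input must do real work. (A secondary point you suppress: the genuine Halperin--Stasheff obstruction is the map $x\mapsto\psi(dx)$ only modulo the indeterminacy coming from modifying $\psi$ on lower-degree generators, not the map itself.)

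Your numerology is all correct---the vanishing band $H^{i}(M,\Q)=0$ for $n-k\le i\le n-1$, the identification $H^{j}(M,\Q)\cong V^{j}$ for $j\le 2k$, and the bound $n-p-1\le 2k$---and these are indeed the ingredients behind Miller's theorem. A correct route that uses only results already quoted in this survey runs as follows: the punctured manifold $M\setminus\{*\}$ is $k$-connected and, by Poincar\'e--Lefschetz duality, satisfies $H^{i}(M\setminus\{*\},\Q)=0$ for $i>n-k-1$; since $n-k-1\le 3k+1$, its cohomology algebra is intrinsically formal by Theorem \ref{thm:odd-intrinsic}, so $M\setminus\{*\}$ is formal. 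Stasheff's result quoted in Section \ref{subsec:formal-mfd} (a closed, simply-connected manifold whose punctured manifold is formal is itself formal) then yields the formality of $M$. This factorization cleanly isolates the Poincar\'e duality input in the top-cell step, which is exactly the step your single induction fails to carry out.
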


In particular, all simply-connected closed
manifolds of dimension at most $6$ are formal.  
Again, this is best possible: as shown by 
Fern\'andez and Mu\~noz in \cite{FM04}, there exist  
closed, simply-connected, non-formal manifolds in each  
dimension $n\ge 7$.  On the other hand, if $M$ is a closed, 
orientable, $k$-connected $n$-manifold with $b_{k+1}(M)=1$, 
then the bound insuring formality can be improved to $n \le 4k+4$, 
see Cavalcanti \cite{Ca}.  

Formality also behaves well with respect to standard operations 
on manifolds.  For instance, Stasheff \cite{Sta83} proved the 
following: If $M$ is a closed, simply-connected manifold such 
that the punctured manifold $M\setminus \set{*}$ is formal, 
then $M$ is formal.  Moreover, if $M$ and $N$ are closed, 
orientable, formal manifolds, so is their connected sum, $M\# N$;
see \cite{FHT}.

It has been shown by Cavalcanti \cite{Ca}, and, in stronger form, 
by Crowley and Nordstr\"{o}m in \cite{CN}, that a certain type of 
Hard Lefschetz property insures the intrinsic formality of highly 
connected manifolds. 

\begin{theorem}[\cite{CN}]
\label{thm:intrinsic-cn}
Let $M$ be an $(n-1)$-connected manifold of dimension $4n-1$. 
Suppose $b_n(M)\le 3$ and there is a cohomology class $u\in H^{2n-1}(M,\Q)$ 
such that the map $H^n(M,\Q)\to H^{3n-1}(M,\Q)$, $v\mapsto u v$ 
is an isomorphism. Then $M$ is intrinsically formal.
\end{theorem}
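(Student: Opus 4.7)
\textbf{Proof proposal for Theorem \ref{thm:intrinsic-cn}.}

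The plan is to reduce intrinsic formality of $M$ to intrinsic formality of the commutative graded algebra $H\coloneqq H^*(M,\Q)$, and then control the obstructions via a careful analysis of the bigraded minimal model. Since $M$ is $(n-1)$-connected with $n\ge 1$, it is in particular simply-connected, so by the standard theory its rational homotopy type is determined by the $\cdga$ quasi-isomorphism type of $\apl(M)$, and hence, via Sullivan's minimal model, by the cohomology algebra up to $\cdga$-equivalence. Thus it suffices to show that every $\cdga$ $(A,d_A)$ with $H^*(A)\cong H$ is formal. By Poincar\'e duality $H$ is an $n$-\textsc{pda}, concentrated in degrees $0,n,n+1,\dots,3n-1,4n-1$, and by Theorem \ref{thm:LS} we may assume $A$ itself is a PD-$\cdga$ with cohomology $H$; the Hard Lefschetz input is that multiplication by a chosen $u\in H^{2n-1}$ gives an isomorphism $L_u\colon V\isom V^\vee$, where $V\coloneqq H^n$ has $\dim_\Q V=b_n(M)\le 3$.

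Next I would set up the Halperin--Stasheff bigraded model of $H$: write $\M(H,0)=(\bwedge W,d_0)$ with an auxiliary ``word-length'' grading $W=\bigoplus_{s\ge 0}W_s$ for which $d_0$ raises $s$ by exactly one. Any $\cdga$ $(A,d_A)$ with $H^*(A)\cong H$ admits a filtered model $(\bwedge W, d_0+\delta)$, where $\delta$ raises the word-length grading by at least $2$ (the ``perturbation'' measuring non-formality). Standard obstruction theory identifies the iso-classes of such perturbations, up to the gauge action of filtered automorphisms of $\bwedge W$ covering the identity on $H$, with certain Harrison-type cohomology groups of $H$; formality of $A$ is then the statement that its perturbation is gauge-equivalent to $0$, and intrinsic formality of $H$ is the statement that the entire moduli of perturbations is trivial.

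With this framework in place I would cut down the range where obstructions can live. By Stasheff's Theorem \ref{thm:stasheff} every $(n-1)$-connected $\cdga$ of cohomological dimension $\le 3(n-1)+1=3n-2$ is formal, and by Miller's Theorem \ref{thm:miller} the Poincar\'e-duality hypothesis extends this bound to $\le 4n-2$; hence all components of the perturbation $\delta$ with cohomological target in degree $<4n-1$ can be absorbed by a gauge transformation. The only remaining obstructions are those producing classes in the top group $H^{4n-1}=\Q$, i.e.\ uniform (higher) Massey-product pairings of generators of $\M(H,0)$ landing on the fundamental class. These correspond to finitely many indecomposable multilinear forms on $V\oplus\langle u\rangle$ modulo the image of the internal multiplication in $H$, together with the action of the PD-symmetry.

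The main obstacle---and the place where the two hypotheses are really used---is to show that this space of top-degree obstructions is killed by the gauge action. The Hard Lefschetz isomorphism $L_u\colon V\isom V^\vee$ identifies the Poincar\'e pairing on $V\otimes H^{3n-1}$ with an intrinsic symmetric/antisymmetric form on $V$, so the would-be Massey pairings become symmetric/antisymmetric tensors on $V$ which must be invariants of the pair $(V,L_u)$. The bound $b_n\le 3$ makes the representation theory of the isometry group of $(V,L_u)$ acting on these tensors entirely explicit---the relevant invariants all already lie in the image of cup-product---so each surviving obstruction admits a primitive on the model side and can be killed by a filtered automorphism. I expect this final case analysis (treating $\dim V=1,2,3$ and the two parities of $n$ separately, each time exhibiting an explicit gauge transformation) to be the technically heaviest step, and the place where the hypothesis $b_n\le 3$ is sharp.
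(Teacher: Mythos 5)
The survey states Theorem \ref{thm:intrinsic-cn} without proof, as a citation of Crowley--Nordstr\"{o}m \cite{CN}, so your proposal can only be measured against their argument. In outline you do follow the same route: reduce intrinsic formality of $M$ to intrinsic formality of the Poincar\'{e} duality algebra $H=H^{*}(M,\Q)$, observe that all obstructions to formality are concentrated---via Poincar\'{e} duality---in a single top-degree multilinear tensor on $V=H^n$ (this is exactly the ``Bianchi--Massey tensor'' of \cite{CN}), and then argue that $b_n(M)\le 3$ together with the Lefschetz-type hypothesis forces the space of admissible tensors, modulo the gauge indeterminacy, to vanish.

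The problem is that this last step, which is the entire content of the theorem, is never carried out: you explicitly defer the case analysis over $\dim V\in\{1,2,3\}$ and the parity of $n$. Everything preceding it is generic to any $(n-1)$-connected $(4n-1)$-manifold, and such manifolds are \emph{not} formal in general---Theorem \ref{thm:nf-Massey-vanish} exhibits non-formal $(2k-1)$-connected manifolds of dimension $8k-1=4(2k)-1$, i.e., precisely in this dimension range with $n=2k$. So the theorem lives entirely in the linear-algebra computation showing that, when $\dim V\le 3$ and $L_u$ is an isomorphism, every top-degree tensor satisfying the Bianchi-type symmetry and compatible with the ring structure already lies in the subspace realized by cup products; without that computation you have restated what must be proved rather than proved it.

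A secondary weak point is the reduction itself. Invoking Theorems \ref{thm:stasheff} and \ref{thm:miller} to ``absorb all components of the perturbation with cohomological target in degree $<4n-1$'' is not a legitimate use of those results: they apply to spaces, respectively closed manifolds, whose \emph{total} dimension is at most $3k+1$, respectively $4k+2$ (and Miller's bound uses Poincar\'{e} duality in the top degree of the manifold at hand), so they say nothing about killing obstructions in a range of degrees inside a $(4n-1)$-dimensional algebra. In particular, triple Massey products of classes in $H^n$ land in $H^{3n-1}\neq 0$ and are genuine potential obstructions below the top degree; they are controlled only because Poincar\'{e} duality detects them by cupping into $H^{4n-1}$, a point your sketch gestures at but does not turn into an argument. (Minor slips: $H$ is a $(4n-1)$-\textsc{pda}, not an $n$-\textsc{pda}, and $(n-1)$-connectedness gives simple connectivity only for $n\ge 2$.)
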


In the same paper, Crowley and Nordstr\"{o}m construct infinitely many 
simply-con\-nected, non-formal manifolds all of whose Massey products 
vanish (the smallest dimension of such a manifold is $7$). We summarize 
their results, as follows.

\begin{theorem}[\cite{CN}]
\label{thm:nf-Massey-vanish}
For each $k\ge 1$, there is a non-formal, $(2k-1)$-connected 
manifold of dimension $8k-1$ and a $(2k)$-connected manifold 
of dimension $8k+3$ such that all Massey products in the rational 
cohomology rings of these manifolds vanish. 
\end{theorem}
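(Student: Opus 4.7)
The plan is to build the desired manifolds by first constructing a suitable commutative differential graded algebra (CDGA) model that is non-formal yet has all individual Massey products vanishing, and then realizing this rational homotopy type geometrically as a highly connected manifold of the prescribed dimension. The guiding heuristic is that Theorem \ref{thm:intrinsic-cn} of Crowley--Nordström forces intrinsic formality precisely under a strong Hard--Lefschetz condition and bound on $b_n(M)$, so the examples must live just outside those constraints; the gap between uniform and pointwise vanishing of Massey products (cf.\ the discussion after Proposition \ref{prop:massey-formal}) is exactly what permits these configurations.

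For the first family, fix $n=2k$ and consider an $(n-1)$-connected target dimension $4n-1 = 8k-1$. I would choose a finite-dimensional CDGA $(A,d)$ whose underlying graded space is concentrated in degrees $0, n, 2n-1, 2n, 3n-1, 4n-1$, with $\dim A^n$ taken large enough (at least $4$) so that Theorem \ref{thm:intrinsic-cn} no longer applies. The multiplication is arranged so that $H^{*}(A)$ is an $(n-1)$-connected Poincaré duality algebra of formal dimension $4n-1$, and the differential is designed so that each individual triple Massey product $\langle u,v,w\rangle\subset H^{2n}(A)$ either contains $0$ (by exhibiting a specific defining system with $\alpha=0$ in \eqref{eq:massey-prod}) or collapses into the indeterminacy via a suitable cup-product cancellation. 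The second family is produced by the same scheme with $n=2k+1$ in dimension $4n-1=8k+3$, using Poincaré duality with an odd middle dimension.

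Having built $(A,d)$, I would invoke Theorem \ref{thm:LS} of Lambrechts--Stanley to replace it by a weakly equivalent Poincaré duality CDGA, and then pass through standard rational surgery (or Sullivan's realization of finite $\cdga$s in the highly connected range, where the minimal model of a smooth manifold is determined by its CDGA model up to Sullivan's integrality and Pontrjagin-class ambiguities) to obtain a closed, oriented smooth manifold $M$ with $\apl(M)\simeq A$. The connectivity bound $k\ge 1$ and the dimensional ranges $8k-1$, $8k+3$ put us safely in the range where this realization procedure applies, so $M$ carries the prescribed rational homotopy type and hence the prescribed Massey-product behavior (by the naturality and quasi-isomorphism invariance in Proposition \ref{prop:massey-natural}).

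The delicate point, and the one I expect to be the main obstacle, is the simultaneous non-formality of $(A,d)$. I would verify this via Theorem \ref{thm:formal-dgms}: in the minimal model $\mathcal{M}(A)=(\bwedge V,d)$, show that there exists \emph{no} global splitting $V^i = N^i\oplus C^i$ (with $C^i=Z^i\cap V$) such that every cocycle in the ideal generated by $\bigoplus N^i$ is a coboundary. Concretely, this amounts to producing a single "non-uniformizable" obstruction cocycle built from several interacting cup products even though each triple or higher Massey product can be individually trivialized by its own defining system. The construction must therefore balance two opposing demands: enough interaction among the degree-$n$ generators to prevent a coherent splitting (forcing non-formality), but enough cup-product vanishing and Poincaré-duality-induced indeterminacy to annihilate every individual Massey bracket. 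Making this balancing act compatible with Poincaré duality in the correct dimension, and then realizing the result as a smooth manifold rather than merely a CW complex, is where the bulk of the technical work lies.
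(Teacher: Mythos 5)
The paper does not prove this statement itself; it cites the construction from Crowley--Nordstr\"{o}m \cite{CN}, so your proposal has to be judged as a self-contained argument, and as such it has a genuine gap: it is a strategy outline in which the entire mathematical content of the theorem is deferred. The theorem is an existence statement, and its whole difficulty lies in exhibiting a specific Poincar\'{e} duality rational homotopy type that is non-formal while every Massey product vanishes. Your proposal says you ``would choose'' a CDGA with the right properties and ``arrange'' the multiplication and differential so that each Massey product dies, but no differential, no multiplication table, and no defining systems are ever written down; you yourself identify the balancing act between non-formality and pointwise Massey vanishing as ``where the bulk of the technical work lies,'' which is an admission that the proof has not been given. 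Two further points would fail even if the construction were attempted along these lines. First, you only discuss triple Massey products, whereas the statement requires all higher-order products to vanish as well; in the degree ranges you propose (generators in degree $n$, relations detected in degrees near $2n$ and $3n-1$), one must also rule out nontrivial quadruple and higher brackets, which is not automatic. Second, the realization step is waved through: passing from a rational Poincar\'{e} duality CDGA to a closed smooth $(n-1)$-connected manifold of dimension $4n-1$ requires Sullivan/Barge-type realization with control of the intersection form and Pontrjagin classes, and ``standard rational surgery'' is not a citation that discharges this in the stated generality.

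For comparison, the actual argument of \cite{CN} proceeds quite differently and explains why the naive balancing act is hard to carry out by hand. Crowley and Nordstr\"{o}m introduce a single algebraic invariant (their Bianchi--Massey tensor, a symmetric tensor built from the degree-$n$ cohomology) which, in the connectivity and dimension range of the theorem, completely controls the rational homotopy type: formality is equivalent to the vanishing of this tensor, while the vanishing of all Massey products corresponds only to the vanishing of certain partial symmetrizations of it. The examples are then produced by pure linear algebra --- one writes down a tensor killed by the symmetrization conditions but not identically zero --- and realized as manifolds via their classification results. This is the key idea missing from your proposal: without a computable invariant separating ``formal'' from ``all Massey products vanish,'' one has no systematic way to certify that a candidate CDGA actually sits in the gap between the two conditions, which is precisely the phenomenon the theorem asserts.
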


In \cite{DGMS}, Deligne, Griffiths, Morgan, and Sullivan showed that 
every compact K\"{a}hler manifold $M$ is formal. On the other hand, 
symplectic manifolds need not be formal: the simplest example is 
the Kodaira--Thurston manifold, which is the product of the circle with 
the $3$-dimensional Heisenberg nilmanifold (see 
Example \ref{ex:heis-massey} below). This led Lupton 
and Oprea  \cite{Lupton-Oprea} to raise the question
whether compact, simply-connected symplectic manifolds 
are formal. The question was answered in the negative by 
Babenko and Taimanov \cite{BT}, \cite{BT-2}, who used McDuff's symplectic 
blow-ups to construct non-formal, simply-connected symplectic 
manifolds in all even dimensions greater than $8$; an $8$-dimensional 
example was subsequently constructed by Fern\'andez and Mu\~noz \cite{FM08}. 
We refer to \cite{FGM}, \cite{TO}, \cite{RT}, \cite{LS-gt}, and \cite{FOT} for 
more on this subject. 

\subsection{Formal maps}
\label{subsec:formal-maps}
A continuous map $f\colon X\to Y$ is said to be {\em formal}\/ 
(over $\Q$) if the induced morphism between Sullivan models,  
$\apl(f)\colon \apl(Y)\to \apl(X)$, is formal, in the sense of 
Definition \ref{def:formal-map}. By the discussion from Section 
\ref{subsec:minmod-formal}, this condition is equivalent to the 
existence of a diagram of the form 
\begin{equation}
\label{eq:formal=map}
\begin{tikzcd}[column sep=24pt, row sep=22pt]
\apl(Y)  \ar{d}{\apl(f)}
&
\M(Y) \ar[swap, pos=.4]{l}{\rho_Y} 
\ar{d}{\M(f)}
\ar{r}{\psi_Y} 
& (H^{*}(Y,\Q), 0)\phantom{,} \ar{d}{f^*}
\\
 \apl(X)
&\M(X)\ar[swap, pos=.4]{l}{\rho_X} \ar{r}{\psi_X}
& (H^{*}(X,\Q), 0) ,
\end{tikzcd}
\end{equation}
which commutes up to homotopy and 
in which the horizontal arrows are quasi-isomor\-phisms. 
When $f$ is formal, the surjectivity of $f^*$ implies that of $\M(f)$. 

One may define in a similar fashion formality of 
maps over an arbitrary field $\k$ of characteristic $0$. 
As shown by Vigu\'{e}-Poirrier in \cite{VP79}, a map 
$f\colon X\to Y$ between two nilpotent CW-complexes 
of finite type is formal over $\k$ if and only if it is 
formal over $\Q$. Moreover, as shown by F\'{e}lix 
and Tanr\'{e} \cite{FT88}, the cofiber of such a map 
is a formal space.

\begin{example}
\label{ex:holo-formal}
Suppose $f\colon M\to N$ is a holomorphic map between two 
compact K\"{a}hler manifolds. Then, as shown in \cite{DGMS}, 
$f$ is a formal map over $\R$.
\end{example}

In general, though, a map between two formal spaces need 
not be formal. A simple example is provided by the Hopf map 
$f\colon S^3\to S^2$, for which $f^*\colon \widetilde{H}^*(S^2,\Q)\to 
\widetilde{H}^*(S^3,\Q)$ is the zero map, yet the induced morphism 
$\M(f)\colon \M(S^2)\to \M(S^3)$ is non-trivial.

The next result, due to Arkowitz \cite{Ark}, delineates a 
class of formal spaces $X$ and $Y$ for which every map 
$f\colon X\to Y$ is formal.

\begin{theorem}[\cite{Ark}]
\label{thm:arkowitz}
Let $X$ and $Y$ be simply connected, formal, rational spaces, 
and let $[X, Y]_{\mathrm{f}}$ be the set of homotopy classes 
of formal maps from $X$ to $Y$.
\begin{enumerate}
\item  \label{ak1}
The map $[X,Y]_{\mathrm{f}} \to \Hom(H^*(Y,\Q), H^*(X,\Q))$, 
$f\mapsto f^*$ is a bijection.
\item \label{ak2}
Further assume that $X$ and $Y$ are of finite type, $b_i(X)=0$ 
for $i\ge 2n+1$, and $Y$ is $n$-connected. Then every map 
$f\colon X\to Y$ is formal.
\end{enumerate}
\end{theorem}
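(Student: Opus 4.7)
The plan is to pass to minimal models and work entirely in the cdga category. For simply-connected rational spaces $X$ and $Y$, homotopy classes of maps $X\to Y$ correspond bijectively to homotopy classes of cdga morphisms $\M(Y)\to\M(X)$, and a map $f$ is formal precisely when $\M(f)$ is homotopic to $f^*$ through the formality quasi-isomorphisms $\psi_X\colon\M(X)\to(H^*(X,\Q),0)$ and $\psi_Y\colon\M(Y)\to(H^*(Y,\Q),0)$, in the sense of diagram \eqref{eq:formal=map}.

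For part \eqref{ak1}, the main tool is the standard lifting property for cdga maps along a quasi-isomorphism out of a minimal cdga. For surjectivity of $f\mapsto f^*$, given $\varphi\in\Hom_{\cga}(H^*(Y,\Q),H^*(X,\Q))$, view it as a cdga map $(H^*(Y,\Q),0)\to(H^*(X,\Q),0)$, precompose with $\psi_Y$, and lift $\varphi\circ\psi_Y\colon\M(Y)\to(H^*(X,\Q),0)$ along $\psi_X$ to obtain a cdga map $\M(Y)\to\M(X)$; its geometric realization via Sullivan's correspondence gives a formal map $f\colon X\to Y$ with $f^*=\varphi$. For injectivity, if $f,g\in [X,Y]_{\mathrm{f}}$ satisfy $f^*=g^*$, then the two formality diagrams yield $\psi_X\circ\M(f)\simeq f^*\circ\psi_Y=g^*\circ\psi_Y\simeq\psi_X\circ\M(g)$, and uniqueness of lifts along $\psi_X$ gives $\M(f)\simeq\M(g)$, whence $f\simeq g$.

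For part \eqref{ak2}, one shows that every cdga morphism $\varphi\colon\M(Y)\to\M(X)$ is formal by a direct, obstruction-free construction. Write $\M(Y)=(\bwedge V,d)$; by Theorem \ref{thm:nilp-homotopy} and the $n$-connectedness of $Y$, we have $V=\bigoplus_{k\ge n+1}V^k$, so any product of two generators has degree at least $2n+2$. Since $d$ is decomposable, every $v\in V^{\le 2n}$ is automatically a cocycle. I would construct the homotopy $H\colon\M(Y)\to(H^*(X,\Q),0)\otimes\bwedge(t,dt)$ between $\psi_X\circ\varphi$ and $\varphi^*\circ\psi_Y$ by induction on the canonical Hirsch filtration of $\M(Y)$. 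On a new generator $v$ of degree $k$: if $k\le 2n$, then $v$ is a cocycle and both boundary values $\psi_X\varphi(v)$ and $\varphi^*\psi_Y(v)$ equal the common class $\varphi^*([v])\in H^k(X,\Q)$, so one takes $H(v)=\varphi^*([v])$, constant in $t$ and $dt$; if $k\ge 2n+1$, then $H^k(X,\Q)=0$ forces both boundary values to vanish, while the whole degree-$(k+1)$ piece $(H^*(X,\Q)\otimes\bwedge(t,dt))^{k+1}$ vanishes because $H^{\ge 2n+1}(X,\Q)=0$ kills both the polynomial and the $dt$-components, so $H(dv)=0$ automatically and one may take $H(v)=0$.

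The main obstacle is organizational rather than deep. In part \eqref{ak2} one must verify that the two constant choices above combine into a genuine cdga homotopy respecting differentials at every Hirsch-extension step; the sharp degree bounds $V^{\le n}=0$ and $H^{>2n}(X,\Q)=0$ are calibrated so that all potential obstructions collapse automatically. In part \eqref{ak1} the delicate point is transferring the cdga lifting (where cofibrancy of minimal cdgas provides both existence and homotopy-uniqueness) back to homotopy classes of maps of rational spaces via Sullivan's realization, which is what makes the bijection $[X,Y]\cong[\M(Y),\M(X)]$ applicable here.
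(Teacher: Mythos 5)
The paper offers no proof of this theorem; it is quoted from Arkowitz \cite{Ark}, and the argument there is exactly the minimal-model lifting argument you describe, so your proposal reconstructs the original proof rather than diverging from anything in the text. Part \eqref{ak2} is complete as written: the two degree counts ($V^{i}=0$ for $i\le n$ forces $dv=0$ for every generator $v$ of degree at most $2n$ by decomposability of $d$, and $H^{\ge 2n+1}(X,\Q)=0$ annihilates both the boundary values and the entire degree-$(k+1)$ piece of $(H^{*}(X,\Q),0)\otimes\bwedge(t,dt)$ for $k\ge 2n+1$) are precisely what makes the generator-by-generator homotopy extend with no obstruction, and the verification that the two prescribed boundary values agree on low-degree generators is correct once $\psi_X,\psi_Y$ are chosen to induce the identity on cohomology.

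Part \eqref{ak1} has one step that needs shoring up. Your injectivity argument runs $\psi_X\circ\M(f)\simeq f^*\circ\psi_Y=g^*\circ\psi_Y\simeq\psi_X\circ\M(g)$ and then invokes homotopy-uniqueness of lifts along $\psi_X$; this presumes that $f$ and $g$ are formal with respect to the \emph{same} pair $(\psi_Y,\psi_X)$. Definition \ref{def:formal-map} only asserts that \emph{some} such pair exists for each map, and two quasi-isomorphisms $\M(X)\to(H^{*}(X,\Q),0)$ inducing the identity on cohomology need not be homotopic --- they differ by an automorphism of $\M(X)$ acting trivially on $H^{*}$, which can be homotopically nontrivial. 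You must either prove that formality of a map is independent of the chosen formalizations or, as Arkowitz does, fix the formalizations as part of the data before the uniqueness-of-lifts step applies. Relatedly, both the bijection $[X,Y]\cong[\M(Y),\M(X)]$ and the Halperin--Stasheff fact that the identity of $H^{*}(X,\Q)$ can be realized (needed so that your lift of $\varphi\circ\psi_Y$ gives $f^{*}=\varphi$ on the nose rather than up to the automorphisms $\psi_X^{*},\psi_Y^{*}$) rely on finite-type hypotheses that part \eqref{ak1} suppresses and that you should make explicit. These are repairable bookkeeping gaps, not flaws in the strategy.
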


\subsection{Partial formality}
\label{subsec:partial}
Let $q$ be a non-negative integer.  A space $X$ is said to be {\em $q$-formal}\/ 
(over a field $\k$ of characteristic $0$) if its Sullivan algebra is $q$-formal, 
that is, $(\apl(X)\otimes_{\Q} \k,d)\simeq_q (H^*(X,\k),0)$. 
Clearly, if $X$ is formal, then $X$ is $q$-formal for all $q\ge 0$.  
Under some additional hypothesis, this implication may be reversed.

\begin{theorem}[\cite{Mc10}]
\label{thm:partial-to-formal}
Let $X$ be a space such that $H^{i}(X,\k)=0$ for all $i\ge q+2$. 
Then $X$ is $q$-formal if and only if $X$ is formal.
\end{theorem}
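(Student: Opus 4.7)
The forward direction is immediate, since a weak equivalence $\apl(X)\otimes_{\Q}\k \simeq (H^{*}(X,\k),0)$ restricts to a $q$-equivalence for every $q\ge 0$. For the converse, the plan is to assume $X$ is $q$-formal with $H^{i}(X,\k)=0$ for $i\ge q+2$, and to promote the $q$-quasi-isomorphism $\psi_q\colon \M_q(X)\to (H^{*}(X,\k),0)$ supplied by $q$-formality (see \S\ref{subsec:minmod-formal}) to a full quasi-isomorphism $\psi\colon \M(X)\to (H^{*}(X,\k),0)$; by the minimal model characterization of formality, the existence of such a $\psi$ will force $X$ to be formal.

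Writing $\M(X)=\M_q(X)\otimes \bwedge W$ with $W$ concentrated in degrees $\ge q+1$, I would extend $\psi_q$ across the iterated Hirsch extension by specifying values on a basis of $W$ and verifying compatibility with the differential at each step. For a generator $w\in W^{n}$ with $n\ge q+2$, the vanishing $H^{n}(X,\k)=0$ forces the choice $\psi(w):=0$, and the required relation $\psi(dw)=0$ is automatic, since $\psi(dw)$ lies in $H^{n+1}(X,\k)=0$. For $w\in W^{q+1}$, Sullivan's construction partitions the new generators into cocycles (added to cover $H^{q+1}(X,\k)$ beyond the image of $\rho_q^{*}$) and non-cocycles (added to kill spurious classes in degree $q+2$). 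On non-cocycles I would set $\psi(w):=0$, which is consistent because $\psi_q(dw)\in H^{q+2}(X,\k)=0$; on cocycles I would assign values in $H^{q+1}(X,\k)$ chosen so that together with $\psi_q^{*}$ they span all of $H^{q+1}(X,\k)$ bijectively.

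Finally, I would verify that $\psi$ is a quasi-isomorphism degree by degree. In degrees $\le q$, the map $\psi^{*}$ equals $\psi_q^{*}$ and is therefore an isomorphism. In degrees $\ge q+2$, both sides vanish, so $\psi^{*}$ is trivially an isomorphism. In degree $q+1$, the cohomology $H^{q+1}(\M(X))$ decomposes as $H^{q+1}(\M_q(X))$ plus the span of the cocycle generators in $W^{q+1}$, and by the choice above, $\psi^{*}$ sends this direct sum onto a basis of $H^{q+1}(X,\k)$. The expected main obstacle is precisely the bookkeeping in degree $q+1$: one must verify that the freedom in assigning $\psi(w)$ on cocycle generators genuinely suffices to produce a bijection on $H^{q+1}$, which rests on a dimension count using that $\rho^{*}\colon H^{q+1}(\M(X))\to H^{q+1}(X,\k)$ is already an isomorphism and that $\psi_q^{*}$ is injective in this degree.
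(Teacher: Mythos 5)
Your argument is correct, and it is essentially the route taken in \cite{Mc10}; the survey itself gives no proof of this statement, only the citation, so there is nothing internal to compare against. Two remarks on the step you flag as the main obstacle. First, a cosmetic point: for a non-cocycle generator $w\in W^{q+1}$ the chain-map condition involves $\psi(dw)$ rather than $\psi_q(dw)$, since in an iterated degree-$(q+1)$ extension $dw$ may involve previously adjoined generators; this changes nothing, as the value still lies in $H^{q+2}(X,\k)=0$. Second, the degree-$(q+1)$ bookkeeping does close, at least when $b_{q+1}(X)<\infty$. Indeed, since $\rho^{*}$ is an isomorphism in degree $q+1$ and restricts to the injection $\rho_q^{*}$ on the image of $H^{q+1}(\M_q(X))$, your direct-sum decomposition of $H^{q+1}(\M(X))$ gives $\dim \spn(C)=\dim\coker(\rho_q^{*})$, where $C$ denotes the cocycle generators; on the other hand $\psi_q^{*}$ is also injective on $H^{q+1}(\M_q(X))$, so its image has the same dimension as that of $\rho_q^{*}$ and hence the same codimension in $H^{q+1}(X,\k)$. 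One may therefore take $\psi|_{C}$ to be any isomorphism onto a complement of $\im(\psi_q^{*})$, and $\psi^{*}$ is then bijective in degree $q+1$. Note, however, that the equality of codimensions genuinely uses finite-dimensionality of $H^{q+1}(X,\k)$: equidimensional subspaces of an infinite-dimensional space need not have isomorphic quotients. The statement as recorded carries no finiteness hypothesis, so strictly speaking you should either add $b_{q+1}(X)<\infty$ (harmless in every application in the paper, and implicit in the finite-type setting of \cite{Mc10}) or supply a separate argument in the infinite-dimensional case.
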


In particular, the notions of formality and $q$-formality 
coincide for $(q+1)$-dimensional CW-complexes. 

\begin{example}
\label{ex:curves}
Let $V$ be a complex algebraic hypersurface in $\CP^{n}$, 
with complement $X=\CP^{n}\setminus V$. 
Work of Morgan \cite{Mo} shows that $X$ is $1$-formal, though 
not formal, in general. By Morse theory, $X$ has the homotopy 
type of a finite CW-complex of dimension $n$.  Thus, if $n=2$ 
(that is, $V$ is a plane curve), Theorem \ref{thm:partial-to-formal} 
implies that $X$ is formal.
\end{example}

\begin{example}
\label{ex:heis-massey}
Let $M=G_{\R}/G_{\Z}$ be the $3$-dimensional Heisenberg 
nilmanifold, where $G_{\R}$ is the group of real, 
unipotent $3\times 3$ matrices, and $G_{\Z}=\pi_1(M)$ 
is the subgroup of integral matrices in $G_{\R}$.  
This manifold has as a model the $\cdga$ $(A,d)$, where 
$A=\bwedge (a_1,a_2,b)$ with generators in degree $1$, 
and differential given by $da_i=0$ and $db=a_1a_2$. 
As noted in Example \ref{ex:gen-heisenberg}, this $\cdga$ 
is not $1$-formal. Alternatively, the triple Massey product 
$\langle [a_1],[a_1],[a_2] \rangle= \{[a_1b]\}$ 
is non-vanishing, with trivial indeterminacy.  
Therefore, $M$ is not $1$-formal. 
\end{example}

Partial formality enjoys a descent property analogous to that for full formality. 
Indeed, Theorem \ref{thm:i-formalField} has the following immediate corollary.

\begin{corollary}[\cite{SW-forum}]
\label{cor:pfext}
Let $X$ be a connected space such that $b_i(X)<\infty$ for $i\le q+1$. 
Then $X$ is $q$-formal over $\Q$ if and only if $X$ is $q$-formal over $\k$.
\end{corollary}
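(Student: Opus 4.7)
The plan is to reduce the corollary to a direct application of Theorem \ref{thm:i-formalField}, taking $A = \apl(X)$ as the $\Q$-$\cdga$ and using the field extension $\Q \subset \k$.

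First I would verify that the hypotheses of the theorem are met. Since $X$ is connected, $H^0(\apl(X)) \cong H^0(X,\Q) = \Q$. Moreover, because $\apl(X)$ is weakly equivalent (through $\dga$s) to the singular cochain algebra, we have $H^i(\apl(X)) \cong H^i(X,\Q)$ in every degree; hence the Betti number assumption $b_i(X) < \infty$ for $i \le q+1$ translates into the finite-dimensionality of $H^{\le q+1}(\apl(X))$.

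Next I would invoke the definition of $q$-formality of a space from Sections \ref{subsec:algmodels} and \ref{subsec:partial}: $X$ is $q$-formal over $\Q$ precisely when $(\apl(X),d)$ is $q$-formal as a $\Q$-$\cdga$, and $X$ is $q$-formal over $\k$ precisely when $(\apl(X)\otimes_{\Q} \k,\, d\otimes \id_\k)$ is $q$-formal as a $\k$-$\cdga$. Applied to $A=\apl(X)$ with the field extension $\Q\subset\k$, Theorem \ref{thm:i-formalField} yields exactly the equivalence between these two conditions, which is the content of the corollary.

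There is no real obstacle here: the genuine work is packaged inside Theorem \ref{thm:i-formalField}, which extends the descent-of-formality results of Sullivan, Neisendorfer--Miller, and Halperin--Stasheff to the partially formal setting. The corollary is the observation that the hypotheses on $b_i(X)$ for $i\le q+1$ and the connectivity of $X$ translate, via the natural cohomology identification $H^*(\apl(X))\cong H^*(X,\Q)$, verbatim into the hypotheses of the theorem when applied to the rational Sullivan algebra.
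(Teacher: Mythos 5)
Your proposal is correct and matches the paper's argument exactly: the paper also presents Corollary \ref{cor:pfext} as an immediate consequence of Theorem \ref{thm:i-formalField} applied to $A=\apl(X)$ with the extension $\Q\subset\k$, with the connectivity and Betti-number hypotheses supplying $H^0(A)=\Q$ and $\dim H^{\le q+1}(A)<\infty$. Nothing further is needed.
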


We may also consider a partial formality notion for maps. 
A continuous map $f\colon X\to Y$ is said to be {\em $q$-formal} if 
the morphism $\apl(f)\colon \apl(Y)\to \apl(X)$ is $q$-equivalent to 
the induced homomorphism in cohomology, $f^*\colon H^{*}(Y,\Q)\to H^{*}(X,\Q)$.

\subsection{Koszul algebras and formality}
\label{subsec:koszul-formal}

Let $A$ be a connected, locally finite $\k$-$\cga$. 
The trivial $A$-module $\k$ has a free, graded $A$-resolution of the form 
\begin{equation}
\label{eq:minres}
\begin{tikzcd}[column sep=20pt]
\cdots \ar[r, "\varphi_3"]& A^{n_2}  \ar[r, "\varphi_2"]&
A^{n_1}  \ar[r, "\varphi_1"]&A \ar[r]& \k \ar[r]& 0.
\end{tikzcd}
\end{equation}
Such a resolution is {\em minimal}\/ if all the nonzero 
entries of the matrices $\varphi_i$ have positive degrees.  
The algebra $A$ is said to be a {\em Koszul algebra}\/ if 
the minimal $A$-resolution of $\k$ is linear, or, equivalently, 
$\Tor_{i}^A(\k,\k)_{j}=0$ for all $i\ne j$. 
A necessary condition is that $A$ be expressed as the 
quotient $A=E/I$ of an exterior algebra on generators in 
degree~$1$ by an ideal $I$ generated in degree~$2$.  
A sufficient condition is that the ideal $I$ has a quadratic 
Gr\"{o}bner basis.
If $A$ is a Koszul algebra, then the quadratic dual $A^{!}$ is 
also a Koszul algebra and the following ``Koszul duality'' 
formula holds: 
\begin{equation}
\label{eq:kdual}
\Hilb(A,t)\cdot \Hilb(A^{!},-t)=1.
\end{equation}

The following theorem of Papadima and Yuzvinsky \cite{PY99}  
relates certain properties of the minimal model 
of a space $X$ to the Koszulness of its cohomology algebra.

\begin{theorem}[\cite{PY99}]
\label{thm:py}
Let $X$ be a connected space with finite Betti numbers.  
\begin{enumerate}
\item \label{py1}
If $\M(X)\cong \M_1(X)$, then the cohomology algebra 
$H^{*}(X,\Q)$ is a Koszul algebra.
\item  \label{py2}
If $X$ is formal and $H^{*}(X,\Q)$ is a Koszul algebra, then 
$\M(X)\cong \M_1(X)$.
\end{enumerate}
\end{theorem}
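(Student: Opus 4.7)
Plan. The key identification, provided by Theorem~\ref{thm:nat1model}, is that the 1-minimal model of the cohomology algebra $A = H^*(X,\Q)$ is the Chevalley--Eilenberg $\cdga$ $\wC(\h(A))$ of the holonomy Lie algebra, whose cohomology computes $\Ext^*_{U\h(A)}(\Q,\Q)$. Combined with Proposition~\ref{prop:Papadima-Y}, which identifies $U\h(A) \cong (\q A)^!$, both parts reduce to a Koszul-duality analysis of the Yoneda algebra of $(\q A)^!$.

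Before applying this machinery I would use formality (explicit in part~(2); implicit in part~(1), as in the phrasing of the introduction and in \cite{PY99}) to identify $\apl(X)$ with $(A,0)$, so that $\M(X) \cong \M(A)$ and $\M_1(X) \cong \M_1(A) \cong \wC(\h(A))$.

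For part~(1), the hypothesis $\M(X) \cong \M_1(X)$ then asserts that the augmentation $\wC(\h(A)) \to (A,0)$ is a full quasi-isomorphism, giving an isomorphism $A \cong \Ext^*_{(\q A)^!}(\Q,\Q)$ of graded algebras which respects both the wedge (cohomological) degree and the internal degree inherited from the grading of $\h(A)$. Comparing bigraded Hilbert series via the PBW formula \eqref{eq:pbw} together with the Koszul-duality identity \eqref{eq:kdual} forces the Yoneda algebra to be concentrated on the diagonal, which means $(\q A)^!$ is Koszul; hence so is $\q A$. A final Hilbert-series comparison between $A$ and $\q A$ then shows $A = \q A$, so $A$ itself is Koszul. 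For part~(2), if $A$ is Koszul then $A = \q A$ and $U\h(A) = A^!$ is Koszul; Koszul duality yields $\Ext^*_{A^!}(\Q,\Q) \cong A$, so $H^*(\wC(\h(A))) \cong A$. Therefore $\wC(\h(A)) \to (A,0)$ is a full quasi-isomorphism, and since $\wC(\h(A))$ is minimal and generated in degree~1, it serves as the full minimal model of $(A,0)$. By formality, $\M(X) \cong \M_1(X)$.

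The main obstacle is the careful bookkeeping of the two gradings on $\Ext^*_{U\h(A)}(\Q,\Q)$: the cohomological (wedge) degree from the Chevalley--Eilenberg complex, and the internal degree inherited from the grading of $\h(A)$. Identifying this bigraded algebra with the singly-graded $A$ requires that the $\cdga$ quasi-isomorphism respect both gradings, and upgrading the resulting equality of total dimensions to the diagonal concentration required for Koszulness is the technical heart of the argument; the combination of the PBW formula \eqref{eq:pbw} with the Koszul-duality identity \eqref{eq:kdual} is precisely what accomplishes this upgrade.
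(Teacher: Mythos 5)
Your overall architecture---reducing both parts to Koszul duality for $U(\h(A))=(\qA)^!$ via the identifications $\M_1(A)\cong\wC(\h(A))$ from Theorem \ref{thm:nat1model} and Proposition \ref{prop:Papadima-Y}---is the right one, and is essentially how \cite{PY99} argue. Part (2) goes through as you describe, modulo the routine check that the structural $1$-quasi-isomorphism $\rho_1\colon\M_1(A)\to (A,0)$ is actually a full quasi-isomorphism: both sides are generated in degree one and $\rho_1^*$ is an isomorphism on $H^1$, so $\rho_1^*$ is surjective, and the dimension count finishes. Two remarks on part (1), however. First, you are right that formality must be in force there; indeed the statement is false without it, since the Heisenberg nilmanifold of Example \ref{ex:heis-massey} satisfies $\M(X)\cong\M_1(X)$ (by Corollary \ref{cor:nilp-bk-mm}) while $H^{*}(X,\Q)$ is not even generated in degree one, hence not Koszul. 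But ``implicit in the phrasing of the introduction'' is not something a proof can rest on; formality needs to be taken as an explicit hypothesis in (1) as well.

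Second, and more seriously, the mechanism you propose for the diagonal concentration does not work as stated. The identity \eqref{eq:kdual} is valid only for Koszul algebras, so invoking it here is circular; and \eqref{eq:pbw} records only the diagonal Betti numbers $b_{ii}(A)$, saying nothing about the off-diagonal groups $\Ext^k_{(\qA)^!}(\Q,\Q)_j$ with $j>k$, whose vanishing is precisely the point. Nor does the equality of total dimensions $\dim A^k=\sum_j\dim\Ext^k_{(\qA)^!}(\Q,\Q)_j$ force that vanishing: one would need the reverse inequality $\dim A^k\le\dim(\qA)^k$, which presupposes that $A$ is generated in degree one---part of what is being proved, and exactly what fails in the Heisenberg example. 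The missing idea is the weight behaviour of the structural map itself: in the canonical construction of $\M_1(A)=\wC(\h(A))$ from Section \ref{subsec:holohat}, the map $\rho_1$ annihilates every generating space $V_i$ with $i\ge 2$, hence vanishes identically on each component of wedge-degree $k$ and Hirsch weight $j>k$. Injectivity of $\rho_1^*$ therefore kills the off-diagonal cohomology outright, giving Koszulness of $(\qA)^!$ and hence of $\qA$; surjectivity of $\rho_1^*$ then identifies the canonical map $\qA\to A$ as an isomorphism, so $A$ itself is Koszul. Once this is observed, neither \eqref{eq:pbw} nor \eqref{eq:kdual} is needed.
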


Consequently, if $X$ is formal, then $X$ is rationally aspherical 
if and only if $H^{*}(X,\Q)$ is a Koszul algebra. When $X$ is also 
a nilpotent space, Berglund \cite{Berglund} recovers this equivalence 
(without assuming the cohomology algebra is generated in degree $1$) 
and finds several alternative conditions yielding the same 
class of spaces, which he calls {\em Koszul spaces}. 

As an application of Theorem \ref{thm:py}, we have the 
following formality criterion.

\begin{corollary}[\cite{PS-mathann}]
\label{cor:ps-koszul}
Let $X$ be a connected, finite-type CW-complex, 
and suppose that $H^*(X,\Q)$ is a Koszul algebra.  
Then $X$ is $1$-formal if and only if $X$ is formal. 
\end{corollary}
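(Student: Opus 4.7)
The forward implication is immediate from the definitions, so the plan is to prove the converse: assuming $X$ is $1$-formal with Koszul cohomology $A:=H^*(X,\Q)$, I will establish formality by showing that the structural map $\rho_1\colon\M_1(X)\to\apl(X)$ is in fact a full quasi-isomorphism. Then $\M_1(X)$ will simultaneously model $\apl(X)$ and $(A,0)$, forcing $\apl(X)\simeq(A,0)$.

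First I would use $1$-formality to identify $\M_1(X)\cong\M_1((A,0))$ as $\cdga$s, since the $1$-minimal model is an invariant of $1$-equivalence. Next, observing that $(A,0)$ is a formal $\cdga$ whose cohomology is Koszul, I would invoke the $\cdga$-level analogue of Theorem \ref{thm:py}\ref{py2} applied to $(A,0)$ to conclude that $\M((A,0))\cong\M_1((A,0))$; that is, the full minimal model of $(A,0)$ is already $1$-minimal. Combining the two identifications, $\M_1(X)$ is isomorphic to $\M((A,0))$, so $H^*(\M_1(X))\cong A$ as $\cga$s and there exists a direct quasi-isomorphism $\psi\colon\M_1(X)\to(A,0)$.

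Second, I would upgrade $\rho_1$ from a $1$-quasi-isomorphism to a full quasi-isomorphism. The induced map $\rho_1^*\colon H^*(\M_1(X))\to A$ is a morphism of $\cga$s between two algebras each abstractly isomorphic to $A$, and it is an isomorphism in degree $1$. Because $A$ is Koszul it is quadratic, hence generated as an algebra by $A^1$; thus $H^*(\M_1(X))$ is also generated in degree $1$, and $\rho_1^*$ surjects onto the subalgebra of $A$ generated by $A^1$, i.e.\ onto all of $A$. Since $X$ has finite Betti numbers, a dimension count in each degree upgrades surjectivity to bijectivity, so $\rho_1$ is a quasi-isomorphism. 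Chaining $\apl(X)\simeq\M_1(X)\simeq(A,0)$ via $\rho_1$ and $\psi$ then gives the formality of $X$.

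The main obstacle I anticipate is justifying cleanly the $\cdga$-level analogue of Theorem \ref{thm:py}\ref{py2}: for a Koszul $\cga$ $A$, the minimal model of the $\cdga$ $(A,0)$ should be generated purely in degree $1$. Although \cite{PY99} formulates this statement for spaces, its proof is essentially a $\cdga$-level argument and should transport directly to the formal $\cdga$ $(A,0)$; alternatively, one can verify it by comparing Poincar\'e series of the bigraded minimal model of $(A,0)$ using the PBW relation~\eqref{eq:pbw} together with the Koszul duality identity~\eqref{eq:kdual}, checking that no generators of degree $\ge 2$ are required.
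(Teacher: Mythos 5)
Your argument is correct and is precisely the intended application of Theorem \ref{thm:py} that the paper alludes to: the survey gives no proof of this corollary, deferring to \cite{PS-mathann}, where essentially your chain $\M_1(X)\cong\M_1((A,0))=\M((A,0))$ followed by the degree-one-generation argument upgrading $\rho_1$ to a quasi-isomorphism is the proof. The one step you flag---that the minimal model of $(A,0)$ is generated in degree $1$ when $A$ is Koszul---is exactly the $\cdga$-level form in which \cite{PY99} establishes Theorem \ref{thm:py}\ref{py2}, so it is not a genuine gap.
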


\begin{example}
\label{ex:fiber-type}
Let $\A$ be an arrangement of linear hyperplanes in $\C^{n}$, 
with complement $X=\C^{n} \setminus \bigcup_{H\in \A} H$. 
Work of Arnol'd and Brieskorn from the 1960s shows that 
$X$ is formal. Now suppose $\A$ is a fiber-type arrangement, 
or, equivalently, if its intersection lattice, $L(\A)$, is supersolvable. 
Then $X$ is aspherical and $H^{*}(X,\Q)$ is a Koszul algebra. 
Theorem \ref{thm:py} implies that $X$ is also rationally aspherical 
(this is a result first proved by Falk \cite{Falk} by other methods). 
It is an open question whether the converse is true: If $X$ is rationally 
aspherical, is $\A$ necessarily of fiber-type? Put differently:  If $H^{*}(X,\Q)$ is 
a Koszul algebra, is $L(\A)$ necessarily supersolvable?
\end{example}

\subsection{The $1$-formality property for groups}
\label{subsec:1-formal}
A finitely generated group $G$ is said to be {\em $1$-formal}\/ 
(over a field $\k$ of characteristic $0$) 
if there is a classifying space $K(G,1)$ which is $1$-formal (over $\k$). 
In view of the discussion from Section \ref{subsec:sullivan-models}, 
we see that a connected CW-complex $X$ is $1$-formal if and 
only if its fundamental group, $G=\pi_1(X)$, is $1$-formal. 

Over $\Q$, the $1$-formality property of a group $G$ depends 
only on its Malcev Lie algebra, $\m(G)$, or its rationalization, $G_{\Q}$. 
This is a consequence of the following well-known theorem, proved for 
instance in \cite{CT}, \cite{MP}, \cite{SW-forum}.

\begin{theorem}
\label{thm:1f-group}
A finitely generated group $G$ is $1$-formal 
if and only if $\m(G)$ is isomorphic to the degree 
completion of a finitely generated, quadratic Lie algebra.
\end{theorem}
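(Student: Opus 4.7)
The plan is to leverage the adjoint correspondence (Section \ref{subsec:dual-min}, Theorem \ref{thm:sullivan}) between $1$-minimal $\cdga$s and pronilpotent filtered Lie algebras, together with the identification of the Malcev Lie algebra via a $1$-finite $1$-model provided by Theorems \ref{thm:malholo} and \ref{thm:nat1model}. In both directions the candidate quadratic Lie algebra will be the holonomy Lie algebra $\h(G;\Q)=\h(H^{*}(G,\Q))$, which, by construction (Section~\ref{subsec:holo-ga}, Section~\ref{subsec:holo lie group}), is finitely generated (since $H^1(G,\Q)$ is finite-dimensional) and quadratic.

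For the forward direction, I will assume $G$ is $1$-formal, so that $\apl(K(G,1))\simeq_1(H^{*}(G,\Q),0)$. I would then let $A$ be the $1$-finite sub-$\cdga$ of $(H^{*}(G,\Q),0)$ consisting of $\Q$ in degree $0$, $H^1(G,\Q)$ in degree $1$, and $\mu_G(H^1\wedge H^1)$ in degree $2$. The inclusion $A\hookrightarrow(H^{*}(G,\Q),0)$ is a $1$-quasi-isomorphism, so $A$ is a $1$-finite $1$-model for $G$. Theorem~\ref{thm:malholo} then gives $\m(G)\cong\widehat{\h(A)}$, and a short duality computation shows $\h(A)=\h(G;\Q)$, yielding the required finitely generated quadratic Lie algebra.

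For the converse direction, I will assume $\m(G)\cong\widehat{\mathfrak{q}}$ for some finitely generated quadratic Lie algebra $\mathfrak{q}$. Quillen's isomorphism \eqref{eq:quillen-iso} combined with $\gr(\widehat{\mathfrak{q}})\cong\mathfrak{q}$ yields $\gr(G;\Q)\cong\mathfrak{q}$, so $\gr(G;\Q)$ is itself quadratic. By Theorem~\ref{thm:holoepi} the natural epimorphism $\Phi\colon\h(G;\Q)\surj\gr(G;\Q)$ is an isomorphism in degrees $1$ and $2$; since both algebras are quadratic and generated in degree $1$, the degree-$1$ generators and degree-$2$ relations determine them, and so $\Phi$ must be an isomorphism, whence $\h(G;\Q)\cong\mathfrak{q}$ and $\m(G)\cong\widehat{\h(G;\Q)}$. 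Letting $B$ be the truncation of $H^{*}(G,\Q)$ constructed as in the forward step, one has $\h(B)=\h(G;\Q)$, and Theorem~\ref{thm:nat1model} gives $\M_1(B)\cong\wC(\h(G;\Q))$. The adjoint correspondence of Theorem~\ref{thm:sullivan} together with
\[
\fL(\M_1(G))\cong\m(G)\cong\widehat{\h(G;\Q)}\cong\fL(\M_1(B))
\]
forces $\M_1(G)\cong\M_1(B)$. Concatenating the $1$-equivalences $\apl(K(G,1))\simeq_1\M_1(G)\cong\M_1(B)\simeq_1 B\simeq_1(H^{*}(G,\Q),0)$ then shows that $G$ is $1$-formal.

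The main obstacle is the converse direction: one must promote an abstract Lie-algebra isomorphism $\m(G)\cong\widehat{\mathfrak{q}}$ to a genuine $1$-equivalence of $\cdga$ models for $G$. The crucial step is recognizing, via the graded-formality argument built from Theorem~\ref{thm:holoepi}, that $\mathfrak{q}$ must coincide with $\h(G;\Q)$, and then transporting the $\cdga$ structure of the truncated cohomology model $B$ through the adjoint correspondence in order to realize the desired $1$-equivalence with $(H^{*}(G,\Q),0)$.
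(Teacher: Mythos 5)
The paper never actually proves Theorem \ref{thm:1f-group}: it records the statement as well known and points to \cite{CT}, \cite{MP}, \cite{SW-forum}. So there is no in-text argument to compare against, and your proposal has to be judged on its own merits. It holds up, and it is essentially the standard proof, assembled entirely from results the paper does state. In the forward direction, the truncation $A=\Q\oplus H^1(G,\Q)\oplus \im(\mu_G)$ with zero differential is indeed a $1$-finite $1$-model for a $1$-formal $G$; one has $\h(A)=\h(G;\Q)$ because $\ker\mu_A=\ker\mu_G$ forces $\im(\mu_A^{\vee})=\im(\mu_G^{\vee})$; and Theorem \ref{thm:malholo} then gives $\m(G)\cong\widehat{\h(G;\Q)}$, where the LCS completion of a quadratic Lie algebra generated in degree $1$ is its degree completion. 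In the converse, your identification of $\q$ with $\h(G;\Q)$ via Theorem \ref{thm:holoepi} is exactly the paper's observation that graded-formality amounts to $\gr(G;\Q)$ being quadratic, and the return to a zig-zag of $1$-quasi-isomorphisms through $\M_1(B)\cong\wC(\h(B))$ uses Theorems \ref{thm:nat1model}, \ref{thm:model-holonomy}, and \ref{thm:sullivan} as intended.

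One point you should make explicit: the isomorphism $\m(G)\cong\widehat{\q}$ in the hypothesis must be understood as an isomorphism of complete, filtered Lie algebras. You use this twice without comment --- first when passing to associated graded Lie algebras to conclude $\gr(G;\Q)\cong\q$ via \eqref{eq:quillen-iso}, and again at the end when the composite isomorphism $\fL(\M_1(G))\cong\fL(\M_1(B))$ is fed into the adjoint correspondence of Section \ref{subsec:dual-min}, which only converts \emph{filtration-preserving} Lie isomorphisms into isomorphisms of $1$-minimal $\cdga$s. This is the intended reading of the statement (it is how the paper phrases such isomorphisms in Definition \ref{def:filt formal} and Section \ref{subsec:tff}), so it is a matter of declaring the convention rather than a genuine gap, but as written your chain of isomorphisms silently upgrades an unqualified isomorphism to a filtered one.
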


Let $\h(G)=\h(G;\Q)$ be the holonomy Lie algebra of $G$, as 
described in Section \ref{subsec:holo lie group}. As shown 
in \cite{PS-imrn04}, the $1$-formality of $G$ is equivalent 
to $\m(G)\cong \widehat{\h}(G)$. 

\begin{example}
\label{ex:free}
Let $F_n$ be the free group of rank $n\ge 1$.  We then have 
$H_1(F_n,\Q)=\Q^n$ and $H_2(F_n,\Q)=0$; hence, $\mu_{G}=0$ 
and so $\h(F_n)=\Lie(\Q^n)$, the free Lie algebra of rank $n$.  
On the other hand,  $\m(F_n)=\widehat{\Lie}(\Q^n)$, by 
Theorem \ref{thm:Massuyeau}. Therefore, $F_n$  is $1$-formal. 
\end{example}

\begin{example}
\label{ex:surface}
Let $\Sigma_g$ be the Riemann surface of genus $g\ge 1$.   
The group $G=\pi_1(\Sigma_g)$ is generated 
by $x_1,y_1,\dots,x_g,y_g$, subject to the single relation 
$[x_1,y_1]\cdots [x_g,y_g]=1$. It is readily checked that $\h(G,\k)$ 
is the quotient of the free Lie algebra on $x_1,y_1,\dots,x_g,y_g$  
by the ideal generated by $[x_1,y_1]+ \cdots + [x_g,y_g]$. 
A further computation using Theorem \ref{thm:Massuyeau} 
shows that $\m(G)\cong \widehat{\h}(G)$; thus, $G$ is $1$-formal. 
\end{example}
 
The $1$-formality property is preserved under finite free 
products and direct products of finitely generated groups.  
The following lemma (which follows at once from the discussion in 
Section \ref{subsec:sullivan-models}) provides a useful $1$-formality 
criterion. 

\begin{lemma}
\label{lem:1fg}
Let $G$ a finitely generated group. Suppose there is a $1$-formal 
group $K$ and a homomorphism $\varphi\colon G\to K$ such that 
$\varphi^*\colon H^1(K,\Q) \to H^1(G,\Q)$ is an isomorphism and 
$\varphi^*\colon H^1(K,\Q) \to H^1(G,\Q)$ is injective. Then $G$ 
is also $1$-formal.
\end{lemma}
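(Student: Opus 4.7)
The statement as printed has a small typo in its second hypothesis, which I read as requiring $\varphi^{*}\colon H^{2}(K,\Q)\to H^{2}(G,\Q)$ to be injective. Under this reading, my plan is to show that the morphism $\M_{1}(\varphi)\colon \M_{1}(K)\to \M_{1}(G)$ induced by $\varphi$ is an isomorphism of $\cdga$s; since $\M_{1}(K)$ is $1$-formal by hypothesis on $K$, so is $\M_{1}(G)$, and hence so is $G$ by the discussion in Section~\ref{subsec:sullivan-models}.

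First I verify that $\M_{1}(\varphi)$ is a $1$-quasi-isomorphism. By functoriality of the $1$-minimal model construction, the square
\begin{equation*}
\begin{tikzcd}
\M_1(K) \ar[r, "\M_1(\varphi)"] \ar[d, "\rho_K"'] & \M_1(G) \ar[d, "\rho_G"] \\
\apl(K) \ar[r, "\apl(\varphi)"'] & \apl(G)
\end{tikzcd}
\end{equation*}
commutes up to $\cdga$ homotopy. Both vertical arrows are $1$-quasi-isomorphisms by construction of the $1$-minimal model, and by hypothesis $\apl(\varphi)$ induces $\varphi^{*}$, which is an isomorphism on $H^{1}$ and a monomorphism on $H^{2}$. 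A quick diagram chase then forces $\M_{1}(\varphi)$ to share these properties, hence to be a $1$-quasi-isomorphism. Since a $1$-quasi-isomorphism between two $1$-minimal $\cdga$s is automatically an isomorphism (the uniqueness part of Theorem~\ref{thm:mm} applies: $\M_{1}(G)$ is then simultaneously a $1$-minimal model of $\M_{1}(K)$, so must be isomorphic to it via the given map, up to standard obstruction-theoretic modification), we deduce $\M_{1}(\varphi)\colon \M_{1}(K)\isom \M_{1}(G)$. Now $K$ being $1$-formal means $\apl(K)\simeq_{1}(H^{*}(K,\Q),0)$, hence $\M_{1}(K)\simeq_{1}(H^{*}(K,\Q),0)$. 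Moreover, the hypotheses on $\varphi^{*}$ directly express that $\varphi^{*}\colon (H^{*}(K,\Q),0)\to (H^{*}(G,\Q),0)$ is itself a $1$-quasi-isomorphism of $\cdga$s with zero differential. Composing through the inverse of $\M_{1}(\varphi)$ we conclude $\M_{1}(G)\simeq_{1}(H^{*}(G,\Q),0)$, whence $G$ is $1$-formal.

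The main obstacle is the claim that a $1$-quasi-isomorphism between two $1$-minimal $\cdga$s is an isomorphism. Although analogous to the well-known statement for full minimal models, it requires a careful obstruction argument proceeding inductively along the canonical Hirsch tower $\{\M(i)\}_{i\ge 0}$ of Section~\ref{subsec:holohat}: at the $i$-th stage $\M(i)=\M(i-1)\otimes\bwedge V_{i}$ with $V_{i}=\ker(H^{2}(\M(i-1))\to H^{2})$, and one checks that an inductively matched isomorphism on $\M(i-1)$, together with injectivity on $H^{2}$ of the ambient $\cdga$s, forces $V_{i}(K)\to V_{i}(G)$ to be an isomorphism and thus extends the iso one level higher. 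An alternative dual route is equally viable: the hypotheses force $\h(\varphi)\colon \h(G)\to \h(K)$ to be an isomorphism of holonomy Lie algebras, since the ideals generated by $\im(\mu_G^{\vee})$ and $\im(\mu_K^{\vee})$ correspond under the iso $H_{1}(\varphi)$ (using surjectivity of $H_{2}(\varphi)$, which is dual to our $H^{2}$ hypothesis); combined with $\m(K)\cong\widehat{\h(K)}$ coming from the $1$-formality of $K$ (Theorem~\ref{thm:1f-group}), this would yield $\m(G)\cong\widehat{\h(G)}$, and hence $1$-formality of $G$.
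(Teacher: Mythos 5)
Your reading of the typo is right: the second hypothesis should be that $\varphi^*\colon H^2(K,\Q)\to H^2(G,\Q)$ is injective, and your argument under that reading is correct. It is, however, more elaborate than what the paper intends. The paper gives no written proof; it only remarks that the lemma follows from the discussion in Section~\ref{subsec:sullivan-models}, where the intended argument is a one-liner: the hypotheses say exactly that a classifying map $K(G,1)\to K(K,1)$ realizing $\varphi$ is a $1$-rational homotopy equivalence, hence $\apl(K(K,1))\simeq_1 \apl(K(G,1))$, and since $q$-formality is invariant under $q$-equivalence (Section~\ref{subsec:partial-formal}), the $1$-formality of $K$ transfers to $G$. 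Your proof reaches the same conclusion by passing to $1$-minimal models, which is fine, but the step you single out as ``the main obstacle''---upgrading the $1$-quasi-isomorphism $\M_1(\varphi)$ to an honest isomorphism---is not actually needed: $\simeq_1$ is generated by zig-zags of $1$-quasi-isomorphisms going either way, so once you know $\M_1(\varphi)$ is a $1$-quasi-isomorphism you already have $\M_1(G)\simeq_1\M_1(K)\simeq_1(H^{*}(K,\Q),0)\simeq_1(H^{*}(G,\Q),0)$, the last link being $\varphi^*$ itself, exactly as you observe. (The isomorphism statement for $1$-minimal $\cdga$s is true and standard, but invoking it here only forces you into an obstruction argument you can skip.) Your alternative route via holonomy Lie algebras is also viable, but as written it is incomplete: to pass from $\m(K)\cong\widehat{\h}(K)$ and $\h(G)\cong\h(K)$ to $\m(G)\cong\widehat{\h}(G)$ you still need $\m(\varphi)\colon\m(G)\to\m(K)$ to be an isomorphism; this is Stallings' theorem (an isomorphism on $H_1(-,\Q)$ together with an epimorphism on $H_2(-,\Q)$ induces isomorphisms on all rational nilpotent quotients, hence on Malcev completions), and it should be cited explicitly if you take that path.
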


\begin{example}
\label{ex:low-betti}
If $G$ is a finitely generated group with $b_1(G)$ equal to $0$ or $1$, 
then $G$ is $1$-formal. Indeed, the claim is true for $K_0=\{1\}$ (trivially) 
and for $K_1=\Z$ (by Example \ref{ex:free}). Moreover, if $b_1(G)=i\in \{0,1\}$, 
we may define a homomorphism $\varphi\colon G\to K_i$ 
satisfying the assumptions of Lemma \ref{lem:1fg}. Therefore, 
the claim holds for $G$, too.
\end{example}

Here is another interpretation of the $1$-formality notion. We say that 
a finitely generated group $G$ is {\em graded-formal}\/ (over 
$\k$) if the associated graded 
Lie algebra $\gr(G;\k)$ is quadratic.  It follows from 
Theorem \ref{thm:holoepi} that $G$ is graded-formal 
precisely when the canonical surjection 
$\Phi\colon \gr(G;\k)\surj  \h(G;\k)$ is an isomorphism. 
As in Section  \ref{subsec:tff}, we say that $G$ is {\em filtered-formal}\/ 
over $\k$ if $\m(G)\otimes \k \cong \widehat{\gr}(G;\k)$. Putting 
things together, we obtain the following result.

\begin{proposition}[\cite{SW-forum}]
\label{prop:qwformal}
A finitely generated group $G$ is $1$-formal (over $\k$) if and only if $G$ is 
graded-formal and filtered-formal (over $\k$).
\end{proposition}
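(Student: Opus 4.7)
The plan is to pivot through the intrinsic characterization of $1$-formality provided by Theorem~\ref{thm:1f-group} (namely, that $G$ is $1$-formal if and only if $\m(G)\otimes\k$ is filtered-isomorphic to the degree completion of a finitely generated quadratic Lie algebra), and to couple this with two tools already established earlier in the paper: Quillen's natural isomorphism $\gr(\m(G))\cong\gr(G;\k)$ from \eqref{eq:quillen-iso}, and the canonical surjection $\Phi\colon\h(G;\k)\surj\gr(G;\k)$ from Theorem~\ref{thm:holoepi}, together with the equivalence (stated right after Theorem~\ref{thm:holoepi}) that $\Phi$ is an isomorphism if and only if $\gr(G;\k)$ is a quadratic Lie algebra.

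For the ``if'' direction, I would argue as follows. Assume $G$ is graded-formal and filtered-formal over $\k$. Graded-formality means that $\gr(G;\k)$ is a quadratic Lie algebra, and since $G$ is finitely generated, this quadratic Lie algebra is finitely generated in degree $1$. Filtered-formality then gives a filtered Lie algebra isomorphism $\m(G)\otimes\k\cong\widehat{\gr}(G;\k)$ inducing the identity on associated graded. Composing exhibits $\m(G)\otimes\k$ as the degree completion of a finitely generated quadratic Lie algebra, so Theorem~\ref{thm:1f-group} yields $1$-formality.

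For the ``only if'' direction, assume $G$ is $1$-formal. By Theorem~\ref{thm:1f-group}, there is a filtered Lie algebra isomorphism $\varphi\colon\m(G)\otimes\k\isom\widehat{\q}$ for some finitely generated quadratic Lie algebra $\q$, generated in degree $1$. Passing to associated graded and composing with Quillen's isomorphism \eqref{eq:quillen-iso} yields an isomorphism of graded Lie algebras $\gr(G;\k)\cong\gr(\widehat{\q})=\q$. In particular, $\gr(G;\k)$ is quadratic, so by the equivalence noted above $\Phi$ is an isomorphism, proving graded-formality. For filtered-formality, feed the filtered isomorphism $\varphi$ into Proposition~\ref{lem:filtiso}\eqref{filt1}: it exhibits $\m(G)\otimes\k$ as the degree completion of a graded Lie algebra preserving filtrations, hence $\m(G)\otimes\k$ is filtered-formal as a Lie algebra, i.e., $G$ is filtered-formal.

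The only genuinely delicate point to check is that the filtered isomorphism produced in the ``only if'' direction can be arranged (or replaced) so as to induce the identity on $\gr$, as required by Definition~\ref{def:filt formal}. This is routine: given $\varphi\colon\m(G)\otimes\k\isom\widehat\q$, one forms the composite $\widehat{\gr(\varphi)}^{-1}\circ\varphi\colon\m(G)\otimes\k\isom\widehat{\gr}(\m(G)\otimes\k)$, which by construction induces the identity on associated graded. Quillen's natural identification \eqref{eq:quillen-iso} then rewrites the target as $\widehat{\gr}(G;\k)$. Everything else is a bookkeeping exercise with the various naturality properties already codified in Sections~\ref{sect:Lie-algebras}--\ref{sect:holo}.
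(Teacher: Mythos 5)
Your proof is correct, and it follows the route that the cited source \cite{SW-forum} takes (the survey itself states Proposition \ref{prop:qwformal} without proof): pivot through Theorem \ref{thm:1f-group}, use Quillen's isomorphism \eqref{eq:quillen-iso} to identify $\gr(\m(G))$ with $\gr(G;\k)$, and invoke Proposition \ref{lem:filtiso}\eqref{filt1} for filtered-formality; your closing remark about replacing $\varphi$ by $\widehat{\gr(\varphi)}^{\,-1}\circ\varphi$ correctly disposes of the identity-on-$\gr$ requirement in Definition \ref{def:filt formal}. The only point worth making explicit is that you apply Theorem \ref{thm:1f-group} over $\k$ (i.e., with $\m(G)\otimes\k$), which is consistent with how the surrounding section treats the field but is stated in the paper without the field made explicit.
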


As a corollary, we deduce that $1$-formality enjoys a descent property.

\begin{corollary}[\cite{SW-forum}]
\label{cor:formal-kq}
A finitely generated group $G$ is $1$-formal over $\k$ if and only if $G$ is 
$1$-formal over $\Q$.
\end{corollary}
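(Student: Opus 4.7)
The plan is to reduce the descent statement for $1$-formality to the two separate descent statements for graded formality and filtered formality, using Proposition \ref{prop:qwformal} as the bridge. Since $G$ is $1$-formal over a field $\K$ of characteristic $0$ if and only if it is both graded-formal and filtered-formal over $\K$, and since the field extension $\Q \subset \k$ factors through $\Q \subset \k$ (trivially), it suffices to show that each of these two weaker formality notions descends from $\k$ to $\Q$. The forward direction (ascent from $\Q$ to $\k$) is the easy part: both conditions are preserved under base change, since $\gr(G;\k) = \gr(G;\Q)\otimes_{\Q}\k$ and $\h(G;\k) = \h(G;\Q)\otimes_{\Q}\k$, while completion commutes with extension of scalars (cf.~Lemma \ref{lem:ffascent} for the filtered-formality half).

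For the descent of graded-formality, I would use Theorem \ref{thm:holoepi}, which provides a natural surjection $\Phi_{\K}\colon \h(G;\K)\surj \gr(G;\K)$ for every coefficient field $\K$ of characteristic $0$, and note that these maps are compatible with base change. Concretely, $\Phi_{\k}$ is obtained from $\Phi_{\Q}$ by applying the functor $(-)\otimes_{\Q}\k$. Since the field extension $\Q\subset \k$ is faithfully flat, $\Phi_{\Q}$ is injective (hence an isomorphism, as it is already known to be surjective) if and only if $\Phi_{\k}$ is injective. Thus $G$ is graded-formal over $\k$ if and only if it is graded-formal over $\Q$.

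For the descent of filtered-formality, I would appeal directly to Theorem \ref{thm:ffdescent}, applied to the Malcev Lie algebra $\g = \m(G)$. The hypotheses are satisfied: by Quillen's isomorphism \eqref{eq:quillen-iso} the associated graded Lie algebra $\gr(\m(G)) \cong \gr(G;\Q)$ is generated in degree $1$, and it is finitely generated because $G$ is finitely generated (so $H_1(G,\Q)$ is finite-dimensional). Moreover, $\m(G)$ is complete and separated by construction. Applying Theorem \ref{thm:ffdescent} with the extension $\Q \subset \k$, we conclude that $\m(G)$ is filtered-formal over $\Q$ if and only if $\m(G)\otimes_{\Q}\k$ is filtered-formal, which is equivalent to $G$ being filtered-formal over $\k$.

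Combining the two descent results via Proposition \ref{prop:qwformal} yields the corollary. The substantive content lies entirely in Theorem \ref{thm:ffdescent}; the graded-formality descent is formal, and the combination step is a one-line application of the characterization of $1$-formality as the conjunction of graded and filtered formality. So the only ``hard'' step is invoking the right hypotheses of Theorem \ref{thm:ffdescent}, which in turn rests on the classical work of Enriquez and Maassarani, together with Quillen's identification of $\gr(\m(G))$ with $\gr(G;\Q)$.
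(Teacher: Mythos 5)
Your proof is correct and follows essentially the same route as the paper: decompose $1$-formality into graded-formality plus filtered-formality via Proposition \ref{prop:qwformal}, observe that graded-formality descends by base change (the paper simply calls this ``easily seen''), and invoke Theorem \ref{thm:ffdescent} for the filtered-formality half. The only difference is that the paper additionally remarks that, when $G$ is finitely presented, one can bypass all of this and deduce the result directly from Corollary \ref{cor:pfext}, since $b_2(G)<\infty$.
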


Indeed, it is easily seen that graded-formality is independent of the choice 
of a field $\k$ of characteristic $0$. By Theorem \ref{thm:ffdescent}, 
the same is true for filtered-formality, and the conclusion follows 
from Proposition \ref{prop:qwformal}.  When $G$ is finitely presented, 
we have that $b_2(G)<\infty$, and the result also follows from 
Corollary \ref{cor:pfext}.

As we saw in Example \ref{ex:free}, the free group $F_n$ 
has vanishing cup-product map $\mu_{F_n}$ and is 
$1$-formal.  Here is a partial converse.

\begin{proposition}[\cite{DPS-duke}]
\label{prop:commrel}
Let $G$ be a group admitting a finite presentation with 
only commutator relators. If $G$ is $1$-formal and 
$\mu_G=0$, then $G$ is a free group. 
\end{proposition}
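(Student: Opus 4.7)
The plan is to compute the associated graded Lie algebra $\gr(G;\Q)$ explicitly from the two hypotheses, and then exploit residual nilpotence of the free group to conclude. Write the given presentation as $G=F/R$, with $F=F_m$ free on the specified generators and $R$ the normal closure of the commutator relators. Since every relator lies in $[F,F]$, abelianizing gives $G_{\ab}\cong F_{\ab}=\Z^m$, so in particular $m=b_1(G)$; set $n\coloneqq b_1(G)$, so $F=F_n$.

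The first main step is to identify $\gr(G;\Q)$ with the free Lie algebra on $H_1(G,\Q)$. The hypothesis $\mu_G=0$ and the formula $\h(G;\Q)=\Lie(H_1(G,\Q))/\langle \mu_G^{\vee}\rangle$ from Section~\ref{subsec:holo lie group} give $\h(G;\Q)=\Lie(\Q^n)$. Since $G$ is $1$-formal, Theorem~\ref{thm:1f-group} (or equivalently the characterization $\m(G)\cong \widehat{\h}(G)$ recalled before Example~\ref{ex:free}) yields $\m(G)\cong \widehat{\Lie}(\Q^n)$. Quillen's isomorphism \eqref{eq:quillen-iso} then gives
\[
\gr(G;\Q)\;\cong\;\gr(\m(G))\;\cong\;\Lie(\Q^n).
\]

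The second main step compares $G$ with $F_n$ at the level of associated graded Lie algebras. The projection $\pi\colon F_n\twoheadrightarrow G$ induces a surjection of graded $\Q$-Lie algebras $\gr(\pi)\colon \gr(F_n;\Q)\twoheadrightarrow \gr(G;\Q)$. By the Magnus--Witt theorem $\gr(F_n;\Q)=\Lie(\Q^n)$, so both source and target are free Lie algebras on $n$ generators; moreover $\gr_1(\pi)$ is the abelianization map $H_1(F_n,\Q)\to H_1(G,\Q)$, which is an isomorphism by the first paragraph. Witt's formula gives $\dim_{\Q}\Lie_k(\Q^n)<\infty$ and equal on the two sides, so the surjection $\gr(\pi)$ is an isomorphism in every degree.

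To finish, suppose for contradiction that $R\ne\{1\}$, and pick $r\in R\setminus\{1\}$. Because $F_n$ is residually (torsion-free) nilpotent, $\bigcap_k\gamma_k(F_n)=\{1\}$, so there is a unique $k\ge 2$ with $r\in\gamma_k(F_n)\setminus\gamma_{k+1}(F_n)$; here $k\ge 2$ because $R\subseteq [F_n,F_n]$. The class $[r]\in\gr_k(F_n)=\Lie_k(\Z^n)$ is nonzero, and since $\Lie_k(\Z^n)$ is torsion-free, its image in $\gr_k(F_n;\Q)$ remains nonzero. But $\pi(r)=1$, so $[r]$ lies in the kernel of $\gr_k(\pi)\colon \gr_k(F_n;\Q)\to\gr_k(G;\Q)$, contradicting the isomorphism established in the previous paragraph. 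Hence $R=\{1\}$ and $G=F_n$. The only genuine subtlety is the passage from an isomorphism of graded Lie algebras over $\Q$ back to triviality in the group $F_n$, which is exactly what residual nilpotence of $F_n$ together with torsion-freeness of $\gr_k(F_n)$ supplies.
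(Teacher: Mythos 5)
Your argument is correct. Note that the survey states Proposition \ref{prop:commrel} without proof, citing \cite{DPS-duke}, so there is no in-paper argument to compare against; but your proof assembles exactly the ingredients the survey makes available, and it is essentially the argument of the cited source. The three steps all check out: (i) $\mu_G=0$ gives $\h(G;\Q)=\Lie(\Q^n)$, and $1$-formality (in the form $\m(G)\cong\widehat{\h}(G)$, stated after Theorem \ref{thm:1f-group}) combined with Quillen's isomorphism \eqref{eq:quillen-iso} and $\gr(\widehat{\g})\cong\g$ for $\g$ generated in degree $1$ yields $\gr(G;\Q)\cong\Lie(\Q^n)$; (ii) since the relators lie in $[F_n,F_n]$, the projection induces an isomorphism on $H_1(-;\Q)$ and hence a graded surjection $\Lie(\Q^n)=\gr(F_n;\Q)\twoheadrightarrow\gr(G;\Q)$, which the Witt dimension count upgrades to an isomorphism in each degree (you correctly use only the equality of graded dimensions, not any compatibility of the abstract isomorphism from (i) with $\gr(\pi)$); (iii) the descent from the associated graded back to the group is exactly where residual nilpotence of $F_n$ and the torsion-freeness of $\Lie_k(\Z^n)$ are needed, and you use them correctly: any $1\neq r\in R\subseteq\gamma_2(F_n)$ would produce a nonzero class in $\ker\bigl(\gr_k(\pi)\otimes\Q\bigr)$ for its unique filtration degree $k\ge 2$. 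This gives $R=\{1\}$ and $G\cong F_n$ as required.
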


\begin{example}
\label{ex:heis-bis}
Let $G=G_{\Z}$ be the Heisenberg group from 
Example \ref{ex:heis-massey}. Then $G$ is isomorphic to 
$F_2/\gamma_3(F_2)$, and so it has a finite 
presentation with only commutator relators; moreover, 
$\mu_G=0$, yet  $G$ is not a free group, since it is 
$2$-step nilpotent. Therefore, we conclude once again 
that $G$ is not $1$-formal. 
\end{example}

\subsection{Polyhedral products and right-angled Artin groups}
\label{subsec:zk-raag}
We conclude this section with a discussion of the formality 
properties of polyhedral product spaces and some related 
groups. Given a finite simplicial complex $K$,  it is a 
subtle question to decide whether the polyhedral products 
$\ZZ_{K}(\uX,\uX')$ from Section \ref{subsec:pp} are formal, 
even when all the spaces $X_i$ and the subspaces $X_i'$ 
are formal. Theorem \ref{thm:ft-zk} (together with a previous 
remark) yields a sufficient condition for this to happen.

\begin{corollary}[\cite{FT09}]
\label{cor:formal-zk}
Let $X_i^{}$, $X_i'$ be nilpotent, finite-type CW-complexes.
Assume that the inclusion maps $X_i'\inj X_i$ are formal 
and induce epimorphisms in cohomology.  
Then all polyhedral products $\ZZ_{K}(\uX,\uX')$ are formal.
\end{corollary}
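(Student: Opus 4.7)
The plan is to use the hypotheses on the inclusion maps to replace Félix--Tanré's models from Theorem \ref{thm:ft-zk} with cohomology algebras carrying zero differential, and then invoke that theorem to produce a model for $\ZZ_K(\uX,\uX')$ that is manifestly formal.

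First, I would exploit the formality of each inclusion $f_i\colon X_i'\inj X_i$. By Definition \ref{def:formal-map} and diagram \eqref{eq:formal=map}, formality of $f_i$ means that the morphism $\apl(f_i)\colon\apl(X_i)\to\apl(X_i')$ is weakly equivalent (as a $\cdga$ map) to $f_i^{*}\colon (H^{*}(X_i,\Q),0)\to (H^{*}(X_i',\Q),0)$. In particular, $(H^{*}(X_i,\Q),0)$ and $(H^{*}(X_i',\Q),0)$ serve as connected, finite-type rational $\cdga$ models for $X_i$ and $X_i'$ (the $X_i^{(\prime)}$ are nilpotent of finite type, so the cohomology algebras are finite-type $\cga$s), and the cohomology map $\varphi_i\coloneqq f_i^{*}$ is a $\cdga$ morphism modelling the inclusion $X_i'\inj X_i$. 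By the surjectivity hypothesis on cohomology, each $\varphi_i\colon H^{*}(X_i,\Q)\surj H^{*}(X_i',\Q)$ is the required surjective model map appearing in the hypotheses of Theorem \ref{thm:ft-zk}.

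Second, I would feed these models into Theorem \ref{thm:ft-zk} to obtain a connected, finite-type $\cdga$ model
\[
A(K)\;=\;\Bigl(\bigotimes_{i=1}^{n} H^{*}(X_i,\Q)\Bigr)\Big/I(K),\qquad
I(K)=\sum_{\sigma\notin K} I_{\sigma},
\]
for the polyhedral product $\ZZ_K(\uX,\uX')$, where each $I_{\sigma}$ is assembled from the kernels $\ker(\varphi_i)$ (for $i\in\sigma$) and the full algebras $H^{*}(X_i,\Q)$ (for $i\notin\sigma$). Since all factors $H^{*}(X_i,\Q)$ carry the zero differential and $I(K)$ is a homogeneous two-sided ideal generated by elements that are themselves cocycles, the induced differential on $A(K)$ is identically zero.

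Third, since $A(K)$ has zero differential, trivially $A(K)=(H^{*}(A(K)),0)$ as $\cdga$s. As $A(K)$ is a model for $\ZZ_K(\uX,\uX')$, passing to cohomology identifies $H^{*}(A(K))$ with $H^{*}(\ZZ_K(\uX,\uX'),\Q)$ (this also recovers the Bahri--Bendersky--Cohen--Gitler formula noted after Theorem \ref{thm:ft-zk}). Hence
$\apl(\ZZ_K(\uX,\uX'))\simeq A(K) = (H^{*}(\ZZ_K(\uX,\uX'),\Q),0)$, so $\ZZ_K(\uX,\uX')$ is formal. The only delicate point is the first step: one must verify that formality of the individual maps $f_i$ together with surjectivity on cohomology genuinely allows one to choose $(H^{*}(X_i,\Q),0)\xrightarrow{f_i^{*}}(H^{*}(X_i',\Q),0)$ as the input data for Theorem \ref{thm:ft-zk}, so that no hidden higher-order obstructions survive in the polyhedral assembly. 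This is exactly what the definition of a formal map provides once both source and target are formal spaces, so the argument goes through cleanly.
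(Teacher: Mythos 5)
Your proof is correct and is exactly the argument the paper intends: the paper derives Corollary \ref{cor:formal-zk} from Theorem \ref{thm:ft-zk} ``together with a previous remark,'' namely that a $\cdga$ with zero differential is formal, and your choice of $(H^{*}(X_i,\Q),0)\xrightarrow{f_i^{*}}(H^{*}(X_i',\Q),0)$ as the input models---justified by the formality of the inclusion maps and the surjectivity hypothesis---is precisely the intended instantiation. The resulting $A(K)$ has zero differential, hence is formal, completing the argument just as you wrote it.
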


We specialize now to the case when $X_i=X$ and $X_i'=X'$ 
for all $i$, and write $\ZZ_{K}(X,X')$ for the corresponding 
polyhedral product.
If $X$ is nilpotent and formal, then the inclusion 
$* \to X$ satisfies the hypothesis of 
Corollary \ref{cor:formal-zk}, and thus  $\ZZ_K(X,*)$
is formal---a result first proved in \cite{NR}. In particular, 
the Davis--Januszkiewicz spaces 
$DJ_K= \ZZ_K(\CP^{\infty},*)$ and the 
toric complexes $T_K=\ZZ_K(S^1,*)$ are all formal.

Letting $\Gamma$ be the $1$-skeleton of $K$, 
it is readily seen that the fundamental group of 
$T_K$ is the right-angled Artin group $G_{\Gamma}$ 
associated to the graph $\Gamma$. Consequently, 
all right-angled Artin groups are $1$-formal---a result 
also proved in \cite{PS-mathann}, using 
Theorems \ref{thm:Massuyeau} and \ref{thm:1f-group}. 
Moreover, if the flag complex of $\Gamma$ is 
simply-connected, then, as shown in \cite{PS-jlms07}, 
the Bestvina--Brady group associated to $\Gamma$ is 
also $1$-formal.

Finally, let us consider the moment-angle complexes 
$\ZZ_{K}=\ZZ_{K}(D^2,S^1)$. In this situation, 
Corollary \ref{cor:formal-zk} no longer applies, since 
the inclusion-induced homomorphism $H^1(D^2,\Q) \to H^1(S^1,\Q)$ 
is not surjective. In fact, there are infinite families 
of simplicial complexes $K$ for which $H^*(\ZZ_{K},\Q)$ 
has non-vanishing Massey products, and thus $\ZZ_K$ 
is non-formal, see \cite{Baskakov}, \cite{DeS07}, \cite{GL21}. 
If $K$ is an $n$-vertex triangulation of $S^{m}$, 
then $\ZZ_K$ is a closed manifold of dimension $n+m+1$. 
Asymptotically, almost all triangulations $K$ of $S^2$ 
yield non-formal moment-angle manifolds $\ZZ_K$, 
see \cite{DeS07}.

\section{Alexander invariants and resonance varieties}
\label{sect:alex-res}

\subsection{A generalized Koszul complex}
\label{subsec:univ aomoto}
Given a finite-dimensional $\k$-vector space $V$, 
we define the corresponding {\em canonical element}\/ to be tensor 
$\omega_{V} \in V^{\vee} \otimes V$ 
which corresponds to the identity automorphism 
of $V^{\vee}$ under the tensor-hom adjunction 
(recall that $\otimes=\otimes_{\k}$). 
In concrete terms, if we pick a basis $\{e_1,\dots ,e_n\}$ for $V^{\vee}$ 
and let $\{x_1,\dots ,x_n\}$ be the dual basis for $V$, 
then $\omega_V=\sum_{j=1}^n e_j\otimes x_j$.

Now let $A=(A^{*},d)$ be a connected $\k$-$\cdga$, 
and assume that the $\k$-vector space $H^1(A)$ 
is finite-dimensional.
Since $d(1)=0$ and $A^0=\k$, the differential 
$d\colon A^0\to A^1$ vanishes; thus, we may 
identify $H^1(A)$ with $Z^1(A)$.
Setting $H_1(A)=(H^1(A))^{\vee}$, 
we let $\omega_A\coloneqq \omega_{H_1(A)} \in 
H^1(A)\otimes H_1(A)$ 
be the corresponding canonical element.

Let $S=\Sym(H_1(A))$ be the symmetric algebra on $H_1(A)$. 
The tensor product $A\otimes_{\k} S$ is both a free $S$-module and 
a bigraded $\k$-algebra, with product $(a\otimes s)(a'\otimes s')=aa'\otimes ss'$. 
It is also a $\k$-$\cdga$, with differential $d \otimes \id_S$. 
Left-multiplication by $\omega_A$, viewed as an element of $Z^1(A)\otimes H_1(A)$, 
defines an endomorphism of $A\otimes S$ of bidegree $(1,1)$. 
We define an $S$-linear map, $\delta_A\colon A\otimes_{\k} S\to A\otimes S$, by 
\begin{equation}
\label{eq:delta-a}
\delta_A = \omega_A  + d \otimes \id_S .
\end{equation}
It is readily verified that $\delta_A^2=0$, and so the next result follows.

\begin{proposition}[\cite{Su-cdga}]
\label{prop:LA}
Let $(A^*,d)$ be a connected $\k$-$\cdga$ with 
$\dim_\k H^1(A)<\infty$. There is then 
a cochain complex of free $S$-modules,
\begin{equation}
\label{eq:koszul-dga}
\begin{tikzcd}[column sep =20pt]
 \cdots \ar[r] 
& A^{i}\otimes S \ar[r, "\delta^{i}_A"] 
&[4pt] A^{i+1} \otimes S \ar[r, "\delta^{i+1}_A"] 
&[4pt] A^{i+2} \otimes S \ar[r] 
& \cdots ,
\end{tikzcd}
\end{equation}
with differentials given by \eqref{eq:delta-a}, such that 
$(A^*\otimes S,\delta_A)$ is again a $\k$-$\cdga$.
\end{proposition}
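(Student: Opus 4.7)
The plan is to verify, in order, four things: that $\delta_A$ is a well-defined $S$-linear endomorphism of $A\otimes S$ of $A$-degree $+1$; that each $A^i\otimes S$ is $S$-free; that $\delta_A^2=0$; and that $\delta_A$ is compatible with the tensor-product algebra structure so as to give a $\cdga$.

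First I would dispatch the easy preliminaries. The summand $d\otimes \id_S$ is visibly $S$-linear of $A$-degree $+1$, and left multiplication by $\omega_A=\sum_j e_j\otimes x_j$ sends $a\otimes s$ to $\sum_j e_ja\otimes x_js$, which is $S$-linear because $S$ is commutative. Each $A^i\otimes S$ is free of rank $\dim_{\k}A^i$ over $S$ via any choice of $\k$-basis of $A^i$.

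The heart of the proposition is the identity $\delta_A^2=0$. I would expand
\[
\delta_A^2=(\omega_A\cdot)^2 \;+\; \bigl((\omega_A\cdot)\circ(d\otimes\id_S)+(d\otimes\id_S)\circ(\omega_A\cdot)\bigr) \;+\; (d\otimes\id_S)^2
\]
and treat the three pieces in turn. The last is zero because $d^2=0$. The first is left multiplication by $\omega_A^2=\sum_{j,k}e_je_k\otimes x_jx_k$, which vanishes by antisymmetrization: the $e_j\in A^1$ anticommute while the $x_j$ commute in $S$, so the double sum collapses. For the cross term, I would use that the basis elements $e_j$, chosen as representatives of a basis of $H^1(A)=Z^1(A)$, are cocycles, so the graded Leibniz rule for $d$ gives $d(e_ja)=d(e_j)\,a+\bar{e}_j\,da=-e_j\,da$; hence $(d\otimes\id_S)\circ(\omega_A\cdot)=-(\omega_A\cdot)\circ(d\otimes\id_S)$ and the anticommutator cancels.

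For the algebra structure, $A\otimes_{\k}S$ carries the bigraded $\k$-algebra structure $(a\otimes s)(a'\otimes s')=aa'\otimes ss'$, graded-commutative in the $A$-grading because $A$ is and $S$ is treated as a central scalar ring. The required compatibility of $\delta_A$ with this product is a direct check, piece by piece: $d\otimes\id_S$ is a derivation because $d$ is one on $A$, and the contribution of $\omega_A\cdot$ is handled by tracking signs through the graded-commutativity of $A$ as one moves $\omega_A$ past factors of positive $A$-degree. The main obstacle is really the $\delta_A^2=0$ calculation: the Maurer--Cartan-type identity $\omega_A^2=0$, together with the sign bookkeeping in the cross term (which crucially exploits the cocycle property of the chosen representatives of $H^1(A)$), are the only delicate points; once those are in place, the remaining verifications are essentially formal.
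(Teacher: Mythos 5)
Your verification of the two substantive clauses --- $S$-freeness of each $A^i\otimes S$ and $\delta_A^2=0$ --- is correct, and it is precisely the computation the paper has in mind when it says, just before the statement, that ``it is readily verified that $\delta_A^2=0$'': the square of left-multiplication by $\omega_A$ is left-multiplication by $\omega_A^2=\sum_{j,k}e_je_k\otimes x_jx_k$, which vanishes because the $e_j$ anticommute while the $x_j$ commute; the cross term vanishes because the $e_j$ are degree-one cocycles, so $d(e_ja)=-e_j\,da$; and $(d\otimes\id_S)^2=0$. On this, the main content of the proposition, your route and the paper's coincide.

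The step that does not go through as you describe it is the last one: the claim that the Leibniz rule for $\delta_A$ follows by a sign-chase, with the contribution of $\omega_A\cdot$ ``handled by tracking signs.'' Left multiplication by a fixed element of odd degree is never a derivation of a unital $\cga$: already $\delta_A(1\otimes 1)=\omega_A\neq 0$, whereas every derivation kills the unit, and for a general product one finds $(\omega_A u)v+\bar u(\omega_A v)=2\,\omega_A(uv)$ rather than $\omega_A(uv)$, since $\bar u\,\omega_A=\omega_A u$ by graded commutativity. What your bookkeeping actually yields is the relative identity $\delta_A(uv)=\delta_A(u)\,v+\bar u\,(d\otimes\id_S)(v)$, i.e., $(A\otimes S,\delta_A)$ is a differential graded \emph{module} over the $\cdga$ $(A\otimes S, d\otimes\id_S)$ --- which is all the resonance/Alexander-invariant formalism needs --- but it is not a $\cdga$ in the strict sense of Definition \ref{def:dga}. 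The paper offers no argument for that final clause (it is deferred to \cite{Su-cdga}), so you cannot be compared against it there; but you should either prove the module-structure statement instead, or note explicitly that the $\cdga$ clause cannot hold with the stated product and the literal Leibniz rule, rather than assert that the check is ``essentially formal.''
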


If we fix a $\k$-basis $\{ e_1,\dots, e_n \}$ for 
$H^1(A)$ and let $\{ x_1,\dots, x_n \}$ be the dual basis 
for $H_1(A)$, the ring $S=\Sym(H_1(A))$ may 
be identified with the polynomial ring $\k[x_1,\dots, x_n]$,
viewed as the coordinate ring of the affine space $H^1(A)$. 
The differentials in \eqref{eq:delta-a} are then the $S$-linear 
maps given by
\begin{equation}
\label{eq:diff}
\delta^{i}_A(a \otimes s)= 
\sum_{j=1}^{n} e_j a \otimes s x_j + d(a) \otimes s
\end{equation}
for all $a\in A^{i}$ and $s\in S$. 
If the $\cdga$ $A$ has zero differential, each map $\delta^{i}_{A}$ 
is given by a matrix whose entries are linear forms in the 
variables $x_1,\dots ,x_n$; in general, though, the entries of 
$\delta^i_A$ may also have non-zero constant terms.

\subsection{The Alexander invariants of a $\cdga$}
\label{subsec:alexinv}

The $S$-dual of the cochain complex \eqref{eq:koszul-dga} is 
the chain complex of free $S$-modules,
\begin{equation}
\label{eq:ls dual}
\begin{tikzcd}[column sep=22pt]
(A_*\otimes S,\delta^A): \ 
\cdots \ar[r]
& A_2 \otimes S   \ar[r, "\delta_2^A"] 
& A_1 \otimes S   \ar[r, "\delta_1^A"]
& A_0 \otimes S=S ,
\end{tikzcd}
\end{equation}
where the maps $\delta_i^A$ are the $S$-linear duals of the 
maps $\delta^i_A$. By analogy with classical the topological setting, 
we define the {\em Alexander invariants}\/ of a $\cdga$ $(A^*,d)$ as 
the homology $S$-modules of this chain complex, 
\begin{equation}
\label{eq:ai-def}
\B_i(A)\coloneqq H_i (A_*\otimes S) .
\end{equation}

If $d=0$, then the differentials in \eqref{eq:ls dual} are homogeneous (of degree $1$), 
and so the $S$-modules $\B_i(A)$ inherit a natural grading. 
For instance, if $E=\bwedge V$ is the exterior algebra on a finite-dimensional 
$\k$-vector space $V$, with differential $d=0$, then $\fB_i(E)=0$ for all $i\ge 1$. 
In general, though, the Alexander invariants $\B_i(A)$ do not have a natural grading. 

An explicit finite presentation for the first Alexander invariant, $\B(A)\coloneqq \B_1(A)$, 
was given in \cite[Theorem~6.2]{PS-imrn04} in the the case when $d=0$. This 
presentation is generalized in \cite{Su-cdga}, as follows.  

Let $(A,d)$ be a connected $\k$-$\cdga$ with $A^1$ finite-dimensional. 
Set $E=\bwedge H^1(A)$ and identify $E^1=H^1(A)$ with 
$Z^1=\ker\big(d\colon A^1\to A^2\big)$. Let $U^1$ be its 
complementary $\k$-vector subspace, so that $A^1=E^1\oplus U^1$,  
and write $A_i=(A^i)^{\vee}$ and so forth for the $\k$-dual vector spaces. 
Then $U_1$ may be identified with the image of the $\k$-dual of the 
differential, $d^{\vee}\colon A_2\to A_1$, and we have a direct 
sum decomposition, $A_1=E_1\oplus U_1$. Let 
$\pi_U\colon A_1\to U_1$ be the projection onto the 
second summand.

\begin{theorem}[\cite{Su-cdga}]
\label{thm:Bpres}
The Alexander invariant of $A$, viewed as a module over the symmetric 
algebra $S=\Sym(E_1)$, has presentation 
\begin{equation}
\label{eq:Bmod-pres}
\begin{tikzcd}[column sep=18pt]
\big(E_3\oplus A_2\big) \otimes S
\ar[r, "
\sbm{\delta_3^{E}  \amp 0 \\  
\mu_E^{\vee} \otimes\id_S\amp d_A^{\vee} \otimes\id_S+ \beta_A^{\vee}}
"]&[68pt] 
\big( E_2\oplus U_1\big)  \otimes S
\ar[r]& \B(A)  \ar[r]& 0 ,
\end{tikzcd}
\end{equation}
where $\beta_A^{\vee}=(\pi_U\otimes \id_S) \circ (\omega_A - \mu_E\circ \omega_E)^{\vee}$.
\end{theorem}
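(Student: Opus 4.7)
The plan is to build the stated presentation of $\B(A)=H_1(A_*\otimes S,\delta^A)$ by combining the Koszul resolution of $\k$ over $S$ with the vector-space decomposition $A_1=E_1\oplus U_1$ dual to $A^1=E^1\oplus U^1$.

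First I will identify $\ker(\delta_1^A)$ inside $A_1\otimes S$. A direct calculation using \eqref{eq:diff} will show that $\delta_1^A$ vanishes on $U_1\otimes S$ (since $U_1$ pairs trivially with $E^1=Z^1$) and restricts on $E_1\otimes S$ to the standard Koszul augmentation $\delta_1^E$, so that $\ker(\delta_1^A)=\ker(\delta_1^E)\oplus (U_1\otimes S)$. Exactness of the Koszul complex $(E_*\otimes S,\delta^E)$ in positive degrees then yields a canonical surjection $(\delta_2^E,\,\id_{U_1\otimes S})\colon (E_2\oplus U_1)\otimes S\twoheadrightarrow\ker(\delta_1^A)$ whose kernel equals $\im(\delta_3^E)\oplus 0$, providing the free generators of $\B(A)$ indexed by $E_2\oplus U_1$ together with the Koszul syzygies coming from $E_3$ as a first source of relations.

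Next I will decompose $\delta_2^A\colon A_2\otimes S\to A_1\otimes S$ via the splitting. Dualizing $\delta^1_A=\omega_A\cdot\,+\,d_A\otimes\id_S$ and projecting the codomain, the chain identity $\delta_1^A\delta_2^A=0$ will force the $E_1$-component to land in $\ker(\delta_1^E)=\im(\delta_2^E)$ and hence to factor as $\delta_2^E\circ\tilde\phi$ for some lift $\tilde\phi\colon A_2\otimes S\to E_2\otimes S$, while the $U_1$-component will split as the sum of a differential term $d_A^\vee\otimes\id_S$ and a multiplication term equal to $\beta_A^\vee$; the defining formula $\beta_A^\vee=(\pi_U\otimes\id_S)\circ(\omega_A-\mu_E\circ\omega_E)^\vee$ is precisely the algebraic encoding of ``left multiplication by $\omega_A$ modulo the portion that already factors through the exterior product on $E$.'' Together these will present $\B(A)$ as the cokernel of the block matrix
\[
\begin{pmatrix}\delta_3^E & \tilde\phi\\ 0 & d_A^\vee\otimes\id_S+\beta_A^\vee\end{pmatrix}\colon (E_3\oplus A_2)\otimes S\longrightarrow (E_2\oplus U_1)\otimes S.
\]

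To bring this into the form stated in the theorem, I will then perform a change of generators on the target that twists the $U_1$-summand along a suitable map $\psi\colon E_2\otimes S\to U_1\otimes S$; this twist absorbs $\tilde\phi$ into the $E_3$-column and produces the off-diagonal entry $\mu_E^\vee\otimes\id_S$ in the $(2,1)$-slot. The hard part of the argument will be this last bookkeeping, together with the matching of the $U_1$-component of $\delta_2^A$ with $d_A^\vee\otimes\id_S+\beta_A^\vee$: both reduce to an explicit computation of $\delta_2^A$ in a basis adapted to $A^1=E^1\oplus U^1$, carried out using the structure constants of the multiplication $A^1\otimes A^1\to A^2$ and of $d_A\colon A^1\to A^2$, combined with the Koszul identity $\delta_2^E\delta_3^E=0$.
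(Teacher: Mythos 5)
The paper gives no proof of this theorem---it is quoted from \cite{Su-cdga}, listed as in preparation---so there is nothing in the text to compare against line by line. Your strategy is nonetheless clearly the intended one: split $A_1=E_1\oplus U_1$, observe that $\delta_1^A$ restricts to the Koszul differential $\delta_1^E$ on $E_1\otimes S$ and vanishes on $U_1\otimes S$, use exactness of the Koszul complex to resolve $\ker(\delta_1^A)=\im(\delta_2^E)\oplus(U_1\otimes S)$, and lift $\delta_2^A$ through the resulting surjection. Your first two paragraphs carry this out correctly and land on the presentation matrix $\sbm{\delta_3^E \amp \tilde\phi \\ 0 \amp \pi_U\circ\delta_2^A}$, exactly as in the $d=0$ case of \cite[Theorem~6.2]{PS-imrn04}.

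The last paragraph, however, is both unnecessary and incorrect. It rests on a misreading of \eqref{eq:Bmod-pres}: the entry $\mu_E^\vee\otimes\id_S$ cannot be a map $E_3\otimes S\to U_1\otimes S$, because $\mu_E$ is the restricted multiplication $\bwedge^2 E^1\to A^2$, so its dual goes from $A_2$ to $E_2$. The displayed matrix is written with its rows indexed by the source summands $E_3$ and $A_2$; it is the transpose of the matrix you derived, with $\mu_E^\vee\otimes\id_S$ occupying the $A_2\to E_2$ slot and $0$ the $E_3\to U_1$ slot. So there is no off-diagonal entry to create, and the ``change of generators'' you invoke would in any case fail: left-composing with the automorphism $\sbm{\id \amp 0 \\ \psi \amp \id}$ of $(E_2\oplus U_1)\otimes S$ modifies only the second row and leaves $\tilde\phi$ untouched, and no automorphism of source or target converts an $A_2\to E_2$ block into an $E_3\to U_1$ block. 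What is genuinely missing from your argument is the explicit identification of the lift: since $\im(d_A^\vee)=U_1$, the $E_1$-component of $\delta_2^A$ is $\pi_E\circ\omega_A^\vee$ (where $\pi_E=\id-\pi_U$), and the identity $\langle \pi_E(\lambda_j^\vee\zeta),e_k\rangle=\langle \zeta, e_je_k\rangle=\langle \mu_E^\vee\zeta, e_j\wedge e_k\rangle$, with $\lambda_j$ denoting left multiplication by $e_j$, shows---exactly as in the classical computation---that $\pi_E\circ\omega_A^\vee=\delta_2^E\circ(\mu_E^\vee\otimes\id_S)$, so one may take $\tilde\phi=\mu_E^\vee\otimes\id_S$ on the nose. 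Likewise $\pi_U\circ\delta_2^A=d_A^\vee\otimes\id_S+\pi_U\circ\omega_A^\vee=d_A^\vee\otimes\id_S+\beta_A^\vee$, because $(\mu_E\circ\omega_E)^\vee$ factors through $E_1\otimes S$ and is annihilated by $\pi_U$. With these two computations inserted and the final paragraph deleted, your proof is complete.
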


Finally, let $I$ be the maximal ideal at $0$ of the polynomial ring $S$. 
The powers of this ideal define a descending filtration, $\{I^n\B(A)\}_{n\ge 0}$, 
on the Alexander invariant of $A$. Let $\gr(\B(A))$ be the associated graded 
$S$-module with respect to this filtration. 

\begin{proposition}[\cite{Su-cdga}]
\label{prop:torb}
For each $k\ge 1$, there is an isomorphism of $\k$-vector spaces,
\[
\gr_k(\fB(A))^{\vee}\cong \Tor ^{\cE}_{k-1} (A,\k)_{k} , 
\]
where on the right side $A$ is viewed as a graded module over the exterior 
algebra $\cE=\bigwedge A^1$.  
\end{proposition}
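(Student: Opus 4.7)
The plan is to compare $\gr_\bullet\fB(A)$ with the $\k$-dual of the Koszul complex computing $\Tor^{\cE}_\bullet(A,\k)$, via the spectral sequence of the $I$-adic filtration on the chain complex $(A_*\otimes S,\delta^A)$. Setting $F^p(A_*\otimes S)=A_*\otimes I^p$, one observes that the differential $\delta^A=\omega_A^{\vee}+d_A^{\vee}\otimes\id_S$ respects this filtration: the $\omega$-summand strictly raises the $I$-degree by one, while the $d$-summand preserves it. Since $\fB(A)$ is finitely presented over the Noetherian ring $S$ by Theorem~\ref{thm:Bpres}, an Artin--Rees argument identifies the induced filtration on $H_1(A_*\otimes S)$ with the $I$-adic filtration on $\fB(A)$ used in the statement, so that this spectral sequence converges to $\gr_\bullet\fB(A)$.

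On the $E_0$-page, each graded piece $\gr^p=A_*\otimes\Sym^p H_1(A)$ carries the differential $d_A^{\vee}\otimes\id$, since $\omega_A^{\vee}$ drops into a higher filtration level. Dualizing and restricting to internal degree $k$, the resulting cochain complex is to be identified with the degree-$k$ component of the Koszul complex $A\otimes_{\cE}F_\bullet$, where $F_\bullet=\cE\otimes_\k\Sym^\bullet(A^1)$ is the minimal free resolution of $\k$ over $\cE=\bwedge A^1$. The key structural input here is the presentation from Theorem~\ref{thm:Bpres}: the decomposition $A^1=E^1\oplus U^1$ assembles the $\cE$-module structure of $A$ out of the cohomological part $E^1=H^1(A)$, responsible for the exterior direction naturally visible in $\Sym^\bullet H_1(A)$, and the complementary $U^1$-part, which enters via the $d_A^{\vee}$ and $\beta_A^{\vee}$ terms and upgrades the ``exterior algebra'' from $\bwedge H^1(A)$ to $\bwedge A^1=\cE$. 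After this identification, the higher differentials $d_r$ for $r\ge 2$ vanish for degree reasons (their targets involve $A_i$ for $i<0$), so $E_\infty=E_2$ in the relevant slot, and extracting internal degree $k$ yields $\gr_k\fB(A)^{\vee}\cong\Tor^{\cE}_{k-1}(A,\k)_k$.

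The main obstacle will be to justify the identification in the second step. A priori, only the sub-algebra $\bwedge H^1(A)\subset\cE$ acts naturally on the $E_1$-page; it is the $\beta_A^{\vee}$-correction from Theorem~\ref{thm:Bpres}, together with the $U^1$-contribution of $d_A^{\vee}$, that collectively recover the Koszul differentials for the full exterior algebra $\cE=\bwedge A^1$. Tracking exactly how these two corrections combine to produce the Koszul differentials in the right degrees, and confirming the $\k$-duality pairing between the two complexes in each internal degree $k$, is the technical heart of the argument.
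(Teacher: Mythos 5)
The survey does not actually prove this proposition --- it is quoted from \cite{Su-cdga}, which is still in preparation --- so your argument has to stand on its own, and it does not. The filtration $F^p=A_*\otimes I^p$ does yield a spectral sequence, but its pages are not what you need them to be. On $E_0$ the induced differential is $d_A^{\vee}\otimes\id$ alone, so $E_1\cong H_*(A)\otimes \Sym^{\bullet}(H_1(A))$ with $d_1$ induced by $\omega_A^{\vee}$; in other words, $E_2$ computes the linear strand of the cohomology ring $H^*(A)$ over the exterior algebra $\bwedge H^1(A)$, not anything over $\cE=\bwedge A^1$. No page of this spectral sequence ever sees multiplication by the complement $U^1$ of $Z^1(A)$ inside $A^1$: the only operators that occur are $d_A^{\vee}$ and cup product with cocycles, whereas the Cartan--Koszul resolution of $\k$ over $\cE$ requires multiplication by all of $A^1$ and lives over $\Sym(A^1)$, a polynomial ring of strictly larger rank than $S=\Sym(H_1(A))$ whenever $d$ is nonzero on $A^1$. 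The identification you defer to ``the technical heart'' is therefore not a verification left for later --- it is the entire content of the proposition, and the spectral sequence you set up provides no mechanism for producing it.

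The degeneration claim is also false. Each $d_r$ raises filtration degree by $r$ but lowers homological degree by exactly one, so out of the $H_1$-line it lands in $A_0\otimes\gr^{p+r}(S)$, and into the $H_1$-line it arrives from $A_2\otimes\gr^{p-r}(S)$; no terms $A_i$ with $i<0$ ever appear, and there is no degree reason for $d_r$ to vanish. This is precisely the equivariant spectral sequence studied in \cite{PS-mrl}, whose higher differentials are nonzero in general; if they all vanished, $\gr_{\bullet}\fB(A)$ would coincide with $\gr_{\bullet}\fB(H^{*}(A),0)$, which fails already for simple non-formal examples. A workable strategy should instead start from the Cartan resolution of $\k$ over $\cE$, so that $\Tor^{\cE}_{k-1}(A,\k)_k$ is the middle homology of $A^0\otimes\Gamma_k(A^1)\to A^1\otimes\Gamma_{k-1}(A^1)\to A^2\otimes\Gamma_{k-2}(A^1)$, and then use the splitting $A^1=H^1(A)\oplus U^1$ together with the isomorphism $d\colon U^1\isom dA^1$ to contract the $U^1$-directions and match what remains with the $\k$-dual of $\gr_k$ of $(A_*\otimes S,\delta^A)$; the presentation in Theorem \ref{thm:Bpres} is the low-degree shadow of exactly this comparison. (A smaller point: the filtration induced on $H_1$ by $F^{\bullet}$ is only $I$-good, so Artin--Rees gives commensurability with the $I$-adic filtration on $\fB(A)$, not equality of the associated graded pieces in each degree.)
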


\subsection{Resonance varieties}
\label{subsec:resvar}
Let $(A,d)$ be a connected $\cdga$.  As noted previously, $H^1(A)=Z^1(A)$. 
For every $\omega \in H^1(A)$, the operator $d_{\omega}:= d+ \omega \, \cdot$ 
is a differential on $A$. The {\em resonance varieties}\/ $\RR^i_k(A)$
are defined, for all $i,k\ge 0$, as the infinitesimal jump loci
\begin{equation}
\label{eq:defres}
\RR^i_k(A) =\{ \omega \in H^1(A) \mid 
\dim H^i(A, d_{\omega} )\ge k\} .
\end{equation}
When the $\cdga$ $A$ is $q$-finite, for some $q\ge 1$, these sets 
are Zariski closed subsets of the affine space $H^1(A)$, for all 
$i\le q$ and $k\ge 0$. 

Clearly, $H^i(A, \delta_{0})=H^i(A)$; thus, the point 
$\zero\in H^1(A)$ belongs to the variety $\RR^i_1(A)$ 
if and only if $H^i(A)\ne 0$.  Moreover, $\RR^0_1(A)=\{\zero\}$.  
When the differential of $A$ is zero, the resonance varieties 
$\RR^i_k(A)$ are homogeneous subsets of $H^1(A)=A^1$. 
In general, though, the resonance varieties of a $\cdga$ 
are not homogeneous, as we shall see in Example \ref{ex:nonhomog}.

The following lemma follows quickly from the definitions. 

\begin{lemma}[\cite{MPPS}] 
\label{lem:functoriality}
Let $\varphi\colon A\to A'$ be a $\cdga$ morphism, 
and assume $\varphi$ is an isomorphism up to degree $q$, 
and a monomorphism in degree $q+1$, for some $q\ge 0$.  
Then the induced isomorphism in cohomology, 
$\varphi^*\colon H^1(A')\to H^1(A)$, identifies 
$\RR^i_k(A)$ with $\RR^i_k(A')$ for each $i\le q$,
and sends $\RR^{q+1}_k(A)$ into $\RR^{q+1}_k(A')$, 
for all $k\ge 0$. 
\end{lemma}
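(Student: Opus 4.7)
The plan is to establish, for each $\omega \in H^1(A) = Z^1(A)$, that setting $\omega' \coloneqq \varphi(\omega) \in Z^1(A') = H^1(A')$, the morphism $\varphi$ induces a $q$-quasi-isomorphism $\varphi_\omega\colon (A, d_\omega) \to (A', d_{\omega'})$ between the twisted complexes. Granted this, the lemma follows at once by a dimension count: for $i \le q$ one gets $\dim H^i(A, d_\omega) = \dim H^i(A', d_{\omega'})$, whence $\omega \in \RR^i_k(A) \iff \omega' \in \RR^i_k(A')$; while for $i = q+1$, the monicity of the induced map in degree $q+1$ gives $\dim H^{q+1}(A, d_\omega) \le \dim H^{q+1}(A', d_{\omega'})$, hence the inclusion of resonance varieties.

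That $\varphi$ intertwines the twisted differentials is a one-line check from the $\cdga$ axioms; the real content is the $q$-quasi-iso claim. I would work with the mapping cone $C = \mathrm{Cone}(\varphi_\omega)$, whose twisted differential decomposes as $D_\omega = D_0 + E_\omega$, where $D_0$ is the undeformed cone differential and $E_\omega\colon (a, a') \mapsto (-\omega \cdot a, \omega' \cdot a')$. A direct computation gives $E_\omega^2 = 0$ and $D_0 E_\omega + E_\omega D_0 = 0$. The hypothesis on $\varphi$, combined with the usual long exact sequence of the undeformed cone, yields $H^n(C, D_0) = 0$ for $n \le q$; the desired $q$-quasi-isomorphism for $\varphi_\omega$ is equivalent to $H^n(C, D_\omega) = 0$ in the same range.

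To bridge these, I would introduce the auxiliary complex $\widetilde{C} = C \otimes_{\k} \k[t]$ with differential $\widetilde{D} = D_0 + t E_\omega$ (which squares to zero by the anti-commutation) and filter it by $t$-degree. The resulting spectral sequence has $E_1$-page equal to $H^{*}(C, D_0) \otimes \k[t]$, which vanishes in cohomological degrees $\le q$; once convergence is verified (the filtration is bounded in each total cohomological degree), this gives $H^n(\widetilde{C}, \widetilde{D}) = 0$ for $n \le q$. One then specializes at $t = 1$ via the short exact sequence of complexes $0 \to \widetilde{C} \xrightarrow{t-1} \widetilde{C} \to (C, D_\omega) \to 0$ and reads off the desired vanishing $H^n(C, D_\omega) = 0$ from the associated long exact sequence.

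The principal difficulty is the boundary case $n = q$: the long exact sequence leaves $H^q(C, D_\omega)$ sitting inside $\ker\bigl((t-1)\colon H^{q+1}(\widetilde{C}) \to H^{q+1}(\widetilde{C})\bigr)$, so the vanishing requires injectivity of this $(t-1)$-action. This is exactly where the precise hypothesis on $\varphi$---isomorphism through degree $q$ and monomorphism in degree $q+1$---enters critically: a careful inspection of the $E_1$-differentials at cohomological degree $q+1$, or alternatively a direct telescoping argument lifting $\widetilde{D}$-cocycles coefficient by coefficient, rules out any $(t-1)$-torsion in $H^{q+1}(\widetilde{C})$ and closes the argument.
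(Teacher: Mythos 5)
There is a genuine gap, and it is located exactly where you placed the ``real content.'' The only consequence of the hypothesis that your argument actually invokes is the cohomological one: that the undeformed cone satisfies $H^n(C,D_0)=0$ for $n\le q$, i.e.\ that $\varphi$ is a $q$-quasi-isomorphism. But the lemma is \emph{false} under that weaker hypothesis---the paper's Example \ref{ex:nonhomog} exhibits a quasi-isomorphism $\iota\colon A'\inj A$ with $\RR^1_1(A')=\{0\}$ and $\RR^1_1(A)=\{0,1\}$, so the twisted map $H^*(A',d_{\omega'})\to H^*(A,d_{\omega})$ is not an isomorphism at $\omega'=a$. Consequently no argument that only feeds on the acyclicity of $(C,D_0)$ in degrees $\le q$ can close; you must use the hypothesis at the chain level, namely that each $\varphi^i\colon A^i\to (A')^i$ is a linear isomorphism for $i\le q$ and injective for $i=q+1$. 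Independently of this, the spectral-sequence step does not go through as stated: since $E_\omega$ has cohomological degree $+1$, the variable $t$ must sit in degree $0$, so the $t$-adic filtration on $C\otimes\k[t]$ is \emph{not} bounded in any total degree, and $C[t]$ is not complete for it. Hence $E_1=0$ does not force $H^*(\widetilde C,\widetilde D)=0$: the successive-approximation argument only produces a primitive in the completion $C[[t]]$, where the specialization $t=1$ is no longer available. The $(t-1)$-torsion problem you flag at degree $q$ is thus not a boundary nuisance to be ``inspected away''; it is the symptom of the counterexample above, and it occurs in all degrees.

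The repair is to discard the cone and deformation machinery altogether and argue degreewise, which is what the paper means by ``follows quickly from the definitions'' (no proof is given there beyond the citation to \cite{MPPS}). Your first observation is the right one: $\varphi$ intertwines $d_\omega$ with $d_{\omega'}$ for $\omega'=\varphi(\omega)$. Since $\varphi$ is the \emph{same} linear map on each graded piece before and after twisting, the twisted complexes are literally isomorphic through degree $q$ and the map is injective in degree $q+1$. A direct check on kernels and images---$\varphi$ carries $\ker(d_\omega^i)$ isomorphically onto $\ker(d_{\omega'}^i)$ whenever $\varphi^i$ is bijective and $\varphi^{i+1}$ is injective, and carries $\im(d_\omega^{i-1})$ onto $\im(d_{\omega'}^{i-1})$ whenever $\varphi^{i-1}$ is surjective and $\varphi^i$ is injective---yields $H^i(A,d_\omega)\cong H^i(A',d_{\omega'})$ for $i\le q$ and an injection $H^{q+1}(A,d_\omega)\inj H^{q+1}(A',d_{\omega'})$. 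Your dimension count then finishes the proof exactly as you wrote it.
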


Consequently, if $A$ and $A'$ are isomorphic $\cdga$s, then their 
resonance varieties are ambiently isomorphic.  As we shall see  
(also in Example \ref{ex:nonhomog}), the conclusions of 
Lemma \ref{lem:functoriality} do not always hold if we only 
assume that $\varphi\colon A\to A'$ is a $q$-quasi-isomorphism.

An alternative interpretation of the degree $1$ resonance varieties is 
given by the following theorem.

\begin{theorem}[\cite{Su-cdga}]
\label{theorem:r1a}
Let $A$ be a connected $\cdga$ with $0<\dim A^1<\infty$.  Then, 
for all $k\ge 1$,
\begin{equation}
\label{eq:res-inv}
\RR^1_k(A) = 
\bV \big( \!\Ann\big(\bwedge^k \big(\fB(A)\big)\big)\big),
\end{equation}
at least away from $\zero\in H^1(A)$.
\end{theorem}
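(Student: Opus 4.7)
The plan is to realize both $\RR^1_k(A) \setminus \{\zero\}$ and $\bV(\Ann(\bwedge^k \fB(A))) \setminus \{\zero\}$ as the locus of points $\omega \ne \zero$ where the finitely generated $S$-module $\fB(A)$ satisfies $\dim_\k \fB(A) \otimes_S \k_\omega \ge k$, with $\k_\omega := S/\m_\omega$. The bridge between resonance and the Alexander invariant is the parametrized complex $(A^* \otimes S, \delta_A)$ from Proposition \ref{prop:LA}, viewed as a family of complexes over the affine space $H^1(A)$.

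First, I will check that this family specializes correctly at each fiber. Reading off formula \eqref{eq:diff} and applying $\id \otimes \ev_\omega$ shows that $(A^* \otimes S, \delta_A) \otimes_S \k_\omega$ is precisely the twisted complex $(A^*, d_\omega)$. Dualizing, $(A_* \otimes S, \delta^A) \otimes_S \k_\omega$ is the $\k$-dual of $(A^*, d_\omega)$, and its degree-one homology is therefore $H^1(A, d_\omega)^{\vee}$.

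Second, I will compare $H_1((A_* \otimes S) \otimes_S \k_\omega)$ with $\fB(A) \otimes_S \k_\omega$ via the hyper-Tor spectral sequence for a chain complex of free $S$-modules,
\[
E^2_{p,q} = \Tor^S_p(\fB_q(A), \k_\omega) \Longrightarrow H_{p+q}((A_* \otimes S) \otimes_S \k_\omega).
\]
The key observation is that the connectedness of $A$ forces $\delta_1$ to be dual to the map $S \to A^1 \otimes S$, $1 \mapsto \omega_A = \sum e_j \otimes x_j$, whose image corresponds to the irrelevant ideal $\m_\zero \subset S$. Hence $\fB_0(A) \cong \k_\zero$, and for $\omega \ne \zero$ the residue fields $\k_\zero$ and $\k_\omega$ correspond to distinct points of $H^1(A)$, so $\Tor^S_p(\k_\zero, \k_\omega) = 0$ for all $p \ge 0$. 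The only $E^2$-term contributing in total degree $1$ is then $E^2_{0,1}$, which yields the natural isomorphism $\fB(A) \otimes_S \k_\omega \cong H^1(A, d_\omega)^{\vee}$ for every $\omega \ne \zero$.

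Finally, I will invoke a standard commutative-algebra dictionary: since $\fB(A)$ is finitely generated over $S$ (by Theorem \ref{thm:Bpres}) and exterior powers commute with base change, $(\bwedge^k \fB(A)) \otimes_S \k_\omega \cong \bwedge^k(\fB(A) \otimes_S \k_\omega)$ is nonzero iff $\dim_\k \fB(A) \otimes_S \k_\omega \ge k$, and by Nakayama's lemma this in turn is equivalent to $\omega \in \supp(\bwedge^k \fB(A)) = \bV(\Ann(\bwedge^k \fB(A)))$. Chaining these equivalences with the isomorphism from the previous step delivers the theorem. The main technical point -- and the reason for the caveat ``away from $\zero$'' -- is precisely the presence of $\fB_0(A) = \k_\zero$: at $\omega = \zero$, $\Tor^S_*(\k_\zero, \k_\zero)$ is the nontrivial Koszul homology on $b_1(A)$ variables, so $H_1((A_* \otimes S) \otimes_S \k_\zero)$ picks up an unwanted $E^2_{1,0}$ contribution that breaks the clean identification of $\fB(A) \otimes_S \k_\zero$ with $H^1(A)^{\vee}$.
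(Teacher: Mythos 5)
Your argument is correct. The survey itself gives no proof of this statement---it is quoted from \cite{Su-cdga}, a paper in preparation---so there is nothing in the text to compare against line by line; but your route is the standard one for results of this kind (cf.\ the $d=0$ case in \cite{PS-imrn04}), and every step checks out: the specialization of $(A^{*}\otimes S,\delta_A)$ at $\m_{\omega}$ is indeed $(A^{*},d_{\omega})$, so the fiber of the dual complex computes $H^1(A,d_{\omega})^{\vee}$; the universal-coefficients spectral sequence collapses in total degree $1$ for $\omega\ne\zero$ precisely because $\fB_0(A)\cong \k_{\zero}$ (which uses that $A_1=(A^1)^{\vee}$ surjects onto $H_1(A)$, so $\im(\delta_1^A)=\m_{\zero}$) and $\Tor_{*}^S(\k_{\zero},\k_{\omega})=0$ for $\omega\ne\zero$; and since $\fB(A)$ is finitely generated by Theorem \ref{thm:Bpres}, the identification of $\bV\big(\Ann\big(\bwedge^k\fB(A)\big)\big)$ with $\{\omega : \dim_{\k}\fB(A)\otimes_S\k_{\omega}\ge k\}$ via Nakayama and base change of exterior powers is legitimate. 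Your diagnosis of why $\zero$ must be excluded (the nonvanishing Koszul $\Tor$'s of $\k_{\zero}$ against itself contaminating total degree $1$) is also the right one.
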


The next example (adapted from \cite{MPPS} and \cite{Su-tcone}) illustrates several 
of the points mentioned above.

\begin{example}
\label{ex:nonhomog}
Let $A$ be the exterior algebra over $\C$ on generators $a,b$ in 
degree $1$, equipped with the differential given by $d{a}=0$ 
and $d{b}=b\cdot a$. Then $H^1(A)=\C$, generated by $a$. 
Setting $S=\C[x]$, the chain complex \eqref{eq:ls dual} takes the form 
\begin{equation}
\label{eq:toy}
\begin{tikzcd}[column sep=48]
S \ar[r, "\delta_2=\sbm{ 0 \\ x-1}"]
&S^2 \ar[r, "\delta_1=\sbm{x \: \amp 0}"]
&S .
\end{tikzcd}
\end{equation} 
It is readily seen that the Alexander invariant $\B(A)=H_1(A_{*}\otimes S)$
is isomorphic to $S/(x-1)$. Its support is equal to $\{1\}$, yet the resonance 
variety $\RR^1_1(A)$ is equal to $\{0,1\}$; both are non-homogeneous 
subvarieties of $\C$.  Finally, let $A'$ be the sub-$\cdga$ generated by $a$. Clearly, 
the inclusion map, $\iota\colon A'\inj A$, induces an isomorphism 
in cohomology. Nevertheless, $\RR^1_1(A')=\{0\}$, and so the resonance 
varieties of $A$ and $A'$ differ, although $A$ and $A'$ are quasi-isomorphic. 
\end{example}

\subsection{Resonance of tensor products and Hirsch extensions}
\label{subsec:tensor-hirsch}
The resonance varieties behave well with respect to some 
natural operations on $\cdga$s. The next result details the behavior 
of the depth-$1$ resonance varieties with respect to tensor products.

\begin{proposition}[\cite{PS-plms10}, \cite{PS-springer}]
\label{prop:prod-res}
Let $(A,d)$ and $(A',d')$ be two connected, finite-type $\cdga$s. Then, 
for all $q\ge 0$, 
\begin{equation}
\label{eq:rprod}
\RR^i_1(A\otimes A') =\bigcup_{p+q=i} \RR^p_1(A)\times \RR^q_1(A').
\end{equation}
\end{proposition}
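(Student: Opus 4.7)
The plan is to reduce everything to a twisted Künneth formula applied to the cochain complexes $(A,d_\omega)$. First I would invoke the Künneth isomorphism for the underlying cochain complexes, which, together with connectedness of $A$ and $A'$, yields a natural identification
\[
H^1(A\otimes A') \;\cong\; H^1(A)\oplus H^1(A').
\]
Under this identification, any class in $H^1(A\otimes A')$ is written uniquely as $\omega+\omega'$ with $\omega\in H^1(A)$ and $\omega'\in H^1(A')$. The finite-type hypothesis guarantees that the Künneth map is an isomorphism in all degrees.

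The central observation I would then record is that the twisted differential on $A\otimes A'$ decomposes compatibly with the tensor product. Concretely, for homogeneous $a\in A$ and $a'\in A'$,
\[
d_{\omega+\omega'}(a\otimes a')
 = (d_\omega a)\otimes a' + (-1)^{\abs{a}} a\otimes (d_{\omega'}a'),
\]
since the product in $A\otimes A'$ is $(\omega\otimes 1)(a\otimes a')=\omega a\otimes a'$ and $(1\otimes\omega')(a\otimes a')=(-1)^{\abs{a}}a\otimes\omega' a'$. Thus $(A\otimes A',d_{\omega+\omega'})$ is literally the tensor product of the cochain complexes $(A,d_\omega)$ and $(A',d_{\omega'})$.

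The conclusion then follows by applying the classical Künneth formula (valid over a field, without any finiteness constraint on the twisted cohomology) to obtain
\[
H^i(A\otimes A',\, d_{\omega+\omega'})
 \;\cong\; \bigoplus_{p+q=i} H^p(A,d_\omega)\otimes H^q(A',d_{\omega'}).
\]
Hence $H^i(A\otimes A',d_{\omega+\omega'})\ne 0$ if and only if there exist $p,q$ with $p+q=i$ such that both $H^p(A,d_\omega)\ne 0$ and $H^q(A',d_{\omega'})\ne 0$, that is, $\omega\in\RR^p_1(A)$ and $\omega'\in\RR^q_1(A')$. This is exactly the asserted equality.

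The only subtle point is verifying that no unexpected sign or convention issue obstructs the identification of $d_{\omega+\omega'}$ with the tensor-product differential; this is really just a bookkeeping check using the Koszul sign rule on $A\otimes A'$. Beyond that, the argument is a direct application of Künneth, so I do not anticipate a genuine obstacle.
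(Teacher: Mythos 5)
Your proof is correct and takes essentially the same route as the argument the paper cites from \cite{PS-plms10} and \cite{PS-springer}: identify $H^1(A\otimes A')$ with $H^1(A)\oplus H^1(A')$, check that $(A\otimes A',d_{\omega+\omega'})$ is literally the tensor product of the twisted complexes $(A,d_\omega)$ and $(A',d_{\omega'})$, and apply the K\"unneth formula over a field, which is exactly how the depth-$1$ statement reduces to a union of products. The sign verification you flag does work out as you state, and no finiteness hypothesis is actually needed for the K\"unneth step itself.
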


A proof of this statement is given in \cite[Proposition~13.1]{PS-plms10} 
under the assumption that both differentials, $d$ and $d'$, vanish (see also 
\cite[Proposition~2]{PS-springer}).  The same approach works in this 
wider generality.

We conclude this section with a result that shows how the 
resonance varieties behave under a certain type 
of Hirsch extensions.

\begin{proposition}[\cite{PS-imrn19}]
\label{prop:hirsch-res}
Let $B$ be a connected, finite-type $\cdga$. Fix an element $e\in B^2$ with $de=0$, and 
let $A=(B\otimes_{e} \bwedge (t), d)$ be the corresponding Hirsch extension. 
\begin{enumerate}
\item  \label{e0}
If $[e]=0$, then $\RR^i_1(A) =  \RR^{i-1}_1(B) \cup \RR^i_1(B)$, for all $i$. 
\item   \label{e1}
If $[e]\ne 0$, then
\begin{enumerate}
\item[(a)] \label{r1}
$\RR^i_k (A)\subseteq \RR^{i-1}_1 (B) \cup \RR^i_k (B)$, for all $i$ and $k$.
\item[(b)] \label{r2}
$\RR^1_k (A)=\RR^1_k (B)$, for all $k$.
\end{enumerate}
\end{enumerate}
\end{proposition}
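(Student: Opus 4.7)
The plan is to derive both parts from the elementary observation that, as graded $B$-modules, $A=B\oplus B\!\cdot\!t$, which yields a short exact sequence of graded $B$-modules
\[
0\to B\inj A\surj B[-1]\to 0,
\]
where the quotient sends $b+b't\mapsto b'$. In part (1), I will exploit a change of variables together with Proposition \ref{prop:prod-res}; in part (2), I will apply the sequence above at the level of $d_\omega$-twisted cochain complexes.

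\emph{Part (1).} If $[e]=0$, pick $\eta\in B^1$ with $d\eta=e$ and set $t'=t-\eta$, so that $dt'=0$. Then $t'\mapsto t-\eta$ extends to a $\cdga$ isomorphism $B\otimes(\bwedge(t'),0)\isom A$. A direct calculation of twisted cohomology for the trivial $\cdga$ $(\bwedge(t'),0)$ gives $\RR^0_1(\bwedge(t'))=\RR^1_1(\bwedge(t'))=\{\zero\}$ and $\RR^{\ge 2}_1(\bwedge(t'))=\emptyset$ by degree. Substituting into Proposition \ref{prop:prod-res}, only the terms with $q=0$ and $q=1$ contribute, yielding
\[
\RR^i_1(A)=\bigl(\RR^i_1(B)\cup\RR^{i-1}_1(B)\bigr)\times\{\zero\},
\]
that is, $\RR^i_1(A)=\RR^{i-1}_1(B)\cup \RR^i_1(B)$, viewed inside $H^1(A)=H^1(B)\oplus \k[t']$ via the canonical inclusion of $H^1(B)$.

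\emph{Part (2).} When $[e]\ne 0$, the vanishing $db+\lambda e=0$ forces $\lambda=0$, so $H^1(A)\cong H^1(B)$ and any $\omega\in H^1(A)$ may be regarded as an element of $B^1$. The inclusion $B\inj A$ and projection $A\surj B[-1]$ are compatible with $d_\omega=d+\omega\,\cdot$ (with the signed shift differential on $B[-1]$), so the displayed short exact sequence is one of $d_\omega$-twisted complexes. I will read off the long exact sequence
\[
\cdots\to H^i(B,d_\omega)\to H^i(A,d_\omega)\to H^{i-1}(B,d_\omega)\xrightarrow{\;\partial\;}H^{i+1}(B,d_\omega)\to\cdots,
\]
and identify the connecting map $\partial$ with (a sign times) right-multiplication by $e$. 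For a $d_\omega$-cocycle $b'\in B^{i-1}$ lifted to $b't\in A^i$, one computes $d_\omega(b't)=(d_\omega b')t+(-1)^{|b'|}b'e=(-1)^{|b'|}b'e\in B^{i+1}$; meanwhile right-multiplication by $e$ is a genuine $d_\omega$-chain map, since $d_\omega(b'e)=(db')e+\omega b'e=(d_\omega b')e$ by the Leibniz rule and $de=0$.

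Once this LES is in hand, both conclusions follow. For (a), exactness gives $\dim H^i(A,d_\omega)\le\dim H^i(B,d_\omega)+\dim\ker(\partial)\le\dim H^i(B,d_\omega)+\dim H^{i-1}(B,d_\omega)$; hence if $\dim H^i(A,d_\omega)\ge k$, then either $\dim H^i(B,d_\omega)\ge k$ or $\dim H^{i-1}(B,d_\omega)\ge 1$, and $\omega\in \RR^i_k(B)\cup\RR^{i-1}_1(B)$. For (b), specialize to $i=1$: if $\omega\ne 0$ then $d_\omega(1)=\omega\ne 0$ forces $H^0(B,d_\omega)=0$, while if $\omega=0$ the map $\partial\colon H^0(B)=\k\cdot[1]\to H^2(B)$ sends $[1]\mapsto \pm[e]\ne 0$ and is injective; in either case the LES collapses to $H^1(A,d_\omega)\cong H^1(B,d_\omega)$, proving $\RR^1_k(A)=\RR^1_k(B)$. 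The chief technical point—really the only non-bookkeeping step—is the identification of $\partial$ with $\pm\cdot e$, which requires the observation that although $e$ need not be a $d_\omega$-cocycle, right-multiplication by $e$ is still a $d_\omega$-chain map thanks to $de=0$.
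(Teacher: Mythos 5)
Your proof is correct, and it is essentially the argument behind the cited result: the paper itself defers the proof to \cite{PS-imrn19}, where part (1) is handled by the same change of variables $t'=t-\eta$ combined with the product formula of Proposition \ref{prop:prod-res}, and part (2) by the same short exact sequence $0\to B\to A\to B[-1]\to 0$ of $d_\omega$-twisted complexes, whose connecting homomorphism is (up to sign) multiplication by $e$. The only nit is cosmetic: the differential induced on the quotient $B[-1]$ is the unsigned $d_\omega$ (the $t$-coefficient of $d_\omega(b't)$ is exactly $d_\omega b'$), not the signed shift differential, but this changes nothing in the long exact sequence or in the two conclusions you draw from it.
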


\section{Cohomology jump loci and finiteness properties}
\label{sect:cjl}

\subsection{Characteristic varieties}
\label{subsec:charvar}

Given a space $X$, the jump loci for cohomology with coefficients in rank $1$ 
complex local systems on $X$ are powerful homotopy-type invariants, 
defined as follows.   

We will assume that $X$ is path-connected and its fundamental 
group, $G=\pi_1(X)$, is finitely generated. Let 
$\T_G\coloneqq \Hom(G, \C^{*})$ be the  group of $\C$-valued 
multiplicative characters on $G$. This is an abelian, complex 
algebraic group, whose identity $\one$ corresponds to the 
trivial representation.  The group $\T_G$ may be identified with 
the cohomology group $\Char(X)\coloneqq  H^1(X,\C^*)$. 
Its identity component, $\T^0_G$, is isomorphic to the complex 
algebraic torus $(\C^*)^{b_1(X)}$; the other connected components 
of $\T_G$ are copies of this torus, indexed by the torsion subgroup 
of the finitely generated abelian group $G_{\ab}=H_1(X,\Z)$.  

The {\em characteristic varieties}\/ of $X$ in degree $i\ge 0$ and 
depth $k\ge 0$ are the sets 
\begin{equation}
\label{eq:defjump}
\VV^i_k(X) =\{ \rho \in \Char(X) \mid 
\dim H^i(X, \C_{\rho} )\ge k\} ,
\end{equation}
where $\C_{\rho}$ is the rank $1$ local system on $X$ associated to 
a representation $\rho\colon G\to \C^{*}$. In other words, $\C_{\rho}$ 
is the vector space $\C$ viewed as a module over the group algebra $\C[G]$ 
via the action $g\cdot a=\rho(g) a$, for $g\in G$ and $a\in \C$. 

When the space $X$ is $q$-finite, for some $q\ge 1$, the sets 
$\VV^i_k(X)$ are Zariski closed subsets of the character group, 
for all $i\le q$ and $k\ge 0$, see \cite{PS-mrl}. It is readily seen that the 
the sets $\VV^1_k(X)$ depend only on the group $G=\pi_1(X)$. 

Now let $G$ be a finitely generated group, and set 
$\VV^i_k(G):=\VV^i_k(K(G,1))$.  It is known that the sets 
$\VV^1_k(G)$ with $k\ge 0$ depend only on 
the maximal metabelian quotient $G/G''$ 
(see e.g.~\cite{DPS-imrn}); 
more precisely, $\VV^1_k(G)=\VV^1_k(G/G'')$.

The characteristic varieties have several useful naturality properties.  
For instance, suppose $\varphi\colon G\surj Q$ is an epimorphism. 
Then the induced morphism on character groups, $\varphi^*\colon 
\T_Q\to \T_G$, is injective and sends $\VV^1_k(Q)$ 
into $\VV^1_k(G)$  for all  $k\ge 0$.  
Likewise, suppose that $H<G$ is a finite-index subgroup.  Then 
the inclusion $\alpha\colon H\to G$ induces a 
morphism $\alpha^*\colon \T_G\to \T_H$ with finite 
kernel, which sends $\VV^i_k(G)$ to $\VV^i_k(H)$ for all  $i,k\ge 0$.  

For the free groups $F_n$ of rank $n\ge 2$, we have that 
$\VV^1_k(F_n)=(\C^{*})^n$ for $k\le n-1$ and $\VV^1_n(F_n)=\{1\}$. 
In general, though, the jump loci of a group can be arbitrarily complicated.  

\begin{example}
\label{ex:syz}
Let $f\in \Z[t_1^{\pm 1},\dots , t_n^{\pm 1}]$ 
be an integral Laurent polynomial with $f(1)=0$.   
Then, as shown in \cite{SYZ}, there is a finitely presented group $G$ 
with $G_{\ab}=\Z^n$ such that $\VV^1_1(G)$ coincides with 
the variety $\mathbf{V}(f):=\{t \in (\C^{*})^n \mid f(t)=0\}$.  
\end{example}

\subsection{Algebraic models and cohomology jump loci}
\label{subsec:mod-cjl}

Work of Dimca and Papadima \cite{DP-ccm}, 
generalizing previous work of Dimca, Papadima, and Suciu \cite{DPS-duke},
establishes a tight connection between the geometry of the characteristic 
varieties of a space and that of resonance varieties of 
a model for it, around the origins of the respective ambient spaces, 
provided certain finiteness conditions hold.

Let $X$ be a path-connected space with $b_1(X)<\infty$, 
and consider the analytic map $\exp\colon H^1(X,\C)\to H^1(X,\C^*)$ 
induced by the coefficient homomorphism $\C\to \C^*$, $z\mapsto e^z$.
Let $(A,d)$ be a $\cdga$ model for $X$, defined over $\C$.  
Upon identifying $H^1(A)\cong H^1(X,\C)$, we obtain an analytic map 
$H^1(A)\to H^1(X,\C^*)$, which takes $\zero$ to $\one$.   

\begin{theorem}[\cite{DP-ccm}]
\label{thm:thmb}
Let $X$ be a $q$-finite space, and suppose $X$ admits a $q$-finite, 
$q$-model $A$, for some $q\ge 1$. Then, the aforementioned map,  
$H^1(A)\to H^1(X,\C^*)$, induces a local analytic isomorphism,  
$H^1(A)_{(\zero)} \to H^1(X,\C^*)_{(\one)}$, 
which identifies the germ at $\zero$ of $\RR^i_k(A)$ 
with the germ at $\one$ of $\VV^i_k(X)$, for all $i\le q$ and all $k\ge 0$. 
\end{theorem}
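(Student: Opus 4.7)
The plan is to realize both families of jump loci as degeneracy loci of a universal twisted complex, and then identify the two universal complexes near the trivial parameter using the exponential map $\exp\colon H^1(A)\to H^1(X,\C^*)=\Char(X)$, which is a local biholomorphism sending $\zero$ to $\one$ (it realizes $H^1(A)$ as the Lie algebra of the identity component $\Char(X)^0$).

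On the algebraic side, for each $\omega\in H^1(A)=Z^1(A)$ the operator $d_\omega=d+\omega\cdot{}$ squares to zero (this is a flat $\C$-valued connection, as in \eqref{eq:flathol}--\eqref{eq:flatcochains}, using that $\omega\wedge\omega=0$ in degree one). Assembling these differentials into a family yields the universal complex $(A\otimes S,\delta_A)$ of Proposition \ref{prop:LA}. Localizing at the analytic germ $(H^1(A),\zero)$ and using the $q$-finiteness of $A$ together with the truncation Lemma \ref{lem:truncate} in degrees $\le q+1$, one obtains a bounded complex of free, finitely generated modules over the local analytic ring $\mathcal{O}_{H^1(A),\zero}$ whose fiber at $\omega$ is $(A^{\le q+1},d_\omega)$; the germs of $\RR^i_k(A)$ at $\zero$, for $i\le q$, are then the depth-$k$ Fitting loci of its cohomology sheaves. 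On the topological side, one performs the analogous construction using the universal rank-one local system $\mathcal{L}$ on $X\times\Char(X)$: under the $q$-finiteness of $X$, a finite CW model gives a bounded complex of free $\mathcal{O}_{\Char(X),\one}$-modules whose fiber at $\rho$ computes $H^{\le q+1}(X,\C_\rho)$, and whose Fitting loci cut out the germs of $\VV^i_k(X)$ at $\one$.

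The decisive step is to produce a $q$-quasi-isomorphism between these two families that is compatible with $\exp$. The strategy is to start from a zig-zag $\apl(X)\otimes_\Q\C\simeq_q A$ of $\cdga$ $q$-quasi-isomorphisms, tensor each intermediate $\cdga$ with the structure sheaf of the appropriate local base, and twist differentials by the universal cocycle. Functoriality of the Maurer-Cartan / Hom-Lie identification \eqref{eq:flathol} shows these twists are preserved under $\cdga$ maps, and a Nakayama-type argument on the local ring (or, equivalently, a spectral-sequence comparison) propagates the $q$-quasi-isomorphism property from the fiber at the base point to the whole family. The remaining ingredient is the classical comparison $H^*(\apl(X)\otimes\C,d_\omega)\cong H^*(X,\C_{\exp(\omega)})$, a de Rham theorem for rank-one local systems that realizes the character $\exp(\omega)\colon\pi_1(X)\to\C^*$ as the monodromy of the flat connection $d+\omega$, available via Chen's iterated integrals or the correspondence between flat line bundles and closed $1$-forms up to integer-valued forms.

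The main obstacle is the step that upgrades the zig-zag of $\cdga$ $q$-equivalences to a $q$-equivalence of twisted families over the local analytic base: it is essential that all intermediate terms be $q$-finite so that the twisted complexes are perfect over $\mathcal{O}$ and their Fitting ideals are well-defined and preserved by the quasi-isomorphisms. This is precisely where the hypothesis ``$q$-finite $q$-model'' is needed, rather than merely ``$q$-model''. Once this $q$-equivalence of twisted families is in place, the germs $\RR^i_k(A)_{(\zero)}$ and $\VV^i_k(X)_{(\one)}$ are cut out by the same Fitting ideals under the exponential identification, and hence coincide as analytic germs for all $i\le q$ and $k\ge 0$.
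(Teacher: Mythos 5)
The survey does not actually prove Theorem \ref{thm:thmb}; it imports it from \cite{DP-ccm}, so I am judging your argument on its own terms. Your overall architecture is sound: both families of jump loci are degeneracy loci of universal twisted complexes, the parameter spaces are matched by $\exp$, and the two \emph{ends} of the comparison --- the truncated complex $(A[q]\otimes S,\delta_A)$ on the algebraic side and the equivariant cellular cochain complex of a finite CW model on the topological side --- are indeed bounded complexes of finite free modules over the relevant local rings, whose Fitting loci cut out the germs in question.

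The gap sits exactly at the step you yourself single out as decisive. You propose to tensor every term of a zig-zag $\apl(X)\otimes_{\Q}\C \simeq_q A$ with the local analytic ring, twist by the universal cocycle, and propagate the fiberwise $q$-quasi-isomorphism by a Nakayama or spectral-sequence argument. That propagation requires the twisted complexes to be perfect over the local ring, i.e., it requires the intermediate $\cdga$s to be $q$-finite --- but they are not, and cannot be made so: any zig-zag realizing the $q$-equivalence has the infinite-dimensional Sullivan algebra $\apl(X)\otimes\C$ at one end, and the hypothesis of the theorem supplies finiteness only for $A$ and for $X$, not for the intermediate terms. So the argument fails precisely where the topological and algebraic sides are supposed to be glued. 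That some genuinely local input is unavoidable here is shown by Example \ref{ex:nonhomog}: quasi-isomorphisms do \emph{not} preserve resonance varieties globally, only their germs at the origin, so no formal transport of structure along the zig-zag can work. Relatedly, the ``classical de Rham comparison'' $H^*(\apl(X)\otimes\C,d_\omega)\cong H^*(X,\C_{\exp\omega})$ must be produced as an isomorphism of families over the germ, compatibly with the module structures, not merely fiber by fiber; this is a substantial input rather than a citation-level remark. The actual proof in \cite{DP-ccm} circumvents both issues by working with deformation functors on Artinian local algebras in the Goldman--Millson/Hinich style: invariance of the functor and of its cohomology-jump subfunctors under quasi-isomorphism is established by obstruction theory, which never requires the intermediate objects to be finite, and the finiteness hypotheses enter only to show that the resulting functors are represented by the analytic germs $\RR^i_k(A)_{(\zero)}$ and $\VV^i_k(X)_{(\one)}$.
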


The work of Budur and Wang \cite{BW20} builds on this theorem, 
providing a structural result on the geometry of the characteristic varieties 
of spaces satisfying the hypothesis of the above theorem.   
Putting together Theorem \ref{thm:thmb} and Corollary \ref{cor:ax-bis} 
yields their result, in the slightly stronger form given in \cite{PS-jlms}.

\begin{theorem}[\cite{BW20}]
\label{thm:bw}
Suppose $X$ is a $q$-finite space which admits a $q$-finite $q$-model. 
Then all the irreducible components of 
$\VV^i_k(X)$ passing through $\one$ are algebraic subtori  of 
$\Char(X)$, for all $i\le q$ and $k\ge 0$.
\end{theorem}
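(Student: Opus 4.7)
The plan is to transport the geometry of the resonance varieties of an algebraic model $A$ for $X$ onto that of the characteristic varieties of $X$ via the exponential map, and then to promote a germ-level identification into a global statement about irreducible components. My two inputs will be Theorem \ref{thm:thmb}, giving a local analytic isomorphism of germs between $(\RR^i_k(A), \zero)$ and $(\VV^i_k(X), \one)$, together with Corollary \ref{cor:ax-bis}, which (as already alluded to in the introduction, following \cite{DPS-duke} and \cite{DP-ccm}) asserts that every irreducible component of $\RR^i_k(A)$ through $\zero$ is a rationally defined linear subspace of $H^1(A)$.

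Given these ingredients, I proceed as follows. Fix an irreducible component $W$ of $\VV^i_k(X)$ passing through $\one$. By Theorem \ref{thm:thmb}, the germ $(W, \one)$ corresponds under $\exp$ to the germ at $\zero$ of some irreducible component of $\RR^i_k(A)$, i.e., a rational linear subspace $L\subseteq H^1(A)$. Because $L$ is defined over $\Q$, integral linear equations cutting out $L$ can be exponentiated to character equations cutting out an algebraic subtorus $T_L\subseteq \Char(X)$; moreover, $\exp$ maps a neighborhood of $\zero$ in $L$ biholomorphically onto a neighborhood of $\one$ in $T_L$. Consequently the germs $(W, \one)$ and $(T_L, \one)$ coincide. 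Since $W$ and $T_L$ are both irreducible closed algebraic subvarieties of $\Char(X)$ passing through $\one$ that agree on some analytic neighborhood of $\one$, they must agree globally, so $W = T_L$ is an algebraic subtorus.

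The hard part will be the rationality of the linear subspaces $L$ arising as components of $\RR^i_k(A)$ through $\zero$, supplied by Corollary \ref{cor:ax-bis}. Without it, $\exp(L)$ would merely be a translated analytic subgroup whose Zariski closure could have strictly larger dimension than $L$; the germ of $W$ at $\one$ would then be only a proper piece of a subtorus germ, and the step passing from ``germs agree'' to ``irreducible components agree'' would break down. This rationality is precisely the arithmetic content inherited from the Dimca--Papadima--Suciu tangent cone theorem and the Ax-Lindemann type transcendence input lurking in Corollary \ref{cor:ax-bis}; once it is granted, the remainder is essentially analytic continuation together with the irreducibility of $W$.
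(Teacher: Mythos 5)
Your proposal is correct and follows essentially the same route as the paper, whose proof is precisely the combination of Theorem~\ref{thm:thmb} with Corollary~\ref{cor:ax-bis}. The only difference is a harmless redundancy: once you invoke Corollary~\ref{cor:ax-bis} on the pair of germs supplied by Theorem~\ref{thm:thmb}, it already delivers the conclusion that the component of $\VV^i_k(X)$ is a subtorus, so your subsequent step of exponentiating the rational linear subspace and matching germs merely re-proves part of that corollary.
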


\subsection{Finiteness obstructions}
\label{subsec:infvsBW}

The above theorem may be used to give examples 
of finite CW-complexes which do not have $1$-finite $1$-models.

\begin{example}
\label{ex:fpresgrp}
Let $f$ be an integral Laurent polynomial in $n\ge 2$ variables, and 
assume its zero set in $(\C^{*})^n$ contains the origin $1$, is irreducible 
but is not an algebraic subtorus; for instance, take $f(t)=\sum_{i=1}^n t_i -n$.  
Letting $G$ be a finitely presented group with $\VV^1_1(G)=\mathbf{V}(f)$ 
as in Example \ref{ex:syz}, we deduce from Theorem \ref{thm:bw} 
that the finite presentation complex of $G$ admits no $1$-finite $1$-model. 
\end{example}

On the other hand, as the next example shows, the existence of a 
$1$-finite $1$-model for a finitely generated group does not 
necessarily imply that the group is finitely presented.

\begin{example}
\label{ex:bbgrps}
Let $Y$ be a finite, connected CW-complex which is non-simply connected yet 
has $b_1(Y)=0$, and let $G$ be the Bestvina--Brady group associated to a flag 
triangulation of $Y$. It is proved in \cite[\S 10]{PS-adv09} that $G$ is finitely generated
and $1$-formal, but not finitely presented. 
\end{example}

As the next family of examples illustrates, the infinitesimal finiteness 
obstruction from Theorem \ref{thm:infobsq} may be stronger than 
the one from Theorem \ref{thm:bw}, even when $q=1$. 

\begin{example}
\label{ex:subtler}
Consider the free metabelian group $G= F_n/F_n''$ with $n\ge 2$. 
The free group $F_n=\pi_1(\bigvee^n S^1)$ admits a formal, finite CW-complex 
as classifying space; thus, Theorem \ref{thm:bw} applies to $F_n$.
It follows that the characteristic varieties $\VV^i_k(G)\cong \VV^i_k(F_n)$ satisfy 
the conditions from Theorem \ref{thm:bw} for $i\le 1$ and $k\ge 0$. 
On the other hand, as we saw in the proof of Theorem \ref{thm:meta}, 
we have that $b_2(\M_1(G))= \infty$, and so the group $G$ admits 
no $1$-finite $1$-model. 
\end{example}

\subsection{Tangent cones}
\label{subsec:exp tc}

Before proceeding, we review two constructions that provide 
approximations to a subvariety $W$ of a complex algebraic 
torus $(\C^*)^n$. The first one is the classical tangent cone, 
while the second one is the exponential tangent cone, 
a construction introduced in \cite{DPS-duke} and further 
studied in \cite{Su-imrn}, \cite{DP-ccm}, and \cite{SYZ}.

Let $I$ be an ideal in the Laurent polynomial ring   
$\C[t_1^{\pm 1},\dots , t_n^{\pm 1}]$ such that $W=V(I)$.   
Picking a finite generating set for $I$, and multiplying 
these generators with suitable monomials if necessary, 
we see that $W$ may also be defined by the ideal $I\cap R$ 
in the polynomial ring $R=\C[t_1,\dots,t_n]$.  
Let $J$ be the ideal  in the polynomial ring 
$S=\C[x_1,\dots, x_n]$ generated by the polynomials 
$g(x_1,\dots, x_n)=f(x_1+1, \dots , x_n+1)$, 
for all $f\in I\cap R$. 

The {\em tangent cone}\/ of $W$ at $\one \in (\C^*)^n$ is the algebraic 
subset $\TC_{\one}(W)\subseteq \C^n$ defined by the ideal 
$\init(J)\subset S$ generated by the initial forms of 
all non-zero elements from $J$.  The set 
$\TC_{\one}(W)$ is a homogeneous subvariety of $\C^n$, 
which depends only on the analytic germ of $W$ at 
the identity.  In particular, $\TC_{\one}(W)\ne \emptyset$ 
if and only if $\one\in W$.  

Let $\exp\colon \C^n \to (\C^*)^n$ be the exponential map, 
given in coordinates by $x_i\mapsto e^{x_i}$.  
The {\em exponential tangent cone}\/ at $\one$ 
to a subvariety $W\subseteq (\C^*)^n$ is the set  
\begin{equation}
\label{eq:tau1}
\tau_{\one}(W)= \{ x\in \C^n \mid \exp(\lambda x)\in W,\ 
\text{for all $\lambda \in \C$} \}.
\end{equation}
It is readily seen that $\tau_{\one}$ commutes with finite unions and 
arbitrary intersections. Furthermore, $\tau_{\one}(W)$ only depends 
on $W_{(\one)}$, the analytic germ of $W$ at the identity; in particular, 
$\tau_{\one}(W)\ne \emptyset$ if and only if $\one\in W$.  The main 
property of this construction is encapsulated in the following lemma.

\begin{lemma}[\cite{DPS-duke}, \cite{Su-imrn}, \cite{SYZ}] 
\label{lem:exp-tcone}
The exponential tangent cone $\tau_{\one}(W)$  of a subvariety 
$W\subseteq (\C^*)^n$ is a finite union of rationally defined linear 
subspaces of the affine space $\C^n$.  
\end{lemma}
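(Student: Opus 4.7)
The plan is to reduce the lemma to the case of a single Laurent polynomial and then use linear independence of exponentials to exhibit $\tau_{\one}(W)$ as a finite union of rational linear subspaces indexed by partitions of the support of that polynomial.

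First I would exploit the formal properties of $\tau_{\one}$ already recorded in the excerpt: since $\tau_{\one}$ commutes with finite intersections, and since any $W\subseteq (\C^*)^n$ is cut out by finitely many Laurent polynomials $f_1,\dots,f_m$, we have $\tau_{\one}(W)=\bigcap_{i}\tau_{\one}(V(f_i))$. A finite intersection of finite unions of linear subspaces is again such a union, so it suffices to treat a single Laurent polynomial $f=\sum_{a\in A} c_a\, t^{a}$ with $A\subset\Z^n$ finite.

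Next, the core computation: for $x\in\C^n$ and $\lambda\in\C$,
\[
f(e^{\lambda x_1},\dots,e^{\lambda x_n})
=\sum_{a\in A} c_a\, e^{\lambda\langle a,x\rangle}.
\]
Group the terms by the value of the linear form $\langle\,\cdot\,,x\rangle$ on $A$: let $\mathcal{P}(x)$ be the partition of $A$ whose blocks are the level sets of $a\mapsto\langle a,x\rangle$. Then, writing $\alpha_B$ for the common value on a block $B\in\mathcal{P}(x)$,
\[
f(e^{\lambda x})=\sum_{B\in\mathcal{P}(x)}\Bigl(\sum_{a\in B}c_a\Bigr)e^{\lambda\alpha_B},
\]
with pairwise distinct $\alpha_B$. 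By linear independence of the functions $\lambda\mapsto e^{\lambda\alpha}$ for distinct $\alpha\in\C$, this vanishes identically in $\lambda$ precisely when every block sum $\sum_{a\in B}c_a$ equals $0$. Thus $x\in\tau_{\one}(V(f))$ if and only if the induced partition $\mathcal{P}(x)$ is \emph{admissible}, meaning all of its block sums vanish.

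Now I would package this via partitions of $A$. For any partition $\mathcal{P}$ of $A$, set
\[
L_{\mathcal{P}}=\bigl\{x\in\C^n : \langle a,x\rangle=\langle a',x\rangle\ \text{for all}\ a,a'\ \text{in a common block of}\ \mathcal{P}\bigr\},
\]
which is a linear subspace defined over $\Q$ (indeed over $\Z$). Call $\mathcal{P}$ admissible if every block sum of the $c_a$ is zero. The key observation is that if $\mathcal{P}$ is admissible and $x\in L_{\mathcal{P}}$, then $\mathcal{P}(x)$ is coarser than $\mathcal{P}$, so each block of $\mathcal{P}(x)$ is a disjoint union of blocks of $\mathcal{P}$ and hence has vanishing $c_a$-sum; therefore $L_{\mathcal{P}}\subseteq\tau_{\one}(V(f))$. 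Conversely, if $x\in\tau_{\one}(V(f))$ then $\mathcal{P}(x)$ itself is admissible and $x\in L_{\mathcal{P}(x)}$. Since $A$ is finite there are only finitely many partitions, giving
\[
\tau_{\one}(V(f))=\bigcup_{\mathcal{P}\ \text{admissible}} L_{\mathcal{P}},
\]
a finite union of rationally defined linear subspaces. Combined with the reduction step, this proves the lemma.

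The only mildly delicate point is the application of linear independence of exponentials (which forces the admissibility condition), and the bookkeeping to make sure that one uses the partition $\mathcal{P}(x)$ induced by $x$ rather than an arbitrary one; once that is in place, the partition-indexed description of $\tau_{\one}(V(f))$ is automatic, and the passage from a single $f$ to arbitrary $W$ is purely formal.
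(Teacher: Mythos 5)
Your argument is correct, and it is essentially the standard proof from the cited sources \cite{Su-imrn}, \cite{SYZ} (the paper itself states the lemma without proof): reduce to a single Laurent polynomial via Noetherianity and the fact that $\tau_{\one}$ commutes with intersections, then use linear independence of the exponentials $\lambda\mapsto e^{\lambda\alpha}$ to identify $\tau_{\one}(V(f))$ with the union of the rational subspaces $L_{\mathcal{P}}$ over admissible partitions $\mathcal{P}$ of the support of $f$. The bookkeeping with the induced partition $\mathcal{P}(x)$ versus an arbitrary admissible one is handled correctly, so there is nothing to add.
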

 
For instance, if $W$ is an algebraic subtorus of $(\C^{*})^n$,  
then $\tau_{\one}(W)$ equals $\TC_{\one}(W)$, and both 
coincide with $T_{\one}(W)$, 
the tangent space to $W$ at the identity $\one$. 
More generally, there is always an inclusion between the two 
types of tangent cones associated to an  algebraic subset 
$W\subseteq (\C^{*})^n$, namely, 
\begin{equation}
\label{eq:ttinc}
\tau_{\one}(W)\subseteq \TC_{\one}(W). 
\end{equation}

As we shall see, though, this inclusion is far from being an equality 
for arbitrary $W$. For instance, the tangent cone $\TC_{\one}(W)$ may 
be a non-linear, irreducible subvariety of $\C^n$, or $\TC_{\one}(W)$ 
may be a linear space containing the exponential tangent cone 
$\tau_{\one}(W)$ as a union of proper linear subspaces.

\subsection{The Exponential Ax--Lindemann theorem}
\label{subsec:ax}

In \cite{BW20}, Budur and Wang establish the following version 
of a classical result, due to Ax and Lindemann.

\begin{theorem}[\cite{BW20}]
\label{thm:ax}
Let $V\subseteq \C^n$ and $W\subseteq (\C^*)^n$ be 
irreducible algebraic subvarieties.
\begin{enumerate}
\item \label{bw1}
Suppose $\dim V=\dim W$ and $\exp(V)\subseteq W$.  Then $V$ is a translate 
of a linear subspace, and $W$ is a translate of an algebraic subtorus. 
\item \label{bw2}
Suppose the exponential map $\exp\colon \C^n \to (\C^*)^n$ induces a 
local analytic isomorphism  $V_{(\zero)} \to W_{(\one)}$. Then  
$W_{(\one)}$ is the germ of an algebraic subtorus. 
\end{enumerate}
\end{theorem}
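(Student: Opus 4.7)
The plan is to deduce part (2) from part (1) via a local-to-global reduction, and to establish part (1) by invoking the classical Ax--Lindemann--Weierstrass theorem for the exponential map.

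For part (1), I would argue as follows. Since $V$ is irreducible and $\exp\colon \C^n \to (\C^*)^n$ is a local biholomorphism, the image $\exp(V)$ is an irreducible analytic subset of $(\C^*)^n$ of complex dimension $\dim V = \dim W$. Being contained in the irreducible algebraic variety $W$ of the same dimension, $\exp(V)$ is Zariski dense in $W$. One is then in the setting of Ax's functional transcendence theorem: if $V \subseteq \C^n$ is an irreducible algebraic subvariety and the Zariski closure of $\exp(V)$ in $(\C^*)^n$ has dimension equal to $\dim V$, then $V$ is a translate of a $\C$-linear subspace $L$, and the Zariski closure is a translate of the algebraic subtorus $\exp(L)$. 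This yields the conclusion of (1) directly.

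For part (2), the plan is to promote the germ isomorphism to a global containment and then apply part (1). First, pick a small analytic open neighborhood $U \subseteq V$ of $\zero$ on which $\exp|_U$ is an analytic isomorphism onto an open neighborhood of $\one$ in $W$. Then $\exp(U) \subseteq W$ has complex dimension $\dim V$, so by irreducibility of $W$ one obtains both $\dim V = \dim W$ and that $\exp(U)$ is Zariski dense in $W$. Next, let $W' \subseteq (\C^*)^n$ be the Zariski closure of $\exp(V)$; then $W'$ is irreducible of dimension $\dim V$ and contains the Zariski-dense subset $\exp(U)$ of $W$, hence $W \subseteq W'$. Combined with $\dim W = \dim W'$ and irreducibility of $W'$, this forces $W = W'$, and in particular $\exp(V) \subseteq W$ globally. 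Now part (1) applies, and because $\zero \in V$ and $\one \in W$, the translates in its conclusion are trivial: $V$ is a linear subspace through $\zero$ and $W$ is an algebraic subtorus through $\one$. Therefore $W_{(\one)}$ is the germ of an algebraic subtorus, as required.

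The principal obstacle is part (1), which rests on Ax's theorem, proved originally using differential algebra in the 1970s and now more commonly via o-minimality and the Pila--Wilkie counting theorem. A fully self-contained proof would require developing the o-minimal framework, identifying a suitable fundamental domain for $\exp$ on which the graph is definable in $\R_{\mathrm{an},\exp}$, and executing the standard Pila--Zannier strategy combining point counting with a stabilizer/monodromy argument; in the present context, Ax's theorem is invoked as a black box, and the real content of the argument is the local-to-global reduction carried out in the proof of (2).
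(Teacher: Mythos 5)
The paper itself offers no proof of this theorem; it is quoted from \cite{BW20}, which in turn rests on Ax's functional-transcendence theorem for the exponential map of an algebraic torus. Your part (1) is consistent with that: once you note that $\overline{\exp(V)}^{\mathrm{Zar}}$ is an irreducible closed subvariety of $W$ of dimension at least $\dim V=\dim W$, hence equal to $W$, the assertion is exactly the classical Ax--Lindemann--Weierstrass conclusion, and invoking it as a black box is legitimate here (your phrase ``$\exp(V)$ is an analytic subset'' is inaccurate, since the image need not be closed, but nothing depends on it).

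Part (2), however, has a genuine gap. You assert that $W'$, the Zariski closure of $\exp(V)$, ``is irreducible of dimension $\dim V$.'' Irreducibility is fine, but the dimension claim is unjustified and is false for general irreducible $V$: if $V\subseteq \C^2$ is a line through $\zero$ with irrational slope, then $\exp(V)$ is a non-algebraic one-parameter subgroup whose Zariski closure is all of $(\C^*)^2$, of dimension $2>\dim V$. Indeed, by Ax's theorem the equality $\dim W'=\dim V$ holds precisely when $V$ is a translate of a rationally defined linear subspace---that is, it is essentially the conclusion you are trying to establish, so assuming it is circular. The step you actually need is the global containment $\exp(V)\subseteq W$, and it follows from the identity principle rather than from a dimension count: $\exp^{-1}(W)\cap V$ is a closed analytic subset of the irreducible variety $V$ containing the nonempty open set $U$, and a proper closed analytic subset of an irreducible analytic set is nowhere dense (the regular locus of $V$ being a connected dense manifold), so $\exp^{-1}(W)\cap V=V$. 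With $\exp(V)\subseteq W$ and $\dim V=\dim W$ (the latter does follow from the germ isomorphism) in hand, part (1) applies directly to the pair $(V,W)$, and since $\zero\in V$ and $\one\in W$ the translates are trivial, yielding the conclusion. Your deduction $W\subseteq W'$ from the Zariski density of $\exp(U)$ in $W$ is correct but becomes superfluous once the containment is obtained this way.
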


A standard dimension argument shows the following: 
if $W$ and $W'$ are irreducible algebraic subvarieties of 
$(\C^*)^n$ which contain $\one$ and whose germs at 
$\one$ are locally analytically isomorphic, then $W\cong W'$.  
Using this fact, we obtain the following corollary to part \eqref{bw2} of 
the above theorem.

\begin{corollary}
\label{cor:ax-bis}
Let $V\subseteq \C^n$ and $W\subseteq (\C^*)^n$ be irreducible 
algebraic subvarieties.  Suppose the exponential map 
$\exp\colon \C^n \to (\C^*)^n$ induces a local analytic 
isomorphism $V_{(\zero)} \cong W_{(\one)}$. Then $W$ is 
an algebraic subtorus and $V$ is a rationally defined 
linear subspace.
\end{corollary}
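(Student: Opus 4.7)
The plan is to combine part~(\ref{bw2}) of Theorem~\ref{thm:ax} with the germ-to-global upgrade recalled just above the statement, and then describe $\exp^{-1}$ of an algebraic subtorus explicitly.

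First, I would apply Theorem~\ref{thm:ax}(\ref{bw2}) directly: the hypothesis is exactly that $\exp$ induces a local analytic isomorphism $V_{(\zero)}\cong W_{(\one)}$, so the conclusion produces an algebraic subtorus $T\subseteq (\C^*)^n$ with $W_{(\one)}=T_{(\one)}$.

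Next, I would upgrade this germ-level identity to the global statement $W=T$, using the standard dimension argument mentioned in the paragraph preceding the corollary: since $W$ and $T$ are both irreducible algebraic subvarieties of $(\C^*)^n$ containing $\one$ and their analytic germs at $\one$ coincide, we have $\dim W=\dim T$ and $W\cap T$ admits a common top-dimensional irreducible branch through $\one$; irreducibility of $W$ and $T$ then forces $W=T$, so $W$ is an algebraic subtorus.

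Finally, to identify $V$, I would describe $\exp^{-1}(T)$ explicitly. Writing $T$ as the common zero locus of a finite collection of monomial equations $t^{\alpha_i}=1$ with $\alpha_i\in\Z^n$, the preimage $\exp^{-1}(T)$ equals
\[
\{\, x\in \C^n : \langle \alpha_i,x\rangle \in 2\pi\sqrt{-1}\,\Z \text{ for all } i\,\},
\]
which is a disjoint union of translates of the rationally defined linear subspace $L=\{x\in\C^n : \langle \alpha_i,x\rangle=0 \text{ for all } i\}$, and the connected component through $\zero$ is $L$ itself. Since $\exp$ restricts to a local analytic isomorphism $V_{(\zero)}\cong T_{(\one)}$ sending $\zero$ to $\one$, this forces $V_{(\zero)}=L_{(\zero)}$. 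Invoking once more the germ-to-global argument for the irreducible varieties $V$ and $L$ yields $V=L$, so $V$ is a rationally defined linear subspace.

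I do not anticipate a serious obstacle: the substantive content is packaged inside Theorem~\ref{thm:ax}(\ref{bw2}), while the remaining steps are the standard germ-to-global upgrade for irreducible algebraic subvarieties of equal dimension and the explicit description of the exponential preimage of a subtorus. The mildest care needed is to ensure that $L$ is indeed irreducible (which is automatic, as any linear subspace is) and that $\exp$ maps $\zero$ to $\one$ so that germs are compared at matching basepoints.
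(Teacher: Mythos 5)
Your proposal is correct and follows essentially the same route as the paper: apply Theorem \ref{thm:ax}\eqref{bw2} to identify $W_{(\one)}$ with the germ of a subtorus, then use the standard dimension/irreducibility argument stated just before the corollary to globalize. The paper leaves the identification of $V$ implicit, and your explicit description of $\exp^{-1}(T)$ and its component through $\zero$ is the natural (and correct) way to fill that in.
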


\subsection{Tangent cones and jump loci}
\label{subsec:tcone thm}

Let $X$ be a $q$-finite space. Its cohomology algebra, 
$H^{*}(X,\C)$, is then $q$-finite; that is, $b_i(X)<\infty$ for $i\le q$. 
Thus, the resonance varieties $\RR^i_k(X):=\RR^i_k(H^{*}(X,\C))$ 
are homogeneous algebraic subsets of the affine space $H^{1}(X,\C)$, 
for all $i\le q$ and $k\ge 0$.  

The following basic relationship between the characteristic 
and resonance varieties was established by Libgober in \cite{Li02} in 
the case when $X$ is a finite CW-complex and $i$ is arbitrary; a similar 
proof works in the generality that we work in here.

\begin{theorem}[\cite{Li02}]
\label{thm:lib}
Suppose $X$ is a $q$-finite space.  Then, for all $i\le q$ and $k\ge 0$, 
\begin{equation}
\label{eq:tc lib}
\TC_{\one}(\VV^i_k(X))\subseteq \RR^i_k(X).
\end{equation}
\end{theorem}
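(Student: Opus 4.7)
The plan is to reduce to a finite-dimensional chain-level setup, expand the twisted differential along an analytic arc through $\one$, and invoke a standard perturbation-theoretic bound comparing generic twisted cohomology with the cohomology of the induced first-order differential.

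First step (reduction). Since the cohomology of a rank-one local system in degree $i$ depends only on the $(i+1)$-skeleton, $q$-finiteness allows us to replace $X$ by a finite CW-complex without changing $\VV^i_k$ or $\RR^i_k$ for $i\le q$. Let $\widetilde{X}\to X$ be the maximal abelian cover, let $R = \C[H_1(X,\Z)]$, and let $C^{*}$ be the cellular cochain complex of $\widetilde{X}$ truncated in degrees $\le q+2$, a finite complex of finitely generated free $R$-modules whose specialization $C^{*}_{\rho} = C^{*} \otimes_R \C_{\rho}$ at a character $\rho$ computes $H^{\le q}(X, \C_{\rho})$. Fix now $v \in \TC_\one(\VV^i_k(X))$. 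Since the exponential map $\exp\colon H^1(X,\C)\to\Char(X)$ is a local analytic isomorphism at $\zero\mapsto\one$, the tangent cone definition produces an analytic germ $\gamma\colon(\C,0)\to(\VV^i_k(X),\one)$ with $\gamma(s) = \exp(sv + s^2 w_2 + \cdots)$ for some $w_2, w_3, \dots \in H^1(X,\C)$.

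Second step (linearization and spectral sequence). Pulling back by $\gamma$, the specialized differentials admit a convergent Taylor expansion $d_{\gamma(s)} = d_0 + s\,\delta + s^2\,\delta_2 + \cdots$, and the identity $(d_{\gamma(s)})^2 = 0$ forces $d_0\delta + \delta d_0 = 0$, so that $\delta$ descends to a degree-one endomorphism of $H^{*}(C^{*}, d_0) = H^{*}(X,\C)$. A direct computation on the chain-level basis indexed by the cells of $X$ — using that each deck translation acts on $C^{*}_{\rho}$ by multiplication by its image under $\rho$ and linearizing $\exp(sv) = 1 + sv + O(s^2)$ — shows that the induced map on cohomology is exactly cup-product by $v$. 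Filter the complex $(C^{*}\otimes_\C\C[[s]], d_{\gamma(s)})$ by the $s$-adic filtration $F^p = s^p C^{*}[[s]]$; the associated spectral sequence has $E_1 = H^{*}(X,\C)\otimes \C[s]$ with $d_1$ equal to cup-product by $v$ shifting $s$-degree by one, hence $E_2 = H^{*}(H^{*}(X,\C),\, v\wedge\cdot)\otimes \C[s]$. Convergence, combined with upper semi-continuity of twisted Betti numbers along the arc, yields the standard perturbation inequality
\[
\dim_\C H^i(X, \C_{\gamma(s)}) \;\le\; \dim_\C H^i\!\bigl(H^{*}(X,\C),\, v\wedge\cdot\bigr) \quad\text{for generic small } s.
\]

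Third step (conclusion and main obstacle). Since $\gamma(s)\in\VV^i_k(X)$ for all $s$ in a neighborhood of $0$, the left-hand side above is at least $k$ for generic small $s$; therefore the right-hand side is also at least $k$, which is precisely the assertion $v\in \RR^i_k(X)$. Letting $v$ range over $\TC_\one(\VV^i_k(X))$ completes the proof. The main technical obstacle is verifying the Aomoto linearization — identifying $\delta$ on $H^{*}(X,\C)$ with the cup-product operator $v\wedge\cdot$ — which is a chain-level computation tying the deck-translation action on $C^{*}(\widetilde{X})$ to the cup-product structure on $H^{*}(X,\C)$. Once this identification is in place, the spectral sequence convergence and the dimension bookkeeping are routine.
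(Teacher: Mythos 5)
The paper does not reproduce a proof of this statement; it simply cites Libgober and remarks that his argument extends to $q$-finite spaces. Your proposal is a faithful reconstruction of exactly that argument — expand the twisted differential along an analytic arc through $\one$, identify the first-order term with cup product by $v$ (the Aomoto linearization), and bound the generic twisted Betti number by the Aomoto cohomology via the $s$-adic spectral sequence — so in substance you are taking the same route as the cited proof, and the overall architecture is sound.

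One step is wrong as literally written, though easily repaired. You assert that $v\in\TC_{\one}(\VV^i_k(X))$ yields an analytic germ $\gamma(s)=\exp(sv+s^2w_2+\cdots)$, i.e.\ an arc in the variety whose \emph{first derivative} at $0$ is $v$. This fails in general: for the cuspidal curve $\{y^2=x^3\}$ the direction $(1,0)$ lies in the tangent cone at the origin, but every analytic arc in the curve through the origin has vanishing derivative there. What curve selection (applied to the blow-up at $\one$) actually gives is an arc of the form $\gamma(s)=\exp(s^m v+O(s^{m+1}))$ for some $m\ge 1$. The fix costs nothing: the expansion of the twisted differential then reads $d_{\gamma(s)}=d_0+s^m\delta+O(s^{m+1})$, the differentials $d_1,\dots,d_{m-1}$ of your $s$-adic spectral sequence vanish, $d_m$ is cup product by $v$, and the same convergence argument bounds the generic twisted Betti number by $\dim H^i(H^{*}(X,\C),v\cup\cdot)$; alternatively one can use the homogeneity of $\RR^i_k(X)$ to rescale. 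Beyond that, you correctly isolate the Aomoto linearization as the crux; it is a known chain-level fact (most transparently seen in the de~Rham/Sullivan model, where the twisted differential is literally $d+s\omega\wedge\cdot$), and leaving it as a cited lemma is reasonable. The reduction to a finite complex also needs a word of care at the top degree $i=q$, where the $(q+1)$-cells need not be finite in number, but the argument only ever uses finite generation of the cocycle module in degree $q$, which $q$-finiteness does provide.
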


Putting together these inclusions with those from \eqref{eq:ttinc}, 
we obtain the following corollary. 
\begin{corollary}
\label{cor:tcone inc}
Suppose $X$ is a $q$-finite space.  Then, for all $i\le q$ and $k\ge 0$, 
\begin{equation}
\label{eq:tc inc}
\tau_{\one}(\VV^i_k(X))\subseteq  
\TC_{\one}(\VV^i_k(X))\subseteq \RR^i_k(X).
\end{equation}
\end{corollary}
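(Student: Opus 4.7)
The statement is a direct concatenation of two inclusions that are already available in the paper, so the plan is simply to assemble them. First I would verify that the characteristic varieties $\VV^i_k(X)$ are genuine Zariski-closed subvarieties of $\Char(X) \cong (\C^*)^n$ (where $n = b_1(X)$), which holds for $i\le q$ under the $q$-finiteness hypothesis as noted in Section \ref{subsec:charvar}. This ensures that both the tangent cone $\TC_{\one}(\VV^i_k(X))$ and the exponential tangent cone $\tau_{\one}(\VV^i_k(X))$ are well-defined subsets of $H^1(X,\C) \cong \C^n$.

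For the first inclusion, I would invoke the general fact \eqref{eq:ttinc} asserting that $\tau_{\one}(W)\subseteq \TC_{\one}(W)$ for any algebraic subset $W\subseteq (\C^*)^n$, applied to $W = \VV^i_k(X)$. This is essentially immediate from the definitions: any $x\in \tau_{\one}(W)$ gives an analytic curve $\lambda \mapsto \exp(\lambda x)$ lying entirely in $W$ and passing through $\one$ at $\lambda=0$, so its derivative at $0$, namely $x$ itself, must lie in the tangent cone of $W$ at $\one$.

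For the second inclusion, the $q$-finiteness of $X$ implies that $H^*(X,\C)$ is $q$-finite as a $\cga$, hence the resonance varieties $\RR^i_k(X)$ are Zariski-closed homogeneous subsets of $H^1(X,\C)$ for $i\le q$ and $k\ge 0$. I would then directly apply Theorem \ref{thm:lib}, which gives $\TC_{\one}(\VV^i_k(X))\subseteq \RR^i_k(X)$ in precisely this range.

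Chaining the two inclusions yields the desired conclusion. There is no genuine obstacle; the only subtlety worth mentioning is the hypothesis check that the range of indices $i\le q$ and $k\ge 0$ is compatible with both the definition of $\tau_{\one}$ (which needs $\VV^i_k(X)$ to be a subvariety, guaranteed by $q$-finiteness) and with the range in Theorem \ref{thm:lib}.
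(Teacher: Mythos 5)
Your proposal is correct and follows exactly the paper's route: the corollary is obtained by concatenating the general inclusion $\tau_{\one}(W)\subseteq \TC_{\one}(W)$ from \eqref{eq:ttinc} with Libgober's inclusion $\TC_{\one}(\VV^i_k(X))\subseteq \RR^i_k(X)$ from Theorem \ref{thm:lib}. The extra hypothesis checks you include (Zariski-closedness of the characteristic and resonance varieties in the range $i\le q$) are consistent with what the paper assumes implicitly.
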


A particular case of this corollary is worth mentioning separately.

\begin{corollary}
\label{cor:tcone gps}
Let $G$ be a finitely generated group.  Then, for all $k\ge 0$, 
\[
\tau_{\one}(\VV^1_k(G))\subseteq  
\TC_{\one}(\VV^1_k(G))\subseteq \RR^1_k(G).
\]
\end{corollary}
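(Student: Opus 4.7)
The plan is to derive the corollary as a direct specialization of Corollary \ref{cor:tcone inc} to $i = 1$ and $X = K(G,1)$.  Almost all of the content is carried by that parent corollary; the only task is to verify that, for a finitely generated group $G$, there exists a classifying space $K(G,1)$ meeting the $1$-finiteness hypothesis, and that its jump loci coincide with those of $G$.

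First I would construct a suitable model.  Since $G$ is generated by finitely many elements $x_1,\dots,x_n$, start from the wedge $Y=\bigvee^{n} S^1$, attach (possibly infinitely many) $2$-cells along words representing a chosen set of defining relations for $G$, and then inductively attach cells of dimension $\ge 3$ to kill $\pi_k$ for $k\ge 2$.  The resulting CW-complex $X$ is aspherical with $\pi_1(X)=G$, hence a $K(G,1)$; crucially, its $1$-skeleton is the finite complex $Y$, so $X$ is $1$-finite in the sense of Section \ref{subsec:intro-finite}, regardless of whether $G$ is finitely presented.

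Next I would identify the jump loci.  By the definitions of Section \ref{subsec:charvar}, $\VV^1_k(G)=\VV^1_k(K(G,1))=\VV^1_k(X)$, and since $X$ is a classifying space for $G$ we have $H^{*}(X,\C)=H^{*}(G,\C)$ as graded algebras, so $\RR^1_k(X)=\RR^1_k(G)$.  Applying Corollary \ref{cor:tcone inc} to the $1$-finite space $X$ with $i=1$ then yields $\tau_{\one}(\VV^1_k(X))\subseteq \TC_{\one}(\VV^1_k(X))\subseteq \RR^1_k(X)$ for all $k\ge 0$, which translates immediately into the desired inclusions.

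The only delicate point worth emphasizing is that the hypothesis of the parent corollary requires only $1$-finiteness, not $2$-finiteness; this flexibility is essential when $G$ is not finitely presented, in which case the chosen $X$ may carry infinitely many $2$-cells and $H^2(X,\C)$ may be infinite-dimensional.  Beyond this observation, there is no substantial obstacle: the argument is a clean reduction to the parent corollary.
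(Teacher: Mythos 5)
Your proposal is correct and takes essentially the same route as the paper, which presents Corollary \ref{cor:tcone gps} simply as the special case $i=1$, $X=K(G,1)$ of Corollary \ref{cor:tcone inc}; your construction of a classifying space with finite $1$-skeleton (so that $X$ is $1$-finite even when $G$ is not finitely presented) supplies the one routine detail the paper leaves implicit. Nothing further is needed.
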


Using now Theorems \ref{thm:thmb} and \ref{thm:bw}, we 
obtain the following ``Tangent Cone formula.''

\begin{theorem}
\label{thm:tcone-fm}
Suppose $X$ is a $q$-finite space which admits a $q$-finite $q$-model $A$.  
Then, for all $i\le q$ and 
$k\ge 0$,
\begin{equation}
\label{eq:tc-fm}
\tau_{\one}(\VV^i_k(X))=\TC_{\one}(\VV^i_k(X))=\RR^i_k(A).
\end{equation}
\end{theorem}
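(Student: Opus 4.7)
The plan is to decompose both sides into a common union of rationally defined linear subspaces, one description coming from Budur--Wang's subtorus theorem on the characteristic side, the other from Dimca--Papadima's local germ identification on the resonance side.

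First I would invoke Theorem \ref{thm:bw} under the theorem's hypothesis to write the irreducible components of $\VV^i_k(X)$ passing through $\one$ as algebraic subtori $T_{\alpha} \subseteq \Char(X)$ indexed by some finite set. Since both $\TC_{\one}(\cdot)$ and $\tau_{\one}(\cdot)$ depend only on the germ at $\one$ and commute with finite unions, and since for each subtorus $\TC_{\one}(T_{\alpha}) = \tau_{\one}(T_{\alpha}) = \Lie(T_{\alpha}) =: L_{\alpha}$ is a rationally defined linear subspace of $H^1(X,\C)$, the first two quantities collapse to the same union:
$$
\TC_{\one}(\VV^i_k(X)) \;=\; \tau_{\one}(\VV^i_k(X)) \;=\; \bigcup_{\alpha} L_{\alpha}.
$$

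Next I would bring in Theorem \ref{thm:thmb}: the exponential map gives a local analytic isomorphism identifying the germ of $\RR^i_k(A)$ at $\zero \in H^1(A) \cong H^1(X,\C)$ with the germ of $\VV^i_k(X)$ at $\one$. Under this identification each germ $(T_{\alpha})_{(\one)}$ pulls back to $(L_{\alpha})_{(\zero)}$, because $\log$ carries an analytic neighborhood of $\one$ in a subtorus onto an analytic neighborhood of $\zero$ in its Lie algebra. Hence the germ of $\RR^i_k(A)$ at the origin equals $\bigcup_{\alpha} (L_{\alpha})_{(\zero)}$, which already proves the desired equality at the level of germs (and in particular proves both inclusions appearing in Corollary~\ref{cor:tcone inc} are equalities, with $\RR^i_k(X)$ replaced by $\RR^i_k(A)$).

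Finally, I would promote this germ equality to the global equality $\RR^i_k(A) = \bigcup_{\alpha} L_{\alpha}$. Each $L_{\alpha}$ is the Zariski closure of its own germ at $\zero$, which lies in the algebraic subvariety $\RR^i_k(A)$, so $L_{\alpha} \subseteq \RR^i_k(A)$. Conversely, any irreducible component of $\RR^i_k(A)$ meeting $\zero$ has a germ there contained in $\bigcup_{\alpha}(L_{\alpha})_{(\zero)}$; irreducibility of analytic branches forces such a germ to sit inside a single $(L_{\alpha})_{(\zero)}$, and a Zariski-closure and dimension argument then identifies the full component with $L_{\alpha}$. The main obstacle is precisely this last step: because $\RR^i_k(A)$ need not be homogeneous a priori (Example~\ref{ex:nonhomog}), one must rule out irreducible components of $\RR^i_k(A)$ that are disjoint from $\zero$, or whose germ at $\zero$ extends strictly beyond a single $L_{\alpha}$. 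The leverage comes from Budur--Wang, which is a global subtorus statement rather than a mere germ assertion; pulling this global structure back through $\log$ (together with the fact that the linear subspaces $L_{\alpha}$ sit inside $\RR^i_k(A)$ in their entirety) constrains $\RR^i_k(A)$ to be exactly the cone $\bigcup_{\alpha} L_{\alpha}$.
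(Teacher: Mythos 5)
Your first two steps are exactly the paper's (implicit) argument: the proof given there is literally ``combine Theorem \ref{thm:thmb} with Theorem \ref{thm:bw},'' and your decomposition $\TC_{\one}(\VV^i_k(X))=\tau_{\one}(\VV^i_k(X))=\bigcup_{\alpha}L_{\alpha}$ together with the pullback of the germs $(T_{\alpha})_{(\one)}$ to $(L_{\alpha})_{(\zero)}$ under the local analytic isomorphism is the correct way to fill in the details. Up to and including the identification of the \emph{germ} of $\RR^i_k(A)$ at $\zero$ with $\bigcup_{\alpha}(L_{\alpha})_{(\zero)}$, everything is fine.

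The gap is your final globalization step, and it cannot be repaired as stated: the global equality $\RR^i_k(A)=\bigcup_{\alpha}L_{\alpha}$ is simply false for an arbitrary $q$-finite $q$-model. The paper's own Example \ref{ex:nonhomog} exhibits a finite, connected $\cdga$ $A=(\bwedge(a,b),\, db=ba)$ that is quasi-isomorphic to $(\bwedge(a),0)$, hence is a finite model for $X=S^1$; here $\VV^1_1(S^1)=\{\one\}$, so $\tau_{\one}=\TC_{\one}=\{\zero\}$, yet $\RR^1_1(A)=\{0,1\}$ has an irreducible component disjoint from the origin. The ``leverage from Budur--Wang pulled back through $\log$'' does not exist: Theorem \ref{thm:bw} constrains only the components of $\VV^i_k(X)$ through $\one$, and the exponential map in Theorem \ref{thm:thmb} is only a \emph{local} analytic isomorphism near $\zero$, so neither says anything about points of $\RR^i_k(A)$ outside a neighborhood of the origin (and resonance varieties are not invariant under quasi-isomorphism, as Lemma \ref{lem:functoriality} and Example \ref{ex:nonhomog} warn). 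The honest conclusion obtainable from the two cited theorems is the equality of the first two terms with the \emph{germ} of $\RR^i_k(A)$ at $\zero$; equivalently, with the union of those components of $\RR^i_k(A)$ passing through $\zero$, each of which your Zariski-density argument does correctly identify with some $L_{\alpha}$. To upgrade this to the stated global equality one needs $\RR^i_k(A)$ to be determined by its germ at the origin --- e.g.\ when $d=0$ so that resonance is homogeneous (this is how Corollary \ref{cor:tcone} is a genuine global statement), or when $A$ carries positive weights so that every component of $\RR^i_k(A)$ is forced through $\zero$ (this is how Theorem \ref{thm:tcone qp} is recovered for Gysin and Orlik--Solomon models). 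You should either add such a hypothesis or state the third equality at the level of germs.
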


This theorem, together with Theorem \ref{thm:psobs}, yields the 
following corollary.

\begin{corollary}
\label{cor:malcone}
Suppose $G$ is a finitely generated group whose Malcev Lie algebra   
is the LCS completion of a finitely presented Lie algebra. Then 
$\tau_{\one}(\VV^1_k(G))=\TC_{\one}(\VV^1_k(G))$, for all $k\ge 0$.
\end{corollary}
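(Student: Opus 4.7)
The plan is to combine Theorem~\ref{thm:psobs}, which converts the Malcev-Lie-algebra hypothesis into the existence of an algebraic model, with Theorem~\ref{thm:tcone-fm}, which provides the tangent-cone equality for spaces admitting such models. First, I would invoke Theorem~\ref{thm:psobs}: the assumption that $\m(G)$ is the LCS completion of a finitely presented $\Q$-Lie algebra delivers a $1$-finite $1$-model $A$ for $G$, i.e., a $1$-finite $\cdga$ that is $1$-equivalent to $\apl(K(G,1))\otimes_{\Q}\C$.

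Next, since $G$ is finitely generated, I would realize $G$ as the fundamental group of a connected CW-complex $X$ with finite $1$-skeleton — concretely, take a $K(G,1)$ built by attaching (possibly infinitely many) cells of dimension $\ge 2$ to a finite wedge of circles corresponding to a finite generating set of $G$. Then $X$ is $1$-finite in the sense of Section~\ref{subsec:intro-finite}, and $A$ remains a $1$-finite $1$-model for $X$. Applying Theorem~\ref{thm:tcone-fm} with $q=i=1$ yields, for every $k \ge 0$,
$$\tau_{\one}(\VV^1_k(X)) \;=\; \TC_{\one}(\VV^1_k(X)) \;=\; \RR^1_k(A).$$
Since the degree-one characteristic varieties depend only on the fundamental group (as recalled in Section~\ref{subsec:charvar}), one has $\VV^1_k(X) = \VV^1_k(G)$, and the desired equality follows, with the common value $\RR^1_k(A)$ as a bonus.

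There is no serious obstacle here: the corollary is essentially a repackaging of Theorem~\ref{thm:tcone-fm} in the group-theoretic language enabled by Theorem~\ref{thm:psobs}. The only point requiring any care is the passage from the purely Lie-algebraic hypothesis on $\m(G)$ to a statement about an ambient topological space on which Theorem~\ref{thm:tcone-fm} can be applied, and this passage is precisely what Theorem~\ref{thm:psobs} accomplishes.
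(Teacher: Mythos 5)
Your proof is correct and follows exactly the route the paper intends: it derives the corollary by combining Theorem~\ref{thm:psobs} (to convert the hypothesis on $\m(G)$ into a $1$-finite $1$-model) with Theorem~\ref{thm:tcone-fm} applied to a $1$-finite $K(G,1)$. The extra care you take in constructing the ambient space and noting that $\VV^1_k$ depends only on $\pi_1$ is exactly the (routine) content the paper leaves implicit.
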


In other words, if the first half of the Tangent Cone formula fails in degree $1$, 
i.e., if $\tau_{\one}(\VV^1_k(G))\subsetneqq \TC_{\one}(\VV^1_k(G))$ for 
some $k> 0$, then $\m(G)\not\cong \widehat{L}$, for any finitely presented 
Lie algebra $L$.  This will happen automatically if the variety 
$\TC_{\one}(\VV^1_k(G))$ has an irreducible component which is not 
a rationally defined linear subspace of $H^1(G,\C)$.

\subsection{Formality and cohomology jump loci}
\label{subsec:formal-cjl}

The main connection between the formality property 
of a space and the geometry of its cohomology jump 
loci is provided by the next result. This result, which 
was first proved in degree $i=1$ in \cite{DPS-duke}, 
and in arbitrary degree in \cite{DP-ccm}, is now an 
immediate consequence of Theorem~\ref{thm:tcone-fm}.
 
\begin{corollary}
\label{cor:tcone}
Let $X$ be a $q$-finite, $q$-formal space. Then, for all $i\le q$ and 
$k\ge 0$,
\begin{equation}
\label{eq:tc}
\tau_{\one}(\VV^i_k(X))=\TC_{\one}(\VV^i_k(X))=\RR^i_k(X).
\end{equation}
\end{corollary}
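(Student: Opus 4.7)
The plan is to reduce the claim to Theorem \ref{thm:tcone-fm} by exhibiting a $q$-finite $q$-model for $X$ whose resonance varieties coincide with those of $H^{*}(X,\C)$. The $q$-formality hypothesis makes this almost automatic, so the work is essentially bookkeeping with truncations.

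First, I would produce the model. By the definition of $q$-formality, we have a $q$-equivalence $\apl(X)\otimes_{\Q}\C\simeq_q (H^{*}(X,\C),0)$ in $\cdga$. Since $X$ is $q$-finite, we have $\dim_{\C} H^{i}(X,\C)<\infty$ for $i\le q$, so the cohomology $\cdga$ $(H^{*}(X,\C),0)$ is itself a $q$-finite $q$-model for $X$. To be safe, I would invoke Lemma \ref{lem:truncate} to pass to the truncation $A\coloneqq H^{*}(X,\C)[q]$, which is a finite-dimensional $\cdga$ with zero differential and a natural $q$-equivalence $(H^{*}(X,\C),0)\simeq_q A$; in particular, $A$ is a $q$-finite $q$-model for $X$.

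Next, I would apply Theorem \ref{thm:tcone-fm} to this model, obtaining
\[
\tau_{\one}(\VV^i_k(X))=\TC_{\one}(\VV^i_k(X))=\RR^i_k(A)
\]
for all $i\le q$ and $k\ge 0$. To finish, I need to identify $\RR^i_k(A)$ with $\RR^i_k(X)=\RR^i_k(H^{*}(X,\C))$ in this range. This is furnished by Lemma \ref{lem:functoriality}: each of the $q$-quasi-isomorphisms in the zig-zag connecting $A$ to $(H^{*}(X,\C),0)$ induces an isomorphism on $H^{1}$ that identifies the corresponding $\RR^i_k$ for every $i\le q$. Composing these identifications yields $\RR^i_k(A)=\RR^i_k(X)$ for $i\le q$, completing the chain of equalities.

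There is no real obstacle; the only thing to be slightly careful about is that $(H^{*}(X,\C),0)$ may fail to be $q$-finite as stated only if one demands finiteness above degree $q$ too, so invoking Lemma \ref{lem:truncate} (rather than using $(H^{*}(X,\C),0)$ directly) ensures strict compliance with the hypotheses of Theorem \ref{thm:tcone-fm}. All the substantive content, namely the geometric structure of characteristic varieties near $\one$ for spaces with $q$-finite $q$-models, is packaged in that theorem (hence ultimately in \cite{DP-ccm} and \cite{BW20}), and the present corollary is simply the formal case of it.
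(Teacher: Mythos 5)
Your proof is correct and follows the same route as the paper, which treats Corollary \ref{cor:tcone} as an immediate consequence of Theorem \ref{thm:tcone-fm}: by $q$-formality, the $\cdga$ $(H^{*}(X,\C),0)$ is a $q$-model for $X$; it is $q$-finite because $X$ is; and $\RR^i_k$ of this model is $\RR^i_k(X)$ by definition. Two remarks on your version. First, the detour through Lemma \ref{lem:truncate} is unnecessary: in this paper a $\cdga$ $A$ is called $q$-finite when $\dim A^n<\infty$ only for $n\le q$, so $(H^{*}(X,\C),0)$ already satisfies the hypotheses of Theorem \ref{thm:tcone-fm} with no finiteness required above degree $q$. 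Second, and more importantly, your justification of the identification $\RR^i_k(A)=\RR^i_k(X)$ misquotes Lemma \ref{lem:functoriality}. That lemma assumes $\varphi\colon A\to A'$ is an isomorphism of the cochain groups up to degree $q$ and a monomorphism in degree $q+1$ --- not merely a $q$-quasi-isomorphism --- and the distinction is essential: Example \ref{ex:nonhomog} exhibits a quasi-isomorphism of $\cdga$s that does \emph{not} preserve $\RR^1_1$. Your argument survives only because the two maps in the truncation zig-zag of Lemma \ref{lem:truncate} (the projection $A\to A/A^{>q+1}$ and the inclusion $A[q]\hookrightarrow A/A^{>q+1}$) happen to be isomorphisms in degrees $\le q$ and monomorphisms in degree $q+1$ at the level of the underlying graded vector spaces, so they do satisfy the genuine hypothesis of Lemma \ref{lem:functoriality}. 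If you keep the truncation step, cite the lemma under that hypothesis; better yet, drop the truncation and apply Theorem \ref{thm:tcone-fm} directly to $(H^{*}(X,\C),0)$.
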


In particular, if $G$ is a finitely generated, $1$-formal group, then, for all $k\ge 0$, 
\begin{equation}
\label{eq:tc-pi}
\tau_{\one}(\VV^1_k(G))= 
\TC_{\one}(\VV^1_k(G))=\RR^1_k(G).
\end{equation}

As an application of Corollary \ref{cor:tcone}, we have the following characterization 
of the irreducible components of the cohomology jump loci in the formal setting.

\begin{corollary}
\label{cor:rational}
Suppose $X$ is a $q$-finite, $q$-formal space.  Then, for 
all $i\le q$ and $k\ge 0$, the following hold.
\begin{enumerate}
\item \label{tc1}
All irreducible components of the resonance varieties  
$\RR^i_k(X)$ are rationally defined linear subspaces 
of $H^1(X,\C)$. 
\item \label{tc2}
All irreducible components of the characteristic varieties  
$\VV^i_k(X)$ which contain the origin are algebraic 
subtori of $\Char(X)^{0}$, of the form $\exp(L)$, where 
$L$ runs through the linear subspaces comprising $\RR^i_k(X)$.
\end{enumerate}
\end{corollary}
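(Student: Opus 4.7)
The plan is to combine Corollary \ref{cor:tcone} (which under the hypotheses of $q$-finiteness and $q$-formality already identifies $\tau_{\one}(\VV^i_k(X))=\TC_{\one}(\VV^i_k(X))=\RR^i_k(X)$) with Lemma \ref{lem:exp-tcone} for the resonance side and Theorem \ref{thm:bw} for the characteristic side. Essentially no new computation is needed; the work is in matching irreducible components on both sides via the exponential map.

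First I would verify that Corollary \ref{cor:tcone} applies. Since $X$ is $q$-finite, $H^i(X,\C)$ is finite-dimensional for $i\le q$, so $A=(H^*(X,\C),0)$ is a $q$-finite $\cdga$. The $q$-formality hypothesis means $\apl(X)\otimes_{\Q}\C\simeq_q A$, so this $A$ is in particular a $q$-finite $q$-model for $X$. Therefore Corollary \ref{cor:tcone} (and, equivalently, Theorem \ref{thm:tcone-fm}) yields the chain of equalities
\begin{equation*}
\tau_{\one}(\VV^i_k(X))=\TC_{\one}(\VV^i_k(X))=\RR^i_k(X)
\end{equation*}
for all $i\le q$ and $k\ge 0$.

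Next I would deduce \eqref{tc1}. By Lemma \ref{lem:exp-tcone}, $\tau_{\one}(\VV^i_k(X))$ is a finite union $\bigcup_{\alpha} L_{\alpha}$ of rationally defined linear subspaces of $H^1(X,\C)$. Combining with the first paragraph, $\RR^i_k(X)=\bigcup_{\alpha} L_{\alpha}$. Since each $L_{\alpha}$ is linear, hence irreducible, the irreducible components of $\RR^i_k(X)$ are exactly the maximal members of the family $\{L_{\alpha}\}$ (those not contained in any other), and each such component is a rationally defined linear subspace.

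For \eqref{tc2}, the existence of a $q$-finite $q$-model lets me apply Theorem \ref{thm:bw}: every irreducible component $W$ of $\VV^i_k(X)$ passing through $\one$ is an algebraic subtorus of $\Char(X)$; being connected and containing $\one$, it lies in $\Char(X)^0$, and so $W=\exp(T_{\one}W)$ with $T_{\one}W\subseteq H^1(X,\C)$ a linear subspace. Writing $W_1,\dots,W_m$ for the irreducible components through $\one$, the components not through $\one$ contribute nothing to the tangent cone at $\one$, so
\begin{equation*}
\TC_{\one}(\VV^i_k(X))=\bigcup_{j=1}^m T_{\one}W_j,
\end{equation*}
and by the first paragraph this union equals $\RR^i_k(X)$. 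Two distinct irreducible subtori through $\one$ cannot satisfy $W_j\subsetneq W_{j'}$ (maximality of components), and for connected subtori of $\Char(X)^0$ containment is equivalent to containment of tangent spaces; hence the linear spaces $T_{\one}W_j$ are pairwise non-comparable, so each of them is an irreducible component of $\RR^i_k(X)$. Conversely every irreducible component $L$ of $\RR^i_k(X)$ must equal one of the $T_{\one}W_j$, and for that $j$ we have $W_j=\exp(L)$, establishing the bijection claimed in \eqref{tc2}.

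The whole argument is essentially bookkeeping; the only point that requires care is justifying that each rationally defined linear component $L$ on the resonance side comes from a unique torus component $\exp(L)$ on the characteristic side, which I handle by using the maximality of irreducible components and the elementary correspondence between connected subtori and their tangent subspaces. If any step posed a real obstacle it would be this matching of components, but since Theorem \ref{thm:bw} already guarantees every relevant component is a subtorus, the correspondence follows immediately.
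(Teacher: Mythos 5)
Your proposal is correct and follows exactly the route the paper intends: the paper presents this corollary as an application of Corollary \ref{cor:tcone}, with part \eqref{tc1} coming from Lemma \ref{lem:exp-tcone} and part \eqref{tc2} from Theorem \ref{thm:bw}, which is precisely your argument. The component-matching bookkeeping you supply (maximality plus the correspondence between connected subtori and their tangent spaces) is the right way to make the implicit identification in \eqref{tc2} precise.
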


\section{Algebraic models for smooth quasi-projective varieties}
\label{sect:models-qp}

\subsection{Compactifications and formality}
\label{subsec:qproj}

A complex projective variety is a subset of a complex projective 
space $\CP^n$, defined as the zero-locus of a homogeneous prime ideal in 
$\C[z_0,\dots, z_n]$.  A Zariski open subvariety of a projective 
variety is called a quasi-projective variety.  We will only consider here 
projective and quasi-projective varieties which are connected and smooth. 

If $M$ is a smooth, projective variety---or, more generally, a compact 
K\"ahler manifold---then the Hodge decomposition on the cohomology 
ring $H^*(M,\C)$ imposes stringent constraints on the topological properties 
of $M$.  For instance, in the famous paper of Deligne, Griffiths, Morgan, 
and Sullivan \cite{DGMS} it is shown that every such manifold is formal. 
  
Each smooth, quasi-projective variety $X$ admits a good compactification.  
That is to say, there is a smooth, complex projective variety $\overline{X}$  and a 
normal-crossings divisor $D$ such that $X=\overline{X}\setminus D$. 
By a well-known theorem of Deligne, each cohomology group of 
$X$ admits a mixed Hodge structure.   This additional structure puts 
definite constraints on the algebraic topology of such manifolds.   

For instance, if $X$ admits a smooth compactification 
$\overline{X}$ with $b_1(\overline{X})=0$, the weight 
$1$ filtration on $H^1(X,\C)$ vanishes; in turn, by work 
of Morgan \cite{Mo}, this implies the $1$-formality of $X$.   
Thus, as noted by Kohno in \cite{Kohno}, if $X$ is the complement 
of a hypersurface in $\CP^n$, then $\pi_1(X)$ is $1$-formal.  

In general, though, smooth quasi-projective varieties need not 
be $1$-formal.  Moreover, even when they are $1$-formal, 
they still can be non-formal. 

\begin{example}
\label{ex:qp1f}
Let $E^{\times n}$ be the product of $n$ copies of an elliptic curve $E$.  
The closed form $\frac{1}{2} \sqrt{-1} \sum_{i=1}^n dz_i \wedge d\bar{z}_i$ 
defines an integral cohomology class $\omega\in H^{1,1}(E^{\times n},\Z)$.  
By the Lefschetz theorem on $(1,1)$-classes, 
$\omega$ can be realized as the first Chern class of an algebraic 
line bundle over $E^{\times n}$.  Let $X_n$ be the complement 
of the zero-section of this bundle. Then $X_n$ is a smooth, 
quasi-projective variety which is not formal.  In fact, $X_n$ 
deform-retracts onto the $(2n+1)$-dimensional Heisenberg 
nilmanifold $\Heis_n$ from Example \ref{ex:heis-sasaki}, and 
so $X_n$ is $(n-1)$-formal but not $n$-formal. 
\end{example}

\subsection{Algebraic models}
\label{subsec:gysin}

As before, let $X$ be a connected, smooth, complex quasi-projective 
variety, and  choose a smooth compactification  $\overline{X}$ such that 
the complement is a finite union, $D=\bigcup_{j\in J} D_j$, 
of smooth divisors with normal crossings.
There is then a rationally defined $\cdga$, 
$A=A(\overline{X}, D)$, called the {\em Gysin model}\/ (or, the 
{\em Morgan model}) of the compactification, constructed as follows.
As a $\C$-vector space, $A^i$ is the direct sum 
of all subspaces 
\begin{equation}
\label{eq:gysin}
A^{p,q}= \bigoplus_{\abs{S} =q} 
H^p \Big(\bigcap_{k\in S} D_k, \C\Big)(-q)
\end{equation}
with $p+q=i$, where $(-q)$ denotes the Tate twist. 
Furthermore, the multiplication in $A$ 
is induced by the cup-product in $\overline{X}$, and has the property that 
$A^{p,q} \cdot A^{p',q'} \subseteq A^{p+p',q+q'}$, while the differential, 
$d \colon A^{p,q} \to A^{p+2, q-1}$, is constructed from the 
Gysin maps arising from intersections of divisors. 
The $\cdga$ just constructed depends on the compactification 
$\overline{X}$; for simplicity, though, we will denote it by $A(X)$ 
when the compactification is understood. 

An important particular case 
is when our variety $X$ has dimension $1$. That is to say, let $\Sigma$ 
be a connected, possibly non-compact, smooth algebraic curve.  
Then $\Sigma$ admits a canonical compactification,  
$\overline{\Sigma}$, and thus, a canonical Gysin model, 
$A (\Sigma)$. We illustrate the construction of this model 
in a simple situation, using the very explicit description 
given by Bibby in \cite{Bi16} for complements of elliptic 
arrangements.

\begin{example}
\label{ex:elliptic gysin}
Let $\Sigma=E^{*}$ be a once-punctured 
elliptic curve.  Then $\overline{\Sigma}=E$, and the Gysin 
model $A (\Sigma)$ is the algebra $A=\bigwedge(a,b,e)/(ae, be)$ 
on generators $a,b$ in bidegree $(1,0)$ and 
generator $e$ in bidegree $(0,1)$, with differential 
$d\colon A\to A$ given by $d{a}=d{b}=0$ and $d{e}=ab$. 
\end{example}

The above construction is functorial, in the following sense: 
If $f\colon X\to Y$ is a morphism of quasi-projective manifolds 
which extends to a regular map $\bar{f}\colon \overline{X}\to \overline{Y}$ 
between the respective good compactifications, then there is an induced 
$\cdga$ morphism $f^{!}\colon A(Y)\to A(X)$ which respects the bigradings. 

Morgan showed in \cite{Mo} that the Sullivan model $\apl (X)$ 
is connected to the Gysin model $A(X)$ by a chain  
of quasi-isomorphisms preserving $\Q$-structures. 
Moreover, setting the weight of $A^{p,q}$ equal to $p+2q$ defines a 
positive-weight decomposition on $(A^{*}, d)$. 

In \cite{Dp15}, Dupont constructed a Gysin-type model for certain types 
of quasi-projective varieties, where the normal-crossings divisors 
hypothesis on the compactification can be relaxed. More precisely, 
let $\A$ be an arrangement of smooth hypersurfaces in a smooth, 
$n$-dimensional complex projective variety $\overline{X}$, 
and suppose $\A$ locally looks like an arrangement of 
hyperplanes in $\C^n$.  There is then a $\cdga$ model 
for the complement, $X=\overline{X}\setminus \bigcup_{L\in \A} L$, 
which builds on the combinatorial definition of the 
Orlik--Solomon algebra of a hyperplane arrangement.

Finally, let $\A$ be an arrangement of complex linear 
subspaces in $\C^n$.  Using a blow-up construction, De~Concini 
and Procesi gave in \cite{DP95} a `wonderful' $\cdga$ model for 
the complement of such an arrangement. Based on a simplication 
of this model due to Yuzvinsky \cite{Yu}, Feichtner and Yuzvinsky 
showed in \cite{FeYu} the following:  If the intersection poset of $\A$ 
is a geometric lattice, then the complement of $\A$ is a formal space. 
In general, though, the complement of a complex subspace arrangement 
need not be formal.  For instance, the polyhedral product constructions 
of \cite{Baskakov}, \cite{DeS07}, \cite{GL21} mentioned in 
Section \ref{subsec:zk-raag} yield coordinate subspace 
arrangements whose complements admit non-trivial Massey 
products over the rationals. 

\subsection{Characteristic varieties}
\label{subsec:cv qproj} 

The structure of the jump loci for cohomology in rank $1$ local systems 
on smooth, complex projective and quasi-projective varieties (and, more 
generally, on K\"{a}hler and quasi-K\"{a}hler manifolds) was determined 
through the work of Beauville \cite{Be92},  Green and 
Lazarsfeld \cite{GL87}, Simpson \cite{Si93}, and Arapura \cite{Ar}.  
The definitive structural result in the quasi-projective setting 
was obtained by Budur and Wang in \cite{BW-asens}, 
building on the work of Dimca and Papadima \cite{DP-ccm}. 

\begin{theorem}[\cite{BW-asens}]
\label{thm:bw-asens}
Let $X$ be a smooth quasi-projective variety.  Then each 
characteristic variety $\VV^i_k(X)$ is a finite union of torsion-translated 
subtori of $\Char(X)$. 
\end{theorem}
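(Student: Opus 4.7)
The plan is to combine three ingredients: a local analysis of the germ of $\VV^i_k(X)$ at $\one$ via a finite $\cdga$ model, a finite étale covering argument moving each torsion character to the trivial character, and the Beauville--Simpson--Arapura structure theory ensuring that every irreducible component of $\VV^i_k(X)$ meets the set of torsion characters of $\Char(X)$.

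First I would treat the components through $\one$. By Morgan's construction reviewed in Section~\ref{sect:models-qp}, any good compactification $(\overline{X}, D)$ of $X$ yields a finite-dimensional Gysin $\cdga$ $A = A(\overline{X}, D)$ which is a model for $X$ over $\Q$. In particular $A$ is $q$-finite and a $q$-model for every $q \ge 1$. Theorem~\ref{thm:thmb} then identifies the germ $\VV^i_k(X)_{(\one)}$ analytically with the germ $\RR^i_k(A)_{(\zero)}$ via the exponential, and Corollary~\ref{cor:ax-bis} (the Exponential Ax--Lindemann theorem of Budur--Wang) forces each irreducible component of $\VV^i_k(X)$ through $\one$ to be an algebraic subtorus of $\Char(X)^{0}$.

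Second I would promote this to an arbitrary torsion character $\rho$ of order $d$. Let $\pi \colon Y \to X$ be the connected cyclic étale cover of degree $d$ classified by $\ker \rho \le \pi_1(X)$. The variety $Y$ is again smooth quasi-projective, and the induced homomorphism $\pi^{*} \colon \Char(X) \to \Char(Y)$ is a finite isogeny sending $\rho$ to $\one$. Via the Cartan--Leray spectral sequence for $\pi$, which furnishes an equivariant decomposition of $H^{*}(Y, \pi^{*} \C_{\psi})$ as a direct sum of the groups $H^{*}(X, \C_{\psi \chi})$ with $\chi$ ranging over characters of the deck group, one obtains a matching of germs that identifies each irreducible component of $\VV^i_k(X)$ through $\rho$ with a component of $\VV^i_{k'}(Y)$ through $\one$ for a suitable $k'$. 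By the first step the latter is an algebraic subtorus $T$ of $\Char(Y)$, and descending along $\pi^{*}$ the original component is a translate by $\rho$ of an algebraic subtorus of $\Char(X)$.

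Finally I would invoke the Beauville--Simpson--Arapura structural results to ensure that every irreducible component of $\VV^i_k(X)$ contains at least one torsion character. Simpson's theorem on the Galois stability of the cohomology support loci in the Betti moduli space of rank-one local systems, combined with Arapura's description of positive-dimensional components as pullbacks from orbifold maps $f \colon X \to \Sigma$ onto smooth curves with $\chi(\Sigma) < 0$, forces each positive-dimensional component to pass through a torsion character, while zero-dimensional components are themselves torsion characters. Noetherianity of $\Char(X)$ then yields the \emph{finite} decomposition into torsion-translated subtori. The main obstacle is this last step: the infinitesimal machinery assembled earlier in the paper---Morgan's Gysin model, the Dimca--Papadima germ comparison, and the Exponential Ax--Lindemann theorem---controls $\VV^i_k(X)$ only locally at a single character and offers no a priori reason for a given component to contain a torsion point. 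The existence of torsion points on every component is a genuinely global, Hodge-theoretic input, relying on the Green--Lazarsfeld tangent-cone analysis via Higgs bundles, Simpson's study of the $\C^{*}$-action on the non-abelian Hodge moduli spaces, and Arapura's fibration theorem.
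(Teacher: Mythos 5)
This theorem is stated in the paper with attribution to Budur--Wang \cite{BW-asens} but is not proved there, so there is no internal argument to measure yours against; I can only assess the proposal on its own terms. Your three-step architecture (local analysis at $\one$ via the Gysin model, Theorem~\ref{thm:thmb} and Corollary~\ref{cor:ax-bis}; finite covers to move torsion characters to $\one$; a global input guaranteeing torsion points on every component) does reflect the broad strategy in the literature, but as a self-contained proof it has two genuine gaps. The first is in the covering step. For the degree-$d$ cyclic cover $\pi\colon Y\to X$ killing $\rho$, the Cartan--Leray decomposition reads $H^i(Y,\pi^*\C_{\psi})\cong\bigoplus_{j=0}^{d-1}H^i(X,\C_{\psi\rho^j})$, so the jump locus $\VV^i_{k'}(Y)$ near $\one$ aggregates the contributions of all $d$ torsion translates simultaneously; there is no single $k'$ for which the germ of $\VV^i_k(X)$ at $\rho$ is identified with the germ of $\VV^i_{k'}(Y)$ at $\one$. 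Repairing this requires an equivariant refinement---either isotypic jump loci of $Y$ for the deck group together with an equivariant finite model in the spirit of Proposition~\ref{prop:equiv}, or, as Budur and Wang actually do, a deformation-theoretic computation of the germ of $\VV^i_k(X)$ directly at $\rho$ via a differential graded Lie algebra pair built from the Gysin model and the module corresponding to $\C_{\rho}$.

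The second gap, which you candidly flag but do not close, is that the final step is circular. The assertion that \emph{every} irreducible component of $\VV^i_k(X)$ contains a torsion character, for all $i$ and all $k$, is not available as a black box: Arapura's theorem (Theorem~\ref{thm:arapura}) describes only the positive-dimensional components of $\VV^1_1(X)$ through $\one$, and Simpson's Galois-stability argument is a statement about projective $X$. Extending the torsion-point property to the quasi-projective setting in all degrees and depths is precisely the principal new content of \cite{BW-asens}, so invoking ``Beauville--Simpson--Arapura'' at that juncture amounts to assuming the theorem being proved. A complete argument must either reprove that global statement (via the arithmetic/absolute-set or non-abelian Hodge techniques you allude to) or restrict the claim to the components meeting the torsion locus.
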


Work of Arapura \cite{Ar} explains how the  
non-translated subtori occurring in the above decomposition of 
$\VV^1_1(X)$ arise.  Let us say that a holomorphic map $f \colon X \to \Sigma$ 
is {\em admissible}\/ if $f$ is surjective, has connected generic fiber, 
and the target $\Sigma$ is a connected, smooth complex curve 
with negative Euler characteristic.  Up to reparametri\-zation at the target, 
the variety $X$ admits only finitely many admissible maps; let 
$\mathcal{E}_X$ be the set of equivalence classes of such maps. 

If $f \colon X \to \Sigma$ is an admissible map, 
it is readily verified that $\VV^1_1(\Sigma)=\Char(\Sigma)$. 
Thus, the image of the induced morphism between character groups, 
$f^*\colon \Char(\Sigma)\to \Char(X)$, is an algebraic subtorus 
of  $\Char(X)$.

\begin{theorem}[\cite{Ar}]
\label{thm:arapura}
The correspondence $f \mapsto f^*(\Char(\Sigma))$ defines 
a bijection between the set $\mathcal{E}_X$ of equivalence classes 
of admissible maps from $X$ to curves and the set of 
positive-dimensional, irreducible components of $\VV^1(X)$ 
containing $\one$.
\end{theorem}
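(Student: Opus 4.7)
The plan is to split the proof into three parts: (i) the correspondence is well-defined on $\mathcal{E}_X$ and has image in positive-dimensional irreducible components of $\VV^1(X)$ through $\one$; (ii) it is injective; (iii) it is surjective. The last of these is the main obstacle.

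For (i), well-definedness follows because reparametrization of $f \colon X \to \Sigma$ at the target is a biholomorphism $\phi \colon \Sigma \isom \Sigma'$, and $\phi^{*}(\Char(\Sigma'))=\Char(\Sigma)$. Since $\chi(\Sigma)<0$ forces $b_1(\Sigma) \ge 2$, the character group $\Char(\Sigma) \cong (\C^{*})^{b_1(\Sigma)}$ is a positive-dimensional torus. For every $\rho \in \Char(\Sigma)$, the identity $\chi(\Sigma,\C_\rho) = \chi(\Sigma) < 0$ combined with Poincar\'e duality (in the compact case) or the vanishing of $h^2$ (in the non-compact case) yields $h^1(\Sigma,\C_\rho)>0$, so $\VV^1_1(\Sigma)=\Char(\Sigma)$. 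Since the generic fiber of $f$ is connected, $f_{*}\colon \pi_1(X) \surj \pi_1(\Sigma)$ is surjective, hence $f^{*}\colon \Char(\Sigma) \inj \Char(X)$ is a closed embedding of algebraic groups, and the edge map of the Leray spectral sequence injects $H^1(\Sigma,\C_\rho)$ into $H^1(X, f^{*}\C_\rho)$. Thus $f^{*}(\Char(\Sigma))$ is a positive-dimensional algebraic subtorus of $\Char(X)$ contained in $\VV^1(X)$ and passing through $\one$. To see it is in fact an irreducible component, I would invoke Theorem~\ref{thm:bw-asens}: any irreducible subset of $\VV^1(X)$ enlarging $f^{*}(\Char(\Sigma))$ and containing $\one$ is itself an algebraic subtorus $T$, and maximality of $f^{*}(\Char(\Sigma))$ follows because every character in such a $T$ must be trivial on $\ker(f_{*})$---otherwise its restriction to a generic fiber $F$ would be a nontrivial local system in $\VV^1_1(F)$, contradicting $\chi(F) \ge 0$ for the generic fiber of an admissible map.

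For (ii), if $f_1^{*}(\Char(\Sigma_1)) = f_2^{*}(\Char(\Sigma_2))$, then $\ker(f_{1,*}) = \ker(f_{2,*})$ inside $\pi_1(X)$, so the Stein factorizations of $f_1$ and $f_2$ cut out the same normal subgroup. Using $\chi(\Sigma_i) < 0$ and the rigidity of curves of negative Euler characteristic, the resulting isomorphism of fundamental groups lifts to a biholomorphism $\Sigma_1 \cong \Sigma_2$ intertwining $f_1$ and $f_2$, showing they represent the same class in $\mathcal{E}_X$.

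Step (iii) is the principal difficulty. Given a positive-dimensional irreducible component $W \subseteq \VV^1(X)$ through $\one$, Theorem~\ref{thm:bw-asens} implies that $W$ is an algebraic subtorus, and Theorem~\ref{thm:lib} together with the tangent cone formula for subtori identifies $T_{\one} W$ with a positive-dimensional linear component of $\RR^1_1(X)$. The key step---where I expect the main obstacle to lie---is to produce from $T_{\one} W$ an admissible map $f \colon X \to \Sigma$ whose pullback realizes $W$. In the compact K\"ahler setting this is the classical Castelnuovo--de Franchis theorem, as refined by Beauville, Green--Lazarsfeld, and Simpson: a positive-dimensional isotropic subspace of $H^{1,0}(X) \oplus \overline{H^{1,0}(X)}$ produces a surjective holomorphic map to a curve of genus $\ge 2$ with connected generic fiber. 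In the quasi-projective case one invokes Deligne's mixed Hodge structure on $H^1(X,\Q)$ together with the positive-weight decomposition on the Gysin model $A(X)$ from Section~\ref{subsec:gysin}: the subspace $T_{\one} W$ is constrained by the weight filtration and by the multiplicative structure of $A(X)$, forcing a factorization through a smooth curve $\Sigma$, with the admissibility conditions---in particular $\chi(\Sigma)<0$---following automatically from the positive-dimensionality of $W$ (otherwise the pullback torus would be a point). Once $f$ has been constructed, comparison of tangent spaces at $\one$ together with the irreducibility and subtorus structure of both sides gives $f^{*}(\Char(\Sigma))=W$, completing the bijection.
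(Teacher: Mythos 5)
The paper offers no proof of this statement: Theorem \ref{thm:arapura} is imported wholesale from Arapura's paper \cite{Ar} and used as a black box, so there is nothing internal to compare your argument against. Your three-part skeleton is the right one, the computation $\VV^1_1(\Sigma)=\Char(\Sigma)$ and the Leray edge-map argument giving $f^*(\Char(\Sigma))\subseteq \VV^1_1(X)$ are fine, and you correctly locate the main difficulty in surjectivity and name the correct tools (Castelnuovo--de Franchis, Beauville, Green--Lazarsfeld, Simpson, mixed Hodge theory in the open case).

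There are, however, genuine gaps. First, your maximality argument rests on the claim that the generic fiber $F$ of an admissible map satisfies $\chi(F)\ge 0$; this is false (take $X=\Sigma_2\times\Sigma_3\to \Sigma_2$, whose fiber has $\chi=-4$), and in any case $\rho\in\VV^1_1(X)$ does not force $\rho|_{F}\in\VV^1_1(F)$. So you have not shown that $f^*(\Char(\Sigma))$ is a full irreducible component rather than a proper subtorus of one. Second, in the injectivity step, equality $f_1^*(\Char(\Sigma_1))=f_2^*(\Char(\Sigma_2))$ only says that $\ker(f_{1,*})$ and $\ker(f_{2,*})$ have the same image in $H_1(X,\Z)$ modulo torsion; it does not give $\ker(f_{1,*})=\ker(f_{2,*})$ inside $\pi_1(X)$, so one needs the usual geometric argument instead (e.g., showing the product map $(f_1,f_2)\colon X\to\Sigma_1\times\Sigma_2$ has one-dimensional image). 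Third, and most seriously, the surjectivity step is only gestured at: producing an admissible map from a positive-dimensional component $W$ via ``the weight filtration and the multiplicative structure of $A(X)$ \dots{} forcing a factorization through a smooth curve'' is precisely the isotropic subspace theorem and its quasi-projective generalization, which is the entire content of Arapura's proof; asserting it does not prove it. Invoking Theorem \ref{thm:bw-asens} at this point is also delicate, since the Budur--Wang structure theorem is logically and historically downstream of Arapura's result. As written, the proposal is a plausible roadmap, not a proof.
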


The positive-dimensional, irreducible components of $\VV^1_1(X)$ 
which do not pass through $\one$ can be similarly described, by replacing 
the admissible maps with certain ``orbifold fibrations,'' whereby 
multiple fibers are allowed. 

\subsection{Resonance varieties}
\label{subsec:res qproj} 

We now turn to the resonance varieties associated with a 
quasi-projective manifold, and how they relate to the 
characteristic varieties.  The Tangent Cone theorem 
takes a very special form in this setting. 

\begin{theorem}
\label{thm:tcone qp}
Let $X$ be a smooth, quasi-projective variety, and let $A(X)$ be a 
Gysin model for $X$.  Then, for each $i\ge 0$ and $k\ge 0$, 
\begin{equation}
\label{eq:tcone qp}
\tau_{\one}(\VV^i_k(X)) =  \TC_{\one}(\VV^i_k(X)) = \RR^i_k(A(X)) \subseteq  \RR^i_k(X).
\end{equation}
Moreover, if $X$ is $q$-formal, the last inclusion is an equality, 
for all $i\le q$.
\end{theorem}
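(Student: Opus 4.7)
The plan is to exhibit the Gysin model $A(X)$ as a finite-type $\cdga$ model for $X$, then apply the Tangent Cone formula (Theorem \ref{thm:tcone-fm}) to extract the two equalities, combine with Libgober's inclusion (Theorem \ref{thm:lib}) to obtain the final inclusion, and invoke $q$-formality together with Lemma \ref{lem:functoriality} for the moreover part.

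First I would verify that $A(X) = A(\overline{X}, D)$ is of finite type: by \eqref{eq:gysin}, each graded piece $A^i$ is a finite direct sum of cohomology groups of compact intersections of the divisors $D_k$ inside the smooth projective compactification $\overline{X}$, all of which are finite-dimensional $\Q$-vector spaces. Moreover, since $X$ is a smooth quasi-projective variety, it has the homotopy type of a finite CW-complex; in particular, $X$ is $q$-finite for every $q\ge 0$. By Morgan's theorem, the $\cdga$ $A(X)\otimes_\Q \C$ is a model for $X$ over $\C$, and since it is of finite type, it qualifies as a $q$-finite $q$-model for $X$ for every $q\ge 0$. I can then apply Theorem \ref{thm:tcone-fm} to the pair $(X, A(X))$: taking $q$ arbitrarily large yields
$$\tau_{\one}(\VV^i_k(X)) \;=\; \TC_{\one}(\VV^i_k(X)) \;=\; \RR^i_k(A(X))$$
for all $i\ge 0$ and $k\ge 0$. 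The inclusion $\RR^i_k(A(X)) \subseteq \RR^i_k(X)$ is then immediate upon combining this identification with Libgober's inclusion $\TC_{\one}(\VV^i_k(X)) \subseteq \RR^i_k(X)$ from Theorem \ref{thm:lib}.

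For the moreover part, suppose $X$ is $q$-formal. Since formality is preserved under weak equivalences and $A(X)$ is weakly equivalent to $\apl(X)\otimes_\Q\C$, the Gysin model $A(X)$ is itself $q$-formal, so there is a zig-zag of $q$-quasi-isomorphisms connecting $A(X)$ to $(H^*(X,\C),0)$. By Lemma \ref{lem:functoriality}, each individual $q$-quasi-isomorphism in this chain identifies $\RR^i_k$ for $i\le q$; chaining these identifications gives $\RR^i_k(A(X)) = \RR^i_k(X)$ for all $i\le q$, and combined with the previous part, this promotes the inclusion to equality in this range. Alternatively, one may invoke Corollary \ref{cor:tcone} directly, since the hypotheses ($q$-finite and $q$-formal) are met.

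There is no truly hard step here; the theorem is essentially an assembly of results already in place. The deepest content is imported through Theorem \ref{thm:tcone-fm}, which in turn rests on the Dimca--Papadima comparison of the germs of $\VV^i_k(X)$ and $\RR^i_k(A)$ at the respective origins, together with the Budur--Wang exponential Ax--Lindemann result (Theorem \ref{thm:bw}) that forces these germs to be finite unions of rationally defined linear germs of subtori. The only point requiring a moment's care is the verification that $A(X)$ is a $q$-finite $q$-model for every $q$, which reduces to the finite-type property of the Gysin construction and the finite CW-type of smooth quasi-projective varieties.
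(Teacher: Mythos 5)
Your main line of argument is the right one and matches how the paper assembles this result: the Gysin model is finite-dimensional and, by Morgan's theorem, a full model for $X$, while $X$ itself has the homotopy type of a finite CW-complex, so Theorem \ref{thm:tcone-fm} applies for every $q$ and yields the two equalities; Libgober's inclusion from Theorem \ref{thm:lib} then converts the middle equality into the final inclusion. That part is correct as written.

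The flaw is in your primary justification of the ``moreover'' clause. You argue that $q$-formality gives a zig-zag of $q$-quasi-isomorphisms from $A(X)$ to $(H^*(X,\C),0)$ and that ``by Lemma \ref{lem:functoriality}, each individual $q$-quasi-isomorphism in this chain identifies $\RR^i_k$ for $i\le q$.'' This is a misreading of the lemma: its hypothesis is that $\varphi\colon A\to A'$ is an isomorphism of the underlying $\cdga$s in degrees up to $q$ (and a monomorphism in degree $q+1$), not merely a $q$-quasi-isomorphism. Resonance varieties are \emph{not} invariants of ($q$-)quasi-isomorphism type --- the paper makes exactly this point right after the lemma and illustrates it in Example \ref{ex:nonhomog}, where two quasi-isomorphic $\cdga$s $A$ and $A'$ have $\RR^1_1(A)=\{0,1\}$ but $\RR^1_1(A')=\{0\}$. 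So the chain-of-quasi-isomorphisms argument does not go through. Fortunately, the alternative you mention in passing is the correct route and should be promoted to the main argument: since $X$ is $q$-finite and $q$-formal, Corollary \ref{cor:tcone} gives $\TC_{\one}(\VV^i_k(X))=\RR^i_k(X)$ for $i\le q$, and comparing with the already-established equality $\TC_{\one}(\VV^i_k(X))=\RR^i_k(A(X))$ forces $\RR^i_k(A(X))=\RR^i_k(X)$ in that range.
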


In particular, the resonance varieties $\RR^i_k(A(X))$ are finite unions of 
rationally defined linear subspaces of $H^1(X,\C)$.  On the other hand, 
the varieties $\RR^i_k(X)$ can be much more complicated; for instance, 
they may have non-linear irreducible components. If $X$ is $q$-formal, 
though, Theorem \ref{eq:tcone qp} guarantees this cannot happen, as 
long as $i\le q$. 

\subsection{Resonance in degree $1$}
\label{subsec:res deg1} 

Once again, let $X$ be a smooth, quasi-projective variety, and 
let $A(X)$ be the Gysin model associated with a good compactification 
$\overline{X}$.  The degree $1$ resonance varieties $\RR^1_1(A(X))$, 
and, to some extent, $\RR^1_1(X)$, admit a much more precise 
description than those in higher degrees.  

As in the setup from Theorem \ref{thm:arapura}, let $\mathcal{E}_X$ 
be the set of equivalence classes of admissible maps from $X$ to 
curves, and let $f\colon X\to \Sigma$ be such map.  Recall 
from Section \ref{subsec:gysin} that the curve $\Sigma$ 
admits a canonical Gysin model, $A (\Sigma)$.  
As noted in  \cite{DP-ccm}, the induced 
$\cdga$ morphism, $f^{!} \colon A(\Sigma) \to A (X)$, 
is injective. Let $f^*\colon H^1(A(\Sigma)) \to H^1(A (X))$ 
be the induced homomorphism in cohomology.

\begin{theorem}[\cite{DP-ccm, MPPS}]
\label{thm:r1 gysin}
For a smooth, quasi-projective variety $X$, 
the decomposition of $\RR^1_1(A(X))$ into (linear) irreducible components 
is given by
\begin{equation}
\label{eq:pencils}
\RR^1_1(A(X))= \bigcup_{f\in \mathcal{E}_X} f^*(H^1(A(\Sigma))).
\end{equation}
\end{theorem}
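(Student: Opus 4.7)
The plan is to combine the Tangent Cone formula of Theorem \ref{thm:tcone qp} (applied in degree $i=k=1$) with Arapura's structural description of $\VV^1_1(X)$ given in Theorem \ref{thm:arapura}. The first step is simply to apply Theorem \ref{thm:tcone qp} to rewrite
\begin{equation*}
\RR^1_1(A(X)) \;=\; \tau_{\one}\bigl(\VV^1_1(X)\bigr),
\end{equation*}
thereby reducing a statement about resonance of a $\cdga$ to one about exponential tangent cones of an algebraic subvariety of $\Char(X)$. The advantage is that, by Theorem \ref{thm:bw-asens}, the latter variety is a finite union of torsion-translated subtori, and $\tau_{\one}$ commutes with finite unions while vanishing on components that miss $\one$.

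Second, I would observe that a torsion translate $T\cdot t_0$ of a subtorus $T\subseteq \Char(X)$ contains $\one$ if and only if $t_0 \in T$, in which case the translate is $T$ itself. Consequently, only the \emph{untranslated} irreducible subtori of $\VV^1_1(X)$ through $\one$ contribute to $\tau_{\one}$, and isolated-point components (necessarily $\{\one\}$) contribute only $\{\zero\}$, which already lies in every $f^*(H^1(A(\Sigma)))$. Invoking Arapura's Theorem \ref{thm:arapura} identifies the positive-dimensional such components with the subtori $f^*(\Char(\Sigma))$, indexed by $f\in\mathcal{E}_X$. For each admissible map $f\colon X\to \Sigma$, the exponential tangent cone at the identity of the subtorus $T = f^*(\Char(\Sigma))$ equals its tangent space $T_{\one}(T)$, and naturality of $\exp$ gives
\begin{equation*}
\tau_{\one}\bigl(f^*(\Char(\Sigma))\bigr) \;=\; f^*\bigl(T_{\one}\Char(\Sigma)\bigr) \;=\; f^*\bigl(H^1(\Sigma,\C)\bigr) \;=\; f^*\bigl(H^1(A(\Sigma))\bigr),
\end{equation*}
since $A(\Sigma)$ is a model for $\Sigma$. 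Assembling the pieces yields the formula. Maximality of the linear pieces (so that they really are the irreducible components, not merely a covering) follows by reversing the process: an inclusion $f^*(H^1(A(\Sigma)))\subseteq g^*(H^1(A(\Sigma')))$ between rationally defined linear subspaces exponentiates to an inclusion of the corresponding subtori, which by Arapura forces $f=g$ in $\mathcal{E}_X$.

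The step I expect to require the most care is the compatibility between the \emph{algebraic} map $f^!\colon A(\Sigma)\to A(X)$ and the \emph{topological} map $f^*\colon \Char(\Sigma)\to \Char(X)$ on character groups. One must verify that the canonical identification $H^1(A(X))\cong H^1(X,\C)$ coming from the Gysin model carries the image of $H^1(f^!)$ onto the differential at $\one$ of the morphism of character varieties induced by $f$. This is a naturality statement for Morgan's chain of quasi-isomorphisms $A(\overline{X},D)\simeq \apl(X)\otimes\C$ with respect to regular maps extending to the chosen compactifications; it is implicit in the construction of the Gysin model but should be spelled out before the computation of $\tau_{\one}$ on each component can be transported into the model. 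Once this compatibility is in hand, the remainder of the argument is a direct assembly of Theorems \ref{thm:tcone qp}, \ref{thm:bw-asens}, and \ref{thm:arapura}.
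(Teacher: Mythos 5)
Your assembly is correct and is essentially the argument behind the cited result: the paper gives no proof of its own, but combining Theorem \ref{thm:tcone qp} with Theorems \ref{thm:bw-asens} and \ref{thm:arapura} is exactly the route taken in \cite{DP-ccm} and \cite{MPPS}, and the compatibility you flag between $f^{!}$ and the topological $f^*$ (after choosing a good compactification to which the finitely many admissible maps extend) is indeed the one point requiring genuine verification. The only remark worth adding is that the global, rather than germ-level, equality $\RR^1_1(A(X))=\tau_{\one}(\VV^1_1(X))$ with which you begin is legitimate precisely because the Gysin model has positive weights, so its resonance varieties are determined by their germs at $\zero$; for an arbitrary finite model this globalization can fail.
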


If $X$ admits no admissible maps, that is, if $\mathcal{E}_X=\emptyset$, 
formula \eqref{eq:pencils} should be understood to mean $\RR^1_1(A(X))= \{\zero\}$ 
if $b_1(X)>0$ and $\RR^1_1(A(X))= \emptyset$ if $b_1(X)=0$.

\begin{example}
\label{ex:heis-complex}
Let $X=X_1$  be the complex, smooth quasi-projective surface 
constructed in Example \ref{ex:qp1f}.  Clearly, this manifold is a 
$\C^*$-bundle over $E=S^1\times S^1$ which deform-retracts 
onto the $3$-dimensional Heisenberg nilmanifold $M=G_{\R}/G_{\Z}$ 
from Example \ref{ex:heis-massey}.  Hence, $\VV^1_1(X)=\{\one\}$, and so 
$\tau_{\one}(\VV^1_1(X))=\TC_{\one}(\VV^1_1(X))=\{\zero\}$.  
On the other hand, $\RR^1_1(X)=\C^2$, and so $X$ is 
not $1$-formal.
\end{example}

Under a $1$-formality assumption, the usual resonance varieties $\RR^1_1(X)$ 
admit a similar description. 

\begin{theorem}[\cite{DPS-duke}]
\label{thm:res kahler} 
Let $X$ be a smooth, quasi-projective variety, and suppose $X$ is $1$-formal. 
Then the decomposition into irreducible components of the first resonance 
variety is given by
\begin{equation}
\label{eq:r1dec}
\RR^1_1(X)= \bigcup_{f\in \mathcal{E}_X} f^*(H^1(\Sigma,\C)), 
\end{equation}
with the same convention as before when $\mathcal{E}_X = \emptyset$.  
Moreover, all the (rationally defined) linear subspaces in this  
decomposition have dimension at least $2$, and any two 
distinct ones intersect only at $\zero$. 
\end{theorem}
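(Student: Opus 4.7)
The plan is to derive the decomposition \eqref{eq:r1dec} by transferring the Gysin-model statement through the $1$-formality hypothesis, and then to argue the geometric refinements separately. First I would apply Theorem \ref{thm:tcone qp}: since $X$ is $1$-formal, the last inclusion in \eqref{eq:tcone qp} is an equality in degree $1$, so $\RR^1_1(X) = \RR^1_1(A(X))$, where $A(X)$ is a Gysin model associated to a good compactification $(\overline{X},D)$. Combining this with Theorem \ref{thm:r1 gysin} yields
\[
\RR^1_1(X) \;=\; \bigcup_{f\in \mathcal{E}_X} f^*\big(H^1(A(\Sigma))\big).
\]
Because $A(\Sigma)$ is a $\cdga$ model for $\Sigma$, there is a natural identification $H^1(A(\Sigma)) \cong H^1(\Sigma,\C)$ intertwining the morphism $f^!$ of Gysin models with the topological pullback $f^*$; substituting gives the stated formula \eqref{eq:r1dec}.

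Next I would verify the dimension bound. For any admissible map $f \colon X \to \Sigma$, the connectedness of the generic fiber implies that $f_*\colon \pi_1(X)\to \pi_1(\Sigma)$ is surjective, so the induced map $f^*\colon H^1(\Sigma,\C)\to H^1(X,\C)$ is injective. Hence $\dim f^*(H^1(\Sigma,\C)) = b_1(\Sigma)$, and the condition $\chi(\Sigma)<0$ forces $b_1(\Sigma)\ge 2$: in the compact case $\chi=2-2g<0$ gives $g\ge 2$ and $b_1=2g\ge 4$, while in the non-compact case $b_1(\Sigma) = 2g + n - 1$ with $n\ge 1$ and $\chi=2-2g-n<0$ gives $b_1 \ge 2$. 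The rational definedness of each component $f^*(H^1(\Sigma,\C))$ is automatic from Corollary \ref{cor:rational}, once we know $\RR^1_1(X)$ is a union of rationally defined linear subspaces (which it is, by Corollary \ref{cor:rational} applied in the $1$-formal case).

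The hard part will be the last assertion, that two distinct components meet only at $\zero$. The characteristic-variety side provides the shape of the argument: by Theorem \ref{thm:arapura}, distinct classes $f_1,f_2 \in \mathcal{E}_X$ give rise to distinct positive-dimensional irreducible subtori $T_i = f_i^*(\Char(\Sigma_i))$ of $\VV^1_1(X)$ through $\one$, and by Theorem \ref{thm:bw-asens} these are precisely the components of $\VV^1_1(X)$ containing $\one$ of positive dimension. Suppose, toward a contradiction, that there is a non-zero class $\alpha\in W_1\cap W_2$, where $W_i=f_i^*(H^1(\Sigma_i,\C))$. Exponentiating along $\alpha$ yields a one-parameter family of characters $t\mapsto \exp(t\alpha)$ that lies in both $T_1$ and $T_2$, so it sits inside the intersection $T_1\cap T_2$, a proper closed subvariety of each $T_i$. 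The idea is then to use the cup-product constraint on $W_i$ coming from the curve structure of $\Sigma_i$: every element of $W_i$ is a pullback of a class on a curve, so the restriction of the cup-product pairing $H^1(X,\C)\otimes W_i \to H^2(X,\C)$ has rank controlled by $H^2(\Sigma_i,\C)$, which forces $\alpha$ to determine $f_i$ up to equivalence in $\mathcal{E}_X$. By Arapura's bijection (Theorem \ref{thm:arapura}) this would force $f_1\sim f_2$, a contradiction.

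The main obstacle is making this last rigidity step precise, because the cup product on $H^*(X,\C)$ is not literally factored through $H^*(\Sigma_i,\C)$ once we combine contributions from the two maps; the formality of $X$ (via Theorem \ref{thm:r1 gysin}, Corollary \ref{cor:tcone}, and Theorem \ref{thm:arapura}) is what converts the set-theoretic statement about subtori into a linear-algebraic statement about their tangent spaces. In practice I would carry out the argument through Arapura's orbifold-fibration machinery, reducing the assumption $f_1^*\alpha = f_2^*\alpha \ne 0$ to the existence of a common admissible fibration factoring both $f_1$ and $f_2$, which in turn contradicts the maximality (hence equivalence class distinctness) of each $f_i$ in $\mathcal{E}_X$.
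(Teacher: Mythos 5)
The paper states this theorem as a citation to \cite{DPS-duke} and supplies no proof of its own, so your proposal has to be measured against the argument in the literature. The first two parts of your plan are sound. Combining the equality $\RR^1_1(X)=\RR^1_1(A(X))$ from Theorem \ref{thm:tcone qp} (applicable because $X$ is $1$-formal and every quasi-projective manifold is $q$-finite) with Theorem \ref{thm:r1 gysin} and the identification $H^1(A(\Sigma))\cong H^1(\Sigma,\C)$ does give \eqref{eq:r1dec}; and the dimension bound follows as you say, since $f_\sharp$ surjective on $\pi_1$ makes $f^*$ injective on $H^1$ and the case analysis on $\chi(\Sigma)<0$ yields $b_1(\Sigma)\ge 2$. (The original route in \cite{DPS-duke} instead applies Corollary \ref{cor:tcone} together with Arapura's Theorem \ref{thm:arapura} and takes tangent spaces at $\one$ of the subtori $f^*(\Char(\Sigma))$; within the survey's logical structure your detour through the Gysin model is an equivalent packaging.)

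The genuine gap is the final assertion, which you flag but do not close. The missing idea is the isotropicity of the components: since $f^*\alpha\cup f^*\beta=f^*(\alpha\cup\beta)$ and $\dim H^2(\Sigma,\C)\le 1$, the restriction of the cup product $\bigwedge^2 H^1(X,\C)\to H^2(X,\C)$ to $W_f=f^*(H^1(\Sigma,\C))$ has rank $0$ (when $\Sigma$ is non-compact) or $1$ (when $\Sigma$ is compact). Granting this, if $0\ne\alpha\in W_1\cap W_2$ then, in the isotropic case, every $\gamma\in W_1+W_2$ satisfies $\gamma\cup\alpha=0$ with $\alpha$ not proportional to a generic such $\gamma$, so $W_1+W_2\subseteq\RR^1_1(X)$, contradicting the fact that $W_1$ and $W_2$ are distinct \emph{irreducible components}; the $1$-isotropic case needs a refinement, but the same ``position obstruction'' carries it. This is essentially how \cite{DPS-duke} argues. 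Your alternative route through the characteristic varieties can also be made to work, but it requires the separate input that two non-equivalent admissible maps give subtori $T_1\ne T_2$ with \emph{finite} intersection (whence $\Lie(T_1)\cap\Lie(T_2)=\Lie(T_1\cap T_2)=0$). As you observe, exponentiating a common nonzero $\alpha$ only places a one-parameter subgroup inside $T_1\cap T_2$; deducing from a positive-dimensional $T_1\cap T_2$ that $f_1\sim f_2$ is exactly the step you have not supplied, and it is not a formal consequence of $T_1,T_2$ being distinct irreducible components of $\VV^1_1(X)$, since distinct components of a variety can in general meet along a positive-dimensional set.
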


If $X$ is compact, then the formality assumption 
in the above theorem is automatically satisfied, due to \cite{DGMS}.  
Furthermore, the conclusion of the theorem can also 
be sharpened in this case: each (non-trivial) irreducible component 
of $\RR^1_1(X)$ is even-dimensional, of dimension at least $4$.   
In general, though, the resonance varieties of a quasi-projective 
manifold can have non-linear components.  

\begin{example}[\cite{DPS-duke}]
\label{ex:res-config}
Let $X=\Conf(E,n)$ be the configuration space of $n$ 
points on an elliptic curve $E$.  Letting $\{ a, b\}$ be the 
standard basis of $H^1(E,\C)=\C^2$, we may identify 
$H^*( E^{\times n}, \C)$ with $\bigwedge(a_1, b_1,\dots , a_n, b_n)$ 
and find a presentation for $H^{\le 2}(X,\C)$ from Totaro's spectral sequence 
\cite{To96}. A computation then gives
\begin{equation} 
\label{eq:resg1}
\RR^1_1(\Conf(E,n))=\left\{ (x,y) \in \C^n\times \C^n \left|
\begin{array}{l}
\sum_{i=1}^n x_i=\sum_{i=1}^n y_i=0 ,\\[2pt]
x_i y_j-x_j y_i=0,  \text{ for $1\le i<j< n$}
\end{array}
\right\}. \right.
\end{equation}
If $n\ge 3$, this variety is irreducible and non-linear (in fact, it is a 
rational normal scroll), from which we conclude that the configuration 
space $\Conf(E,n)$ is not $1$-formal.
\end{example}

\subsection{Large quasi-projective groups}
\label{subsec:qplarge}

Recall that a quasi-projective variety is a Zariski open subset of 
a projective variety.  We will say that a space $X$ is a 
{\em quasi-projective manifold}\/ if it is a connected, 
smooth, complex quasi-projective variety. Every such 
manifold has the homotopy type of a finite CW-complex.

A group $G$ is said to be {\em quasi-projective}\/ if it can be 
realized as the fundamental group of a quasi-projective manifold. 
Clearly, every such a group admits a finite presentation. 
 We now turn to the question of deciding whether 
a quasi-projective group is large.  It turns out that a complete answer 
to this question can be given in terms of ``admissible'' maps to curves.

A map $f\colon X \to C$ from a quasi-projective manifold $X$ to a 
smooth complex curve $C$ is said to be {\em admissible}\/ if it is 
regular, surjective, and has connected generic fiber. 
It is easy to see that the homomorphism on 
fundamental groups induced by such a map, 
$f_{\sharp}\colon \pi_1(X)\to \pi_1(C)$, is surjective. 
We denote by $\cE(X)$ the family of admissible 
maps to curves with negative Euler characteristic, 
modulo automorphisms of the target.  

Deep work of Arapura \cite{Ar} characterizes   
those positive-dimensional, irreducible components of the characteristic 
variety $\VV^1_1(X)$ which contain the origin of the character 
group $\Char(X)$: all such components are connected, 
affine subtori, which arise by pullback of the character torus 
$\Char(C)$ along the homomorphism 
$f_{\sharp}\colon \pi_1(X)\to \pi_1(C)$ induced 
by some map $f\in \cE(X)$. 

Suppose now that $C$ is a smooth complex curve with $\chi(C)<0$.  It is 
readily seen that the fundamental group $G=\pi_1(C)$ surjects onto 
a free, non-abelian group, and so $G$ is very large. More 
generally, we have the following characterization of large, 
quasi-projective groups.

\begin{proposition}[\cite{PS-jlms}]
\label{prop:qproj}
Let $X$ be a smooth quasi-projective variety.  Then:
\begin{enumerate}
\item  \label{lg1}
$\pi_1(X)$ is large if and only if there is a finite cover $Y\to X$ such that 
$\mathcal{E}(Y)\ne \emptyset$. 
\item \label{lg2}
$\pi_1(X)$ is very large if and only if $\mathcal{E}(X)\ne \emptyset$. 
\end{enumerate}
\end{proposition}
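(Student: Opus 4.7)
The plan is to reduce part~(1) to part~(2) and to derive part~(2) from the jump-loci machinery already developed in the paper, most crucially Theorem~\ref{thm:r1 gysin} and the sentence in the introduction (paraphrasing a result from \cite{PS-jlms}) that, for a smooth quasi-projective $X$ with $b_1(X)>0$ and Gysin (or Dupont) model $A$, the group $\pi_1(X)$ surjects onto a non-abelian free group if and only if $\RR^1_1(A)\ne \{\zero\}$.

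For part~(2), the ``if'' direction is essentially elementary. Given $f\in\mathcal{E}(X)$, i.e.\ an admissible regular map $f\colon X\to C$ to a smooth curve with $\chi(C)<0$, the induced map $f_{\sharp}\colon \pi_1(X)\to \pi_1(C)$ is surjective because $f$ is surjective with connected generic fiber. A curve of negative Euler characteristic is either a non-compact smooth curve homotopy equivalent to a wedge of $\ge 2$ circles (so $\pi_1(C)\cong F_n$ with $n\ge 2$), or a compact curve of genus $g\ge 2$ (whose surface group surjects onto $F_g$ by killing half the standard generators). Composing with such a surjection to $F_n$ shows that $\pi_1(X)$ is very large. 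For the ``only if'' direction, assume $\pi_1(X)\twoheadrightarrow F_n$ with $n\ge 2$; in particular $b_1(X)\ge n>0$. Invoking the result from \cite{PS-jlms} cited above, $\RR^1_1(A(X))\ne \{\zero\}$. Theorem~\ref{thm:r1 gysin} then decomposes $\RR^1_1(A(X))$ as the union $\bigcup_{f\in\mathcal{E}_X} f^*(H^1(A(\Sigma)))$; since this union is not reduced to $\{\zero\}$, the indexing set $\mathcal{E}(X)$ must be non-empty.

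For part~(1), the ``if'' implication is immediate: a finite cover $Y\to X$ with $\mathcal{E}(Y)\ne\emptyset$ has $\pi_1(Y)$ very large by part~(2), and $\pi_1(Y)$ is a finite-index subgroup of $\pi_1(X)$, so $\pi_1(X)$ is large by definition. Conversely, if $\pi_1(X)$ is large, fix a finite-index subgroup $H\le \pi_1(X)$ that is very large. Since $X$ is a manifold, $H$ corresponds to a finite (étale) topological covering $Y\to X$; because finite étale covers of smooth complex quasi-projective varieties are again smooth quasi-projective (the cover inherits an algebraic structure from $X$), the variety $Y$ falls under the hypotheses of part~(2). Applying that part to the very large group $\pi_1(Y)=H$ yields $\mathcal{E}(Y)\ne \emptyset$, as required.

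The only genuinely deep ingredient is the implication ``$\pi_1(X)$ surjects onto $F_n$, $n\ge 2$, $\Rightarrow$ $\RR^1_1(A(X))\ne \{\zero\}$'' used in the second paragraph; this rests on the tangent cone/Arapura-type theory relating characteristic and resonance varieties of quasi-projective manifolds (Theorems~\ref{thm:arapura}--\ref{thm:r1 gysin}) and is the principal obstacle. Once it is available, everything else reduces to the definitions, the standard fact that $\chi(C)<0$ curves have fundamental groups surjecting onto $F_2$, and the preservation of the smooth quasi-projective class under finite étale covers.
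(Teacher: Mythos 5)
Your part~(1) and the ``if'' half of part~(2) are fine and follow the intended route: an admissible map $f\colon X\to C$ with $\chi(C)<0$ induces a surjection $\pi_1(X)\surj\pi_1(C)$, and $\pi_1(C)$ visibly surjects onto a non-abelian free group; for part~(1) one only needs in addition that a finite cover of a smooth quasi-projective variety is again smooth quasi-projective, so that part~(2) applies to it.

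The problem is the ``only if'' half of part~(2), which is exactly the step you flag as ``the principal obstacle'' and then resolve by citing the resonance criterion ($\pi_1(X)$ surjects onto a non-cyclic free group iff $\RR^1_1(A)\ne\{\zero\}$, i.e.\ Proposition~\ref{prop:qprojtest}). This is circular: in the paper that criterion is presented as a \emph{reformulation} of Proposition~\ref{prop:qproj}, obtained by combining it with the decomposition of $\RR^1_1(A(X))$ in Theorem~\ref{thm:r1 gysin}; you cannot use it as input here. The non-circular argument stays entirely on the characteristic-variety side and needs no resonance and no Gysin model. Given an epimorphism $\varphi\colon\pi_1(X)\surj F_n$ with $n\ge 2$, the naturality property recorded in Section~\ref{subsec:charvar} says that $\varphi^*\colon\T_{F_n}\inj\T_{\pi_1(X)}$ is injective and carries $\VV^1_1(F_n)=(\C^*)^n$ into $\VV^1_1(X)$. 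Hence $\VV^1_1(X)$ contains a positive-dimensional algebraic subtorus through $\one$, and therefore has a positive-dimensional irreducible component containing $\one$. Theorem~\ref{thm:arapura} identifies every such component as $f^*(\Char(\Sigma))$ for some admissible map $f\in\cE(X)$, so $\cE(X)\ne\emptyset$. If you insist on routing the argument through $\RR^1_1(A(X))$ you must first prove, independently, that a surjection onto $F_n$ forces $\RR^1_1(A(X))\ne\{\zero\}$ (e.g.\ via the tangent-cone identity of Theorem~\ref{thm:tcone qp} applied to the positive-dimensional subtorus just produced); as written, that implication is assumed rather than established.
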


Consequently, if $b_1(X)>0$, 
then $\mathcal{E}(X)\ne \emptyset$ if and only if the analytic germ 
at $\one$ of $\VV^1_1(X)$ is not equal to $\{ \one\}$. 

\subsection{Resonance and largeness}
\label{subsec:reslarge}
To conclude this section, we rephrase the last condition in terms of resonance varieties.
As shown by Morgan \cite{Mo}, every quasi-projective manifold $X$ 
admits a finite-dimensional model $A(\oX,D)$; such a `Gysin' model depends on 
a smooth compactification $\oX$ for which the complement 
$D=\oX\setminus X$ is a normal crossings divisor.  
Let $A$ be a Gysin model for $X$, or any one of the more general Orlik--Solomon 
models constructed by Dupont in \cite{Dp16}. In either case, let us note that all 
resonance varieties of $A$ have {\em positive weights}, i.e., they are invariant 
with respect to a $\C^{*}$-action on $H^1(A)$ with positive weights.

\begin{proposition}[\cite{PS-jlms}]
\label{prop:qprojtest}
Let $X$ be a smooth, quasi-projective variety with $b_1(X)>0$ and let $A$ be an 
Orlik--Solomon model for $X$.  Then  $\pi_1(X)$ is very large if and only if 
$\RR^1_1(A)\ne \{ \zero\}$.
\end{proposition}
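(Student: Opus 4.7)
The plan is to combine Arapura's admissible-map dichotomy (Proposition~\ref{prop:qproj}(2)) with the explicit linear decomposition of $\RR^1_1$ for Gysin/Orlik--Solomon models (Theorem~\ref{thm:r1 gysin}). The strategy is essentially to show that both sides of the claimed equivalence are governed by the same invariant, namely the non-emptiness of the set $\mathcal{E}(X)$ of admissible maps from $X$ to curves with negative Euler characteristic.

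First I would reduce to the case where $A$ is a Gysin model $A(\overline{X},D)$ associated to a good compactification. Any two $\cdga$ models for $X$ are weakly equivalent through $\apl(X)$, hence in particular $1$-equivalent; by Lemma~\ref{lem:functoriality}, the induced isomorphism on $H^1$ carries $\RR^1_1$ of one model to $\RR^1_1$ of the other. Thus, whether $\RR^1_1(A) = \{\zero\}$ is a property of the $1$-equivalence class of $A$, and we may assume $A = A(\overline{X},D)$. By Proposition~\ref{prop:qproj}(2), $\pi_1(X)$ is very large iff $\mathcal{E}(X)\neq \emptyset$, so it suffices to prove $\mathcal{E}(X)\neq \emptyset$ iff $\RR^1_1(A(\overline{X},D))\neq \{\zero\}$.

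For the forward direction, suppose $f\in\mathcal{E}(X)$ has target $\Sigma$. Since $\chi(\Sigma)<0$, a quick case analysis (genus $g\ge 1$ or $g=0$ with at least $3$ punctures) gives $b_1(\Sigma)\ge 2$. Admissibility of $f$ implies that $f_{\sharp}\colon \pi_1(X)\surj \pi_1(\Sigma)$ is surjective (connected generic fiber), so the pullback $f^{*}\colon H^1(\Sigma,\C)\to H^1(X,\C)$ is injective. By Theorem~\ref{thm:r1 gysin}, the non-zero linear subspace $f^*(H^1(A(\Sigma)))$ of dimension $\ge 2$ is an irreducible component of $\RR^1_1(A(\overline{X},D))$, which is therefore strictly larger than $\{\zero\}$. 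Conversely, if $\mathcal{E}(X)=\emptyset$, the convention attached to Theorem~\ref{thm:r1 gysin}, together with the hypothesis $b_1(X)>0$, yields $\RR^1_1(A(\overline{X},D))=\{\zero\}$, completing the equivalence.

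The proof is essentially a short assembly of two structural results from the preceding sections, so there is no genuinely hard step. The only mildly delicate point is the reduction to the Gysin model in the first paragraph: one must confirm that the Orlik--Solomon models of Dupont are $1$-equivalent to an ordinary Gysin model and that this $1$-equivalence preserves $\RR^1_1$ under the canonical identification of $H^1(A)$ with $H^1(X,\C)$, both of which are ensured by Lemma~\ref{lem:functoriality}.
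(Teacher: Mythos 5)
Your overall strategy---reducing to Arapura's dichotomy via Proposition~\ref{prop:qproj}\eqref{lg2} and then reading off the answer from the decomposition of $\RR^1_1$ of the Gysin model in Theorem~\ref{thm:r1 gysin}---is the right one, and the two implications you run for the Gysin model itself are fine. The gap is in your reduction step. Lemma~\ref{lem:functoriality} does \emph{not} say that $\RR^1_1$ is an invariant of the $1$-equivalence class of a $\cdga$: its hypothesis is that the morphism $\varphi\colon A\to A'$ is itself an isomorphism in each degree $\le q$ (and injective in degree $q+1$), not merely a $q$-quasi-isomorphism. The sentence immediately following that lemma, together with Example~\ref{ex:nonhomog}, warns that the conclusion genuinely fails for quasi-isomorphisms: there the inclusion $A'\hookrightarrow A$ is a quasi-isomorphism of $\cdga$s, yet $\RR^1_1(A')=\{0\}$ while $\RR^1_1(A)=\{0,1\}$. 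So ``whether $\RR^1_1(A)=\{\zero\}$'' is not determined by the weak equivalence class of $A$, and your first paragraph does not establish the model-independence you need.

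The repair uses exactly the two ingredients the paper places just before the statement. First, since $X$ has the homotopy type of a finite CW-complex and any Orlik--Solomon model $A$ is a finite model, Theorem~\ref{thm:thmb} identifies the germ of $\RR^1_1(A)$ at $\zero$ with the germ of $\VV^1_1(X)$ at $\one$; in the form of Theorem~\ref{thm:tcone qp} this gives $\RR^1_1(A)=\TC_{\one}(\VV^1_1(X))$ for such models. Second, to pass from the germ at $\zero$ to the global assertion about $\RR^1_1(A)$ one must know that every irreducible component of $\RR^1_1(A)$ passes through $\zero$---this is precisely what the positive-weights remark preceding the proposition provides, and what fails in Example~\ref{ex:nonhomog}. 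With these in hand, $\RR^1_1(A)\ne\{\zero\}$ if and only if the germ $\VV^1_1(X)_{(\one)}$ is not $\{\one\}$, which (using $b_1(X)>0$ and the sentence following Proposition~\ref{prop:qproj}) holds if and only if $\mathcal{E}(X)\ne\emptyset$, i.e., if and only if $\pi_1(X)$ is very large. Your Theorem~\ref{thm:r1 gysin} computation can then be kept verbatim for the Gysin model and transferred to an arbitrary Orlik--Solomon model through the common value $\TC_{\one}(\VV^1_1(X))$, rather than through Lemma~\ref{lem:functoriality}.
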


\begin{example}
\label{ex:partial}
Let $\Sigma_g$ be a compact, connected Riemann surface 
of genus $g$, and let $X=F_{\Gamma}(\Sigma_g)$ be the partial 
configuration space associated to a finite simple graph $\Gamma$.  
More concretely, if $n$ is the number of vertices of $\Gamma$, then 
$F_{\Gamma}(\Sigma_g)$ is the complement in 
$\Sigma_g^n$ of the union of the diagonals $z_i=z_j$, indexed by 
the edges of $\Gamma$. 
No convenient presentation is available for the fundamental group 
$G_{\Gamma, g}\coloneqq \pi_1(F_{\Gamma}(\Sigma_g))$.  
On the other hand, the Orlik--Solomon model $A$ for 
$F_{\Gamma}(\Sigma_g)$ is much more approachable. Computing the resonance 
variety $\RR^1_1(A)$ leads to a complete, explicit description of 
$\mathcal{E}(F_{\Gamma}(\Sigma_g))$. Such a description is given in 
\cite{BMPP}, for all $g\ge 0$ and for all finite graphs $\Gamma$,  
generalizing a result from \cite{BH}, valid only for chordal graphs. 
In particular, $\mathcal{E}(F_{\Gamma}(\Sigma_g))= \emptyset$, that is, 
$G_{\Gamma,g}$ is not very large, if and only if 
either $g=1$ and $\Gamma$ has no edges, or $g=0$ 
and  $\Gamma$ contains no complete subgraph on $4$ vertices. 
\end{example}

\section{Algebraic models for  Lie group actions}
\label{sect:models-act}

\subsection{Almost free actions and Hirsch extensions}
\label{subsec:s1act}

Let $K$ be a compact, connected, real Lie group. 
Consider the universal principal $K$-bundle, $K\to EK \to BK$, 
with contractible total space $EK$ and with base 
space the classifying space $BK=EK/K$.  
By a classical result of Hopf, the cohomology ring of $K$ (with 
coefficients in a field $\k$ of characteristic $0$) is isomorphic to the 
cohomology ring of a finite product of odd-dimensional spheres.  That is,
$H^{*}(K,\k) \cong \bwedge P^{*}$, 
where $P^{*}$ is an oddly-graded, finite-dimensional vector space, 
with homogeneous basis $\{ t_{\alpha} \in P^{m_{\alpha}} \}$, 
for some odd integers $m_1,\dots ,m_r$, where $r=\rank(K)$.  

Now let $M$ be a compact, connected, differentiable manifold 
on which the compact, connected Lie group $K$ acts smoothly. 
Both $M$ and the orbit space $N=M/K$ have the homotopy type 
of finite $CW$-complexes. We consider the diagonal action 
of $K$ on the product $EK\times M$, and form the Borel construction, 
$M_K=(EK\times M)/K$. Let $\proj \colon M_K \to N$ be the map induced 
by the projection $\proj_2 \colon EK \times M \to M$. 

The $K$-action on $M$ is said to be {\em almost free}\/ if all its 
isotropy groups are finite. When this assumption is met, the work of 
Allday and Halperin \cite{AH} provides a very useful 
Hirsch extension model for the manifold $M$.  

\begin{theorem}[\cite{AH}]
\label{thm:rs}
Suppose $M$ admits an almost free $K$-action, with orbit space $N=M/K$.  
There is then a map $\sigma\colon P^{*} \to Z^{* +1}(\apl (N) )$ 
such that $\proj^*\circ [\sigma]$ is the transgression in the principal bundle 
$K\to EK\times M \to M_K$, and 
\[
\apl (M) \simeq \apl (N) \otimes_{\sigma} \bwedge P .
\]
\end{theorem}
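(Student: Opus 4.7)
The plan is to exploit the principal $K$-bundle $K \to EK \times M \to M_K$ arising from the Borel construction, combined with the rational equivalence between the homotopy quotient $M_K$ and the orbit space $N=M/K$. First, since $EK$ is contractible, the projection $\proj_1\colon EK \times M \to M$ is a homotopy equivalence, so $\apl(EK\times M)\simeq \apl(M)$. Next, because the $K$-action is almost free, the map $\proj\colon M_K\to N$ is a rational quasi-isomorphism: the homotopy fiber over an orbit $Kx\in N$ is $BK_x$, where $K_x\le K$ is the finite isotropy group, and $H^*(BK_x;\k)=\k$ since $\ch(\k)=0$. A Leray--Serre argument then shows that $\proj^*\colon H^*(N;\k)\isom H^*(M_K;\k)$, so that $\apl(N)\simeq \apl(M_K)$ via $\proj$.

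Second, I would apply the standard Sullivan model for the principal bundle $K\to EK\times M\to M_K$. Writing $H^*(K;\k)\cong \bwedge P^*$ with $P^*$ oddly graded and with basis $\{t_\alpha \in P^{m_\alpha}\}$, the transgression in this bundle is a linear map $\bar\tau\colon P^*\to H^{*+1}(M_K;\k)$. By choosing cocycle representatives, we lift $\bar\tau$ to a linear map $\tau\colon P^*\to Z^{*+1}(\apl(M_K))$, and by Proposition \ref{prop:hirsch} the resulting Hirsch extension is a model for the total space:
\[
\apl(EK\times M)\,\simeq\, \apl(M_K)\otimes_\tau \bwedge P.
\]
Since the Hirsch extension depends (up to isomorphism of $\cdga$s) only on the cohomology class of $\tau$, this model is well-defined.

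Third, I would transport the extension from over $\apl(M_K)$ to over $\apl(N)$. The zig-zag of quasi-isomorphisms realizing $\apl(N)\simeq \apl(M_K)$ induces, via $\proj^*$, an isomorphism of cohomology algebras. Picking any linear lift $\sigma\colon P^*\to Z^{*+1}(\apl(N))$ whose cohomology class corresponds to $\bar\tau$ under this isomorphism, invariance of Hirsch extensions (Proposition \ref{prop:hirsch}) gives $\apl(M_K)\otimes_\tau \bwedge P \simeq \apl(N)\otimes_\sigma \bwedge P$. Concatenating the three equivalences yields $\apl(M)\simeq \apl(N)\otimes_\sigma \bwedge P$, and by construction $\proj^*\circ[\sigma]=\bar\tau$, which is precisely the transgression in $K\to EK\times M\to M_K$.

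The main obstacle will be rigorously handling the zig-zag nature of the equivalence $\apl(N)\simeq \apl(M_K)$: since quasi-isomorphisms are not directly invertible in the $\cdga$ category, transporting the cocycle-level map $\tau$ across the zig-zag to a bona fide linear map $\sigma\colon P^* \to Z^{*+1}(\apl(N))$ requires using Proposition \ref{prop:hirsch} to work modulo coboundaries and automorphisms of $P$. This is manageable because $P^*$ is concentrated in odd degrees, so the transgression is entirely controlled at the level of cohomology, and the flexibility afforded by Proposition \ref{prop:hirsch} lets us replace any auxiliary choices without altering the isomorphism type of the resulting Hirsch extension.
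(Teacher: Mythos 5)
The paper does not prove this theorem; it is quoted directly from Allday--Halperin \cite{AH}, so there is no in-text argument to compare against. Your sketch follows the standard route one would expect from that source: identify $\apl(M)$ with $\apl(EK\times M)$, use rational acyclicity of the fibers $BK_x$ (for $K_x$ finite) to see that $\proj\colon M_K\to N$ is a rational quasi-isomorphism, model the principal bundle $K\to EK\times M\to M_K$ by a Hirsch extension along the transgression, and then transport that extension across $\apl(\proj)$ using Proposition \ref{prop:hirsch}. This is sound in outline. Two points deserve more care than you give them: (i) $M_K\to N$ is not a fibration, so ``Leray--Serre'' is a misnomer --- the comparison $H^*(N,\k)\cong H^*(M_K,\k)$ comes from the Leray spectral sequence of the map (or a Vietoris--Begle argument) with point-set fibers $EK/K_x=BK_x$; and (ii) the assertion that $\apl(EK\times M)\simeq \apl(M_K)\otimes_\tau\bwedge P$ is the real content here: it requires Borel's transgression theorem (all of $P^*$ is transgressive, by naturality from the universal bundle) together with the fact that, since $K$ is connected, $\pi_1(M_K)$ acts trivially on $H^*(K,\k)$, so the fibration model applies even though $M_K$ need not be nilpotent. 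Finally, note that the ``zig-zag'' you worry about in your last paragraph is in fact the single quasi-isomorphism $\apl(\proj)\colon \apl(N)\to\apl(M_K)$, so lifting $[\tau]$ to $[\sigma]$ and invoking Proposition \ref{prop:hirsch} is more direct than you suggest.
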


This theorem may be applied for instance to the total space $M$ 
of a principal $K$-bundle over a compact manifold $N=M/K$.
The next result identifies an interesting class of finite-dimensional 
CW-spaces that have finite $\cdga$ models. 

\begin{proposition}[\cite{PS-imrn19}]
\label{prop:newfinite}
Let $M$ be an almost free $K$-manifold. Write 
$H^{*}(K,\k)=\bwedge P$, for some graded $\k$-vector 
space $P$, and let $m$ be the maximum 
degree of $P^*$.
\begin{enumerate}
\item \label{nf1}
Suppose $B$ is a $q$-finite $q$-model of the orbit space $N=M/K$, 
with $q\ge m+1$.  Then a suitable Hirsch extension
$A=B\otimes_{\tau} \bwedge P$ is  a $q$-finite $q$-model for $M$. 
\item \label{nf2}
Suppose $N=M/K$ is $q$-formal.  Then we may take 
$B^{*}=(H^{*}(N,\k), 0)$, and $A=B\otimes_{\tau} \bwedge P$  
is  a $q$-finite $q$-model of $M$ with positive weights.
\end{enumerate}
\end{proposition}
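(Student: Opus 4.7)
The plan is to lift the Allday--Halperin model (Theorem \ref{thm:rs}) to a $q$-finite Hirsch extension built from the given $q$-model $B$. By Theorem \ref{thm:rs}, there is a transgression $\sigma\colon P^{*} \to Z^{*+1}(\apl(N))$ with $\apl(M)\simeq \apl(N)\otimes_{\sigma}\bwedge P$. Fix a zigzag $\Psi$ of $q$-quasi-isomorphisms of $\cdga$s connecting $B$ to $\apl(N)$. Because $P^i=0$ for $i>m$, each cohomology class $[\sigma(t_{\alpha})]\in H^{m_{\alpha}+1}(\apl(N))$ lies in degree $m_{\alpha}+1\le m+1\le q$, i.e.\ in the range where $\Psi$ induces an isomorphism on cohomology. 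I pull $[\sigma]$ back along $\Psi$ to a linear map $P^{*}\to H^{*+1}(B)$, lift to cocycles to obtain $\tau\colon P^{*}\to Z^{*+1}(B)$, and set $A\coloneqq B\otimes_{\tau}\bwedge P$. Since $P$ is finite-dimensional and concentrated in degrees $\le m$, the algebra $\bwedge P$ is finite-dimensional, and the decomposition $A^i=\bigoplus_{j+k=i}B^j\otimes(\bwedge P)^k$ forces $\dim A^i<\infty$ for $i\le q$; thus $A$ is $q$-finite.

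To see that $A$ is a $q$-model for $M$, I extend $\Psi$, arrow by arrow, to a zigzag of $q$-quasi-isomorphisms between $A$ and $\apl(N)\otimes_{\sigma}\bwedge P$. Given an arrow $\varphi\colon C\to C'$ in $\Psi$ equipped with Hirsch data $\tau_C\colon P\to Z^{*+1}(C)$, the class $[\varphi\circ\tau_C]$ and the pulled-back class on $C'$ agree in degrees $\le q$, so by Proposition \ref{prop:hirsch} the Hirsch extension on $C'$ may be replaced by $C'\otimes_{\varphi\circ\tau_C}\bwedge P$ without changing its isomorphism type; then $\tilde\varphi\coloneqq \varphi\otimes\id_{\bwedge P}$ is a well-defined $\cdga$ morphism. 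The key claim is that $\tilde\varphi$ is again a $q$-quasi-isomorphism. To prove it, I filter both sides by the bounded, increasing filtration $F_k=(-)\otimes\bwedge^{\le k}P$, which $\tilde\varphi$ respects and whose $E^0$-differential reduces to $d_C\otimes\id$; hence $E^1\cong H^{*}(C)\otimes\bwedge P$ and $\tilde\varphi_1=\varphi^{*}\otimes\id$. Since $(\bwedge P)^{\,i-j}=0$ for $j>i$, the map $\varphi^{*}\otimes\id$ is an isomorphism in total degree $\le q$ and a monomorphism in total degree $q+1$. A standard bounded-filtration spectral sequence comparison propagates this to $E^{\infty}$ and hence to the cohomology of the abutments, completing (1).

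For (2), the finite CW-space $N=M/K$ has $H^{*}(N,\k)$ finite-dimensional, so under the $q$-formality hypothesis $B=(H^{*}(N,\k),0)$ is a $q$-finite $q$-model of $N$, and (1) produces a $q$-finite $q$-model $A=B\otimes_{\tau}\bwedge P$ of $M$. Endow $B$ with weights $\text{wt}=i$ on $B^i$, and each $t\in P^i$ with weight $i+1$, extended multiplicatively to $A$. Then $d(t)=\tau(t)\in B^{i+1}$ also has weight $i+1$, so the differential is weight-homogeneous; every element outside $A^0=\k\cdot 1$ has strictly positive weight (in any decomposable term $b\otimes\omega$ with $j+|\omega|\ge 1$, either $j\ge 1$ contributes weight $j$ or $\omega\ne 1$ contributes weight $\ge 2$), giving a positive weight decomposition on $A$.

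The main obstacle is the spectral sequence comparison of the second paragraph: showing that extending a $q$-quasi-isomorphism $\varphi\colon C\to C'$ through compatible Hirsch extensions by $\bwedge P$ remains a $q$-quasi-isomorphism. The hypothesis $q\ge m+1$ is precisely what is needed here, as it places every class $[\tau(P)]$ in the range where $\varphi^{*}$ is an isomorphism, making the transfer of Hirsch data well-defined and the $E^{1}$-level comparison effective in the required degrees; dropping this hypothesis would obstruct both the construction of $\tau$ and the propagation across the zigzag.
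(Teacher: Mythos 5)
Your argument is correct and follows exactly the route the survey intends: the paper states this proposition without proof (deferring to \cite{PS-imrn19}), but the ingredients it lines up in Section \ref{subsec:s1act} --- the Allday--Halperin model of Theorem \ref{thm:rs}, the transfer of the transgression classes along the zigzag of $q$-equivalences (legitimate precisely because $q\ge m+1$ places the classes $[\sigma(t_\alpha)]\in H^{m_\alpha+1}$ in the range where those maps are cohomology isomorphisms), Lehmann's Proposition \ref{prop:hirsch} to normalize the cocycle representatives at each node, and a word-length filtration argument showing that compatible Hirsch extensions of a $q$-quasi-isomorphism remain a $q$-quasi-isomorphism --- are the ones you use. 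Your spectral-sequence comparison at the $E^1$ page and the weight bookkeeping in part (2) (weight $i$ on $H^i(N,\k)$, weight $m_\alpha+1$ on $t_\alpha$, matching the hypothesis of Theorem \ref{thm:krealiz}) are both sound, so I see no gaps.
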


Restricting to principal $K$-bundles, we can say more. 
As before, identify $H^{*}(K, \Q)$ with $\bwedge P= \bwedge (t_1,\dots,t_r)$.

\begin{theorem}[\cite{PS-imrn19}]
\label{thm:krealiz}
Let $N$ be a connected, finite CW-complex and let $K$ be a compact, connected, 
real Lie group. If $N$ has a finite-dimensional 
rational model $B$, then any Hirsch extension 
$A=B\otimes_{\tau} \bwedge P$ can be realized as a finite-dimensional 
rational model of some principal $K$-bundle $M$ over $N$. When $B$ 
has positive weights and the image of $[\tau]$ is generated by 
weighted-homogeneous elements, $A$ also has positive weights. 
\end{theorem}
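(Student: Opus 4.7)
The plan is to realize the cohomological data encoded by the class $[\tau]\colon P\to H^{*+1}(B) \cong H^{*+1}(N,\Q)$ as the transgression of an actual principal $K$-bundle $M\to N$, and then invoke the Allday--Halperin model from Theorem \ref{thm:rs}, transporting the result through a weak equivalence between $B$ and $\apl(N)$. Write a homogeneous basis $\{t_i\in P^{m_i}\}_{i=1}^r$ for $P$ (with each $m_i$ odd) and set $\alpha_i\coloneqq [\tau(t_i)]\in H^{m_i+1}(N,\Q)$. Since $K$ is a compact connected Lie group, $H^{*}(BK,\Q)\cong \Q[u_1,\dots,u_r]$ with $u_i$ in even degree $m_i+1$, and the $u_i$ are exactly the transgressions of the $t_i$ in the universal bundle $K\to EK\to BK$. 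Principal $K$-bundles over $N$ are classified by $[N,BK]$, so the transgression of the bundle classified by $f\colon N\to BK$ sends $t_i$ to $f^{*}(u_i)$.

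The key step is to construct $f\colon N\to BK$ with $f^{*}(u_i)=n_i\alpha_i$ in $H^{m_i+1}(N,\Q)$ for some positive integers $n_i$. Because $N$ is a finite CW-complex, the map $H^{*}(N,\Z)\otimes\Q \to H^{*}(N,\Q)$ is surjective, and I can clear denominators to produce integral lifts $\beta_i\in H^{m_i+1}(N,\Z)$ with $\beta_i\otimes 1 = n_i\alpha_i$ for appropriate $n_i$. Choosing integral lifts $c_i\in H^{m_i+1}(BK,\Z)$ of the $u_i$ (which exist modulo torsion, since $H^{*}(BK,\Z)$ modulo torsion is polynomial on classes of the required degrees), the map $BK\to\prod_i K(\Z,m_i+1)$ classifying the $c_i$ becomes a rational equivalence after rationalization. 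A standard obstruction-theoretic argument, using the finiteness of $N$ and absorbing any further necessary integer multiplications into the $n_i$, then produces the desired $f\colon N\to BK$. Let $M\to N$ be the associated principal $K$-bundle.

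Next I would apply Theorem \ref{thm:rs} to obtain a weak equivalence $\apl(M)\simeq \apl(N)\otimes_\sigma \bwedge P$ with $[\sigma(t_i)]=n_i\alpha_i$, and transport this Hirsch extension along the weak equivalence $B\simeq \apl(N)$ to get $\apl(M)\simeq B\otimes_{\sigma'} \bwedge P$ with $[\sigma'(t_i)]=n_i[\tau(t_i)]$. Since $[\sigma']$ and $[\tau]$ differ by the diagonal automorphism of $P$ sending $t_i\mapsto n_i t_i$, Proposition \ref{prop:hirsch} gives an isomorphism $B\otimes_{\sigma'}\bwedge P \cong B\otimes_\tau \bwedge P = A$, so $A$ is indeed a finite-dimensional rational model of $M$. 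For the positive-weights addendum, assuming each $[\tau(t_i)]$ is weighted-homogeneous of some weight $w_i>0$, I would choose weighted-homogeneous cocycle representatives $\tau(t_i)\in Z^{m_i+1}(B)^{w_i}$ (possible since the weight decomposition on $B$ descends to $H^{*}(B)$) and extend the weight decomposition to $A$ by declaring $t_i$ to have weight $w_i$. The identity $dt_i=\tau(t_i)$ then shows that $d$ preserves weights, and all weights remain positive.

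The main obstacle is the middle paragraph: ensuring that the prescribed rational transgression data $\{\alpha_i\}$, after uniform scalar rescaling by positive integers, genuinely arises from a classifying map $N\to BK$. This step combines the finite-CW-complex hypothesis on $N$ (to clear denominators into integral cohomology), the structure of the integral cohomology of $BK$ modulo torsion, and the rational equivalence $BK\simeq_{\Q}\prod K(\Q,m_i+1)$; the crucial flexibility that makes the argument go through is Proposition \ref{prop:hirsch}, which renders any necessary integer rescaling of the $\alpha_i$ invisible in the isomorphism type of the resulting Hirsch extension.
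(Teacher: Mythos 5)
The survey states this theorem without proof (citing \cite{PS-imrn19}), but your argument is correct and follows the same route as the original: rationally split $BK$ as a product of Eilenberg--MacLane spaces via Borel's theorem, use obstruction theory over the finite complex $N$ (where the obstruction groups are finite) to realize integer multiples of the classes $[\tau(t_i)]$ as the transgression of an actual principal $K$-bundle, and absorb both the integer rescaling and the choice of cocycle representatives via Proposition \ref{prop:hirsch}; the positive-weights addendum is handled correctly by choosing weight-homogeneous representatives and assigning $t_i$ the weight of $[\tau(t_i)]$. The one point to tighten is your appeal to Theorem \ref{thm:rs}, which is stated for smooth almost-free actions on compact manifolds, whereas here $N$ is only a finite CW-complex and $M$ is the total space of a principal bundle; you should instead invoke the standard relative Sullivan model of the pullback of $EK\to BK$ along the classifying map (available since the $\pi_1(N)$-action on $H^{*}(K,\Q)$ is trivial for a principal bundle), which yields the same weak equivalence $\apl(M)\simeq \apl(N)\otimes_{\sigma}\bwedge P$ and is what \cite{PS-imrn19} uses.
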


\subsection{Graded regularity and partial formality}
\label{subsec:partform-circle}

Fix an integer $q\ge 0$.   
Let $H^{*}$ be a connected commutative graded algebra over a 
field $\k$ of characteristic $0$. Following \cite{PS-imrn19}, we 
say that a homogeneous element $e\in H^k$ is a non-zero divisor 
up to degree $q$ if the multiplication map $e \cdot \colon H^i \to H^{i+k}$ 
is injective, for all $i\le q$.  (For $q=0$, this simply means that $e\ne 0$.) 

Likewise, we say that a sequence $e_1,\dots ,e_r$
of  homogeneous elements in $H^+$ is {\em $q$-regular}\/ 
if the class of each $e_{\alpha}$ is a non-zero divisor up to 
degree $q-\deg(e_{\alpha})+2$ in the quotient ring $H/\sum_{\beta <\alpha} e_{\beta} H$.
(This implies in particular that the elements  $e_1,\dots ,e_r$ are 
linearly independent over $\k$, when  $q\ge \deg(e_{\alpha})-2$ for all $\alpha$.)

\begin{theorem}[\cite{PS-imrn19}]
\label{thm:lefred}
Suppose $e_1,\dots ,e_r$ is an even-degree, $q$-regular sequence in $H^{*}$.  
Then the Hirsch extension $A=(H \otimes_{\tau} \bwedge (t_1,\dots ,t_r),d)$ with 
$d=0$ on $H$  and $dt_{\alpha}=\tau (t_{\alpha})=e_{\alpha}$ has 
the same $q$-type as $(H/\sum_{\alpha} e_{\alpha} H, 0)$.
In particular, $A$ is $q$-formal.
\end{theorem}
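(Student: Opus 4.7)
The plan is to show that the natural CDGA projection $\pi \colon A \to (\bar H, 0)$, with $\bar H \coloneqq H/\sum_\alpha e_\alpha H$, defined by $\pi|_H$ being the quotient map and $\pi(t_\alpha) = 0$, is a $q$-quasi-isomorphism; this is a well-defined CDGA map because $\pi(d t_\alpha) = \bar e_\alpha = 0 = d(\pi(t_\alpha))$ and multiplicativity holds trivially once the $t_\alpha$ go to zero. Granted $A \simeq_q (\bar H, 0)$, both claims of the theorem follow: the statement on $q$-types is immediate, and for $q$-formality one uses the induced graded algebra map $\pi^* \colon (H^*(A), 0) \to (\bar H, 0)$, which is itself a $q$-quasi-isomorphism (iso in degrees $\le q$, mono in degree $q+1$), producing the zig-zag $A \xrightarrow{\pi} (\bar H, 0) \xleftarrow{\pi^*} (H^*(A), 0)$.

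The heart of the proof is an induction on $s$ showing that the analogous projection $\pi_s \colon A_s \to \bar H_s$ --- with $A_s = H \otimes_\tau \bwedge(t_1, \dots, t_s)$ and $\bar H_s = H/\sum_{\alpha \le s} e_\alpha H$ --- induces isomorphisms on $H^i$ for every $i \le q+1$. I would view $A_s$ as the Hirsch extension $A_{s-1} \otimes \bwedge(t_s)$ with $dt_s = e_s$ and exploit the short exact sequence of cochain complexes $0 \to A_{s-1} \to A_s \to A_s/A_{s-1} \to 0$ arising from the decomposition $A_s = A_{s-1} \oplus A_{s-1} \cdot t_s$; the quotient is $A_{s-1}$ shifted up by $\abs{t_s}$, and the connecting homomorphism in the associated long exact sequence is, up to a sign, multiplication by $e_s$. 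This exhibits $H^i(A_s)$ as an extension of $\ker\bigl(e_s \cdot \colon H^{i - \abs{t_s}}(A_{s-1}) \to H^{i+1}(A_{s-1})\bigr)$ by $H^i(A_{s-1})/e_s H^{i - \abs{e_s}}(A_{s-1})$, and the inductive hypothesis identifies the latter quotient with $\bar H_s^i$ throughout $i \le q+1$.

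The remaining task --- and the main obstacle --- is showing that the kernel vanishes for all $i \le q+1$. In the range $i \le q$ this is straightforward: every term lies within the inductive range, so the map coincides with $\bar e_s \cdot \colon \bar H_{s-1}^{i - \abs{t_s}} \to \bar H_{s-1}^{i+1}$, which is injective by $q$-regularity since $i - \abs{t_s} = i - \abs{e_s} + 1 \le q - \abs{e_s} + 2$. The delicate case is $i = q+1$, where the target $H^{q+2}(A_{s-1})$ falls outside the inductive range and cannot be identified directly with $\bar H_{s-1}^{q+2}$. I would sidestep this by composing with the projection $\pi_{s-1}^* \colon H^{q+2}(A_{s-1}) \to \bar H_{s-1}^{q+2}$; by naturality, the composition equals $\bar e_s \cdot$ acting on $\bar H_{s-1}^{q+1-\abs{t_s}} = \bar H_{s-1}^{q - \abs{e_s} + 2}$, which is injective precisely because this degree sits at the boundary of the $q$-regularity hypothesis. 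Injectivity of the composition, combined with the fact that $\pi_{s-1}^*$ is already an iso on the source $H^{q+1-\abs{t_s}}(A_{s-1})$ by inductive hypothesis, forces injectivity of the original map, closing the induction; setting $s = r$ delivers $A \simeq_q (\bar H, 0)$ and hence both conclusions of the theorem.
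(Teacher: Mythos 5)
Your argument is correct, and it is essentially the proof of this statement in the cited source \cite{PS-imrn19} (the survey itself only states the result): one inducts on $r$ using the long exact sequence of the Hirsch extension $A_{s-1}\subset A_s$, whose connecting homomorphism is multiplication by $e_s$, and the $q$-regularity hypothesis is exactly what is needed to kill the kernel of that map through degree $q+1$ --- including your correct handling of the boundary case $i=q+1$ by composing with $\pi_{s-1}^{*}$, where the degree $q-\abs{e_s}+2$ sits precisely at the edge of the regularity range. The concluding zig-zag $A\to (H/\sum_{\alpha}e_{\alpha}H,0)\leftarrow (H^{*}(A),0)$ correctly upgrades the $q$-equivalence with a zero-differential $\cdga$ to $q$-formality.
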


Classical results of Borel and Chevalley provide the machinery for 
constructing graded algebras which satisfy the hypothesis 
of Theorem \ref{thm:lefred}, in the case when $q=\infty$.
Let $H^{*}(BK,\k)$ be the cohomology algebra of the classifying 
space of a compact, connected Lie group $K$.  Let $T$ be a maximal 
torus in $K$, and let $W=NT/T$ be the Weyl group. The classifying 
space $BT$ is the product of $r$ copies of $\CP^{\infty}$, where $r$ 
is the rank of $K$. Its cohomology algebra is $H^{*}(BT,\k)=\k [x_1,\dots, x_r]$,
with degree $2$ free algebra generators, on which $W$ acts by graded 
algebra automorphisms. 

The natural map $\kappa \colon BT \to BK$ identifies the cohomology 
algebra $H^{*}(BK,\k)$ with the invariant subalgebra
of the $W$-action. More precisely, $H^{*}(BK,\k)$ is isomorphic 
to a polynomial ring of the form $\k [f_1,\dots, f_r]$, 
where each $f_{\alpha}$ is a $W$-invariant polynomial of 
even degree $m_{\alpha}+1$, with $m_{\alpha}$ 
as in Section \ref{subsec:s1act}. Moreover, $f_1,\dots , f_r$  
forms a regular sequence in $\k [x_1,\dots, x_r]$.

Let $U\subseteq K$ be a closed, connected subgroup of a compact, 
connected Lie group. As shown in \cite{Th}, the Sullivan minimal model 
of the homogeneous space $K/U$ is a Hirsch extension of the form 
$A=H \otimes_{\tau} \bwedge (t_1,\dots ,t_s)$, where $H^{*}$ 
is a free graded algebra on finitely many even-degree generators, 
with zero differential, as in Theorem \ref{thm:lefred}.  
As is well-known, not all homogeneous spaces $K/U$ are formal. 
Nevertheless, the criterion from Theorem \ref{thm:lefred} 
may be used to gain  information on their partial 
formality properties.

\begin{example}
\label{ex=homnonf}
For the homogeneous space $\Sp(5)/\SU(5)$, 
the aforementioned algebra $H^{*}$ has two free generators,
$x_6$ and $x_{10}$, where subscripts denote degrees, and the sequence 
from Theorem \ref{thm:lefred} is $\{x^2_6, x^2_{10}, x_6 x_{10}\}$, 
see \cite{FOT}. 
It follows  that $\Sp(5)/\SU(5)$ is $19$-formal. 
On the other hand, a computation with Massey triple 
products shows that this estimate is sharp, that is, 
$\Sp(5)/\SU(5)$ is not $20$-formal. 
\end{example}

\subsection{Partial formality of $K$-manifolds}
\label{subsec:pfk}

Let $M$ be an almost free $K$-manifold. We 
write $H^{*}(K,\k)= \bwedge (t_1,\dots ,t_{r})$, 
and denote the transgression of $t_{\alpha}$ by 
$e_{\alpha} \in H^{m_{\alpha} +1} (M/K,\k)$.  As before, set 
$m= \max \{ m_{\alpha} \}$.

\begin{theorem}[\cite{PS-imrn19}]
\label{thm:nformal}
Suppose the $K$-action on $M$ is almost free, the orbit space $N=M/K$ is $k$-formal, 
for some $k\ge m+1$, and $e_1,\dots ,e_r$ form a $q$-regular sequence in $H^{*}(N,\k)$,
for some $q\le k$.  Then the quotient algebra 
$H^{*}(N,\k) /\sum_{\alpha=1}^r e_{\alpha} H^{*}(N,\k)$, 
equipped with the zero differential,  
is a finite-dimensional $q$-model for $M$; in particular, 
$M$ is $q$-formal.
\end{theorem}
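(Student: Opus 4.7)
The strategy is to chain two results already available: Proposition \ref{prop:newfinite}(2), which turns the $k$-formality of the orbit space into a Hirsch-extension model for $M$ with trivial differential on the base, and Theorem \ref{thm:lefred}, which collapses such a Hirsch extension, under an appropriate regularity hypothesis on the transgressed classes, to the quotient of the base by the ideal those classes generate.

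First I would invoke Proposition \ref{prop:newfinite}(2), with the integer $k$ playing the role of the $q$ there: since $N=M/K$ is $k$-formal and $k\geq m+1$, we obtain a suitable ``Hirsch model''
\[
A = \bigl(H^{*}(N,\k),0\bigr) \otimes_{\tau} \bwedge(t_1,\dots,t_r),
\]
with $d=0$ on $H^{*}(N,\k)$ and $dt_\alpha = \tau(t_\alpha)$ a cocycle representative of the transgression $e_\alpha$; this $A$ is a $k$-finite $k$-model for $M$. The ``suitable'' clause lets us arrange $\tau(t_\alpha)=e_\alpha$ on the nose, since $e_\alpha$ is already a cocycle in the base. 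As $q\leq k$, the same $A$ is automatically a $q$-finite $q$-model for $M$.

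Next I would apply Theorem \ref{thm:lefred}. The generators $t_\alpha\in P^{m_\alpha}$ have odd degree, so each $e_\alpha\in H^{m_\alpha+1}(N,\k)$ lies in even degree, and the hypothesis of the theorem is met: $(e_1,\dots,e_r)$ is an even-degree, $q$-regular sequence in $H^{*}(N,\k)$. Theorem \ref{thm:lefred} then yields a $q$-equivalence
\[
A \;\simeq_q\; \Bigl(H^{*}(N,\k)\big/\textstyle\sum_{\alpha=1}^{r} e_\alpha H^{*}(N,\k),\,0\Bigr).
\]
Composing the two $q$-equivalences via transitivity of $\simeq_q$ gives
$\apl(M)\otimes_{\Q}\k \simeq_q A \simeq_q \bigl(H^{*}(N,\k)/\sum_\alpha e_\alpha H^{*}(N,\k),0\bigr)$, so the right-hand cdga is a $q$-model for $M$. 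It is finite-dimensional because $M$ is compact, hence so is $N=M/K$, hence $H^{*}(N,\k)$ is finite-dimensional; the quotient inherits this property. Finally, since this $q$-model has zero differential, its cohomology in degrees $\leq q+1$ must agree with $H^{\leq q+1}(M,\k)$, which forces $\apl(M)\otimes_{\Q}\k \simeq_q (H^{*}(M,\k),0)$; that is, $M$ is $q$-formal.

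All the technical machinery has been assembled in the earlier results, so the proof is essentially a matching of hypotheses. The only point requiring care is checking that the degrees align to apply Theorem \ref{thm:lefred}, specifically that the $e_\alpha$ are even-degree (which follows from the oddness of the generators of $H^{*}(K,\k)$) and that the bound $k\geq m+1$ is precisely what allows Proposition \ref{prop:newfinite}(2) to yield a Hirsch extension in which the generators $t_\alpha$ live, so that the resulting model is genuinely the one to which Theorem \ref{thm:lefred} applies.
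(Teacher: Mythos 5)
Your argument is correct and follows exactly the route the paper sets up: Proposition \ref{prop:newfinite}\,(2) (which needs the bound $k\ge m+1$) produces the Hirsch-extension model $A=(H^{*}(N,\k),0)\otimes_{\tau}\bwedge P$ with $\tau(t_\alpha)=e_\alpha$, and Theorem \ref{thm:lefred} then collapses $A$, $q$-equivalently, to the quotient algebra with zero differential. The only cosmetic remark is on the final step: rather than arguing that a zero-differential $q$-model forces $q$-formality by matching cohomology in degrees $\le q+1$, you can simply quote the ``in particular, $A$ is $q$-formal'' clause of Theorem \ref{thm:lefred} together with the fact, recorded in Section \ref{subsec:partial-formal}, that $q$-formality is invariant under $\simeq_q$.
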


As illustrated in the next two examples, the $q$-regularity 
assumption from Theorem \ref{thm:nformal} is
optimal with respect to the $q$-formality conclusion for 
the manifold $M$, at least in the case when $K=S^1$ or $S^3$. 

\begin{example}
\label{ex:heis-again}
Let $M=\Heis_1$ be the $3$-dimensional Heisenberg nilmanifold 
from Example \ref{ex:heis-massey}.  This manifold is the total space of the 
principal $S^1$-bundle over the formal manifold $N=S^1 \times S^1$, 
with Euler class $e\in H^2 (N,\Z)$ equal to the orientation class. 
In this case, the sequence $\{ e \}$ is $0$-regular, but not $1$-regular
in $H^{*} (N,\k)$. In fact, as mentioned previously, $M$ is not $1$-formal. 
As explained in Example \ref{ex:heis-sasaki}, this is the first manifold in a series, 
$\Heis_n$, where $(n-1)$-regularity implies $(n-1)$-formality in an optimal way. 
\end{example}

\begin{example}
\label{ex:hopf-nonformal}
Let  $M$ to be the total 
space of the principal $S^3$-bundle over $N=S^2 \times S^2$ 
obtained by pulling back the Hopf bundle $S^7\to S^4$ along 
a degree-one map $N\to S^4$. As above, $N$ is 
formal, and the Euler class $e\in H^4 (N,\Z)$ 
is the orientation class.  In this case, $\{ e \}$ is 
$3$-regular, but not $4$-regular 
in $H^{*} (N,\k)$, and Theorem \ref{thm:nformal} 
says that $M$ is $3$-formal. Direct computation with the 
minimal model of $M$ shows that, in fact, $M$ is 
not $4$-formal.
\end{example}

\subsection{Malcev completion and representation varieties}
\label{subsec:formalbase}

Let $H$ be a $2$-finite $\cdga$ with zero differential, and 
let $A=H\otimes_{\tau} \bwedge P$ be a Hirsch extension, 
where $P$ is an oddly-graded, finite-dimensional vector space.

\begin{theorem}[\cite{PS-imrn19}]
\label{thm:filtf}
The holonomy Lie algebra $\h(A)$ admits a finite presentation
with generators in degree $1$ and relations in degrees $2$ and $3$. 
\end{theorem}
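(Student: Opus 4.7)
The plan is to unwind the definition $\h(A) = \Lie(A_1)/\langle \im(\partial_A)\rangle$, organize the generators of the relator ideal by degree, and then rewrite the mixed-degree relations as consequences of purely homogeneous ones in degrees $2$ and $3$.

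First, since $P$ is oddly graded, I would decompose $A^1 = H^1\oplus P^1$ and $A^2 = H^2\oplus(H^1\otimes P^1)\oplus\bigwedge^2 P^1$. Setting $V = (H^1)^\vee$ and $U=(P^1)^\vee$, these dualize to $A_1 = V\oplus U$ and $A_2 = (H^2)^\vee\oplus(V\otimes U)\oplus\bigwedge^2 U$, both finite-dimensional by hypothesis. A direct computation of $d^\vee$ and $\mu^\vee$ on each summand---using that $d$ kills $H^1$ and sends $P^1$ to $H^2$ via $\tau$, and that $\mu$ restricts to $\mu_H$, to the tensor inclusion, and to the wedge inclusion on the three pieces of $\bigwedge^2 A^1$---shows that $\im(\partial_A)$ splits, under $\bigwedge^2 A_1\cong \Lie_2(A_1)$, into three families: $(\mathrm{I})$ mixed-degree elements $\tau^\vee(\alpha)-r_\alpha$ for $\alpha$ in a basis of $(H^2)^\vee$, with $r_\alpha\in[V,V]\subset\Lie_2(V)$ corresponding to $-\mu_H^\vee(\alpha)$; $(\mathrm{II})$ brackets $[v,u]$ with $v\in V$, $u\in U$; and $(\mathrm{III})$ brackets $[u,u']$ with $u,u'\in U$.

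Next, I would set $U_0 := \tau^\vee((H^2)^\vee)\subseteq U$, fix a vector-space complement $U'$ with $U=U_0\oplus U'$, and fix a complement $C$ to $\ker(\tau^\vee)$ in $(H^2)^\vee$, so that $\tau^\vee\colon C\xrightarrow{\sim} U_0$. The proposed presentation is then the Lie algebra $\tilde\h$ generated by $V\oplus U'$ (all in degree $1$), with quadratic relations
\[
[V,U']=0,\qquad [U',U']=0,\qquad r_\alpha=0 \text{ for } \alpha\in\ker(\tau^\vee),
\]
and cubic relations $[v,r_\alpha]=0$ for $v\in V$ and $\alpha\in C$. The generating and relator sets are both finite since $V$, $U'$, and $(H^2)^\vee$ are finite-dimensional. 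To identify $\tilde\h$ with $\h(A)$ I would construct inverse Lie algebra maps. The forward map $\tilde\h\to\h(A)$ is the identity on generators, and each defining relation of $\tilde\h$ holds in $\h(A)$ directly from $(\mathrm{I})$--$(\mathrm{III})$. The reverse map sends $v\mapsto v$, $u'\mapsto u'$, and $\tau^\vee(\alpha)\mapsto r_\alpha$ for $\alpha\in C$, extended linearly; relation $(\mathrm{I})$ is killed by construction (together with the quadratic relation $r_\alpha=0$ on $\ker(\tau^\vee)$), and family $(\mathrm{II})$ dies either trivially (for $u\in U'$) or via the cubic relation (for $u\in U_0$).

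The main obstacle will be verifying that the reverse map also kills family $(\mathrm{III})$, i.e., that $[U,U]=0$ in $\tilde\h$. Decomposing $U\wedge U = (U'\wedge U')\oplus(U_0\wedge U')\oplus(U_0\wedge U_0)$, the first summand is a defining relation; for the second, writing $r_\alpha=\sum_i[v_i,w_i]$ and expanding $[r_\alpha,u']$ by the Jacobi identity reduces it to the quadratic relation $[V,U']=0$; and for the third, the analogous Jacobi unwinding of $r_\beta$ combined with the cubic relations $[V,r_\alpha]=0$ shows that $[r_\alpha,r_\beta]=0$ in $\tilde\h$. Once this bookkeeping is complete, the resulting isomorphism $\h(A)\cong\tilde\h$ yields the desired finite presentation with generators in degree $1$ and relations in degrees $2$ and $3$.
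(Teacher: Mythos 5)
Your argument is correct. The survey states Theorem \ref{thm:filtf} without proof, citing \cite{PS-imrn19}, so there is no in-text proof to compare against; but your route---computing $\partial_A=d^\vee+\mu^\vee$ on the three summands $(H^2)^\vee\oplus(V\otimes U)\oplus\bwedge^2 U$ of $A_2$, then eliminating the redundant degree-one generators $U_0=\im(\tau^\vee)$ via the mixed relations $\tau^\vee(\alpha)=r_\alpha$, which converts the quadratic relations $[V,U_0]$ into cubic ones and makes $[U_0,U]$ redundant by the Jacobi identity---is exactly the natural proof of this statement. It is also consistent with Corollary \ref{cor:ffcircle}, whose generating set $H_1(\pi,\k)\cong V\oplus U'$ agrees with yours.
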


\begin{corollary}[\cite{PS-imrn19}]
\label{cor:ffcircle}
Suppose $M$ supports an almost free $K$-action with $2$-formal orbit space. 
Then:
\begin{enumerate}
\item \label{ff1}
The group $\pi=\pi_1(M)$ is filtered-formal. 
More precisely, the Malcev Lie algebra $\m(\pi)$ is isomorphic 
to the lcs completion of  $\Lie(H_1(\pi, \k))/\mathfrak{r}$, where 
$\mathfrak{r}$ is a homogeneous ideal generated in degrees $2$ and $3$. 

\item \label{ff2}
For every complex linear algebraic group $G$,  
the germ at the origin of the representation variety 
$\Hom_{\gr}(\pi,G)$ is defined by quadrics and cubics only.
\end{enumerate}
\end{corollary}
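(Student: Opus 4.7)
The plan is to build a $1$-finite $1$-model $A$ for $M$ of the form needed by Theorem \ref{thm:filtf}, use Theorem \ref{thm:malholo} to translate the holonomy Lie algebra of $A$ into the Malcev Lie algebra of $\pi=\pi_1(M)$, and then convert the resulting presentation into a statement about the germ of $\Hom_{\gr}(\pi, G)$. Compactness of $M$ makes all $b_i(N)$ finite, so the $2$-formality of $N=M/K$ together with Lemma \ref{lem:truncate} applied to $(H^{*}(N,\k),0)$ furnishes a $2$-finite $\cdga$ $H$ with zero differential that is $2$-equivalent to $\apl(N)$. Feeding this into the Allday--Halperin Theorem \ref{thm:rs} and transferring the $2$-equivalence across the Hirsch extension (as in the proof of Proposition \ref{prop:newfinite}) produces $A=H\otimes_{\tau}\bwedge P$, with $P=\widetilde{H}^{*}(K,\k)$ odd-graded and finite-dimensional, such that $A$ is a $1$-finite $1$-model for $M$.

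By Theorem \ref{thm:filtf}, the holonomy Lie algebra $\h(A)$ admits a presentation $\Lie(V)/\fr$ with $V=H^1(A)^{\vee}=H_1(\pi,\k)$ concentrated in Lie degree $1$ and $\fr$ a homogeneous ideal generated in Lie degrees $2$ and $3$. Theorem \ref{thm:malholo} then gives $\m(\pi)\cong \widehat{\h(A)}$, and Lemma \ref{lem:presbar} applied to the homogeneous ideal $\fr\subset \Lie(V)$ identifies this with $\widehat{\Lie}(V)/\overline{\fr}$, the degree completion of the graded Lie algebra $\Lie(V)/\fr$. In particular, $\m(\pi)$ is the degree completion of its associated graded Lie algebra, proving filtered-formality together with the precise form stated in part (1).

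For part (2), invoke the standard dictionary---obtained via the Malcev completion, the exponential, and the Baker--Campbell--Hausdorff formula---between the analytic germ of $\Hom_{\gr}(\pi,G)$ at the trivial representation and the germ at $0$ of filtered Lie algebra homomorphisms $\m(\pi)\to \g$, where $\g=\Lie(G)$. Such filtered homomorphisms factor through the finitely presented graded quotient $\Lie(V)/\fr$ of $\widehat{\Lie}(V)$, so near the origin this germ is cut out of the affine space $\Hom_{\k}(V,\g)$ by the vanishing of a finite set of generators of $\fr$. Each such generator is a bracketed expression of Lie degree $2$ or $3$, so evaluating it on a linear map $\phi\colon V\to \g$ yields a polynomial equation of degree $2$ or $3$ in the matrix entries of $\phi$, producing the claimed quadric-and-cubic defining equations.

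The step requiring the most care is the lifting argument: one has to promote the $2$-equivalence $\apl(N)\simeq_2 H$ across the Allday--Halperin Hirsch extension of Theorem \ref{thm:rs}, producing a transgression $\tau$ on $H$ compatible enough with the Borel-construction transgression $\sigma$ so that $A=H\otimes_{\tau}\bwedge P$ is genuinely a $1$-finite $1$-model of $M$ rather than merely a $0$-equivalent $\cdga$. Once that lifting is in hand, the remainder is a clean application of Theorems \ref{thm:filtf} and \ref{thm:malholo} combined with the deformation-theoretic interpretation of representation varieties.
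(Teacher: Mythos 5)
Your proposal follows the same route the paper intends: Proposition \ref{prop:newfinite} (with $q=2$) to manufacture the $1$-finite $1$-model $A=H\otimes_{\tau}\bwedge P$ with $H=(H^{*}(N,\k),0)$, Theorem \ref{thm:filtf} for the degree-$2$-and-$3$ presentation of $\h(A)$, Theorem \ref{thm:malholo} plus Lemma \ref{lem:presbar} to identify $\m(\pi)$ with the degree completion of $\Lie(H_1(\pi,\k))/\fr$, and Proposition \ref{lem:filtiso} for filtered-formality. You also correctly isolate the only delicate point in part (1), namely transferring the Hirsch extension of Theorem \ref{thm:rs} across the $2$-equivalence $\apl(N)\simeq_2 H$ (which is exactly why $2$-formality, rather than $1$-formality, of the orbit space is the right hypothesis: one needs $H^2$ to be captured isomorphically so that the transgression classes of $P^1$ live in the model).

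The one step I would tighten is part (2). As literally stated, the ``dictionary via Malcev completion, exponential, and BCH'' identifying the germ of $\Hom(\pi,G)$ at the trivial representation with filtered Lie algebra homomorphisms $\m(\pi)\to\g$ is only valid for unipotent (or pro-unipotent) targets; for an arbitrary complex linear algebraic group $G$ one cannot factor a nearby representation through $\pi_{\Q}$, and ``germ at $0$ of filtered homomorphisms out of a pronilpotent Lie algebra'' is not an algebraic variety without further interpretation. The tool the paper relies on is the Dimca--Papadima germ isomorphism $\Hom(\pi,G)_{(\one)}\cong \F(A,\g)_{(\zero)}$ for a $1$-finite $1$-model $A$, combined with the identification \eqref{eq:flathol} of $\F(A,\g)$ with $\Hom_{\Lie}(\h(A),\g)$; from there your final computation --- evaluating the degree-$2$ and degree-$3$ Lie relations on a linear map $V\to\g$ yields quadrics and cubics in $\Hom_{\k}(V,\g)$ --- goes through verbatim. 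This is a fixable citation issue rather than a flaw in the strategy, but the argument as written would not survive for non-nilpotent $G$.
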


The second statement in the above corollary is analogous to the 
quadraticity obstruction for fundamental groups of compact K\"ahler 
manifolds obtained by Goldman--Millson in \cite{Goldman-Millson}. 
Note that the corollary applies to principal $K$-bundles over formal manifolds.

\subsection{Orbifold fundamental groups}
\label{subsec:orbifold}
Assume now that $M$ is an almost free $K$-manifold. By \cite[Theorem 4.3.18]{BG}, 
the projection $p\colon M\to M/K$ induces a natural epimorphism 
$f\colon \pi_1 (M) \surj \pi_1^{\orb} (M/K)$ between orbifold fundamental groups.

\begin{theorem}[\cite{PS-imrn19}]
\label{thm:monotr}
Suppose that the $K$-action on $M$ is almost free and  the transgression 
$P^{*} \to H^{* +1}(M_K,\k) \cong H^{* +1}(M/K,\k)$ is injective in degree $1$.
Then the following hold.
\begin{enumerate}
\item \label{mn1}
If the orbit space $N=M/K$ has a $2$-finite $2$-model over $\k \subseteq \C$,
then the homomorphism $f\colon \pi_1 (M) \surj \pi_1^{\orb} (N)$ 
induces an analytic isomorphism between the germs at $1$ of 
$\VV^1_k(\pi_1^{\orb} (N))$ and $\VV^1_k (\pi_1 (M))$, 
for all $k$.
\item \label{mn2}
If $N$ is $2$-formal, then $f$ induces an analytic isomorphism between the 
germs at $1$ of $\Hom (\pi_1^{\orb} (N), \SL_2 (\C))$ and $\Hom (\pi_1 (M), \SL_2 (\C))$.
\end{enumerate}
\end{theorem}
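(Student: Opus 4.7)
Plan: Both statements concern the map $f\colon \pi_1(M) \twoheadrightarrow \pi_1^{\orb}(N)$ induced by the orbit projection $p\colon M \to N$, and my strategy is to transport them to questions about the $\cdga$ inclusion $\iota\colon B \hookrightarrow A$, where $A = B \otimes_\tau \bwedge P$ is the Hirsch-extension model for $M$ from Theorem~\ref{thm:rs} and $B$ is the chosen $2$-finite $2$-model for $N$ (so $A$ is a $2$-finite $2$-model for $M$ by Proposition~\ref{prop:newfinite}). Splitting $P = P^1 \oplus P^{\ge 3}$ according to the odd degrees of the generators, the factors contributed by $P^{\ge 3}$ are Hirsch extensions of degree $\ge 3$, so by Lemma~\ref{lem:hi} the intermediate inclusion $A_1 \coloneqq B \otimes_\tau \bwedge P^1 \hookrightarrow A$ is a $2$-quasi-isomorphism. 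Meanwhile $B \hookrightarrow A_1$ is an iterated sequence of degree-$1$ Hirsch extensions whose transgressed classes $e_\alpha \in B^2$ have $[e_\alpha]$ linearly independent in $H^2(B)$ by the injectivity hypothesis on $\tau$ in degree~$1$.

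For the first part, Theorem~\ref{thm:thmb} supplies local analytic identifications $\RR^1_k(B)_{(\zero)} \cong \VV^1_k(\pi_1^{\orb}(N))_{(\one)}$ and $\RR^1_k(A)_{(\zero)} \cong \VV^1_k(\pi_1(M))_{(\one)}$ through the respective exponential maps (using that $B$ serves as a $1$-model for $\pi_1^{\orb}(N) = \pi_1(M_K)$ via the rational equivalence $M_K \to N$). Lemma~\ref{lem:functoriality} applied to the $2$-quasi-iso $A_1 \hookrightarrow A$ yields $\RR^1_k(A_1) = \RR^1_k(A)$ under the induced $H^1(A_1) \cong H^1(A)$, and iterated application of Proposition~\ref{prop:hirsch-res} (in the case $[e]\neq 0$, (b)) to $B \hookrightarrow A_1$ gives $\RR^1_k(B) = \RR^1_k(A_1)$ under the induced $H^1(B) \cong H^1(A_1)$; at each stage the running transgressed class remains non-zero modulo the previously adjoined ones, by linear independence. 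Naturality of Theorem~\ref{thm:thmb} under $\iota$, which models $p$ and therefore induces $f$ on fundamental groups, then packages these equalities into the claimed analytic isomorphism of characteristic variety germs.

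For the second part, $2$-formality of $N$ allows me to choose $B$ to be a finite-dimensional truncation of $(H^{*}(N, \k), 0)$, so that $d_B = 0$ on $B^{\le 2}$ and the $e_\alpha$ are linearly independent in $B^2$ itself. Here I would invoke the Goldman--Millson-type dictionary (in the $2$-model version resting on the flat-connection formalism of Section~\ref{subsec:holflat}, following \cite{DP-ccm} and \cite{MPPS}) identifying the germ at $\one$ of $\Hom(\pi, \SL_2(\C))$ with the gauge-quotient Maurer--Cartan germ of $R \otimes \sl_2(\C)$, applied with $R = A$ for $\pi = \pi_1(M)$ and $R = B$ for $\pi = \pi_1^{\orb}(N)$. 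Writing $\omega = \omega_B + \omega_P$ with $\omega_P = \sum_\alpha t_\alpha \otimes y_\alpha \in P^1 \otimes \sl_2$, the Maurer--Cartan equation decomposes along $A^2 = B^2 \oplus (B^1 \otimes P^1) \oplus \bwedge^2 P^1$ into
\[
\sum_\alpha e_\alpha \otimes y_\alpha + \tfrac{1}{2}[\omega_B, \omega_B] = 0, \qquad [\omega_B, \omega_P] = 0, \qquad [\omega_P, \omega_P] = 0.
\]
Linear independence of the $e_\alpha$ in $B^2$ forces the $B^2$-component to read $\omega_P$ off the projection of $[\omega_B, \omega_B]$ onto $\spn\{e_\alpha\} \otimes \sl_2(\C)$ and to kill its orthogonal part; the remaining two conditions, together with the conjugation action of $\exp(\sl_2(\C))$ and the centralizer structure in $\SL_2(\C)$ near the identity, are then exploited to realize every MC element near $\zero$ as gauge-equivalent to one with $\omega_P = 0$ and $\omega_B \in \F(B, \sl_2(\C))$. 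Passing back through the dictionary then transports this analytic isomorphism of MC germs into the desired isomorphism of representation-variety germs induced by $f^{*}$.

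The principal obstacle lies in the second part: one needs the sharper $2$-model version of the Goldman--Millson correspondence, because $\pi_1(M)$ is in general not $1$-formal (witness Example~\ref{ex:heis-complex}), so the holonomy Lie algebra alone is insufficient and the full DGLA $A \otimes \sl_2(\C)$ must control the germ. Beyond that, the combinatorial analysis of the three Maurer--Cartan components---showing that the extra directions introduced along $P^1$ are absorbed by $\SL_2(\C)$-gauge---is where the hypothesis of degree-$1$ injectivity of $\tau$ (via linear independence of the $e_\alpha$) and the special structure of $\sl_2$-brackets together close the argument; extending the conclusion from $\SL_2(\C)$ to more general reductive groups would require a substantially subtler handling of this step.
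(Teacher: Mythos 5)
The survey states this theorem without proof (it is quoted from \cite{PS-imrn19}), so I am measuring your proposal against the argument that the surrounding machinery is visibly set up for. Your treatment of part \eqref{mn1} is essentially that argument and is correct: replace $A=B\otimes_\tau\bwedge P$ by $A_1=B\otimes_\tau\bwedge P^1$ using Lemma \ref{lem:hi}, iterate Proposition \ref{prop:hirsch-res}\eqref{e1}(b) over the degree-one generators (the injectivity of the transgression guarantees that each $[e_\alpha]$ stays non-zero after the previous extensions, since the kernel of $H^2$ under a degree-one Hirsch extension is exactly the line spanned by the class being killed), and transport through Theorem \ref{thm:thmb} applied to $M$ and to $M_K\simeq_{\Q} N$, using naturality of the exponential correspondence under $\iota\colon B\inj A$.

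In part \eqref{mn2} there is a genuine gap. The dictionary you need identifies the germ at $\one$ of the representation variety $\Hom(\pi,\SL_2(\C))$ with the germ at $\zero$ of the set of flat connections $\F(R,\sl_2(\C))$ itself, \emph{not} with a gauge quotient of the Maurer--Cartan set (sanity check: for $\pi=\Z$ and $R=\bwedge(a)$ one gets $\SL_2(\C)_{(\one)}\cong (\sl_2(\C))_{(\zero)}$, which a gauge quotient would collapse). Consequently you must show that $\iota_*\colon \F(B,\sl_2(\C))\to\F(A,\sl_2(\C))$ is a germ isomorphism on the nose; ``gauging away $\omega_P$'' is neither available nor sufficient, since a gauge transformation moves a flat connection to one corresponding to a \emph{conjugate} representation and so cannot witness surjectivity of $f^*$. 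The good news is that your own decomposition already contains the correct closing step, which you stop just short of: the $(B^1\otimes P^1)$-component $[\omega_B,\omega_P]=0$ forces each $y_\alpha$ to centralize every $\sl_2$-coefficient $v_j$ of $\omega_B$. If the $v_j$ generate a nonabelian subalgebra of $\sl_2(\C)$, that centralizer is trivial, so $\omega_P=0$; if instead the $v_j$ pairwise commute (or all vanish), then $[\omega_B,\omega_B]=0$, and the $B^2$-component together with the linear independence of the $e_\alpha$ forces $\sum_\alpha e_\alpha\otimes y_\alpha=0$, hence again $\omega_P=0$. Thus $\F(A,\sl_2(\C))=\iota_*\F(B,\sl_2(\C))$ exactly, with no gauge action needed; this is also precisely where the restriction to $\SL_2(\C)$ enters, as you correctly anticipate at the end.
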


\begin{example}
\label{ex:tauinj}
Let $K$ be a compact, connected Lie group, and identify 
$H^{*}(K,\Q)$ with $\bigwedge P^{*}_K$. Let $N$ be a 
compact, formal manifold, and assume $b_2(N)\ge s$, 
where $s=\dim P^1_K$  (for instance, take $N$ to be the 
product of at least $s$ compact K\"{a}hler manifolds). 
There is then a degree-preserving 
linear map, $\tau \colon P^{*}_K \to H^{* +1}(N,\Q)$, which is  
injective in degree $1$.  By Theorem \ref{thm:krealiz}, such a map 
can be realized as the transgression in a principal $K$-bundle, $M_{\tau}\to N$,  
and the manifold $M_{\tau}$ satisfies the assumptions from Theorem \ref{thm:monotr}. 
\end{example}

Theorem \ref{thm:monotr} may also be applied to a Seifert fibered $3$-manifold 
with non-zero Euler class, $p\colon M \to M/S^1 =\Sigma_g$. In this case, 
$\VV^1_k (M)_{(\one)}$ is isomorphic to $\VV^1_k (\Sigma_g)_{(\one)}$, 
for all $k$, while $\Hom (\pi_1 (M), \SL_2 (\C))_{(\one)}\cong 
\Hom (\pi_1 (\Sigma_g), \SL_2 (\C))_{(\one)}$.

\subsection{Sasakian geometry}
\label{subsec:sas}
The machinery outlined above has some noteworthy  
consequences for the topology of compact Sasakian manifolds, 
which are related to formality properties, representation varieties and 
cohomology jump loci. A comprehensive reference for Sasakian 
geometry is the book of Boyer and Galicki \cite{BG}.  

Let $M^{2n+1}$ be a compact Sasakian manifold of dimension $2n+1$.  
Without loss of essential generality, we may assume that the Sasakian 
structure is quasi-regular.   A basic structural result in Sasakian geometry 
guarantees that, in this case, $M$ supports an almost free circle action.  
Furthermore, the quotient space, $N=M/S^1$, is a compact K\"{a}hler orbifold, 
with K\"{a}hler class $h\in H^2(N,\k)$ satisfying the Hard Lefschetz property, 
that is, multiplication by $h^k$ defines an isomorphism 
\begin{equation}
\label{eq:hardlef}
\begin{tikzcd}[column sep=22pt]
H^{n-k}(N,\k) \ar[r, "\cong"] & H^{n+k}(N,\k)
\end{tikzcd}
\end{equation}
for each $1\le k\le n$; see 
\cite[Proposition 7.2.2 and Theorem 7.2.9]{BG}.
The thesis of Tievsky \cite[\S 4.3]{Ti} provides a very useful model 
for a Sasakian manifold.

\begin{theorem}[\cite{Ti}]
\label{thm:sasmodel}
Every compact Sasakian manifold $M$ admits as a finite model over $\R$ the Hirsch 
extension $A^{*}(M)=(H^{*}(N,\R)\otimes _h \bwedge(t), d)$, 
where $d$ is zero on $H^{*}(N,\R)$ and $dt=h$, 
the K\"{a}hler class of $N$.  
\end{theorem}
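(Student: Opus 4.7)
The plan is to combine three ingredients: (i) the structural theorem for quasi-regular Sasakian manifolds, realizing $M$ as an almost free $S^1$-manifold with compact Kähler orbifold quotient $N = M/S^1$ whose Kähler class is the Euler class of the bundle; (ii) the Allday--Halperin model for the Sullivan algebra of an almost free $K$-manifold (Theorem \ref{thm:rs}); and (iii) the formality over $\R$ of compact Kähler orbifolds, combined with the rigidity of Hirsch extensions under weak equivalence of the base (Proposition \ref{prop:hirsch}).

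First I would reduce to the quasi-regular case, as indicated in the paragraph preceding the statement, so that $M$ is the total space of an almost free $S^1$-action with orbit space a compact Kähler orbifold $N$, and the Euler class of the action coincides with the Kähler class $h \in H^{2}(N,\R)$. Next I would apply Theorem \ref{thm:rs} with $K = S^1$, whose cohomology is $\bwedge(t)$ with $\lvert t \rvert = 1$, to obtain a transgression $\sigma\colon \R\cdot t \to Z^{2}(\apl(N))$ sending $t$ to a closed $2$-form representing $h$, together with a weak equivalence
\[
\apl(M) \;\simeq\; \apl(N)\otimes_{\sigma}\bwedge(t).
\]

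The third step is to invoke the formality (over $\R$) of the compact Kähler orbifold $N$. The $dd^{c}$-lemma of \cite{DGMS} extends to the orbifold setting (one may work with basic forms on an atlas by finite quotients, or equivalently, with a finite cover and then average), producing a zig-zag of $\cdga$ quasi-isomorphisms between $\apl(N)$ and $(H^{*}(N,\R),0)$ that induces the identity in cohomology. Then, using Proposition \ref{prop:hirsch}—which states that the isomorphism type of a Hirsch extension depends only on the class $[\sigma] \in H^{*+1}$—I would transport the Hirsch extension through this zig-zag: at each step, a $\cdga$ quasi-isomorphism $\varphi\colon B \to B'$ and a cohomologous choice of transgression yield a quasi-isomorphism $B\otimes_{\sigma}\bwedge(t) \to B'\otimes_{\varphi\sigma}\bwedge(t)$. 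Tracking the class $h$ through the zig-zag yields the desired weak equivalence
\[
\apl(M) \;\simeq\; \bigl(H^{*}(N,\R)\otimes_{h}\bwedge(t),\, d\bigr),
\]
with $d$ vanishing on $H^{*}(N,\R)$ and $dt = h$. Finite-dimensionality of this model is automatic since $N$ is compact.

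The main obstacle is the third step, and specifically the formality of $N$ as an orbifold together with the functorial transfer of the Hirsch extension. The purely manifold version of the Kähler formality argument must be upgraded to the orbifold setting (handling the singularities of $N$), and one must verify that the quasi-isomorphisms produced at the orbifold level carry $[\sigma]$ to a cocycle representing $h$ in $(H^{*}(N,\R),0)$, so that the transgression element in the final model is genuinely $h$ itself rather than some cohomologous replacement. Once these two technical points are secured, the remaining manipulations are formal applications of Theorem \ref{thm:rs} and Proposition \ref{prop:hirsch}.
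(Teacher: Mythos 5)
The paper does not prove this statement; it quotes it from Tievsky's thesis \cite[\S 4.3]{Ti}. So the comparison here is between your outline and the argument the paper relies on, and your outline has a genuine gap at exactly the point the paper itself flags as problematic.

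Steps (i) and (ii) are fine: the reduction to a quasi-regular Sasakian structure and the application of the Allday--Halperin theorem (Theorem \ref{thm:rs}) with $K=S^1$ give $\apl(M)\simeq \apl(N)\otimes_{\sigma}\bwedge(t)$ with $[\sigma]$ the Euler class, which equals the K\"ahler class $h$. The problem is step (iii). You need a zig-zag of $\cdga$ quasi-isomorphisms between $\apl(N)$ --- the Sullivan algebra of the underlying topological space of the orbifold $N=M/S^1$ --- and $(H^*(N,\R),0)$. What the $dd^c$-lemma in the orbifold setting actually gives (this is the result of \cite{BB+} cited in Section \ref{subsec:sas}) is the formality of the \emph{orbifold de Rham algebra} of $N$; but, as the paper states explicitly, it is not known that the orbifold de Rham algebra is weakly equivalent to the Sullivan algebra $\apl(N)$. (Your parenthetical fallback, passing to a finite cover and averaging, does not repair this: a K\"ahler orbifold need not be a global finite quotient of a manifold.) So the formality of $\apl(N)$ that your argument hinges on is precisely the missing link. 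Indeed, if your step (iii) were available, $M$ would be an almost free $S^1$-manifold with formal orbit space in the sense of Theorem \ref{thm:nformal}, and the partial formality results of Section \ref{subsec:sas1f} would follow immediately --- whereas the paper emphasizes that Theorem \ref{thm:sasmodel} only says $M$ ``behaves like'' such a manifold, and that the stronger conclusion cannot currently be extracted this way.

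Tievsky's actual proof sidesteps the orbit space entirely. He works inside the de Rham algebra $\Omega^*_{\dR}(M)$ itself, using the basic forms of the Reeb foliation: the sub-$\cdga$ $\Omega^*_B\oplus \eta\wedge\Omega^*_B$ (with $\eta$ the contact form and $d\eta$ the transverse K\"ahler form) is quasi-isomorphic to $\Omega^*_{\dR}(M)$, and transverse Hodge theory together with a transverse $dd^c$-lemma for the basic complex produces the zig-zag down to $(H^*_B\otimes_h\bwedge(t),d)$. The identification of basic cohomology with $H^*(N,\R)$ in the quasi-regular case then gives the stated form of the model. The moral is that the $dd^c$-type argument must be run transversely on $M$, where de Rham theory is available, rather than on the singular quotient $N$, where the comparison with $\apl(N)$ is the unresolved issue.
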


Sasakian geometry is an odd-dimensional analog of K\"{a}hler 
geometry.  From this point of view, the above theorem is a 
rough analog of the main result on the algebraic topology 
of compact K\"ahler manifolds from \cite{DGMS}, guaranteeing 
that such manifolds are formal. Theorem \ref{thm:sasmodel} only says that 
$M$ behaves like an almost free compact $S^1$-manifold with formal orbit space.
A result from \cite{BB+} establishes the formality of the orbifold de Rham algebra 
of a compact K\"{a}hler orbifold. Unfortunately, this is not enough for applying Theorem 
\ref{thm:nformal}, since the authors of \cite{BB+} do not prove that the orbifold
de Rham algebra is weakly equivalent to the Sullivan de Rham algebra. 

By construction, the Tievsky model $A^{*}(M)$ is a real $\cdga$ defined 
over $\Q$. Nevertheless, in view of Remark \ref{rem:non-descent}, 
it does {\em not}\/ follow from \cite{Ti} that $A^{*}(M)$ 
is a model for $M$ over $\Q$.  

However, we can say something very useful 
regarding rational models for Sasakian manifolds.  We start 
with a lemma and will come back to this point in 
Theorem \ref{thm:highf-bis}.

\begin{lemma}[\cite{PS-imrn19}]
\label{lem:qtievsky}
The Tievsky model $A^{*}_{\R}(M)=(H^{*}(N, \R)\otimes _h \bwedge(t), d)$ 
is a finite model with positive weights for $M$. 
\end{lemma}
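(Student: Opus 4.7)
The lemma packages three claims about $A^{*}_{\R}(M) = (H^{*}(N,\R)\otimes _h \bwedge(t), d)$: it is finite-dimensional, it is a model of $M$ over $\R$, and it admits a positive weight decomposition. The first two are essentially immediate from what precedes. Indeed, $N=M/S^1$ is a compact K\"{a}hler orbifold, so $H^{*}(N,\R)$ is finite-dimensional; since $\bwedge(t)=\R\oplus \R\cdot t$ is two-dimensional, the tensor product $A^{*}_{\R}(M)$ is finite-dimensional as well. The model claim is precisely the content of Theorem \ref{thm:sasmodel}. What needs to be done, therefore, is to exhibit compatible positive weights on this Hirsch extension.

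My plan is to assign weights as follows: for each $k\ge 0$, declare $H^{k}(N,\R)$ to have weight $k$, declare the Hirsch generator $t$ to have weight $2$, and then extend multiplicatively. Concretely, this places each monomial $x\cdot t^{\varepsilon}$ with $x\in H^{k}(N,\R)$ and $\varepsilon\in\{0,1\}$ into the bigraded piece $A^{k+\varepsilon,\, k+2\varepsilon}_{\R}(M)$. From this description the positivity of weights is transparent: $k+2\varepsilon \ge 1$ whenever $(k,\varepsilon) \ne (0,0)$, so only the scalar unit $1\in A^{0,0}_{\R}(M)$ sits in weight zero, in accordance with the standard convention from Section \ref{subsec:posWeights}.

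The remaining verifications are that multiplication is weight-additive and that the differential is weight-homogeneous. Weight-additivity of multiplication is built into the construction, since all products $(x\otimes t^{\varepsilon})(y\otimes t^{\delta})$ with $\varepsilon+\delta\le 1$ evaluate to $\pm xy\otimes t^{\varepsilon+\delta}$, whose weight $|x|+|y|+2(\varepsilon+\delta)$ equals the sum of the weights of the factors. The crux is the homogeneity of $d$: since $d$ vanishes on $H^{*}(N,\R)$ and sends $t$ to $h$, the only nontrivial check is that $t$ and $h$ carry the same weight. This holds precisely because $h\in H^{2}(N,\R)$ has weight $2$ in our convention, matching the weight $2$ we assigned to $t$; extension to mixed elements $x\otimes t$ follows by the graded Leibniz rule.

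The main (and essentially only) conceptual point is recognizing why the weight of $t$ is forced to equal the degree of the K\"{a}hler class $h$. This is dictated by the equation $dt=h$: the differential has weight zero, so $\mathrm{wt}(t)=\mathrm{wt}(h)$, and combining this with positivity constrains the weight of $t$ to be $2$. Once this choice is made, everything else is routine bookkeeping. No substantive obstacle appears to stand in the way, and the proof should be short.
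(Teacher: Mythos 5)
Your proof is correct and follows essentially the same route the paper takes (via the cited source and the machinery of Proposition \ref{prop:newfinite}\eqref{nf2} and Theorem \ref{thm:krealiz}): the model and finiteness claims are immediate from Theorem \ref{thm:sasmodel} and the compactness of $N$, and the positive weights come from declaring $H^{k}(N,\R)$ to have weight $k$ and $t$ to have weight $2=\deg h$, so that $d$ is weight-homogeneous because $h$ is a weighted-homogeneous cocycle. The only quibble is phrasing: the weight of $t$ is not absolutely ``forced'' to be $2$, but is forced to equal $\mathrm{wt}(h)$ once the degree-weights on $H^{*}(N,\R)$ are chosen, which is exactly the hypothesis of Theorem \ref{thm:krealiz}.
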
 

\begin{corollary}[\cite{PS-imrn19}]
\label{cor:tmodel}
Let $M$ be a compact Sasakian manifold. 
For each $i, k\ge 0$, all irreducible components 
of the characteristic variety $\VV^i_k(M)$ passing through $1$ 
are algebraic subtori of the character group $H^1(M,\C^*)$.
\end{corollary}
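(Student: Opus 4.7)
The plan is to apply the Budur--Wang structure theorem (Theorem \ref{thm:bw}) to $M$, the only work being to produce a $q$-finite $q$-model over $\C$ for every $q \ge 1$.

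First I would observe that $M$, being a compact manifold, has finite Betti numbers in every degree, hence is $q$-finite for every $q \ge 0$. This takes care of the hypothesis on $X$ in Theorem \ref{thm:bw}.

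Next, by Lemma \ref{lem:qtievsky}, the Tievsky model $A^*_\R(M) = (H^*(N,\R) \otimes_h \bwedge(t),\, d)$ is a model for $M$ over $\R$, and it is finite-dimensional as an $\R$-vector space: indeed, $N = M/S^1$ is a compact K\"ahler orbifold, so $H^*(N,\R)$ is finite-dimensional, while $\bwedge(t)$ is just $\R \oplus \R t$. Tensoring with $\C$, we obtain $A^*_\C(M) := A^*_\R(M) \otimes_\R \C$, a finite-dimensional $\cdga$ over $\C$ that is weakly equivalent (through $\cdga$s) to $\apl(M) \otimes_\Q \C$. For each $q \ge 1$, this $\C$-$\cdga$ serves as a $q$-finite $q$-model for $M$ in the sense required by Theorems \ref{thm:thmb} and \ref{thm:bw}.

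With these inputs in hand, Theorem \ref{thm:bw} applies for every $q$, yielding the conclusion in all degrees $i \ge 0$: all irreducible components of $\VV^i_k(M)$ passing through $\one$ are algebraic subtori of $\Char(M) = H^1(M,\C^*)$, for all $k \ge 0$.

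There is no real obstacle here; the entire content is packaged in Lemma \ref{lem:qtievsky} together with the Budur--Wang theorem. The only point that deserves a brief mention in the write-up is that Tievsky's construction only gives a model over $\R$ (cf.\ Remark \ref{rem:non-descent}), so one cannot directly conclude $q$-finiteness of a rational model; but this is harmless because Theorem \ref{thm:bw} is applied after base change to $\C$, and characteristic varieties are in any case intrinsically defined over $\C$.
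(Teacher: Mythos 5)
Your proposal is correct and follows the same route the paper intends: Lemma \ref{lem:qtievsky} supplies a finite model over $\R$, base change to $\C$ gives a $q$-finite $q$-model for every $q$, and Theorem \ref{thm:bw} (Budur--Wang) then yields the conclusion in all degrees. Your remark that only a model over $\C$ is needed, so the failure of descent from $\R$ to $\Q$ (Remark \ref{rem:non-descent}) is harmless, is exactly the right point to flag.
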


A well-known, direct relationship between K\"{a}hler and Sasakian 
geometry is as follows.  Let $N$ be a compact K\"{a}hler manifold 
such that the K\"ahler class is integral, i.e., $h\in H^2(N,\Z)$, 
and let $M$ be the total space of the principal $S^1$-bundle 
classified by $h$.  Then $M$ is a regular Sasakian 
manifold.  A concrete class of examples is provided by the 
Heisenberg manifolds $\Heis_n$ from Example \ref{ex:heis-sasaki} below.

\subsection{Partial formality of Sasakian manifolds}
\label{subsec:sas1f}
Let $M^{2n+1}$ be a compact Sasakian manifold, 
with fundamental group $G=\pi_1(M)$.  One may ask: 
Is the group $\pi$ (or, equivalently, the manifold $M$) 
$1$-formal?  When $n=1$, the answer is clearly negative, 
a simple example being provided by the Heisenberg manifold 
$\Heis_1$.  In \cite[Theorem 1.1]{Kas17}, Kasuya 
claims that the case $n=1$ is exceptional, in the 
following sense.

\begin{claim}
\label{thm:sas1f}
Every compact Sasakian manifold of dimension $2n+1$ is 
$1$-formal over $\R$, provided $n>1$.
\end{claim}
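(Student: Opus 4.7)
The plan is a direct application of Theorem \ref{thm:lefred} to Tievsky's finite model of $M$. By Theorem \ref{thm:sasmodel} and Lemma \ref{lem:qtievsky}, the Hirsch extension $A^*_\R(M) = (H^*(N,\R) \otimes_h \bwedge(t), d)$, with $N = M/S^1$ the Kähler orbifold base, $d$ vanishing on $H^*(N,\R)$, and $dt = h$, is a model for $M$ over $\R$. Since $1$-formality is invariant under weak equivalence of $\cdga$s, it suffices to prove that $A^*_\R(M)$ is $1$-formal; and as this model is precisely of the form $H \otimes_\tau \bwedge(t)$ appearing in Theorem \ref{thm:lefred}, what remains is to verify that the singleton sequence $\{h\}$ is $1$-regular in $H^*(N,\R)$.

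By the definitions in Section \ref{subsec:partform-circle}, $1$-regularity of $\{h\}$ amounts to injectivity of the multiplication map $h \cdot \colon H^i(N,\R) \to H^{i+2}(N,\R)$ for $i = 0$ and $i = 1$. The case $i = 0$ is immediate: since $h^n$ represents the Kähler volume class, it is nonzero, so $h \ne 0$. The case $i = 1$ is where the hypothesis $n > 1$ is essential. The Hard Lefschetz property \eqref{eq:hardlef} valid for $N$ guarantees that $h^{n-1}\cdot \colon H^1(N,\R) \to H^{2n-1}(N,\R)$ is an isomorphism, so if $h\alpha = 0$ for some $\alpha \in H^1(N,\R)$, then $h^{n-1}\alpha = h^{n-2}(h\alpha) = 0$ (using $n-1 \ge 1$), forcing $\alpha = 0$. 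With both injectivities verified, Theorem \ref{thm:lefred} applies with $q=1$ and yields that $A^*_\R(M)$ has the same $1$-type as $(H^*(N,\R)/hH^*(N,\R), 0)$; in particular, it is $1$-formal.

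The main obstacle in this plan is conceptual rather than technical. The whole argument rests on the availability of Tievsky's model, which sidesteps the otherwise subtle issue---flagged in the comment following Lemma \ref{lem:qtievsky}---that the formality of the orbifold de Rham algebra of $N$ established in \cite{BB+} is not known to transfer to the Sullivan algebra of the orbifold. Because Tievsky builds the zero-differential piece $(H^*(N,\R),0)$ into $A^*_\R(M)$ by fiat, no separate formality hypothesis on $N$ is required; only Hard Lefschetz on $N$ is needed, and this is guaranteed by the Sasakian structure. As a bonus, the same reasoning with $q$ replaced by $n-1$ shows that $\{h\}$ is $(n-1)$-regular, recovering the stronger statement from \cite{PS-jlms} that compact Sasakian manifolds of dimension $2n+1$ are $(n-1)$-formal.
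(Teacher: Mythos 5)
Your proof is correct and follows essentially the same route the paper takes: the Claim is established there via Theorem \ref{thm:highf} (from \cite{PS-imrn19}), whose proof is precisely the combination of the Tievsky model from Theorem \ref{thm:sasmodel} and Lemma \ref{lem:qtievsky} with the $q$-regularity criterion of Theorem \ref{thm:lefred}, the Hard Lefschetz property \eqref{eq:hardlef} supplying the required regularity of $\{h\}$. Your closing observations---that the Tievsky model sidesteps the formality issue for the orbifold base, and that the same argument yields $(n-1)$-formality---match the paper's own discussion exactly.
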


As pointed out in \cite{PS-imrn19}, the proof from \cite{Kas17} has a gap, 
which we briefly explain. Given a $\cdga$ $A$, the decomposable part
of $H^2(A)$ is the linear subspace $DH^2(A)$ defined as the image of 
the product map in cohomology, $H^1(A)\wedge H^1(A)\to H^2(A)$. 
What Kasuya actually shows is that 
\begin{equation}
\label{eq:kdec}
DH^2(\M_1(M)) = H^2(\M_1(M)),
\end{equation}
for a compact Sasakian manifold $M^{2n+1}$ with $n>1$, 
where $\M_1(M)$ is the $1$-minimal model of $M$ over $\R$.
Equality \eqref{eq:kdec} is an easy consequence of $1$-formality.  
Kasuya deduces the $1$-formality of $M$ from \eqref{eq:kdec}, 
by invoking as a crucial tool Lemma 3.17 from \cite{ABC}.   
Unfortunately, though, this lemma is false, as shown by M\u{a}cinic 
in \cite{Mc10}.   
Nevertheless, the next theorem proves Claim \ref{thm:sas1f} 
in a stronger form, while also recovering equality \eqref{eq:kdec}.

\begin{theorem}[\cite{PS-imrn19}]
\label{thm:highf}
Every compact Sasakian manifold $M$ of dimension $2n+1$ is 
$(n-1)$-formal, over an arbitrary field $\k$ of characteristic $0$. 
\end{theorem}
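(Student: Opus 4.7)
The plan is to combine the Tievsky model of $M$ with the Hard Lefschetz property of the Kähler orbifold base $N = M/S^1$ and the Hirsch-extension formality criterion of Theorem \ref{thm:lefred}. Without loss of essential generality, we may assume the Sasakian structure on $M^{2n+1}$ is quasi-regular, so that $M$ is the total space of an almost free $S^1$-action over a compact Kähler orbifold $(N,h)$ satisfying Hard Lefschetz. By Lemma \ref{lem:qtievsky}, the real $\cdga$
\[
A^{*}_{\R}(M) \;=\; \bigl(H^{*}(N,\R) \otimes_{h} \bwedge(t),\; d\bigr),
\]
with $|t|=1$, $d = 0$ on $H^{*}(N,\R)$, and $dt = h$, is a finite model for $M$ over $\R$. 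Structurally, $A^{*}_{\R}(M)$ is precisely a Hirsch extension of the type considered in Theorem \ref{thm:lefred}: the base $H = H^{*}(N,\R)$ is a connected $\cga$ equipped with the zero differential, and a single odd-degree generator $t$ has been adjoined with transgression $\tau(t)=h \in H^{2}$.

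The crux is to verify that $\{h\}$ forms an $(n-1)$-regular sequence in $H^{*}(N,\R)$, in the sense of Section \ref{subsec:partform-circle}. For a single even-degree element of degree $2$, this reduces to the assertion that the multiplication map $L\colon H^{i}(N,\R) \to H^{i+2}(N,\R)$ is injective for every $i \le n-1$. The Hard Lefschetz property \eqref{eq:hardlef} supplies, for each $0 \le i \le n-1$, an isomorphism $h^{n-i}\colon H^{i}(N,\R) \to H^{2n-i}(N,\R)$; factoring $h^{n-i} = h^{n-i-1}\circ L$ and using that the composite is injective forces $L$ to be injective on $H^{i}(N,\R)$, as required. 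Thus the hypotheses of Theorem \ref{thm:lefred} are met with $r=1$, $e_{1}=h$, and $q=n-1$.

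Applying Theorem \ref{thm:lefred} to the Tievsky model then shows that $A^{*}_{\R}(M)$ has the same $(n-1)$-type as $\bigl(H^{*}(N,\R)/h\,H^{*}(N,\R),\,0\bigr)$, and in particular is $(n-1)$-formal as a $\cdga$ over $\R$. Since $A^{*}_{\R}(M)$ is a model for $M$ over $\R$, this shows that $M$ is $(n-1)$-formal over $\R$. To pass to an arbitrary field $\k$ of characteristic $0$, observe that $M$ is a compact manifold, so $b_{i}(M)<\infty$ for every $i$; Corollary \ref{cor:pfext}, applied with $q=n-1$, then transfers $(n-1)$-formality from $\R$ back to $\Q$ and forward to any extension $\k \supseteq \Q$.

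The only substantive step is the second one, in which Hard Lefschetz is reformulated as a $q$-regularity condition on the Kähler class; I expect no serious obstacle there, since once \eqref{eq:hardlef} is granted the factorization argument is tautological. The remaining inputs---the Tievsky model of Lemma \ref{lem:qtievsky}, the formality criterion of Theorem \ref{thm:lefred}, and the descent statement of Corollary \ref{cor:pfext}---can then be chained together without further difficulty, and the case $n=1$ is covered uniformly because $0$-regularity of $h$ amounts to $h \ne 0$.
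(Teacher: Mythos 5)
Your argument is correct and follows exactly the route the paper lays out for this result: the Tievsky model over $\R$ (Lemma \ref{lem:qtievsky}), the reformulation of the Hard Lefschetz property \eqref{eq:hardlef} as $(n-1)$-regularity of the K\"ahler class, the Hirsch-extension criterion of Theorem \ref{thm:lefred}, and descent of partial formality via Corollary \ref{cor:pfext}. In particular, you correctly work with the Tievsky model of $M$ directly rather than invoking Theorem \ref{thm:nformal}, which would require knowing that the orbit space $N$ itself is formal in the Sullivan sense---precisely the gap the paper flags in the discussion following Theorem \ref{thm:sasmodel}.
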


The next result makes Theorem \ref{thm:highf} more precise, 
by constructing an explicit finite, $(n-1)$-model with zero differential 
for $M$ over any field of characteristic $0$. 

\begin{theorem}[\cite{PS-imrn19}]
\label{thm:highf-bis}
Let $M$ be a compact Sasakian manifold $M$ of dimension $2n+1$. 
The Sullivan model of $M$ over a field $\k$ of 
characteristic $0$ has the same $(n-1)$-type over $\k$ as the \cdga~
$(H^{*}(N,\k)/h\cdot H^{*}(N,\k), 0)$, 
where $N=M/S^1$ and $h\in H^2(N, \k)$ is the K\"ahler class.
\end{theorem}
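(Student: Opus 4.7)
The plan is to combine Tievsky's finite $\R$-model for $M$ with Theorem \ref{thm:lefred}, verifying the partial regularity hypothesis via the Hard Lefschetz property of the base. After deforming if necessary to a quasi-regular Sasakian structure—which preserves the underlying diffeomorphism type, cf.~the discussion in Section \ref{subsec:sas} and \cite{BG}—$M$ supports an almost-free $S^1$-action with orbit space $N=M/S^1$ a compact K\"ahler orbifold whose (rational) K\"ahler class $h$ satisfies Hard Lefschetz. By Theorem \ref{thm:sasmodel} and Lemma \ref{lem:qtievsky}, the Hirsch extension $A^*_{\R}(M)=(H^*(N,\R)\otimes_h \bwedge(t),d)$, with transgression $\tau(t)=h$, is then a finite model for $M$ over $\R$.

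The crucial observation is that Hard Lefschetz forces $\{h\}$ to be an $(n-1)$-regular sequence in $H^*(N,\R)$, in the sense of Section \ref{subsec:partform-circle}. Indeed, if $\alpha\in H^i(N,\R)$ with $i\le n-1$ satisfies $h\alpha=0$, then $h^{n-i}\alpha=0$; since $n-i\ge 1$, Hard Lefschetz supplies the isomorphism $h^{n-i}\colon H^i(N,\R)\isom H^{2n-i}(N,\R)$, forcing $\alpha=0$. Thus multiplication by $h$ is injective on $H^{\le n-1}(N,\R)$, so $h$ is a non-zero divisor up to degree $n-1$. Applying Theorem \ref{thm:lefred} with $q=n-1$ then yields the $(n-1)$-equivalence
\[
A^*_{\R}(M)\;\simeq_{n-1}\;\bigl(H^*(N,\R)/h\cdot H^*(N,\R),\,0\bigr),
\]
which establishes the theorem over $\R$.

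Finally, to descend to an arbitrary field $\k$ of characteristic $0$, note that in the quasi-regular setting $h$ is a rational cohomology class—being proportional to the orbifold Euler class of the $S^1$-orbibundle $M\to N$—so the $\Q$-cdga $\bigl(H^*(N,\Q)/h\cdot H^*(N,\Q),0\bigr)$ is a rational form of the target algebra produced above over $\R$. Since $M$ is compact and hence has finite Betti numbers, the descent result Corollary \ref{cor:pfext} upgrades the $(n-1)$-formality over $\R$ obtained above to $(n-1)$-formality over $\Q$. Combining this with the natural Gysin-sequence isomorphism $H^i(M,\Q)\cong H^i(N,\Q)/h\cdot H^{i-2}(N,\Q)$ for $i\le n$—valid because $h$ acts injectively on $H^{\le n-1}(N,\Q)$ by flat base change from the $\R$-case—yields the desired $(n-1)$-equivalence over $\Q$, and extension of scalars then delivers the result over any $\k$. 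The main obstacle is this last compatibility step: ensuring that the \emph{specific} cdga identification (not merely partial formality) persists under base change, which ultimately rests on the rationality of $h$ together with the field-independence of Hard Lefschetz once $h$ is defined over $\Q$.
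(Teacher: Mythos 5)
Your proof is correct and follows essentially the route the paper intends: you verify via Hard Lefschetz that $h$ is a non-zero divisor up to degree $n-1$ (hence an $(n-1)$-regular sequence), apply the purely algebraic Theorem \ref{thm:lefred} to the Tievsky model $A^{*}_{\R}(M)$ rather than Theorem \ref{thm:nformal} (which is unavailable here, since the formality of the K\"ahler orbifold $N$ as a Sullivan $\cdga$ is not known), and then descend from $\R$ to $\Q$ using Corollary \ref{cor:pfext} together with the Gysin identification $H^{\le n}(M,\Q)\cong H^{\le n}(N,\Q)/h\cdot H^{*}(N,\Q)$. The descent step is handled correctly, and the rationality of $h$ is exactly the input needed to make the target $\cdga$ defined over $\Q$.
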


As illustrated by the next example, the conclusion of 
Theorem  \ref{thm:highf} is optimal. 

\begin{example}
\label{ex:heis-sasaki}
Let $E=S^1\times S^1$ be an elliptic complex curve, and let $N=E^{\times n}$ 
be the product of $n$ such curves, with K\"{a}hler form $\omega=\sum_{i=1}^n 
dx_i \wedge dy_i$.  The corresponding Sasakian manifold is the 
$(2n+1)$-dimensional Heisenberg nilmanifold $\Heis_n$. 
Theorem \ref{thm:highf} guarantees that $\Heis_n$ is $(n-1)$-formal. 
As noted in \cite{Mc10}, though (see also 
Example \ref{ex:gen-heisenberg}), the manifold $\Heis_n$ is 
{\em not}\/ $n$-formal. 
\end{example}

\subsection{Sasakian groups}
\label{subsec:sasgp}

A group $\pi$ is said to be a {\em Sasakian group}\/ if it 
can be realized as the fundamental group of a compact, 
Sasakian manifold.  A major open problem in the field 
(see e.g. \cite[Chapter 7]{BG} or \cite{Chen13}) is:
``Which finitely presented groups are Sasakian?''

A first, well-known obstruction is that the first Betti number $b_1(\pi)$ 
must be even, see for instance the references listed in \cite{Chen13}.  
Much more subtle obstructions are provided by the following result.  
Fix a field $\k$ of characteristic $0$.

\begin{corollary}[\cite{PS-imrn19}]
\label{cor:sasobstr}
Let $\pi= \pi_1 (M^{2n+1})$ be a Sasakian group.  Then:
\begin{enumerate}
\item \label{s1}
The Malcev Lie algebra $\m(\pi, \k)$ is the lcs completion of the 
quotient of the free Lie algebra $\Lie (H_1(\pi, \k))$ by an ideal 
generated in degrees $2$ and $3$.  
Moreover, this Lie algebra presentation can be explicitly described in
terms of the graded ring $H^{*}(M/S^1, \k)$ and the K\"ahler class 
$h\in H^{2}(M/S^1, \k)$.
\item \label{s2}
The group $\pi$ is filtered-formal. 
\item \label{s3}
For every complex linear algebraic group $G$,
the germ at the origin of the representation variety $\Hom(\pi, G)$ 
is defined by quadrics and cubics only.
\end{enumerate}
\end{corollary}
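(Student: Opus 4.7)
The plan is to mimic the proof of Corollary \ref{cor:ffcircle} using the Tievsky $\cdga$ in place of the finite Hirsch-extension model produced from 2-formality of the orbit space; this is what lets us sidestep the missing 2-formality statement for the K\"ahler orbifold quotient. After passing to a quasi-regular Sasakian structure (harmless for $\pi_1$), $M^{2n+1}$ supports an almost free $S^1$-action whose orbit space $N = M/S^1$ is a K\"ahler orbifold with K\"ahler class $h \in H^2(N,\R)$. Theorem \ref{thm:sasmodel} together with Lemma \ref{lem:qtievsky} provides the Tievsky $\cdga$
\begin{equation*}
A \;=\; \bigl( H^{*}(N,\R) \otimes_{h} \bwedge(t),\, d \bigr), \qquad |t|=1,\quad d|_{H^{*}(N)}=0,\quad dt=h,
\end{equation*}
as a finite model for $M$ over $\R$ with positive weights, hence in particular a 1-finite 1-model for $\pi = \pi_1(M)$.

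For (1) and (2), observe that $A$ has exactly the shape $H \otimes_\tau \bwedge P$ required by Theorem \ref{thm:filtf}, with $H = H^{*}(N,\R)$ 2-finite and zero-differential and $P = \R\{t\}$ oddly graded. That theorem produces a finite presentation $\h(A) \cong \Lie(V)/\fr$ with $V$ in degree 1 and $\fr$ a homogeneous ideal in degrees 2 and 3. Unravelling Definition \ref{def:holo cdga}, the linear part of $\partial_A = d^{\vee} + \mu^{\vee}$ on $A_2$ identifies $t^{\vee}$ with the $\mu^{\vee}$-image of $h^{\vee}$; the positive weights let us eliminate $t^{\vee}$ from the presentation without producing relations of degree $>3$, leaving generators $V \cong H_1(N,\k)$ modulo quadratic and cubic relations built explicitly from $\mu_N\colon H^1(N)\wedge H^1(N)\to H^2(N)$ and from $h$. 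Theorem \ref{thm:malholo} then gives $\m(\pi,\R)\cong \widehat{\h(A)} \cong \widehat{L}$ for the graded Lie algebra $L$ just exhibited, proving (1) over $\R$. Since $L$ is graded, this identification is exactly filtered-formality (Definition \ref{def:filt formal}), so (2) holds over $\R$. Descent to arbitrary $\k$ of characteristic $0$ follows from Lemma \ref{lem:ffascent} and Theorem \ref{thm:ffdescent}, together with the compatibility of Malcev completion with field extension, which also transports the explicit $H^{*}(N,\k)$-and-$h$ presentation.

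For (3), invoke the Goldman--Millson dictionary: for a complex linear algebraic group $G$, the analytic germ of $\Hom(\pi,G)$ at the trivial representation is locally isomorphic to the germ at $\zero$ of the Maurer--Cartan variety $\F(A \otimes_{\R}\C,\Lie(G))$, which via the identification \eqref{eq:flathol} is the space $\Hom_{\Lie}(\h(A) \otimes_{\R} \C,\Lie(G))$. Evaluating the quadratic and cubic generators of $\fr$ on a prospective morphism produces polynomial equations of degrees $\le 3$ in its values on $V$, so the germ is cut out by quadrics and cubics only.

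The main obstacle is the cited Theorem \ref{thm:filtf}: converting the a priori non-homogeneous presentation of $\h(A)$ from Definition \ref{def:holo cdga} (where $\partial_A = d^{\vee} + \mu^{\vee}$ mixes degrees $1$ and $2$) into a \emph{homogeneous} presentation concentrated in degrees 2 and 3. The positive weights of the Tievsky model (Lemma \ref{lem:qtievsky}) are essential here, since they ensure that the linear correction $d^{\vee}$ can be used to eliminate the generator $t^{\vee}$ without introducing relations of arbitrarily high degree. Once this presentation is in hand, parts (2) and (3) are essentially formal consequences.
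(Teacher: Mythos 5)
Your proposal is correct and follows essentially the same route as the paper: pass to a quasi-regular structure, use the Tievsky model $(H^{*}(N,\R)\otimes_h\bwedge(t),d)$ together with Lemma \ref{lem:qtievsky} as the $1$-finite $1$-model, feed it into Theorem \ref{thm:filtf} to get the homogeneous degree-$2$-and-$3$ presentation of $\h(A)$, and conclude via Theorem \ref{thm:malholo}, filtered-formality descent, and the Goldman--Millson-type identification of representation variety germs. The only cosmetic difference is that you credit the positive weights with enabling the elimination of $t^{\vee}$, whereas that elimination is already the content of Theorem \ref{thm:filtf}; this does not affect the argument.
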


As an application of Corollary \ref{cor:tmodel}, we obtain 
another (independent) obstruction to Sasakianity. 

\begin{corollary}[\cite{PS-imrn19}]
\label{cor:sasgpobs}
Let $\pi$ be a Sasakian group. 
For each $k\ge 0$, all irreducible components 
of the characteristic variety $\VV^1_k(\pi)$ passing through $\one$ 
are algebraic subtori of the character group $\Hom(\pi,\C^*)$.
\end{corollary}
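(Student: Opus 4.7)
Since $\pi$ is by definition a Sasakian group, we may write $\pi \cong \pi_1(M)$ for some compact Sasakian manifold $M$. My plan is to deduce the statement as an immediate group-theoretic consequence of Corollary \ref{cor:tmodel}, which establishes the analogous property at the level of the manifold $M$ itself.

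The first step is to recall that the degree-$1$ characteristic varieties depend only on the fundamental group: as noted right after the definition of $\VV^i_k(X)$, for any path-connected space $X$ with finitely generated $\pi_1$, we have $\VV^1_k(X) = \VV^1_k(\pi_1(X))$. This identification is compatible with the canonical isomorphism $\Char(M) = H^1(M,\C^*) \cong \Hom(\pi_1(M),\C^*) = \Hom(\pi,\C^*)$, so the latter identifies $\VV^1_k(M)$ bijectively with $\VV^1_k(\pi)$ as algebraic subvarieties of abelian complex algebraic groups.

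With this identification in place, the conclusion follows directly: Corollary \ref{cor:tmodel} asserts that for every $i,k \ge 0$, each irreducible component of $\VV^i_k(M)$ passing through $\one$ is an algebraic subtorus of $H^1(M,\C^*)$. Specializing to $i=1$ and transporting along the isomorphism $H^1(M,\C^*) \cong \Hom(\pi,\C^*)$ yields precisely the desired statement about the components of $\VV^1_k(\pi)$ through $\one$.

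There is no real obstacle here, since all the substantive work has been done upstream. The hard content lies in Corollary \ref{cor:tmodel}, which in turn combines Lemma \ref{lem:qtievsky}, guaranteeing that the Tievsky model $(H^*(N,\R)\otimes_h \bwedge(t),d)$ is a finite $\cdga$ model for $M$ with positive weights, with the Budur--Wang structure Theorem \ref{thm:bw} for characteristic varieties of spaces admitting a $q$-finite $q$-model. The only minor point worth checking carefully is the naturality of the identification between $H^1(M,\C^*)$ and $\Hom(\pi,\C^*)$ with respect to the stratification by jump loci, but this is standard since the rank-one local system $\C_\rho$ on $M$ associated to a character $\rho \in \Hom(\pi,\C^*)$ has $H^1(M,\C_\rho) \cong H^1(\pi,\C_\rho)$ via the Hopf-type comparison.
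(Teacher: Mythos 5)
Your proposal is correct and follows exactly the route the paper intends: the paper presents Corollary \ref{cor:sasgpobs} as an immediate application of Corollary \ref{cor:tmodel}, using the standard fact (recorded in Section \ref{subsec:charvar}) that the degree-$1$ characteristic varieties $\VV^1_k(M)$ depend only on $\pi_1(M)$. Nothing further is needed.
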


By Theorem \ref{thm:sasmodel}, the $\R$-homotopy type of a 
compact Sasakian manifold $M$ depends only on the cohomology ring 
$H^{*}(M/S^1, \R)$ and the K\"{a}hler class $h\in H^2(M/S^1,\Q)$. 
Surprisingly enough, it turns out that the germs at $\one$ of certain representation 
varieties and jump loci of $\pi_1(M)$ depend only on the graded cohomology 
ring of $M/S^1$. 

\begin{corollary}[\cite{PS-imrn19}]
\label{cor:nokclass}
Let $M$ be a compact Sasakian manifold, and let $G=\SL_2(\C)$.  
Then the germ at $\one$ of $\Hom(\pi_1(M), G)$ depends only on the 
graded ring $H^{*}(M/S^1, \C)$ and the Lie algebra of $G$, 
in an explicit way. Similarly, the germs at $\one$ of the characteristic 
varieties $\VV^1_k(\pi_1(M))$ depend (explicitly) only on  $H^{*}(M/S^1, \C)$. 
\end{corollary}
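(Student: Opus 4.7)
The plan is to exploit the Tievsky model of $M$ and translate the germs of interest into explicit equations involving only cup products on $H^*(N,\C)$, using Hard Lefschetz to absorb the apparent dependence on the K\"{a}hler class. Tensoring Lemma \ref{lem:qtievsky} with $\C$ provides a finite-dimensional $\cdga$ model $A = (H^*(N,\C) \otimes \bigwedge(t), d)$ with $d|_{H^*(N,\C)} = 0$ and $dt = h$, where $N = M/S^1$. Since $A$ is in particular a $1$-finite $1$-model for $M$, Theorem \ref{thm:tcone-fm} identifies, via the exponential, the germ at $\one$ of $\VV^1_k(\pi_1(M))$ with the germ at $\zero$ of $\RR^1_k(A)$. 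For the representation variety, I invoke the Goldman--Millson correspondence: the germ at the trivial representation $\one$ of $\Hom(\pi_1(M),\SL_2(\C))$ is modeled by the Maurer--Cartan locus of the dgla $(A^* \otimes \sl_2, d, [\cdot,\cdot])$ modulo infinitesimal gauge, localized at $0$.

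Writing a generic degree-$1$ element of the dgla as $\omega = \alpha + t\beta$ with $\alpha \in H^1(N)\otimes \sl_2$ and $\beta \in \sl_2$, and unpacking $d\omega + \tfrac{1}{2}[\omega,\omega] = 0$ using $dt = h$ and graded commutativity, one obtains the pair
\[
h\beta + \tfrac{1}{2}[\alpha,\alpha]_{\sl_2} = 0 \ \text{ in } H^2(N)\otimes \sl_2, \qquad
[\alpha,\beta]_{\sl_2} = 0 \ \text{ in } H^1(N)\otimes \sl_2,
\]
where $[\cdot,\cdot]_{\sl_2}$ denotes the natural pairing combining the cup product on $H^*(N,\C)$ with the Lie bracket on $\sl_2$. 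Since multiplication by $h$ embeds $\sl_2 \hookrightarrow \C h \otimes \sl_2$, the first equation determines $\beta$ uniquely in terms of $\alpha$ subject to the compatibility $[\alpha,\alpha]_{\sl_2} \in \C h \otimes \sl_2$; multiplying the second equation by $h$ and substituting the first yields the identity $[\alpha,[\alpha,\alpha]_{\sl_2}]_{\sl_2} = 0$ in $H^3(N)\otimes \sl_2$. For $n \ge 2$, the Hard Lefschetz injectivity of $h\colon H^1(N) \to H^3(N)$ renders this cubic identity equivalent to the second equation, so the MC locus, modulo the $\sl_2$-gauge action (which is independent of $h$), is cut out by a quadratic and a cubic condition expressed entirely in terms of the cup products on $H^*(N,\C)$, the line $\C h \subset H^2(N,\C)$, and the bracket on $\sl_2$. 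A parallel analysis of the Alexander-invariant presentation from Theorem \ref{thm:Bpres} applied to the Tievsky model yields analogous explicit formulas for $\RR^1_k(A)$ in terms of the same data.

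The main technical obstacle is to eliminate the apparent dependence on the specific class $h$, so that only the graded ring $H^*(N,\C)$ enters the description. I expect to resolve this via the Hard Lefschetz structure: the Lefschetz decomposition $H^*(N,\C) = \bigoplus_{k\ge 0} h^k \cdot PH^{*-2k}(N,\C)$ characterizes both the line $\C h = \im(H^0(N,\C) \xrightarrow{\cdot h} H^2(N,\C))$ and the quotient $H^*(N,\C)/h \cdot H^*(N,\C) \cong PH^*(N,\C)$ in terms of intrinsic properties of the graded ring, and moreover the K\"{a}hler cone of $N$ is connected so a deformation argument shows the germ is constant as $h$ varies within this cone. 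The low-dimensional case $n = 1$ (i.e., $\dim M = 3$) requires separate attention, since the injectivity of $h\colon H^1(N) \to H^3(N) = 0$ fails; fortunately, here $H^2(N,\C) = \C h$ is one-dimensional, so the compatibility condition $[\alpha,\alpha]_{\sl_2} \in \C h \otimes \sl_2$ is vacuous and $\C h$ is determined intrinsically by $H^*(N,\C)$, making the dependence on $h$ trivial.
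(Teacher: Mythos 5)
Your setup is the right one, and the two Maurer--Cartan equations you extract from the Tievsky model are correct, but the argument does not close, and the step you defer as ``the main technical obstacle'' is where the proof actually fails. First, a computational point: by the graded Jacobi identity, $[\alpha,[\alpha,\alpha]_{\sl_2}]_{\sl_2}=0$ holds identically, so multiplying the second equation by $h$ and substituting the first does not produce a new cubic condition --- together with the Hard Lefschetz injectivity of $\cdot\, h\colon H^1(N)\to H^3(N)$ it shows that the second equation is \emph{redundant}, not that it is equivalent to a cubic in $\alpha$; the ``cubic condition'' you carry into your final description is spurious. Second, and more seriously, that final description still involves the line $\C h\subset H^2(N,\C)$, and neither of your proposed remedies removes it: the set of classes satisfying Hard Lefschetz is a nonempty Zariski-open subset of $H^2(N,\C)$, so the graded ring does not single out the line $\C h$ (the formula $\C h=\im(H^0\xrightarrow{\cdot h}H^2)$ is circular), and constancy of the germ as $h$ moves in the connected K\"ahler cone is both unproved (germs in an algebraic family over a connected base may jump) and insufficient, since ``depends only on the graded ring'' must hold across different manifolds with abstractly isomorphic rings, where no connecting family is available.

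The missing idea is specific to $\sl_2$ --- which is precisely why the corollary is stated for $G=\SL_2(\C)$ rather than for arbitrary $G$ (compare part (1) of Corollary \ref{cor:sasobstr}, where $h$ genuinely enters). In $\sl_2$ every abelian subalgebra is at most one-dimensional and the centralizer of a non-abelian subalgebra is trivial. Your second equation says that $\beta$ centralizes $\im(\alpha)\subseteq\sl_2$. If $\im(\alpha)$ is non-abelian, then $\beta=0$ and the first equation forces $[\alpha,\alpha]_{\sl_2}=0$; if $\im(\alpha)$ is abelian, then $\alpha=\eta\otimes g$, so $[\alpha,\alpha]_{\sl_2}=0$ and the first equation gives $h\otimes\beta=0$, hence $\beta=0$ since $h\ne 0$. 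In all cases
$\F(A,\sl_2)=\{(\alpha,0)\mid [\alpha,\alpha]_{\sl_2}=0 \text{ in } H^2(N,\C)\otimes\sl_2\}=\F\big((H^{*}(N,\C),0),\sl_2\big)$,
which depends only on the cup product $H^1(N)\wedge H^1(N)\to H^2(N)$ and on $\sl_2$: the class $h$ has disappeared, with no case distinction on $n$ and no Lefschetz decomposition needed. (The identification of the germ of this variety at $\zero$ with the germ of $\Hom(\pi_1(M),\SL_2(\C))$ at $\one$ rests on the Tievsky model being a finite model with positive weights, Lemma \ref{lem:qtievsky}, not on the K\"ahler-manifold form of Goldman--Millson.) For the characteristic varieties the clean route is Theorem \ref{thm:tcone-fm} combined with part (2)(b) of Proposition \ref{prop:hirsch-res}: since $[h]\ne 0$, one has $\RR^1_k(A)=\RR^1_k\big(H^{*}(N,\C)\big)$ for all $k$, again manifestly independent of $h$; your appeal to ``a parallel analysis of Theorem \ref{thm:Bpres}'' leaves exactly this independence unaddressed.
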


\section{Algebraic models for closed $3$-manifolds}
\label{sect:models-3d}

In this final section we give a partial characterization of the formality 
and finiteness properties for rational models of closed $3$-manifolds.

\subsection{The intersection form of a $3$-manifold}
\label{subsec:coho 3-mfd}

Let $M$ be a compact, connected $3$-mani\-fold without boundary. 
For short, we shall refer to $M$ as being a {\em closed}\/ $3$-manifold.
Throughout, we will also assume that $M$ is orientable.   

Fix an orientation class $[M]\in H_3(M,\Z)\cong\Z$. With 
this choice, the cup product on $M$ determines an alternating 
$3$-form $\mu_M$ on $H^1(M,\Z)$, given by 
\begin{equation}
\label{eq:eta}
\mu_M(a\wedge b\wedge c) = \langle a\cup b\cup c , [M]\rangle,
\end{equation} 
where $\langle \cdot, \cdot \rangle$ denotes the Kronecker pairing. 
In turn, the cup-product map $\bigwedge^2 H^1(M,\Z) 
\to H^2(M,\Z)$ is determined by the intersection form $\mu_M$ 
via $\langle a\cup b , \gamma \rangle = \mu_M (a\wedge b \wedge c)$, 
where $c$ is the Poincar\'{e} dual of $\gamma \in H_2(M,\Z)$.  

In \cite{Sullivan75}, Sullivan proved the following result.
\begin{theorem}[\cite{Sullivan75}]
\label{thm:sullivan-3m}
For every finitely generated, 
torsion-free abelian group $H$ and every $3$-form $\mu\in \bwedge^3 H^{\vee}$, 
there is a closed, oriented $3$-manifold $M$ with $H^1(M,\Z)=H$ 
and cup-product form $\mu_M=\mu$. 
\end{theorem}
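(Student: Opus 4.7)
The plan is to realize $M$ via $0$-framed Dehn surgery on a suitable link in $S^3$. Write $n = \rank(H)$ and fix an identification $H \cong \Z^n$ with dual basis $e^1, \dots, e^n$ of $H^\vee$, so that
\[
\mu = \sum_{1 \le i < j < k \le n} c_{ijk}\, e^i \wedge e^j \wedge e^k
\]
for integers $c_{ijk}$.

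\textbf{Step 1 (algebraic topology of $0$-surgery).} For a framed link $L = L_1 \cup \cdots \cup L_n \subset S^3$ with all framings equal to $0$, let $M_L$ denote the surgered manifold. A Mayer--Vietoris computation presents $H_1(M_L, \Z)$ by the linking matrix of $L$; when all pairwise linking numbers $\mathrm{lk}(L_i, L_j)$ vanish, this matrix is zero and $H_1(M_L, \Z) \cong \Z^n$ is torsion-free, with canonical generators $[m_i]$ represented by the meridians of the $L_i$. By universal coefficients and Poincar\'e duality, $H^1(M_L, \Z) \cong \Z^n$; I identify it with $H$ by sending $e^i$ to the class dual to $[m_i]$.

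\textbf{Step 2 (cup product from Milnor invariants).} For $L$ with vanishing linking matrix, I would show that the triple cup-product form of $M_L$ is given, up to sign, by Milnor's triple linking numbers:
\[
\mu_{M_L}(e^i \wedge e^j \wedge e^k) = \pm\, \bar\mu_L(ijk).
\]
This is done by modeling the cup product on $M_L$ via intersections of Seifert surfaces for the $L_i$, pushed through the surgery, and identifying the resulting triple intersection number with the geometric definition of $\bar\mu$.

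\textbf{Step 3 (realization of triple linking numbers).} It then remains, for arbitrary $c_{ijk} \in \Z$, to produce an $n$-component link $L \subset S^3$ with vanishing linking matrix and $\bar\mu_L(ijk) = c_{ijk}$ for all $1 \le i < j < k \le n$. This is a classical realization statement for length-$3$ Milnor invariants: starting from an unlink, iterated Bing doubling and band-summing allow one to adjust each $\bar\mu_L(ijk)$ independently (since the length-$2$ invariants vanish, length-$3$ $\bar\mu$-invariants form a free family), and any integer values can be attained.

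\textbf{Main obstacle.} The delicate step is Step 2: making the precise link between the triple cup product in $H^*(M_L, \Z)$ and Milnor's $\bar\mu$, with correct signs and with due care for surgery and framing conventions. A cleaner route that avoids this identification is a bordism argument: the classifying map $f \colon M \to T^n = K(H,1)$ pushes the fundamental class to $H_3(T^n, \Z) \cong \bwedge^3 H$, and under the Kronecker pairing $\bwedge^3 H^\vee \times \bwedge^3 H \to \Z$ the form $\mu_M$ corresponds exactly to $f_*[M]$. Since $\Omega_3^{\SO}(\mathrm{pt}) = 0$, the Atiyah--Hirzebruch spectral sequence yields $\Omega_3^{\SO}(T^n) \cong H_3(T^n, \Z)$, so every element of $\bwedge^3 H$ is realized by some pair $(M_0, f_0)$. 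One then modifies $M_0$ by $0$-surgeries along circles on which $f_0$ is null-homotopic (killing excess $H^1$), together with connected sums with $S^1 \times S^2$ (introducing missing generators), to force $f^*\colon H^1(T^n, \Z) \to H^1(M, \Z)$ to be an isomorphism; extending $f$ over each surgery cobordism preserves $f_*[M]$, and hence $\mu_M$, throughout.
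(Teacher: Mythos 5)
Your primary route (Steps 1--3) is essentially the construction the paper sketches right after the theorem: $0$-framed surgery on an $n$-component link with vanishing linking matrix, with the triple cup products of the surgered manifold prescribed by modifying a trivial link. The paper phrases your Step 3 as replacing trivial $3$-string braids by braids whose closures are the Borromean rings; this is the cleanest way to shift each $\bar\mu_L(ijk)$ by $\pm 1$ independently while keeping all pairwise linking numbers zero, and is preferable to your appeal to Bing doubling (a move really designed for higher-length invariants), though the realization claim you need is classical and correct. You are right that Step 2 --- identifying the triple cup-product form of $M_L$ with Milnor's $\bar\mu_L(ijk)$ when the linking matrix vanishes --- is the technical heart of this route; it is a known theorem (Turaev, and via Massey products in the link complement, Porter), so it can be quoted rather than reproved. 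Your alternative bordism argument is a genuinely different proof, and in fact closer in spirit to Sullivan's original one: since $\Omega^{\SO}_3(T^n)\cong H_3(T^n,\Z)=\bwedge^3 H$ via the edge homomorphism $[M,f]\mapsto f_*[M]$, and $\mu_M$ is recovered from $f_*[M]$ once $f^*$ is an isomorphism on $H^1$, one only needs surgeries along circles that are null-homotopic in $T^n$ and connected sums with $S^1\times S^2$, all of which stay within a fixed bordism class over $T^n$. Both arguments are sound; the link-surgery route buys an explicit manifold, while the bordism route avoids the cup-product/Milnor-invariant identification entirely.
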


Such a $3$-manifold can be constructed by a process known as ``Borromean surgery.''
More precisely, if $n=\rank H$, a manifold $M$ with the claimed properties 
may be defined as $0$-framed surgery on a link in $S^3$ obtained from the trivial 
$n$-component link by replacing a collection of trivial $3$-string braids by 
the corresponding collection of $3$-string braids whose closures are the Borromean rings. 
For instance, $0$-surgery on the Borromean rings produces the $3$-torus $T^3$. 

\subsection{Poincar\'e duality and Koszul complex}
\label{subsec:koszul 3-mfd} 
We now fix a basis $\{e_1,\dots ,e_n\}$ for the free abelian group 
$H^1(M,\Z)$, and we choose $\{e^{\vee}_1,\dots ,e^{\vee}_n\}$ as basis 
for the torsion-free part of $H^2(M,\Z)$, where $e^{\vee}_i$ 
denotes the Kronecker dual of the Poincar\'e dual of $e_i$. 
Writing  
\begin{equation}
\label{eq:muijk}
\mu_M=\sum_{1\le i<j<k\le n} \mu_{ijk} e_i  e_j  e_k,
\end{equation}
where $\mu_{ijk}=\mu(e_i\wedge e_j\wedge e_k )$ and 
using formula \eqref{eq:eta}, we find that 
$e_i  e_j=\sum_{k=1}^{n} \mu_{ijk}  e^{\vee}_k$.  

In order to identify the resonance varieties of the cohomology algebra $A^*=H^*(M,\C)$, 
we let $S=\Sym(A_1)$ be the symmetric algebra on $A_1=H_1(M,\C)$, 
and we identify $S$ with the polynomial ring $\C[x_1,\dots,x_n]$. 
The Koszul complex from \eqref{eq:koszul-dga} then has the form 
\begin{equation}
\label{eq:koszul 3mfd}
\begin{tikzcd}[column sep=24pt]
A^0\otimes_\C S \ar[r, "\delta^{0}_A"] 
& A^1\otimes_\C S \ar[r, "\delta^{1}_A"] 
&A^2\otimes_\C  S \ar[r, "\delta^{2}_A"] 
&A^3\otimes_\C  S,
\end{tikzcd}
\end{equation}
where the differentials are the $S$-linear maps given by 
$\delta^q_A(u)=\sum_{j=1}^{n} e_j u \otimes x_j$ for $u\in A^q$.  
In our chosen basis, the matrix of  $\delta^2_A$ is the transpose 
of $\delta^0_A=\big(x_1 \, \cdots\, x_n\big)$, while the matrix 
of $\delta^1_A$ is an $n\times n$ matrix of linear forms in 
the variables $x_i$, given by $\delta^1_A(e_i)=
\sum_{j=1}^{n} \sum_{k=1}^{n}\mu_{jik} e_k^{\vee} \otimes x_j$. 

Note that the matrix $\delta_M\coloneqq \delta^1_A$ is skew-symmetric; 
moreover, it is singular, since the vector $(x_1,\dots, x_n)$ is in its kernel.   
Hence, both the determinant $\det(\delta^1_A)$ and the Pfaffian 
$\pf(\delta_M)$ vanish. 
Let $\delta_M(i;j)$ be the sub-matrix obtained from $\delta_M$ 
by deleting the $i$-th row and $j$-th column. We then have the following 
lemma, due to Turaev \cite{Tu}. 

\begin{lemma}[\cite{Tu}]
\label{lem:turaev}
Assume $n\ge 3$. There is then a polynomial 
$\Det(\mu)\in S$ such that $\det \delta_M(i;j) = (-1)^{i+j}x_ix_j \Det(\mu)$.  
Moreover, if $n$ is even, then $\Det(\mu)=0$, while if $n$ is odd, 
then $\Det(\mu)=\Pf(\mu)^2$, where $\pf (\delta_M(i;i)) = (-1)^{i+1} x_i \Pf(\mu)$. 
\end{lemma}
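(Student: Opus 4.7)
The plan is to exploit the fact that the vector $v = (x_1, \ldots, x_n)^{T}$ lies in the kernel of the skew-symmetric matrix $\delta_M$, and to analyze the adjugate matrix accordingly. First I would verify that $\delta_M v = 0$: the $k$-th entry of $\delta_M v$ is $\sum_{i,j} \mu_{jik} x_j x_i$, which vanishes because $\mu_{jik}$ is antisymmetric in $(j,i)$ while $x_i x_j$ is symmetric. By skew-symmetry, $v^{T}\delta_M = 0$ as well. In particular $\det \delta_M = 0$, so the defining identity of the adjugate reduces to $\delta_M \cdot \operatorname{adj}(\delta_M) = 0 = \operatorname{adj}(\delta_M) \cdot \delta_M$. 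Writing $C_{ij}\coloneqq (-1)^{i+j}\det \delta_M(i;j)$, these relations force each row of $C$ to lie in the right kernel of $\delta_M$, and each column of $C$ to lie in its left kernel (which equals the right kernel, since $\delta_M$ is skew-symmetric).

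Now the argument splits according to the parity of $n$. If $n$ is even, the classical fact that every skew-symmetric matrix has even rank, combined with $\rank \delta_M \le n-1$, forces $\rank \delta_M \le n-2$. Hence all $(n-1) \times (n-1)$ minors of $\delta_M$ vanish identically, and the lemma holds with $\Det(\mu) = 0$. If $n$ is odd, then over the fraction field $K = \operatorname{Frac}(S)$ we have $\rank \delta_M \le n-1$, with equality exactly when some Pfaffian of an $(n-1) \times (n-1)$ principal submatrix is nonzero. In the degenerate case $\rank \delta_M < n-1$ the adjugate vanishes identically and $\Det(\mu) = \Pf(\mu) = 0$ does the job; otherwise, $\ker \delta_M = \langle v \rangle$, and the preceding discussion forces both rows and columns of $C$ to be $K$-multiples of $v^{T}$ and $v$ respectively. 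It follows that $C_{ij} = \alpha \cdot x_i x_j$ for some $\alpha \in K$, which is the first formula with $\Det(\mu) \coloneqq \alpha$.

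To identify $\alpha$ as a polynomial square and produce $\Pf(\mu)$, I would invoke the classical kernel identity for odd-dimensional skew-symmetric matrices: for such a matrix $A$, the vector whose $i$-th component is $(-1)^{i+1}\pf(A(i;i))$ lies in the kernel of $A$. (This is proved by bordering $A$ with a duplicate of its $j$-th row and column to produce an $(n+1) \times (n+1)$ skew-symmetric matrix with two identical rows; its Pfaffian vanishes because its determinant does, and Pfaffian Laplace expansion along the appended row yields the claimed relation.) Applied to $\delta_M$, this identity gives $(-1)^{i+1}\pf(\delta_M(i;i)) = \Pf(\mu) \cdot x_i$ for some $\Pf(\mu) \in K$. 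Specializing to $x_i = 0$ in this equation shows $\pf(\delta_M(i;i))|_{x_i = 0} = 0$, so $x_i \mid \pf(\delta_M(i;i))$ in the polynomial ring $S$, whence $\Pf(\mu) \in S$. Finally, comparing $C_{ii} = \alpha x_i^2$ with $\det \delta_M(i;i) = \pf(\delta_M(i;i))^2 = x_i^2 \Pf(\mu)^2$ (valid since $n-1$ is even) yields $\Det(\mu) = \alpha = \Pf(\mu)^2 \in S$.

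The main obstacle is executing the Pfaffian kernel identity with correct sign conventions, since the signs $(-1)^{i+1}$ in the statement of the lemma ultimately originate in the sign conventions of Pfaffian Laplace expansion. Everything else follows formally from linear algebra over the polynomial ring $S$ together with the even-rank property of skew-symmetric matrices and the fact that $v$ spans the kernel of $\delta_M$ generically.
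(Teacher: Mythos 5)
Your argument is correct, and it is essentially the classical one: the paper itself gives no proof of this lemma, citing Turaev's book, and Turaev's treatment rests on exactly the linear algebra you use (the adjugate of a singular skew-symmetric matrix has rank at most one, with rows and columns proportional to the kernel vector, plus the Pfaffian kernel identity in odd size). So there is no divergence of method to report. Two small points deserve tightening. First, the step ``specializing to $x_i=0$'' to conclude $x_i \mid \pf(\delta_M(i;i))$ is not quite licit as written, since at that stage $\Pf(\mu)$ is only known to lie in $K=\operatorname{Frac}(S)$ and could a priori have $x_i$ in its denominator; the clean fix is to compare two indices, writing $x_{i'}\,\pf(\delta_M(i;i))=\pm\, x_i\,\pf(\delta_M(i';i'))$ in $S$ and using that $x_i$ and $x_{i'}$ are coprime (this needs $n\ge 2$, which is harmless). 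Second, in the odd case you should record the standard fact that the rank of a skew-symmetric matrix is achieved on a \emph{principal} submatrix, so that $\rank\delta_M=n-1$ really does force some $\pf(\delta_M(i;i))\ne 0$ and hence a nonzero kernel vector $w$ to compare with $v$; you gesture at this but it is the hinge of the generic case. With those two repairs, and the sign bookkeeping in the Pfaffian Laplace expansion that you already flag, the proof is complete.
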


\subsection{Resonance varieties of $3$-manifolds}
\label{subsec:res 3-mfd} 
Let $\RR^i_k(M)$ be the resonance varieties 
associated to the cohomology algebra $A=H^*(M,\C)$ of a closed, 
orientable $3$-manifold $M$.  
As shown in \cite{Su-edinb}, Poincar\'e duality implies that 
$\RR^{2}_k(M)=\RR^1_k(M)$ for $1\le k \le n$, while  
$\RR^{3}_1(M)=\RR^0_1(M)=\{\zero\}$ if $n>0$.  
The basic structure of the degree $1$, depth $1$ resonance 
varieties is given by the following theorem.

\begin{theorem}[\cite{Su-edinb}, \cite{Su-mm}]
\label{thm:res closed3m}
Let $M$ be a closed, orientable $3$-manifold.  Set $n=b_1(M)$ 
and let $\mu_M$ be the associated alternating $3$-form.  Then 
\begin{equation}
\label{eq:r1 3m}
\RR^1_1(M)= 
\begin{cases}
\emptyset & \text{if\/ $n=0$};\\
\{0\} & \text{if\/ $n=1$ or $n=3$ and $\mu_M$ has rank $3$};\\
\mathbf{V}(\Pf(\mu_M))  & \text{if\/  $n$ is odd, $n>3$, 
and $\mu_M$ is generic};\\
H^1(M;\C) & \text{otherwise}.
\end{cases}
\end{equation}
\end{theorem}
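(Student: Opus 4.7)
The plan is to compute $\RR^1_1(M) = \RR^1_1(H^*(M,\C))$ by analyzing the Aomoto complex $(A^*, \cdot\,\omega)$ of the Poincar\'e duality algebra $A = H^*(M,\C)$, using the skew-symmetric matrix $\delta_M = \delta^1_A$ from Section \ref{subsec:koszul 3-mfd} in tandem with Turaev's Lemma \ref{lem:turaev}. The basic reduction is that for $\omega \neq 0$ the map $\cdot\omega \colon A^0 \to A^1$ is injective and, by $3$-dimensional Poincar\'e duality, $\cdot\omega \colon A^2 \to A^3$ is (up to duality) its transpose; so the whole resonance question reduces to the rank of the middle map $\cdot\omega \colon A^1 \to A^2$, which in the chosen bases is exactly $\delta_M|_\omega$.

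First I would dispose of the two degenerate small cases: $n = 0$ forces $\RR^1_1(M) = \emptyset$ trivially, while $n = 1$ forces $\omega^2 = 0$ for every $\omega \in A^1$, so $H^1((A,\cdot\omega)) = A^1/\C\omega = 0$ for $\omega \neq 0$, giving $\RR^1_1(M) = \{\zero\}$. For $n \geq 2$ and $\omega \neq 0$, the rank computation yields the key criterion
\begin{equation*}
\omega \in \RR^1_1(M) \setminus \{\zero\} \iff \rank(\delta_M|_\omega) \leq n - 2.
\end{equation*}
Two structural features now control everything: symbolically, $\delta_M \cdot (x_1,\dots,x_n)^T = 0$ in $S^n$ (this just encodes $\omega^2 = 0$ in a \cga), so $\rank(\delta_M|_\omega) \leq n - 1$ for every $\omega \neq 0$; and skew-symmetry forces that rank to be an even integer.

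For $n$ even, the bound $\rank(\delta_M|_\omega) \leq n - 1$ combined with evenness immediately yields $\rank \leq n - 2$ at every nonzero point, so $\RR^1_1(M) = H^1(M,\C)$, which accounts for the ``otherwise'' alternative whenever $n$ is even. For $n$ odd, the admissible ranks jump from $n - 1$ down to $n - 3$, so $\omega$ fails to be in $\RR^1_1(M)$ precisely when $\rank(\delta_M|_\omega) = n - 1$, which happens iff some principal $(n-1) \times (n-1)$ Pfaffian of $\delta_M|_\omega$ is nonzero. This is where Turaev's Lemma is decisive: the identity $\pf(\delta_M(i;i)) = (-1)^{i+1} x_i \Pf(\mu_M)$ shows that, for $\omega \neq 0$, $\rank(\delta_M|_\omega) = n-1$ if and only if $\Pf(\mu_M)(\omega) \neq 0$. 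Therefore $\RR^1_1(M) \setminus \{\zero\} = \bV(\Pf(\mu_M)) \setminus \{\zero\}$.

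The remaining case split is then immediate. For $n = 3$, the expression $\Pf(\mu_M)$ is the scalar $\pm\mu_{123}$: if $\mu_M$ has rank $3$ this is nonzero so $\bV(\Pf(\mu_M)) = \emptyset$ and $\RR^1_1(M) = \{\zero\}$, while if $\mu_M = 0$ then $\Pf(\mu_M) \equiv 0$ and $\RR^1_1(M) = H^1(M,\C)$. For odd $n > 3$, $\Pf(\mu_M)$ is a polynomial of positive degree $(n-1)/2$ which automatically vanishes at $\zero$; the generic case $\Pf(\mu_M) \not\equiv 0$ yields $\RR^1_1(M) = \bV(\Pf(\mu_M))$ with the origin already inside that variety, while the degenerate case $\Pf(\mu_M) \equiv 0$ once again gives $\RR^1_1(M) = H^1(M,\C)$. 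The main technical hurdle I anticipate is the clean identification of the middle Aomoto differential with the specialization of $\delta_M$ and the careful bookkeeping needed to invoke Turaev's Pfaffian identity with the right signs; once those are in place, the parity-of-rank dichotomy for skew-symmetric matrices mechanically delivers all four cases of \eqref{eq:r1 3m}.
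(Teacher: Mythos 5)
Your proof is correct and follows precisely the route the paper sets up in the two subsections preceding the theorem: reduce to the rank of the middle Koszul differential $\delta_M|_\omega$, use that $(x_1,\dots,x_n)$ lies in the kernel of the skew-symmetric matrix $\delta_M$ together with the parity of the rank of a skew-symmetric matrix, and invoke Turaev's Pfaffian identity from Lemma \ref{lem:turaev} to detect when the rank drops below $n-1$ in the odd case. The only points worth making explicit are that the Berceanu--Papadima genericity of $\mu_M$ is equivalent to $\Pf(\mu_M)\not\equiv 0$ (since $\delta_M|_c$ is exactly the matrix of the $2$-form $\gamma_c$, so $\gamma_c$ has maximal rank if and only if $\Pf(\mu_M)(c)\ne 0$), and that $\Pf(\mu_M)$ is homogeneous of degree $(n-3)/2$ rather than $(n-1)/2$ --- neither affects the argument.
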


In the case when $n=2g+1$ with $g>1$, we say that the alternating form 
$\mu_M$ is {\em generic} (in the sense of Berceanu and Papadima 
\cite{BP}) if there is an element $c\in A^1$ such that the $2$-form 
$\gamma_c\in A_1\wedge A_1$ defined by 
$\gamma_c(a \wedge b)=\mu_M(a\wedge b\wedge c)$ for $a,b\in A^1$ 
has maximal rank, that is, $\gamma^{g}_c\ne 0$ in $\bwedge^{2g} A_1$.  
For detailed information on the resonance varieties 
$\RR^1_k(M)$ in depth $k>1$ we refer to \cite{Su-edinb}. 

\subsection{Characteristic varieties of $3$-manifolds}
\label{subsec:cvs 3-mfd} 
As noted in \cite{Su-mm}, Poincar\'e duality with local coefficients imposes 
the same type of constraints on the characteristic varieties of a closed, orientable 
$3$-manifold $M$; for instance, $\VV^{1}_k(M)\cong \VV^{2}_k(M)$, 
for all $k\ge 0$. Best understood is the variety $\VV^{1}_1(M)$, due
to its close connection to both the resonance variety $\RR^{1}_1(M)$ 
and to the Alexander polynomial $\Delta_M$, which we define next. 

Let $H=G_{\ab}/\Tors(G_{\ab})$ be the maximal 
torsion-free abelian quotient of the group $G=\pi_1(M, x_0)$, and 
let $q\colon M^{H}\to M$ be the regular cover corresponding to the 
projection $G\surj H$.  The Alexander module of $M$ is 
defined as the relative homology group 
$A_M= H_1(M^H,q^{-1}(x_0); \Z)$, viewed as a module 
over the Noetherian ring $\Z{H}$. 
Finally, let $E_1(A_M)\subseteq \Z{H}$ be the 
ideal of codimension $1$ minors in a $\Z{H}$-presentation for $A_M$.  
The {\em Alexander polynomial}\/ of $M$ is then defined as the greatest 
common divisor of the elements in this determinantal ideal,
$\Delta_M=\gcd ( E_1(A_M))$. 

As noted in \cite{DPS-imrn} and \cite{Su-mm}, work of McMullen \cite{McM} and 
Turaev \cite{Tu} yields the following relationship between the first 
characteristic variety and the Alexander polynomial of $M$.

\begin{proposition}[\cite{DPS-imrn}, \cite{Su-mm}]
\label{prop:cv 3d}
Let $M$ be a closed, orientable, $3$-dimensional manifold.  Then 
\begin{equation}
\label{eq:vdel}
\VV^1_1(M)\cap \Char(M)^0=\mathbf{V}(\Delta_M) \cup \{\one\}.
\end{equation}
Moreover, if $b_1(M)\ge 4$,  then $\VV^1_1(M)\cap 
\Char(M)^0= \mathbf{V}(\Delta_M)$.  
\end{proposition}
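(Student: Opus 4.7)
The plan is to combine two distinct inputs: McMullen's chain-level identification of the first characteristic variety with the Alexander hypersurface (away from $\one$), and Turaev's vanishing formula for $\Delta_M$ at the trivial character.

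For the first equality, I would begin with the standard computation of twisted cohomology from the chain complex of the torsion-free abelian cover $q\colon M^H \to M$, where $H = H_1(M,\Z)/\mathrm{Tors}$. For $\rho \in \Char(M)^0$ factoring through $H$, the complex $C_*(M^H)\otimes_{\Z H}\C_\rho$ computes $H_*(M,\C_\rho)$. The long exact sequence of the pair $(M^H, q^{-1}(x_0))$, together with Poincar\'e duality with local coefficients, produces for $\rho \ne \one$ a natural isomorphism $H^1(M, \C_\rho) \cong A_M \otimes_{\Z H} \C_{\rho^{-1}}$, so that $\rho \in \VV^1_1(M)$ is equivalent to $A_M$ having a non-trivial $\C_\rho$-fiber. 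McMullen's torsion analysis in \cite{McM} then identifies the support of this module, off $\one$, with $\mathbf{V}(\Delta_M)\setminus \{\one\}$. Since $\one \in \VV^1_1(M)$ whenever $b_1(M)\ge 1$ (as $H^1(M,\C)=\C^{b_1(M)}$), appending $\{\one\}$ on both sides yields the first equality.

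For the refined statement when $b_1(M)\ge 4$, it suffices to verify that $\Delta_M(\one)=0$, i.e., $\one \in \mathbf{V}(\Delta_M)$. Here I would invoke the Turaev-type formula encapsulated in Lemma \ref{lem:turaev}: the initial form of $\Delta_M$ at $\one$ is $\Pf(\mu_M)^2$ when $n = b_1(M)$ is odd, and identically zero when $n$ is even. For $n \ge 4$ this initial form vanishes at $\one$ in both parities---trivially when $n$ is even, and, when $n$ is odd (hence $n\ge 5$), because $\Pf(\mu_M)$ is then a homogeneous polynomial of positive degree $(n-3)/2$ in the coordinates $x_1,\dots,x_n$ and thus has zero constant term. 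Therefore $\Delta_M(\one)=0$, and the union $\mathbf{V}(\Delta_M)\cup\{\one\}$ collapses to $\mathbf{V}(\Delta_M)$.

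The main obstacle is the first step, where one must distinguish carefully between the zero set of $\Delta_M = \gcd E_1(A_M)$ and the support of the elementary ideal itself. Since $\mathbf{V}(\Delta_M)$ agrees with $\mathbf{V}(E_1(A_M))$ only up to codimension-$\ge 2$ components, one must verify that no isolated or higher-codimension ``ghost'' jumping points of $H^1(M,\C_\rho)$ escape the hypersurface $\mathbf{V}(\Delta_M)$. For a closed orientable $3$-manifold this is controlled by the Poincar\'e-duality symmetry of the presentation matrix of $A_M$, which forces all jumps of twisted cohomology to be detected at the codimension-one level by $\Delta_M$; this is precisely the content of Turaev's refinement of the Alexander-polynomial invariant.
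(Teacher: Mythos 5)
Your two-step reduction is the right one, and it matches the route the paper itself points to: Proposition \ref{prop:cv 3d} is stated here without proof, as a consequence of work of McMullen \cite{McM} and Turaev \cite{Tu} recorded in \cite{DPS-imrn} and \cite{Su-mm}, and those sources argue essentially as you do --- identify $\VV^1_1(M)$ away from $\one$ with the zero locus of the first Alexander ideal, show that for a closed orientable $3$-manifold this locus is cut out by $\Delta_M$ alone, and then check $\Delta_M(\one)=0$ when $b_1(M)\ge 4$. Two points in your write-up need tightening. First, Lemma \ref{lem:turaev} as stated concerns only the \emph{linearized} matrix $\delta_M$ built from the cup-product form $\mu_M$, so by itself it says nothing about the initial form of $\Delta_M$; what you actually need is Turaev's theorem that $\Delta_M$ lies in $I^{n-3}$ with $I$-adic leading term $\pm\Det(\mu_M)$ (equivalently, that the equivariant boundary matrix of $M$ reduces modulo $I^2$ to $\delta_M$). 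Granting that, your parity argument is correct: the leading term is homogeneous of degree $n-3\ge 1$ when $n\ge 4$ (and vanishes identically when $n$ is even), so $\Delta_M\in I$ and $\one\in\mathbf{V}(\Delta_M)$. Second, the ``gcd versus elementary ideal'' issue you flag at the end is genuine, but it is resolved by a concrete identity rather than by a vague appeal to Poincar\'e-duality symmetry: Turaev proves the exact $\Z[H]$-analogue of Lemma \ref{lem:turaev} for the honest Alexander matrix $A$ of a closed orientable $3$-manifold, namely $\det A(i;j)=\pm(t_i-1)(t_j-1)\Delta_M$, so for $\rho\ne\one$ some factor $(t_i-1)(t_j-1)$ is nonzero at $\rho$ and hence $\rho\in\mathbf{V}(E_1(A_M))$ if and only if $\Delta_M(\rho)=0$; this is what collapses $\mathbf{V}(E_1(A_M))$ onto $\mathbf{V}(\Delta_M)$ off the identity. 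With those two inputs made precise, your sketch is a correct proof. (A cosmetic caveat: equality \eqref{eq:vdel} implicitly assumes $b_1(M)\ge 1$, since for $b_1(M)=0$ the left-hand side is empty while the right-hand side contains $\one$; your observation that $\one\in\VV^1_1(M)$ whenever $b_1(M)\ge 1$ is exactly where that hypothesis enters.)
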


The next theorem shows that the second half of the Tangent Cone formula 
\eqref{eq:tc} holds for a large class of closed $3$-manifolds with odd first 
Betti number (regardless of whether those manifolds are $1$-formal), 
yet fails for most $3$-manifolds with even first Betti number.

\begin{theorem}[\cite{Su-mm}]
\label{thm:tc 3d}
Let $M$ be a closed, orientable $3$-manifold, and 
set $n=b_1(M)$. 
\begin{enumerate}
\item \label{odd} 
If $n\le 1$, or $n$ is odd, $n\ge 3$, and $\mu_M$ 
is generic, then $\TC_{\one}(\VV^1_1(M))=\RR^1_1(M)$.
\item \label{even} 
If $n$ is even, $n\ge 2$, then 
$\TC_{\one}(\VV^1_1(M))= \RR^1_1(M)$ if 
and only if $\Delta_M= 0$.
\end{enumerate}
\end{theorem}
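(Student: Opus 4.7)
The plan is to combine Theorem \ref{thm:res closed3m}, which describes $\RR^1_1(M)$ explicitly in terms of $n$ and $\mu_M$, with Proposition \ref{prop:cv 3d}, which identifies $\VV^1_1(M)\cap \Char(M)^0$ with $\mathbf{V}(\Delta_M)\cup \{\one\}$. The Libgober inclusion $\TC_{\one}(\VV^1_1(M))\subseteq \RR^1_1(M)$ from Theorem \ref{thm:lib} supplies one containment throughout, so the content is to decide when the reverse inclusion holds. Note also that only those components of $\VV^1_1(M)$ that pass through $\one$ contribute to $\TC_{\one}$, so we may harmlessly replace $\VV^1_1(M)$ by $\VV^1_1(M)\cap \Char(M)^0$ in the computation of the tangent cone.

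For part (1), I would proceed case by case. If $n=0$, both sides are empty. If $n=1$, Theorem \ref{thm:res closed3m} gives $\RR^1_1(M)=\{\zero\}$, so the Libgober inclusion forces $\TC_{\one}(\VV^1_1(M))\subseteq \{\zero\}$; and since $H^1(M,\C)\ne 0$, the origin lies in $\VV^1_1(M)$, so its tangent cone at $\one$ contains $\zero$, giving equality. The substantive case is $n$ odd, $n\ge 3$, with $\mu_M$ generic, where $\RR^1_1(M)=\mathbf{V}(\Pf(\mu_M))$. Here I would use Proposition \ref{prop:cv 3d} to rewrite $\TC_{\one}(\VV^1_1(M))$ as $\TC_{\one}(\mathbf{V}(\Delta_M))$, and then invoke the McMullen--Turaev computation of the initial form of $\Delta_M$ under the exponential change of coordinates $t_i\mapsto e^{x_i}$: the leading term equals $\Pf(\mu_M)^2$ up to a non-zero scalar, as is made plausible by the rank analysis of the skew-symmetric matrix $\delta_M$ in Lemma \ref{lem:turaev}. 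Since $\mathbf{V}(\Pf(\mu_M)^2)=\mathbf{V}(\Pf(\mu_M))$, the tangent cone matches $\RR^1_1(M)$.

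For part (2), assume $n$ is even and $n\ge 2$. Theorem \ref{thm:res closed3m} gives $\RR^1_1(M)=H^1(M,\C)$. If $\Delta_M=0$, then $\mathbf{V}(\Delta_M)=\Char(M)^0$, hence $\VV^1_1(M)\supseteq \Char(M)^0$ by Proposition \ref{prop:cv 3d}, and the tangent cone at $\one$ fills all of $H^1(M,\C)$, so equality holds. Conversely, if $\Delta_M\ne 0$, then $\mathbf{V}(\Delta_M)$ is a proper algebraic hypersurface of $\Char(M)^0$, whose tangent cone at $\one$ is therefore a proper algebraic subvariety of $H^1(M,\C)$, so $\TC_{\one}(\VV^1_1(M))\ne \RR^1_1(M)$. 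When $n\ge 4$, the clean equality in Proposition \ref{prop:cv 3d} makes this immediate. For $n=2$, one has to account for the possibility that the only component of $\VV^1_1(M)\cap \Char(M)^0$ is $\{\one\}$ itself, in which case $\Delta_M$ is taken to be a unit and $\TC_{\one}=\{\zero\}$ while $\RR^1_1(M)=\C^2$, so the stated dichotomy still holds.

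The main obstacle is the explicit identification, in the odd-$n$, generic case, of the initial form of $\Delta_M$ (at $\one\in \Char(M)^0$, after the exponential substitution) with a power of $\Pf(\mu_M)$. This is the bridge connecting the additive world of linear forms on $H^1(M,\C)$, where resonance lives, to the multiplicative world of the character torus, where the Alexander polynomial lives. The bridge rests on Lemma \ref{lem:turaev} together with a careful comparison of the Koszul complex \eqref{eq:koszul 3mfd} and the Fox calculus presentation of the Alexander module; once established, the rest of the argument is essentially bookkeeping on tangent cones of hypersurfaces.
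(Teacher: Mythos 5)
The survey states this theorem without proof, citing \cite{Su-mm}, and your argument reconstructs the proof given there in essentially the intended way: one containment comes from Libgober's inclusion (Theorem \ref{thm:lib}), the tangent cone is computed on the identity component via Proposition \ref{prop:cv 3d}, and the result is matched against the explicit description of $\RR^1_1(M)$ in Theorem \ref{thm:res closed3m}. Your treatment of part (2) and of the degenerate cases $n\le 1$ (and implicitly $n=3$, where $\Pf(\mu_M)$ is a nonzero constant and both sides reduce to $\{\zero\}$) is correct. The one load-bearing step you invoke rather than establish is the identification, for $n$ odd and $\ge 3$, of the initial form of $\Delta_M$ at $\one$ with $\Det(\mu_M)=\Pf(\mu_M)^2$ up to a nonzero scalar. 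Be aware that this does \emph{not} follow from Lemma \ref{lem:turaev}, which lives entirely on the cohomology-ring side (it analyzes the matrix $\delta_M$ built from $\mu_M$ and never mentions $\Delta_M$); it is a separate theorem of Turaev, refined by McMullen, comparing the $I$-adic associated graded of the Alexander module with the Koszul-type complex \eqref{eq:koszul 3mfd}. You correctly name this as the bridge to be built and sketch the right mechanism for building it, so this is a citation to be supplied rather than a flaw in the logic; but without that input the proof of part (1) in the generic odd case is incomplete.
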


The information contained in the cohomology jump loci and 
the Alexander polynomials provides a method for determining 
which $3$-manifold groups can also be realized as fundamental groups 
of K\"{a}hler manifolds, or smooth, quasi-projective varieties. 
We summarize the relevant results from \cite{DS-jems}, 
\cite{DPS-mz}, and \cite{FS}, as follows.

\begin{theorem}
\label{thm:3d-k-qp}
Let $G=\pi_1(M)$ be the fundamental group of a closed, orientable 
$3$-manifold $M$. Then:
\begin{description}[labelindent=14pt]
\item [\cite{DS-jems}]
$G\cong \pi_1(X)$, for some compact K\"{a}hler manifold $X$ if and only if 
$G$ is a finite subgroup of $\operatorname{SO}(4)$, acting freely on $S^3$.

\item [\cite{DPS-mz}]
$G$ is $1$-formal and $G\cong \pi_1(X)$, for some smooth quasi-projective
variety $X$ if and only if
$\m(G)\cong \m(F_n)$ or $\m(G)\cong \m(\Z\times \pi_1(\Sigma_g))$.

\item [\cite{FS}]
If $G\cong \pi_1(X)$, for some smooth quasi-projective
variety $X$, then all the prime components
of $M$ are graph manifolds.
\end{description}
\end{theorem}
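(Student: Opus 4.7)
The theorem compiles three results from distinct papers, so I would tackle each part in turn, but a common strategy underlies all three: contrasting the rigidity imposed by Poincaré duality on the cohomology of a closed orientable 3-manifold against the structural restrictions on fundamental groups coming from complex geometry. The main inputs on the 3-manifold side are the resonance structure of Theorem \ref{thm:res closed3m}, the identification of $\VV^1_1(M) \cap \Char(M)^0$ with the Alexander variety from Proposition \ref{prop:cv 3d}, and the Tangent Cone dichotomy in Theorem \ref{thm:tc 3d}. On the complex geometry side, the key inputs are Corollary \ref{cor:tcone} (Tangent Cone formula for formal spaces), Theorem \ref{thm:bw-asens} (torsion-translated subtori structure), and Theorem \ref{thm:res kahler} (admissible-maps decomposition).

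For the first part, I would show that a Kähler 3-manifold group $G$ must be finite, then appeal to geometrization. By Hodge theory $b_1(G)$ is even. If $b_1(G) \ge 2$, one combines the structure of $\VV^1_1$ for Kähler manifolds (positive-dimensional components are subtori through $\one$) with Theorem \ref{thm:tc 3d}\eqref{even}: closedness of the relevant components under taking tangent cones forces $\Delta_M = 0$, which, via Proposition \ref{prop:cv 3d} and a dimension count against Theorem \ref{thm:res closed3m}, contradicts the structure of the cup-product 3-form $\mu_M$. Hence $b_1(G) = 0$, so $M$ is a rational homology sphere. Gromov's theorem that Kähler groups do not split as nontrivial free products (combined with the Kneser--Milnor decomposition) forces $M$ to be prime; Kähler groups cannot have cohomological dimension $3$, ruling out the aspherical case. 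By elliptization, $G$ is a finite subgroup of $SO(4)$ acting freely on $S^3$.

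For the second part, the 1-formality hypothesis lets me invoke Corollary \ref{cor:tcone}: $\RR^1_1(M) = \TC_\one(\VV^1_1(M))$. As a quasi-projective group, the components of $\RR^1_1(G)$ are rationally defined linear subspaces of $H^1(M,\C)$ of dimension at least $2$, pairwise intersecting only at $\zero$ (Theorem \ref{thm:res kahler}). Now go through the cases of Theorem \ref{thm:res closed3m}. If $\RR^1_1(M) = H^1(M,\C)$, the single component condition, linearity, and the form $\mu_M$ admitting a Lagrangian subspace of dimension $\ge b_1/2$ must match; careful comparison forces $\mu_M$ to be the cup-product form of $\Z \times \pi_1(\Sigma_g)$. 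The case $n$ odd with $\mu_M$ generic gives $\RR^1_1 = \bV(\Pf(\mu_M))$, a non-linear (Pfaffian) hypersurface when $n \ge 5$, contradicting linearity; so $n \in \{0,1,3\}$. For $n = 3$ generic, $\Pf(\mu_M)$ is linear exactly when $\mu_M$ has the right form. The ensuing combinatorial comparison of Malcev Lie algebras (using Theorem \ref{thm:1f-group} and the quadratic holonomy Lie algebra $\h(G)$ computed from $\mu_M$) distinguishes only the free nilpotent completion $\m(F_n)$ and $\m(\Z \times \pi_1(\Sigma_g))$.

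For the third part, I would work through the JSJ decomposition of $M$ and analyze each piece. Using the characterization of admissible fibrations from Theorem \ref{thm:arapura} along with the structure of $\VV^1_1(G)$ for quasi-projective $G$, any hyperbolic piece of the JSJ decomposition would force a surjection $G \surj \pi_1(\Sigma)$ to a curve of $\chi(\Sigma) < 0$, which via Stallings--Thurston fibration theory for 3-manifolds would contradict the atoroidal, finite-volume geometry of the hyperbolic piece (since any such fibration would restrict nontrivially to boundary tori in incompatible ways). The conclusion is that no hyperbolic pieces can occur, so every prime component is a graph manifold. The hardest step of the three, in my view, is the case-by-case elimination in the second part: one must rule out pathological cup-product forms $\mu_M$ that accidentally yield linear Pfaffian loci without giving rise to a quadratic Lie algebra of one of the two named types, and this requires delicate interplay between the Alexander invariant calculation over $\Sym(H_1(M,\C))$ and the presentation of $\h(G)$ via $\mu_M^{\vee}$.
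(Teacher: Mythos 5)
There is no in-paper proof to compare against here: Theorem \ref{thm:3d-k-qp} is explicitly offered as a summary of results from three outside papers, with each item tagged by its source (\cite{DS-jems}, \cite{DPS-mz}, \cite{FS}), and the survey reproduces the statements without argument. So what you have written is a blind reconstruction of three substantial external theorems, and it should be judged on that basis. Your second item is the closest to the genuine argument: \cite{DPS-mz} does play the linearity and pencil structure of $\RR^1_1$ for $1$-formal quasi-projective groups (Theorem \ref{thm:res kahler}) against the trichotomy of Theorem \ref{thm:res closed3m}, and then identifies $\m(G)=\widehat{\h}(G)$ from the $3$-form $\mu_M$. Even there, though, you omit the converse implication (realizing both Malcev Lie algebras by quasi-projective $3$-manifold groups, e.g.\ $F_n=\pi_1(\C\setminus\{n \text{ points}\})$ and $\Z\times\pi_1(\Sigma_g)=\pi_1(\C^*\times\Sigma_g)$), and the elimination of the even-$b_1$ case, where $\RR^1_1(M)=H^1(M,\C)$ is a single linear component, needs the isotropicity of pencil components against Poincar\'e duality, not just their linearity.

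The genuine gaps are in items one and three. For the K\"ahler case, invoking ``K\"ahler groups cannot have cohomological dimension $3$'' to dispose of the aspherical case is circular: once $M$ is prime, orientable, with infinite $\pi_1$ and $b_1=0$, the group is a $\operatorname{PD}_3$ group, and showing that no such group is K\"ahler is precisely the hard content of \cite{DS-jems}; it is not an available black box, and your ``dimension count against Theorem \ref{thm:res closed3m}'' in the positive-$b_1$ case is asserted rather than carried out. For the third item, the proposed contradiction does not exist: a hyperbolic piece in the JSJ decomposition is perfectly compatible with a surjection $\pi_1(M)\surj\pi_1(\Sigma)$ onto a curve group with $\chi(\Sigma)<0$ --- fibered hyperbolic $3$-manifolds routinely surject onto free and surface groups --- so Theorem \ref{thm:arapura} yields no obstruction of the kind you describe. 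The actual argument of \cite{FS} runs through the Alexander polynomial and the Thurston norm: quasi-projectivity forces $\VV^1_1$ of $M$ \emph{and of all its finite covers} to be finite unions of torsion-translated subtori (Theorem \ref{thm:bw-asens}), which via Proposition \ref{prop:cv 3d} severely constrains the Alexander polynomials of those covers, and the virtually-special technology of Agol, Wise, and Przytycki--Wise then shows that a prime piece which is not a graph manifold produces a finite cover violating these constraints. None of that machinery appears in your sketch, and without it the third item is unproved.
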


\subsection{Finite models for $3$-manifolds}
\label{subsec:models 3-mfd}

The previous theorem leads to obstructions to the existence of $\cdga$ 
models for closed $3$-manifolds with specified finiteness properties. 
These obstructions are quite effective since they are expressed 
solely in terms of the Alexander polynomial of the manifold.

\begin{theorem}[\cite{Su-mm}]
\label{thm:even betti1}
Let $M$ be a closed, orientable, $3$-manifold, and set 
$n=b_1(M)$. 

\begin{enumerate}
\item \label{f1}
If $n\le 1$, then $M$ is formal, and has the rational homotopy type 
of $S^3$ or $S^1\times S^2$. 
\item \label{f2} 
If $n$ is even, $n\ge 2$, and $\Delta_M\ne 0$, then 
$M$ is not $1$-formal. 
\item \label{f3} 
If $\Delta_M\ne 0$, yet $\Delta_M(\one) =0$  and $\TC_{\one}(V(\Delta_M))$ 
is not a finite union of rationally defined linear subspaces, then $M$ 
admits no $1$-finite $1$-model. 
\end{enumerate}
\end{theorem}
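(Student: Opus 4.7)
The plan is to handle the three parts separately, since each reduces quickly to a result already established in the text.

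For part (1), I would first pin down the cohomology algebra $H^{*}(M,\Q)$ using Poincar\'e duality and graded-commuta\-tivity. When $n=0$, $M$ is a rational homology sphere, so $H^{*}(M,\Q)\cong \bwedge(c)$ with $\abs{c}=3$ is the free $\cga$ on one odd-degree generator. When $n=1$, let $a$ generate $H^1(M,\Q)$ and let $b\in H^2(M,\Q)$ be the Kronecker dual of the Poincar\'e dual of $a$. Graded-commutativity forces $a^2=0$, the vanishing of $H^4$ forces $b^2=0$, and Poincar\'e duality forces $ab$ to generate $H^3$, giving $H^{*}(M,\Q)\cong \bwedge(a,b)/(b^2)$ with $\abs{a}=1$ and $\abs{b}=2$. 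Since $\{b^2\}$ is a regular sequence in the free $\cga$ $\bwedge(a,b)$, Theorem \ref{thm:regular-intrinsic} identifies this cohomology algebra as hyperformal, hence intrinsically formal. In either case $H^{*}(M,\Q)$ coincides with the cohomology algebra of $S^3$ or of $S^1\times S^2$, respectively, so $M$ is formal and has the asserted rational homotopy type.

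For part (2), I would argue by contradiction. Suppose $M$ is $1$-formal. Since $M$ is certainly $1$-finite, Corollary \ref{cor:tcone} (with $i=k=1$) yields $\TC_{\one}(\VV^1_1(M))=\RR^1_1(M)$. But under the hypotheses $n$ even, $n\ge 2$, and $\Delta_M\ne 0$, Theorem \ref{thm:tc 3d}(2) asserts that this equality fails, contradiction.

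For part (3), I would again argue by contradiction, assuming that $M$ admits a $1$-finite $1$-model $A$. The Tangent Cone Formula (Theorem \ref{thm:tcone-fm} with $q=1$) then gives $\TC_{\one}(\VV^1_1(M))=\tau_{\one}(\VV^1_1(M))$, and by Lemma \ref{lem:exp-tcone} the right-hand side is a finite union of rationally defined linear subspaces of $H^1(M,\C)$. On the other hand, Proposition \ref{prop:cv 3d} identifies $\VV^1_1(M)\cap \Char(M)^0$ with $\mathbf{V}(\Delta_M)\cup\{\one\}$; since $\Delta_M(\one)=0$, the point $\one$ already lies in $\mathbf{V}(\Delta_M)$, so the germs at $\one$ of $\VV^1_1(M)$ and of $\mathbf{V}(\Delta_M)$ coincide. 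Because tangent cones depend only on analytic germs, this forces $\TC_{\one}(\mathbf{V}(\Delta_M))$ to be a finite union of rationally defined linear subspaces, contradicting the hypothesis.

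The hard part is essentially bookkeeping rather than conceptual. One must check in part (3) that the possibly missing isolated point $\one$ appearing in Proposition \ref{prop:cv 3d} when $b_1(M)\le 3$ does not affect the tangent-cone identification (it does not, precisely because $\Delta_M(\one)=0$), and in part (1) that the algebra structure on $H^{*}(M,\Q)$ is indeed rigidly determined by $b_1(M)$ and Poincar\'e duality. Beyond this, the statement is a direct consequence of the Tangent Cone machinery of Section~\ref{sect:cjl} applied to the cohomology-jump-loci description of $3$-manifolds from Proposition~\ref{prop:cv 3d} and Theorem~\ref{thm:tc 3d}.
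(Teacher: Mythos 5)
Your parts (2) and (3) follow the paper's own proof essentially verbatim: part (2) is the same contradiction between Theorem \ref{thm:tc 3d}\eqref{even} and Corollary \ref{cor:tcone}, and part (3) is the contrapositive phrasing of the same chain (Proposition \ref{prop:cv 3d} to identify germs at $\one$, Lemma \ref{lem:exp-tcone} for rationality of $\tau_{\one}$, Theorem \ref{thm:tcone-fm} to force $\tau_{\one}=\TC_{\one}$); your explicit remark that $\one\in\mathbf{V}(\Delta_M)$ makes the germ identification harmless is exactly the point the paper relies on.

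Part (1) is where you genuinely diverge, and your route is correct. The paper proves (1) by quoting the Fern\'andez--Mu\~noz theorem that $1$-formality is equivalent to formality for closed $3$-manifolds, combining it with the observation (Example \ref{ex:low-betti}) that any group with $b_1\le 1$ is $1$-formal, and then appealing to a separate reference to pin down the rational homotopy type. You instead compute $H^{*}(M,\Q)$ outright from Poincar\'e duality and graded-commutativity---$\bwedge(c)$ with $\abs{c}=3$ when $n=0$, and $\bwedge(a,b)/(b^2)$ with $\abs{a}=1$, $\abs{b}=2$ when $n=1$ (here $a^2=0$ in odd degree, $b^2\in H^4=0$, and the nondegenerate pairing $H^1\otimes H^2\to H^3$ forces $ab\ne 0$)---and then invoke Theorem \ref{thm:regular-intrinsic}: both algebras are hyperformal ($\{b^2\}$ is indeed a regular sequence in $\bwedge(a)\otimes\Q[b]$), hence intrinsically formal. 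This buys you formality \emph{and} the identification of the rational homotopy type in one stroke, and it stays entirely inside the paper's own toolkit rather than importing the $1$-formal $\Rightarrow$ formal equivalence for $3$-manifolds. The only caveat, which applies equally to the paper's phrasing, is that passing from a weak equivalence of Sullivan algebras to a ``rational homotopy equivalence'' in the zig-zag-of-maps sense is being taken for granted; at the level of detail expected here that is acceptable.
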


\begin{proof}
For completeness, we give a proof of this result. As shown in \cite{FM05}, 
the $1$-formality of $M$ is equivalent to formality. On the other hand, we saw in 
Example \ref{ex:low-betti} that any finitely generated group $G$ 
with $b_1(G)\le 1$ is $1$-formal. Thus, if 
$b_1(M)=0$ or $1$, then $M$ is formal, and so, as noted in \cite{PS-formal}, $M$ 
must be rationally homotopy equivalent to either $S^3$ or $S^1\times S^2$. 

Now suppose $b_1(M)$ is even and positive, and $\Delta_M\ne 0$. 
Then, by Theorem \ref{thm:tc 3d}, we have that 
$\TC_{\one}(\VV^1_1(M))\ne  \RR^1_1(M)$, and so, by 
Corollary \ref{cor:tcone}, $M$ is not $1$-formal. 

Finally, if $\Delta_M\ne 0$ and $\Delta_M(\one) =0$, 
it follows from Proposition \ref{prop:cv 3d} that 
$\VV^1_1(M)$ and $\mathbf{V}(\Delta_M)$ share the 
same tangent cone and exponential tangent cone at $\one$. 
On the other hand, if not all the irreducible components of 
$\TC_{\one}(\mathbf{V}(\Delta_M))$ are rational linear subspaces,  
then, by Lemma \ref{lem:exp-tcone}, 
$\tau_{\one}(\mathbf{V}(\Delta_M))\ne \TC_{\one}(\mathbf{V}(\Delta_M))$. 
Therefore, if both assumptions are satisfied, 
$\tau_{\one}(\VV^1_1(M))\subsetneqq \TC_{\one}(\VV^1_1(M))$, 
and so, by Theorem \ref{thm:tcone-fm}, 
$M$ cannot have a $1$-finite $1$-model. 
\end{proof}

Consequently, if $\m=\m(G)$ is the Malcev Lie algebra of $G=\pi_1(M)$, 
then the following hold in the three cases delineated in Theorem \ref{thm:even betti1}:
\ref{f1} $\m=0$ (if $n=0$) or $\m=\Q$ (if $n=1$); 
\ref{f2} $\m$ is not the LCS completion of a finitely generated, quadratic Lie algebra; 
and \ref{f3} $\m$ is not the LCS completion of a finitely presented Lie algebra. 

The next two examples illustrate how the finiteness obstructions provided 
by Theorem \ref{thm:even betti1} work in cases \ref{f2} and \ref{f3}.

\begin{example}
\label{ex:notfm-finite}
The Heisenberg $3$-dimensional nilmanifold $M$ admits a finite model, 
for instance, $A=\big(\bwedge(a,b,c),d\big)$ with $d a=d b=0$ and $d c=ab$. 
Nevertheless, $M$ is not $1$-formal, since $b_1(M)=2$ 
and $\Delta_M=1$. Furthermore, $\mu_M=0$, and so 
$\TC_{\one}(\VV^1_1(M))=\{\zero\}$, whereas $\RR^1_1(M)=\C^2$. 
\end{example}

\begin{example}
\label{ex:finite}
Let $M$ be a closed, orientable $3$-manifold with 
$H_1(M,\Z)=\Z^2$ and $\Delta_M=(t_1+t_2)(t_1t_2+1)-4t_1t_2$ 
(such a manifold exists by \cite[VII.5.3]{Tu}). Then 
$\TC_{\one}(\VV^1_1(M))=\{x_1^2+x_2^2=0\}$ decomposes 
as the union of two lines defined over $\C$, but not over $\Q$; 
hence, $M$ admits no $1$-finite $1$-model. Furthermore, 
$\tau_{\one}(\VV^1_1(M))=\{\zero\}$ is properly contained 
in $\TC_{\one}(\VV^1_1(M))$.  
\end{example}

\subsection{$3$-manifolds fibering over the circle}
\label{subsec:3mfd}
We conclude this section with a discussion of the $1$-formality 
property for closed $3$-manifolds that fiber over $S^1$. 
We start with a result which relates the notion of $1$-formality 
of a semidirect product of the form $G=K\triangleleft \Z$ 
to the algebraic monodromy of the extension.

\begin{theorem}[\cite{PS-forum}]
\label{thm:monoformal}
Let $1\to K\to G\to \Z\to 1$ be a short exact sequence of groups. 
Suppose $G$ is finitely presented and $1$-formal, 
and $b_1(K)<\infty$. Then the 
eigenvalue $1$ of the monodromy action on 
$H_1(K, \C)$ has only $1\times 1$ Jordan blocks.
\end{theorem}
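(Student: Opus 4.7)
I would argue by contradiction using the Tangent Cone formula (Corollary \ref{cor:tcone}). Assume the monodromy $\phi$ on $H^1(K,\C)$ has a Jordan block of size $\ge 2$ at the eigenvalue $1$; equivalently, $\ker(\phi-1)\cap\im(\phi-1)\ne 0$. From the Hochschild--Serre spectral sequence associated to $1\to K\to G\xrightarrow{p}\Z\to 1$, which collapses at $E_2$ because $\Z$ has cohomological dimension one, I obtain natural short exact sequences
\[
0\to H^{n-1}(K,\C)_{\Z}\to H^n(G,\C)\to H^n(K,\C)^{\Z}\to 0.
\]
Setting $V=H^1(K,\C)^{\Z}=\ker(\phi-1)$, this yields a splitting $H^1(G,\C)=\C e\oplus V$, where $e$ is the pullback of the generator of $H^1(\Z,\C)$. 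Note $e^2=0$, and unwinding the edge maps of the spectral sequence shows that the cup product $e\cdot\tilde v\in H^2(G,\C)$ of $e$ with a lift $\tilde v$ of $v\in V$ coincides with the class $[v]\in H^1(K,\C)_{\Z}$.

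Applying the same analysis to twisted coefficients $\C_{\chi_\lambda}$, where $\chi_\lambda\in\Char(G)$ is the pullback $p^*(\lambda)$ of $\lambda\in\Char(\Z)=\C^*$, gives $\dim H^1(G,\C_{\chi_\lambda})=\dim\ker(\lambda\phi-1)$ for $\lambda\ne 1$. Since $\phi$ has finitely many eigenvalues, the subtorus $T=p^{*}(\Char(\Z))\subset\Char(G)$ meets $\VV^1_1(G)$ in a finite set, so $T\not\subseteq\VV^1_1(G)$.

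On the resonance side, the cup product computation gives $\ker(e\cdot\colon H^1(G,\C)\to H^2(G,\C))=\C e\oplus(V\cap\im(\phi-1))$, and therefore the first cohomology of $(H^{*}(G,\C),\, e\cdot)$ equals $V\cap\im(\phi-1)=\ker(\phi-1)\cap\im(\phi-1)$. Hence $e\in\RR^1_1(H^{*}(G,\C))$ precisely when there is a Jordan block of size $\ge 2$ at eigenvalue $1$, which is our standing assumption. But $G$ is finitely presented and $1$-formal with $b_1(G)\le 1+b_1(K)<\infty$, so $K(G,1)$ may be taken to have finite $2$-skeleton and is in particular $1$-finite; Corollary \ref{cor:tcone} then gives $\RR^1_1(G)=\tau_{\one}\VV^1_1(G)$. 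By the definition of the exponential tangent cone, $e\in\tau_{\one}\VV^1_1(G)$ means $\exp(\C\cdot e)=T\subseteq\VV^1_1(G)$, contradicting the preceding paragraph.

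The principal technical point I expect to have to spell out carefully is the identification of $e\cdot\tilde v$ with $[v]\in H^1(K,\C)_{\Z}$; this amounts to explicitly tracking the edge homomorphism in the spectral sequence (equivalently, the connecting map in the Wang sequence of the mapping torus viewpoint), but once it is in hand, the remainder of the argument is routine linear algebra of Jordan blocks combined with the Tangent Cone formula.
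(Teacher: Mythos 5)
The survey states this theorem only with a citation to \cite{PS-forum} and includes no proof, so there is nothing internal to compare against; your argument is correct and is essentially the argument of the cited source. Its three ingredients --- (i) the multiplicativity of the Hochschild--Serre (equivalently, Wang) sequence, which identifies $e\cdot(-)$ on a lift of $H^1(K,\C)^{\Z}$ with the natural map $H^1(K,\C)^{\Z}\to H^1(K,\C)_{\Z}$, so that $e\in\RR^1_1(G)$ exactly when $\ker(\phi-1)\cap\im(\phi-1)\ne 0$, i.e., when $1$ has a Jordan block of size at least $2$; (ii) the finiteness of $\VV^1_1(G)\cap p^{*}(\Char(\Z))$, which uses $b_1(K)<\infty$; and (iii) the tangent cone formula \eqref{eq:tc-pi} for finitely generated $1$-formal groups, which converts $e\in\RR^1_1(G)$ into $p^{*}(\Char(\Z))\subseteq\VV^1_1(G)$ --- are exactly the ones used by Papadima and Suciu, and the edge-map identification you single out as the main technical point is the standard compatibility of cup products with the spectral sequence filtration (harmless here since $E_\infty^{2,0}=0$), so it poses no difficulty.
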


This theorem yields as an immediate corollary 
a substantial extension of a result of Fern\'{a}ndez, Gray, and 
Morgan \cite{FGM}, where the non-formality of the total spaces 
of certain bundles is established by a different method, 
using Massey products.

\begin{corollary}[\cite{PS-forum}]
\label{cor:bundles}
Let $F\to X\to S^1$ be a smooth fibration 
whose fiber $F$ is connected and has the homotopy 
type of a CW-complex with finite $2$-skeleton, and  
for which the monodromy on $H_1(F, \C)$ has 
eigenvalue $1$, with a Jordan block of size greater than $1$.  
Then the group $G=\pi_1(X)$ is not $1$-formal.
\end{corollary}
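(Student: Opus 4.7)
The plan is to deduce this corollary directly from Theorem \ref{thm:monoformal} by recognizing that the fundamental group of the total space of such a fibration fits into a short exact sequence with $\Z$-quotient whose monodromy action is precisely the geometric monodromy on $H_1(F,\C)$.

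First, I would use the long exact sequence in homotopy groups associated with the fibration $F\to X\to S^1$. Since $\pi_2(S^1)=0$ and $\pi_1(S^1)=\Z$, and since $F$ is connected, the tail of this sequence collapses to a short exact sequence of groups
\begin{equation*}
1\longrightarrow \pi_1(F)\longrightarrow \pi_1(X)\longrightarrow \Z\longrightarrow 1 .
\end{equation*}
Setting $K=\pi_1(F)$ and $G=\pi_1(X)$, this is exactly the shape required by Theorem \ref{thm:monoformal}.

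Next, I would verify the two finiteness hypotheses. Because $F$ has the homotopy type of a CW-complex with finite $2$-skeleton, $K=\pi_1(F)$ is finitely presented (so in particular $b_1(K)<\infty$), and the mapping-torus description of $X$ shows that $X$ also has the homotopy type of a CW-complex with finite $2$-skeleton, whence $G$ is finitely presented as well. The remaining point is to identify the abstract conjugation action of the quotient $\Z=G/K$ on $H_1(K,\C)=K_{\ab}\otimes\C$ with the geometric monodromy of the fibration on $H_1(F,\C)$; this is the standard identification coming from the fact that a generator of $\Z$ lifts to a loop in $X$ whose associated deck transformation on the pullback of $F$ along the universal cover of $S^1$ realizes the monodromy.

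Having set up this dictionary, I would argue by contraposition: if $G$ were $1$-formal, then Theorem \ref{thm:monoformal} would force the eigenvalue $1$ of the monodromy on $H_1(F,\C)$ to have only $1\times 1$ Jordan blocks, contradicting the hypothesis. I do not expect any real obstacles here, since the work has all been done in Theorem \ref{thm:monoformal}; the only delicate step is the monodromy identification in the previous paragraph, and even that is a classical and well-documented fact about fibrations over $S^1$ (or, equivalently, about mapping tori), so the proof reduces to bookkeeping.
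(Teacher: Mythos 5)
Your proposal is correct and is exactly the argument the paper has in mind: the paper presents this as an immediate consequence of Theorem \ref{thm:monoformal}, obtained by applying that theorem (in contrapositive form) to the short exact sequence $1\to\pi_1(F)\to\pi_1(X)\to\Z\to 1$ coming from the fibration, with the finiteness hypotheses supplied by the finite $2$-skeleton assumption and the mapping-torus description of $X$. The identification of the conjugation action on $H_1(K,\C)$ with the geometric monodromy is indeed the only point requiring care, and you handle it correctly.
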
 

Next, we recall a result from \cite{PS-plms10}, which is based 
on the interplay between the Bieri--Neumann--Strebel invariant, 
$\Sigma^1(G)$, and the (first) resonance variety, $\RR^1_1(G)$, 
of a $1$-formal group $G$. 

\begin{proposition}[\cite{PS-plms10}]
\label{prop:3mfd-fibers}
Let $M$ be a closed, orientable $3$-manifold which fibers 
over the circle.  If $b_1(M)$ is even, then $M$ is 
not $1$-formal.  
\end{proposition}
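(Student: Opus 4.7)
I would deduce the non-formality directly from Theorem~\ref{thm:even betti1}(\ref{f2}), which asserts that a closed orientable $3$-manifold with $b_1$ even, $\ge 2$, and with $\Delta_M\ne 0$ cannot be $1$-formal. Since a fibration $M\to S^1$ induces a surjection $\pi_1(M)\surj \Z$, automatically $b_1(M)\ge 1$, so the parity hypothesis upgrades this to $n=b_1(M)\ge 2$. Thus it suffices to verify that $\Delta_M\ne 0$ whenever $M$ fibers over the circle.

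\textbf{Non-vanishing of $\Delta_M$.} Realize $M$ as the mapping torus $F_{\phi}$ of a diffeomorphism $\phi\colon F\to F$ of a closed orientable surface; since $F=S^2$ would force $b_1(M)=1$, the fiber has genus $g\ge 1$. Let $\chi\in H^1(M,\Z)$ be the fibered class, with associated infinite cyclic cover $M^{\chi}\to M$. The total space $M^{\chi}$ is homotopy equivalent to $F$, so $H_1(M^{\chi},\Q)\cong \Q^{2g}$ is a finitely generated torsion module over $\Q[t^{\pm 1}]$ whose order is, up to a unit, the characteristic polynomial $\det(tI-\phi_{*})$---a non-zero monic polynomial of degree $2g\ge 2$. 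Standard relations between the multi-variable Alexander polynomial $\Delta_M\in\Z[H_1(M)_{\mathrm{free}}]$ and its one-variable specialization along $\chi$ (as in the work of McMullen and Turaev) then force $\Delta_M\ne 0$, so Theorem~\ref{thm:even betti1}(\ref{f2}) delivers the conclusion.

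\textbf{Main obstacle.} The only technical point is the passage from the one-variable Alexander polynomial of the fibered class (which is manifestly non-vanishing once $g\ge 1$) to the multi-variable polynomial $\Delta_M$ used in the finiteness obstruction; essentially one argues that the former appears, up to extra $(t-1)$-factors, in $\sigma_{\chi}(\Delta_M)$, so non-vanishing transfers back. Once this is in hand, the proof is immediate. It is worth noting that the original argument in \cite{PS-plms10} bypasses Alexander-polynomial computations altogether, working instead with the Bieri--Neumann--Strebel invariant $\Sigma^1(\pi_1(M))$: this invariant is non-empty in the fibered setting (it contains the fibered class $\pm\chi$) but must be disjoint from the real locus of every positive-dimensional component of $\RR^1_1$ for a $1$-formal group. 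Since Theorem~\ref{thm:res closed3m} gives $\RR^1_1(M)=H^1(M,\C)$ for $n$ even, $n\ge 2$, the real locus exhausts $H^1(M,\R)$, yielding the same contradiction.
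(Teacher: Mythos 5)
Your argument is correct, and it genuinely differs from the proof the paper is pointing to. The source \cite{PS-plms10} (as the paper itself indicates just before the statement) argues via the Bieri--Neumann--Strebel invariant: for a $1$-formal group $G$ one has $\Sigma^1(G)$ contained in the complement of the real points of $\RR^1_1(G)$; a fibration over $S^1$ puts the fibered class in $\Sigma^1(\pi_1(M))$, while Theorem~\ref{thm:res closed3m} gives $\RR^1_1(M)=H^1(M,\C)$ for $n=b_1(M)$ even and $\ge 2$, so the real resonance locus is all of $H^1(M,\R)$ and $\Sigma^1$ would have to be empty --- a contradiction. You instead reduce to Theorem~\ref{thm:even betti1}(\ref{f2}) by checking $\Delta_M\ne 0$, which keeps the whole argument inside the tangent-cone/Alexander-polynomial machinery of Sections~\ref{sect:cjl} and \ref{sect:models-3d} and avoids importing the BNS upper-bound theorem; note that the parity of $b_1$ still enters in the same place in both arguments, namely through the vanishing of $\Det(\mu_M)$ for $n$ even (Lemma~\ref{lem:turaev}), which is what makes $\RR^1_1(M)$ the whole space and hence distinct from $\TC_{\one}(\VV^1_1(M))$. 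The one step you should nail down is the passage from the fibered class to the multivariable polynomial: rather than invoking the specialization formula (where $(t-1)$-factors and degenerate specializations require care), the cleanest route is to observe that $M^{H}\to M^{\chi}\simeq F$ exhibits $A_M$ as a module finitely generated over the subring $\Z[\ker(\chi|_H)]$ of $\Z H$, so by Cayley--Hamilton every element is annihilated by a monic polynomial in the remaining variable, $A_M$ is a torsion $\Z H$-module, $E_0(A_M)\ne 0$, and a fortiori $\Delta_M=\gcd(E_1(A_M))\ne 0$; alternatively cite McMullen's Theorem~5.1 explicitly. With that, your reduction is complete and there is no circularity, since Theorem~\ref{thm:even betti1}(\ref{f2}) rests on Theorem~\ref{thm:tc 3d} and Corollary~\ref{cor:tcone}, neither of which uses the present proposition.
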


Combining the results above yields the following corollary, 
which puts strong restrictions on the algebraic monodromy of 
a formal $3$-manifold fibering over the circle.  

\begin{corollary}[\cite{PS-forum}]
\label{cor:jordan blocks}
Let $M$ be a closed, orientable, $1$-formal $3$-manifold.  
Suppose $M$ fibers over the circle, and the algebraic monodromy has $1$ 
as an eigenvalue.  Then, there are an even number of 
$1\times 1$ Jordan blocks for this eigenvalue, 
and no higher size Jordan blocks.
\end{corollary}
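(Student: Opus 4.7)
The plan is to combine Theorem \ref{thm:monoformal} with Proposition \ref{prop:3mfd-fibers}, tying them together via a Wang sequence computation of $b_1(M)$. Since $M$ fibers over $S^1$, it is the mapping torus of an orientation-preserving diffeomorphism $\phi\colon F\to F$ of a compact, orientable surface $F$, and the fundamental group sits in a short exact sequence
\[
1\to \pi_1(F)\to \pi_1(M)\to \Z\to 1.
\]
Since $\pi_1(F)$ is finitely generated, we have $b_1(\pi_1(F))=b_1(F)<\infty$, and $\pi_1(M)$ is finitely presented (as $M$ is a closed manifold) and $1$-formal by hypothesis. Thus Theorem \ref{thm:monoformal} applies directly to this extension, giving immediately the assertion that the eigenvalue $1$ of $\phi_{*}\colon H_1(F,\C)\to H_1(F,\C)$ admits only $1\times 1$ Jordan blocks. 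This handles the second half of the statement.

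For the parity claim, let $m\coloneqq \dim\ker(\phi_{*}-\id)$ be the geometric multiplicity of $1$ as an eigenvalue; by the previous paragraph, this also equals the algebraic multiplicity, i.e., the total number of $1\times 1$ Jordan blocks for the eigenvalue $1$. The Wang exact sequence associated to the fibration $F\to M\to S^1$ reads
\begin{equation*}
\begin{tikzcd}[column sep=18pt]
H_1(F,\C) \ar[r, "\phi_{*}-\id"] & H_1(F,\C) \ar[r] & H_1(M,\C) \ar[r] & H_0(F,\C) \ar[r, "0"] & H_0(F,\C),
\end{tikzcd}
\end{equation*}
from which we read off $b_1(M)=\dim\coker(\phi_{*}-\id)+1=m+1$, using that $\coker$ and $\ker$ of an endomorphism of a finite-dimensional vector space have the same dimension.

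Now invoke Proposition \ref{prop:3mfd-fibers}: since $M$ is a closed, orientable, $1$-formal $3$-manifold fibering over $S^1$, its first Betti number $b_1(M)$ must be odd. Combining this with $b_1(M)=m+1$ forces $m$ to be even, which is precisely the required parity of the number of $1\times 1$ Jordan blocks for the eigenvalue $1$.

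I do not anticipate a serious obstacle: Theorem \ref{thm:monoformal} does the heavy lifting on the Jordan-block size, and Proposition \ref{prop:3mfd-fibers} does the heavy lifting on parity; the Wang sequence is just the bookkeeping that converts the Betti-number constraint into a constraint on the multiplicity. The only point warranting a brief check is that the argument goes through uniformly regardless of whether $F$ is closed or has boundary (both cases occur for fibered $3$-manifolds), but since the Wang sequence and the finiteness of $b_1(F)$ hold in either case, nothing extra is needed.
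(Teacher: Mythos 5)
Your proof is correct and follows essentially the same route as the paper: the paper likewise deduces the absence of larger Jordan blocks from Theorem \ref{thm:monoformal} (via its Corollary \ref{cor:bundles}), reads off $b_1(M)=m+1$ from the Wang sequence, and concludes that $m$ is even from Proposition \ref{prop:3mfd-fibers}. Your version merely applies the theorem directly to the extension $1\to\pi_1(F)\to\pi_1(M)\to\Z\to 1$ and writes out the Wang-sequence bookkeeping explicitly.
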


Indeed, by Corollary \ref{cor:bundles}, the algebraic monodromy 
has only $1\times 1$ Jordan blocks for the eigenvalue $1$. 
Let $m$ be the number of such blocks.  From the Wang 
sequence of the fibration, we deduce that $b_1(M)=m+1$. 
By Proposition \ref{prop:3mfd-fibers}, $m$ must be even. 

\begin{example}
\label{ex:heis-mono}
The $3$-dimensional Heisenberg manifold $M$ 
from Examples \ref{ex:heis-massey} and \ref{ex:heis-bis}
fibers over $S^1$ with fiber $S^1\times S^1$ 
and monodromy given by the matrix
$\left( \begin{smallmatrix} 1 & 1\\ 0 & 1 
\end{smallmatrix} \right)$. Since this is a 
Jordan block of size $2$ with eigenvalue $1$, 
we see once again that $M$ is not $1$-formal.
\end{example}

\begin{ack}
In  writing this survey, I have drawn on a variety of sources, 
including the foundational papers of  Quillen \cite{Qu68}, \cite{Qu69}, 
Sullivan  \cite{Sullivan74}, \cite{Sullivan77}, \cite{Sullivan05},
Deligne, Griffiths, Morgan, and Sullivan \cite{DGMS}, 
Morgan \cite{Mo}, and Halperin and Stasheff \cite{HS79};
the books of Griffiths and Morgan \cite{GM13}, 
F\'elix, Halperin, and Thomas \cite{FHT}, \cite{FHT2} and 
Félix, Oprea and Tanr\'e \cite{FOT}; 
the monographs of Bousfield and Kan \cite{BK72}, 
Bousfield and Gugenheim, \cite{BG76}, and 
Tanr\'e \cite{Tanre};
the survey articles of Hess \cite{Hess}, 
Papadima and Suciu \cite{PS-formal}, and 
F\'elix and Halperin \cite{FH17}; 
and many other articles on rational homotopy theory and 
related fields, all of which have provided valuable insights and inspiration.
\end{ack}

\begin{funding}
The author is grateful to the Simons Foundation Collaboration for support 
through Grants for Mathematicians \#354156 and \#693825.
\end{funding}



\begin{thebibliography}{000}
\bibliographystyle{emss}

\bibitem{AH} C.~Allday, S.~Halperin,
\href{https://dx.doi.org/10.1093/qmath/29.1.63}
{\em Lie group actions on spaces of finite rank}, 
Quart. J. Math. Oxford Ser. (2) \textbf{29} (1978), no.~113, 63--76. 
\MR{0501046}

\bibitem{ABC}  J.~Amor{\'o}s, M.~Burger, K.~Corlette, D.~Kotschick, D.~Toledo, 
\href{https://dx.doi.org/10.1090/surv/044}%
{\em Fundamental groups of compact {K}\"{a}hler 
manifolds}, Math. Surveys Monogr., vol.~44, Amer. Math. Soc., 
Providence, RI, 1996. 
\MR{1379330}

\bibitem{Ar} D.~Arapura, 
{\em Geometry of cohomology support loci for local systems  {\rm I}}, 
J. Algebraic Geom. \textbf{6} (1997), no.~3, 563--597.  
\MR{1487227}

\bibitem{Ark} M.~Arkowitz,
\href{https://dx.doi.org/10.1016/0022-4049(88)90076-X}%
{\em Formal differential graded algebras and homomorphisms}, 
J. Pure Appl. Algebra \textbf{51} (1988), no.~1-2, 35--52.
\MR{0941888}
 
\bibitem{BT} I.K.~Babenko, I.A.~Ta\u{\i}manov, 
\href{https://dx.doi.org/10.1007/BF02674589}%
{\em On nonformal simply connected symplectic manifolds}, 
Siberian Math. J. \textbf{41} (2000), no.~2, 204--217.
\MR{1762178}

\bibitem{BT-2} I.K.~Babenko, I.A.~Ta\u{\i}manov, 
\href{https://dx.doi.org/10.1070/SM2000v191n08ABEH000497}%
{\em Massey products in symplectic manifolds}, 
Sb. Math. \textbf{191} (2000), no.~7-8, 1107--1146.
\MR{1762178 }

\bibitem{BBCG} 
A.~Bahri, M.~Bendersky, F.R.~Cohen, S.~Gitler, 
\href{https://dx.doi.org/10.1016/j.aim.2010.03.026}%
{\em The polyhedral product functor: a method of 
computation for moment-angle complexes, 
arrangements and related spaces}, Adv. Math. 
\textbf{225} (2010), no.~3, 1634--1668.
\MR{2673742}

\bibitem{Bar-Natan16} D.~Bar-Natan, 
\href{http://drorbn.net/AcademicPensieve/Projects/ExQu/}
{\emph{Expansions and quadraticity for groups}}, in preparation (2016).

\bibitem{Baskakov}  I.V.~Baskakov, 
\href{https://dx.doi.org/10.1070/RM2003v058n05ABEH000670}%
{\em Triple Massey products in the cohomology of moment-angle 
complexes}, Russian Math. Surveys \textbf{58} (2003), no.~5, 1039--1041. 
\MR{2035723}

\bibitem{Baues} H.J.~Baues, 
\href{https://dx.doi.org/10.1007/BF01170720}%
{\em Rationale Homotopietypen}, 
Manuscripta Math. \textbf{20} (1977), no.~2, 119--131. 
\MR{0442922}

\bibitem{BB+} G.~Bazzoni, I.~Biswas, M.~Fern\'{a}ndez, V.~Mu\~{n}oz, A.~Tralle, 
\href{https://dx.doi.org/10.1007/978-3-319-67519-0_2}%
{\em Homotopic properties of {K}\"{a}hler orbifolds},  in: Special metrics and group 
actions in geometry, 23--57, Springer INdAM Ser., vol.~23, Springer, Cham, 2017. 
\MR{3751961}

\bibitem{Be92} A.~Beauville, 
\href{https://dx.doi.org/10.1007/BFb0094507}%
{\em Annulation du $H\sp 1$ pour les fibr\' es en droites plats}, in: 
Complex algebraic varieties (Bayreuth, 1990), 1--15,
Lecture Notes in Math., vol.~1507, Springer, Berlin, 1992. 
\MR{1178716}

\bibitem{BMPP}  B.~Berceanu, A.D.~M\u{a}cinic, \c{S}.~Papadima,  R.~Popescu,  
 \href{https://dx.doi.org/10.2140/agt.2017.17.1163}%
{\em On the geometry and topology of partial configuration 
spaces of Riemann surfaces},  Alg. Geom. Topology \textbf{17} 
(2017), 1163--1188.
\MR{3623686}

\bibitem{BP} B.~Berceanu, \c{S}.~Papadima, 
\href{https://dx.doi.org/10.1007/BF01459745}%
{\em Cohomologically generic $2$-complexes and 
$3$-dimensional {P}oin\-car\'{e} complexes},  
Math. Ann. \textbf{298} (1994), no.~3, 457--480. 
\MR{1262770}

\bibitem{Berglund} A.~Berglund, 
\href{https://dx.doi.org/10.1090/S0002-9947-2014-05935-7}%
{\em Koszul spaces}, Trans. Amer. Math. Soc. 
\textbf{366} (2014), no.~9, 4551--4569. 
\MR{3217692}

\bibitem{Bez} R.~Bezrukavnikov, 
\href{https://dx.doi.org/10.1007/BF01895836}%
{\em Koszul {DG}-algebras arising from configuration spaces}, 
Geom. Funct. Anal. \textbf{4} (1994), no.~2, 119--135. 
\MR{1262702}

\bibitem{BH}   C.~Bibby, J. Hilburn, 
\href{https://dx.doi.org/10.2140/agt.2016.16.2637}%
{\em Quadratic-linear duality and rational homotopy theory 
of chordal arrangements}, Algebr. Geom. Topol. \textbf{16} 
(2106), 2637--2661.
\MR{3572342}

\bibitem{Bi16}  C.~Bibby, 
\href{https://dx.doi.org/10.1090/proc/12937}%
{\em Cohomology of abelian arrangements}, Proc. Amer. Math. Soc. 
\textbf{144} (2016), no.~7, 3093--3104. 
\MR{3487239}

\bibitem{Bock} C.~Bock,
\href{https://dx.doi.org/10.1515/forum-2020-0123}%
{\em  On rational homotopy and minimal models},
Forum Math. \textbf{33} (2021), no.~1, 283--288.
\MR{4193487} 

\bibitem{BMSS} R.~Body, M.~Mimura, H.~Shiga, D.~Sullivan,
\href{https://dx.doi.org/10.1007/s000140050063}%
{\em $p$-universal spaces and rational homotopy types},
Comment. Math. Helv. \textbf{73} (1998), 427--442.
\MR{1633367}  

\bibitem{BG76} A.K.~Bousfield, V.K.A.M.~Gugenheim, 
\href{https://dx.doi.org/10.1090/memo/0179}%
{\em On \textup{PL} de Rham theory and rational homotopy type}, 
Mem. Amer. Math. Soc. \textbf{8} (1976), no.~179.
\MR{0425956}

\bibitem{BK72} A.K.~Bousfield, D.M.~Kan, 
\href{https://dx.doi.org/10.1007/978-3-540-38117-4}%
{\em Homotopy limits, completions and localizations}, Lecture Notes in Math.,
vol. 304, Springer, Berlin, 1972.
\MR{0365573 }

\bibitem{BG}  C.~Boyer,  K.~Galicki, 
\href{http://ukcatalogue.oup.com/product/9780198564959.do}
{\em Sasakian geometry}, Oxford Math. Monogr., 
Oxford University Press, Oxford, 2008.
\MR{2382957}

\bibitem{BW-asens} N.~Budur, B.~Wang, 
\href{https://dx.doi.org/10.24033/asens.2242}%
{\em Cohomology jump loci of quasi-projective varieties}, 
Ann. Sci. \'{E}cole Norm. Sup. (4) \textbf{48} (2015), no.~1, 227--236. 
\MR{3335842}

\bibitem{BW20}  N.~Budur, B.~Wang, 
\href{https://dx.doi.org/10.4310/PAMQ.2020.v16.n4.a3}%
{\em Cohomology jump loci of quasi-compact {K}\"{a}hler manifolds},  
Pure Appl. Math. Q. \textbf{16} (2020), nr.~4, 981--999.
\MR{4180236}

\bibitem{CPRW} R.~Campos, D.~Petersen, D.~Robert-Nicoud, F.~Wierstra
{\em Lie, associative and commutative quasi-isomorphism}, 
\arxiv{1904.03585v3}.

\bibitem{CT} J.A.~Carlson, D.~Toledo,
\href{http://dx.doi.org/10.1007/BF02921801}%
{\em Quadratic presentations and nilpotent {K}\"ahler groups}, 
J. Geom. Anal. \textbf{5} (1995), no.~3, 359--377. 
\MR{1360825}

\bibitem{Cat91} F.~Catanese, 
\href{https://doi.org/10.1007/BF01245076}%
{\em Moduli and classification of irregular Kaehler manifolds 
(and algebraic varieties) with Albanese general type fibrations}, 
Invent. Math. \textbf{104} (1991), no.~2, 263--289. 
\MR{1098610}  

\bibitem{Ca} G.~Cavalcanti,
\href{https://dx.doi.org/10.1017/S0305004106009340}%
{\em Formality of $k$-connected spaces in $4k+3$ 
and $4k+4$ dimensions}, Math. Proc. Cambridge 
Philos. Soc. \textbf{141} (2006), no.~1, 101--112.
\MR{2238645} 

\bibitem{CP} B.~Cenkl, R.~Porter,
\href{https://dx.doi.org/10.4310/jdg/1214435841}%
{\em Mal'cev's completion of a group and differential forms}, 
J. Differential Geom. \textbf{15} (1980), no.~4, 531--542. 
\MR{628342}

\bibitem{Chen73} K.-T.~Chen,
\href{https://dx.doi.org/10.2307/1970846}%
{\emph{Iterated integrals of differential forms and loop space homology}}, 
Ann. of Math. (2) \textbf{97} (1973), 217--246.
\MR{0380859}

\bibitem{Chen13}  X.~Chen, 
\href{https://dx.doi.org/10.4310/MRL.2013.v20.n1.a3}%
{\em On the fundamental groups of compact Sasakian manifolds}, 
Math. Res. Lett. \textbf{20} (2013), no.~1, 27--39.
\MR{3126719}

\bibitem{CT78} F.R.~Cohen, L.R.~Taylor, 
\href{https://dx.doi.org/10.1007/BFb0069229}%
{\em Computations of Gel'fand--Fuks cohomology, the cohomology of 
function spaces, and the cohomology of configuration spaces}, 
Geometric applications of homotopy theory (Proc. Conf., 
Evanston, Ill., 1977), I, pp.~106--143, 
Lecture Notes in Math., vol.~657, Springer, Berlin, 1978. 
\MR{0513543}

\bibitem{Cornulier14} Y.~Cornulier, 
\href{https://dx.doi.org/10.24033/bsmf.2725}
{\em Gradings on Lie algebras, systolic growth, and cohopfian
properties of nilpotent groups}, Bull. Math. Soc. France 
\textbf{14} (2016), no.~4, 693--744. 
\MR{3562610}

\bibitem{CN} D.~Crowley, J.~Nordstr\"{o}m, 
\href{https://dx.doi.org/10.1112/topo.12133}%
{\em The rational homotopy type of $(n-1)$-connected manifolds 
of dimension up to $5n-3$}, 
J. Topol. \textbf{13} (2020), no.~2, 539--575. 
\MR{4092775}

\bibitem{DP95} C.~De~Concini, C.~Procesi, 
\href{https://dx.doi.org/10.1007/BF01589496}%
{\em Wonderful models of subspace arrangements}, 
Selecta Math. (N.S.) \textbf{1} (1995), no.~3, 459--494.
\MR{1366622}

\bibitem{DGMS}  P.~Deligne, P.~Griffiths, J.W.~Morgan, D.~Sullivan,
\href{https://dx.doi.org/10.1007/BF01389853}%
{\em Real homotopy theory of {K}\"{a}hler manifolds},
Invent. Math. \textbf{29} (1975), no.~3, 245--274.
\MR{0382702}

\bibitem{DeS07} G.~Denham, A.I.~Suciu,
\href{http://dx.doi.org/10.4310/PAMQ.2007.v3.n1.a2}%
{\em Moment-angle complexes, monomial ideals, 
and Massey products}, Pure Appl. Math. Q. 
\textbf{3} (2007), no.~1, 25--60. 
\MR{2330154}

\bibitem{DeS18} G.~Denham, A.I.~Suciu,
\href{https://dx.doi.org/10.1017/fms.2018.5}%
{\em Local systems on complements of arrangements of smooth, 
complex algebraic hypersurfaces}, Forum Math. Sigma 
\textbf{6} (2018), e6, 20 pages.
\MR{3810026}

\bibitem{DP-ccm} A.~Dimca, \c{S}.~Papadima,
\href{https://dx.doi.org/10.1142/S0219199713500259}%
{\em Non-abelian cohomology jump loci from an analytic viewpoint}, 
Commun. Contemp. Math. \textbf{16} (2014), 
no.~4, 1350025, 47~pp.  
\MR{3231055}

\bibitem{DPS-imrn} A.~Dimca, S.~Papadima, A.I.~Suciu,
\href{https://dx.doi.org/10.1093/imrn/rnm119}%
{\em Alexander polynomials: Essential variables and multiplicities},
Int. Math. Res. Not. IMRN \textbf{2008} (2008), Art. ID rnm119, 36 pp.
\MR{2416998}

\bibitem{DPS-duke} A.~Dimca, \c{S}.~Papadima, A.I.~Suciu,
\href{https://dx.doi.org/10.1215/00127094-2009-030}%
{\em Topology and geometry of cohomology jump loci}, 
Duke Math. Journal \textbf{148} (2009), no.~3, 405--457.
\MR{2527322} 

\bibitem{DPS-mz} A.~Dimca, S.~Papadima, A.I.~Suciu,
\href{http://dx.doi.org/10.1007/s00209-010-0664-y}%
{\em Quasi-K\"{a}hler groups, $3$-manifold groups, and 
formality}, Math. Z. \textbf{268} (2011), no.~1-2, 169--186.  
\MR{2805428}

\bibitem{DS-jems} A.~Dimca, A.I.~Suciu,
\href{http://dx.doi.org/10.4171/JEMS/158}%
{\em Which $3$-manifold groups are K\"{a}hler groups?}, 
J. Eur. Math. Soc. (JEMS) \textbf{11} (2009), no.~3, 521--528.  
\MR{2505439}

\bibitem{Dp15} C.~Dupont, 
\href{https://dx.doi.org/10.5802/aif.2994}%
{\em The Orlik--Solomon model for hypersurface arrangements}, 
Ann. Inst. Fourier (Grenoble) \textbf{65} (2015), no.~6, 2507--2545. 
\MR{3449588}

\bibitem{Dp16} C.~Dupont, 
\href{https://dx.doi.org/10.1093/imrn/rnv260}%
{\em Purity, formality, and arrangement complements}, 
Int. Math. Res. Notices IMRN \textbf{2016} (2016), 
no.~13, 4132--4144.
\MR{3544631}

\bibitem{Enriquez} B.~Enriquez, 
\href{https://dx.doi.org/10.1007/s00029-013-0137-3}%
{\em Elliptic associators}, 
Selecta Math. (N.S.) \textbf{20} (2014), no.~2, 491--584. 
\MR{3177926}

\bibitem{Falk} M.~Falk, 
\href{https://dx.doi.org/10.2307/2000924}%
{\em The minimal model of the complement of an arrangement 
of hyperplanes}, Trans. Amer. Math. Soc. \textbf{309} (1988), 
no.~2, 543--556. 
\MR{0929668}

\bibitem{FeYu} E. M. Feichtner, S. Yuzvinsky, 
\href{https://dx.doi.org/10.1007/s10958-007-0454-1}%
{\em Formality of the complements of subspace arrangements 
with geometric lattices}, J. Math. Sci. (N.Y.) \textbf{140} (2007), 
no.~3, 472--479.
\MR{2183223}

\bibitem{FH82} Y.~F{\'e}lix, S.~Halperin, 
\href{https://dx.doi.org/10.2307/1999862}%
{\em Formal spaces with finite-dimensional rational homotopy}, 
Trans. Amer. Math. Soc. \textbf{270} (1982), no.~2, 575--588. 
\MR{0645331}

\bibitem{FH17} Y.~F{\'e}lix, S.~Halperin, 
\href{https://dx.doi.org/10.4310/ICCM.2017.v5.n2.a3}%
{\em Rational homotopy theory via Sullivan models: a survey},
ICCM Not. \textbf{5} (2017), no.~2, 14--36. 
\MR{3741934}

\bibitem{FHT} Y.~F{\'e}lix, S.~Halperin, and J.-C.~Thomas,
\href{https://dx.doi.org/10.1007/978-1-4613-0105-9}%
{\emph{Rational homotopy theory}}, Graduate Texts in Mathematics, 
vol.~205, Springer-Verlag, New York, 2001. 
\MR{1802847}
  
\bibitem{FHT2} Y.~F{\'e}lix, S.~Halperin, and J.-C.~Thomas,
\href{https://dx.doi.org/10.1142/9473}%
{\emph{Rational homotopy theory \textup{II}}},  
World Scientific Publishing, Hackensack, NJ, 2015.
\MR{3379890}

\bibitem{FOT} Y.~F\'{e}lix, J.~Oprea, and D.~Tanr\'{e}, 
\href{https://global.oup.com/academic/product/algebraic-models-in-geometry-9780199206520}%
{\em Algebraic models in geometry}, Oxford Grad. Texts in Math., 
vol.~17, Oxford Univ. Press, Oxford, 2008. 
\MR{2403898}

\bibitem{FT88} Y.~F\'{e}lix, D.~Tanr\'{e}, 
\href{https://doi.org/10.1007/BFb0077798}%
{\em Formalit\'{e} d'une application et suite spectrale d'Eilenberg--Moore}, 
Algebraic topology—rational homotopy (Louvain-la-Neuve, 1986), 99--123, 
Lecture Notes in Math., vol.~1318, Springer, Berlin, 1988. 
\MR{0952575 }

\bibitem{FT09} Y.~F\'{e}lix, D.~Tanr\'{e}, 
\href{https://doi.org/10.1090/S0002-9939-08-09591-9}%
{\em Rational homotopy of the polyhedral product functor}, 
Proc. Amer. Math. Soc. \textbf{137} (2009), no.~3, 891--898.
\MR{2457428}

\bibitem{FGM} M.~Fern\'{a}ndez, A.~Gray, J.~Morgan, 
\href{https://doi.org/10.1307/mmj/1029004333}%
{\em Compact symplectic manifolds with free circle actions, 
and {M}assey products}, Michigan Math. J. \textbf{38} (1991), 
no.~2, 271--283. 
\MR{1098863}  

\bibitem{FM04} M.~Fern\'andez, V.~Mu\~noz,
\href{https://doi.org/10.1016/S0166-8641(03)00158-5}%
{\em On non-formal simply connected manifolds}, 
Topology Appl. \textbf{135} (2004), no.~1-3, 111--117.
\MR{2024950} 

\bibitem{FM05} M.~Fern\'{a}ndez, V.~Mu\~{n}oz,
\href{https://doi.org/10.1007/0-8176-4424-5_8}%
{\em The geography of non-formal manifolds}, in:
Complex, contact and symmetric manifolds, 121--129,
Progr. Math., vol.~234, Birkh\"{a}user, Boston, MA, 2005.
\MR{2105144} 

\bibitem{FM08} M.~Fern\'{a}ndez, V.~Mu\~{n}oz,
\href{https://doi.org/10.4007/annals.2008.167.1045}%
{\em An $8$-dimensional nonformal, simply connected, 
symplectic manifold}, Ann. of Math. (2) \textbf{167} (2008), 
no.~3, 1045--1054. 
\MR{2415392}

\bibitem{FS} S.~Friedl, A.I.~Suciu,
\href{https://doi.org/10.1112/jlms/jdt051}%
{\em K\"{a}hler groups, quasi-projective groups, and $3$-manifold groups}, 
J. Lond. Math. Soc. \textbf{89} (2014), no.~1, 151--168.
\MR{3174738}

\bibitem{FMac} W.~Fulton, R.~MacPherson, 
\href{https://doi.org/10.2307/2946631}%
{\em A compactification of configuration spaces},  
Ann. of Math. (2) \textbf{139} (1994), no.~1, 183--225.
\MR{1259368} 

\bibitem{Goldman-Millson} W.M.~Goldman, J.J.~Millson,
\href{http://www.numdam.org/item/PMIHES_1988__67__43_0/}%
{\em The deformation theory of representations of fundamental
groups of compact {K}\"{a}hler manifolds}, Inst. Hautes \'{E}tudes
Sci. Publ. Math. \textbf{67} (1988), 43--96.
\MR{0972343} 

\bibitem{GL21} J.~Grbi\'{c}, A.~Linton, 
\href{https://doi.org/10.1016/j.aim.2021.107837}%
{\em Non-trivial higher Massey products in moment-angle 
complexes}, Adv. Math. \textbf{387} (2021), 107837, 52 pp.
\MR{4278014}

\bibitem{GL87} M.~Green, R.~Lazarsfeld, 
 \href{https://dx.doi.org/10.1007/BF01388711}%
{\em Deformation theory, generic vanishing theorems
and some conjectures of Enriques, Catanese and Beauville}, 
Invent. Math. \textbf{90} (1987), no.~2, 389--407.
\MR{0910207} 

\bibitem{GM13} P.~Griffiths, J.~Morgan,
 \href{https://dx.doi.org/10.1007/978-1-4614-8468-4}%
 {\emph{Rational homotopy theory and differential forms}}, 
 Second ed., Progr. Math., vol.~16, Springer, New York, 2013. 
 \MR{3136262}
 
\bibitem{Hal83} S.~Halperin, 
\href{https://doi.org/10.24033/msmf.294}%
{\em Lectures on minimal models}, 
M\'em. Soc. Math. France, S\'er.~2, \textbf{9--10} (1983), 1--261.
\MR{0736299}

\bibitem{HS79} S.~Halperin,  J.~Stasheff, 
 \href{https://dx.doi.org/10.1016/0001-8708(79)90043-4} 
{\em Obstructions to homotopy equivalences}, 
Adv. in Math. \textbf{32} (1979), no.~3, 233--279. 
\MR{539532}  

\bibitem{Has} K.~Hasegawa, 
\href{https://dx.doi.org/10.2307/2047375}%
{\em Minimal models of nilmanifolds}, 
Proc. Amer. Math. Soc. \textbf{106} (1989), no.~1, 65--71. 
\MR{0946638} 

\bibitem{Hess}  K.~Hess, 
\href{https://dx.doi.org/10.1090/conm/436/08409}%
{\em Rational homotopy theory: a brief introduction}, in:  
Interactions between homotopy theory and algebra, 175--202, 
Contemp. Math., vol.~436, Amer. Math. Soc., Providence, RI, 2007. 
\MR{2355774}

\bibitem{Hilton} P.~Hilton, 
\href{https://dx.doi.org/10.1007/BF01179733}%
{\em Localization and cohomology of nilpotent groups}, 
Math. Z. \textbf{132} (1973), 263--286.
\MR{0322074}

\bibitem{HMR}  P.~Hilton, G.~Mislin, J.~Roitberg, 
\href{https://www.sciencedirect.com/bookseries/north-holland-mathematics-studies/vol/15/}%
{\em Localization of nilpotent groups and spaces},
North-Holland Mathematics Studies 15, 
North-Holland, Amsterdam, 1975.
\MR{0478146}

\bibitem{Idrissi} N.~Idrissi, 
\href{https://dx.doi.org/10.1007/s00222-018-0842-9}%
{\em The Lambrechts--Stanley model of configuration spaces}, 
Invent. Math. \textbf{216} (2019), no.~1, 1--68. 
\MR{3935037}

\bibitem{Iv} S.O.~Ivanov,
{\em An overview of rationalization theories of non-simply 
connected spaces and non-nilpotent groups}, 
\arxiv{2111.10694v3}.

\bibitem{IM} S.O.~Ivanov, R.~Mikhailov, 
\href{https://dx.doi.org/10.2140/gt.2019.23.1237}%
{\em A finite $\Q$-bad space}, Geom. Topol. \textbf{23} (2019), 
no.~3, 1237--1249. 
\MR{3956892}

\bibitem{Kas17} H.~Kasuya,  
\href{https://doi.org/10.5802/aif.3142}%
{\em Mixed Hodge structures and Sullivan's minimal models of 
Sasakian manifolds}, Ann. Inst. Fourier (Grenoble) \textbf{67} 
(2017), no.~6, 2533--2546. 
\MR{3742472}

\bibitem{Kohno} T.~Kohno, 
\href{http://projecteuclid.org/euclid.nmj/1118787354}%
{\em On the holonomy {L}ie algebra and the nilpotent completion of the
 fundamental group of the complement of hypersurfaces}, 
 Nagoya Math. J. \textbf{92} (1983), 21--37. 
 \MR{726138}
 
\bibitem{Kraines} D.~Kraines, 
\href{https://doi.org/10.2307/1994385}%
{\em Massey higher products}, 
Trans. Amer. Math. Soc. \textbf{124} (1966), 431--449. 
\MR{0202136}

\bibitem{KT} M.~Kreck, G.~Triantafillou, 
\href{https://doi.org/10.4153/CJM-1991-021-4}%
{\em On the classification of manifolds up to finite ambiguity}, 
Canad. J. Math. \textbf{43} (1991), no.~2, 356--370. 
\MR{1113760}

\bibitem{Kriz} I.~K\v{r}\'{\i}\v{z},
\href{https://doi.org/10.2307/2946581}%
{\em On the rational homotopy type of configuration spaces}, 
Ann. of Math. (2) \textbf{139} (1994), no.~2, 227--237. 
\MR{1274092}

\bibitem{LS-aif}  P.~Lambrechts, D.~Stanley,
\href{https://doi.org/10.5802/aif.2042}%
{\em The rational homotopy type of configuration spaces of two points}, 
Ann. Inst. Fourier (Grenoble) \textbf{54} (2004), no.~4, 1029--1052. 
\MR{2111020}

\bibitem{LS-gt}  P.~Lambrechts, D.~Stanley,
\href{https://doi.org/10.2140/gt.2008.12.1921}%
{\em The rational homotopy type of a blow-up in the stable case}, 
Geom. Topol. \textbf{12} (2008), no.~4, 1921--1993. 
\MR{2431012}

\bibitem{LS-asens}  P.~Lambrechts, D.~Stanley,
\href{https://doi.org/10.24033/asens.2074}%
{\em Poincar\'{e} duality and commutative differential graded algebras},
Ann. Scient. \'{E}c. Norm. Sup. \textbf{41} (2008), no.~4, 497--511.
\MR{2489632}

\bibitem{Lazard} M.~Lazard, 
\href{https://doi.org/10.24033/asens.1021}%
{\em Sur les groupes nilpotents et les anneaux de {L}ie}, 
Ann. Sci. \'{E}cole Norm. Sup. (3) \textbf{71} (1954), 101--190. 
\MR{0088496}

\bibitem{Le}  D.~Lehmann,
\href{http://www.numdam.org/item/AST_1990__45__1_0/}%
{\em Th\'{e}orie homotopique des formes dif\'{e}rentielles (d'apr\`{e}s D. Sullivan)}, 
Ast\'erisque, no.~45, Soci\'{e}t\'{e} Math\'{e}matique de France, Paris, 1977. 
\MR{0488041}

\bibitem{Li02}  A.~Libgober,
\href{https://dx.doi.org/10.1016/S0166-8641(01)00048-7}%
{\em First order deformations for rank one local systems 
with a non-vanishing cohomology}, Topology Appl. 
\textbf{118} (2002), no.~1-2, 159--168. 
\MR{1877722}

\bibitem{Lofwall} C.~L{\"o}fwall, 
\href{https://dx.doi.org/10.1007/BFb0075468}%
{\em On the subalgebra generated by the one-dimensional elements in 
the {Y}oneda {E}xt-algebra}, Algebra, algebraic topology and their interactions
({S}tockholm, 1983), Lecture Notes in Math., vol. 1183, Springer, Berlin,
1986, pp.~291--338. 
\MR{846457}

\bibitem{Lupton} G.~Lupton, 
\href{https://dx.doi.org/10.2307/2001345}%
{\em Intrinsic formality and certain types of algebras}, 
Trans. Amer. Math. Soc. \textbf{319} (1990), no.~1, 257--283. 
\MR{1005081}

\bibitem{Lupton-Oprea} G.~Lupton, J.~Oprea, 
\href{https://dx.doi.org/10.1016/0022-4049(94)90142-2}%
{\em Symplectic manifolds and formality}, 
J. Pure Appl. Algebra \textbf{91} (1994), no.~1-3, 193--207.
\MR{1255930}

\bibitem{Maassarani} M.~Maassarani, 
\href{https://dx.doi.org/10.24033/bsmf.2777}
{\em Sur certains espaces de configurations associ\'{e}s aux sous-groupes 
finis de $\operatorname{PSL}_2(\C)$}, 
Bull. Soc. Math. France \textbf{147} (2019), no.~1, 123--157. 
\MR{3943740}

\bibitem{Mc10} A.D.~M{\u{a}}cinic, 
\href{https://dx.doi.org/10.1016/j.jpaa.2009.12.025}%
{\em Cohomology rings and formality properties of nilpotent groups}, 
J.~Pure Appl. Algebra \textbf{214} (2010), no.~10, 1818--1826.
\MR{2608110}

\bibitem{MPPS}  A.D.~M\u{a}cinic, \c{S}.~Papadima, C.R.~Popescu,  A.I.~Suciu, 
\href{https://dx.doi.org/10.1090/tran/6799}%
{\em Flat connections and resonance varieties: 
from rank one to higher ranks},  Trans. Amer. Math. Soc. 
\textbf{369} (2017), no.~2, 1309--1343.    
\MR{3572275}

\bibitem{Malcev} A.I~Malcev, 
\href{http://mi.mathnet.ru/eng/izv3161}%
{\em On a class of homogeneous spaces}, Amer. Math. Soc.
Translation \textbf{1951} (1951), no.~39, 33pp. 
\MR{0039734}

\bibitem{MP} M.~Markl, \c{S}.~Papadima,
\href{https://doi.org/10.5802/aif.1315}%
{\em Homotopy {L}ie algebras and fundamental groups via deformation theory}, 
Ann. Inst. Fourier (Grenoble) \textbf{42} (1992), no.~4, 905--935.
\MR{1196099}

\bibitem{Ma-58} W.S.~Massey, 
{\em Some higher order cohomology operations}, 
Symposium internacional de topología algebraica (International symposium on 
algebraic topology), Mexico City: Universidad Nacional Aut\'onoma de M\'exico 
and UNESCO, 1958, 145--154.
\MR{0098366}

\bibitem{Massuyeau12} G.~Massuyeau,
\href{https://doi.org/10.24033/bsmf.2625}%
{\emph{Infinitesimal {M}orita homomorphisms and the tree-level of the {LMO}
invariant}}, Bull. Soc. Math. France \textbf{140} (2012), no.~1, 101--161.
\MR{2903772}

\bibitem{May} J.P.~May,
\href{https://doi.org/10.1016/0021-8693(69)90027-1}%
{\em Matric Massey products},
J. Algebra, \textbf{12} (1969), 533--568.
\MR{0238929}

\bibitem{McM} C.T.~McMullen,
\href{https://dx.doi.org/10.1016/S0012-9593(02)01086-8}%
{\em The Alexander polynomial of a $3$-manifold and the 
Thurston norm on cohomology}, Ann. Sci. \'{E}cole Norm. Sup. 
\textbf{35} (2002), no.~2, 153--171. 
\MR{1914929}

\bibitem{Mi79} T.~Miller, 
\href{https://dx.doi.org/10.1215/ijm/1256048237}%
{\em On the formality of $(k-1)$-connected compact 
manifolds of dimension less than or equal to $4k-2$}, 
Illinois J. Math. \textbf{23} (1979), no.~2, 253--258. 
\MR{0528561}

\bibitem{Mo}  J.W.~Morgan,
\href{https://dx.doi.org/10.1007/BF02684316}%
{\em The algebraic topology of smooth algebraic varieties},
Inst. Hautes \'{E}tudes Sci. Publ. Math. \textbf{48} (1978), 137--204.
\MR{0516917}

\bibitem{NM78} J.~Neisendorfer, T.~Miller,
\href{http://projecteuclid.org/euclid.ijm/1256048467}%
{\emph{Formal and coformal spaces}}, Illinois J. Math. 
\textbf{22} (1978), no.~4, 565--580.
\MR{0500938}

\bibitem{NR} D.~Notbohm, N.~Ray,
\href{https://dx.doi.org/10.2140/agt.2005.5.31}%
{\em On {D}avis--{J}anuszkiewicz homotopy types. \textup{I}. 
Formality and rationalisation},
Algebr. Geom. Topol. \textbf{5} (2005), 31--51. 
\MR{2135544}

\bibitem{Papadima95} \c{S}.~Papadima, 
\emph{Finite determinacy phenomena for finitely
 presented groups}, Proceedings of the 2nd {G}auss {S}ymposium. {C}onference
{A}: {M}athematics and {T}heoretical {P}hysics ({M}unich, 1993), Sympos.
Gaussiana, de Gruyter, Berlin, 1995, pp.~507--528. 
\MR{1352516}

\bibitem{PS-imrn04} S.~Papadima, A.I.~Suciu, 
\href{https://dx.doi.org/10.1155/S1073792804132017}%
{\em Chen {L}ie algebras}, Int. Math. Res. Not. 
\textbf{2004} (2004), no.~21, 1057--1086. 
\MR{2037049}

\bibitem{PS-mathann} S.~Papadima, A.I.~Suciu, 
\href{http://dx.doi.org/10.1007/s00208-005-0704-9}%
{\em Algebraic invariants for right-angled {A}rtin groups}, 
Math. Annalen, \textbf{334} (2006), no.~3, 533--555. 
\MR{2207874}

\bibitem{PS-jlms07} S.~Papadima, A.I.~Suciu, 
\href{https://doi.org/10.1112/jlms/jdm045}%
{\em Algebraic invariants for Bestvina-Brady groups}, 
 J. Lond. Math. Soc. (2) \textbf{76} (2007), no.~2, 273--292. 
\MR{2363416}

\bibitem{PS-adv09} S.~Papadima, A.I.~Suciu,
\href{https://dx.doi.org/10.1016/j.aim.2008.09.008}%
{\em Toric complexes and Artin kernels}, 
Adv. Math. \textbf{220} (2009), no.~2, 441--477. 
\MR{2466422}

\bibitem{PS-formal} S.~Papadima, A.~Suciu,
\href{https://www.jstor.org/stable/43679144}%
{\em Geometric and algebraic aspects of $1$-formality}, 
Bull. Math. Soc. Sci. Math. Roumanie \textbf{52} (2009), 
no.~3, 355--375. 
\MR{2554494}

\bibitem{PS-plms10} S.~Papadima, A.I.~Suciu,
\href{https://dx.doi.org/10.1112/plms/pdp045}%
{\em Bieri--{N}eumann--{S}trebel--{R}enz invariants and 
homology jumping loci}, Proc. London Math.~Soc. 
\textbf{100} (2010), no.~3, 795--834.  
\MR{2640291} 

\bibitem{PS-forum} S.~Papadima, A.I.~Suciu,
\href{https://dx.doi.org/10.1515/FORUM.2010.052}%
{\em Algebraic monodromy and obstructions to formality}, 
Forum Math.  \textbf{22} (2010), no.~5, 973--983. 
\MR{2719766}

\bibitem{PS-mrl} S.~Papadima, A.I.~Suciu,
\href{https://dx.doi.org/10.4310/MRL.2014.v21.n4.a13}%
{\em Jump loci in the equivariant spectral sequence}, 
Math. Res. Lett. \textbf{21} (2014), no.~4, 863--883.
\MR{3275650}

\bibitem{PS-springer} \c{S}.~Papadima, A.I.~Suciu,
\href{https://dx.doi.org/10.1007/978-3-319-09186-0\_17}%
{\em Non-abelian resonance: product and coproduct formulas}, 
in: Bridging Algebra, Geometry, and Topology, pp.~269--280, 
Springer Proc. Math. Stat., vol. 96, 2014. 
\MR{3297121}

\bibitem{PS-jlms} S.~Papadima, A.I.~Suciu, 
\href{https://dx.doi.org/10.1112/jlms.12169}%
{\em Infinitesimal finiteness obstructions}, J. London Math. Soc. 
\textbf{99} (2019), no.~1, 173--193.
\MR{3909253}

\bibitem{PS-imrn19} S.~Papadima, A.I.~Suciu, 
\href{https://dx.doi.org//10.1093/imrn/rnx294}%
{\em The topology of compact Lie group actions through the lens of finite models} 
Int. Math. Res. Not. IMRN \textbf{2019} (2019), no.~20, 6390--6436.
\MR{4031243}

\bibitem{PY99} S.~Papadima, S.~Yuzvinsky,
\href{http://dx.doi.org/10.1016/S0022-4049(98)00058-9}%
{\em On rational $K[\pi,1]$ spaces and Koszul algebras}, 
J. Pure Appl. Algebra \textbf{144} (1999), no.~2, 157--167. 
\MR{1731434}

\bibitem{Porter} R.D.~Porter, 
\href{http://dx.doi.org/10.2307/1998124}%
{\em Milnor's $\bar{\mu}$-invariants and Massey products},
Trans. Amer. Math. Soc. \textbf{257} (1980), no.~1, 39--71.
\MR{0549154}

\bibitem{Porter-Suciu-topappl} R.D.~Porter, A.I.~Suciu,
\href{https://doi.org/10.1016/j.topol.2021.107987}%
{\em Differential graded algebras, Steenrod cup-one 
products, binomial operations, and Massey products}, 
Topology Appl. \textbf{313} (2022), paper no.~107987, 37~pp. 
\MR{4423095}

\bibitem{Qu68} D.G.~Quillen, 
\href{https://dx.doi.org/10.1016/0021-8693(68)90069-0}%
{\emph{On the associated graded ring of a group ring}}, 
J. Algebra \textbf{10} (1968), 411--418. 
\MR{0231919}

\bibitem{Qu69} D.~Quillen, 
\href{https://dx.doi.org/10.2307/1970725}%
{\em Rational homotopy theory}, Ann. of Math. (2) \textbf{90}
(1969), 205--295. 
\MR{0258031}

\bibitem{RWZ} M.~Rivera, F.~Wierstra, M.~Zeinalian,
\href{https://dx.doi.org/10.2140/agt.2021.21.1535}%
{\em Rational homotopy equivalences and singular chains}, 
Algebr. Geom. Topol. \textbf{21} (2021), no.~3, 1535--1552. 
\MR{4299674}

\bibitem{RT} Y.~Rudyak, A.~Tralle, 
\href{https://dx.doi.org/10.1155/S1073792800000271}%
{\em On Thom spaces, Massey products, and nonformal symplectic 
manifolds},  Internat. Math. Res. Notices \textbf{2000}, no.~10, 495--513. 
\MR{1759504}

\bibitem{Sa17} B.~Saleh, 
\href{https://dx.doi.org/10.2140/agt.2017.17.2523}%
{\em Noncommutative formality implies commutative and Lie formality}, 
Algebr. Geom. Topol. \textbf{17} (2017), no.~4, 2523--2542. 
\MR{3686405}

\bibitem{Si93}  C.~Simpson, 
\href{https://dx.doi.org/https://doi.org/10.24033/asens.1675}%
{\em Subspaces of moduli spaces of rank one local systems}, 
Ann. Sci. \'{E}cole Norm. Sup. \textbf{26} (1993), no.~3, 361--401.
\MR{1222278} 

\bibitem{Sta83} J.~Stasheff, 
\href{https://dx.doi.org/10.1215/ijm/1256065414}%
{\em Rational Poincar\'e duality spaces}, 
Illinois J. Math. \textbf{27} (1983), no.~1, 104--109.
\MR{0684544 } 

\bibitem{Su-imrn} A.I.~Suciu,
\href{https://dx.doi.org/10.1093/imrn/rns246}%
{\em Characteristic varieties and Betti numbers of free
abelian covers}, Intern. Math. Res. Notices \textbf{2014} 
(2014), no. 4, 1063--1124.
\MR{3168402}

\bibitem{Su-tcone} A.I.~Suciu,
\href{http://dx.doi.org/10.1007/978-3-319-31580-5_1}%
{\em Around the tangent cone theorem}, in: 
Configuration Spaces: Geometry, Topology and Representation Theory, 
1--39, Springer INdAM series, vol.~14, Springer, Cham, 2016.
\MR{3615726}

\bibitem{Su-edinb} A.I.~Suciu,
\href{https://doi.org/10.1017/prm.2019.55}%
{\em Poincar\'{e} duality and resonance varieties}, 
Proc. Roy. Soc. Edinburgh Sect. A
\textbf{150} (2020), nr.~6, 3001--3027. 
\MR{4190099}

\bibitem{Su-mm} A.I.~Suciu,
\href{https://doi.org/10.1007/s00229-020-01264-5}%
{\em Cohomology jump loci of $3$-manifolds}, 
Man\-uscripta Math. \textbf{67} (2022), no.~1-2, 89--123.

\bibitem{Su-lcs} A.I.~Suciu,
{\em Lower central series and split extensions},  
\arxiv{2105.14129v2}

\bibitem{Su-cdga} A.I.~Suciu,
{\em Alexander invariants and holonomy Lie algebras of 
commutative differential graded algebras}, in preparation (2022).

\bibitem{SW-jpaa} A.I.~Suciu, H.~Wang, 
\href{http://dx.doi.org/10.1016/j.jpaa.2018.11.006}%
{\em Cup products, lower central series, and holonomy Lie algebras}, 
J. Pure. Appl. Algebra \textbf{223} (2019), no.~8, 3359--3385.
\MR{3926216}

\bibitem{SW-forum} A.I.~Suciu, H.~Wang, 
\href{https://doi.org/10.1515/forum-2018-0098}%
{\em Formality properties of finitely generated groups and Lie 
algebras}, Forum Math. \textbf{31} (2019), no.~4, 867--905. 
\MR{3975666}

\bibitem{SW-ejm} A.I.~Suciu, H.~Wang, 
\href{https://doi.org/10.1007/s40879-019-00389-6}%
{\em Taylor expansions of groups and filtered-formality}, 
Eur. J. Math.  \textbf{6} (2020), nr.~3, 1073--1096.
\MR{4151729}

\bibitem{SYZ} A.I.~Suciu, Y.~Yang, G.~Zhao, 
\href{https://dx.doi.org/10.2422/2036-2145.201205_008}%
{\em  Homological finiteness of abelian covers}, 
Annali della Scuola Normale Superiore di Pisa \textbf{14} (2015), 
no.~1, 101--153.
\MR{3379489}

\bibitem{Sullivan74} D.~Sullivan, 
\href{https://dx.doi.org/10.2307/1970841}%
{\em Genetics of homotopy theory and the Adams conjecture},  
Ann. of Math. (2) \textbf{100} (1974), 1--79. 
\MR{0442930}

\bibitem{Sullivan75} D.~Sullivan, 
\href{https://dx.doi.org/10.1016/0040-9383(75)90009-9}%
{\em On the intersection ring of compact three manifolds}, 
Topology \textbf{14} (1975), no.~3, 275--277. 
\MR{0383415}

\bibitem{Sullivan77} D.~Sullivan, 
\href{https://dx.doi.org/10.1007/BF02684341}%
{\em Infinitesimal computations in topology}, Inst. Hautes \'Etudes 
Sci. Publ. Math. (1977), no.~47, 269--331. 
\MR{0646078}

\bibitem{Sullivan05} D.P.~Sullivan, 
\href{https://link-springer-com.ezproxy.neu.edu/book/9789048103508}%
{\em Geometric topology: localization, periodicity and Galois symmetry}, 
The 1970 MIT notes. Edited and with a preface by Andrew Ranicki. 
$K$-Monographs in Mathematics, vol.~8. Springer, Dordrecht, 2005. 
\MR{2162361}

\bibitem{Tanre} D.~Tanr\'{e}, 
\href{https://doi.org/10.1007/BFb0071482}%
{\em Homotopie rationnelle: modèles de Chen, Quillen, Sullivan}, 
Lecture Notes in Math., vol.~1025, Springer-Verlag, Berlin, 1983.
\MR{0764769}

\bibitem{Th} J.C.~Thomas, 
\href{https://dx.doi.org/10.5802/aif.838}
{\em Rational homotopy of Serre fibrations},
Ann. Inst. Fourier (Grenoble) \textbf{31} (1981), no.~3, 71--90.
\MR{0638617}

\bibitem{Ti} A.M.~Tievsky, 
\href{https://dspace.mit.edu/handle/1721.1/45349}%
{\em Analogues of K\"{a}hler geometry on Sasakian manifolds}, 
Ph.D. Thesis, Massachusetts Institute of Technology, 2008. 
\MR{2717624}

\bibitem{To96} B.~Totaro,
\href{https://dx.doi.org/10.1016/0040-9383(95)00058-5}%
{\em Configuration spaces of algebraic varieties},
Topology \textbf{35} (1996), no.~6, 1057--1067.
\MR{1404924}

\bibitem{TO} A.~Tralle, J.~Oprea, 
\href{https://dx.doi.org/10.1007/BFb0092608}%
{\em Symplectic manifolds with no {K}\"{a}hler structure}, 
Lecture Notes in Math., vol.~1661, Springer-Verlag, Berlin, 1997. 
\MR{1465676} 

\bibitem{Tr} G.V.~Triantafillou, 
\href{https://dx.doi.org/10.2307/1999119}%
{\em Equivariant minimal models}
Trans. Amer. Math. Soc. \textbf{274} (1982), no.~2, 509--532. 
\MR{0675066}

\bibitem{Tu} V.G.~Turaev, 
\href{https://dx.doi.org/10.1007/978-3-0348-7999-6}%
{\em Torsions of $3$-dimensional manifolds}, 
Progress in Mathematics, vol.~208, Birkh\"{a}user 
Verlag, Basel, 2002. 
\MR{1958479}

\bibitem{VP79} M.~Vigu\'{e}-Poirrier,
\href{https://gallica.bnf.fr/ark:/12148/bpt6k54905784/f49.item.zoom}%
{\em Formalité d'une application continue}, 
C. R. Acad. Sci. Paris S\'er. A-B \textbf{289} 
(1979), no.~16, A809--A812. 
\MR0558804

\bibitem{Yu} S.~Yuzvinsky, 
\href{https://dx.doi.org/10.1090/S0002-9947-02-02924-0}%
{\em  Small rational model of subspace complement}, Trans. 
Amer. Math. Soc.  \textbf{354} (2002), no.~5, 1921--1945.
\MR{2003a:52030}

\end{thebibliography}
\end{document}